\newtheoremstyle{mytheoremstyle}
{5pt} % Space above
{5pt} % Space below
{\itshape} % Body font
{} % Indent amount
{\bfseries} % Theorem head font
{.} % Punctuation after theorem head
{.5em} % Space after theorem head
{} % Theorem head spec (can be left empty, meaning `normal')
\newtheoremstyle{mydefinitionstyle}
{5pt} % Space above
{5pt} % Space below
{} % Body font
{} % Indent amount
{\bfseries} % Theorem head font
{.} % Punctuation after theorem head
{.5em} % Space after theorem head
{} % Theorem head spec (can be left empty, meaning `normal')
\theoremstyle{mytheoremstyle}
\newtheorem{theorem}{Theorem}[section]
\newtheorem{lemma}[theorem]{Lemma}
\newtheorem{proposition}[theorem]{Proposition}
\newtheorem{corollary}[theorem]{Corollary}
\theoremstyle{mydefinitionstyle}
\newtheorem{remark}[theorem]{Remark}
\newtheorem{notation}[theorem]{Notation}
\newtheorem{definition}[theorem]{Definition}
\newtheorem{assumption}[theorem]{Assumption}
\numberwithin{equation}{section}
\newcommand{\norm}[1]{\Vert#1\Vert}
\newcommand{\abs}[1]{\vert#1\vert}
\newcommand{\dive}{\mbox{\upshape div}\mkern 1mu}
\newcommand{\curl}{\mbox{\upshape curl}\mkern 1mu}
\newcommand{\tr}{\operatorname{tr}}
\newcommand{\vol}{\operatorname{vol}}
\newcommand{\proj}{\mathsf{\Pi}}
\newcommand{\Err}{\mathcal{R}}
\newcommand{\textdef}[1]{\textbf{#1}}
\newcommand{\Ric}{\operatorname{Ric}}
\newcommand{\Riem}{\operatorname{Riem}}
\newcommand{\vort}{\operatorname{vort}}
\newcommand{\C}{\mathcal{C}}
\newcommand{\D}{\mathcal{D}}
\newcommand{\B}{\mathbf{B}}
\newcommand{\Md}{\mathscr{D}}
\newcommand{\Fb}{\mathsf{\Gamma}}
\newcommand{\Hspace}{\mathscr{H}}
\newcommand{\SoundCone}{\mathscr{C}}
\newtheorem{innercustomthmproblem}{Problem}
\setlist[itemize]{topsep=3pt}
\setlist[enumerate]{topsep=3pt}
\begin{document}

\title[Relativistic fluids]{Recent developments in mathematical aspects of relativistic fluids}

%%=============================================================%%
%% Prefix	-> \pfx{Dr}
%% GivenName	-> \fnm{Joergen W.}
%% Particle	-> \spfx{van der} -> surname prefix
%% FamilyName	-> \sur{Ploeg}
%% Suffix	-> \sfx{IV}
%% NatureName	-> \tanm{Poet Laureate} -> Title after name
%% Degrees	-> \dgr{MSc, PhD}
%% \author*[1,2]{\pfx{Dr} \fnm{Joergen W.} \spfx{van der} \sur{Ploeg} \sfx{IV} \tanm{Poet Laureate} 
%%                 \dgr{MSc, PhD}}\email{iauthor@gmail.com}
%%=============================================================%%

\author*[1]{\fnm{Marcelo} \sur{Disconzi}}\email{marcelo.disconzi@vanderbilt.edu}

\affil*[1]{\orgdiv{Mathematics Department}, \orgname{Vanderbilt University}, \orgaddress{\street{1326 Stevenson Center Ln}, \city{Nashville}, \postcode{37240}, \state{TN}, \country{United States of America}}}

%%==================================%%
%% sample for unstructured abstract %%
%%==================================%%

\abstract{We review some recent developments in mathematical aspects
of relativistic fluids. The goal is to provide a quick entry point to some research topics 
of current interest that is 
accessible to graduate students and researchers from adjacent fields, as well as to researches working on broader aspects of relativistic fluid dynamics interested in its mathematical formalism. Instead 
of complete proofs, which can be found in the published 
literature, here we focus on the proofs' main ideas
and key concepts. After an introduction to the relativistic Euler equations, we cover the following topics:
a new wave-transport formulation of the relativistic Euler equations tailored to applications; the problem
of shock formation for relativistic Euler;
rough (i.e., low-regularity) solutions to the relativistic Euler equations;
the relativistic Euler equations with a physical vacuum boundary;
relativistic fluids with viscosity. We finish with a discussion of open problems and future directions of research.}

\keywords{Relativistic fluids, Einstein's equations, Free-boundary problems, Low regularity problems, Relativistic viscous fluids}

\maketitle

\setcounter{tocdepth}{3} % TOC subsubsections
\tableofcontents

\section{Introduction}
\label{S:Introduction}

The field of relativistic fluid dynamics is concerned with the study of fluids in situations when effects pertaining to the theory of relativity cannot be neglected. It is an essential tool in high-energy nuclear physics, cosmology, and astrophysics (see references in Sect.~\ref{S:Background}).
Relativistic effects are manifest in models of relativistic fluids through the geometry of spacetime. This can be done in two ways: (a) by letting the fluid evolve on fixed spacetime geometry, or (b) by coupling the fluid equations to Einstein's equations. In (a), we are neglecting the effect of the fluid's matter and energy on the curvature of spacetime\footnote{In applications, often the fixed geometry is chosen to satisfy the vacuum Einstein equations.}, while in (b) such effects are taken into account. 
Both situations will be investigated here.

Recent years have witnessed a great deal of progress in understanding the mathematical structure of relativistic fluid theories. Such understanding underscores the role played by geometric-analytic tools in the study of fluids, tools that have been largely developed in the study of the Cauchy problem for the vacuum Einstein equations. The use of techniques of this sort to solve challenging problems in fluid dynamics is a testimony to their depth and versatility. More precisely, one might naively think 
that invoking ideas from general relativity is an obvious approach to study \emph{relativistic} fluids.
Nevertheless, the core of the geometric-analytic techniques we will discuss is fundamentally linked to the \emph{wave} aspect of Einstein's equations, and not so much to other notorious features of general relativity, such as its covariance or the thorny problem of singularities. In fact, 
geometric-analytic techniques originally developed in the context of Einstein's equations can be, and have been, successfully applied to the study of the classical (i.e., non-relativistic\footnote{Physicists often use the terminology ``classical'' to refer to ``non-quantum.'' Here, we will use ``classical'' to mean ``non-relativistic.'' Since we will not discuss any quantum system, no confusion should arise from this terminology.\label{FN:Classical_NR}}) compressible Euler equations as well. The bedrock feature of these techniques encompassing both the study of fluids and Einstein's equations is the \emph{propagation of waves:} \emph{gravitational waves} in the case of Einstein's equations and \emph{sound waves} in the case of fluids. The role played by sound waves is captured by the key concept of the \emph{acoustical metric,} which is a Lorentzian metric constructed out of the fluid variables whose characteristic sets track the propagation of sound. The characteristics of the acoustical metric will be called \emph{sound cones,} very much like the characteristics of the Minkowski metric, tracking propagation of light, are called lightcones. More generally, the \emph{techniques we will present are intrinsically tied to the behavior of the characteristic sets (or simply characteristics) of the equations describing the 
evolution of relativistic fluids.} 

When a fluid is perfect (i.e., non-viscous), irrotational, and isentropic (i.e., has constant entropy), its dynamics is entirely governed by the propagation of sound waves, as the corresponding (classical compressible or relativistic) Euler equations reduce to a system of quasilinear wave equations. Thus,  perfect irrotational isentropic fluids is where techniques originated in the study of Einstein's equations first found an application in the study of fluids, most notably in the breakthrough monograph 
by \cite{Christodoulou-Book-2007}. We should stress, however, that
the study of a perfect irrotational isentropic fluid presents its own difficulties, and one should not think that it suffices to quote general relativity techniques as a black-box in order to study the fluid problem treated by Christodoulou. We also remark that, prior to Christodoulou, \cite{Alinhac-1999-1, Alinhac-1999-2, Alinhac-2001-1, Alinhac-2001-2} also employed some geometric-analytic ideas to study quasilinear wave equations.
 
When a fluid is not irrotational nor isentropic, a fundamental new dynamical aspect emerges. Aside from  sound waves, the fluid also exhibits propagation of vorticity and entropy. Vorticity and entropy are transported along the fluid's \emph{flow lines} (which are the integral curves of the fluid's velocity), corresponding to a different set of characteristics than the sound cones. When viscosity is added, further characteristics are present, such as shear waves and the so-called second sound (tied to the propagation of temperature disturbances), corresponding to additional modes of propagation in the fluid. (As we will discuss in Sect.~\ref{S:Relativistic_viscous_fluids}, differently than the classical setting where the standard Navier--Stokes--Fourier equations are parabolic, in the relativistic setting a viscous fluid is described by hyperbolic equations\footnote{In view of the requirement, in relativity, that propagation speeds be finite and at most the speed of light.}, thus one can meaningfully talk about its characteristics.) In other words, the equations of relativistic fluid dynamics form a hyperbolic system with \emph{multiple characteristics speeds.} Understanding systems of this type in more than one spatial dimension is one of the current challenges in the theory of hyperbolic partial differential equations.

Therefore, for the general study of relativistic fluids (or the classical compressible 
Euler equations), it is crucial
to understand the \emph{interactions} among their different characteristics. More than
adapting ideas employed in the study of the vacuum Einstein equations or perfect irrotational isentropic fluids, which are tailored to systems with a single characteristic speed (the gravitational waves in the case of vacuum Einstein's equations and sound waves for the perfect irrotational isentropic  Euler equations), it is necessary to develop \emph{new techniques tailored to the additional characteristics and their interactions.} In order to do so, one has to uncover several hidden aspects of the dynamics.
Furthermore, unlike the case of the Minkowski metric which appears in the standard wave equation, the characteristics
of relativistic fluid equations depend on the solution variables (this is a feature of the quasilinear nature of the problem). An interplay between the regularity of different characteristics, of several geometric quantities associated with them, and of the fluid variables themselves, leads to 
many intricate questions and very rich mathematics. 

Having alluded to some of the general aspects and ideas permeating the study of relativistic fluids,
we will next focus on the specific goals and topics of this work. After an introduction to the relativistic Euler equations (Sect.~\ref{S:Relativistic_Euler}), we will present the following results: a new formulation of the relativistic Euler equations that exhibits some remarkable structural and regularity properties (Sect.~\ref{S:New_formulation}); we provide a discussion of how such new
formulation can be used in the study of shock formation for relativistic fluids (Sect.~\ref{S:Study_of_shocks});
we also use the new formulation to investigate existence of rough (i.e., low-regularity) solutions to the relativistic Euler equations (Sect.~\ref{S:Rough_solutions}); the free-boundary relativistic Euler equations, more specifically, the relativistic Euler equations with a physical vacuum boundary (Sect.~\ref{S:Vacuum_bry}); finally, we discuss relativistic fluids with viscosity (Sect.~\ref{S:Relativistic_viscous_fluids}). We finish with a discussion of open problems and future directions of research (Sect.~\ref{S:Open_problems}). We have included appendices that will be useful to some readers, especially graduate students.

\subsection{Aims and scope}

The primary goal of this article is to provide a quick entry point to some research topics of current interest in relativistic fluids.
While there is no lack of references addressing the topics we will present, including review articles (some of which are provided below), the key words here are ``quick'' and ``entry point."
With the field of relativistic fluids moving rather fast and the amount of background needed to understand many of its modern aspects being substantially large, 
it is often difficult for graduate students, researchers from adjacent fields, and
researches working on broader aspects of relativistic fluid dynamics interested in its mathematical formalism, to keep up with recent developments. We are especially hopeful that our presentation
will encourage students and researchers trained in traditional methods of fluid dynamics to appreciate the geometric-analytic formalism originated in the study of Einstein's equations applied to fluid problems.

Like most review papers, we sought to strike a balance between big picture ideas versus precise statements, as well as
high-level conceptual discussions and nitty-gritty technicalities. 
We took the inspiration for our style of presentation from years of effective discussion with members of the community. Most readers probably have had the experience of 
discussing a topic on a blackboard with a colleague, wherein one takes several liberties in order to get to the main point of a mathematical argument. For example, we write equations in a schematic form, ignore lower-order terms, abuse notations, appeal to toy-models, and often invoke things that are not exactly true but are ``close enough to be true'' for the sake of that discussion. This type of informal-yet-informative form of presentation is particularly useful when we explain new ideas and results to each other. Proceeding in this fashion, we can cut through pages of work and get to key ideas in a proof. This approach is particularly useful when explaining things to graduate students, since they often do not yet have the experience to see the forest for the trees, and can thus get bogged down on some secondary technical point and miss the fundamental aspects of some proof.

Therefore, \emph{we have made a deliberate attempt to present a discussion that is often schematic but retains the core of the arguments.} We hope that we have accomplished this \emph{without compromising clarity or precision, while avoiding many technical aspects.} Our goal is not to completely de-emphasize technical arguments. The results we will discuss are, unfortunately, quite technical, and pointing
out some of the technical difficulties is crucial to understand the role of the techniques we employ. For example, only after clarifying some key technical difficulties can one fully appreciate the relevance of the geometric-analytic formalism
borrowed from the study of Einstein's equations as applied to fluid problems.
But these technical points can often be highlighted in a simplified setting wherein one avoids much of the baggage needed for the complete argument. In this way, we often carry out the presentation at 
a high level, trying to point out the main difficulties and key insights to overcome them. 
Thus, we will not shy away from using schematic notation, plainly ignore certain terms in the estimates, use derivative counting, and equate different terms that differ by some ``unimportant'' factor. We will, of course, always warn the reader when this is being done. 

In many parts of this paper, we have decided to present several intermediate steps in computations. Some of the topics discuss involve some heavy computations.
At first sight, it may look like presenting detailed computations goes against our philosophy of being schematic whenever possible. However, this is not the case for two reasons. First and foremost, one of the key results we will discuss, a new formulation of the relativistic Euler equations enjoying remarkable properties (Sect.~\ref{S:New_formulation}), is derived through some very delicate computations that exhibit miraculous cancellations. Although it is not feasible to reproduce here all the arguments (which are the bulk of the nearly hundred-pages paper by \citealt{Disconzi-Speck-2019}), readers would not be able to appreciate the miraculous cancellations of this new formulation unless one example of the exact computations is provided. In particular, presenting such a calculations in schematic form would entirely miss the point of the cancellations. Second, while we can always omit intermediate computations and present only the end results, we believe that there
is only that much one can learn if many important calculations are omitted. Although readers should ultimately reproduce such computations for themselves in order to fully understand them, it can be  frustrating to be stuck in some computations while the goal should be to get to the main key ideas of a proof. Thus, we present several of the calculations needed for some important formulas, although we leave them to the appendix when they are too long or cumbersome. 

An important aspect of relativistic fluid dynamics is that the mathematical structures present in the equations of motion can be, depending on the problem one has in mind, substantially different than those present in classical (i.e., non-relativistic) fluids. Thus, often results from relativistic fluids cannot, in general, be obtained by a simple tweak of techniques used for classical fluids. This is an important point that is not always fully appreciated, and thus we will routinely reminded readers of it. That said,
there will be one instance in which we will illustrate our main points using a non-relativistic fluid. In the discussion of rough solutions to the relativistic Euler equations (Sect.~\ref{S:Rough_solutions}), we will dedicate a considerable part of the presentation to establish a similar result for the classical compressible Euler equations. The reason for this is twofold. First, the proof in the simpler setting of the classical compressible Euler equations is already quite demanding, thus discussing the proof in the relativistic setting would add several layers of difficulty to the exposition in a way that some important points common to both the classical and the relativistic setting could be missed. Second, \emph{we want to give at least one example of how geometric-analytic techniques imported from general relativity can be used in the study of fluids in general, thus emphasizing that what makes such techniques useful to the study of relativistic fluids is not so much the fact that we are considering a relativistic theory but rather the fact that we are studying a model with propagation of waves,} as pointed out earlier in the Introduction.

We will be primarily interested in mathematical questions motivated by physics. Thus, our goal is to establish rigorous results under physically relevant assumptions. This is not always possible, in which case
we settle for results that can be proven under physically-inspired scenarios. Results of this kind
should be viewed as a first step toward the eventual goal of proving results under realistic hypotheses.
In this spirit, physical aspect will be invoked when helpful to justify certain mathematical assumption or to clarify basic concepts and terminology. 

There are several different topics covered in this review and readers might have different interest in each of them. Thus, we try as much as possible to make each of the main sections self-contained. Connections between sections, however, will be pointed out whenever it is pertinent. The level of detail and to some extent the style of presentation also varies among sections. Section~\ref{S:Relativistic_Euler} provides a relatively self contained introduction to the relativistic Euler equations from a partial differential equations (PDEs) perspective. More precisely, it is not feasible to give a self-contained introduction to the topic from scratch within the scope of these notes. But readers interested primarily in mathematical aspects of the relativistic Euler equations with focus on the evolution problem from a PDE perspective should be able to quickly absorb the basics of the Cauchy problem needed to understand the more advanced applications of the subsequent sections. Sect.~\ref{S:New_formulation} focuses on hidden structures of the relativistic Euler equations and explains how such structures are key for investigating several questions of interest. Sect.~\ref{S:New_formulation} also introduces some features of the geometric-analytic framework originally developed in studies of Einstein's equations. Such formalism is then expanded in Sect.~\ref{S:Rough_solutions}. Among all different topics presented in this review, those of Sect.~\ref{S:Rough_solutions}, where rough solutions to the Euler equations are studied, is where the discussion will be most schematic. This is due mainly to the extensive amount of background needed for the results of that section. Nevertheless, Sect.~\ref{S:Rough_solutions} should provide readers with a good snapshot of the geometric-analytic formalism. Sect.~\ref{S:Vacuum_bry}, on the other hand, is where estimates are presented in considerably more detail in comparison to the other sections. This is due to the nature of the topic there presented, which allows us to focus primarily on energy estimates, which aside from constituting the basis of the overall  argument, can be well-illustrated with some not-very-complicated calculations. Finally, Sect.~\ref{S:Relativistic_viscous_fluids} will provide little discussion of proofs, focusing primarily in introducing the problems and stating the results. This is because the topics covered in Sect.~\ref{S:Relativistic_viscous_fluids}, relativistic viscous fluids, have been less investigated among mathematicians than those of previous sections, despite a great deal of activity in the physics community which leads to several interesting mathematical questions. In addition, Sect.~\ref{S:Relativistic_viscous_fluids} will be by far the part of this review with more physical discussion, the reason being that the mathematical results of that section are directly motivated by physical questions.
This difference in focus and presentation among different sections is reflected in Sect.~\ref{S:Open_problems}, where we discuss open problems and future directions of research. Some of the open problems will be presented as precise conjectures following directly from the material here presented, whereas other problems will be more tentative, pointing toward interesting yet less specific directions of research. 

For the sake of brevity, we will not provide a thorough literature review. 
Hence, works will be cited only when directly connected to the results being presented. The topics here surveyed all belong to very active fields of research, with important contributions from many authors, 
and a detailed literature review would be outside our scope. 
This, unfortunately, will leave several interesting and related works unmentioned.
In this sense, our manuscript is more properly understood as a review -- in the sense that we review specific results and techniques -- rather than a survey -- where one would provide a comprehensive 
overview of the status of the field. That said, in an attempt to convey the richness of the field of mathematical relativistic fluid dynamics, in Sect.~\ref{S:Further_important_results} we provide some discussion of important works in relativistic fluids that are not directly related to the topics that are the main focus of our presentation.

\subsection{Background\label{S:Background}}

We will assume that readers are familiar with the main ideas of hyperbolic partial differential equations (PDEs), 
such as energy estimates, finite speed of propagation, characteristic sets, etc.
Basic elliptic theory will also be assumed. Some further tools from analysis will also be employed in some
parts of the text (e.g., Littlewood--Paley theory in Sect.~\ref{S:Rough_solutions} or weighted Sobolev embeddings in Sect.~\ref{S:Vacuum_bry}). Readers not familiar with such tools should have no difficulty finding their basic results in standard textbooks, although readers should still be able to follow our main arguments if they simply take our applications of these tools at face value.

We will assume familiarity with Lorentzian geometry and the basics of relativity theory. 
We stress that not much background in these subjects will in fact be needed for our discussion. 
After appealing to ideas from geometry and general relativity at the very beginning (e.g., introducing concepts such as an energy-momentum tensor), we will quickly write down equations of motion and focus on their study from a PDE perspective. More precisely, much of the Lorentzian geometry needed for the proofs we will discuss will be hidden
in our high-level presentation. 
Thus, with a bit of patience, 
students and researchers trained in traditional methods of fluid dynamics should still be able to follow most of our arguments. In fact, readers acquainted only with basic ideas from geometry, such as the 
definition of a Riemannian metric and the notion that a Lorentzian metric is like a Riemannian metric that is not positive definite, should have no difficulty reading the paper. We will talk about 
covariant derivatives, but for the most part readers not trained in geometry can think of covariant derivatives as roughly regular partial derivatives without compromising much of the PDE aspects which will be our main focus. The concept of curvature will also be invoked but, once again, readers untrained in geometry can take for granted that the curvature tensors we will use are certain combination of derivatives of the solution variables (of the metric in the case of Einstein's equations; of the fluid variables in the case of the curvature of the acoustical metric). An instrumental introduction to the most basic aspects of Lorentzian geometry that could help readers with no geometry background follow our presentation can be found in Sect.~3 of \cite{Disconzi-USCFluidNotes}.

There are many books and monographs providing extensive background on relativistic fluids, Einstein's equations, and Lorentzian geometry. Here we present a few suggestions to the interested reader, stressing that this is an incomplete list. 

\begin{itemize}
\item A thorough introduction to relativistic fluids can be found
in the book by \cite{Rezzolla-Zanotti-Book-2013}. It covers the physics of relativistic fluids and its basic mathematical aspects. It also contains a short introductory chapter on Lorentzian geometry that 
can be helpful to readers unfamiliar with the topic.
\item A short introduction to the mathematics of relativistic perfect fluids can be found in \cite{Christodoulou-2007}.
\item The book by \cite{Romatschke:2017ejr} summarizes much of what is known about the physics of relativistic viscous fluids, with emphasis on connections with experiments and numerical simulations.
\item The review by \cite{Rocha:2023ilf} gives a comprehensive outlook of most theories of relativistic viscous fluids, with an emphasis on kinetic theory aspects.
\item Applications of relativistic fluids to cosmology can be found in \cite{Weinberg-Book-2008}.
\item The microscopic foundations of relativistic fluid dynamics are presented in 
\cite{Denicol-Rischke-Book-2021}. Although microscopic theory will not be discussed in our presentation and we will only make some very brief references to it when commenting on some physical aspects, it constitutes an important aspect of research on relativistic fluids.
\item Taken together, the above references will give readers a very good view of the importance and applications of relativistic fluid dynamics.
\item A more mathematical take on relativistic perfect fluids can be found in 
\cite{Anile-Book-1990,Lichnerowicz-Book-1967}.
\item The monograph by \cite{Choquet-Bruhat-Book-2009} is provides a very comprehensive study of general relativity and the Einstein equations, focusing mostly on mathematical results. It stars with an introduction to Lorentzian geometry and covers a tremendous amount of material, including a chapter on relativistic fluids.
\item The books by \cite{Hawking-Ellis-Book-1973,Wald:1984rg} are standard introductions to general relativity.
\item A mathematical introduction to the Cauchy problem for Einstein's equations, which also provides much of the necessary background in Lorentzian geometry, can be found in \cite{Ringstrom-Book-2009}.
\item An introduction to the geometric-analytic tools we will employ, albeit focusing strictly on wave
equations and thus with no discussion of fluids, can be found in
\cite{Alinhac-Book-2010}. See also the introductory online lecture notes by \cite{Aretakis-NotesGR}.
\item A recent review on the problem of shock formation for the relativistic Euler equations can be found in \cite{Abbrescia-Speck-2023}. While there is some intersection between this work and our exposition in Sect.~\ref{S:Study_of_shocks}, our treatment of the problem of shocks and \cite{Abbrescia-Speck-2023} complement each other.
\end{itemize}

\subsection{Notation and conventions}
\label{S:Notation_conventions}

We will assume familiarity with Lorentzian geometry\footnote{See, however, our comments in Sect.~\ref{S:Background} about pre-requisites in geometry.} and the basics of partial differential equations (PDEs). Unless stated otherwise, we will always assume as given a differentiable four-dimensional manifold $M$
equipped with a Lorentzian metric $g$ --- so $(M,g)$ will be the spacetime --- where our objects (tensors etc.) will be defined.
For much of the discussion, precise details about $(M,g)$ will not be important, and we will use them simply to define our basic objects. Precise assumptions on $(M,g)$ will be given for the statement of the theorems we will discuss. The tangent and cotangent bundles of $M$ will be denoted by $TM$ and $T^*M$, respectively.

All fluids will be relativistic, unless explicitly mentioned otherwise. Thus, sometimes we refer to them simply as fluids instead of relativistic fluids. The word classical will be used to refer to non-relativistic objects, e.g., a classical (non-relativistic) fluid\footnote{See Footnote \ref{FN:Classical_NR}.}.

Unless stated otherwise, we adopt the following notation, abbreviations, and conventions:
\begin{itemize}
\item Greek indices run from $0$ to $3$, Latin indices from $1$ to $3$, and repeated indices appearing once upstairs and once downstairs are summed over their range.
\item $\{ x^\alpha \}_{\alpha=0}^3$ denotes local coordinates in spacetime, with $t := x^0$ denoting a time coordinate and
$\{x^i\}_{i=1}^3$ denoting spatial coordinates. 
In such coordinates, we often write $(t,x)$ for a spacetime point, with $x=(x^1,x^2,x^3)$. We write
$\{\frac{\partial}{\partial x^\alpha} \}_{\alpha=0}^3$ 
or $\{ \partial_\alpha \}_{\alpha=0}^3$ for the corresponding basis of coordinate vectors.
\item Our signature convention for Lorentzian metrics is $-+++$.
\item We use the terms one-form and co-vector interchangeably. ``One-form'' seems more common in the math literature, but physicists seem to prefer ``co-vector."
\item Indices are raised and lowered with the spacetime metric. We will silently 
use the spacetime metric to identify vectors and one-forms. 
We follow a standard convention in relativity of using the same letter for vectors and one-forms related by the metric, e.g., $u_\alpha = g_{\alpha\beta} u^\beta$, where $g$ is the spacetime metric.
In particular, we do not use the notation of ``musical operators,'' e.g., $u^\flat$ for a one-form associated with a vectorfield $u$ and $\omega^\sharp$ for a vectorfield associated with a one-form $\omega$.
\item $\nabla$ is the covariant derivative associated with the spacetime metric.
\item In Sect.~\ref{S:Characteristics_Euler}, Definition \ref{D:Acoustical_metric}, we will introduce the so-called acoustical metric, which, as we will show, is a Lorentzian metric that plays an important role in the mathematical study of relativistic fluids. We stress that, unless stated otherwise, indices are not raised and lowered with the acoustical metric, and $\nabla$ will not be the covariant derivative associated with the acoustical metric. Similarly, other geometric concepts, like ``orthogonality,'' etc., will be with respect to the spacetime metric, and not the acoustical metric, unless stated otherwise.  
\item We use units where $c_\ell = 8\pi G = 1$, where 
$c_\ell$ is the speed of light (in vacuum) and $G$ is Newton's gravitational constant.
\item $H^N$ denotes the Sobolev space with norm $\norm{\cdot}_N$. The relevant domain for maps in $H^N$, i.e., $H^N(M)$, $H^N(\mathbb{R}^3)$, etc., will be specified in the appropriate context and when needed we will also specify it in the norm, e.g., $\norm{\cdot}_{H^N(\mathbb{R}^3)}$.
\item $\Sigma_t$ will denote a constant-time hypersurface, i.e., if $(x^0,x^1, x^2, x^3) = (t, x)$, then 
$\Sigma_t := \{ (\tau,x) \, | \, \tau = t \}$.
\item We use the standard notation $A \lesssim B$ to mean $A \leq C B$ where $C$ is some universal constant depending only on ultimately fixed parameters (e.g., the dimension of space or the size of a time interval).
\item L.O.T. stands for lower-order terms. Which terms can be treated as lower order depends on the context and the terms subsumed under L.O.T. can vary from line to line.
\item RHS = right-hand side, LHS = left-hand side.
\item $\updelta$ denotes the Euclidean metric and $\updelta^\alpha_\beta$ the Kronecker delta.
\item $\partial$ will denote a generic spacetime derivative, i.e., 
a generic $\partial_\alpha$ derivative. 
$\bar{\partial}$ will denote a generic spatial derivative, i.e., a generic $\partial_i$ derivative.
They will
be often used when only the derivative counting of some expression is relevant. 
\item $\sim$ will be used to mean equality up to ``unimportant terms.'' Naturally, what classifies as ``unimportant'' depends on the context, which typically will be clear from the surrounding discussion. For example, in a typical PDE fashion, one often is interested only in a derivative counting, and thus would write something like $\nabla_\mu \nabla^\mu \phi \sim \partial \phi$ to mean that in this equation for $\phi$ only derivatives up to first-order\footnote{One often uses a notation like $\partial^{\leq 1} \phi$ to denote an expression depending on up to first-order derivatives of $\phi$, but we do not do so here.} of $\phi$ appear on the RHS, with the precise way this dependence on first-order derivatives (e.g., linearly, nonlinearly, etc.) being of no importance for the argument being discussed. When more than one quantity is involved we  separate them by commas, e.g.,  $\sim \partial \phi, \partial \psi$ denotes equality to a generic expression involving up to first-order derivative of $\phi$ and $\psi$.
\item $A \approx B$ means $A \lesssim B$ and $B \lesssim A$.
\item Different volume form will be used in different parts of the text but for the most part they will always be the ``natural'' one. Thus, when no confusion can arise, we will omit the integration measure, e.g., writing $\int f$ for $\int f \, dx$. 
\item When derivatives and covariant derivatives appear to the left of a term without a parenthesis, that means that it differentiates only the term immediately to its right. E.g., in $\partial_\alpha u v$ only $u$ is differentiated\footnote{This is standard notation, but we remark this here because some authors prefer to emphasize the differentiation of only the term immediately to the right with parenthesis, i.e., writing $(\partial_\alpha u) v$ for $\partial_\alpha u v$.}, with $\partial_\alpha( u v)$ used to indicate a derivative of the product $uv$.
\item New concepts and terminology, whether introduced in Definitions or in the body of the text, will be highlighted with \textdef{bolded text.}
\item We use $:=$ for definitions in mathematical expressions, i.e., $A := B$ means that $A$ is defined by the expression $B$.
\end{itemize}

\section{The relativistic Euler equations}
\label{S:Relativistic_Euler}

The dynamics of a perfect (i.e., no viscous) relativistic fluid is described by the relativistic Euler equations introduced below.

\begin{definition}
\label{D:Energy_momentum_perfect}
The \textdef{energy-momentum tensor of a relativistic perfect fluid} is the symmetric two-tensor on\footnote{Recall our notation and conventions from Sect.~\ref{S:Notation_conventions}.} 	$M$
\begin{align}
\label{E:Energy_momentum_perfect}
\mathcal{T}_{\alpha\beta} := (p+\varrho) u_\alpha u_\beta
+ p g_{\alpha\beta},
\end{align}
where $g$ is a Lorentzian metric on $M$, $p\colon M \rightarrow \mathbb{R}$ and $\varrho\colon M \rightarrow \mathbb{R}$ are real-valued functions representing the \textdef{pressure} and
\textdef{energy density} of the fluid, respectively, $u\colon M \rightarrow TM$
is a vectorfield representing the \textdef{velocity} of the fluid and normalized by
\begin{align}
\label{E:Velocity_normalization}
\abs{u}^2_g = g_{\alpha\beta} u^\alpha u^\beta = u^\alpha u_\alpha = -1. 
\end{align}
\end{definition}

\begin{remark}
$u$ is often referred to as the fluid's \textdef{four-velocity}, emphasizing that it is a vectorfield in spacetime. We will refer to it simply as velocity unless the terminology is ambiguous or we want to emphasize its four-dimensional nature. Similarly for all ``four-'' quantities often used in relativity, e.g., four-acceleration, etc.
\end{remark}

\begin{remark}
Often perfect fluids are also called ideal fluids and both terms are used interchangeably here, although some authors (e.g., \citealt{Rezzolla-Zanotti-Book-2013}) reserve the terminology 
``ideal'' for fluids that obey the equation of state of an ideal gas.
\end{remark}

The assumption $\abs{u}^2_g = -1$ can be understood as follows. Recall that in relativity, observers are defined by their (timelike) world-line up to reparametrizations. More precisely, the norm of a tangent vector to the world-line has no physical meaning if the parameter is not specified. Thus, we can choose to normalize the observer's velocity to $-1$. In the case of a fluid, we can identify the flow lines of $u$ with the world-line of observers traveling with the fluid particles. The normalization $\abs{u}^2_g = -1$ also says that $u$ is timelike, so fluid particles do not travel faster than or at the speed of light.

The normalization $\abs{u}^2_g = -1$ has yet another physical interpretation. The energy density $\varrho$ in $\mathcal{T}$ is the energy density measured by an observer traveling with the fluid (i.e., at rest with respect to the fluid). It is possible to show, using kinetic theory (see \citealt{Denicol-Rischke-Book-2021}), that the energy density measured by an observer with velocity $v$ will be $v^\alpha v^\beta \mathcal{T}_{\alpha\beta}$. Thus, for the fluid velocity itself we need to have $\varrho = u^\alpha u^\beta \mathcal{T}_{\alpha\beta}$, thus $u^\alpha u_\alpha = -1$.

Let us make another remark about kinetic theory. It can be shown that the energy-momentum tensor
\eqref{E:Energy_momentum_perfect} arises as a suitable limit, via a coarse-graining procedure, of a microscopic dynamics described by kinetic theory (under appropriate assumptions and ignoring dissipation), see \cite{Denicol-Rischke-Book-2021,DeGroot:1980dk}. One can also justify definition \eqref{E:Baryon_density_current_perfect} below from kinetic theory.
While kinetic theory provides one of the best justifications for defining $\mathcal{T}$ by \eqref{E:Energy_momentum_perfect}, it is also possible 
to postulate \eqref{E:Energy_momentum_perfect} motivated by certain physical considerations (see \citealt{Weinberg:1972kfs}). Similar remarks apply to \eqref{E:Baryon_density_current_perfect}.

The normalization $\abs{u}^2_g = -1$ implies the following useful identity which will often be silently used in many of our computations,
\begin{align}
\label{E:Consequence_velocity_normalization}
u^\alpha \nabla_\mu u_\alpha = 0.
\end{align}
Contracting \eqref{E:Consequence_velocity_normalization} with $u^\mu$ implies that the \textdef{fluid's acceleration}
given by $a^\alpha := u^\mu \nabla_\mu u^\alpha$ is orthogonal to the fluid's velocity, i.e., $a^\alpha u_\alpha = 0$.

Finally, normalization \eqref{E:Velocity_normalization} allows us to define\footnote{The definition of a
LRF relies only on \eqref{E:Velocity_normalization} and, thus, is not exclusive to the relativistic Euler equations. In particular, we can define a LRF for the viscous fluid theories studied in Sect.~\ref{S:Relativistic_viscous_fluids}.} a fluid's \textdef{local rest frame (LRF)}, which is an orthonormal frame $\{ e_\alpha \}_{\alpha=0}^3$ such that $e_0 = u$.

\begin{remark}
Fluids described by \eqref{E:Energy_momentum_perfect} are sometimes called isotropic as one is assuming that if one is at rest with respect to the fluid then stresses in all directions of the fluid are the same. This means that in a LRF, $T_{ii} = p$. It is possible to consider perfect fluid models without this assumption (see \citealt{Rezzolla-Zanotti-Book-2013}), but we will not do so here. 
For fluids
with viscosity, to be introduced later, such isotropy does not hold.
One should be careful to note that isotropy can be used to mean different things in the literature. In particular, by saying that we will consider only perfect fluids that are isotropic we are not saying that we will impose symmetry conditions in our study (as it would be by assuming, e.g., that the fluid is spherically symmetric).
\end{remark}

\begin{definition}
\label{D:Baryon_density_current_perfect}
The \textdef{baryon density current,} or simply \textbf{baryon current,} of a perfect fluid is the
vectorfield $\mathcal{J}\colon M \rightarrow TM$ given by
\begin{align}
\label{E:Baryon_density_current_perfect}
\mathcal{J}^\alpha := n u^\alpha,
\end{align}
where $n\colon M \rightarrow \mathbb{R}$ is a real-valued function representing the \textdef{baryon number density.} or simply \textbf{baryon density,}  of the fluid and $u$ is the fluid's velocity introduced above.
\end{definition}

Physically, the baryon number density gives the denstiy of matter\footnote{\label{FN:Matter}Due to the ``equivalence of matter and energy'' in relativity, an equivalence that can be made mathematically precise, one needs to be more careful when talking about ``density of matter'' and how it is distinguished from the energy density. But for our purposes, where the focus is mathematical and we treat $\varrho$ and $n$ primarily as scalars entering as variables in the equations, such physical considerations can be neglected. Readers trained in classical fluid dynamics should note that in the non-relativistic limit, $n$ reduces to the classical density variable (matter per volume) that satisfies the continuity equation, while $\varrho$ reduces to the classical notion of energy density.} of the fluid: the rest-mass density (measured by an observer at rest with respect to the fluid) is given by $n m$,
where $m$ is the mass of the baryonic\footnote{Readers unfamiliar with this terminology can take ``baryonic particles'' to mean ``matter particles.'' See also Footnote \ref{FN:Matter}.} particles that constitute the fluid (see \citealt{Rezzolla-Zanotti-Book-2013}), which here we take to be equal to one.

Physically, the quantities $p,\varrho$, and $n$ are not all independent and are related by a relation known as an \textdef{equation of state} (whose choice depends on the nature of the fluid). Under ``normal circumstances'' (e.g., absent phase transitions) this relation is assumed to be invertible, i.e., knowledge of any two quantities, e.g., $\varrho$ and $n$, determines the third, e.g., $p$. In this case, we can choose any two such quantities to be the fundamental/primitive/primary variables/unknowns. We will choose $\varrho$ and $n$ as primary variables, assuming that
$p$ is a given function of them, i.e., $p = p(\varrho,n)$, although later on it will be more convenient to make other choices. It is also possible to use thermodynamic relations (see Sect.~\ref{S:Thermodynamic_properties}) to introduce other scalar quantities of physical interest, such as temperature or entropy, and use these instead as primary variables. We will be more precise about which variables we will take as the unknowns for
the evolution when we discuss the Cauchy problem. 

\begin{definition}[The relativistic Euler equations]
The \textdef{relativistic Euler equations} are defined as
\begin{subequations}{\label{E:Relativistic_Euler_eq_full_system}}
\begin{align}
\nabla_\alpha \mathcal{T}^\alpha_\beta & = 0,
\label{E:Relativistic_Euler_eq_full_system_energy_momentum}
\\
\nabla_\alpha \mathcal{J}^\alpha & = 0, 
\label{E:Relativistic_Euler_eq_full_system_baryon_charge}
\\
g_{\alpha\beta} u^\alpha u^\beta & = -1,
\label{E:Relativistic_Euler_eq_full_system_normalization}
\\
p & = p(\varrho, n).
\label{E:Relativistic_Euler_eq_full_system_equation_of_state}
\end{align}
\end{subequations}
Above, $\mathcal{T}$ and $\mathcal{J}$ are the energy-momentum tensor and baryon current of a relativistic perfect fluid given in Definitions \ref{D:Energy_momentum_perfect}
and \ref{D:Baryon_density_current_perfect}, respectively, $g$ is the spacetime metric figuring in $\mathcal{T}$, and $\nabla$ is its covariant derivative.
\end{definition}

Equation \eqref{E:Relativistic_Euler_eq_full_system_energy_momentum} corresponds to conservation of energy and momentum in the fluid, Eq.~\eqref{E:Relativistic_Euler_eq_full_system_baryon_charge} corresponds to conservation of baryonic charge, \eqref{E:Relativistic_Euler_eq_full_system_normalization} is the normalization condition discussed above (which is a constraint rather than an evolution equation), and \eqref{E:Relativistic_Euler_eq_full_system_equation_of_state} is the equation of state (which defines $p$ in terms of $\varrho$ and $n$).

\begin{remark}
On physical grounds, one often requires $\varrho\geq 0$, 
$n \geq 0$, and $p\geq 0$. From the point of view of the Cauchy problem, these conditions should be assumed for the initial data and showed to be propagated by the flow.
\end{remark}

\begin{remark}
If the metric $g$ is known, then equations
\eqref{E:Relativistic_Euler_eq_full_system} form a closed system of equations. Equations \eqref{E:Relativistic_Euler_eq_full_system_energy_momentum} and \eqref{E:Relativistic_Euler_eq_full_system_baryon_charge} are five equations for the five quantities $\varrho$, $n$ and three components of $u$, with a fourth component of $u$ and $p$ determined by
\eqref{E:Relativistic_Euler_eq_full_system_normalization} and \eqref{E:Relativistic_Euler_eq_full_system_equation_of_state}, respectively. 
\end{remark}

We introduce the symmetric two-tensor
\begin{align}
\label{E:Projection_u_orthogonal}
\proj_{\alpha\beta} := g_{\alpha\beta} + u_\alpha u_\beta,
\end{align}
which corresponds to 
\textdef{projection onto the space orthogonal to} $u$, i.e., 
\begin{align}
\label{E:Projection_u_zero}
\proj_{\alpha\beta} u^\beta = 
u_\alpha + u_\alpha \underbrace{u_\beta u^\beta}_{=-1} = 0,
\end{align}
and if $v$ is orthogonal to $u$,
\begin{align}
\proj_{\alpha\beta} v^\beta = v_\alpha
+ u_\alpha \underbrace{u_\beta v^\beta}_{=0} = v_\alpha.
\nonumber
\end{align}
It is convenient to decompose $\nabla_\alpha \mathcal{T}^\alpha_\beta$ in the directions parallel and orthogonal $u$,
\begin{align}
\begin{split}
\nabla_\alpha \mathcal{T}^\alpha_\beta 
& =
\nabla_\alpha 
	(
	(p + \varrho) u^\alpha u_\beta
	+ p g^\alpha_\beta
	)
\\
& = 
u^\alpha \nabla_\alpha (p+\varrho) u_\beta
+ (p+\varrho) \nabla_\alpha u^\alpha u_\beta
+ (p+\varrho) u^\alpha \nabla_\alpha u_\beta
+ \nabla_\beta p,
\end{split}
\nonumber
\end{align}
so that, using \eqref{E:Relativistic_Euler_eq_full_system_normalization}, 
\eqref{E:Projection_u_orthogonal},
\eqref{E:Consequence_velocity_normalization}, and
\eqref{E:Projection_u_zero}, we obtain
\begin{align}
\begin{split}
u^\beta \nabla_\alpha \mathcal{T}^\alpha_\beta
& = 
-u^\alpha \nabla_\alpha (p+\varrho) - (p+\varrho) \nabla_\alpha u^\alpha + (p+\varrho) u^\alpha
\underbrace{u^\beta \nabla_\alpha u_\beta}_{=0} 
+ u^\beta \nabla_\beta p
\\
& = - u^\alpha\nabla_\alpha\varrho - (p+\varrho)\nabla_\alpha u^\alpha,
\end{split}
\nonumber
\end{align}
and
\begin{align}
\begin{split}
\proj^{\gamma\beta} \nabla_\alpha \mathcal{T}^\alpha_\beta 
& =
u^\alpha \nabla_\alpha (p+\varrho) 
\underbrace{\proj^{\gamma \beta}u_\beta}_{=0}
+(p+\varrho) \nabla_\alpha u^\alpha 
\underbrace{\proj^{\gamma \beta}u_\beta}_{=0}
+ (p+\varrho) \proj^{\gamma \beta} u^\alpha \nabla_\alpha u_\beta
+\proj^{\gamma \beta}\nabla_\beta p
\\
& =
(p+\varrho)u^\alpha
	(
	\underbrace{g^{\gamma\beta} \nabla_\alpha 
	u_\beta}_{=\nabla_\alpha u^\gamma}
	+ u^\gamma  
	\underbrace{u^\beta\nabla_\alpha u_\beta}_{=0}
	)
+\proj^{\gamma \beta}\nabla_\beta p
\\
&=
(p+\varrho) u^\alpha \nabla_\alpha u^\gamma + \proj^{\gamma\beta}\nabla_\beta p.
\end{split}
\nonumber
\end{align}
Expanding \eqref{E:Relativistic_Euler_eq_full_system_baryon_charge}, we have
\begin{align}
\nabla_\alpha \mathcal{J}^\alpha = u^\alpha \nabla_\alpha n
+ n \nabla_\alpha u^\alpha.
\nonumber
\end{align}
Therefore, we can alternatively write equations
\eqref{E:Relativistic_Euler_eq_full_system} as
\begin{subequations}{\label{E:Projected_relativistic_Euler_eq_full_system}}
\begin{align}
u^\alpha \nabla_\alpha \varrho + (p+\varrho) \nabla_\alpha u^\alpha 
&=0,
\label{E:Projected_relativistic_Euler_eq_full_system_energy}
\\
(p+\varrho) u^\alpha \nabla_\alpha u^\beta + \proj^{\beta\alpha}\nabla_\alpha p
&=0,
\label{E:Projected_relativistic_Euler_eq_full_system_momentum}
\\
u^\alpha \nabla_\alpha n + n \nabla_\alpha u^\alpha 
&=0,
\label{E:Projected_relativistic_Euler_eq_full_system_continuity}
\\
g_{\alpha\beta} u^\alpha u^\beta 
&= -1,
\label{E:Projected_relativistic_Euler_eq_full_system_normalization}
\\
p &= p(\varrho,n).
\label{E:Projected_relativistic_Euler_eq_full_system_equation_of_state}
\end{align}
\end{subequations}
Equation \eqref{E:Projected_relativistic_Euler_eq_full_system_energy} is interpreted as conservation of energy, \eqref{E:Projected_relativistic_Euler_eq_full_system_momentum} as conservation of momentum, and \eqref{E:Projected_relativistic_Euler_eq_full_system_continuity}
is the conservation of baryon density and often referred to as
the continuity equation. These equations reduce to the non-relativistic
compressible Euler equations in the non-relativistic limit (see \citealt{Rezzolla-Zanotti-Book-2013}).

Observe that without assuming \eqref{E:Projected_relativistic_Euler_eq_full_system_normalization} but still taking $u$ to be timelike, the projection onto the space orthogonal to $u$ is given by
\begin{align}
\label{E:Projection_u_orthogonal_non_normalized}
\proj_{\alpha\beta} = g_{\alpha\beta} - \frac{u_\alpha u_\beta}{u^\lambda u_\lambda}.
\end{align}

In this case, contracting \eqref{E:Projected_relativistic_Euler_eq_full_system_momentum} with $u_\beta$ gives
\begin{align}
\label{E:Velocity_constraint_propagation}
(p+\varrho) u^\alpha \nabla_\alpha (u_\beta u^\beta) = 0.
\end{align}
Thus, for $p+\varrho > 0$, \emph{the constraint \eqref{E:Projected_relativistic_Euler_eq_full_system_normalization} is propagated by the flow, i.e., it holds at later times provides that it holds initially.} Under these circumstances, one can check
that solutions to \eqref{E:Projected_relativistic_Euler_eq_full_system} yield
solutions to \eqref{E:Relativistic_Euler_eq_full_system}.

\begin{notation}
\label{N:Assuming_normalization_and_propagation}
Henceforth, we will always consider that one of the the equations of motion is the constraint \eqref{E:Projected_relativistic_Euler_eq_full_system_normalization}. This will be the case also for the viscous theories we will discuss later. Thus, \eqref{E:Projected_relativistic_Euler_eq_full_system_normalization} will often be omitted. 
Similarly, an equation of state will always be assumed as given, so that \eqref{E:Projected_relativistic_Euler_eq_full_system_equation_of_state} (or a similar relation when the primary variables are not
$\varrho$ and $n$) will also be omitted.
\end{notation}

\begin{remark}
\label{R:All_velocity_components}
Although  
\eqref{E:Projected_relativistic_Euler_eq_full_system_normalization}
is always a part of the Euler system even if omitted, as  
explained in the Notation \ref{N:Assuming_normalization_and_propagation}, 
from the point of view of the Cauchy problem it will be
more convenient to assume that all components of $u$ are independent, and consider the evolution for all components
on the same footing. After establishing local existence and uniqueness of solutions in such a situation, one recovers the constraint
\eqref{E:Projected_relativistic_Euler_eq_full_system_normalization}
by the above argument showing that it is propagated. 
\end{remark}

While it is not difficult to obtain local existence and uniqueness by writing \eqref{E:Projected_relativistic_Euler_eq_full_system}
as a first-order symmetric hyperbolic system (see \citealt{Anile-Book-1990}), we will use a different approach due to \cite{Lichnerowicz-Book-1967}, generalizing earlier work
by \cite{Choquet-Bruhat-1958}, that makes the role
of the characteristics manifest and connects with what we will discuss later. In fact, as we will see, but also as expected physically, \emph{there are two types of propagation in a fluid described by the relativistic Euler equations: sound waves and transport of vorticity and entropy\footnote{The entropy is introduced in Sect.~\ref{S:Thermodynamic_properties}.}.} These correspond to \emph{different characteristics} and thus we would expect that they should be treated differently.
The first-order symmetric hyperbolic formulation, however, treats both characteristics at the same level.

Before investigating local existence and uniqueness of solutions,
we need to introduce a few more concepts.

\subsection{Thermodynamic properties of relativistic fluids}
\label{S:Thermodynamic_properties}

We begin by introducing the following quantities:
\begin{itemize}
\item The \textdef{internal energy density} $\mathcal{E}$ of a fluid
is defined by
\begin{align}
\varrho = n(1+\mathcal{E}).
\nonumber
\end{align}
Thus, the energy density of the fluid takes into account the energy coming from the fluid's rest mass (strictly speaking the factor $n$ should be replaced by the rest-mass density $m n$, but recall from the previous section that we take $m=1$).
\item The \textdef{specific enthalpy} $h$ of a fluid is defined by
\begin{align}
\label{E:Enthalpy_definition}
h := \frac{p+\varrho}{n}, \, n > 0.
\end{align}
\item We assume the existence of functions $s$ and $\uptheta$, called the
\textdef{entropy density}, a.k.a. \textdef{specific entropy}, and \textdef{temperature} of the fluid, respectively, such that the \textdef{first-law of thermodynamics} holds:
\begin{align}
\label{E:First_law_thermodyanmics_dp_version}
dp = n dh - n\uptheta ds,
\end{align}
where $d$ is the exterior derivative in spacetime.
\end{itemize}

The above definitions and relations can be introduced and justified in a systematic way based on well-known physical and mathematical notions (see 
\citealt{Christodoulou-2007,Landau-Lifshitz-Book-Fluids-1987,Rezzolla-Zanotti-Book-2013}). Here, we will take them as god-given and work out their mathematical consequences.

We note that the first-law of thermodynamics can alternatively be written as
\begin{align}
d\varrho &= h dn + n\uptheta ds,
\nonumber
\\
d\mathcal{E} &= - p d\left(\frac{1}{n}\right) + \uptheta ds.
\nonumber
\end{align}

As before, we can choose which two functions among the so-called 
\textdef{thermodynamic scalars} $\varrho, p, n, h, \mathcal{E}, s, \uptheta$ are independent, with the remaining ones a function of the two primary ones determined by an equation of state and the above relations (called thermodynamics relations). Different choices will be more appropriate for different questions.

With the above definitions, we can write \eqref{E:Energy_momentum_perfect} as
\begin{align}
\mathcal{T}_{\alpha\beta} &= nh u_\alpha u_\beta + p g_{\alpha\beta},
\nonumber
\end{align}
so that
\begin{align}
\nabla_\alpha \mathcal{T}^\alpha_\beta 
& = 
\nabla_\alpha(n h u^\alpha) u_\beta + nh u^\alpha \nabla_\alpha u_\beta + \nabla_\beta p,
\nonumber
\end{align}
and thus, using \eqref{E:Consequence_velocity_normalization}, \eqref{E:Relativistic_Euler_eq_full_system_baryon_charge}, 
and \eqref{E:First_law_thermodyanmics_dp_version}, we find
\begin{align}
u^\beta \nabla_\alpha \mathcal{T}^\alpha_\beta 
& = 
- h \underbrace{\nabla_\alpha(n  u^\alpha)}_{=0} - n u^\alpha \nabla_\alpha h
+ u^\beta \nabla_\beta p
\nonumber
\\
& = u^\alpha \underbrace{(-n\nabla_\alpha h + \nabla_\alpha p )}_{=-n\uptheta \nabla_\alpha s}
\nonumber
\\
&= -n\uptheta u^\alpha \nabla_\alpha s.
\nonumber
\end{align}
Thus, under the physically natural assumptions $\uptheta > 0$ and
$n>0$ (the latter needed for the definition of $h$),
\emph{which we hereafter assume},
 we find
\begin{align}
\label{E:Entropy_transported}
u^\alpha \nabla_\alpha s = 0.
\end{align}
Equation \eqref{E:Entropy_transported} says that the fluid
motion is locally adiabatic, meaning that the entropy is constant
along the fluid's flow lines.

\subsection{The characteristics of the Euler system\label{S:Characteristics_Euler}}
In view of Sect.~\ref{S:Thermodynamic_properties}, using $\varrho$, $s$, and $u$ as primary variables (so in particular $p = p(\varrho,s)$),
the relativistic Euler equations can be written as
\begin{subequations}{\label{E:Projected_relativistic_Euler_density_entropy_full_system}}
\begin{align}
(p+\varrho) u^\alpha \nabla_\alpha u^\beta + 
\frac{\partial p}{\partial \varrho} \proj^{\alpha \beta} \nabla_\alpha \varrho + \frac{\partial p}{\partial s} \proj^{\alpha \beta} \nabla_\alpha s 
& = 0,
\label{E:Projected_relativistic_Euler_density_entropy_full_system_momentum}
\\
u^\alpha \nabla_\alpha \varrho + (p+\varrho) \nabla_\alpha u^\alpha 
& = 0,
\label{E:Projected_relativistic_Euler_density_entropy_full_system_density}
\\
u^\alpha \nabla_\alpha s & = 0,
\label{E:Projected_relativistic_Euler_density_entropy_full_system_entropy}
\end{align}
\end{subequations}
or equivalently
\begin{align}
A^\alpha \nabla_\alpha \Phi &= 0,
\nonumber
\end{align}
where $\Phi$ is the six-component vector $\Phi = (u^\lambda, \varrho, s)$ and the matrices $A^\alpha$ are given by
\begin{align}
\label{E:Matrix_Euler_first-order_formualtion_density_entropy}
A^\alpha = 
\begin{bmatrix}
\left.(p+\varrho) u^\alpha \updelta^\beta_\lambda \right._{
\textcolor{gray}{\, 4\times 4}} 
& 
\left.\proj^{\beta \alpha}\frac{\partial p}{\partial \varrho}\right._{\textcolor{gray}{\, 4\times 1}}
& 
\left.\proj^{\beta\alpha} \frac{\partial p}{\partial s}\right._{\textcolor{gray}{\, 4\times 1}}
\\
\left.(p+\varrho) \updelta^\alpha_\lambda\right._{\textcolor{gray}{\, 1\times 4}}
& 
\left. u^\alpha \right._{\textcolor{gray}{1\times 1}}
& 
\left. 0 \right._{\textcolor{gray}{1\times 1}}
\\
\left. 0 \right._{\textcolor{gray}{1\times 4}}
& 
\left. 0 \right._{\textcolor{gray}{1\times 1}}
& 
\left. u^\alpha \right._{\textcolor{gray}{1\times 1}}
\end{bmatrix},
\end{align}
where we indicated with subscripts and different color the size of each submatrix (observe that the index $\alpha$ labels the matrices and not their entries). For any one-form $\xi \in T^*M$, 
we proceed to compute the characteristic determinant
\begin{align}
\label{E:Computation_characteristic_determinant_Euler}
\begin{split}
\det  (A^\alpha \xi_\alpha)
&= u^\alpha \xi_\alpha 
\det
\begin{bmatrix}
(p+\varrho) u^\alpha\xi_\alpha \delta^\beta_\lambda
&
\proj^{\beta \alpha}\xi_\alpha \frac{\partial p}{\partial \varrho}
\\
(p+\varrho) \xi_\lambda 
&
u^\alpha \xi_\alpha
\end{bmatrix}
\\
&=
\det
\begin{bmatrix}
(p+\varrho) u^\alpha\xi_\alpha \delta^\beta_\lambda
&
\proj^{\beta \alpha}\xi_\alpha \frac{\partial p}{\partial \varrho}
\\
0
&
(u^\alpha \xi_\alpha)^2 - \proj^{\alpha\beta} \xi_\alpha \xi_\beta \frac{\partial p}{\partial \varrho}
\end{bmatrix}
\\
&= 
(p+\varrho)^4 (u^\alpha \xi_\alpha)^4 
	[ 
	(u^\mu \xi_\mu)^2
	- \frac{\partial p}{\partial \varrho} \proj^{\mu\nu} \xi_\mu 	
	\xi_\nu 
	].
\end{split}
\end{align}
To obtain the second line, we multiplied the first row (more precisely, each of the first four rows) by $\xi_\beta$ and subtracted it from the fifth row times $u^\alpha \xi_\alpha$.

As usual, the characteristics are determined by $\det(A^\alpha \xi_\alpha) = 0$, so from \eqref{E:Computation_characteristic_determinant_Euler} we obtain that one set of characteristics are determined by 
$u^\alpha \xi_\alpha = 0$. \emph{These characteristics are
the \textdef{flow lines,}} i.e., the integral curves of $u$.

Another set of characteristics is determined by setting the term in brackets in \eqref{E:Computation_characteristic_determinant_Euler} equal to zero. In order to analyze this term, the invariance of the characteristics allows us to choose a convenient frame
$\{ e_A \}_{A=0}^3$ with $e_0 = u$ and $\{e_1, e_2, e_3\}$ orthonormal and orthogonal to $u$ (i.e., we consider the LRF, but use uppercase Latin letters as indices to avoid confusion). We also introduce the dual frame $\{ e^A \}_{A=0}^3$ given by $(e^A)_\alpha : = m^{AB}(e_B)_\alpha$, where $m$ is $g$ expressed in this frame, so that $m$ takes the form of the Minkowski metric, thus $e^A(e_B) = \delta^A_B$. Decomposing $\xi$ with respect to the dual frame, $\xi_A = e_A^\mu \xi_\mu$, we have
$\xi_{A=0} = -\xi^{A=0} = u^\mu \xi_\mu$ and $\xi_{A=i} = v_{A=i}$, where $v^\mu = \proj^{\mu \alpha} \xi_\alpha$ and $v_A = e_A^\mu \xi_\mu$. With these definitions, the remaining characteristics are determined by
\begin{align}
\label{E:Characteristic_equation_sound_cones_in_frames}
\xi_{A=0}^2 - \frac{\partial p}{\partial \varrho} \sum_{i=1}^3 \xi_{A=i}^2 = 0.
\end{align}

If $\frac{\partial p}{\partial \varrho} < 0$, then there are no real solutions to \eqref{E:Characteristic_equation_sound_cones_in_frames} so that the equations will not be hyperbolic. If 
$\frac{\partial p}{\partial \varrho} > 1$, then $\xi$ must be timelike, so that the corresponding characteristic speeds (in physical space) will be greater than the speed of light (see also Remark \ref{R:Sound_speed_grater_than_one}). The first case leads to an evolution incompatible with relativity. The second case is more subtle. While it is common to restrict attention to theories with sub-luminal propagation speeds, it has been argued that finite super-luminal speeds can in principle be accommodated in a relativistic framework if one restricts attention to field configurations that arise as solutions to the Cauchy problem \citep{Geroch:2010da}. Moreover, one can in principle be interested in studying super-luminal fluids from a purely mathematical point of view (although $\frac{\partial p}{\partial \varrho} > 1$ will then impose restrictions on the initial data, see  Remark \ref{R:Sound_speed_grater_than_one}). 
In any case, most physical studies focus on theories where super-luminal speeds are not allowed, so we restrict out attention to the case
\begin{align}
0 \leq  \frac{\partial p}{\partial \varrho} \leq 1.
\nonumber
\end{align}

The case $\frac{\partial p}{\partial \varrho}=0$ is allowed but has to be treated with additional care as it corresponds to some sort of degeneracy (which will in fact be present in the case of a free-boundary fluid studied in Sect.~\ref{S:Vacuum_bry}), thus for the time being we consider only the case
\begin{align}
\label{E:Sound_speed_squared_greater_zero_less_one}
0 < \frac{\partial p}{\partial \varrho} \leq 1.
\end{align}
When \eqref{E:Sound_speed_squared_greater_zero_less_one} holds, we see from \eqref{E:Characteristic_equation_sound_cones_in_frames} that the corresponding characteristics have the structure of two opposite cones with opening given by $\sqrt{\frac{\partial p}{\partial \varrho}}$. This cone structure is interpreted as corresponding to the propagation of sound waves (see below). It makes sense
to call these cones \textdef{sound cones} or \textdef{acoustic cones} and to define the \textdef{fluid's sound speed} $c_s$ by\footnote{For physically relevant equations of state the pressure is a non-decreasing function of the density. One can check that $c_s$ has units of speed.}
\begin{align}
\label{E:Sound_speed_density_entropy}
c_s^2 := \left. \frac{\partial p}{\partial \varrho}\right|_s,
\end{align}
where we write $\left. \right|_s$ to emphasize that the partial derivative in \eqref{E:Sound_speed_density_entropy} is taken at constant $s$ (thus, taking the partial derivative when writing
$p$ as a function of $\varrho$ and $s$). Under assumption \eqref{E:Sound_speed_squared_greater_zero_less_one} the corresponding picture of characteristics in tangent space is as illustrated in Fig.~\ref{F:Light_cone_and_sound_cones_tangent}.

\begin{figure}[ht]
\centering
  \includegraphics[scale=0.3]{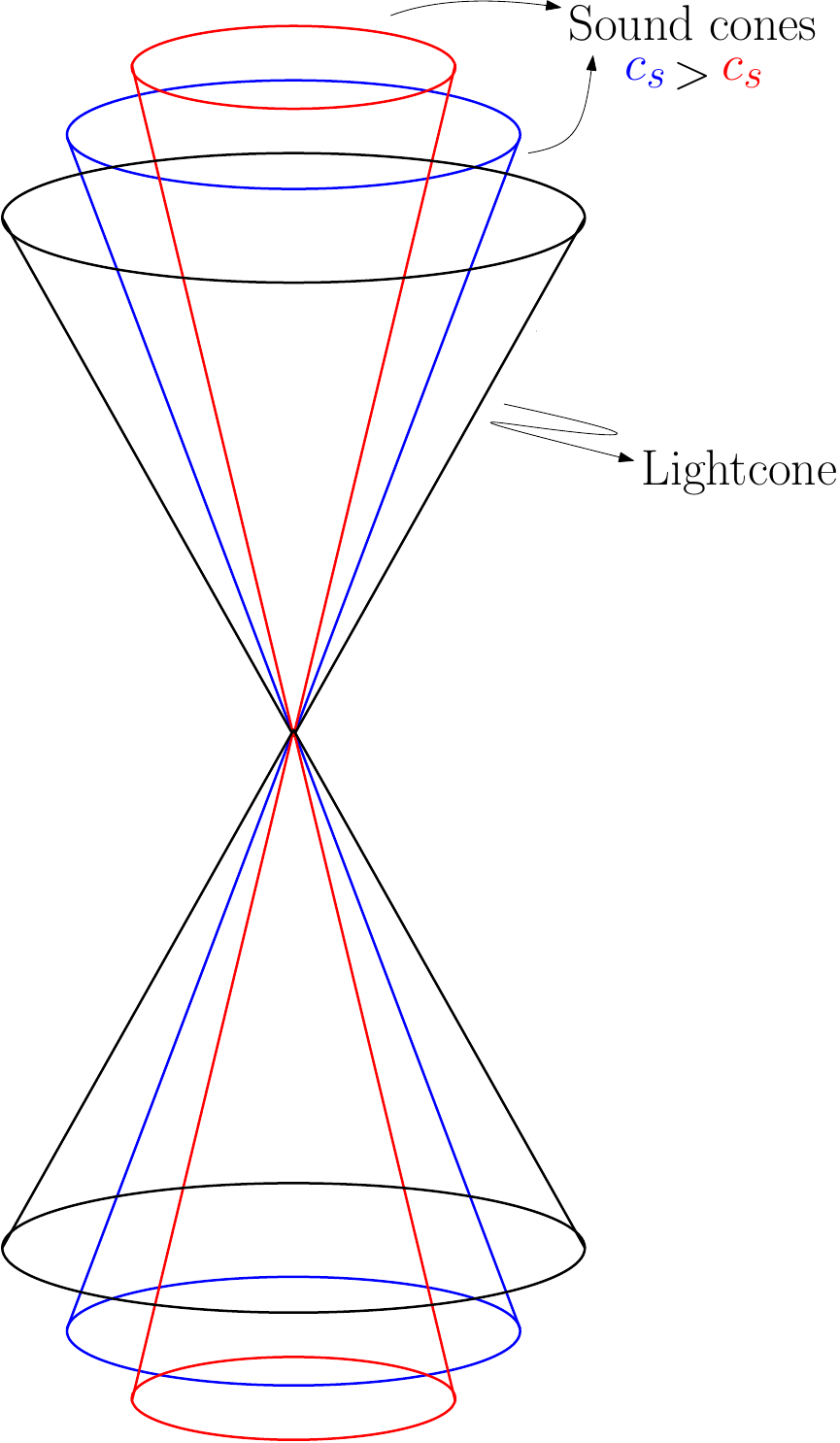}
  \caption{Illustration of sound cones (in tangent space) of different speeds in comparison to the lightcone.}
  \label{F:Light_cone_and_sound_cones_tangent}
\end{figure}

In order to better justify that  the sound cones indeed should be interpreted as corresponding to the propagation of sound waves, take a $u$-derivative of \eqref{E:Projected_relativistic_Euler_eq_full_system_energy} and 
use \eqref{E:Projected_relativistic_Euler_eq_full_system_momentum} 
and \eqref{E:Sound_speed_density_entropy}
to get
\begin{align}
\label{E:Wave_equation_density}
\begin{split}
0 
&= 
u^\mu \nabla_\mu (u^\alpha \nabla_\alpha \varrho + (p+\varrho)
\nabla_\alpha u^\alpha )
\\
&=
u^\mu u^\alpha \nabla_\mu \nabla_\alpha \varrho
+ (p+\varrho) \nabla_\alpha
	(
	\underbrace{u^\mu \nabla_\mu u^\alpha}_{\mathclap{=-\frac{c_s^2}{p+\varrho} 
	\proj^{\alpha \mu} \nabla_\mu \varrho}}
	) 
+ 
\overbrace{\text{L.O.T.}}^{\mathclap{\text{Includes curvature terms}}}
\\
&=
u^\mu u^\alpha \nabla_\mu \nabla_\alpha \varrho - c_s^2 \proj^{\alpha \mu} \nabla_\mu \nabla_\alpha \varrho + \text{L.O.T.}
\end{split}
\end{align}
which is a wave equation for $\varrho$. Compared to the flat wave equation, derivatives on the direction of $u$ play the role of $\partial_t$ and $\proj^{\alpha \mu} \nabla_\mu \nabla_\alpha$ plays the role of the flat Laplacian $\Delta$ since the space orthogonal to $u$ is spacelike. This wave evolution captures the basic idea that sound propagates as disturbances, i.e., expansion and contraction, of density, justifying $c_s$ as sound speed and the characteristics \eqref{E:Characteristic_equation_sound_cones_in_frames} as sound cones. More precisely, the propagation of sound is associated not only with the above wave evolution for the density but also with the evolution of the divergence part of the velocity\footnote{We recall that a vectorfield can be decomposed into and divergence and curl parts.}, as it is the part of the velocity tied with expansions and contractions in the fluid. We will come
back to this point in Sects.~\ref{S:Relativistic_vorticity} and \ref{S:Irrotational_flows}.

The above discussion motivates the following definition.

\begin{definition}[Acoustical metric]
\label{D:Acoustical_metric}
For $0 < c_s \leq 1$ , the \textdef{acoustical metric} is the Lorentzian metric $G$ given by
\begin{align}
\label{E:Acoustical_metric}
G_{\alpha\beta} := c_s^{-2} g_{\alpha\beta} + (c_s^{-2} - 1) u_\alpha u_\beta,
\end{align}
whose inverse is
\begin{align}
\label{E:Acoustical_metric_inverse}
\begin{split}
(G^{-1})^{\alpha \beta} & = c_s^2 \proj^{\alpha\beta} - u^\alpha u^\beta
\\
&= c_s^2 g^{\alpha \beta} + (c_s^2 - 1) u^\alpha u^\beta
\end{split}
\end{align}
\end{definition}

One can immediately verify that for $u$ satisfying \eqref{E:Velocity_normalization}, \eqref{E:Acoustical_metric} indeed defines a Lorentzian metric in spacetime whose inverse is given by \eqref{E:Acoustical_metric_inverse}. Note also that $G_{\alpha\beta} u^\alpha u^\beta = -1$. Observe that the $G$-characteristics given by $(G^{-1})^{\alpha\beta} \xi_\alpha \xi_\beta = 0$ are precisely the sound cones.
In particular, the wave equation \eqref{E:Wave_equation_density} for the density can be written as
\begin{align}
\label{E:Wave_equation_density_acoustical}
(G^{-1})^{\alpha\beta} \nabla_\alpha \nabla_\beta \varrho = 
\text{L.O.T.}
\end{align}

\begin{remark}
\label{R:Inverse_sign_explicit}
We explicitly write ${}^{-1}$ in \eqref{E:Acoustical_metric_inverse} because all indices are raised and lowered with the spacetime metric, but
$g^{\alpha \gamma} g^{\beta \delta} G_{\gamma\delta} \neq (G^{-1})^{\alpha\beta}$.
\end{remark}

The existence of the acoustical metric and its relation to the sound cones
is indicative of the following \emph{key idea} to be exploited later.
\emph{The relevant geometry for the study of a perfect fluid is the acoustic geometry, i.e., the characteristic geometry of the acoustical metric,} and \emph{not} the spacetime geometry. \emph{The acoustic geometry will in general not be flat even if the spacetime is Minkowski.}
This is a consequence of the quasilinear nature of the problem, as the characteristics of the Euler system depend on the solution variables\footnote{In particular, we can see how the case $c_s=0$ is special, as we no longer obtain a Lorentzian metric through \eqref{E:Acoustical_metric}.}.
When coupling to Einstein's equations is considered, then the spacetime and acoustic geometry \emph{interact} with each other, giving rise to a very rich dynamics. We stress this point with the following definition:

\begin{definition}[Acoustic geometry]
\label{D:Acoustic_geometry}
The geometry of the acoustical metric is called the \textdef{acoustic geometry.} We will often refer to ideas like ``controlling the acoustic geometry,'' ``estimates for the acoustic geometry,'' and so on\footnote{Especially in Sects.~\ref{S:Ingredient_one} and \ref{S:Control_acoustic_geometry}.}. The precise meaning of what is being estimated will depend on details of the problem being discussed. But, roughly,
such terminology refers to the fact that in order to close the estimates in some of our problems we need to derive bounds for several geometric quantities associated with the the sound cones (for example, in Sect.~\ref{S:Control_acoustic_geometry} we discuss estimates for the null mean curvature of the sound cones).
\end{definition}

In sum, \emph{the characteristics of the Euler system are the sound cones corresponding to propagation of sound and the flow lines (the integral curves of $u$) which
correspond to the transport of entropy (see \eqref{E:Entropy_transported})
and vorticity (see Sect.~\ref{S:Irrotational_flows}) in the fluid.}

\begin{remark}
\label{R:Sound_speed_grater_than_one}
Above, we excluded $c_s^2 = \frac{\partial p}{\partial \varrho} > 1$ based on the physical requirement that no information propagates faster than the speed of light (often called the principle of \textdef{causality}; we will have more to say about causality when we study viscous fluids in Sect.~\ref{S:Relativistic_viscous_fluids}). One can ask, however, if we could study fluids with $c_s^2 > 1$ from a purely mathematical point of view. Consider the matrix $A^0$ in 
\eqref{E:Matrix_Euler_first-order_formualtion_density_entropy} and for simplicity take $g$ to be the Minkowski metric. Then
\begin{align}
\det A^0 = (p+\varrho)^4 (u^0)^4 (1 + (1-c_s^2) u^i u_i).
\nonumber
\end{align}
We see that while $A^0$ is invertible for any $u$ if $c_s^2 \leq 1$, the invertibility of $A^0$ can fail if $c_s^2 > 1$ (so, e.g., $u$ cannot be prescribed arbitrarily). Since invertibility of $A^0$ is needed for use of many basic PDE tools (e.g., the Cauchy--Kovalevskaya theorem in the simplest case of analytic data; alternatively, we can say that if $c_s^2 > 1$ then there are choices of $u$ that make the ``initial surface'' $\{t=0\}$ characteristic), we see that the assumption $c_s^2 \leq 1$ is also justified mathematically.
\end{remark}

\begin{remark}
The first instance of the acoustical metric that we have been able to locate in the literature is in the work by \cite{Synge-1937-1,Synge-1937-2}, see Eq.~(14.15) in the reprinted edition \citep{Synge-2002}. The acoustical metric also features prominently in the works of
\cite{Lichnerowicz-Book-1967} and \cite{Choquet-Bruhat-1958} that we discuss in detail in Sect.~\ref{S:LWP_Einstein_Euler}. None of these works, however, employs the term acoustical metric, which seems to have been employed by the first time in the work of Bili\'c from 1999, see \cite{Bilic-1999}. (For the classical compressible Euler equations, whose acoustical metric is given by \eqref{E:Acoustical_metric_classical}, the first instance of the term we have been able to locate is in the work by \citealt{Visser-1993-arxiv}.)
\end{remark}

\subsection{Relativistic vorticity\label{S:Relativistic_vorticity}}

A very important quantity in fluids is the vorticity. For classical fluids, it is the curl of the velocity (although one often works with the specific vorticity, the curl of the velocity divided by the density). Since the curl in three dimensions can be identified (using Hodge duality) with the exterior derivative of the velocity (thought of as a one-form) or a suitable multiple of it, it seems natural to define the vorticity of a relativistic fluid (thus in four dimensions) as the (spacetime) exterior derivative of $u$. Up to a multiple, that is what we will do.

\begin{definition}[Relativistic vorticity]
\label{D:Relativistic_vorticity}
The \textdef{enthalpy current} is the vectorfield $w: M \rightarrow TM$ defined by
\begin{align}
w^\alpha := h u^\alpha.
\label{E:Enthalpy_current_definition}
\end{align}
The \textdef{vorticity} is the two-form on $M$ defined\footnote{Our definition differs by a minus sign from the one used in \cite{Rezzolla-Zanotti-Book-2013}. We also recall from Sect.~\ref{S:Notation_conventions}, that we use the same letter for a vector and one-form related by the metric. So, in \eqref{E:Vorticity_definition}, $w$ is viewed as a one-form since it is acted by the exterior derivative. More generally, whenever the exterior derivative acts on a quantity previously defined as a vectorfield, that means that we have used the spacetime metric to identify it as a one-form.} by
\begin{align}
\Omega := dw,
\label{E:Vorticity_definition}
\end{align}
where $d$ is the exterior derivative in spacetime.
\end{definition}

In view of \eqref{E:Velocity_normalization}, $w$ satisfies
\begin{align}
w^\alpha w_\alpha = -h^2.
\label{E:Enthalpy_current_normalization}
\end{align}

In components, $\Omega$ is given by the equivalent expressions
\begin{align}
\Omega_{\alpha\beta} & = \partial_\alpha (hu_\beta) - \partial_\beta (h u_\alpha) 
\nonumber
\\
& = \nabla_\alpha (hu_\beta) - \nabla_\beta (h u_\alpha).
\nonumber
\end{align}

One reason to define the vorticity as above (rather than, say, $du$) is to have a relativistic version of Kelvin's circulation theorem. For a classical fluid with velocity\footnote{Note that the classical velocity has only three components, $v= (v^1, v^2, v^3)$.} $v$, we define its circulation along a closed loop $\gamma$ as
\begin{align}
\mathscr{C}_{\text{classical}} := \oint_\gamma v \cdot d\ell.
\nonumber
\end{align}
Kelvin's theorem states that this quantity is conserved along fluid lines, i.e., 
\begin{align}
(\partial_t + v \cdot \vec{\nabla} )\mathscr{C}_{\text{classical}} = 0,
\end{align}
where $\vec{\nabla}$ is the standard Euclidean three-dimensional gradient.
Fig.~\ref{F:Illustration_Kelvin_theorem} illustrates this situation. 

\begin{figure}[ht]
\centering
  \includegraphics[scale=0.4]{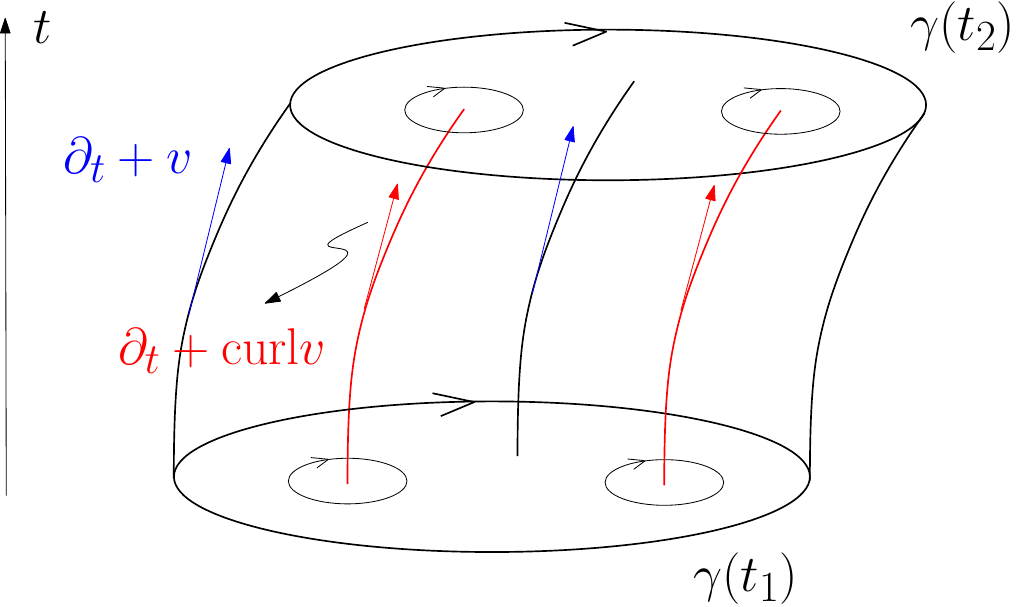}
  \caption{Illustration of Kelvin's circulation theorem in classical physics. The closed curve $\gamma$ at time $t_1$ is advected by the flow of $v$. Thinking of a vortex (illustrated by the small closed loops) as a localized region of nonzero vorticity, the vortex lines are the trajectories of $\curl v$, depicted in red. Kelvin's theorem says that the number of such vortex lines crossing a small element of area moving with the fluid remains constant in time.}
  \label{F:Illustration_Kelvin_theorem}
\end{figure}

Kelvin's circulation theorem has such a clear physical interpretation as ``conservation of vorticies'' that we expect something similar to hold for relativistic fluids (absent dissipation). It indeed holds true but the quantity that is conserved in the relativistic setting is
\begin{align}
\mathscr{C} := \oint_\gamma w_\alpha dx^\alpha =
\oint_\gamma h u_\alpha dx^\alpha.
\nonumber
\end{align}
With this definition one obtains
\begin{align}
u^\mu \nabla_\mu \mathscr{C} = 0.
\nonumber
\end{align}
The same way that the proof of Kelvin's theorem in the classical setting goes through using $dv$, which is the vorticity, the relativistic version involves $d(hu)$, leading to a natural definition of vorticity as in Definition \ref{D:Relativistic_vorticity}. We refer the reader to 
\cite{Rezzolla-Zanotti-Book-2013} for a proof of the relativistic Kelvin theorem and further discussion.

Next, we will derive an important relation between vorticity and entropy. Direct computation and use of \eqref{E:Consequence_velocity_normalization}, 
\eqref{E:Enthalpy_definition},
\eqref{E:First_law_thermodyanmics_dp_version},
\eqref{E:Projected_relativistic_Euler_density_entropy_full_system_momentum},
and
\eqref{E:Projected_relativistic_Euler_density_entropy_full_system_entropy}
gives
\begin{align}
\begin{split}
u^\alpha \Omega_{\alpha\beta} 
&= 
u^\alpha 
	(
	h \nabla_\alpha u_\beta + \nabla_\alpha h u_\beta
	- h \nabla_\beta u_\alpha - \nabla_\beta h u_\alpha
	)
\\
&=
h \underbrace{u^\alpha \nabla_\alpha u_\beta}_{\mathrlap{=-\frac{1}{p+\varrho} \proj^\alpha_\beta \nabla_\alpha p \, = -\frac{1}{nh} \proj^\alpha_\beta \nabla_\alpha p }}
+ u_\beta u^\alpha \nabla_\alpha h 
+\nabla_\beta h
\\
&= -\frac{1}{n} \proj^\alpha_\beta \nabla_\alpha p 
+ u_\beta u^\alpha \nabla_\alpha h 
+\nabla_\beta h
\\
&=
\underbrace{
	-\frac{1}{n} \nabla_\beta p + \nabla_\beta h
}_{\mathclap{= \uptheta \nabla_\beta s}} 
- 
u_\beta
	\underbrace{	
	(
	\frac{1}{n} u^\alpha \nabla_\alpha p - u^\alpha 
	\nabla_\alpha h
	)
	}_{\mathclap{= - u^\alpha \nabla_\alpha s \, = 0}}
\nonumber
\end{split}
\nonumber
\end{align}
Therefore
\begin{align}
\label{E:Lichnerowicz_equation}
u^\alpha \Omega_{\alpha\beta} = \uptheta \nabla_\beta s.
\end{align}

Equation \eqref{E:Lichnerowicz_equation} is known as
\textdef{Lichnerowicz's equation.} It implies that an
\textdef{irrotational fluid,} i.e., a fluid such that 
$\Omega=0$, must be \textdef{isentropic,} i.e., have constant entropy (or have zero temperature), a result with no classical analogue.

We next derive an evolution equation for $\Omega$. Multiplying \eqref{E:Lichnerowicz_equation} by $h$ we have, in compact notation,
\begin{align}
\iota_w \Omega = h \uptheta ds,
\nonumber
\end{align}
where $\iota_X$ is contraction of a form with the vectorfield $X$. Taking the exterior derivative and using that $d^2 s =0$,
\begin{align}
d ( \iota_w \Omega ) = d(h \uptheta ) \wedge ds,
\nonumber
\end{align}
where $\wedge$ is the wedge product of forms.
Recalling Cartan's formula
\begin{align}
\mathcal{L}_X \mu = d(\iota_X \mu) + \iota_X (d \mu),
\label{E:Lie_derivative_iota_d}
\end{align}
for the Lie derivative $\mathcal{L}$ of a form $\mu$ in the direction of a vectorfield $X$, and using that by definition $\Omega$ is an exact form (recall \eqref{E:Vorticity_definition}), we have
\begin{align}
\mathcal{L}_w \Omega = d (h \uptheta) \wedge ds.
\nonumber
\end{align}
Using a well-known formula for the Lie derivative in terms of covariant derivatives, expanding the RHS, and writing the resulting expression in components, we find
\begin{align}
\label{E:Vorticity_evolution}
w^\mu \nabla_\mu \Omega_{\alpha \beta} + \nabla_\alpha 
w^\mu \Omega_{\mu \beta} + \nabla_\beta w^\mu \Omega_{\alpha\mu} 
= \nabla_\alpha (h\uptheta) \nabla_\beta s- \nabla_\beta (h\uptheta) \nabla_\alpha s,
\end{align}
which is the desired evolution equation for the vorticity.

Equation \eqref{E:Vorticity_evolution} is interesting
because of the following. From 
\eqref{E:Projected_relativistic_Euler_density_entropy_full_system_momentum}
we have $u^\alpha \nabla_\alpha u \sim \partial p \sim \partial s$. Commuting with $h$ to get $w$ we have
$u^\alpha \nabla_\alpha w \sim \partial s, \partial h$.
Since $\Omega \sim \partial w$, we would thus naively expect $u^\alpha \nabla_\alpha \Omega \sim \partial^2 s, \partial^2 h$. However, this does not happen; the structure of the Eq.~\eqref{E:Lichnerowicz_equation}, which in particular casts the derivatives of $s$ on the RHS as a perfect derivative, leads to only one derivative on the RHS. This gain of derivative will be important for the local existence and uniqueness theorem we will establish below. In fact, more refined and crucial cancellations that lead to equations with a fewer number of derivatives than expected will play a major role in our discussion of Sect.~\ref{S:New_formulation}. Note also that in deriving \eqref{E:Lichnerowicz_equation} we made use of the first-law of thermodynamics \eqref{E:First_law_thermodyanmics_dp_version}. We did not simply apply $u^\mu \nabla_\mu$ to $\Omega$ and used
$\nabla_\alpha \mathcal{T}^\alpha_\beta = 0$.

We will now provide a simple application of \eqref{E:Vorticity_evolution}, showing that a fluid that is  irrotational and isentropic at a given time (which without loss of generality we can take to be at time zero) remains so through the evolution. 
In other words, \emph{the irrotationality condition, and the corresponding isentropic condition which is a necessary condition for irrotationality, are propagated by the flow.}

\begin{proposition}
\label{P:Irrotational_isentropic_propagated}
Suppose that $s=s_0=\text{constant}$ and $\Omega=0$ on $\{t=0\}$. Then $s=s_0$ and $\Omega=0$ for $t>0$.
\end{proposition}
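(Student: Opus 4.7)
The plan is to propagate $s$ first, and then propagate $\Omega$ by running the vorticity evolution equation \eqref{E:Vorticity_evolution} as a linear homogeneous system once the entropy gradient is known to vanish.

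First, I would handle the entropy. Equation \eqref{E:Projected_relativistic_Euler_density_entropy_full_system_entropy} states $u^\alpha \nabla_\alpha s = 0$, which is a transport equation asserting that $s$ is constant along the integral curves of $u$ (the flow lines). Since $u$ is timelike and future-directed, every point with $t>0$ in the domain of existence of the solution lies on a unique flow line that intersects $\Sigma_0$. Pulling the value of $s$ back along that flow line gives $s = s_0$ for all $t > 0$. In particular $\nabla_\alpha s = 0$ throughout, and hence also $d(h\uptheta) \wedge ds = 0$.

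Next, I would turn to the vorticity. Substituting $\nabla_\alpha s = 0$ into the evolution equation \eqref{E:Vorticity_evolution} collapses the right-hand side to zero, leaving
\begin{align}
w^\mu \nabla_\mu \Omega_{\alpha\beta} + \nabla_\alpha w^\mu \, \Omega_{\mu\beta} + \nabla_\beta w^\mu \, \Omega_{\alpha\mu} = 0.
\nonumber
\end{align}
Because $w = hu$ with $h>0$ and $u$ timelike future-directed, $w$ is a timelike vectorfield transverse to $\Sigma_0$, so the principal part $w^\mu \nabla_\mu$ is a well-defined transport operator. The remaining terms are zeroth-order in $\Omega$ with coefficients depending on $\nabla w$, which is controlled by the (assumed existing) smooth fluid solution. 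Thus the system is a linear homogeneous transport system for the components of $\Omega$ along the flow of $w$, with vanishing initial data on $\Sigma_0$. By the standard uniqueness theorem for linear ODE systems along characteristics (or equivalently a one-line Grönwall argument applied to $|\Omega|^2$ integrated along each flow line), the unique solution is $\Omega \equiv 0$ for all $t>0$.

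The only genuinely delicate point is the derivation of \eqref{E:Vorticity_evolution} itself, which already used the Euler equations and the first law, and in particular used the cancellation that keeps the right-hand side only first-order in $s$. Once that equation is in hand, the proof reduces to two applications of the method of characteristics: one for the scalar transport equation satisfied by $s$, and one for the linear tensorial transport system satisfied by $\Omega$. I do not foresee a real obstacle beyond ensuring that the flow lines of $u$ (equivalently of $w$) from $\Sigma_0$ cover the region where the statement is asserted, which follows from the local existence and smoothness of the fluid solution in the standard Cauchy setup of Remark \ref{R:All_velocity_components}.
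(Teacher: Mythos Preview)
Your proof is correct and follows essentially the same approach as the paper: first propagate $s=s_0$ along the flow lines via \eqref{E:Projected_relativistic_Euler_density_entropy_full_system_entropy}, then observe that \eqref{E:Vorticity_evolution} reduces to a homogeneous linear transport equation for $\Omega$ (the paper writes it compactly as $\mathcal{L}_w \Omega = 0$, which is exactly your component expression) and conclude $\Omega=0$ from zero initial data.
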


\begin{proof}
Integrating \eqref{E:Projected_relativistic_Euler_density_entropy_full_system_entropy} along the flow lines gives $s=s_0$ in spacetime. Then, Eq.~\eqref{E:Vorticity_evolution} reduces to
\begin{align}
\mathcal{L}_w \Omega = 0,
\nonumber 
\end{align}
which is a homogeneous transport equation for $\Omega$, implying the result.
\end{proof}

\begin{remark}
From the point of view of the Cauchy problem, in order to know if $\Omega=0$ at $t=0$ we need to know $\partial_t h$ and $\partial_t u$ at $t=0$, but only $u$ and two thermodynamic scalars, say, $s$ and $\varrho$, and \emph{not} their time derivatives, are given as initial data (since the equations are a system of first-order PDEs). The values $\left. \partial_t u \right|_{t=0}$, $\left. \partial_t \varrho \right|_{t=0}$ and $\left. \partial_t s\right|_{t=0}$, from which one readily computes $\left. \partial_t h \right|_{t=0}$, can be written in terms of the initial data by algebraically solving for the time derivatives in terms of spatial derivatives in Eqs.~\eqref{E:Projected_relativistic_Euler_density_entropy_full_system}.
\end{remark}

In Sect.~\ref{S:Irrotational_flows},
we will see that $(G^{-1})^{\alpha\beta} \xi_\alpha \xi_\beta = 0$ are the only characteristics of the Euler system when the fluid is irrotational. In particular, the flow lines $u^\alpha \xi_\alpha = 0$, which are associated with the operator $u^\mu \nabla_\mu$ appearing in \eqref{E:Vorticity_evolution} and
\eqref{E:Projected_relativistic_Euler_enthalpy_entropy_full_system_entropy} below,
 are not characteristics of the irrotational system. \emph{This is why, in Sect.~\ref{S:Characteristics_Euler}, we said that the flow lines are associated with the transport of vorticity (and entropy).}

Equation \eqref{E:Vorticity_evolution} shows that the vorticity is transported by the flow with source terms depending on derivatives of $h$, $\uptheta$, and $s$. 
From a PDE point of view, in order to assert that 
\eqref{E:Vorticity_evolution} is a true evolution equation
for the vorticity rather than an uninteresting identity we need
to verify that the number of derivatives appearing on the RHS is compatible with transport estimates for the vorticity. This will be done in the proof of Theorem \ref{T:LWP_relativistic_Euler}.

\subsection{Local existence and uniqueness via Lichnerowicz's approach\label{S:LWP_relativistic_Euler}}
The proof of local existence and uniqueness for the relativistic Euler equations we present follows that of \cite{Lichnerowicz-Book-1967}, who generalized an earlier proof of \cite{Choquet-Bruhat-1958}.
Lichnerowicz's proof boiled down to recast the equations
as a system of hyperbolic PDEs with diagonal principal 
part for which he could directly apply 
Leray's celebrated theorem
on local existence and uniqueness of solutions for systems of hyperbolic PDEs (see \citealt{Leray-Book-1953}). While we will
use the same system derived by Lichnerowicz, we will do a bit more and show how one can derive the fundamental energy inequality that is the basis of the result. Leray's theorem, naturally, is also based on energy estimates,
and is applicable to very general systems of hyperbolic PDEs. We belive that a direct derivations of energy estimates for the Euler system (recast in diagonal form) is of greater pedagogical (and thus more in line with the goal of these notes) than a direct quotation of Leray's very general but not so easy to prove theorem.

As mentioned earlier, local existence existence and uniqueness for the relativistic Euler equations can be readily obtained by rewriting them as a first-order symmetric hyperbolic system\footnote{We note that in the first-order symmetric hyperbolic systems approach, there is no need to introduce the vorticity in the system as we will do below.} (see \citealt{Anile-Book-1990}). 
In comparison, the approach adopted here seems very cumbersome. We remind the reader that the point of the approach we present is that it makes the role of the characteristics manifest, opening the door for the more refined and powerful approach of Sect.~\ref{S:New_formulation}, which also relies on a careful analysis of the characteristic geometry.

In Lichnerowicz's approach to rewrite the relativistic Euler equations, one works with the enthalpy current 
$w$ as a primary variable instead of the velocity $u$. Hence, we need a good evolution equation for $w$. The derivation of such an equation is the goal of the next Lemma.

It will be convenient to rewrite \eqref{E:Projected_relativistic_Euler_density_entropy_full_system} in terms
of $(s, h, u)$, in which case it reads
\begin{subequations}{\label{E:Projected_relativistic_Euler_enthalpy_entropy_full_system}}
\begin{align}
h u^\alpha \nabla_\alpha u^\beta + 
\proj^{\alpha \beta} \nabla_\alpha h - \uptheta  \nabla^\beta s 
& = 0,
\label{E:Projected_relativistic_Euler_enthalpy_entropy_full_system_momentum}
\\
u^\alpha \nabla_\alpha h + c_s^2  \nabla_\alpha u^\alpha 
& = 0,
\label{E:Projected_relativistic_Euler_enthalpy_entropy_full_system_enthalpy}
\\
u^\alpha \nabla_\alpha s & = 0,
\label{E:Projected_relativistic_Euler_enthalpy_entropy_full_system_entropy}
\end{align}
\end{subequations}
where $c_s = c_s(s,h)$ and $\uptheta = \uptheta(s,h)$.
For the proof of local existence and uniqueness, we will
make one further choice of variables:

\begin{notation}
\label{N:Notation_h_s_w_primary}
For the remaining of this section, we will take $(s,h,w)$ are primary variables for the Euler system, so that all other quantities will be written in terms of them. In particular, the sound speed will be $c_s = c_s(s,h)$
and the velocity $u = h^{-1} w$.
\end{notation}

\begin{notation}
\label{N:Err_terms}
We will write $\Err(\partial^\ell \phi)$ to denote
an expression depending smoothly on $\phi$ and its derivatives of order at most $\ell$, and similarly for expressions involving two or more variables. E.g., 
$\Err(\partial^\ell \phi, \partial^m \psi)$ denotes an
expression depending smoothly on $\phi$ and its derivatives up to order $\ell$ and on $\psi$ and its derivatives up to order $m$. The precise form of $\Err$ can vary from line to line\footnote{As in the case of the notation $\sim$ (see Sect.~\ref{S:Notation_conventions}), $\Err$ will be used mostly in cases where only the derivative count matters, but differently than $\sim$, we use $\Err$ to emphasize the smooth dependence on the variables and their derivatives, so that standard tools,
such such the Sobolev calculus or nonlinear versions of
Gronwall's inequality, can be applied.}. When needed for consistency of free indices in an equation,
we will attached indices to  $\Err$, e.g., $\Err_\alpha$.
When norms are involved, $\Err$ will denote a smooth function of its arguments which, again, can vary from line to line; e.g., $\Err(\norm{\phi}_N, \norm{\psi}_{N^\prime})$ denotes
a smooth function of  $\norm{\phi}_N$ and $\norm{\psi}_{N^\prime}$ (see Sect.~\ref{S:Notation_conventions} for the notation $\norm{\cdot}_N$).
\end{notation}

\begin{lemma}
\label{L:Lemma_enthalpy_current_evolution}
The enthalpy current defined in \eqref{E:Enthalpy_current_definition} satisfies the 
following equation\footnote{Recall Notation \ref{N:Err_terms} for the meaning of $\Err$.}
\begin{align}
(G^{-1}(s,h,w))^{\alpha\beta} w^\mu \nabla_\mu \nabla_\alpha \nabla_\beta w_\gamma = \Err_\gamma(\partial^2 g, \partial^2 w, \partial^2 s, \partial^2 h, \partial \Omega),
\label{E:Enthalpy_current_evolution}
\end{align}
where we write $G^{-1}(s,h,w)$ to emphasize that the acoustical metric is written in terms of $(s,h,w)$
(see Notation \ref{N:Notation_h_s_w_primary}).
\end{lemma}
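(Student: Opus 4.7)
The plan is to derive the equation in two stages, each of which exploits hidden structures in the Euler system not apparent from the raw equations of motion \eqref{E:Projected_relativistic_Euler_enthalpy_entropy_full_system}. The three structural inputs are: (i) the Lichnerowicz identity \eqref{E:Lichnerowicz_equation}, which in terms of $w = hu$ rearranges to $u^\mu \nabla_\mu w_\gamma = \uptheta \nabla_\gamma s - \nabla_\gamma h$ (using $u^\mu w_\mu = -h$ and $u_\alpha \nabla_\beta u^\alpha = 0$); (ii) a scalar acoustical wave equation $(G^{-1})^{\alpha\beta} \nabla_\alpha \nabla_\beta h = \mathrm{L.O.T.}$, derived by applying $u^\mu \nabla_\mu$ to \eqref{E:Projected_relativistic_Euler_enthalpy_entropy_full_system_enthalpy} and substituting \eqref{E:Projected_relativistic_Euler_enthalpy_entropy_full_system_momentum}, exactly in the manner of the derivation of \eqref{E:Wave_equation_density_acoustical} for the density; and (iii) the vorticity evolution equation \eqref{E:Vorticity_evolution}. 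Roughly, (i) captures the transport nature of $w$, (ii) its acoustic/wave nature, and (iii) the curl piece $\Omega = dw$.

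In the first stage, I would establish an intermediate identity of the form
\begin{align*}
(G^{-1})^{\alpha\beta} \nabla_\alpha \nabla_\beta w_\gamma = \nabla_\gamma \mathcal{H} + \nabla^\alpha \Omega_{\alpha\gamma} + \Err_\gamma^{(1)},
\end{align*}
where $\mathcal{H} = \mathcal{H}(s,h,w,\partial s,\partial h,\partial w)$ and $\Err_\gamma^{(1)}$ is an admissible lower-order remainder. To derive it, I would decompose $(G^{-1})^{\alpha\beta} = c_s^2 \proj^{\alpha\beta} - u^\alpha u^\beta$ and handle the two pieces separately. For $u^\alpha u^\beta \nabla_\alpha \nabla_\beta w_\gamma$, one commutes to $u^\alpha \nabla_\alpha(u^\mu \nabla_\mu w_\gamma) - u^\alpha \nabla_\alpha u^\beta \cdot \nabla_\beta w_\gamma$, substitutes the Lichnerowicz identity in the first summand, and then commutes $u^\alpha \nabla_\alpha$ past $\nabla_\gamma$ (generating Ricci contractions absorbed into $\Err_\gamma^{(1)}$) so that the enthalpy equation and \eqref{E:Projected_relativistic_Euler_enthalpy_entropy_full_system_entropy} can be used to kill $u^\alpha \nabla_\alpha h$ and $u^\alpha \nabla_\alpha s$. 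For $\proj^{\alpha\beta} \nabla_\alpha \nabla_\beta w_\gamma$, one writes $\nabla_\alpha w_\gamma = \nabla_\gamma w_\alpha + \Omega_{\alpha\gamma}$, producing the $\nabla^\alpha \Omega_{\alpha\gamma}$ contribution and a $\nabla_\gamma(\nabla^\alpha w_\alpha)$ contribution; the latter reduces to $\nabla_\gamma$ of a first-order expression via the identity $\nabla^\alpha w_\alpha = \tfrac{c_s^2 - h}{c_s^2} u^\alpha \nabla_\alpha h$, itself a consequence of the enthalpy equation.

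In the second stage, I would apply the transport operator $w^\mu \nabla_\mu = h u^\mu \nabla_\mu$ to both sides of this intermediate identity; the discrepancy between $(G^{-1})^{\alpha\beta} w^\mu \nabla_\mu \nabla_\alpha \nabla_\beta$ and $w^\mu \nabla_\mu (G^{-1})^{\alpha\beta} \nabla_\alpha \nabla_\beta$ contributes a $\nabla (G^{-1}) \cdot \nabla^2 w$ term of the admissible type. For $w^\mu \nabla_\mu \nabla_\gamma \mathcal{H}$, I would commute $w^\mu \nabla_\mu$ past $\nabla_\gamma$; the commutator produces Ricci terms and $\partial w \cdot \partial \mathcal{H}$ products, all absorbed into $\Err_\gamma(\partial^2 g, \partial^2 w, \partial^2 s, \partial^2 h)$, while the principal part $\nabla_\gamma(w^\mu \nabla_\mu \mathcal{H})$ remains at most second order because the transport derivatives of $s$, $h$, and $w$ appearing in $\mathcal{H}$ are algebraically controlled by \eqref{E:Projected_relativistic_Euler_enthalpy_entropy_full_system_entropy}, the enthalpy equation, and the Lichnerowicz identity itself (for instance, $w^\mu \nabla_\mu \partial_\gamma s = -\nabla_\gamma w^\mu \cdot \nabla_\mu s$ plus curvature, using $w^\mu \nabla_\mu s = 0$). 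For $w^\mu \nabla_\mu \nabla^\alpha \Omega_{\alpha\gamma}$, one commutes $w^\mu \nabla_\mu$ inside and invokes \eqref{E:Vorticity_evolution} to rewrite $w^\mu \nabla_\mu \Omega$ as a first-order expression in $w$ and the thermodynamic scalars, so that the outer $\nabla^\alpha$ yields only $\partial^2 w$, $\partial^2 s$, $\partial^2 h$, $\partial \Omega$, and $\partial^2 g$ contributions, all admissible.

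The main obstacle is the bookkeeping required to certify that no third-order derivative of $w$ survives on the right-hand side outside the principal operator. The crucial algebraic input is the Lichnerowicz identity: it casts $u^\mu \nabla_\mu w_\gamma$ as a gradient of thermodynamic scalars rather than as a generic first-order expression in $w$, which is what prevents the manipulations above from ever reintroducing a transverse-to-$u$ second derivative of $w$ at principal order. A secondary point is that each commutator of covariant derivatives contributes a Riemann contraction, which, being two derivatives of $g$, is precisely accommodated by the $\partial^2 g$ slot of $\Err_\gamma$ in the final statement.
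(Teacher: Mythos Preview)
Your proposal is correct in outline and shares the same two-stage architecture as the paper's proof---first a second-order acoustical wave equation for $w_\gamma$ with source $\nabla^\alpha \Omega_{\alpha\gamma}$ plus a gradient, then transport by $w^\mu\nabla_\mu$---but the derivation of the intermediate second-order equation proceeds quite differently. The paper takes a Hodge-theoretic route: it writes $\square_H w = dd^*w + d^*\Omega$, uses the Weitzenb\"ock formula $(\square_H w)_\gamma = -g^{\alpha\beta}\nabla_\alpha\nabla_\beta w_\gamma + \Ric_{\gamma\alpha}w^\alpha$, and computes $dd^*w$ by introducing the auxiliary scalar $F = \ln(n/h)$, for which $d^*w = \iota_w dF$ and hence $dd^*w = \mathcal{L}_w dF$; the Lichnerowicz identity then enters to rewrite $dF$ in terms of $w^\beta\nabla_\beta w_\alpha$ and $\nabla_\alpha s$, and the acoustical metric emerges only at the very end via the thermodynamic identity $c_s^{-2} = (h/n)\,\partial n/\partial h|_s$. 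Your approach instead decomposes $(G^{-1})^{\alpha\beta}$ from the start and handles the $u^\alpha u^\beta$ and $\proj^{\alpha\beta}$ blocks by direct substitution of the Lichnerowicz relation and the curl identity $\nabla_\alpha w_\gamma = \nabla_\gamma w_\alpha + \Omega_{\alpha\gamma}$. This is more elementary (no Hodge machinery, no auxiliary $F$) but heavier on bookkeeping; note in particular that your $\nabla^\alpha\Omega_{\alpha\gamma}$ term will carry a $c_s^2$ prefactor that you have suppressed, though this is harmless since it is absorbed into the smooth dependence of the final $\Err_\gamma$. The second stage (applying $w^\mu\nabla_\mu$, commuting, and invoking \eqref{E:Vorticity_evolution} on $w^\mu\nabla_\mu\nabla^\alpha\Omega_{\alpha\gamma}$) is treated essentially identically in both approaches.
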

\begin{proof}
The proof is a long computation that we leave for Appendix \ref{S:Lemma_enthalpy_current_evolution}.
\end{proof}

We are now ready to establish local existence and uniqueness
of solutions to the Cauchy problem for the relativistic Euler equations.

\begin{theorem}[\citealt{Lichnerowicz-Book-1967}, \citealt{Choquet-Bruhat-1958}]
\label{T:LWP_relativistic_Euler}
Let $(\mathbb{R}\times \Sigma, g)$ be a globally hyperbolic smooth Lorentzian manifold, where $\Sigma$ is 
a Cauchy surface that is a compact smooth three-dimensional manifold without boundary.
Consider initial data
$(\mathring{s}, \mathring{h}, \mathring{u}) \in H^{N}(\Sigma) \times H^N(\Sigma) \times H^{N}(\Sigma)$, $N > \frac{3}{2} + 3$, 
for the relativistic Euler equations
\eqref{E:Projected_relativistic_Euler_enthalpy_entropy_full_system}, where
$\mathring{s}, \mathring{h}\colon \Sigma \rightarrow (0,\infty)$ and $\mathring{u}\colon \Sigma \rightarrow T\Sigma$ are initial data for the entropy, enthalpy, and velocity, respectively. Assume that the equation of 
state 
is an analytic function of its arguments, and that the initial data and equation of state are such that $0 < c_s(\mathring{s},\mathring{h}) < 1$ and $\uptheta(\mathring{s},\mathring{h})$,
$n(\mathring{s},\mathring{h})$, $\mathcal{E}(\mathring{s},\mathring{h})$, $p(\mathring{s},\mathring{h}) > 0$.
Then, there exists a $T > 0$ and a unique classical solution
$(s, h, u)$ to the relativistic Euler equations defined on $(-T,T) \times \Sigma$ and taking the given initial data.
Moreover, this solution satisfies
\begin{align}
(s(t, \cdot), h(t,\cdot), u(t, \cdot) )
\in 
H^N(\Sigma_t), \, -T < t < T,
\nonumber
\end{align}
where\footnote{See Sect.~\ref{S:Notation_conventions}.} $\Sigma_t := \{ (\tau,x) \in \mathbb{R} \times \Sigma \, | \, \tau = t, x \in \Sigma \}$.
\end{theorem}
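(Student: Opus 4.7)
The plan is to use the reformulated system from Lemma \ref{L:Lemma_enthalpy_current_evolution} together with the transport equation \eqref{E:Projected_relativistic_Euler_enthalpy_entropy_full_system_entropy}, which together exhibit a diagonal principal part: a third-order operator $(G^{-1})^{\alpha\beta} w^\mu \nabla_\mu \nabla_\alpha \nabla_\beta$ (morally $w^\mu\nabla_\mu \Box_G$) acting on each component of $w$, and a transport operator $u^\mu \nabla_\mu$ acting on $s$. The enthalpy $h$ will be recovered algebraically via $h^2 = -g_{\alpha\beta} w^\alpha w^\beta$, which, combined with Notation \ref{N:Notation_h_s_w_primary}, reduces the unknowns to $(s,w)$. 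From the given initial data $(\mathring s, \mathring h, \mathring u)$ I first compute $\mathring w = \mathring h \mathring u$ and then obtain $\partial_t \mathring w$, $\partial_t^2 \mathring w$, $\partial_t \mathring s$ by algebraically solving the equations \eqref{E:Projected_relativistic_Euler_enthalpy_entropy_full_system_momentum}–\eqref{E:Projected_relativistic_Euler_enthalpy_entropy_full_system_entropy} (and their time-differentiated versions) for time derivatives in terms of spatial derivatives. This yields data in $H^N \times H^{N-1} \times H^{N-2}$ for $(\mathring w,\partial_t \mathring w, \partial_t^2 \mathring w)$, as required by a third-order Cauchy problem; continuity and smoothness of the equation of state ensure the hyperbolicity conditions $0 < c_s < 1$ and positivity of the thermodynamic scalars persist on a short time interval.

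Next, I set up a Picard-type iteration. Given $(s^{(k-1)}, w^{(k-1)})$ obeying uniform bounds, I form $u^{(k-1)} = w^{(k-1)}/h^{(k-1)}$ and the linearized acoustical metric $G^{(k-1)}$, and solve the linear system
\begin{align*}
((G^{(k-1)})^{-1})^{\alpha\beta} (w^{(k-1)})^\mu \nabla_\mu \nabla_\alpha \nabla_\beta w^{(k)}_\gamma &= \Err_\gamma^{(k-1)}, \\
(u^{(k-1)})^\alpha \nabla_\alpha s^{(k)} &= 0,
\end{align*}
with the prescribed initial data. For the third-order wave-transport equation I view it as a transport equation $w^\mu \nabla_\mu \Psi = F$ (with commutator terms absorbed into $F$) for the auxiliary quantity $\Psi_\gamma := \Box_{G^{(k-1)}} w^{(k)}_\gamma$, and simultaneously as a wave equation for $w^{(k)}$ sourced by $\Psi$. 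Standard energy estimates for $\Box_G$ (using the stress-energy tensor of a scalar wave equation relative to $G$) combined with transport estimates for $\Psi$ yield control of $\norm{w^{(k)}}_{H^N}$ on a uniform time interval, provided $N > \tfrac{3}{2} + 3$ so that Sobolev embedding furnishes $C^2$ control of the coefficients. Control of $s^{(k)}$ in $H^N$ follows from transport estimates along the integral curves of $u^{(k-1)}$, using the $H^N$ bound on the coefficients together with the Moser–type estimate for the composition. Contraction in a weaker norm (e.g.\ $H^{N-2}$) then produces convergence to a classical solution, and a standard energy argument for the difference of two solutions yields uniqueness.

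The main obstacle will be twofold. The first is the error term in \eqref{E:Enthalpy_current_evolution}: the contribution $\partial \Omega = \partial^2 w$ appears at the same differential order as the principal term, so in the energy identity it cannot be absorbed by a crude Gronwall estimate. The required gain comes from the hidden cancellation already noted after \eqref{E:Vorticity_evolution}: when $\Err$ is tracked through the derivation of Lemma \ref{L:Lemma_enthalpy_current_evolution}, the vorticity derivatives enter in a form that, after an integration by parts against the energy current of $\Box_G$ or via the Lichnerowicz relation \eqref{E:Lichnerowicz_equation}, is controlled by terms of the form $\partial s \cdot \partial^2 w$ and lower, which are admissible. The second obstacle is the constraint propagation: a solution of the reformulated system is not automatically a solution of \eqref{E:Projected_relativistic_Euler_enthalpy_entropy_full_system}, since we have raised the differential order and replaced $u$ by $w$. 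One must verify that the quantities $\mathcal{C}_1 := g_{\alpha\beta}w^\alpha w^\beta + h^2$, $\mathcal{C}_2^\beta :=$ LHS of \eqref{E:Projected_relativistic_Euler_enthalpy_entropy_full_system_momentum}, and $\mathcal{C}_3 :=$ LHS of \eqref{E:Projected_relativistic_Euler_enthalpy_entropy_full_system_enthalpy} satisfy a homogeneous hyperbolic system and vanish to sufficient order on $\Sigma_0$; this latter vanishing is precisely what the construction of $\partial_t \mathring w$ and $\partial_t^2 \mathring w$ from the original equations guarantees, so uniqueness for the constraint system forces $\mathcal{C}_j \equiv 0$ and yields a genuine solution to the relativistic Euler equations.
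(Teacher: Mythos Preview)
Your approach is workable but differs from the paper's in two noticeable ways, and you misdiagnose one of your own obstacles.

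\textbf{On the $\partial\Omega$ term.} You write that $\partial\Omega = \partial^2 w$ ``appears at the same differential order as the principal term.'' It does not: the principal part of \eqref{E:Enthalpy_current_evolution} is third order in $w$, while $\partial\Omega$ is second order. For a quasilinear operator of order three, a RHS depending on up to two derivatives of the unknown is exactly what one expects, and in the energy estimate at level $H^N$ the term $\norm{\partial^2 w}_{H^{N-2}} \leq \norm{w}_{H^N}$ closes by Gr\"onwall with no integration by parts or hidden cancellation needed. The ``gain'' you refer to after \eqref{E:Vorticity_evolution} is the reason $\Omega$ itself satisfies a transport equation with only first-order sources, not a device for absorbing $\partial\Omega$ in the $w$-energy. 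So your first obstacle evaporates, and the elaborate mechanism you propose to handle it is unnecessary.

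\textbf{Comparison with the paper.} The paper carries $\Omega$ as an \emph{independent} unknown alongside $(s,w)$, obtaining a diagonal Leray system for $(s,\Omega,w)$ with a transport equation for $s$, a transport equation for $\Omega$, and the third-order equation for $w$; it then derives the basic energy inequality for each block and combines them. Your choice to drop $\Omega$ and keep only $(s,w)$ also yields a valid Leray system (the derivative count still works), so this is a genuine but minor simplification on your side. More substantively, the paper does \emph{not} run a Picard iteration. Instead it approximates the data by analytic data, solves the \emph{original first-order} Euler system \eqref{E:Projected_relativistic_Euler_enthalpy_entropy_full_system} by Cauchy--Kovalevskaya, observes that these analytic solutions automatically satisfy the third-order system, applies the energy estimates for the third-order system to pass to the Sobolev limit, and recovers a solution to the first-order equations directly. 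This completely sidesteps your second obstacle: no constraint propagation for $\mathcal{C}_1,\mathcal{C}_2,\mathcal{C}_3$ is ever needed, because the approximants solve the first-order system by construction. Your route requires you to actually prove that the $\mathcal{C}_j$ satisfy a closed homogeneous hyperbolic system, which you assert but do not verify; given that the third-order system was obtained by several manipulations (Hodge decomposition, use of \eqref{E:Lichnerowicz_equation}, application of $w^\mu\nabla_\mu$), tracing this back is not automatic and is the one place your argument has a real gap.
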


Before giving the proof, we make several remarks.

\begin{itemize}
\item Global hyperbolicity is the natural setting for the study of the Cauchy problem (see \citealt{Bar-Ginoux-Book-2007,Friedlander-Book-1975}). Globally hyperbolic spacetimes have the topology $\mathbb{R} \times \Sigma$, where $\Sigma$ is a three-dimensional manifold (see
\citealt{Wald:1984rg}), thus we are not making any restriction with such assumption.
\item We took $\Sigma$ compact for simplicity in order to avoid discussing conditions at infinity\footnote{Alternatively, one could work with uniformly local Sobolev spaces instead.}. The non-compact case follows essentially along the same lines of the proof we provide since we can localize the estimates via finite-propagation speed, requiring only small changes to the function spaces to accommodate the asymptotic behavior of the variables (see \citealt[Chapter~IX]{Choquet-Bruhat-Book-2009}). In 
particular, compactness and continuity (guaranteed by Sobolev embedding) gives that $\mathring{s}, \mathring{h} \geq \,\text{constant}\, > 0$ and the remaining $>0$ inequalities are in fact
$\geq \,\text{constant}\, > 0$.
\item Only a three-dimensional vectorfield is given as initial condition for $u$. The full initial data for $u$ is then obtained from the normalization \eqref{E:Velocity_normalization} (recall Remark \ref{R:All_velocity_components}). In coordinates, we can think of as prescribing $u^i$ at $t=0$ and determining $u^0$ at $t=0$ from \eqref{E:Velocity_normalization}. In this situation, saying that $u$ takes the initial data means that its restriction to tangent space of $\Sigma$ equals 
$\mathring{u}$.
We could equivalently imagine prescribing the full $u$ at time zero subject to the condition \eqref{E:Velocity_normalization},
but it seems to us more natural to think of initial data as intrinsic to $\Sigma$.
\item We have assumed $c_s$ to be strictly less than one initially because it is easier to deal with open conditions
when doing an iteration for construction of local solutions.
The case $\leq 1$ can be handled with a bit more care (see
\citealt{Choquet-Bruhat-1958}).
\item The regularity we require on the data, $N > \frac{3}{2}+ 3$, is twice degrees higher than the regularity needed for a proof using first-order symmetric hyperbolic systems. 
This is related to the fact that we took two derivatives of the equations, obtaining a third-order equation for $w$ (see Lemma \ref{L:Lemma_enthalpy_current_evolution}), which will be used in the proof.
We stress that our goal with Theorem \ref{T:LWP_relativistic_Euler} is not 
to optimize regularity but rather to present a proof of local existence and uniqueness that highlights the role of the characteristics, opening the door for the discussion of Sect.~\ref{S:New_formulation}. 
Low-regularity solutions below the level 
$N>\frac{3}{2}+1$ given by the theory of first-order symmetric hyperbolic systems will be discussed in Sect.~\ref{S:Rough_solutions}.
In addition, the system we will use in the proof is of independent interest in that it is a system with diagonal principal part, unlike Eqs.~\eqref{E:Relativistic_Euler_eq_full_system} or their formulation as a first-order symmetric hyperbolic system. In Sect.~\ref{S:New_formulation}, we will present another formulation of the relativistic Euler equations which is also diagonal, but at the expense of introducing elliptic equations, so that the resulting system is a hyperbolic-elliptic system. Lichnerowicz's formulation, while lacking
some of the refined structures that the formulation
of Sect.~\ref{S:New_formulation} has, is on the other hand a purely hyperbolic system. We remark that \cite{Lichnerowicz-Book-1967} derived yet another formulation of the relativistic Euler equations with principal diagonal part which 
was useful for his study of the equations of magneto-hydrodynamics.
\end{itemize}
\medskip
\noindent \emph{Proof of Theorem \ref{T:LWP_relativistic_Euler}.} For simplicity, we take $\Sigma$ to be $\mathbb{T}^3$ with coordinates $\{ x^i \}_{i=1}^3$ and let $t = x^0$ be a coordinate on the first component $\mathbb{R}$ of spacetime. The general case can easily be reduced to this one in view of the finite-speed-of-propagation property that follows from our
computation of the characteristics in Sect.~\ref{S:Characteristics_Euler}. 

We will consider $h$ as a function of $w$ given by $h = \sqrt{-w^\alpha w_\alpha}$. We consider Eqs.~\eqref{E:Projected_relativistic_Euler_enthalpy_entropy_full_system_entropy}, \eqref{E:Vorticity_evolution}, and \eqref{E:Enthalpy_current_evolution} for the unknowns $s$, $\Omega$, and $w$. All other functions appearing in the equations are known functions of $s$, $\Omega$, and $w$
(see  Sect.~\ref{S:Thermodynamic_properties}) once
we also take $h$ as a function of $w$ as above (recall also Notation \ref{N:Notation_h_s_w_primary}). Thus, expanding the covariant derivatives in 
Eqs.~\eqref{E:Projected_relativistic_Euler_enthalpy_entropy_full_system_entropy}, \eqref{E:Vorticity_evolution}, and \eqref{E:Enthalpy_current_evolution} we have the the following system\footnote{Since $g$ is given and smooth, for this proof there is no need to keep track of the derivatives of $g$. We will do so, however, in order to readily adapt this proof to the Einstein--Euler system studied in Sect.~\ref{S:LWP_Einstein_Euler}.}
\begin{subequations}{\label{E:LWP_relativistic_Euler_system}}
\begin{align}
w^\alpha \partial_\alpha s & = 0,
\label{E:LWP_relativistic_Euler_system_s}
\\
w^\mu \partial_\mu \Omega_{\alpha \beta} &=
\Err_{\alpha\beta}(\partial g, \partial s, \Omega, \partial w),
\label{E:LWP_relativistic_Euler_system_Omega}
\\
(G^{-1})^{\alpha\beta} w^\gamma \partial_\alpha \partial_\beta \partial_\gamma w_\delta &= \Err_\delta (\partial^3 g, \partial^2 s,\partial \Omega, \partial^2 w). 
\label{E:LWP_relativistic_Euler_system_w}
\end{align}
\end{subequations}
The order of the derivatives on the the RHS of \eqref{E:LWP_relativistic_Euler_system} is compatible with the order of this mixed-order system, so that its charcteristics
are given simply by the characteristics of the operators
on the LHS\footnote{More precisely, we compute the Leray indices of the system to conclude that Eqs.~\eqref{E:LWP_relativistic_Euler_system} form a Leray system
in diagonal form. At this point, we could simply invoke 
Leray's theorem for such systems, as in fact Lichnerowicz did, but as explained, we believe that it is instructive to work out the energy estimates in more detail. See Appendix \ref{S:Leray_systems}
for a discussion of Leray systems, and
\cite{Leray-Book-1953} for the respective proofs.
}. Hence, the characteristics of \eqref{E:LWP_relativistic_Euler_system} are given by
\begin{align}
w^\alpha \xi_\alpha = 0 \, \text{ and } \, (G^{-1})^{\alpha\beta} w^\gamma \xi_\alpha \xi_\beta \xi_\gamma= 
( (G^{-1})^{\alpha\beta} \xi_\alpha \xi_\beta) (w^\gamma  \xi_\gamma)
 = 0.
 \nonumber
\end{align}
  
Consider a sufficiently regular solution to the relativistic Euler equations and the corresponding system \eqref{E:LWP_relativistic_Euler_system} derived from it. In this case, the characteristic of \eqref{E:LWP_relativistic_Euler_system} are indeed  the flow lines  $w^\alpha \xi_\alpha = 0$ and the sound cones $(G^{-1})^{\alpha\beta} \xi_\alpha \xi_\beta = 0$ (in particular, our procedure of rewriting the relativistic Euler equations as \eqref{E:LWP_relativistic_Euler_system} did not introduce spurious characteristics). It then follows that
the operator $(G^{-1})^{\alpha\beta} w^\gamma \partial_\alpha \partial_\beta \partial_\gamma$ is a third-order strictly 
hyperbolic operator for which energy estimates are readily
available, see \cite{Hormander-Book-2003-3,Leray-Book-1953}.
Combining this with standard estimates for transport operators, we have
\begin{align}
\begin{split}
&\norm{s}_{H^N(\Sigma_t)}  \lesssim
\norm{s}_{H^N(\Sigma_0)} 
+ \int_0^t 
\Err(
\norm{s}_{H^N(\Sigma_\tau)},
\norm{w}_{H^N(\Sigma_\tau)}
) \, d\tau,
\\
&\norm{\Omega}_{H^{N-1}(\Sigma_t)}  
\lesssim
\norm{\Omega}_{H^{N-1}(\Sigma_0)}
\\
& \ \ \
+ \int_0^t 
\Err(
\norm{\partial g}_{H^{N-1}(\Sigma_\tau)},
\norm{\partial s}_{H^{N-1}(\Sigma_\tau)}, 
\norm{\Omega}_{H^{N-1}(\Sigma_\tau)},
\norm{\partial w}_{H^{N-1}(\Sigma_\tau)}
) \, d\tau,
\\
& 
\norm{\partial_t^2 w}_{H^{N-2}(\Sigma_t)}
+ 
\norm{\partial_t w}_{H^{N-1}(\Sigma_t)}
+
\norm{w}_{H^N(\Sigma_t)}
\lesssim 
\norm{\partial_t^2 w}_{H^{N-2}(\Sigma_0)}
\\
&
\ \ \
+ 
\norm{\partial_t w}_{H^{N-1}(\Sigma_0)}
+
\norm{w}_{H^N(\Sigma_0)}
+
\\
&
\ \ \
\int_0^t 
\Err(
\norm{\partial^3 g}_{H^{N-2}(\Sigma_\tau)},
\norm{\partial^2 s}_{H^{N-2}(\Sigma_\tau)}, 
 \norm{\partial \Omega}_{H^{N-2}(\Sigma_\tau)},
 \norm{\partial^2 w}_{H^{N-2}(\Sigma_\tau)}
) \, d\tau,
\end{split}
\label{E:LWP_relativistic_Euler_energy_estimates}
\end{align}
where $N>\frac{3}{2} + 3$ has been used together with the
Sobolev calculus (including the Sobolev embedding theorem)
to estimate lower order terms, including those appearing when estimating the coefficients of the principal part of the system \eqref{E:LWP_relativistic_Euler_system}. In particular, we used the Sobolev calculus to control the 
terms on the RHS of \eqref{E:LWP_relativistic_Euler_system}, e.g.
\begin{align}
\begin{split}
&\norm{
\Err (\partial^3 g, \partial^2 s, \partial \Omega,
\partial^2 w)
}_{H^{N-2}(\Sigma_\tau)}
\\
& 
\ \ \ 
\lesssim
\Err(
\norm{\partial^3 g}_{H^{N-2}(\Sigma_\tau)},
\norm{\partial^2 s}_{H^{N-2}(\Sigma_\tau)}, 
 \norm{\partial \Omega}_{H^{N-2}(\Sigma_\tau)},
 \norm{\partial^2 w}_{H^{N-2}(\Sigma_\tau)}
 ).
\end{split}
\nonumber
\end{align}
The $\partial$ derivatives on the RHS of \eqref{E:LWP_relativistic_Euler_energy_estimates} involve $\partial_t$ and $\partial_i$. Using \eqref{E:LWP_relativistic_Euler_system_s} and \eqref{E:LWP_relativistic_Euler_system_Omega} we can solve for $\partial_t (s,\Omega)$ in terms of their spatial derivatives and terms already present in the error terms $\Err$ but 
without $\partial_t (s,\Omega)$, so we can replace
terms involving $\partial s$ and $\partial \Omega$ on the RHS of \eqref{E:LWP_relativistic_Euler_energy_estimates} by corresponding norms of $s$ and $\Omega$ with the same number of derivatives but no time derivatives, e.g., we 
can replace $\norm{\partial s}_{H^{N-1}(\Sigma_\tau)}$
by $\norm{s}_{H^N(\Sigma_\tau)}$ and similarly for the other terms in $\partial(s,\Omega)$. With these substitutions,
if we define
\begin{align}
\label{E:LWP_relativistic_Euler_norm}
\begin{split}
\mathcal{N}_N(t) & := 
\norm{s}_{H^N(\Sigma_t)}
+
\norm{\Omega}_{H^{N-1}(\Sigma_t)}
\\
& 
\ \ \ +
\norm{\partial_t^2 w}_{H^{N-2}(\Sigma_t)}
+ 
\norm{\partial_t w}_{H^{N-1}(\Sigma_t)}
+
\norm{w}_{H^N(\Sigma_t)}
\end{split}
\end{align}
a standard continuation argument\footnote{See
\citealt[Sect.~5.2,]{Disconzi-Kukavica-2019-2} for an example of such continuation argument.} or equivalently a nonlinear Gr\"onwall inequality 
implies that if $T$ is sufficiently small, then 
\eqref{E:LWP_relativistic_Euler_energy_estimates} gives
\begin{align}
\mathcal{N}_N(t) \lesssim \mathcal{P}(\mathcal{N}_N(0)), \, 0\leq t \leq T,
\label{E:LWP_relativistic_Euler_energy_estimates_compact}
\end{align}
for some smooth function $\mathcal{P}$.
Observe that the norms of $g$, in particular, have been absorbed in the implicit constant appearing in $\lesssim$.

As usual, the energy estimate \eqref{E:LWP_relativistic_Euler_energy_estimates_compact} 
is the key for establishing local existence and uniqueness. Expert readers should have no difficulties 
using estimate \eqref{E:LWP_relativistic_Euler_energy_estimates_compact} to obtain the Theorem.
Thus, we relegate the remainder of the proof to Appendix \ref{S:Complementary_LWP_relativistic_Euler}. \hfill \qed

\medskip

Once solutions to \eqref{E:Projected_relativistic_Euler_enthalpy_entropy_full_system} have been obtained, it is straightforward to get solutions to the relativistic Euler equations written in the other formats, e.g.,
\eqref{E:Relativistic_Euler_eq_full_system}, \eqref{E:Projected_relativistic_Euler_eq_full_system}, or \eqref{E:Projected_relativistic_Euler_density_entropy_full_system}.

As in other quasilinear hyperbolic problems, Theorem \ref{T:LWP_relativistic_Euler} can be improved to give continuity in time with respect to the top Sobolev norm $H^N$
and also continuous dependence of solutions on the data. The
proof of these statements is more involved and will not be given here. Continuous dependence on the data, in particular, 
typically requires employing Kato's framework, as in \cite{Kato-1975-1}.

Finally, we observe that a stronger form of uniqueness, based
on the domain of dependence property, can also be obtained (and is in fact needed to localize the problem in the case of general manifolds $\Sigma$, as noted). While we will not present it here, it can be obtained with the tools discussed in Appendix \ref{S:Leray_systems} and our computation of the characteristics.

\subsection{Irrotational flows\label{S:Irrotational_flows}}
Recall from Sect.~\ref{S:Thermodynamic_properties} that a fluid is irrotational if it satisfies $\Omega=0$, a condition that in particular requires the entropy to be constant, i.e., the fluid to be isentropic. We also saw in Proposition \ref{P:Irrotational_isentropic_propagated} of Sect.~\ref{S:Relativistic_vorticity} that the irrotationally and isentropic conditions are propagated by the flow. We recall that Proposition \ref{P:Irrotational_isentropic_propagated} requires knowing $\Omega$ at time zero, and thus $\left. \partial_t (h,u)\right|_{t=0}$. These can be readily computed from the initial data by algebraically solving for the time derivatives in the relativistic Euler equations.

Assume that $\Omega=0$. This means that the form $w$ is closed\footnote{Recall that a differential form $\mu$ is closed if $d\mu = 0$ and that $d^2 = 0$.}
(see Definition \ref{D:Relativistic_vorticity}) and thus, at least locally, can be written as
\begin{align}
w = d\phi,
\nonumber
\end{align}
for some scalar function $\phi$ sometimes called the 
\textdef{fluid potential.} The Hodge-Laplacian of $\phi$ gives
\begin{align}
\square_H \phi = (d d^* + d^* d) \phi = d^* d \phi 
= d^* w = \iota_w dF,
\nonumber
\end{align}
where $d^*$ is the formal adjoint of $d$ (with respect to the spacetime metric), $F = \ln \frac{n}{h}$ and we used \eqref{E:Lemma_enthalpy_current_evolution_divergence_w}.
From \eqref{E:Lemma_enthalpy_current_evolution_dF} with $\nabla_\alpha s=0$ and \eqref{E:Lemma_enthalpy_current_evolution_F_computation} we have
\begin{align}
\iota_w dF = -\frac{c_s^2-1}{c_s^2} \frac{w^\alpha w^\beta}{h^2} \nabla_\alpha w_\beta = 
-\frac{c_s^2-1}{c_s^2} \frac{w^\alpha w^\beta}{h^2} \nabla_\alpha \nabla_\beta \phi.
\nonumber
\end{align}
Multiplying $d^* d\phi - \iota_w dF=0$ by $-c_s^2$ and using that $d^* d \phi 
= - \nabla_\alpha \nabla^\alpha \phi = -g^{\alpha\beta} \nabla_\alpha \nabla_\beta \phi$ we find
\begin{align}
\label{E:Wave_equation_fluid_potential}
(c_s^2 g^{\alpha\beta} - (1-c_s^2) \frac{w^\alpha w^\beta}{h^2} ) \nabla_\alpha \nabla_\beta \phi
= (G^{-1})^{\alpha\beta} \nabla_\alpha\nabla_\beta \phi = 0,
\end{align}
which is a quasilinear wave equation for $\phi$.

Observe that $\phi$ completely determines a solution to the relativistic Euler equations, so in particular \eqref{E:Wave_equation_fluid_potential} is a closed equation. 
Indeed, since $w=d\phi$, the velocity is given by
$u = h^{-1} w = h^{-1} d\phi$ and thus, by 
\eqref{E:Enthalpy_current_normalization}, the enthalpy is determined 
by $h^2 = -\nabla_\mu \phi \nabla^\mu \phi$. Since the entropy is constant, we can determine all thermodynamic quantities (including the sound speed) from $s$ and $h$ (and an equation of state) from the thermodynamic relations of Sect.~\ref{S:Thermodynamic_properties}.

The only characteristics for the system in the irrotational case are the sound cones $(G^{-1})^{\alpha\beta} \xi_\alpha \xi_\beta = 0$; in particular, the flow lines are not characteristics of the irrotational system. Thus, \emph{the flow lines are the characteristics associated with transport of vorticity and entropy,} as mentioned in Sect.~\ref{S:Characteristics_Euler}.

\subsection{The Einstein--Euler system}
\label{S:LWP_Einstein_Euler}

We will now investigate the Cauchy problem for the relativistic Euler equations coupled to Einstein's equations, i.e.,
the \textdef{Einstein--Euler system,} 
\begin{align}
\label{E:Einstein_Euler}
\Ric_{\alpha\beta} - \frac{1}{2}R g_{\alpha\beta} + \Lambda g_{\alpha\beta} = \mathcal{T}_{\alpha\beta},
\end{align}
where $\Ric$ is the Ricci curvature, $R$ is the scalar curvature, $\Lambda$ is a constant (the cosmological constant), and $\mathcal{T}$ is the energy-momentum of a perfect fluid given by \eqref{E:Energy_momentum_perfect}. In order to establish local existence of solutions, the main task is to show that we can obtain energy estimates for the coupled system. The rest of the argument will then be standard and will not be given here (see, e.g., \citealt{Wald:1984rg}).

As usual, we consider the equivalent form of Einstein's equations
\begin{align}
\label{E:Einstein_Euler_trace_reversed}
\Ric_{\alpha\beta} = \mathcal{T}_{\alpha\beta}
- \frac{1}{2} g^{\mu\nu} \mathcal{T}_{\mu\nu} g_{\alpha\beta} + \Lambda g_{\alpha\beta}.
\end{align}
As in Sect.~\ref{S:LWP_relativistic_Euler},
we consider the relativistic Euler equations
written in terms of $(s,\Omega,w)$. For Einstein's equations, we work in wave (or harmonic) coordinates,
so that \eqref{E:Einstein_Euler_trace_reversed} reads (see, e.g., \citealt{Wald:1984rg})
\begin{align}
\label{E:Einstein_Euler_wave_gauge_s_Omega_w}
-\frac{1}{2} g^{\mu\nu} \partial_\mu \partial_\nu g_{\alpha\beta} = \Err_{\alpha\beta}(\partial g, s, w),
\end{align}
where we are using Notation \ref{N:Err_terms}.

Consider the system comprised of Eqs.~\eqref{E:LWP_relativistic_Euler_system} and 
\eqref{E:Einstein_Euler_wave_gauge_s_Omega_w}.
From \eqref{E:Einstein_Euler_wave_gauge_s_Omega_w}, we immediately obtain
\begin{align}
\begin{split}
\norm{\partial_t g}_{H^N(\Sigma_t)}
+ \norm{g}_{H^{N+1}(\Sigma_t)}
& \lesssim 
\norm{\partial_t g}_{H^N(\Sigma_0)}
+ \norm{g}_{H^{N+1}(\Sigma_0)}
\\
& \ \ \ 
+ \int_0^t\Err( \norm{\partial g}_{H^N(\Sigma_\tau)},
\norm{s}_{H^N(\Sigma_\tau)}, \norm{w}_{H^N(\Sigma_\tau)} ) \, d\tau.
\end{split}
\nonumber
\end{align}
Combining  this with\footnote{Recall that in the proof
of Theorem \ref{T:LWP_relativistic_Euler} we purposely kept track of the derivatives of $g$ entering in the estimates even though $g$ was given there, precisely because we wanted to use those estimates here.}
 \eqref{E:LWP_relativistic_Euler_energy_estimates},
we immediately see that the estimates close, so that we obtain a bound for 
\begin{align}
\norm{\partial_t g}_{H^N(\Sigma_t)}
+ \norm{g}_{H^{N+1}(\Sigma_t)}
+ \norm{s}_{H^N(\Sigma_t)}
+ \norm{w}_{H^N(\Sigma_t)}
+ \norm{\Omega}_{H^{N-1}(\Sigma_t)}
\nonumber
\end{align}
in terms of the corresponding norms of the initial data, provided $t$ is taken on a sufficiently small interval. We observe that in deriving this control in terms of the initial data we are using
\eqref{E:Einstein_Euler_wave_gauge_s_Omega_w} to solve for the second and higher time derivatives 
of $g$, so that terms in $\partial^\ell g$ in the error terms $\Err$ can be replaced by terms with the same number of derivatives but no time derivatives and further terms already present in 
$\Err$, precisely as we did in the proof of Theorem \ref{T:LWP_relativistic_Euler}. For example,
the term 
$\norm{\partial^3 g}_{H^{N-2}(\Sigma_\tau)}$ on the RHS of \eqref{E:LWP_relativistic_Euler_energy_estimates} can be replaced by 
$\norm{g}_{H^{N+1}(\Sigma_\tau)}$. 

The surface $\{t=0\}$ is non-characteristic for the 
system \eqref{E:LWP_relativistic_Euler_system}-\eqref{E:Einstein_Euler_wave_gauge_s_Omega_w}.
Thus, using an approximation by analytic functions as in the proof of Theorem \ref{T:LWP_relativistic_Euler} and Appendix \ref{S:Complementary_LWP_relativistic_Euler}, we obtain classical and Sobolev regular solutions to the system 
\eqref{E:LWP_relativistic_Euler_system} and \eqref{E:Einstein_Euler_wave_gauge_s_Omega_w} defined for small time and in a neighborhood of the origin (taking the origin as the point $p$ about which we wave coordinates are constructed). A standard argument gives solutions to the full Einstein equations (for data satisfying the constraints). We summarize the result as follows.

\begin{theorem}[\citealt{Lichnerowicz-Book-1967}, \citealt{Choquet-Bruhat-1958}]
\label{T:LWP_Einstein_Euler}
Let $\mathcal{I} = (\Sigma, \mathring{g}, \mathring{\kappa}, \mathring{s}, \mathring{h}, \mathring{u})$ be an initial data set for the Einstein--Euler system \eqref{E:Einstein_Euler}, with $\Sigma$ compact. Suppose that 
$\mathring{s}$, $\mathring{h}$, $\mathring{u}$, and the equation of state satisfy the same assumptions of Theorem \ref{T:LWP_relativistic_Euler} and that
$\mathring{g} \in H^{N+1}(\Sigma)$, 
$\mathring{\kappa} \in H^N(\Sigma)$. Then,
there exists a globally hyperbolic development of $\mathcal{I}$ satisfying the Einstein--Euler system 
\eqref{E:Einstein_Euler}. This development is unique if taken to be the maximal globally hyperbolic development of the data.
\end{theorem}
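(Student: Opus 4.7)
My plan is to combine the wave-gauge (harmonic coordinates) reduction of Einstein's equations with the Lichnerowicz $(s,\Omega,w)$-formulation used in Theorem \ref{T:LWP_relativistic_Euler}, obtaining the coupled reduced system consisting of \eqref{E:LWP_relativistic_Euler_system} together with \eqref{E:Einstein_Euler_wave_gauge_s_Omega_w}. Most of the analytic estimates are already displayed in the excerpt: the characteristics of the reduced system are the sound cones, the fluid flow lines, and the null cones of $g$, and the a priori bound displayed just before the statement of the theorem controls the quantity $\|\partial_t g\|_{H^N(\Sigma_t)} + \|g\|_{H^{N+1}(\Sigma_t)} + \mathcal{N}_N(t)$ on a short time interval by its value at $t=0$.

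The first step I would carry out is to pass from the geometric initial data $\mathcal{I}$ to Cauchy data for the reduced system. Introducing harmonic coordinates adapted to $\Sigma$, the wave-gauge condition $g^{\mu\nu}\Gamma^\alpha_{\mu\nu}=0$ allows one to solve algebraically for $\left.\partial_t g\right|_\Sigma$ in terms of $\mathring{g}$ and $\mathring{\kappa}$, while the relativistic Euler equations evaluated at $t=0$ permit computing the time derivatives of $(s,h,w)$, and hence of $\Omega$, required to initialize the third-order equation \eqref{E:LWP_relativistic_Euler_system_w}. The regularity $\mathring{g}\in H^{N+1}$, $\mathring{\kappa}\in H^N$ matches the norm that appears in the energy inequality. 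With these data in hand, the analytic-approximation and continuation argument used in Theorem \ref{T:LWP_relativistic_Euler} and detailed in Appendix \ref{S:Complementary_LWP_relativistic_Euler} applies essentially verbatim to the combined system and delivers a unique classical, Sobolev-regular solution of the reduced system on a slab $(-T,T)\times\Sigma$ for some $T>0$; the non-compact-chart issues for general $\Sigma$ are handled as usual by patching together local charts via finite speed of propagation, which is guaranteed by our identification of the characteristics.

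The main obstacle, and the step requiring real care, is the propagation of the wave gauge and of the Einstein constraints, since the reduced equation \eqref{E:Einstein_Euler_wave_gauge_s_Omega_w} is equivalent to \eqref{E:Einstein_Euler} only when the gauge defect $\Gamma^\alpha := g^{\mu\nu}\Gamma^\alpha_{\mu\nu}$ vanishes identically. The key input is that the fluid subsystem was derived from $\nabla_\alpha\mathcal{T}^{\alpha\beta}=0$, so this conservation law holds automatically on any solution of the reduced system (as already established in the proof of Theorem \ref{T:LWP_relativistic_Euler}). Combining it with the twice-contracted Bianchi identity applied to the reduced Einstein equations yields a homogeneous linear wave equation for $\Gamma^\alpha$. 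By construction of the Cauchy data, $\left.\Gamma^\alpha\right|_{t=0}=0$, and the vanishing of $\left.\partial_t\Gamma^\alpha\right|_{t=0}$ is precisely the Hamiltonian and momentum constraints on $(\mathring{g},\mathring{\kappa},\mathring{s},\mathring{h},\mathring{u})$, which are part of the definition of an initial data set for the Einstein-Euler system. Uniqueness for the wave equation then forces $\Gamma^\alpha\equiv 0$, so that the reduced solution solves \eqref{E:Einstein_Euler}. Finally, the existence and uniqueness of a maximal globally hyperbolic development follows from the classical Choquet-Bruhat--Geroch argument: one partially orders globally hyperbolic developments by isometric embedding and applies Zorn's lemma, using geometric uniqueness of the reduced Cauchy problem (which in turn follows from the local uniqueness statement combined with finite speed of propagation) to glue any two developments along their common subdevelopment.
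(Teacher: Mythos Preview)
Your proposal is correct and follows essentially the same approach as the paper: combining the wave-gauge reduced Einstein equations \eqref{E:Einstein_Euler_wave_gauge_s_Omega_w} with the Lichnerowicz $(s,\Omega,w)$ system \eqref{E:LWP_relativistic_Euler_system}, closing the coupled energy estimate displayed just before the theorem, and then invoking the analytic-approximation construction of Appendix \ref{S:Complementary_LWP_relativistic_Euler}. The paper is in fact terser than you are, compressing your gauge-propagation and Choquet-Bruhat--Geroch steps into the single phrase ``a standard argument gives solutions to the full Einstein equations (for data satisfying the constraints),'' so your expansion of these points is entirely in keeping with what the paper intends.
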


\begin{remark}
As in Theorem \ref{T:LWP_relativistic_Euler}, we took $\Sigma$ compact for simplicity. See \cite{Choquet-Bruhat-Book-2009} for the non-compact case.
\end{remark}

\begin{remark}
Theorem \ref{T:LWP_Einstein_Euler} assumes the existence of an initial-data set, i.e., a solution to the constraint equations. Solutions of the constraints with perfect fluid matter sources can be found in \cite{Isenberg-Maxwell-2021-arxiv}.
\end{remark}

\section{New formulation of the relativistic Euler equations: null structures and regularity properties}
\label{S:New_formulation}

Equations \eqref{E:LWP_relativistic_Euler_system} that Lichnerowicz derived to prove local existence and uniqueness of solutions to the relativistic Euler equations involve operators that make the role of the characteristics manifest. They are the operators $u^\mu \partial_\mu$ and
$(G^{-1})^{\alpha\beta} u^\mu \partial_\alpha \partial_\beta \partial_\mu$ that form the principal part of \eqref{E:LWP_relativistic_Euler_system}, the latter operator being a combination of the transport operator $u^\mu \partial_\mu$ and the wave operator $(G^{-1})^{\alpha\beta}\partial_\alpha \partial_\beta $. Nevertheless, such equations are not yet good enough for more refined applications, such as the study of shock formation or the study of low-regularity solutions. Standard
first-order formulations of relativistic Euler (i.e., any of the forms \eqref{E:Relativistic_Euler_eq_full_system}, \eqref{E:Projected_relativistic_Euler_eq_full_system}, 
\eqref{E:Projected_relativistic_Euler_density_entropy_full_system},
\eqref{E:Projected_relativistic_Euler_enthalpy_entropy_full_system} or variants thereof, including formulations as first-order symmetric hyperbolic systems) are not appropriate for such applications either and have (from the point of view of these applications) the further disadvantage that the system's characteristics are hidden in the way the equations are presented. Indeed, while the sound cones are a key dynamical object for the evolution (since they are characteristics corresponding to the propagation of sound), they are hidden in first-order formulations of the equations.

In this section, we will present another way of writing the relativistic Euler equations --- what we call a ``new formulation of relativistic Euler'' or simply ``new formulation'' for short --- which is specially suited for investigating delicate mathematical questions for which very precise information about the behavior of solutions is needed. We will discuss how this new formulation can be used to investigate the problem of shock formation for relativistic perfect fluids, improved regularity of solutions, and solutions of low regularity (the latter topic will be dealt with in detail and will thus be discussed separately in Sect.~\ref{S:Rough_solutions}). None of these applications seem attainable using standard formulations of the relativistic Euler equations. The new formulation is specially tailored to the characteristics of the Euler system (the sound cones and the flow lines).

The \emph{key idea} underlying the new formulation is the following. \emph{The new formulation of the relativistic Euler equations allows us to employ geometric-analytic techniques 
from the theory of quasilinear wave equations (of which many were developed in the context of mathematical general relativity, as mentioned in the Introduction) in the study of relativistic perfect fluids.} This is because the new formulation  recasts the relativistic Euler equations as \emph{perturbations} (in an appropriate sense, see below) of quasilinear wave equations of the form
\begin{align}
\label{E:Prototype_wave_equation}
\square_{\mathscr{G}(\Psi)} \Psi = F(\Psi, \partial \Psi),
\end{align}
where $\square_{\mathscr{G}}$ is a wave operator 
relative to a Lorentzian metric $\mathscr{G}$  that depends on the solution variable $\Psi$ and 
$F$ is a nonlinearity depending on $\Psi$ and $\partial \Psi$ with a definite (typically good) structure.
(In the case of systems, it is understood that 
the LHS of \eqref{E:Prototype_wave_equation} is a diagonal operator with 
the wave operator $\square_{\mathscr{G}}$ acting on each diagonal entry.) 

Systems of the form \eqref{E:Prototype_wave_equation} have been intensively investigated in the mathematical community, with significant contributions coming from the study of mathematical general relativity. Several powerful geometric-analytic techniques have been discovered in these studies, leading to a series of 
breakthroughs and sharp results\footnote{A thorough discussion of these results is beyond our scope here; we refer to 
\cite{Speck-Book-2016,Klainerman-Rodnianski-Szeftel-2012-arxiv} and references therein.}. Thus, by recasting the relativistic Euler equations as a perturbation of \eqref{E:Prototype_wave_equation}, we hope to be able to adapt such powerful techniques to the study of relativistic fluids.

There is, however, a \emph{crucial difference} when it comes to the relativistic Euler equations as compared to
\eqref{E:Prototype_wave_equation}. The aforementioned techniques from quasilinear wave equations are adapted to systems with a single characteristic speed, namely, the (solution dependent) characteristics of $\mathscr{G}$. The relativistic Euler equations, on the other hand, form a system with \emph{multiple characteristic speeds,} the sound cones and the flow lines, as seen in Sect.~\ref{S:Characteristics_Euler}. 
We need to account for the highly nontrivial 
\emph{interaction of sound waves and transport of vorticity and entropy.} 
Thus,
while the goal is to treat contributions coming
from the ``transport-part'' of the system as a perturbation of its ``wave-part,'' \emph{the precise nonlinear structure of the perturbations is crucial.}

The best way to illustrate the above point is 
by comparing with the irrotational case. When the fluid is irrotational, the relativistic Euler equations
take the form \eqref{E:Prototype_wave_equation} with $\mathscr{G}$ being the acoustical metric\footnote{\label{FN:Irrotational_differentiated} Equation \eqref{E:Wave_equation_fluid_potential} is not yet in the form \eqref{E:Prototype_wave_equation} because $G$ depends on one derivative of $\phi$. But upon differentiating \eqref{E:Wave_equation_fluid_potential} and considering $\Psi=\partial \phi$, we obtain a system of equations of the form \eqref{E:Prototype_wave_equation}.}. In particular,
the only characteristics present are the sound cones, see Sect.~\ref{S:Irrotational_flows}.
It turns out that the irrotational system possesses
the required structures for the application of the powerful geometric-analytic techniques we have been referring to\footnote{The presence of such structures was crucial for the major work by \cite{Christodoulou-Book-2007} on formation of shocks
in the irrotational relativistic Euler equations. In fact,
\cite{Christodoulou-Book-2007} 
introduced many of the geometric-analytic techniques referred above and was the first systematic application 
of ideas from mathematical relativity to the study of fluids (although some geometric-analytic tools have been used in earlier work by \citealt{Alinhac-1999-1, Alinhac-1999-2, Alinhac-2001-1, Alinhac-2001-2}).} (we will be more specific about what such structures are when we introduce the new formulation in Sect.~\ref{S:New_formulation_statement}). If we blindly apply to the general (non-irrotational) case the procedure that leads to Eqs.~\eqref{E:Prototype_wave_equation} in the irrotational setting, we will naturally obtain additional terms
in \eqref{E:Prototype_wave_equation}\footnote{\label{FN:Similar_derivation_irrotational}More precisely, we cannot apply to the non-irrotational case the procedure leading to \eqref{E:Prototype_wave_equation}, since we do not have $u=d\phi$ in this case. What we mean is that following similar calculations, applied to $u$ directly, will produce additional terms.}. These additional terms will depend on the entropy, the vorticity, and their derivatives, coupling to the evolution of the the wave-part of the system to that of its transport-part, i.e., the evolution of the entropy and vorticity. \emph{The presence of these extra terms will in general preclude application of techniques proven successful in the study of systems of the form \eqref{E:Prototype_wave_equation}.}

The goal of the new formulation of relativistic Euler is to write the equations as a system of the form \eqref{E:Prototype_wave_equation} containing extra terms involving the entropy and vorticity, as mentioned above, but \emph{in such a way that these extra terms
have a good nonlinear structure that allows us to 
adapt geometric-analytic techniques previously used in the study of quasilinear wave equations with a single characteristic speed.} 
These extra terms involving the entropy and vorticity and possessing good nonlinear structure is what we think of as ``perturbations'' of \eqref{E:Prototype_wave_equation}.
In particular, when we say that we will recast the relativistic Euler equations as a perturbation of \eqref{E:Prototype_wave_equation}, this does not mean that such perturbations can be thought of as negligible 
or small in any sense. We use the term perturbation here in the sense of ``perturbing the underlying structures,'' i.e., obtaining terms whose  \emph{structural properties are compatible} with those of the unperturbed system.

In particular, when applying
to Euler
the geometric-analytic techniques developed in the context of systems with a single characteristic speed,
one cannot treat the perturbations coming from the entropy and vorticity as mere ``passive'' variables
with respect to those techniques. This means
that an application of techniques
from quasilinear wave equations to the non-irrotational Euler system will be successful \emph{only 
if complemented with new geometric-analytic techniques tailored to the dynamics of the entropy and vorticity.}

In order to obtain a system of the form \eqref{E:Prototype_wave_equation} with the desired  perturbations (in the sense just explained), following a derivation similar\footnote{See Footnote \ref{FN:Similar_derivation_irrotational}.} to the one giving wave equations for an irrotational system and carrying the terms in vorticity and entropy along will not  do the trick (i.e., the resulting terms in entropy and vorticity will not have good structure).
Rather, we have to differentiate a first-order formulation of the 
relativistic Euler equations with some carefully chosen geometric vectorfields and observe some remarkable cancellations.

\subsection{Auxiliary quantities}
In order to derive the new formulation of relativistic Euler, we need to introduce some auxiliary variables that will be used.

\begin{notation} 
For simplicity, for the derivation of the new formulation of the relativistic Euler equations, we will take the spacetime metric to be the Minkowski metric. We will work with standard rectangular coordinates. Throughout, we will denote by $\upepsilon$ the totally anti-symmetric symbol in $3+1$ dimensions with the normalization $\upepsilon^{0123} = 1$. In the Definition
\ref{D:Auxiliary_variables_new_formulation},
we will take $s$, $\hat{h}$, and $u$ as primary variables, where $\hat{h}$ is defined in 
\eqref{E:Log_enthalpy}. In particular, $h,n,\uptheta,\varrho,\mathcal{E}$, and $p$ will be functions of $(s,\hat{h})$.
\end{notation}

\begin{assumption}
For the rest of this section, we assume 
$h,n,\uptheta,\varrho,\mathcal{E}, p,s > 0$ and $0 < c_s \leq 1$. The normalization \eqref{E:Velocity_normalization} will also be assumed.
\end{assumption}

\begin{definition}[Auxiliary variables]
\label{D:Auxiliary_variables_new_formulation}
We introduce the following quantities:
\begin{itemize}
\item The (dimensionless) \textdef{logarithmic enthalpy,}
\begin{align}
\hat{h} := \log\frac{h}{\bar{h}},
\label{E:Log_enthalpy}
\end{align}
where $\bar{h}>0$ is some fixed constant reference value.
\item The \textdef{$u$-orthogonal vorticity of a one form $V$,}
\begin{align}
\vort^\alpha (V) := -\upepsilon^{\alpha\beta\gamma\delta} u_\beta \partial_\gamma V_\delta.
\label{E:Vort_operator}
\end{align}
Note that indeed $\vort^\alpha (V) u_\alpha =0$.
\item The \textdef{$u$-orthogonal vorticity,}
\begin{align}
\upomega^\alpha := \vort^\alpha(hu).
\label{E:u_orthogonal_vorticity}
\end{align}
\item The \textdef{entropy gradient,}
\begin{align}
S_\alpha := \partial_\alpha s.
\label{E:Entropy_gradient}
\end{align}
\item The \textdef{modified vorticity of the vorticity,}
\begin{align}
\begin{split}
\C^\alpha :&= \vort^\alpha(\upomega) + c_s^{-2} \upepsilon^{\alpha\beta\gamma\delta} u_\beta \partial_\gamma\hat{h} \upomega_\delta
+(\uptheta-\frac{\partial \uptheta}{\partial\hat{h}} )S^\alpha \partial_\lambda u^\lambda 
\\
& \ \ \ 
+(\uptheta-\frac{\partial \uptheta}{\partial\hat{h}} ) u^\alpha S^\lambda\partial_\lambda \hat{h}
+ (\uptheta-\frac{\partial \uptheta}{\partial\hat{h}} ) S^\lambda g^{\alpha\beta}\partial_\lambda u_\beta.
\end{split}
\label{E:Modified_vorticity_of_vorticity}
\end{align}
\item The \textdef{modified divergence of the entropy gradient,}
\begin{align}
\D := \frac{1}{n} \partial_\lambda S^\lambda + \frac{1}{n} S^\lambda \partial_\lambda\hat{h} - \frac{1}{n}c_s^{-2} S^\lambda \partial_\lambda \hat{h}.
\label{E:Modified_divergence_entropy_gradient}
\end{align}
\end{itemize}
\end{definition}

The modified quantities $\C$ and $\D$ come about because of the following. In the applications we will discuss, we need to estimate $\vort(\upomega)$ and $\partial_\lambda S^\lambda$. These quantities do not seem to satisfy good equations which would produce the desired estimates. However, adding the right combination of variables to 
$\vort(\upomega)$ and $\partial_\lambda S^\lambda$ we obtain quantities ($\C$ and $\D$) that satisfy equations with a good structure, for which estimates can be obtained\footnote{This is illustrated in more detail in Sect.~\ref{S:Rough_solutions}; see in particular Remark \ref{R:Trading_C_for_curl_Omega}.}. The desired estimates for $\vort(\upomega)$ and $\partial_\lambda S^\lambda$ then follow because the terms added to them to define the modified quantities can be estimated separately.

\begin{remark}
The quantities  $\C$ and $\D$ play a crucial role in the proofs of the mathematical results we will present, but we are not aware if they have a physical meaning.
\end{remark}

The $u$-orthogonal vorticity $\upomega$ is related to $\Omega$ by duality\footnote{We do not write the term with $\sqrt{-\det g}$ in dualities involving the anti-symmetric symbol because we are considering the Minkowski metric here.},
\begin{align}
\upomega^\alpha = u^\mu 
(\Omega^*)\indices{_\mu^\alpha},
\nonumber
\end{align}
where $\Omega^*$ is the Hodge dual of $\Omega$,
\begin{align}
(\Omega^*)_{\alpha\beta} = \frac{1}{2} \upepsilon_{\alpha\beta\mu\nu} \Omega^{\mu\nu}.
\nonumber
\end{align}
The role of $\upomega$ is to interpret the vorticity as a vector rather than as a two-form. It turns out that even though $\upomega$ contains less information than $\Omega$, using $\upomega$ suffices for our purposes.

\begin{definition}[Null-forms]
The \textdef{null-forms relative\footnote{Recall that $G$ is the acoustical metric introduced in Definition \ref{D:Acoustical_metric}.} to $G$} are the following quadratic forms
\begin{subequations}{\label{E:Null_forms}}
\begin{align}
\mathcal{Q}^{G}(\partial \phi,\partial \psi) &:=
(G^{-1})^{\alpha\beta} \partial_\alpha \varphi \partial_\beta \psi,
\label{E:Null_forms_symmetric}
\\
\mathcal{Q}_{\alpha\beta} (\partial \varphi, \partial \psi) &:=
\partial_\alpha \varphi \partial_\beta \psi -
\partial_\beta \varphi \partial_\alpha \psi,
\label{E:Null_forms_anti_symmetric}
\end{align}
\end{subequations}
where $\varphi$ and $\psi$ are scalar functions.
\end{definition}

The use of null-forms has a long history in hyperbolic PDEs and we will highlight some of its properties below.

\subsection{The new formulation\label{S:New_formulation_statement}}
We can now state the new formulation of the relativistic Euler equations. As the complete statement of the new formulation is quite long, we will give only a schematic statement. For this, we will write $\simeq$ to denote equality up to ``harmless terms,'' where ``harmless'' here means from the point of view of the applications of the new formulation we will discuss.

\begin{theorem}[\citealt{Disconzi-Speck-2019}] 
\label{T:New_formulation}
Assume that $(\hat{h}, s, u)$ is a $C^3$ solution to relativistic Euler equations\footnote{\label{FN:Solution_quantities_new_formulation_well_defined}It is implicitly assumed that $(\hat{h}, s, u)$ is such that all quantities appearing in Eqs.~\eqref{E:New_formulation_wave}, \eqref{E:New_formulation_transport}, and 
\eqref{E:New_formulation_transport_div_curl}, are well-defined, e.g., $0 < c_s \leq 1$, $n> 0$ so that $h$ is well-defined, etc.}. Then, $(\hat{h},s,u)$ also verifies the following system of equations.

\medskip
\noindent \underline{Wave equations:}
\begin{subequations}{\label{E:New_formulation_wave}}
\begin{align}
\square_G \hat{h} 
& \simeq 
\D + Q(\partial\hat{h},\partial u) + L(\partial \hat{h}),
\label{E:New_formulation_wave_h}
\\
\square_G u^\alpha 
& \simeq
\C^\alpha + Q(\partial\hat{h}, \partial u) +
L(\partial\hat{h},\partial u),
\label{E:New_formulation_wave_u}
\\
\square_G s 
& \simeq 
\D + L(\partial\hat{h}),
\label{E:New_formulation_wave_s}
\end{align}
\end{subequations}

\noindent \underline{Transport equations:}
\begin{subequations}{\label{E:New_formulation_transport}}
\begin{align}
u^\lambda \partial_\lambda s 
& = 0,
\label{E:New_formulation_transport_s}
\\
u^\lambda \partial_\lambda S^\alpha 
& \simeq
L(\partial u),
\label{E:New_formulation_transport_S}
\\
u^\lambda \partial_\lambda \upomega^\alpha
& \simeq
L(\partial\hat{h},\partial u),
\label{E:New_formulation_transport_omega}
\end{align}
\end{subequations}

\noindent \underline{Transport-div-curl equations:}
\begin{subequations}{\label{E:New_formulation_transport_div_curl}}
\begin{align}
u^\lambda \partial_\lambda \D
& \simeq
\C + Q(\partial S, \partial\hat{h}, \partial u)
+ L(\partial\hat{h}, \partial u),
\label{E:New_formulation_transport_div_curl_D}
\\
\vort(S) 
&= 0,
\label{E:New_formulation_transport_div_curl_S}
\\
\partial_\lambda \upomega^\lambda 
& \simeq
L(\partial\hat{h}),
\label{E:New_formulation_transport_div_curl_omega}
\\
u^\lambda \partial_\lambda \C^\alpha
& \simeq
\C + \D + Q(\partial S, \partial \upomega,
\partial\hat{h},\partial u) 
+ L(\partial S, \partial \upomega, \partial\hat{h},\partial u).
\label{E:New_formulation_transport_div_curl_C}
\end{align}
\end{subequations}
Above, $L(\partial f_1,\dots, \partial f_m)$ denotes linear combinations of terms that are at most linear in $\partial f_i$, $i=1,\dots,m$, whereas $Q(\partial f_1,\dots, \partial f_n)$
denotes linear combinations of null-form 
$\mathcal{Q}^G(\partial f_i, \partial f_j)$ and
$\mathcal{Q}(\partial f_i, \partial f_j)$, $i,j=1,\dots, n$. It is understood that the coefficients in these linear combinations are smooth function of 
$(\hat{h},s,u)$ without involving their derivatives. $\square_G$ is the wave operator relative to $G$ and in $\square_G u^\alpha$ the wave operator acts on $u^\alpha$ treated as a scalar function\footnote{For a scalar $\mathsf{f}$, we recall that the wave operator is then $\square_G \mathsf{f} = \frac{1}{|\det G|^\frac{1}{2}} \partial_\alpha( |\det G|^\frac{1}{2} (G^{-1})^{\alpha\beta} \partial_\beta \mathsf{f} )$.}.
\end{theorem}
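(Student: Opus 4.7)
My plan is to derive each of the stated equations by differentiating the first-order relativistic Euler system \eqref{E:Projected_relativistic_Euler_enthalpy_entropy_full_system} (expressed in terms of $\hat h$, $s$, and $u$) with carefully chosen first- or second-order differential operators, and then to identify which terms reassemble into $\square_G$, which into null forms $Q$, which into transport along $u$, and which are genuinely lower order $L$. The template is the derivation of \eqref{E:Wave_equation_density}: applying $u^\mu \partial_\mu$ to the continuity/energy equation and using the momentum equation to replace $u^\mu \partial_\mu u^\alpha$ reconstructs the operator $(G^{-1})^{\alpha\beta}\partial_\alpha \partial_\beta$ acting on the thermodynamic variable. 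Transcribing this with $\hat h$ in place of $\varrho$ and being careful about the entropy contribution in \eqref{E:Projected_relativistic_Euler_enthalpy_entropy_full_system_momentum} produces \eqref{E:New_formulation_wave_h}, with the entropy remainder collapsing into precisely the modified divergence $\D$ from \eqref{E:Modified_divergence_entropy_gradient}. An analogous computation, applying $u^\mu\partial_\mu$ to the momentum equation treating the components of $u$ as independent (Remark \ref{R:All_velocity_components}), gives \eqref{E:New_formulation_wave_u}; here the key reorganization is that the curl-part of the repeated divergence of $u$ produces $\vort(\upomega)$, while the remaining quadratic terms in $\partial\hat h$ and $\partial u$ assemble into $\mathcal Q^G$- and $\mathcal Q$-null forms \eqref{E:Null_forms}, because each would-be non-null contribution appears paired with its antisymmetric counterpart. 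Equation \eqref{E:New_formulation_wave_s} follows by applying $u^\mu\partial_\mu$ to \eqref{E:Entropy_transported}, commuting derivatives, and using \eqref{E:Projected_relativistic_Euler_enthalpy_entropy_full_system_enthalpy} to express $\partial_\alpha u^\alpha$ in terms of $u^\mu\partial_\mu\hat h$.

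The transport equations \eqref{E:New_formulation_transport} and the algebraic identities \eqref{E:New_formulation_transport_div_curl_S}, \eqref{E:New_formulation_transport_div_curl_omega} are comparatively direct. Equation \eqref{E:New_formulation_transport_s} is \eqref{E:Entropy_transported}; \eqref{E:New_formulation_transport_S} follows by commuting $\partial_\alpha$ through $u^\lambda\partial_\lambda s = 0$ and picking up the $-\partial_\alpha u^\lambda S_\lambda$ commutator; \eqref{E:New_formulation_transport_omega} is the $u$-contracted dual of Lichnerowicz's vorticity transport \eqref{E:Vorticity_evolution}, where the $L$-structure on the right-hand side is inherited from the gain-of-derivative in \eqref{E:Lichnerowicz_equation}. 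Identity \eqref{E:New_formulation_transport_div_curl_S} is immediate from $S = ds$ and $d^2 = 0$, and \eqref{E:New_formulation_transport_div_curl_omega} follows by taking $\partial_\alpha$ of \eqref{E:u_orthogonal_vorticity} and noting that the antisymmetry of $\upepsilon^{\alpha\beta\gamma\delta}$ against $\partial_\alpha\partial_\gamma(hu_\delta)$ leaves only the Leibniz term involving $\partial_\alpha u_\beta$, which is first-order in $\partial\hat h$ and $\partial u$.

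The main obstacle will be the derivations of \eqref{E:New_formulation_transport_div_curl_D} and \eqref{E:New_formulation_transport_div_curl_C}, where the miraculous cancellations live. A naive approach—transporting $\partial_\lambda S^\lambda$ along $u$, or transporting $\vort(\upomega)$ along $u$—produces top-order error terms in $\partial^2\hat h$ or $\partial^2 u$ that cannot be absorbed into $\C$, $\D$, null forms, or $L$, and also produces generic (non-null) quadratic terms in $\partial\hat h \cdot \partial S$ and $\partial\hat h\cdot\partial\upomega$. The role of the specific lower-order corrections built into \eqref{E:Modified_vorticity_of_vorticity} and \eqref{E:Modified_divergence_entropy_gradient}—in particular the precise appearance of $c_s^{-2}$ and of the thermodynamic combination $\uptheta - \partial\uptheta/\partial\hat h$—is precisely to kill these offending terms when $u^\lambda\partial_\lambda$ is commuted through. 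The plan is therefore to compute $u^\lambda\partial_\lambda\D$ and $u^\lambda\partial_\lambda\C^\alpha$ term by term from \eqref{E:Modified_divergence_entropy_gradient} and \eqref{E:Modified_vorticity_of_vorticity}, repeatedly substituting the wave equation \eqref{E:New_formulation_wave_h}, the transport equations \eqref{E:New_formulation_transport_S} and \eqref{E:New_formulation_transport_omega}, the continuity-type equation, and the first law \eqref{E:First_law_thermodyanmics_dp_version} (which is what forces the coefficient $\uptheta - \partial\uptheta/\partial\hat h$), and to verify that after all substitutions the top-order and non-null pieces cancel exactly, leaving only $\C$, $\D$, null forms, and terms in $L$.

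Executing these cancellations is the bulk of \cite{Disconzi-Speck-2019}; no shortcut through high-level structural arguments seems available, since the numerical coefficients in \eqref{E:Modified_vorticity_of_vorticity}--\eqref{E:Modified_divergence_entropy_gradient} are uniquely dictated by the computation itself. The one guiding principle to keep in mind is that every uncanceled quadratic contribution must take null-form structure relative to the acoustical metric $G$; this is consistent with the $\mathcal Q^G$-structure already visible in the irrotational case \eqref{E:Wave_equation_fluid_potential} and is what justifies, a priori, that a correction for $\vort(\upomega)$ and $\partial_\lambda S^\lambda$ of the particular schematic form chosen in Definition \ref{D:Auxiliary_variables_new_formulation}—built only out of divergences of $u$, $u$-contracted derivatives of $\hat h$, and the thermodynamic factors arising from the first law—has any chance of producing the desired outcome.
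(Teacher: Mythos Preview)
Your outline is essentially correct and aligned with the paper's strategy: differentiate the first-order system with well-chosen operators, identify the cancellations, and check that every surviving quadratic term is a null form. You also correctly locate the difficult part in \eqref{E:New_formulation_transport_div_curl_D} and \eqref{E:New_formulation_transport_div_curl_C} and correctly attribute the role of the specific coefficients in \eqref{E:Modified_vorticity_of_vorticity}--\eqref{E:Modified_divergence_entropy_gradient}.

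There is one methodological difference worth flagging in your derivation of the wave equations. Your plan follows the template of \eqref{E:Wave_equation_density}: apply $u^\mu\partial_\mu$ to the energy equation and substitute the momentum equation to reconstruct $(G^{-1})^{\alpha\beta}\partial_\alpha\partial_\beta$. The paper works in the opposite direction for \eqref{E:New_formulation_wave_h}: it first expands $\square_G\hat h$ directly from the formula $\square_G \mathsf f = |\det G|^{-1/2}\partial_\alpha(|\det G|^{1/2}(G^{-1})^{\alpha\beta}\partial_\beta \mathsf f)$, and only then contracts the \emph{momentum} equation \eqref{E:Projected_relativistic_Euler_enthalpy_entropy_full_system_momentum} with $c_s^2 g^{\alpha\beta}\partial_\alpha$ to replace the term $c_s^2 g^{\alpha\beta}\partial_\alpha\partial_\beta\hat h$. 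The advantage of the paper's direction is that the first-order terms generated by differentiating $|\det G|^{1/2}(G^{-1})^{\alpha\beta}$ are already organized as $(G^{-1})^{\alpha\beta}\partial_\alpha\hat h\,\partial_\beta\hat h$-type expressions, so the $\mathcal Q^G$ null forms appear automatically; in your approach one obtains the operator $(G^{-1})^{\alpha\beta}\partial_\alpha\partial_\beta$ rather than $\square_G$, and converting one into the other requires adding and subtracting exactly the lower-order terms whose null-form structure you are trying to exhibit. Both routes close, but if you want the null forms to fall out cleanly, starting from $\square_G$ and working backward is the more economical choice.
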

\begin{proof}
The proof is quite long and we refer to \cite{Disconzi-Speck-2019} for details. The core idea is to differentiate a first-order formulation of the equations with several carefully chosen geometric differential operators and observe remarkable cancellations. However, in order for the reader to get an idea of how
these cancellations work, we will provide a proof of \eqref{E:New_formulation_wave_h}.

Direct computation gives
\begin{align}
\det G & = -c_s^{-6},
\nonumber
\\
|\det G|^\frac{1}{2} (G^{-1})^{\alpha\beta} &= c_s^{-1} g^{\alpha\beta} + (c_s^{-1} - c_s^{-3})
u^\alpha u^\beta,
\nonumber
\end{align}
from which we can compute
\begin{align}
\begin{split}
\square_G \hat{h} & = \frac{1}{|\det G|^\frac{1}{2}} \partial_\alpha( |\det G|^\frac{1}{2} (G^{-1})^{\alpha\beta} \partial_\beta \hat{h} )
\\
&=
c_s^3\partial_\alpha
	( 
		-(c_s^{-3} - c_s^{-1} )u^\alpha u^\beta \partial_\beta \hat{h}
		+c_s^{-1} g^{\alpha\beta} \partial_\beta \hat{h}
	)
\\
&=
\textcolor{blue}{-(1-c_s^2) u^\alpha \partial_\alpha (u^\beta \partial_\beta \hat{h} )
-(1-c_s^2) \partial_\alpha u^\alpha u^\beta \partial_\beta \hat{h}}
+ (3c_s^{-1} - c_s) u^\alpha \partial_\alpha c_s u^\beta \partial_\beta \hat{h}
\\
& \ \ \
- c_s g^{\alpha\beta} \partial_\alpha c_s \partial_\beta \hat{h}
\textcolor{blue}{+ c_s^2 g^{\alpha\beta} \partial_\alpha\partial_\beta \hat{h}}
\\
&=
\textcolor{blue}{
(c_s^2-1) u^\alpha \partial_\alpha(u^\beta \partial_\beta\hat{h}) 
+ c_s^2 g^{\alpha\beta}\partial_\alpha\partial_\beta \hat{h}
+(c_s^2-1) \partial_\alpha u^\alpha u^\beta \partial_\beta \hat{h}
}
\nonumber 
\\
& \ \ \ 
\textcolor{green}{+ (3c_s^{-1} -c_s) \frac{\partial c_s}{\partial \hat{h}} u^\alpha \partial_\alpha \hat{h} u^\beta \partial_\beta \hat{h}}
+ (3c_s^{-1} -c_s) \frac{\partial c_s}{\partial s} 
\underbrace{u^\alpha \partial_\alpha s}_{=0} u^\beta \partial_\beta \hat{h}
\\
& \ \ \ 
\textcolor{green}{-c_s g^{\alpha\beta} \frac{\partial c_s}{\partial\hat{h}} \partial_\alpha\hat{h}\partial_\beta \hat{h}}
-c_s g^{\alpha\beta} \frac{\partial c_s}{\partial s} \underbrace{\partial_\alpha s}_{=S_\alpha}\partial_\beta \hat{h}
\\
& =
(c_s^2-1) u^\alpha \partial_\alpha(u^\beta \partial_\beta\hat{h}) 
+ c_s^2 g^{\alpha\beta}\partial_\alpha\partial_\beta \hat{h}
+(c_s^2-1) \partial_\alpha u^\alpha u^\beta \partial_\beta \hat{h}
\nonumber 
\\
& \ \ \ 
\textcolor{green}{
	- \frac{\partial c_s}{\partial \hat{h}} c_s^{-1} 
	\big( 
	\underbrace{c_s^2 g^{\alpha\beta}  \partial_\alpha\hat{h}
	\partial_\beta \hat{h}
	- u^\alpha \partial_\alpha \hat{h} u^\beta \partial_\beta \hat{h}
	+ c_s^2 u^\alpha \partial_\alpha \hat{h} u^\beta \partial_\beta \hat{h}
	}_{=(G^{-1})^{\alpha\beta} \partial_\alpha\hat{h}\partial_\beta \hat{h}}
	- 2 u^\alpha \partial_\alpha\hat{h}u^\beta \partial_\beta\hat{h}
	\big)
	}
\\
& \ \ \ 
-c_s g^{\alpha\beta} \frac{\partial c_s}{\partial s} S_\alpha \partial_\beta \hat{h},
\end{split}
\nonumber
\end{align}
where the terms in \textcolor{blue}{blue} in the third equality have been combined into the terms in 
\textcolor{blue}{blue} in the fourth equality, the terms in \textcolor{green}{green} in the fourth equality
have been combined into the terms in \textcolor{green}{green} in the fifth equality and in doing so we split the term with a coefficient $3$ into a sum of terms with coefficients $2$ and coefficient $1$, and we also used
\eqref{E:Projected_relativistic_Euler_enthalpy_entropy_full_system_entropy}
and \eqref{E:Entropy_gradient}. Thus, so far we have
\begin{align}
\begin{split}
\square_G \hat{h} &=
(c_s^2-1) u^\alpha \partial_\alpha(u^\beta \partial_\beta\hat{h}) 
+ c_s^2 g^{\alpha\beta}\partial_\alpha\partial_\beta \hat{h}
+(c_s^2-1) \partial_\alpha u^\alpha u^\beta \partial_\beta \hat{h}
\\
& \ \ \ 
	+2 \frac{\partial c_s}{\partial \hat{h}} c_s^{-1} u^\alpha \partial_\alpha\hat{h}u^\beta \partial_\beta\hat{h}
		- \frac{\partial c_s}{\partial \hat{h}} c_s^{-1} (G^{-1})^{\alpha\beta} \partial_\alpha\hat{h}\partial_\beta \hat{h}
-c_s g^{\alpha\beta} \frac{\partial c_s}{\partial s} S_\alpha \partial_\beta \hat{h},
\end{split}
\label{E:Proof_wave_equation_enthalpy_1}
\end{align}
In terms of the variables of Definition \ref{D:Auxiliary_variables_new_formulation}, equation
\eqref{E:Projected_relativistic_Euler_enthalpy_entropy_full_system_momentum} reads
\begin{align}
u^\alpha \partial_\alpha u_\beta + \partial_\beta \hat{h} + u_\beta u^\alpha\partial_\alpha \hat{h}
- \mathsf{q} S_\beta = 0,
\label{E:Projected_relativistic_Euler_enthalpy_entropy_full_system_momentum_new_variables}
\end{align}
where\footnote{Here the denominator is $h$ and not $\hat{h}$, but recall that $h$ is viewed as a function of $\hat{h}$.} $\mathsf{q}:= \uptheta/h$, and \eqref{E:Projected_relativistic_Euler_enthalpy_entropy_full_system_enthalpy} reads
\begin{align}
u^\alpha \partial_\alpha \hat{h} + c_s^2 \partial_\alpha u^\alpha = 0.
\label{E:Projected_relativistic_Euler_enthalpy_entropy_full_system_enthalpy_new_variables}
\end{align}
Contracting \eqref{E:Projected_relativistic_Euler_enthalpy_entropy_full_system_momentum_new_variables} with $c_s^2 g^{\alpha\beta} \partial_\alpha$ and using
\eqref{E:Projected_relativistic_Euler_enthalpy_entropy_full_system_enthalpy_new_variables} to replace
for the term in divergence of $u$,
\begin{align}
\begin{split}
c_s^2 g^{\alpha\beta} \partial_\alpha \partial_\beta \hat{h} 
&=
\textcolor{blue}{
-c_s^2 u^\alpha\partial_\alpha \overbrace{\partial_\beta u^\beta}^{\mathclap{= -c_s^{-2} u^\beta \partial_\beta \hat{h}}}
}
- c_s^2 \partial_\beta u^\alpha \partial_\alpha u^\beta
-c_s^2 u^\beta\partial_\beta(u^\alpha\partial_\alpha \hat{h})
\\
& \ \ \ 
-c_s^2 \partial_\beta u^\beta u^\alpha \partial_\alpha \hat{h}
+ c_s^2 \mathsf{q} \partial_\beta S^\beta
+c_s^2 \frac{\partial \mathsf{q}}{\partial\hat{h}} S^\beta \partial_\beta \hat{h}
+c_s^2 \frac{\partial \mathsf{q}}{\partial s} S^\beta S_\beta
\\
&=
\textcolor{blue}{
u^\alpha\partial_\alpha (u^\beta \partial_\beta \hat{h}) 
- 2c_s^{-1} \frac{\partial c_s}{\partial \hat{h}}u^\alpha \partial_\alpha\hat{h}u^\beta \partial_\beta \hat{h}
- 2c_s^{-1} \frac{\partial c_s}{\partial s}\underbrace{u^\alpha \partial_\alpha s}_{=0} u^\beta \partial_\beta \hat{h}
}
\\
& \ \ \
- c_s^2 \partial_\beta u^\alpha \partial_\alpha u^\beta
-c_s^2 u^\beta\partial_\beta(u^\alpha\partial_\alpha \hat{h})
-c_s^2 \partial_\beta u^\beta u^\alpha \partial_\alpha \hat{h}
\\
& \ \ \ 
+ c_s^2 \mathsf{q} \partial_\beta S^\beta
+c_s^2 \frac{\partial \mathsf{q}}{\partial\hat{h}} S^\beta \partial_\beta \hat{h}
+c_s^2 \frac{\partial \mathsf{q}}{\partial s} S^\beta S_\beta,
\end{split}
\label{E:Proof_wave_equation_enthalpy_2}
\end{align}
where the terms in \textcolor{blue}{blue} in the first equality produce the terms in 
\textcolor{blue}{blue} in the second equality and we used \eqref{E:Projected_relativistic_Euler_enthalpy_entropy_full_system_entropy}
and \eqref{E:Entropy_gradient}.

Using \eqref{E:Proof_wave_equation_enthalpy_2} to substitute for the term
$c_s^2 g^{\alpha\beta} \partial_\alpha \partial_\beta \hat{h}$
on the RHS of \eqref{E:Proof_wave_equation_enthalpy_1} and highlighting these terms in \textcolor{blue}{blue}, we find
\begin{align}
\begin{split}
\square_G \hat{h} &=
\textcolor{blue}{
u^\alpha\partial_\alpha (u^\beta \partial_\beta \hat{h}) 
- 2c_s^{-1} \frac{\partial c_s}{\partial \hat{h}}u^\alpha \partial_\alpha\hat{h}u^\beta \partial_\beta \hat{h}
}
\\
& \ \ \
\textcolor{blue}{- c_s^2 \partial_\beta u^\alpha \partial_\alpha u^\beta
-c_s^2 u^\beta\partial_\beta(u^\alpha\partial_\alpha \hat{h})
-c_s^2 \partial_\beta u^\beta u^\alpha \partial_\alpha \hat{h}
}
\\
& \ \ \ 
\textcolor{blue}{
+ c_s^2 \mathsf{q} \partial_\beta S^\beta
+c_s^2 \frac{\partial \mathsf{q}}{\partial\hat{h}} S^\beta \partial_\beta \hat{h}
+c_s^2 \frac{\partial \mathsf{q}}{\partial s} S^\beta S_\beta
}
\\
& \ \ \
+ (c_s^2-1) u^\alpha \partial_\alpha(u^\beta \partial_\beta\hat{h}) 
+(c_s^2-1) \partial_\alpha u^\alpha u^\beta \partial_\beta \hat{h}
\\
& \ \ \ 
	+2 \frac{\partial c_s}{\partial \hat{h}} c_s^{-1} u^\alpha \partial_\alpha\hat{h}u^\beta \partial_\beta\hat{h}
		- \frac{\partial c_s}{\partial \hat{h}} c_s^{-1} (G^{-1})^{\alpha\beta} \partial_\alpha\hat{h}\partial_\beta \hat{h}
-c_s g^{\alpha\beta} \frac{\partial c_s}{\partial s} S_\alpha \partial_\beta \hat{h},
\end{split}
\nonumber
%\label{E:Proof_wave_equaton_enthalpy_3}
\end{align}
We will observe some cancellations. Thus, we rewrite 
in exactly the same form and color code some terms in a way that is explained below:
\begin{align}
\begin{split}
\square_G \hat{h} &=
\textcolor{red}{u^\alpha\partial_\alpha (u^\beta \partial_\beta \hat{h})} 
\textcolor{blue}{- 2c_s^{-1} \frac{\partial c_s}{\partial \hat{h}}u^\alpha \partial_\alpha\hat{h}u^\beta \partial_\beta \hat{h}}
\\
& \ \ \
- c_s^2 \partial_\beta u^\alpha \partial_\alpha u^\beta
\textcolor{red}{-c_s^2 u^\beta\partial_\beta(u^\alpha\partial_\alpha \hat{h})}
\textcolor{green}{-c_s^2 \partial_\beta u^\beta u^\alpha \partial_\alpha \hat{h}}
\\
& \ \ \ 
+ c_s^2 \mathsf{q} \partial_\beta S^\beta
+c_s^2 \frac{\partial \mathsf{q}}{\partial\hat{h}} S^\beta \partial_\beta \hat{h}
+c_s^2 \frac{\partial \mathsf{q}}{\partial s} S^\beta S_\beta
\\
& \ \ \
\textcolor{red}{+ (c_s^2-1) u^\alpha \partial_\alpha(u^\beta \partial_\beta\hat{h}) }
+(\textcolor{green}{c_s^2}-1) \partial_\alpha u^\alpha u^\beta \partial_\beta \hat{h}
\\
& \ \ \ 
	\textcolor{blue}{+2 \frac{\partial c_s}{\partial \hat{h}} c_s^{-1} u^\alpha \partial_\alpha\hat{h}u^\beta \partial_\beta\hat{h}}
		- \frac{\partial c_s}{\partial \hat{h}} c_s^{-1} (G^{-1})^{\alpha\beta} \partial_\alpha\hat{h}\partial_\beta \hat{h}
-c_s g^{\alpha\beta} \frac{\partial c_s}{\partial s} S_\alpha \partial_\beta \hat{h}.
\end{split}
\label{E:Proof_wave_equaton_enthalpy_3}
\end{align}
The terms in same colors in \eqref{E:Proof_wave_equaton_enthalpy_3} cancel each other. We note that for the second term in \textcolor{green}{green},
it is that term multiplied by what is outside the parenthesis that cancels with the first term in \textcolor{green}{green}, but we did not color the term outside the parenthesis because it also multiplies 
another term that does not cancel out. Erasing the terms that cancel, rearranging the order of the remaining terms, and using 
\eqref{E:Projected_relativistic_Euler_enthalpy_entropy_full_system_enthalpy_new_variables} to substitute
for the term $u^\beta \partial_\beta \hat{h}$ we find
\begin{align}
\begin{split}
\square_G \hat{h} &=
\textcolor{blue}{- c_s^2 \partial_\beta u^\alpha \partial_\alpha u^\beta
- \partial_\alpha u^\alpha \overbrace{u^\beta \partial_\beta \hat{h}}^{\mathclap{-c_s^2 \partial_\beta u^\beta}}}
- \frac{\partial c_s}{\partial \hat{h}} c_s^{-1} (G^{-1})^{\alpha\beta} \partial_\alpha\hat{h}\partial_\beta \hat{h}
\\
& \ \ \ 
-c_s g^{\alpha\beta} \frac{\partial c_s}{\partial s} S_\alpha \partial_\beta \hat{h}
\textcolor{red}{+ c_s^2 \mathsf{q} \partial_\beta S^\beta}
+c_s^2 \frac{\partial \mathsf{q}}{\partial\hat{h}} S^\beta \partial_\beta \hat{h}
\\
& \ \ \ 
+c_s^2 \frac{\partial \mathsf{q}}{\partial s} S^\beta S_\beta
\\
&=
\textcolor{red}{
nc_s^2 \mathsf{q}\D + (1-c_s^2) q S^\beta \partial_\beta \hat{q}
}
 - \frac{\partial c_s}{\partial \hat{h}} c_s^{-1} (G^{-1})^{\alpha\beta} \partial_\alpha\hat{h}\partial_\beta \hat{h}
\\
& \ \ \ 
\textcolor{blue}{+c_s^2 (\partial_\alpha u^\alpha \partial_\beta u^\beta - \partial_\beta u^\alpha \partial_\alpha u^\beta)}
-c_s \frac{\partial c_s}{\partial s} S^\alpha \partial_\alpha \hat{h}
\\
& \ \ \
+c_s^2 \frac{\partial \mathsf{q}}{\partial\hat{h}} S^\beta \partial_\beta \hat{h}
+c_s^2 \frac{\partial \mathsf{q}}{\partial s} S^\beta S_\beta,
\end{split}
\label{E:Proof_wave_equaton_enthalpy_4}
\end{align}
where the terms in \textcolor{blue}{blue} in the first equality combine into the terms in
\textcolor{blue}{blue} in the second equality and the term in \textcolor{red}{red}
in the first equality produces the terms in \textcolor{red}{red} in the second equality in view of
\eqref{E:Modified_divergence_entropy_gradient} (we also raised an index in one of the terms).

We claim that \eqref{E:Proof_wave_equaton_enthalpy_4} is the desired expression \eqref{E:New_formulation_wave_h}. Recall that in \eqref{E:New_formulation_wave_h}, each term is allowed 
to have a coefficient that is a smooth function of $(\hat{h},s,u)$. The first term on the RHS of 
\eqref{E:Proof_wave_equaton_enthalpy_4} is the term linear in $\D$ in \eqref{E:New_formulation_wave_h}.
The term $(G^{-1})^{\alpha\beta} \partial_\alpha\hat{h}\partial_\beta \hat{h}$ is a null-form of the form
\eqref{E:Null_forms_symmetric} with $\varphi = \psi = \hat{h}$. For each fixed indices $\alpha$ and $\beta$,
the term $\partial_\alpha u^\alpha \partial_\beta u^\beta - \partial_\beta u^\alpha \partial_\alpha u^\beta$
is a null-form of the form \eqref{E:Null_forms_anti_symmetric} with $\varphi = u^\alpha$ and $\psi = u^\beta$;
thus, $\partial_\alpha u^\alpha \partial_\beta u^\beta - \partial_\beta u^\alpha \partial_\alpha u^\beta$ in 
\eqref{E:Proof_wave_equaton_enthalpy_4} is a sum of such null-forms.
The term $S^\beta S_\beta$ involves no derivatives (recall that $S$ is treated as a variable on its own footing) and the remaining terms are linear in derivatives of $\hat{h}$.
\end{proof}

The derivation of the remaining equations in Theorem \ref{T:New_formulation} is significantly more involved and we refer the reader to \cite{Disconzi-Speck-2019}. The above proof, however, highlights two important aspects. First, one cannot blindly differentiate a first-order formulation to obtain the second-order formulation of Theorem \ref{T:New_formulation}. In order to obtain the desired cancellations it is important to selectively choose when to substitute, expand, or differentiate an expression. Second, the form of the equations of Theorem \ref{T:New_formulation} is unstable under perturbations. \emph{A minor difference, e.g., in a numerical coefficient, would prevent some of the key cancellations or combinations that are needed.}

\begin{remark}
\label{R:Wave_equations_general_case_reduce_prototype}
Equations \eqref{E:New_formulation_wave} are wave equations sourced by terms depending on vorticity and entropy which reduce to a system of the form \eqref{E:Prototype_wave_equation} in the irrotational case.
\end{remark}

When the fluid is irrotational, this new formulation reduces to the equations found by \cite{Christodoulou-Book-2007} in his work on shock formation for the relativistic Euler equations. In this case, the equations are essentially the equations for the fluid potential\footnote{More precisely, a once-differentiated
form of  \eqref{E:Wave_equation_fluid_potential}, see Footnote \ref{FN:Irrotational_differentiated}.}
derived in Sect.~\ref{S:Irrotational_flows}. 
Theorem \ref{T:New_formulation} is a generalization
of a similar new formulation of the classical compressible Euler equations (with entropy and vorticity) found by 
\cite{Luk-Speck-2020,Luk-Speck-2018,Luk-Speck-2024,Speck-2019}.

It is important to stress that \emph{Theorem \ref{T:New_formulation} should not be taken for granted, i.e., as a simple addition on top of the formulations found in the simpler settings of irrotational or classical fluids.} This is because the structures uncovered by Christodoulou and Luk and Speck in those cases are \emph{unstable under perturbations,} in the sense
that even the smallest change in a numerical factor or coefficient would prevent the exact cancellations needed for the equations
of Theorem \ref{T:New_formulation} to hold.

We will next discuss three applications of the new formulation presented in Theorem \ref{T:New_formulation}: improved regularity for the entropy and vorticity, the study of shock formation for the relativistic Euler equations, and low-regularity solutions.
The latter will be carried out in more detail so we leave it for Sect.~\ref{S:Rough_solutions}.

\emph{None of these applications seems attainable using standard formulations of the equations.} The latter observation, in particular, highlights the following important point. Despite looking a monstrosity, the equations of Theorem \ref{T:New_formulation} are very nice in that they have very good structures that allow us to prove several results seemingly inaccessible with first-order formulations of the relativistic Euler equations. In other words, what makes a set of equations ``good'' or ``bad'' is not how long or ugly-looking they are, but rather the structures they possess, a point certainly known to researchers but that might escape some students. The relativistic Euler equations, written as in Theorem \ref{T:New_formulation}, are very good equations (as we will show through the applications we will discuss), despite being scary-looking, because they enjoy several good properties. First-order formulations of relativistic Euler, e.g., \eqref{E:Relativistic_Euler_eq_full_system} or \eqref{E:Projected_relativistic_Euler_density_entropy_full_system}, on the other hand, look relatively benign, but are in fact inappropriate for studying many sophisticated mathematical questions in that they do not have many good mathematical properties.

\subsection{Improved regularity}
Here, we will show that the new formulation of the relativistic Euler equations can be used to establish that the entropy and the $u$-orthogonal vorticity can be one degree more regular than what is given by standard theory.

\begin{theorem}[\citealt{Disconzi-Speck-2019}]
\label{T:Improved_regularity}
The relativistic Euler equations are locally well-posed (i.e., 
existence, uniqueness, and continuous dependence on the data) with\footnote{Footnote \ref{FN:Solution_quantities_new_formulation_well_defined} applies here for the initial data with uniform lower bounds, so for simplicity one can assume to be working in $[0,T]\times \mathbb{T}^3$.}
\begin{align}
(h,s,u,\upomega) \in H^N \times H^{N+1} \times H^N \times H^N,
\nonumber
\end{align}
for $N > \frac{3}{2} + 1$.
\end{theorem}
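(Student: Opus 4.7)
The strategy is to leverage the new formulation of Theorem \ref{T:New_formulation} to establish a priori estimates at the claimed regularity level, and then construct solutions by an iteration scheme with uniqueness and continuous dependence following from standard arguments. The key insight is that the formulation splits Euler into a wave subsystem for $(\hat h, u)$ sourced by the modified quantities $\C, \D$, together with a transport-div-curl system for the entropy gradient $S = \nabla s$ and the $u$-orthogonal vorticity $\upomega$. The claimed regularity $s \in H^{N+1}$ and $\upomega \in H^N$ is exactly one derivative above what a first-order symmetric hyperbolic analysis gives, and recovering this derivative is precisely what the transport-div-curl structure of Theorem \ref{T:New_formulation} is designed to accomplish. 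I would organize the bootstrap around the total norm
\[
\mathcal{N}(t) := \|\hat h\|_{H^N(\Sigma_t)} + \|u\|_{H^N(\Sigma_t)} + \|s\|_{H^{N+1}(\Sigma_t)} + \|\upomega\|_{H^N(\Sigma_t)} + \|\C\|_{H^{N-1}(\Sigma_t)} + \|\D\|_{H^{N-1}(\Sigma_t)},
\]
the central numerology being that $\C, \D$ sit one derivative below $(\hat h, u)$ because wave energy estimates for $\square_G \Psi = F$ give control of $\|\Psi\|_{H^N}$ from $\|F\|_{H^{N-1}}$.

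For the wave part I would apply standard quasilinear wave energy estimates to \eqref{E:New_formulation_wave_h} and \eqref{E:New_formulation_wave_u}. The acoustical metric $G$ has coefficients depending on $(\hat h, u, s)$ which are Lipschitz under the bootstrap assumption since $N > 5/2$. Each term on the RHS lies in $H^{N-1}$: the $\C, \D$ terms by the bootstrap placement, the null forms $Q(\partial \hat h, \partial u)$ as quadratic products of $H^{N-1}$ functions (using that $H^{N-1}$ is an algebra when $N-1 > 3/2$), and the linear terms trivially. This yields $\|\hat h\|_{H^N} + \|u\|_{H^N} \leq \mathcal{P}(\text{data}) + \int_0^t \mathcal{P}(\mathcal{N}(\tau))\, d\tau$. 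Note that the null form structure is not essential for this result (it matters for low regularity in Section \ref{S:Rough_solutions}); here one only needs the algebra property.

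The crux is the derivation of improved regularity for $S$ and $\upomega$ via a transport-div-curl estimate of the schematic form
\[
\|V\|_{H^N} \lesssim \|V\|_{L^2} + \|\partial_\lambda V^\lambda\|_{H^{N-1}} + \|\vort(V)\|_{H^{N-1}} + \|u^\lambda \partial_\lambda V\|_{H^{N-1}}.
\]
For $V = S$ the divergence is $\partial_\lambda S^\lambda = n \D + \text{l.o.t.}$ by \eqref{E:Modified_divergence_entropy_gradient}, which lies in $H^{N-1}$; the vorticity-type part vanishes by \eqref{E:New_formulation_transport_div_curl_S}; and the transport derivative is in $H^{N-1}$ by \eqref{E:New_formulation_transport_S}. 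Hence $S \in H^N$, i.e.\ $s \in H^{N+1}$. For $V = \upomega$ the divergence is given by \eqref{E:New_formulation_transport_div_curl_omega}, the vorticity-type part $\vort(\upomega)$ equals $\C$ up to terms in $H^{N-1}$ by \eqref{E:Modified_vorticity_of_vorticity}, and the transport derivative comes from \eqref{E:New_formulation_transport_omega}; all three are in $H^{N-1}$, giving $\upomega \in H^N$. To close the loop, I would derive transport estimates for $\C, \D$ in $H^{N-1}$ using \eqref{E:New_formulation_transport_div_curl_D}--\eqref{E:New_formulation_transport_div_curl_C}: the RHSs contain $\C, \D$ themselves (absorbed by Gr\"onwall), null forms in $\partial S, \partial \upomega, \partial \hat h, \partial u$ (in $H^{N-1}$ by the algebra property), and linear lower-order terms.

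Combining the four sets of estimates produces a nonlinear Gr\"onwall inequality for $\mathcal{N}(t)$, which controls the solution on a short time interval. Existence is then obtained through an iteration scheme solving linear wave and transport equations at each step and passing to the limit via a lower-norm compactness argument; uniqueness follows from energy estimates on the difference of two solutions; continuous dependence can be established by a Bona--Smith-type approximation. The hard part will be the third step, the div-curl-transport estimate in a genuinely curved, solution-dependent setting: one must handle the $u$-orthogonal/parallel decomposition implicit in the definitions \eqref{E:Vort_operator}, \eqref{E:Modified_vorticity_of_vorticity}, \eqref{E:Modified_divergence_entropy_gradient}, and verify that every commutator and cross-term generated by the Sobolev calculus can indeed be controlled by $\mathcal{N}(t)$ without any loss of derivative. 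This is where the exact cancellations exhibited in the proof of Theorem \ref{T:New_formulation} pay off: the very purpose of introducing the modified variables $\C, \D$ is to shift the problematic top-order derivatives into quantities that satisfy good transport equations, and only the precise form in \eqref{E:Modified_vorticity_of_vorticity}--\eqref{E:Modified_divergence_entropy_gradient} makes the regularity count close.
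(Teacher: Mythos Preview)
Your proposal is correct and follows essentially the same approach as the paper: reduce to a priori estimates, use wave energy estimates for $(\hat h, u)$ sourced by $\C,\D$ at one derivative below, and recover the extra derivative for $S$ and $\upomega$ through the div-curl-transport structure combined with transport estimates for $\C,\D$. The paper's sketch singles out as the key technical point precisely what you flag as the hard part: the operators $\partial_\lambda(\cdot)^\lambda$ and $\vort(\cdot)$ are \emph{spacetime} div and curl, and one must convert them to a purely spatial div-curl system on $\Sigma_t$ by exploiting the orthogonality $u_\alpha \upomega^\alpha = 0$ (and $u_\alpha S^\alpha = u^\lambda\partial_\lambda s = 0$), which yields $u_\alpha \partial \upomega^\alpha = -\partial u_\alpha\, \upomega^\alpha$ and hence lets the timelike component of $\partial\upomega$ be treated as a source.
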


In other words, if 
\begin{align}
\left. (h,s,u,\upomega)\right|_{t=0} \in H^N \times H^{N+1} \times H^N \times H^N,
\nonumber
\end{align}
then this regularity is propagated by the flow. To understand the point of Theorem \ref{T:Improved_regularity}, 
the important observation is that standard theory (e.g., symmetric 
hyperbolic systems or the mixed order formulation we derived in Sect.~\ref{S:LWP_relativistic_Euler}) gives only 
$(h,s,u,\upomega) \in H^N \times H^N \times H^N \times H^{N-1}$,
even if $\left. (h,s,u,\upomega)\right|_{t=0} \in H^N \times H^{N+1} \times H^N \times H^N$.

\begin{proof}
We refer to \cite{Disconzi-Speck-2019} for details. Here, we only 
highlight the main ingredients.

A smooth solution with quantitative control in the space 
$(h,s,u,\upomega) \in H^N \times H^N \times H^N \times H^{N-1}$ can be obtained by standard methods (e.g., writing the equations as a first-order symmetric hyperbolic system). Thus, using a standard approximation of the data by smooth functions,
the problem can be reduced to deriving a priori estimates
for solutions that give control in the space
$(h,s,u,\upomega) \in H^N \times H^{N+1} \times H^N \times H^N$.

First, it is not difficult to see that direct energy estimates on the evolution equations of the new formulation lose derivatives. For example, we want to control $u$ in $H^N$ and $\upomega$ also in $H^N$ (recall that $\upomega \sim \partial u$). The evolution for $\upomega$, Eq.~\eqref{E:New_formulation_transport_omega}, gives (writing schematically and omitting the initial data and other terms that will not be important for the discussion) $u^\alpha \partial_\alpha \upomega \sim \partial u$, so 
standard transport estimates give
\begin{align}
\norm{\upomega}_{H^N(\Sigma_t)} \lesssim \int_0^t \norm{\partial u}_{H^N(\Sigma_\tau)} \, d\tau.
\nonumber
\end{align}
Equation \eqref{E:New_formulation_wave_u} gives
$\square_G u \sim \C \sim \partial \upomega$ and then, using estimates for wave equations,
\begin{align}
\norm{\partial_t u}_{H^{N-1}(\Sigma_t)}
+
\norm{u}_{H^N(\Sigma_t)} 
& \lesssim
\int_0^t \norm{\partial \upomega}_{H^{N-1}(\Sigma_\tau)} \, d\tau
\sim 
\int_0^t \norm{ \upomega}_{H^{N}(\Sigma_\tau)} \, d\tau
\nonumber
\\
& \lesssim
\int_0^t \norm{\partial u}_{H^{N}(\Sigma_\tau)} \, d\tau
\sim
\int_0^t \norm{ u}_{H^{N+1}(\Sigma_\tau)} \, d\tau,
\nonumber
\end{align}
where on the second line we used the above estimate\footnote{We observe that $\partial$ can involve time derivative, but they play the same role as $\bar{\partial}$ for the derivative counting since we can algebraically solve for time derivatives in terms of spatial derivatives using the equations.} for 
the $H^N$-norm of $\upomega$. We see thus that there is a loss
of derivatives.

The way around this problem is to use the fact that $\upomega$
satisfies not only a transport equation but also (taking into account the evolution for $\C \sim \curl \upomega$,
Eq.~\eqref{E:New_formulation_transport_div_curl_C}),
a div-curl-transport system. Thus, \emph{we can use elliptic
regularity through the div-curl part of the system to gain 
a derivative.}

It is not, however, so simple. The div and curl operators
in the new formulation are \emph{spacetime} div and curl
(recall, in particular, \eqref{E:Vort_operator} and \eqref{E:u_orthogonal_vorticity}). We need to extract regularity
across $\Sigma_t$ and for this we need spatial div-curl operators.
To do so, we use that \eqref{E:Vort_operator} and \eqref{E:u_orthogonal_vorticity} give
\begin{align}
u_\alpha \upomega^\alpha = 0 \Rightarrow u_\alpha \partial \upomega^\alpha = - \partial u_\alpha \upomega^\alpha,
\nonumber
\end{align}
which allows us to independently control the ``timelike part"
of $\partial \upomega$. We can then remove this timelike part
from the div-curl system, treating it as a souce, obtaining in this way a purely spatial div-curl system for which
elliptic estimates on $\Sigma_t$ can be applied. (Similar remarks apply to $S$ and its corresponding div-curl system.)
\end{proof}

\begin{remark}
The above div-curl elliptic estimates work because these are estimates across $\Sigma_t$, without boundary terms. For other applications, including the study of the maximal development of the initial data (see Sect.~\ref{S:Study_of_shocks}), one needs to localize such estimates and elliptic theory demands treatment of boundary terms. This has been done in \cite{Abbrescia-Speck-2022-arxiv} for the classical compressible Euler equations (see also \citealt{Shkoller-Vicol-2024}), but is open in the relativistic case (see \citealt{Abbrescia-Speck-2023}).
\end{remark}

\begin{remark}
The above procedure of excising the timelike part of $\partial \upomega$ can be done while preserving the null structure of the equations. While the null structure is not important for 
Theorem \ref{T:Improved_regularity}, it is important for the study of shock formation discussed in Sect.~\ref{S:Study_of_shocks}. In fact, in the shock problem we do need the extra regularity of $s$ and $\upomega$ given by Theorem \ref{T:Improved_regularity}.
\end{remark}

\begin{remark}
Improved regularity for the vorticity and entropy had been proven in the classical, compressible setting, by Luk and Speck using a similar new formulation of the classical compressible Euler equations (see \citealt{Luk-Speck-2020,Luk-Speck-2018,Luk-Speck-2024,Speck-2019}). One key difference to keep in mind is that in the classical setting the div and curl operators are honest, spatial operators, unlike the relativistic case where we have to deal with spacetime operators as mentioned above. See also the discussion after the statement of Theorem \ref{T:New_formulation}.
\end{remark}

\subsection{The study of shock formation\label{S:Study_of_shocks}}
We will next discuss the problem of singularity formation in solutions to the relativistic Euler equations and how it relates to the study of shock waves. Formation of singularities and shock waves in the Euler flow, and more generally in quasilinear hyperbolic systems, are very broad and active topics of study and it would be unfeasible to present here an account that does justice to topic. Our goal in what follows is very focused, intended primarily to highlight how the new formulation of Theorem \ref{T:New_formulation}, and the geometric-analytic framework that it entails, can be used to study these subjects. Even such a limited scope, however, already reveals a great deal of rich mathematics. \emph{The main takeaway is that, while singularity formation and shock waves are distinct topics that can each be studied on its own right, for the relativistic and the classical compressible Euler equations, there is a deep connection between these topics\footnote{Although there is no general simple picture, see Remark \ref{R:Connections_collapse_etc.}.}.} Such a connection is \emph{visible in great detail through the geometric-analytic framework we have been discussing.}
Before presenting details, we need some preliminary discussion. For the next definition, we remind the reader that we denote by $M$ the spacetime.

\begin{definition}
\label{D:Shock_wave}
A \textbf{shock wave, or simply a shock, for the relativistic Euler equations} \eqref{E:Relativistic_Euler_eq_full_system_energy_momentum}--\eqref{E:Relativistic_Euler_eq_full_system_baryon_charge}
is an oriented regular\footnote{Regular means that $\mathcal{M}$ can be locally written as the level $\{ \varphi = 0 \}$ of a $C^1$ function $\varphi$ satisfying $\left.d\varphi\right|_{\mathcal{M}} \neq 0$.} co-dimension one surface\footnote{Typically, this will be a surface with boundary, see Remark \ref{R:Past_boundary_shock}.} 
$\mathcal{M}$ in spacetime satisfying the following properties:\footnote{Here, we follow closely the definition in Chapter~8 of \cite{Anile-Book-1990}.}
\begin{enumerate}
\item For any $p \in \mathcal{M}$, there exists an open neighborhood $V \subset M$ of $p$ such that $V = V^+ \cup (V\cap \mathcal{M}) \cup V^-$, where $V^\pm$ are open sets satisfying $V^\pm \cap \mathcal{M} = \varnothing$, $V^+ \cap V^- = \varnothing$, and the limits
\begin{align}
\mathcal{T}^\pm(p) := \lim_{\substack{x \rightarrow p \\ x \in V^\pm}} \mathcal{T}(x),
\,\,
\mathcal{J}^\pm(p) := \lim_{\substack{x \rightarrow p \\ x \in V^\pm}} \mathcal{J}(x),
\nonumber
\end{align}
exist and satisfy $\mathcal{T}^+(p) \neq \mathcal{T}^-(p)$ or $\mathcal{J}^+(p) \neq \mathcal{J}^-(p)$.
\item Equations \eqref{E:Relativistic_Euler_eq_full_system_energy_momentum}-\eqref{E:Relativistic_Euler_eq_full_system_baryon_charge} hold in the sense of distributions in a neighborhood of $\mathcal{M}$, and in the classical sense in each of the sets $V^\pm$ introduced above.
\end{enumerate}
\end{definition}

The first condition in Definition \ref{D:Shock_wave} states that the energy momentum tensor $\mathcal{T}$ or the baryon density current $\mathcal{J}$ have a jump discontinuity across $\mathcal{M}$, with their values from each ``side'' of $\mathcal{M}$ being different but remaining finite. Under this situation we cannot talk about their derivatives in the classical sense, so the second condition requires the equations of motion to be satisfied in a distributional sense. The second condition also states that away from $\mathcal{M}$ the relativistic Euler equations are satisfied in the usual, classical sense. Thus, shock waves are essentially discontinuous solutions to the equations of motion. Observe that the above conditions are imposed on $\mathcal{T}$ and $\mathcal{J}$, from which we can then derive conditions on $\varrho$, $p$, $u$, etc. 

Physically, 
shock waves play a prominent role in the study of the relativistic (and classical compressible) Euler equations, with many important physical phenomena adequately descried by shocks. The basic idea is that shocks can be used to model abrupt, nearly instantaneous, variations in the fluid variables, such as those happening in a medium in the neighborhood a region where an object transitions from subsonic to supersonic speeds. 
See Chapter~8 of \cite{Anile-Book-1990} and Chapter~4 of \cite{Rezzolla-Zanotti-Book-2013} for a discussion of applications of shock waves in relativistic fluids. 

Given the importance of shocks in physics, the development of a mathematical theory of shock waves has received a great deal of attention through the years; a precise account would not be appropriate here, see the above monographs and the ones cited further below. As mentioned above, we will focus instead on a particular aspect of the problem related to the formation of singularities. In order to explain this in more detail, it is useful to start with some landmark results in $1+1$ dimensions.

\subsubsection{The 1+1-dimensional case}
\label{S:Shocks_1d}

In $1+1$ dimensions, \cite{John-1974} proved that for strictly hyperbolic systems in one spatial dimension (which include the relativistic and classical compressible Euler equations) satisfying a mild condition called the genuinely nonlinear condition, arbitrarily small perturbations of constant states develop a singularity in finite time, with the singularity given by a gradient blow-up.
John's result generalized earlier findings in this direction by  
\cite{Lax-1964} and \cite{Glimm-Lax-1970} (see also the related subsequent work of \citealt{Liu-1979}). Thus, in particular John's result shows that 
for large classes of smooth initial data, solutions to the relativistic Euler equations break down in finite time due to a gradient blow-up.

Once the gradient of the fluid variables blows up, we can no longer talk about them satisfying the equations of motion in the classical sense. But one can ask whether it is possible to continue solutions past the singularity in a distributional sense; in particular, whether solutions admitting shock waves exist. It turns out that distributional solutions to 
\eqref{E:Relativistic_Euler_eq_full_system_energy_momentum}-\eqref{E:Relativistic_Euler_eq_full_system_baryon_charge} are not unique, thus simply determining their existence is not enough. In $1+1$ dimensions, however, uniqueness can be enforced if one restricts attention the to class of \emph{entropy solutions.} An entropy solution is a distributional solution for which the entropy increases discontinuously, in a controlled fashion, across a hypersurface of discontinuity\footnote{See references below for the precise definition of an entropy solution, which will not be needed here. We warn the reader that the terminology is not uniform in the literature, and what we called distributional vs. entropy solutions is sometimes called weak vs. entropy solutions, weak vs. admissible solutions, etc. Moreover, the expression entropy solution itself is used to mean different things in the literature. Because we are introducing this concept only as a motivation for subsequent problems, it suffices for our discussion to know that such a notion of entropy solution exists, without getting into the details of its definition and differences in the literature.}, i.e., it is a shock wherein the entropy\footnote{The history of how this condition came about is quite interesting, going back to the works or Riemann, Helmholtz, and Clausius, see the introduction of \cite{Christodoulou-2007}.} jumps across $\mathcal{M}$. 

Going back to \cite{Glimm-1965}, through the work of many authors including \cite{Bressan-Book-2000}, \cite{Dafermos-Book-2005}, \cite{LeFloch-Book-2002}, \cite{Liu-Book-2021}, and \cite{Serre-Book-1999-1, Serre-Book-2000-2}, a well-developed theory of entropy solutions for the relativistic and classical compressible Euler equations in $1+1$ dimensions is available. The main result can be summarized as follows. 
Given initial data with sufficiently small total variation, there exists a unique entropy solution which is defined globally in time. Because entropy solutions in particular accommodate shocks, this gives a very satisfactory answer to the question of existence of shock waves for the Euler equations in $1+1$ dimensions (entropy solutions also include other types of singularities, such as contact discontinuities or rarefaction waves, see the previous references). We refer the reader to the above references for the precise statements and proofs. The result holds under certain further natural assumptions on the equation of state and on the initial data, e.g., the assumption that the density is bounded away from zero from below (thus, no vacuum regions are present). See also the related works by \cite{Groah-Smoller-Temple-Book-2007,Smoller-Temple-1993}. 

\begin{remark}
Here, we are focusing on the relativistic Euler equations, but the above references establish the aforementioned result on entropy solutions for a large class of (hyperbolic) systems of conservation laws, of which the relativistic and classical compressible Euler equations are simple particular cases. We remark that for such systems a notion of entropy can be introduced in order to define entropy solutions, even if the equations have no direct physical meaning.
\end{remark}

\subsubsection{The higher-dimensional case}
\label{S:Shocks_higher_d}

Turning now to the $2+1$ and $3+1$ cases, it is natural to ask whether the aforementioned theory of entropy solutions that is so successful in the case of one spatial dimension can be generalized to higher dimensions.
The assumption of data with small variation is key for the existence and uniqueness of entropy solutions in $1+1$ dimensions. The proofs rely crucially on estimates in the space of functions of bounded variation (BV). Unfortunately, it was showed by \cite{Rauch-1986} that BV estimates are not true for most quasilinear systems of interest in more than one spatial dimension. 

Establishing existence and uniqueness of entropy solutions for the relativistic and classical compressible Euler equations in $2+1$ and $3+1$ dimensions is currently a major open problem, whereas developing a general theory of entropy solutions for quasilinear equations in divergence form in $2+1$ or $3+1$ dimensions, in the spirit of the general theory of systems of conservation laws in $1+1$, seems out of reach. See the discussion in \cite{Holzegel-Klainerman-Speck-Wong-2016} and also Sect.~\ref{S:Some_context_shocks}. Nevertheless, there has been important progress within some regimes of interest.

On important case is that of what is sometimes called the \textbf{shock front problem.} In this case, one starts with initial data that on $\Sigma_0$ that suffers a jump discontinuity and asks about the existence and uniqueness of entropy solutions containing shocks that emanate from the surface of discontinuity contained in $\Sigma_0$. See Fig.~\ref{F:Shock_front} for an illustration. In the case of the classical compressible Euler equations, this problem was solved by  \cite{Majda-Book-1984} for initial data where a jump discontinuity happens across a smooth hypersurface, with the data being smooth on both sides of the jump.

\begin{figure}[ht]
\centering
  \includegraphics[scale=0.5]{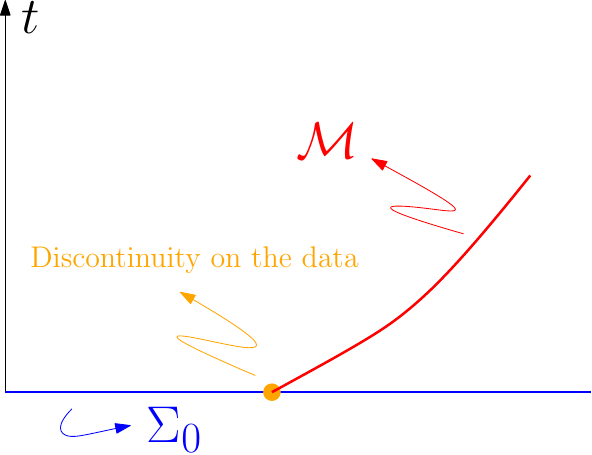}
  \caption{Illustration of the shock front problem.}
  \label{F:Shock_front}
\end{figure}

Important as the shock front problem is, we will focus on a different question, namely, that of the emergence of shocks through a dynamical mechanism, as follows.
Consider sufficiently regular initial data for the relativistic Euler equations prescribed on $\Sigma_0$, where for simplicity we consider the Minkowski spacetime. We would like to determine a time $t_* > 0$, called the \textbf{time of first blow-up,} for which a solution to the Cauchy problem exists and remains regular for $t<t_*$, $\mathcal{T}(t,\cdot)$ and $\mathcal{J}(t,\cdot)$ remain bounded but their gradient blows up as one approach $\Sigma_{t^*}$, with the blow-up occurring along a set $\mathcal{V} \subset \Sigma_{t^*}$, which we call the \textbf{blow-up set\footnote{There is no standard terminology for the set $\mathcal{V}$.}.} Then, we would like to continue the solution past the blow-up in a distributional sense. More precisely, we would like to find an entropy solution that agrees with the classical solution for $t<t_*$ and that contains shock waves emanating from $\mathcal{V}$. This problem is known as the \textbf{shock development problem.} See Fig.~\ref{F:Shock_development} for an illustration.

\begin{figure}[ht]
\centering
  \includegraphics[scale=0.5]{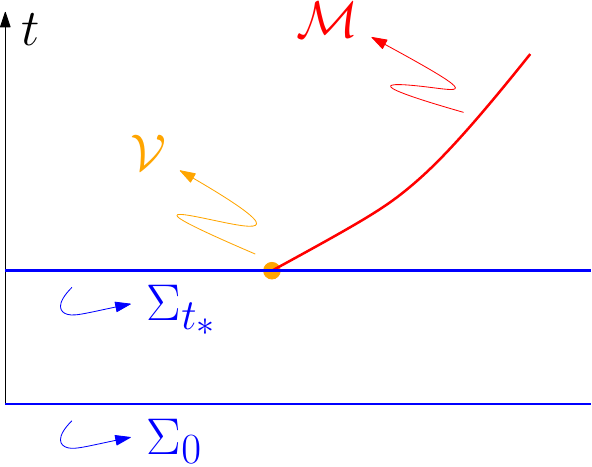}
  \caption{Illustration of the shock development problem.}
  \label{F:Shock_development}
\end{figure}

\begin{remark}
\label{R:Past_boundary_shock}
When we say that a shock wave ``emanates'' from a set $\mathcal{S}$, it roughly means that $\mathcal{M}$ has $\mathcal{S}$ as its past boundary. For our present high-level discussion, a precise definition will not be needed. The interested reader can consult \cite{Abbrescia-Speck-2022-arxiv,Abbrescia-Speck-2023}. But we remark that in general the past boundary of $\mathcal{M}$ will not be contained into a single time slice, thus in the case of the set $\mathcal{V} \subset \Sigma_{t^*}$ above, a more precise description would be to say that $\mathcal{M}$ emanates from a set containing $\mathcal{V}$.
\end{remark}

Observe that the shock development problem contains two different aspects. One is about the formation of singularities and the other is about the continuation of solutions past the singularity\footnote{It is more common in the literature to refer to the shock development problem as only the second aspect, i.e., the continuation past the singularity. Here, however, it is convenient to think of formation of singularities and the subsequent continuation of solutions as two different aspects of a single problem.}. In $1+1$ dimensions, both aspects of the problem, as well as the shock front problem, are all treated together. In higher dimensions, formation of singularities for the relativistic Euler equations was first treated by 
\cite{Guo-Tahvildar-Zadeh-1999} and, subsequently, by \cite{Pan-Smoller-2006}. These results showed that for large classes of smooth initial data which include open sets, classical solutions to the relativistic Euler equations in $3+1$ dimensions break down in finite time\footnote{Naturally, one can conclude breakdown of solutions in $3+1$ dimensions from the $1+1$ case by simply viewing the $1+1$ equations as equations in $3+1$ dimensions under plane symmetry. The corresponding $1+1$ initial data, however, will not form an open set among $3+1$ data under any useful topology.}, generalizing to the relativistic setting a similar earlier result by \cite{Sideris-1985} for the classical compressible Euler equations. The nature of the singularity, however, is not revealed in these works (see more below). While, the full shock development problem remains open in more than one spatial dimension, a very important step toward its resolution was accomplished by \cite{Christodoulou-Book-2019}, who solved a simplified version called the restricted shock development problem. 

Without going into details, the current state-of-affairs in the shock development problem can be summarized as follows. In regimes accessible with current techniques, solving the shock development problem in $2+1$ or $3+1$ dimensions requires understanding the \textbf{maximal globally hyperbolic development of the initial data,} or \textbf{maximal development} for short, which is roughly speaking the largest spacetime domain in which there exists a unique classical solution determined entirely by the initial data\footnote{We say ``the'' maximal development under the implicit assumption that one considers sets of initial data for which there exists a unique maximal development. It is known that, for general quasilinear systems, such a maximal development need not to be unique, see \cite{Eperon-Reall-Sbierski-2019}.}. We refer the reader to \cite{Abbrescia-Speck-2022-arxiv,Shkoller-Vicol-2024,Holzegel-Klainerman-Speck-Wong-2016} for background and a more in-depth discussion of such current state-of-affairs.

For the classical compressible Euler equations in $3+1$ dimensions, \cite{Abbrescia-Speck-2022-arxiv} have recently constructed a portion of the maximal development for specific open sets of data consisting of perturbations of plane-symmetric solutions. We note that the perturbations are not restricted to a symmetry class, thus this is a full $3+1$-dimensional result. They have constructed a singular-boundary portion of the maximal development wherein the solution experiences a gradient blow-up. For a full description of the maximal development, one also has to construct the boundary part that is not singular but determined by domain-of-dependence considerations, i.e., the so-called Cauchy horizon\footnote{See Sect.~8.3 of \cite{Wald:1984rg} for a definition of Cauchy horizons. There, the focus is on the spacetime metric, but the same concept applies to the case of an acoustical metric.}. A portion of the maximal development containing also a Cauchy horizon has been constructed by \cite{Shkoller-Vicol-2024}, albeit in two spatial dimensions and under some convexity assumption on the initial data. 

A key feature in constructing the maximal development is that it requires describing the nature of the singularity, i.e., a refined description of the behavior of the classical solution prior to the breakdown
is needed (recall that the results by \citealt{Guo-Tahvildar-Zadeh-1999,Pan-Smoller-2006,Sideris-1985} do not reveal this information). For the relativistic Euler equations in $3+1$ dimensions, such a precise description was first accomplished in the celebrated work by \cite{Christodoulou-Book-2007} in the case of an irrotational fluid. There, it was shown that there exists an open set of initial data consisting of perturbations of constant states for which the corresponding solutions break down in finite time; the breakdown is caused by a gradient blow-up, whereas the fluid variables themselves remain bounded; and, finally, the breakdown mechanism is the ``collapse of the characteristics,'' where the characteristics in this case are the sound cones, similarly to what happens to the characteristics in the case of the one-dimensional Burgers equation; see below for a definition and more details. We note that prior to \cite{Christodoulou-Book-2007}, \cite{Alinhac-1999-1, Alinhac-1999-2, Alinhac-2001-1, Alinhac-2001-2} obtained results pointing toward the general picture described by Christodoulou. Christodoulou's result was generalized by \cite{Speck-Book-2016}, who considered more general classes of quasilinear wave equations (recall from Sect.~\ref{S:Irrotational_flows} that the irrotational Euler equations can be written as a system of quasilinear wave equations). Finally, a similar result for the classical compressible isentropic irrotational Euler equations was obtained by \cite{Christodoulou-Miao-Book-2014}. 

In the case of the classical compressible Euler equations in $2+1$ and $3+1$ dimensions, 
prior to the works by \cite{Abbrescia-Speck-2022-arxiv,Shkoller-Vicol-2024} on the maximal development, the results mentioned in the previous paragraph had been generalized to include vorticity by 
\cite{Luk-Speck-2020,Luk-Speck-2018,Luk-Speck-2024}. See also the related works by \cite{Buckmaster-Shkoller-Vicol-2022,
Buckmaster-Shkoller-Vicol-2023-1,
Buckmaster-Shkoller-Vicol-2023-2,Buckmaster-Drivas-Shkoller-Vicol-2022}.

The upshot of the previous discussion is the following. \emph{Tackling the shock development problem requires understanding the maximal development of the initial data. The latter, in turn, requires a precise description of the nature of singularities, which entails in particular an understanding of the blow-up set $\mathcal{V}$.} 

With this background in mind, we can now turn attention to the problem we want to discuss in connection with the new formulation of Theorem \ref{T:New_formulation}. Before doing so, however, we need to 
make an important remark on terminology. When one has a gradient blow-up driven by the collapse of the characteristics, with the fluid variables remaining bounded, as described above, it has become 
common practice in the literature (see, e.g., 
\citealt{Abbrescia-Speck-2022-arxiv,Shkoller-Vicol-2024,Abbrescia-Speck-2023,Christodoulou-Book-2007,Luk-Speck-2020,Luk-Speck-2018,Luk-Speck-2024,Speck-2019,Speck-Book-2016}) to refer to the blow-up set $\mathcal{V}$ itself as a shock wave or simply as a shock, and to the process leading to blow-up as a shock formation. This abuse of terminology is understandable if one keeps in mind the connections among gradient blow-up, maximal development, and the shock development problem described above. We will follow this practice, henceforth referring to shocks and the formation of shocks in this sense. 

We can now finally state the problem we want to investigate. It is the problem of \textdef{constructive proofs of stable shock formation without symmetry assumptions in more than one spatial dimension for the relativistic Euler equations,} henceforth referred to simply as the \textdef{problem of shock formation.} By this we mean the following:

\begin{itemize}
\item Shocks form for an open set $\mathcal{B}$ of (small, usually perturbations of constant states) smooth initial data. This means that shocks are stable. More precisely, we want to determine open sets of data for which a time of first blow-up occurs along with a precise description of the nature of the singularity. 
In more detail, we want to show that the gradient of the fluid variables blows up, while the fluid variables themselves remain bounded. Moreover, the goal is to show that blow-up occurs through a specific mechanism of ``collapse of the characteristics,'' where the characteristics in this case are the sound cones. 
The precise definition is given in Sect.~\ref{S:Ingredient_one} (see in particular Eq.~\eqref{E:Inverse_foliation_density} and the surrounding discussion), but the basic idea is that this mechanism is similar to the blow-up mechanism for the Burgers equation in $1+1$ dimensions\footnote{See Footnote \ref{FN:Intersection_characteristics}.}. 
In view of the foregoing discussion, the motivation for considering only singularities where this collapse happens is that these are the types of singularities believed to be connected with the existence of shock waves, as opposed to other types of singularities such as the so-called implosion singularities recently discovered for the classical compressible Euler equations \citep{Merle-Raphael-Rodnianski-Szeftel-2022-1,Merle-Raphael-Rodnianski-Szeftel-2022-2,Merle-Raphael-Rodnianski-Szeftel-2022-3} (see also the related work by \citealt{Chen-Cialdea-Shkoller-Vicol-2024-arxiv}), wherein the fluid variables themselves blow-up, as opposed to only their derivatives.
\item Proofs are constructive, in the sense that they provide information about the nature of the singularity. We do not want, as mentioned above, to describe all sorts of singularities, but only gradient blow-up driven by the collapse of the characteristics. Moreover, as we will explain in Sect.~\ref{S:Ingredient_two}, we want a more refined description intrinsically tied to the collapse of the characteristics. Instead of general gradient blow-up, we want to single out directions transverse to the sound cones as the direction along which derivatives blow up, with derivatives tangent to the sound cones remaining bounded.
\item $\mathcal{B}$ contains ``arbitrary'' initial data, in the sense that it is not restricted to symmetry classes.
\item The problem is considered in $2+1$ or $3+1$ dimensions.
\end{itemize}

In order to carry out a proof with the above elements, there are three key ingredients that are crucial to apply current geometric-analytic techniques. We will discuss each one of them in what follows. Our discussion will elucidate why the formulation
of Theorem \ref{T:New_formulation} is well suited
for the problem of shock formation. After discussing these
three key ingredients, we will provide some further comments on studies of shocks. We stress that all the discussion of these three ingredients focuses on the
case of more than one spatial dimension. We point out that the work by \cite{Abbrescia-Speck-2023} provides a useful complement to the discussion that follows.

\begin{remark}
\label{R:Connections_collapse_etc.}
The connections among collapse of the characteristics, a gradient blow-up, and the shock development problem, are rather intricate and have been understood within specific regimes and for certain sets of initial data. Although these are deep connections that underscore a great deal of beautiful mathematics, we do not intend to suggest that a general theory is available.
\end{remark}

\subsubsection{Ingredient one: nonlinear geometric optics\label{S:Ingredient_one}}
This is done by introducing an \textdef{eikonal function} $\mathcal{U}$, which is a solution to the \textdef{eikonal equation}
\begin{align}
(G^{-1})^{\alpha\beta} \partial_\alpha \mathcal{U} \partial_\beta \mathcal{U} = 0,
\label{E:Eikonal_equation_shocks}
\end{align}
with appropriate initial conditions. The eikonal function plays two crucial roles.

First, the level sets of $\mathcal{U}$ are the characteristics associated with acoustical the metric $G$, which
are the sound cones. In this regard, we note $\mathcal{U}$
is adapted to the wave-part of the system and not the transport-part\footnote{In view of the discussion 
at the beginning of Sect.~\ref{S:New_formulation},
wherein we see transport of entropy and vorticity as a suitable
perturbation of the underlying wave equation \eqref{E:Prototype_wave_equation} (see Remark \ref{R:Wave_equations_general_case_reduce_prototype}), 
we will often refer to the wave-part and the transport-part of the relativistic Euler system. This is intended as a useful conceptual
scheme to help us untangle the behavior of the different characteristics of the system, but should not be thought of as suggesting that we can consider each part separately, as the evolution of the wave-part and the transport-part are coupled.}. This
choice is based on the fact that the transport part corresponds to the evolution of entropy and vorticity, and there are no
known shock results for these quantities. On the other hand, the only known mechanism\footnote{For classical compressible Euler, other types of singularities have been recently constructed in \cite{Merle-Raphael-Rodnianski-Szeftel-2022-1,Merle-Raphael-Rodnianski-Szeftel-2022-2,Merle-Raphael-Rodnianski-Szeftel-2022-3}, but their stability is unknown.} of shock formation for relativistic Euler is the \emph{collapse of the sound cones,} very much like the collapse of the characteristics is the blow-up mechanism for the $1+1$-dimensional Burgers' equation (see Sect.~\ref{S:Some_context_shocks}). Here, ``collapse'' of the sound cones or characteristics refers to the phenomenon of the characteristics accumulating at an ``infinite density."
The collapse of the sound cones is measured by the
\textdef{inverse foliation density} $\upmu$ defined as
\begin{align}
\upmu := -\frac{1}{(G^{-1})^{\alpha\beta} \partial_\alpha t \partial_\beta \mathcal{U} },
\label{E:Inverse_foliation_density}
\end{align}
which has the property that $\upmu \rightarrow 0$ corresponds to the infinite density of the characteristics. In particular, this notion of infinite density\footnote{One would informally talk about the intersection of the characteristics, but this is not very precise because, for generic data, the characteristics do not intersect. Rather, they accumulate in a way that is measured by the inverse foliation density $\upmu$ (see \citealt{Abbrescia-Speck-2022-arxiv} for more discussion).\label{FN:Intersection_characteristics}} is made precise by quantifying it as $\upmu \rightarrow 0$. We note that these remarks highlight the importance, in the context of shock formation, of not treating the transport and wave parts of the system together, as it is done in first-order formulations of the equations.

Second, in order to detect the blow-up, we need to identify precisely in which directions the solution blows up, and in which directions it remains bounded. This is done with the help of a \textdef{null-frame}
\begin{align}
\{ e_1, e_2, \underline{L}, L \},
\nonumber
\end{align}
adapted to the sound cones. Here, $L$ and $\underline{L}$ are null vectors with respect to $G$, satisfying $G(\underline{L},L)=-2$, and $\{e_1,e_2\}$ is an orthonormal (with respect to $G$) frame on the (topological) spheres given by\footnote{We use a harmless abuse of notation to simplify the presentation. Instead of writing $S_{t_0,\mathcal{U}_0} := \{ t = t_0 \} \cap \{ \mathcal{U} = \mathcal{U}_0 \}$
for given $t_0$ and $\mathcal{U}_0$, we simply write $S_{t,\mathcal{U}} :=
\{ t = \, \text{constant} \} \cap \{ \mathcal{U} = \, \text{constant} \}$.}
\begin{align}
S_{t,\mathcal{U}} :=
\{ t = \, \text{constant} \} \cap \{ \mathcal{U} = \, \text{constant} \},
\nonumber
\end{align}
see Fig.~\ref{F:Null_frame}. 
$\underline{L}$ is ``incoming'' and $L$ ``outgoing,'' so that
$\underline{L}$ is transverse to the the characteristics and $L$ tangent to the characteristics.
We also have $G(e_A,L)=0=G(e_A, \underline{L})$, $A=1,2$.

\begin{figure}[ht]
\centering
  \includegraphics[scale=0.4]{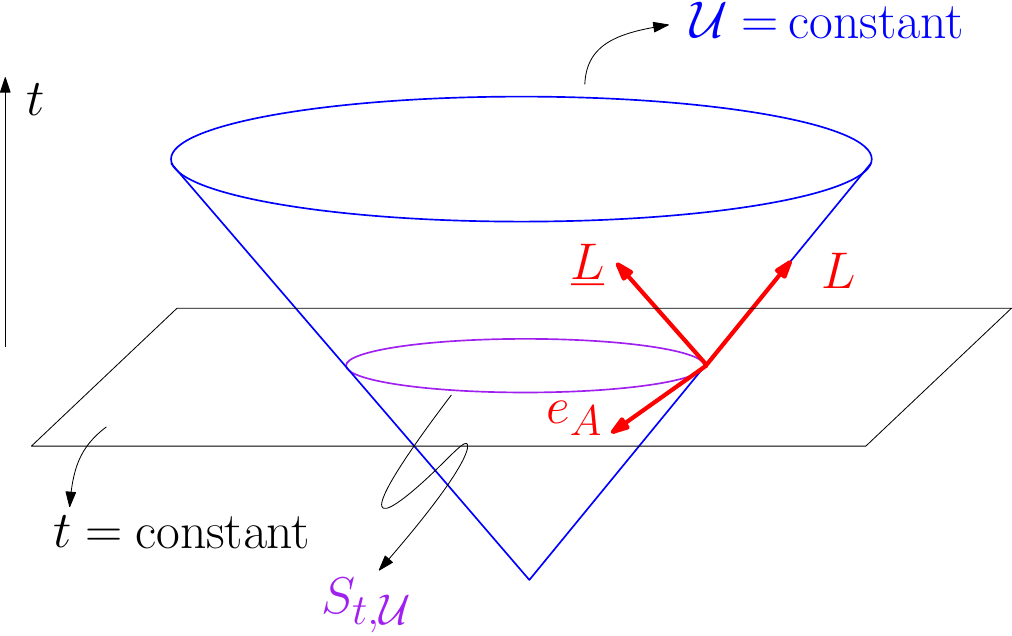}
  \caption{Illustration of a null-frame}
  \label{F:Null_frame}
\end{figure}

We can decompose quantities with respect to this null-frame, and identify that blow-up occurs in the $\underline{L}$ direction, while derivatives of the fluid variables in the other directions remain bounded (see Sects.~\ref{S:Ingredient_two} and \ref{S:Ingredient_three}). To carry out the analysis, we also introduce a geometric system of coordinates adapted to the sound characteristics,
\begin{align}
\{ t, \mathcal{U}, \vartheta^1, \vartheta^2 \},
\nonumber
\end{align}
where $\vartheta^A$, $A=1,2$, are coordinates on the spheres
$S_{t,\mathcal{U}}$, constructed by solving $(G^{-1})^{\alpha\beta}\partial_\alpha \mathcal{U} \partial_\beta \vartheta^A = 0$ with suitable initial conditions.

\subsubsection{Ingredient two: nonlinear null-structure}
\label{S:Ingredient_two}

The basic philosophy for the proof of shock formation is to show that, \emph{relative to the geometric coordinates} $
\{ t, \mathcal{U}, \vartheta^1, \vartheta^2 \}$, \emph{the solution remains bounded all the way to the shock.} In this way we transform the problem of shock formation into a more traditional one, where the goal is to derive long-time estimates for the solution (relative to geometric coordinates). The blow-up of the solution with respect to the original coordinates is recovered by showing that the geometric coordinate system \emph{degenerates} (in a precise fashion, see Remark \ref{R:Genuinely_nonlinear}) relative to the original coordinates\footnote{This is akin to the behavior of Burgers' equation, where in characteristic coordinates solutions are global, although the change of coordinates from standard to characteristic coordinates is no longer valid when a shock forms. An even simpler example is given by the Riccati ODE $\dot{y} = y^2$, which blows up in finite time. If we introduce the 
``new (solution-dependent) coordinate'' $z$ by solving $\dot{z} =- y z$, $z(0)=1$, and define $\tilde{y} :=  z y$, then $\dot{\tilde{y}} = 0$, so $\tilde{y}(t)= \tilde{y}(0)=y(0)$ for all $t$. However, the ``coordinate'' $z$ has to approach zero when $y$ approaches $\pm \infty$.} (since the characteristics are intersecting at the shock, we expect the geometric coordinates to degenerate there).

A crucial aspect of these constructions is that the \emph{null-frames and the geometric coordinates depend on the fluid's solution variables,} since they are constructed out of $\mathcal{U}$, which depends on $G$. (In broad philosophical terms, this resembles the approach to Einstein's equations, where wave coordinates depend on the solution, i.e., on the spacetime metric.) Therefore, in order to implement these ideas, we have to show that the geometric coordinates remain regular all the way up to the shock. And to do so, we need to obtain precise estimates for the fluid's variables, showing, in particular, that the derivatives tangent to the sound cones do not produce singularities, the latter coming from derivatives in the $\underline{L}$ direction.
One important \emph{key idea} here is the following.

One shows that \emph{the evolution can be decomposed into a Riccati-type\footnote{Recall that the Riccati ODE is $\dot{y}=\pm y^2$, which blows up in finite time.} term that drives the blow-up and error terms that do not significantly alter the high-frequency behavior of the Riccati term.} Such terms appear as follows (we will illustrate with $\hat{h}$; similar statements hold for $u$). Expanding the covariant wave operator $\square_G$ relative to the null-frame, Eq.~\eqref{E:New_formulation_wave_h} reads, schematically,
\begin{align}
L (\underline{L} \hat{h} ) \simeq - (\underline{L} \hat{h} )^2 + Q,
\label{E:Decomposition_enthalpy_equation}
\end{align}
where $Q$ denotes linear combinations of null-forms relative to $G$ and we have omitted harmless terms, e.g., terms linear in derivatives. The equation $L (\underline{L} \hat{h} ) \simeq - (\underline{L} \hat{h} )^2$ is the Riccati equation for the variable $\underline{L} \hat{h}$, since $L$ is differentiation along the sound cones so that $L = \frac{d}{d \tau}$ for a suitable parametrization of the flow lines of $L$. Thus, in order to derive blow-up, we need to show that $Q$ is a perturbation that does not significantly alter the Riccati behavior. This is difficult because Riccati terms are generally unstable under perturbations (see Remark \ref{R:ODE_null_forms_analogy}). However, and here is where the role of null-forms becomes important, \emph{Riccati terms are stable under perturbation by null-forms.} Relative to the null-frame, we have,
for a null-form $\mathcal{Q}$ relative to $G$,
\begin{align}
\mathcal{Q}(\partial \phi, \partial \psi) = \mathsf{T}(\phi) \partial \psi + \mathsf{T}(\psi) \partial \phi,
\nonumber
\end{align}
where $\mathsf{T}$ is differentiation tangent to the sound cones. This implies that even though $\mathcal{Q}$ is quadratic, it \emph{never involves terms quadratic in the direction the system ``wants to blow up."} Specifically, in our case then \eqref{E:Decomposition_enthalpy_equation} reads
\begin{align}
\nonumber
L (\underline{L} \hat{h} ) \simeq - (\underline{L} \hat{h} )^2 
+ \mathsf{T}(\hat{h}) \partial \hat{h},
\end{align}
so that the first term on the RHS is the only term quadratic in $\underline{L} \hat{h}$. If instead of $\mathsf{T}(\hat{h})$ we had a generic derivative $\partial \hat{h}$, then we would get a $(\partial\hat{h})^2$ term. After decomposing in a null-frame, this 
$(\partial\hat{h})^2$ term could produce a $(\underline{L}\hat{h})^2$ that cancels or nearly cancels the $-(\underline{L}\hat{h})^2$ from the Riccati part, thus working against blow-up and preventing us from proving that shocks form. The term $\mathsf{T}(\hat{h}) \partial \hat{h}$, however, is at most \emph{linear} in $\underline{L}\hat{h}$, so that in the end we have
\begin{align}
L (\underline{L} \hat{h} ) \simeq - (\underline{L} \hat{h} )^2 
+ \mathsf{T}(\hat{h}) ( \underline{L} \hat{h} ).
\nonumber
\end{align}
Since tangential (to the sound cones) derivatives remain bounded (see Sect.~\ref{S:Ingredient_three} below), the first term on the RHS dominates over the last term, leading to the blow-up of $\underline{L}\hat{h}$, as desired. Note, in particular, that it is precisely a transverse (to the sound cones) derivative that blows up.

\begin{remark}
\label{R:ODE_null_forms_analogy}
A straw man ODE analogy of the above discussion is the following.
Consider the two following perturbations of the Riccati ODE
$\dot{y} = y^2$: $\dot{y} = y^2 + \varepsilon y$ and
$\dot{y} = y^2 \pm \varepsilon y^3$, $y(0) > 0$, $\varepsilon > 0$ small. The first equation still blows up and it does it at the same rate as the original Riccati. For the second perturbation, depending on the sign $\pm$, the solution will either exists for all time or it will blow up at an entirely different rate (thus, effectively, altering the blow-up). The null-forms are the PDE analogue of the $\varepsilon y$ perturbation.
\end{remark}

\begin{remark}
\label{R:Genuinely_nonlinear}
Being a bit less schematic, the evolution for $\underline{L} \hat{h}$ relative the null frame reads
\begin{align}
L (\underline{L} \hat{h} ) \simeq \mathsf{f}(\hat{h}) (\underline{L} \hat{h} )^2 
+ \mathsf{T}(\hat{h}) ( \underline{L} \hat{h} ),
\nonumber
\end{align}
where $\mathsf{f}$ is a smooth function.
John's \emph{genuinely nonlinear condition} (see \citealt{John-1974} and also the overview in \citealt{Speck-Book-2016}), which holds in the present discussion, gives $\mathsf{f}(\hat{h}) < 0$, which gives the negative sign in the above schematic form $L (\underline{L} \hat{h} ) \simeq - (\underline{L} \hat{h} )^2 
+ \mathsf{T}(\hat{h}) ( \underline{L} \hat{h} )$. In the irrotational case, $\mathsf{f}(\hat{h}) =0$ corresponds to an equation of state where the fluid's potential solves the timelike minimal surface equation, where shocks do not occur (see \citealt{Christodoulou-Book-2007}). 
Plane symmetric perturbations of this situations do not form shocks either (see \citealt{Abbrescia-Wong-2020}). We also note that $\upmu$ satisfies an evolution equation of the form $L \upmu \simeq \mathsf{f}(\hat{h}) \upmu$, so if $\left. \upmu \right|_{t=0} > 0$ and $\mathsf{f}(\hat{h}) < 0$, the dynamics favors driving $\upmu$ towards zero (corresponding to the collapse of the sound cones and thus a shock). Finally, recall that we mentioned that blow-up  of the solution with respect to the original coordinates is recovered by showing that the geometric coordinate system degenerates, although the solution remains regular with respect to the geometric coordinates. This can be roughly viewed by noting that 
$\underline{L} \simeq \frac{\partial}{\partial t} + \frac{1}{\upmu} \frac{\partial}{\partial \mathcal{U}} + \sum_{i=1,2} \frac{\partial}{\partial \vartheta^i}$, so $\upmu \rightarrow 0$ gives the shock. (We recall that in all this discussion we are focusing on $\hat{h}$ and omitting $u$ under the $\simeq$ signs.)
\end{remark}

\subsubsection{Ingredient three: energy estimates and regularity\label{S:Ingredient_three}}
The previous discussion assumes that we can in fact close estimates establishing several elements needed for proving that transverse derivatives blow up -- e.g., that tangential derivatives in fact remain bounded or that the geometric coordinate system remains regular prior to the shock. Thus, we need to derive estimates establishing such claims. 
This means estimates not only for the fluid variables but also 
for the eikonal function (note that the regularity of the null-frame and of the geometric coordinate system is tied to that of 
$\mathcal{U}$). It should be pointed out that the regularity theory of the eikonal function is very challenging (see Sect.~\ref{S:Rough_solutions} for more on this).

Energy estimates for the fluid variables are obtained by commuting the equations with derivatives, but in order to avoid generating uncontrollable source terms, we need to commute the equation with certain geometric vectorfields that are adapted to the sound characteristics. This leads to vectorfields of the form
$\mathcal{Z} \sim \partial \mathcal{U} \cdot \partial$.
Commuting through, e.g., Eq.~\eqref{E:New_formulation_wave_h}, we find
\begin{align}
\begin{split}
\mathcal{Z} (\square_G \hat{h} ) & \sim \square_G ( \mathcal{Z} \hat{h} ) + 
(\square_G \partial \mathcal{U} ) \partial \hat{h}
\\
& 
\sim \square_G ( \mathcal{Z} \hat{h} ) + 
\partial^3 \mathcal{U}  \partial \hat{h},
\end{split}
\nonumber
\end{align}
so that the equation for $\mathcal{Z} \hat{h}$ is of the form

\begin{align}
\begin{split}
 \square_G ( \mathcal{Z} \hat{h} ) \sim   
\partial^3 \mathcal{U}  \partial \hat{h} + \dots
\end{split}
\nonumber
\end{align}
Since $\mathcal{U}$ solves a (fully nonlinear) transport equation, standard regularity theory for transport equations gives that $\mathcal{U}$ is only as regular as the coefficients of the equation, which in this case is $G$; and since $G=G(\hat{h}, s, u)$, we find $\partial^3 \mathcal{U} \sim \partial^3 G \sim \partial^3 \hat{h} + \dots$, so that
\begin{align}
\begin{split}
 \square_G ( \mathcal{Z} \hat{h} ) \sim   
\partial^3 \hat{h}+ \dots
\end{split}
\label{E:Illustration_derivative_loss_h_hat}
\end{align}
Standard energy estimates for $\square_G$ give that from 
$ \square_G ( \mathcal{Z} \hat{h} ) $ we obtain an energy-norm
for $\partial \mathcal{Z} \hat{h} \sim \partial^2\hat{h}$. Thus, from
\eqref{E:Illustration_derivative_loss_h_hat} we see that we are trying to bound $\partial^2 \hat{h}$ in terms of $\partial^3\hat{h}$, i.e., there is a derivative loss.

It turns out that we can overcome the above regularity loss by exploiting some delicate tensorial properties of the eikonal function and of the wave equation relative to geometric coordinates. Together, these properties can be used to show that certain geometric tensors constructed out of $\mathcal{U}$ enjoy \emph{extra regularity properties in directions tangent to the sound cones.} Carefully accounting for the precise structure of the aforementioned $\partial^3 \mathcal{U} \partial \hat{h}$ term, we can show that it is precisely one of such terms with extra regularity. It turns out that all terms that seem to exhibit loss of regularity are of this form and can thus be controlled. In particular, this better behavior in directions tangent to the characteristics\footnote{Better behavior in directions tangent to the characteristics is a recurrent theme in the theory of quasilinear wave equations and is already visible in the case of the flat standard wave equation, see Footnote \ref{FN:Better_behavior_flat_wave}.} is what ultimately allows us to show that tangential derivatives remain bounded, as mentioned in Sect.~\ref{S:Ingredient_two}.

\begin{remark}
The special structures mentioned above that are used to prevent loss of regularity of the eikonal function are tied to the geometry of the sound cones. The improved estimates, without derivative loss, are not based directly on the eikonal equation
Eq.~\eqref{E:Eikonal_equation_shocks}, bur rather on an intricate system for several geometric quantities related to the sound cones, the so-called null-structure equations. We will
have more to say about these equations in Sect.~\ref{S:Rough_solutions}.
\end{remark}

To close the estimates we also need to use the extra regularity for $s$ and $\upomega$ obtained in Theorem \ref{T:Improved_regularity}. To see this, let us do a simple derivative counting. From \eqref{E:New_formulation_wave_u} we get $\square_G u \sim \C$, so we can control $\partial u \lesssim \C$. But $\C \sim \vort(\upomega) \sim \partial \upomega$. From
\eqref{E:New_formulation_transport_omega}, we can control
$\upomega \lesssim \partial u$, so in the end are controlling 
$\partial u \lesssim \partial^2 u$, which has a loss of a derivative. This loss of regularity can be avoided, however, by using the extra regularity for $\upomega$ obtained in Theorem \ref{T:Improved_regularity}. Something similar applies to $s$.

Finally, we point out that the energy estimates that are needed
are in fact \emph{weighted estimates,} where the weight is given by the inverse foliation density $\upmu$. Such weights are needed because $\upmu \rightarrow 0$ at the shock. In particular, the energies are degenerate at top order. This is a major technical point that requires a complicated bootstrap argument to close the estimates (see \citealt{Speck-Book-2016}).

\subsection{Some further context for the work on shocks\label{S:Some_context_shocks}}
The ideas discussed in \ref{S:Study_of_shocks} have not all
been introduced in \cite{Disconzi-Speck-2019}. They are the culmination of a series of beautiful ideas developed by several authors (see the introduction of \cite{Disconzi-Speck-2019} and
\cite{Holzegel-Klainerman-Speck-Wong-2016} for a historical account and further discussion).

As mentioned earlier, if the fluid is irrotational, the new formulation of Theorem \ref{T:New_formulation} simplifies considerably and the equations reduce to those found by \cite{Christodoulou-Book-2007}. The inclusion of vorticity and entropy causes several new difficulties and it is quite remarkable that the general case presents many of the good structures found in the irrotational case. Similar good structures
are also present in the classical compressible Euler equations with vorticity and entropy (see \citealt{Luk-Speck-2020,Luk-Speck-2018,Luk-Speck-2024,Speck-2019}), but once more we remark that one should not expect such delicate structures to be present in the relativistic Euler equations just because they
can be found in their classical analogue or in the irrotational case.

Connecting back to Sect.~\ref{S:Shocks_higher_d}, in the irrotational case, the problem of shock formation for the relativistic Euler equations has been solved in the landmark monograph by \cite{Christodoulou-Book-2007}, whereas 
\cite{Luk-Speck-2020,Luk-Speck-2018,Luk-Speck-2024,Speck-2019} solved it for the classical compressible Euler equations in the presence of entropy and vorticity (in both cases,
a shock is signaled by $\upmu \rightarrow 0$, i.e., by the collapse of the sound characteristics). A similar result, for special classes of quasilinear wave equations (those not satisfying Klainerman's null condition) has been obtained by \cite{Speck-Book-2016}. All these works rely
on a geometric-analytic framework along the lines of what was discussed in Sects.~\ref{S:Ingredient_one}--\ref{S:Ingredient_three} and further discussed in Sect.~\ref{S:Rough_solutions}. 
More recently, \cite{Buckmaster-Shkoller-Vicol-2022,
Buckmaster-Shkoller-Vicol-2023-1,
Buckmaster-Shkoller-Vicol-2023-2},
and \cite{Buckmaster-Drivas-Shkoller-Vicol-2022}, have applied a different formalism
to solve the problem of shock formation for the classical compressible Euler equations with entropy and vorticity.

At this point, readers will probably have noticed that despite all our discussion about the problem of shocks, we have not stated a theorem about it. This is because the problem of shock formation
for the relativistic Euler equations with entropy and vorticity,
as described at the beginning of Sect.~\ref{S:Study_of_shocks},
\emph{has not yet been solved.} More precisely, Theorem \ref{T:New_formulation} provides the first but crucial step
for the resolution of this problem, as it casts the relativistic
Euler equations as a system possessing all the good nonlinear structures that were crucial in Christodoulou's work on shock formation for the irrotational relativistic Euler system and
in Luk and Speck's work on shock formation for the full classical compressible Euler equations, 
These structures have been discussed in Sects.~\ref{S:Ingredient_one}--\ref{S:Ingredient_three}
and Theorem \ref{T:New_formulation} shows how we can uncover them in the relativistic setting with vorticity and entropy. 
Thus, one has
good reason to believe that the geometric-analytic techniques of works\footnote{It is not known if the different approach used in 
\cite{Buckmaster-Shkoller-Vicol-2022,
Buckmaster-Shkoller-Vicol-2023-1,
Buckmaster-Shkoller-Vicol-2023-2,
Buckmaster-Drivas-Shkoller-Vicol-2022} can be adapted to the relativistic setting.} 
by \cite{Christodoulou-Book-2007, Luk-Speck-2020,Luk-Speck-2018,Luk-Speck-2024,Speck-2019} can be generalized to the relativistic Euler equations with entropy and vorticity, even if such a generalization is expected to be daunting in that the equations of Theorem \ref{T:New_formulation} are significantly more intricate than their irrotational or classical counterparts.
We return to this point in Sect.~\ref{S:Open_problems}.

\begin{remark}
We remind the reader that all our discussion of the problem of shocks focused on the case of more than one spatial dimension. In $1+1$ dimensions, the problem of shocks for the relativistic Euler equations
and many other systems is relatively well understood with the theory of systems of conservation laws, see the Sect.~\ref{S:Shocks_1d}.
\end{remark}

\section{Rough solutions to the relativistic Euler equations}
\label{S:Rough_solutions}

The standard existence theory for the relativistic Euler equations in three spatial dimensions gives local well-posedness in $H^N$ for $N > \frac{3}{2} + 1$ (taking, say, $(h,s,u)$ as primary variables). A natural question that drives a great deal of research in PDEs is that of the minimum value $N_0$ such that a given system of PDEs is locally well-posed in $H^{N_0}$. A less ambitious but related question is whether we can establish local well-posedness in $H^N$ for $N$ below the threshold given by standard theory\footnote{Where what is considered ``standard'' naturally depends on the equation; for Euler, standard theory refers to the methods yielding $N > \frac{3}{2} + 1$ such as, for example, first-order symmetric hyperbolic systems.}. Questions of this nature are commonly referred to as \textdef{low-regularity questions/problems} and solutions with regularity below such a standard $N$ are known as \textdef{rough or low-regularity solutions.}

In the irrotational case, the relativistic Euler equations can be written as a system of the form \eqref{E:Prototype_wave_equation}, with $\mathscr{G}$ being the acoustical metric, $F$ a quadratic nonlinearity,  
and\footnote{Recall Footnote \ref{FN:Irrotational_differentiated}.} $\Psi = d\phi$, where $\phi$ is the fluid potential (see Sect.~\ref{S:Irrotational_flows}). The study of rough solutions for equations of the form \eqref{E:Prototype_wave_equation} has a long history and remarkable results have been achieved. Some key results, which we state  in terms of the corresponding fluid variables which are of interest here, are the following. The irrotational relativistic Euler equations in $3+1$ dimensions are locally well-posed for\footnote{Recall from Sect.~\ref{S:Irrotational_flows} that the fluid potential $\phi$ completely determines a solution to the relativistic Euler equations, and from Sect.~\ref{S:New_formulation} that the irrotational equations take the form \eqref{E:Prototype_wave_equation} with $\Psi = d\phi$. Thus, it is more natural to state the regularity for $d\phi$. Moreover, while all fluid variables are determined by $\phi$, it is more natural in a fluid context to state the required regularity for both $h$ and $u$, recalling that in fact $u\sim d\phi$ (there is no need to include the entropy because it is necessarily constant in the irrotational case). This also makes comparisons with the non-irrotational case easier. Finally, it is more convenient to consider $\hat{h}$ rather than $h$ because, in this case, conditions at infinity are immediately accommodated by stating that $\hat{h}$ belongs to some Sobolev space, whereas if we used $h$ we would have to make statements for $h-\bar{h}$ for some constant reference value $\bar{h} >0$ (and similarly if we used, say, the density as a variable).
}
\begin{align}
(\hat{h}, u = d\phi) \in H^N,
\nonumber
\end{align}
where $\hat{h}$ is the logarithm of the density, see below, with (we state the results in chronological order and also in order of improvements over the previous ones):

\begin{itemize}
\item $N > \frac{9}{4} = 2.25$, by
a result of \cite{Bahouri-Chemin-1999}.
\item $N > \frac{13}{6} = 2.16\dots$, by a result of \cite{Tataru-2002-1}.
\item $N > 2 + \frac{2-\sqrt{3}}{2} = 2.13\dots$, by a result of \cite{Klainerman-Rodnianski-2003}.
\item $N > 2$, by a result of \cite{Smith-Tataru-2005} (later, an alternative proof was given by \citealt{Wang-2017}\footnote{Roughly speaking, Smith and Tataru's proof uses a frequency-space approach whereas Wang's proof uses a physical-space approach. The low-regularity result for Euler we will discuss, Theorem \ref{T:Rough_classical_Euler}, is also based on a physical-space approach and borrows substantially from Wang for the treatment of the wave-part of the system.}).
\end{itemize}

We remark the following:

\begin{itemize}
\item Within the context of ``linear theory,'' i.e., assuming a pre-specified regularity for the coefficients but no additional structure (in particular, one does not assume that $\square_G G$ satisfies an equation, as it will be the case for the Euler flow), Tataru's $13/6$ result is optimal in view of a result by \cite{Smith-Tataru-2002}.
\item Smith and Tataru's $N>2$ result is optimal under the stated assumptions, as \cite{Lindblad-1998}
proved ill-posedness in $H^2$ (the breakdown mechanism is the instantaneous formation of shocks).
\end{itemize}

We can now ask whether similar low-regularity results hold in the case $\Omega\neq 0$. The irrotational and rotational case are qualitatively different, with the transport-part deeply coupled to the wave-part, a manifestation of the already alluded fact that when $\Omega\neq 0$ the Euler flow is a system with multiple characteristic speeds. Nevertheless, one hopes that the philosophy of Sect.~\ref{S:New_formulation} can be applied here as well, namely, that we can establish existence of rough solutions to the relativistic Euler equations
with vorticity and entropy by treating it as a perturbation of the irrotational case modeled by 
a system of the form \eqref{E:Prototype_wave_equation} (keeping in mind that genuinely new ideas will be needed to treat the transport-part of the system, as we will discuss below).

Before discussing rough solutions for the relativistic Euler equations, we will sketch the proof in the case of the classical compressible Euler equations. We do so \emph{not} because the result for the relativistic Euler equations follows form easy modifications of its classical counterpart one; as we stressed a few times, results in the relativistic setting in general do not follow by tweaking with
techniques applied to the classical equations. Rather, considering the classical case first will provide greater conceptual clarity, as it will allow
us to avoid extra unpleasant technicalities present in the relativistic setting and focus on the some key ideas that are shared by the proofs in both the classical and relativistic cases. Additionally, 
there has been substantially more work on rough solutions to the classical compressible Euler equations than its relativistic counterpart, so that a more comprehensive theory of rough solutions is available in the classical case (see discussion after Theorem \ref{T:Rough_classical_Euler}). Finally, as mentioned in the Introduction, \emph{we want to provide one application showing how the geometric-analytic formalism can be used in the study of classical fluids.}

Therefore, we introduce the following notation representing quantities of a classical fluid governed by the classical compressible Euler equations (Eqs.~\eqref{E:Classical_Euler} below). \emph{We purposely use the same notation as in the relativistic case in order to keep the analogy with the relativistic case, discussed later, easier} (e.g., we call $u$ the classical velocity of the fluid).

\begin{notation}
We henceforth adopt the following notation for fluid variables of a classical fluid described by the classical compressible Euler equations (see equations \ref{E:Classical_Euler}). This notation will be used until Sect.~\ref{S:Back_to_relativistic_Euler}, when we return to the relativistic Euler equations.

All quantities will be defined in $[0,T]\times\mathbb{R}^3$ for some $T>0$. We adopt standard rectangular coordinates $\{ x^\alpha \}_{\alpha=0}^3$ in $[0,T]\times\mathbb{R}^3$, with $t:= x^0$ denoting a time coordinate and $\{x^i\}_{i=1}^3$ denoting spatial coordinates and refer to 
$[0,T]\times\mathbb{R}^3$ as the spacetime. Our indices convention of Sect.~\ref{S:Notation_conventions} still applied here, but spatial vectorfields, i.e., maps $X\colon [0,T]\times\mathbb{R}^3 \rightarrow \mathbb{R}^3$ will have their indices raised and lowered with the 
three-dimensional Euclidean metric denoted $\updelta$. We introduce the following quantities
\begin{itemize}
\item The \textdef{logarithmic density}
$\hat{h}\colon [0,T]\times\mathbb{R}^3 \rightarrow \mathbb{R}$,
\begin{align}
\hat{h} := \log \frac{\varrho}{\bar{\varrho}},
\nonumber
\end{align}
where $\varrho \colon [0,T]\times\mathbb{R}^3 \rightarrow \mathbb{R}$ is the fluid's density and $\bar{\varrho} > 0$ is a constant reference density.
\item $s\colon [0,T]\times\mathbb{R}^3 \rightarrow \mathbb{R}$ is the \textdef{entropy} and we introduce the \textdef{entropy gradient} by
\begin{align}
S := \bar{\nabla} s,
\nonumber
\end{align}
where $\bar{\nabla}$ is the standard three-dimensional gradient in $\mathbb{R}^3$.
\item The \textdef{fluid's velocity} is a spatial
vectorfield
\begin{align}
u = (u^1,u^2,u^3) \colon [0,T]\times\mathbb{R}^3 \rightarrow \mathbb{R}^3
\nonumber
\end{align}
representing the velocity of fluid particles.
\item $\dive$ and $\curl$ are the standard three-dimensional divergence and curl operators 
in $\mathbb{R}^3$.
\item The \textdef{specific vorticity} is the 
spatial
vectorfield
\begin{align}
\Omega := \frac{\curl u}{\varrho/\bar{\varrho}}
= \frac{\curl u}{e^{\hat{h}}}.
\nonumber
\end{align}
\item The \textdef{material vectorfield,}
\begin{align}
\B := \partial_t + u^i\partial_i.
\label{E:Material_derivative_classical}
\end{align}
In the classical setting, $\B$ plays a role similar to differentiation in the direction of the four-velocity in the relativistic case.
\item For an equation of state $p = p(\varrho,s)$, where $p$ represents the \textdef{fluid's pressure,} the \textdef{fluid's sound speed} is defined as\footnote{It is assumed that the equation of state is such that $\frac{\partial p}{\partial \varrho} \geq 0$, which is the case for most equations of state of interest.}
\begin{align}
c_s^2 := \frac{\partial p}{\partial \varrho}.
\nonumber
\end{align}
\item For $c_s > 0$, we define the \textdef{acoustical metric} by
\begin{align}
G := - dt \otimes dt + c_s^{-2} \sum_{i=1}^3 (dx^i - u^i dt) \otimes (dx^i - u^i dt),
\label{E:Acoustical_metric_classical}
\end{align}
whose inverse is\footnote{See Remark \ref{R:Inverse_sign_explicit} for the explicit use of $^{-1}$, where in the classical context indices are raised and lowered with the Euclidean metric.} 
\begin{align}
G^{-1} := - \B \otimes \B + c_s^2 \sum_{i=1}^3 \partial_i \otimes \partial_i.
\label{E:Acoustical_metric_inverse_classical}
\end{align}
\end{itemize}
\end{notation}

It can be verified that
\eqref{E:Acoustical_metric_classical}
defines a Lorentzian metric in $[0,T]\times \mathbb{R}^3$ whose inverse is given by 
\eqref{E:Acoustical_metric_inverse_classical}. 
Observe also that $G(\B,\B) = -1$.
The introduction of a Lorentzian metric preludes the idea that techniques of quasilinear wave equations, many of which were developed in the context of mathematical 
general relativity, will be applied to the study of classical fluids.

We now assume that the pressure is a function\footnote{For the classical compressible Euler equations, similar remarks as Sect.~\ref{S:Thermodynamic_properties} hold regarding the choice of independent thermodynamics variables.} of $\hat{h}$ and $s$.
The \textdef{classical compressible Euler equations}
in $[0,T]\times \mathbb{R}^3$ are given by
\begin{subequations}{\label{E:Classical_Euler}}
\begin{align}
\B\hat{h} & = -\dive u,
\\
\B u^i & = - e^{-\hat{h}} \updelta^{ij} \partial_j p,
\\
\B s & = 0.
\end{align}
\end{subequations}

A computation similar to that of Sect.~\ref{S:Characteristics_Euler} shows that the 
characteristics of \eqref{E:Classical_Euler} are the flow lines of $\B$, $\B^\alpha \xi_\alpha = 0$, and the sound cones, i.e., the characteristic surfaces of the acoustical metric \eqref{E:Acoustical_metric_classical}, $(G^{-1})^{\alpha\beta} \xi_\alpha \xi_\beta = 0$.

As discussed for the relativistic Euler equations in Sect.~\ref{S:New_formulation}, equations
\eqref{E:Classical_Euler}, or equivalent first-order formulations of the equations, do not seem amenable to the types of applications we want to consider.
Thus, we will employ a new formulation of \eqref{E:Classical_Euler} that plays in the classical setting a role similar to the equations of Theorem \ref{T:New_formulation} for the relativistic Euler equations (see Eqs.~\eqref{E:New_formulation_classical} below). We will need the classical analogue 
of quantities \eqref{E:Modified_vorticity_of_vorticity} and \eqref{E:Modified_divergence_entropy_gradient}, thus we introduce
the \textdef{modified vorticity of the vorticity} and the \textdef{modified divergence of the entropy gradient}, respectively, by
\begin{align}
\C := e^{-\hat{h}} \curl \Omega + \dots \sim \curl \Omega 
\label{E:Modified_vorticity_of_vorticity_classical}
\end{align}
and
\begin{align}
\D := e^{-2\hat{h}} \dive S + \cdots \sim \dive S.
\label{E:Modified_divergence_entropy_gradient_classical}
\end{align}
As in the case of the relativistic Euler equations, 
these modified quantities are introduced because 
one needs to obtain good estimates for $\curl \Omega$ and $\dive S$, but these quantities by themselves to not satisfy equations with good structure. However, by adding an appropriate combination of lower-order terms we obtain quantities, $\C$ and $\D$, that satisfy good equations from which estimates can be derived, eventually yielding the desired control of $\curl \Omega$ and $\dive S$. The precise form of the lower-order terms added to $\curl \Omega$ and $\dive S$ will not be important in our discussion because we will mostly write things in schematic form, thus we omit them, only indicating their presence by $\dots$ in \eqref{E:Modified_vorticity_of_vorticity_classical} and \eqref{E:Modified_divergence_entropy_gradient_classical}. We also indicate with $\sim$ the main structural form that is capture by $\C$ and $\D$ and that will be important in our discussion. See \cite{Speck-2019} for the exact definition of $\C$ and $\D$ for the classical compressible Euler equations.

We are now ready to state the main result concerning 
rough solutions of the classical compressible Euler equations. We will denote by $C^{0,\upalpha}$ the standard H\"older spaces and recall that $\Sigma_0 = \{ t = 0 \}$ represents the initial hypersurface where data is prescribed.

\begin{theorem}[\citealt{Disconzi-Luo-Mazzone-Speck-2022}]
\label{T:Rough_classical_Euler}
Consider a smooth\footnote{For convenience, it is  assumed that the solutions are as many times differentiable as necessary. Thus, ``smooth'' means ``as smooth as necessary for qualitative arguments (such as integration by parts) to go through.'' However, all of our quantitative estimates depend only on the Sobolev and H\"older norms mentioned in Theorem \ref{T:Rough_classical_Euler}.} solution to the compressible Euler equations \eqref{E:Classical_Euler} whose initial data obey the following assumptions for some real numbers 
$0<\varepsilon \leq \frac{1}{2}$, a small $\upalpha > 0$, 
$0 < D = D(\varepsilon,\upalpha) < \infty$, $0 < c_1 < c_2 < \infty$, $0 < c_3$:

\begin{enumerate}
\item $\norm{( \hat{h},u,\curl u )}_{H^{2+\varepsilon}(\Sigma_0)} + \norm{s}_{H^{3+\varepsilon}(\Sigma_0)} \leq D$.
\item $\norm{(\C,\D)}_{C^{0,\upalpha}(\Sigma_0)} \leq D$.
\item Along $\Sigma_0$, the data are contained in the interior of a compact subset $K$ of state space in which $\varrho \geq c_3$ and $c_1 \leq c_s \leq c_2$.
\end{enumerate}
Then, the solution's time of classical existence $T$ depends only on $D$ and $K$, $T = T(D, K)$, and the Sobolev and H\"older regularity of the data are propagated by the flow\footnote{I.e., the norms we can control are uniformly bounded functions of $(D,K)$ for $t \in [0,T]$.}.
\end{theorem}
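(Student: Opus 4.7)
The plan is to reduce to a priori estimates via standard approximation by smooth data, and then to prove that the norms in assumptions (1)--(3) are uniformly controlled on a time interval $[0,T]$ depending only on $D$ and $K$. The fundamental mechanism is the new formulation of \eqref{E:Classical_Euler} (the classical analogue of Theorem \ref{T:New_formulation}, due to Luk--Speck), which recasts the system into wave equations
\[
\square_{G} \hat{h} \simeq \D + Q(\partial\hat{h},\partial u)+\mathrm{L.O.T.}, \qquad \square_{G} u^{i} \simeq \C^{i}+Q(\partial\hat{h},\partial u)+\mathrm{L.O.T.},
\]
coupled to transport equations $\B s = 0$, $\B \Omega \simeq \mathrm{L.O.T.}$, $\B S \simeq \mathrm{L.O.T.}$ and transport--div--curl equations for $\C$ and $\D$. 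At the regularity $(\hat{h},u)\in H^{2+\varepsilon}$, this is below the threshold $H^{5/2+}$ of classical energy methods, so energy estimates alone are insufficient: one must produce Strichartz-type mixed-norm estimates for the wave part together with one derivative of improved regularity for $\Omega$ and $S$ via the div--curl--transport structure.

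First I would attack the wave-part by adapting the Smith--Tataru / Wang low-regularity machinery for $\square_{\mathscr{G}(\Psi)}\Psi=F(\Psi,\partial \Psi)$ with coefficients $G=G(\hat{h},u)$. This requires constructing an eikonal function $\mathcal{U}$ for the acoustical metric, together with an adapted null frame, and deriving Strichartz estimates for $\square_G$ by controlling the acoustic geometry---in particular, the connection coefficients of the sound cones and the null mean curvature via the null structure equations. The sources on the right-hand side are handled precisely because of the good structure guaranteed by the new formulation: the nonlinearities appear as null forms relative to $G$ (which have improved high--low frequency behavior) and the vorticity/entropy contributions enter only through $\C$ and $\D$, which are controlled in $C^{0,\upalpha}$ by assumption (2). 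Crucially, the H\"older control of $\C,\D$ is exactly what is needed, since pointwise bounds on the wave sources are what close the Strichartz estimates at this regularity level.

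In parallel, for the transport part I would exploit the div--curl--transport system to upgrade the regularity of $\Omega$ and $S$ by one derivative over that of $u$ and $\hat{h}$, mimicking the strategy behind Theorem \ref{T:Improved_regularity}. Schematically, $\curl\Omega \sim \C + \mathrm{L.O.T.}$ and $\dive S \sim \D+\mathrm{L.O.T.}$, so from H\"older bounds on $(\C,\D)$ together with transport estimates along $\B$, elliptic (div--curl) regularity on $\Sigma_{t}$ yields the needed control of $\Omega,S$ in $H^{2+\varepsilon}$ and the corresponding H\"older propagation of $\C$ and $\D$. The final ingredient is a bootstrap closing the wave and transport estimates simultaneously: the Strichartz norms of $(\hat{h},u)$ control the coefficients of $G$ and the coefficients in the transport operators, while the improved $\Omega,S,\C,\D$ estimates feed back as sources of the wave equations. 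The main obstacle I expect is the regularity theory of the eikonal function: at this rough level, $\mathcal{U}$ is no more regular than $G$, and avoiding derivative loss requires extracting extra tangential (to the sound cones) regularity from the null structure equations. It is here that the delicate null-form structure of the new formulation, together with the one-derivative-better control of the vorticity and entropy, becomes indispensable---without them one cannot absorb the top-order terms generated when commuting the wave equations with geometric vectorfields adapted to $\mathcal{U}$.
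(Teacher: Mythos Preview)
Your proposal follows essentially the same route as the paper. Two points, however, deserve sharpening.

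First, the role of the $C^{0,\upalpha}$ assumption on $(\C,\D)$ is not quite what you state. It is not there to supply pointwise bounds on wave sources for the Strichartz machinery. Rather, closing the Gr\"onwall inequality after the energy estimates requires control of $\norm{\bar{\partial}\Omega}_{L^1_t L^\infty_x}$, and since Calder\'on--Zygmund operators are not bounded on $L^\infty$, one passes to $\norm{\bar{\partial}\Omega}_{L^1_t C^{0,\upalpha}_x}$ and uses Schauder-type div--curl estimates; H\"older control of $\C\sim\curl\Omega$ and $\D\sim\dive S$ is then propagated along the flow lines of $\B$ via a transport estimate in H\"older spaces (which in turn relies on uniform control of the integral curves of $\B$ under the bootstrap).

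Second, you underplay what is genuinely new relative to the irrotational case. The transport variables enter as \emph{sources in the acoustic geometry}: $\curl\Omega$ appears on the right-hand side of the Raychaudhuri equation for the null mean curvature $\tr_{\slashed{G}}\upchi$. Closing the conformal-energy/dispersive estimate therefore requires bounding $\slashed{\partial}\,\curl\Omega$ in $L^2$ \emph{along sound cones} $\SoundCone_{\mathcal{U}}$, not merely along $\Sigma_t$. This works only because $\B$ is everywhere transverse to $\SoundCone_{\mathcal{U}}$ (so the divergence-theorem boundary term has a sign) and because the source carries the special $\curl$ structure---a generic derivative of $\Omega$ in its place would force a Hodge estimate along a cone, which fails. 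Your last paragraph gestures at the eikonal regularity issue, but this specific interaction---transport sources for the acoustic null-structure equations, and their control along cones---is the technical heart of the argument and needs to be made explicit.
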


We now make several remarks regarding the assumptions and conclusions of Theorem \ref{T:Rough_classical_Euler}. It will be useful in such remarks and in what follows to 
introduce $\Psi = (\hat{h}, u, s)$ and call 
$\Psi$ the \textdef{wave variables} because they satisfy wave equations of the form $\square_G \Psi = \dots$, while the variables $\{ \Omega, S, \C, \D \}$ will be called the \textdef{transport variables}
because they satisfy transport equations\footnote{In view of the decomposition of a vectorfield into its divergence and curl parts, it would be more precise to include only $\dive u$ in $\Psi$ since $\curl u$ has been grouped with the transport variables. In practice, however, we will employ a wave equation for $u$ itself analogue to \eqref{E:New_formulation_wave_u} in the relativistic case.\label{FN:Wave_part_full_velocity}} (see Eqs.~\eqref{E:New_formulation_classical} below). We intuitively think of $\Psi$ as corresponding to the wave-part of the system,
i.e., the part whose dynamics is tied to the sound cones, and of $\{ \Omega, S, \C, \D \}$ as the transport-part of the system, i.e., the part whose dynamics is tied to the flow lines of the material derivative, with the understanding that this is a useful way of organizing the proof but \emph{one should not think that we can treat the wave and transport parts separately,} as their evolution is deeply coupled.

\begin{itemize}
\item The proof of Theorem \ref{T:Rough_classical_Euler} involves several ideas of independent interest: sharp estimates for the acoustic geometry (i.e., the characteristic geometry of the acoustical metric, see Sect.~\ref{S:Control_acoustic_geometry} and recall Definition \ref{D:Acoustic_geometry}), Strichartz estimates for waves coupled to vorticity, Schauder estimates for the div-curl part.
\item Compared to the aforementioned low-regularity results for systems of the form \eqref{E:Prototype_wave_equation}, the main challenge is that \eqref{E:Classical_Euler} is a system with multiple characteristic speeds. Low-regularity techniques for systems of the form \eqref{E:Prototype_wave_equation} are based on Strichartz estimates, which are well-adapted to the wave part of the system since they are based on dispersion. \emph{There are no Strichartz estimates} for transport equations because solutions to such equations do not disperse. In addition, one has to handle the interaction of the wave and transport parts; as we will see, the transport variables enter as source terms in the estimates for the acoustic geometry  (see Sect.~\ref{S:Control_acoustic_geometry}). This highlights the fact that the rotational and irrotational problems are qualitatively different; \emph{even the tiniest amount of vorticity is a game changer.}
\item Aside from $(\hat{h}, u) \in H^{2+\varepsilon}(\Sigma_0)$, which are in line with Smith and Tataru's optimal result mentioned above for systems of the form \eqref{E:Prototype_wave_equation}, we have the ``extra'' regularity assumptions $\curl u \in H^{2+\varepsilon}(\Sigma_0)$, $s \in H^{2+\varepsilon}(\Sigma_0)$, and $\C \sim \curl \curl u, D \sim \partial^2 s \in C^{0,\upalpha}(\Sigma_0)$.
We will explain how this extra regularity is needed in the context of the techniques we will present. However, it should be remarked that such additional regularity of the transport-part of the data is \emph{propagated} by the flow\footnote{Observe that $\curl u \in H^{2+\varepsilon}(\Sigma_0)$ and $s \in H^{3+\varepsilon}(\Sigma_0)$ are in line with the extra regularity of vorticity and entropy established in the relativistic setting in Theorem \ref{T:Improved_regularity}.}, even though
the transport-part is deeply coupled to the rougher wave-part (through source terms in estimates for the acoustic geometry, in particular). 
In particular, note that propagation of H\"older regularity is non-standard for hyperbolic systems and is, thus, of independent interest.
Ultimately, our regularity assumptions are tied to the regularity of the characteristics.
\item Assumption (3) is a type of non-degeneracy condition (the degenerate case when the density and sound speed vanish will be investigated in Sect.~\ref{S:Vacuum_bry}).
\item Theorem \ref{T:Rough_classical_Euler} is optimal with respect to the wave-part of the system, i.e., 
$(\hat{h}, u) \in H^{2+\varepsilon}(\Sigma_0)$.
Indeed, without assuming zero vorticity or entropy,
\cite{An-Chen-Yin-2021-arxiv} obtained that equations
\eqref{E:Classical_Euler} are ill-posded for data in $H^2$ due to instantaneous formation of shocks, thus extending result by \cite{Lindblad-1998}
to the rotational and non-isentropic case\footnote{Unlike the case of a relativistic fluid, 
a classical compressible can be irrotational and have dynamic entropy.}.
\item Theorem \ref{T:Rough_classical_Euler} was the first low-regularity result for a system with multiple characteristics in three spatial dimensions. After its appearance, 
\cite{Wang-2022}, and independently \cite{Zhang-Andersson-arxiv-2022},  
improved Theorem \ref{T:Rough_classical_Euler}, particularly by removing the H\"older assumption on the data\footnote{Although, as mentioned, the propagation of H\"older regularity in Theorem \ref{T:Rough_classical_Euler} is of independent interest.} and lowering the required regularity of the vorticity to $\curl u \in H^{2+{\varepsilon^\prime}}(\Sigma_0)$, $0< \varepsilon^\prime < \varepsilon$. 
We discuss these improvements in Sect.~\ref{S:Lowering_regularity_classical_Euler}.
From these works and \cite{
An-Chen-Yin-2021-arxiv} mentioned above, Wang conjectured an optimal-regularity result for the compressible Euler equations (see \citealt{Wang-2022} and Sect.~\ref{S:Lowering_regularity_classical_Euler}).
\end{itemize}

\medskip
\noindent \emph{Proof of Theorem \ref{T:Rough_classical_Euler}.} The proof is quite long. Here, we will only outline the main ideas, expanding on them the ensuing sections and referring to \cite{Disconzi-Luo-Mazzone-Speck-2022} for more details. The logic of the argument is presented in Fig.~\ref{F:Flowchart}.

The strategy for proving Theorem \ref{T:Rough_classical_Euler} is the following:

\begin{enumerate}
\item We will use known techniques from quasilinear wave equations (energy and Strichartz estimates) to control the wave-part of the system. This requires, in particular, controlling the acoustic geometry (the regularity of the characteristic surfaces of the acoustical metric, i.e., the sound cones). For this, one needs to derive complementary estimates for several geometric quantities associated with the sound cones.
\item We need to control the transport variables at a consistent amount of regularity as in the previous item. Energy estimates for transport equations are not enough and there are no Strichartz estimates for transport equations. We will instead combine the transport-type energy estimates with elliptic estimates and exploit structural features of the equations.
\item Transport variables appear as source terms in the acoustic geometry, thus we need to handle their interaction\footnote{Note that this is a feature of a problem with multiple characteristic speeds.}.
\end{enumerate}
\hfill \qed

\begin{figure}[htbp]
\centering
  \includegraphics[scale=0.5]{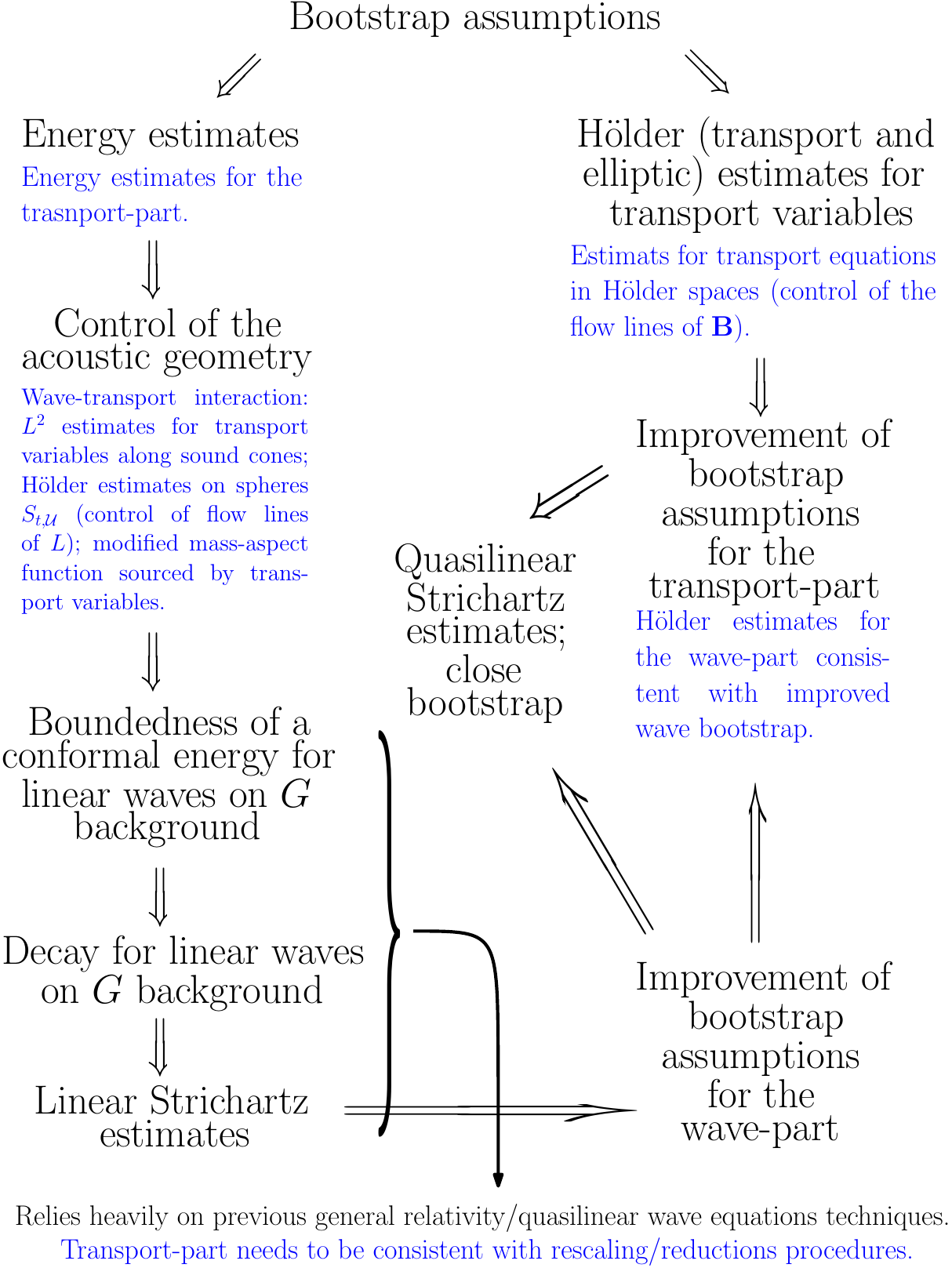}
  \caption{Sketch of the logic of the proof of Theorem \ref{T:Rough_classical_Euler}. New elements, as compared to the irrotational case, are highlighted in \textcolor{blue}{blue.} Terms indicated in the figure but not yet defined are discussed later in the text. The idea of this flowchart was borrowed from \cite{Yu-2024}.}
  \label{F:Flowchart}
\end{figure}

We will discuss the above strategy in more detail in Sects.~\ref{S:New_formulation_classical_schematic}--\ref{S:Closing_bootstrap}. Since Theorem \ref{T:Rough_classical_Euler} can be reduced to the derivation of a priori estimates for smooth solutions in the relevant norms of the problem,
we will assume throughout a given smooth solution
to Eqs.~\eqref{E:Classical_Euler} defined for some time interval $[0,T]$, $T>0$, whose size will ultimately be determined by the estimates we will derive and in accordance with the conclusion of Theorem \ref{T:Rough_classical_Euler}.

\subsection{The new formulation in schematic form\label{S:New_formulation_classical_schematic}}
In order to provide the main ideas in the proof of Theorem \ref{T:Rough_classical_Euler} while keeping the discussion accessible and at a high level, we will need to make some simplifying assumptions, carry out some convenient abuses of notation, and downplay certain distinctions among variables and equations. This will allow us to summarize the nearly hundred fifty pages needed for the proof of Theorem \ref{T:Rough_classical_Euler} (which in turn
relies on hundreds of pages of previous works) in a way that will still be useful to readers.

Thus, we begin by assuming an isentropic fluid, i.e., $s=\text{constant}$, so that $\D=0$. 
In this case $\C$ is given exactly by $\C = e^{-\hat{h}} \curl \Omega$, i.e., the $\dots$ terms in \eqref{E:Modified_vorticity_of_vorticity_classical} vanish (see equation (1.3.13a) in \cite{Speck-2019}). We will further ignore the 
$e^{-\hat{h}}$ term, which is lower order, and write
$\C \sim \curl \Omega$ (see Remark \ref{R:Trading_C_for_curl_Omega} below). 

For the classical compressible Euler system, equations analogue to those of Theorem \ref{T:New_formulation} were found by \cite{Speck-2019,Luk-Speck-2020}. In our simplified setting, and writing schematically, these equations read (recall Sect.~\ref{S:Notation_conventions} for the difference between $\partial$ and $\bar{\partial}$):
\begin{subequations}{\label{E:New_formulation_classical}}
\begin{align}
\square_G \Psi & \simeq \curl \Omega + \partial \Psi \cdot \partial \Psi,
\label{E:New_formulation_classical_wave}
\\
\B \Omega & \simeq \partial \Psi,
\label{E:New_formulation_classical_transport}
\\
\B \curl \Omega & \simeq \partial \Psi \cdot 
\bar{\partial} \Omega,
\label{E:New_formulation_classical_transport_curl_Omega}
\\
\dive \Omega & \simeq \partial \Psi.
\label{E:New_formulation_classical_transport_div_Omega}
\end{align}
\end{subequations}

In Eqs.~\eqref{E:New_formulation_classical}, $\simeq$ is employed as in the statement of Theorem \ref{T:New_formulation}, so in particular 
harmless linear terms, as well as coefficients that are smooth functions of $(\Psi,\Omega)$, are omitted.
Recall that $\Psi= (\hat{h}, s, u)$, or, in our case, $\Psi= (\hat{h}, u)$ since $s$ is constant. Equation \eqref{E:New_formulation_classical_wave} thus represents the evolution equations for both $u$ and $\hat{h}$, which we group together as a single vector equation as the wave equations for $u$ and $\hat{h}$ have similar structures\footnote{In reality, $\C$ is a source term only in the equation for $u$, see equations (3.1.1a,b) in \cite{Speck-2019}. However, since we will treat $\Psi$ as a single variable in what follows, without distinguishing between $\hat{h}$ and $u$, we write it as in \eqref{E:New_formulation_classical_wave}. Moreover, the equation for $\hat{h}$ is in fact sourced by $\D$, which vanishes here but in the general case is the modified variable playing a role for the $\hat{h}$ equation similar to the role played by $\C \sim \curl \Omega$ for the $u$ equation.}. The term $\partial \Psi \cdot \partial \Psi$ on the RHS of \eqref{E:New_formulation_classical_wave} represent terms quadratic in $\Psi$, whereas 
$\partial \Psi \cdot \bar{\partial} \Omega$ on the RHS of \eqref{E:New_formulation_classical_transport_curl_Omega} represents quadratic\footnote{The quadratic terms on the RHS of Eqs.~\eqref{E:New_formulation_classical_wave} and \eqref{E:New_formulation_classical_transport_curl_Omega} are null-forms relative to the acoustical metric. For Theorem \ref{T:Rough_classical_Euler} it is only important that such terms are quadratic and the null-form structure is not important. However, in their work on shock formation \cite{Luk-Speck-2020,Luk-Speck-2018,Luk-Speck-2024,Speck-2019}, when Luk and Speck first introduced the new formulation of the classical compressible Euler equations, the null-forms are crucial, for reasons similar to our discussion in Sect.~\ref{S:Ingredient_two}.}
 terms involving the product of $\partial \Psi$ and $\bar{\partial} \Omega$ but \emph{not} terms in
$(\partial \Psi)^2$ or $(\bar{\partial} \Omega)^2$.
Equation \eqref{E:New_formulation_classical_wave} should be viewed as the classical counterpart to
Eqs.~\eqref{E:New_formulation_wave},
equation \eqref{E:New_formulation_classical_transport} as the classical counterpart to \eqref{E:New_formulation_transport}, and
Eqs.~\eqref{E:New_formulation_classical_transport_curl_Omega}-\eqref{E:New_formulation_classical_transport_div_Omega} as the classical counterparts to \eqref{E:New_formulation_transport_div_curl}, with the further simplification $s=\text{constant}$ in the classical case. 

\begin{remark}
In our exposition, the distinction between
spacetime derivatives $\partial$ and spatial
derivatives $\bar{\partial}$ will not be important,
as we will be interested only in a derivative counting for such terms. The terms for which the specific form of the derivatives \emph{do matter} have already been distinguished by writing explicitly $\B$, $\curl$, and $\dive$ in Eqs.~\eqref{E:New_formulation_classical}. 
\end{remark}

Finally, we note the following important point.

\begin{remark}
\label{R:Trading_C_for_curl_Omega}
In an attempt to simplify the exposition, as explained at the beginning of this section, it is easier to think of the new formulation of classical Euler as a system for
$\Psi$ and $\Omega$ only, as written in \eqref{E:New_formulation_classical}. So, in particular, we used $\C \sim \curl \Omega$ to replace $\C$ by $\curl \Omega$ in the equations derived by \cite{Speck-2019,Luk-Speck-2020}. Nevertheless, we remind that it is $\C$, and not $\curl \Omega$, that satisfies good equations, and estimates for $\curl \Omega$ can be derived only through estimates for $\C$. This can be viewed, for example, from comparing Eqs.~\eqref{E:New_formulation_classical_transport} and \eqref{E:New_formulation_classical_transport_curl_Omega}. Taking the $\curl$ of \eqref{E:New_formulation_classical_transport} would produce a term in $\partial^2 \Psi$, and estimates would not close were such term present. What happens is that in reality it is $\B \C$, and not $\B \curl \Omega$, that we need to consider. The ``extra terms'' in $\C$, implicit under $\dots$ in 
\eqref{E:Modified_vorticity_of_vorticity_classical} (but written explicitly in the relativistic case in \eqref{E:Modified_vorticity_of_vorticity}) that are added to $\curl \Omega$ work precisely to produce a cancellation, so that no $\partial^2 \Psi$ term is present in the evolution for $\C$. However, without getting into the details of the calculations, details that are beyond our goal, one cannot see the usefulness of considering $\C$, so that writing
$\B \C$ instead of $\B \curl \Omega$ and having to switch back and forth between $\C$ and $\curl \Omega$ would be a distraction that we would rather avoid.
In other words, we write 
 $\curl \Omega$ with the understanding that it should be placeholder for what in reality is $\C$.
\end{remark}

With the above set-up in place, we are finally ready to start discussing the proof of Theorem \ref{T:Rough_classical_Euler}.

\subsection{Energy estimates\label{S:Energy_estimates_low_regularity}} We want to control $\Psi$ in $H^{2+\varepsilon}$, so at top order we need $\bar{\partial}^{2+\varepsilon} \Psi \in L^2$. 
Taking $\bar{\partial}^{1+\varepsilon}$ derivatives\footnote{Fractional derivatives are computed with help of the Littlewood--Paley projections.} of \eqref{E:New_formulation_classical_wave} we obtain
\begin{align}
\square_G \bar{\partial}^{1+\varepsilon} \Psi 
\simeq \bar{\partial}^{1+\varepsilon} \curl \Omega,
\nonumber
\end{align}
so that that standard wave equation estimates give
\begin{align}
\norm{\partial\bar{\partial}^{1+\varepsilon} \Psi}_{L^2(\Sigma_t)}
\lesssim  \norm{\partial\bar{\partial}^{1+\varepsilon} \Psi}_{L^2(\Sigma_0)} + \int_0^t 
\norm{\bar{\partial}^{1+\varepsilon} \curl \Omega}_{L^2(\Sigma_\tau)} \, d\tau,
\label{E:Energy_estimates_2_plus_epsilon_part_1}
\end{align}
where we are making use of the following:

\begin{notation}
\label{N:L_infty_terms}
Above and in what follows, we ignore terms that can be Gr\"onwalled in a standard fashion, keeping track only of ``dangerous terms'' whose derivatives have to be controlled at top order. Observe, however, that some of these Gr\"onwall terms come with coefficients in $L^\infty$ which also need to be controlled. We subsume these $L^\infty$ terms
into $\lesssim$ for now, while discussing top-order terms in $L^2$, returning to the $L^\infty$ terms in the sequel.
\end{notation}

From \eqref{E:Energy_estimates_2_plus_epsilon_part_1} we see that we need to control $\bar{\partial}^{1+\varepsilon} \curl \Omega$ in $L^2$. We cannot obtain this control from \eqref{E:New_formulation_classical_transport} because it gives 
\begin{align}
\B \bar{\partial}^{1+\varepsilon}\curl  \Omega 
& \simeq \bar{\partial}^{1+\varepsilon}\curl  \partial \Psi 
\nonumber
\\
& \simeq \partial \bar{\partial}^{2+\varepsilon} \Psi.
\nonumber
\end{align}
However, using \eqref{E:New_formulation_classical_transport_curl_Omega} we get
\begin{align}
\B \bar{\partial}^{1+\varepsilon} \curl \Omega
\simeq \partial \Psi \cdot \bar{\partial}^{2+\varepsilon}\Omega + \partial\bar{\partial}^{1+\varepsilon} \Psi \cdot \bar{\partial} \Omega.
\nonumber
\end{align}
All terms on the RHS are controlled at our regularity level except possibly $\bar{\partial}^{2+\varepsilon} \Omega$ in 
$L^2$, hence:
\begin{align}
\norm{\bar{\partial}^{1+\varepsilon} \curl \Omega}_{L^2(\Sigma_t)}
& \lesssim
\norm{\bar{\partial}^{1+\varepsilon} \curl \Omega}_{L^2(\Sigma_0)} + 
\int_0^t \norm{\bar{\partial}^{2+\varepsilon} \Omega}_{L^2(\Sigma_\tau)} \, d\tau.
\label{E:Energy_estimates_2_plus_epsilon_part_2}
\end{align}

To control $\bar{\partial}^{2+\varepsilon} \Omega$ in $L^2$,
we use the following standard Hodge estimate for any vectorfield $V$ in $\mathbb{R}^3$,
\begin{align}
\norm{\bar{\partial} V}_{L^2(\Sigma_t)}
\lesssim 
\norm{\dive V}_{L^2(\Sigma_t)} +
\norm{\curl V}_{L^2(\Sigma_t)},
\nonumber
\end{align}
which gives
\begin{align}
\begin{split}
\norm{\bar{\partial}^{2+\varepsilon} \Omega}_{L^2(\Sigma_t)}
& \lesssim 
\norm{\dive \bar{\partial}^{1+\varepsilon} \Omega}_{L^2(\Sigma_t)} +
\norm{\curl \bar{\partial}^{1+\varepsilon}\Omega}_{L^2(\Sigma_t)}.
\end{split}
\nonumber
\end{align}
Using \eqref{E:New_formulation_classical_transport_div_Omega}
to estimate the divergence term,
\begin{align}
\begin{split}
\norm{\bar{\partial}^{2+\varepsilon} \Omega}_{L^2(\Sigma_t)}
& \lesssim 
\norm{\bar{\partial}^{2+\varepsilon} \Psi}_{L^2(\Sigma_t)} +
\norm{\curl \bar{\partial}^{1+\varepsilon}\Omega}_{L^2(\Sigma_t)}.
\end{split}
\label{E:Energy_estimates_2_plus_epsilon_part_3}
\end{align}
Using \eqref{E:Energy_estimates_2_plus_epsilon_part_3} into
\eqref{E:Energy_estimates_2_plus_epsilon_part_2} produces
\begin{align}
\begin{split}
\norm{\bar{\partial}^{1+\varepsilon} \curl \Omega}_{L^2(\Sigma_t)}
& \lesssim
\norm{\bar{\partial}^{1+\varepsilon} \curl \Omega}_{L^2(\Sigma_0)} + 
\int_0^t \norm{\bar{\partial}^{2+\varepsilon} \Omega}_{L^2(\Sigma_\tau)} \, d\tau
\\
& \lesssim
\norm{\bar{\partial}^{2+\varepsilon} \Omega}_{L^2(\Sigma_0)} + 
\int_0^t \norm{\curl \bar{\partial}^{1+\varepsilon}\Omega}_{L^2(\Sigma_\tau)} \, d\tau
\end{split}
\label{E:Energy_estimates_2_plus_epsilon_part_4}
\end{align}
Combining 
\eqref{E:Energy_estimates_2_plus_epsilon_part_1},
\eqref{E:Energy_estimates_2_plus_epsilon_part_2},
\eqref{E:Energy_estimates_2_plus_epsilon_part_3},
and
\eqref{E:Energy_estimates_2_plus_epsilon_part_4} 
and Gr\"onwalling gives
\begin{align}
\norm{\partial\bar{\partial}^{1+\varepsilon} \Psi}_{L^2(\Sigma_t)}
+
\norm{\bar{\partial}^{2+\varepsilon} \Omega}_{L^2(\Sigma_t)}
\lesssim  
\norm{\partial\bar{\partial}^{1+\varepsilon} \Psi}_{L^2(\Sigma_0)}
+ 
\norm{\bar{\partial}^{2+\varepsilon} \Omega}_{L^2(\Sigma_0)}.
\label{E:Energy_estimates_2_plus_epsilon_part_5} 
\end{align}

Thus, we obtain the desired control of $\bar{\partial}^{2+\varepsilon} \Psi$ in $L^2$ with the additional control
of $\bar{\partial}^{2+\varepsilon} \Omega$ in $L^2$
\emph{provided} 
the data terms on the RHS of \eqref{E:Energy_estimates_2_plus_epsilon_part_5} are indeed finite. 
This follows from our assumptions 
$\left. \Psi \right|_{t=0} \in H^{2+\varepsilon}$
and
$\left. \curl u \right|_{t=0} \in H^{2+\varepsilon}$.
In particular, \emph{this explains our extra regularity 
assumption on} $\left. \curl u \right|_{t=0}$.

As mentioned in Notation \ref{N:L_infty_terms},
terms in $L^\infty$ have been subsumed into $\lesssim$.
But such terms are not a priori bounded, so, when we Gronwall, we get in the usual fashion an exponential term
containing the $L^\infty$ norms. Thus a more precise
form of \eqref{E:Energy_estimates_2_plus_epsilon_part_5} is
\begin{align}
\norm{\partial\bar{\partial}^{1+\varepsilon} \Psi}_{L^2(\Sigma_t)}
+
\norm{\bar{\partial}^{2+\varepsilon}  \Omega}_{L^2(\Sigma_t)}
\lesssim  
\text{data} \, \times \, 
e^{ \int_0^t \norm{\partial \Psi}_{L^\infty(\Sigma_\tau)}
+
\norm{\bar{\partial} \Omega}_{L^\infty(\Sigma_\tau)} \,
d\tau}.
\label{E:Energy_estimates_2_plus_epsilon_part_6} 
\end{align}
The term $\norm{\partial \Psi}_{L^\infty(\Sigma_\tau)}$ in \eqref{E:Energy_estimates_2_plus_epsilon_part_6}  comes from commuting $\square_G$ with derivatives, whereas
$\norm{\bar{\partial} \Omega}_{L^\infty(\Sigma_\tau)}$ comes from $\bar{\partial} \Omega$ in \eqref{E:New_formulation_classical_transport_curl_Omega}.
The focus now becomes bounding the time integral of these terms.

\subsection{Key element: control of mixed spacetime norms and bootstrap assumptions\label{S:Control_spacetime_norms}}
In view of \eqref{E:Energy_estimates_2_plus_epsilon_part_6},
we can close estimates for $\Psi$ and $\Omega$ in $H^{2+\varepsilon}$ if we can bound
\begin{align}
\norm{\partial \Psi}_{L^1_tL^\infty_x} :=
\int_0^t \norm{ \partial \Psi }_{L^\infty(\Sigma_\tau)} \,
d\tau
\nonumber
\end{align}
and
\begin{align}
\norm{\bar{\partial} \Omega}_{L^1_tL^\infty_x} :=
\int_0^t \norm{ \bar{\partial}\Omega }_{L^\infty(\Sigma_\tau)} \,
d\tau
\nonumber
\end{align}
in terms of the initial data.

If we had that $2+\varepsilon > \frac{3}{2}+1$ then we would immediately obtain the desired bound from Sobolev embedding\footnote{We recall that in the case $2+\varepsilon > \frac{3}{2}+1$ we can use a first-order formulation of the equations and no additional assumptions on the vorticity are needed.}. In the irrotational and
isentropic case we can control $\norm{\partial \Psi}_{L^1_tL^\infty_x}$ using Strichartz estimates and no control of $\Omega$ is needed (recall the introductory
discussion at the beginning of Sect.~\ref{S:Rough_solutions}).

In order to control $\norm{\partial \Psi}_{L^1_tL^\infty_x}$, the goal will be to use Strichartz estimates suitably
adapted to the presence of vorticity, since
Strichartz estimates are designed for controlling mixed spacetime norms for wave-like systems and are
based on dispersion (recall that $\Psi$ is a wave variable).
 For $\norm{\bar{\partial} \Omega}_{L^1_tL^\infty_x}$, there are no Strichartz estimates (no dispersion for transport equations; recall that $\Omega$ is a transport variable). Since $\Omega$ satisfies a div-curl-transport system, we would like to estimate the spatial norm $\norm{\bar{\partial} \Omega}_{L^\infty_x}$ with help of elliptic estimates. As stated, however, this is not possible because Calder\'on-Zygmund operators are not bounded in $L^\infty$. We can, however, bound $\norm{\bar{\partial} \Omega}_{L^1_tL^\infty_x}$ by the stronger norm $\norm{\bar{\partial} \Omega}_{L^1_t C^{0,\upalpha}_x}$ and for $C^{0,\upalpha}$ elliptic estimates are available. \emph{This explains our H\"older assumptions on the data.}

Using Cauchy--Schwarz in the time integral, it suffices to bound
$\norm{(\partial \Psi, \bar{\partial} \Omega)}_{L^2_t L^\infty_x}$. (In view of the energy formalism, it is more convenient to work in $L^2$ spaces.) This will be obtained by improving, for small time, the following bootstrap assumptions:
\begin{subequations}{\label{E:Bootstrap_assumption}}
\begin{align}
\norm{\partial \Psi }^2_{L^2_t L^\infty_x}
+ \sum_{\upnu \geq 2} \upnu^{2\updelta_0} \norm{ P_\upnu \partial \Psi}^2_{L^2_t L^\infty_x} 
& \leq 1,
\label{E:Bootstrap_assumption_Psi}
\\
\norm{\bar{\partial} \Omega }^2_{L^2_t L^\infty_x}
+ \sum_{\upnu \geq 2} \upnu^{2\updelta_0} \norm{ P_\upnu \bar{\partial} \Omega}^2_{L^2_t L^\infty_x} 
& \leq 1,
\label{E:Bootstrap_assumption_Omega}
\end{align}
\end{subequations}
where $P_\upnu$ is the Littlewood--Paley projection onto dyadic frequencies and $\updelta_0$ is a small number depending on $\varepsilon$.  We can view 
\eqref{E:Bootstrap_assumption_Psi} as a bootstrap assumption on the wave-part and \eqref{E:Bootstrap_assumption_Omega} as a bootstrap assumption on the transport-part.

\begin{remark}
\label{R:Bootstrap_assumptions_energy_estimates}
Only the bootstrap assumptions on 
$\norm{\partial \Psi }_{L^2_t L^\infty_x}$ and 
$\norm{\bar{\partial} \Omega }_{L^2_t L^\infty_x}$
are needed for the energy estimates. The bootstrap assumptions involving the sums in \eqref{E:Bootstrap_assumption} are needed for control of the acoustic geometry in Sect.~\ref{S:Control_acoustic_geometry}.
\end{remark}

The $L_t^2$ norms correspond to the $L^2$ norm on a time-interval $[0,T_{\text{bootstrap}}]$ with
$T_{\text{bootstrap}}$ being a bootstrap time that is eventually taken very small. Our goal is to improve \eqref{E:Bootstrap_assumption} to

\begin{subequations}{\label{E:Bootstrap_assumption_improved}}
\begin{align}
\norm{\partial \Psi }^2_{L^2_t L^\infty_x}
+ \sum_{\upnu \geq 2} \upnu^{2\updelta_0} \norm{ P_\upnu \partial \Psi}^2_{L^2_t L^\infty_x} 
& \lesssim T_{\text{bootstrap}}^\updelta,
\label{E:Bootstrap_assumption_Psi_improved}
\\
\norm{\bar{\partial} \Omega }^2_{L^2_t L^\infty_x}
+ \sum_{\upnu \geq 2} \upnu^{2\updelta_0} \norm{ P_\upnu \bar{\partial} \Omega}^2_{L^2_t L^\infty_x} 
& \lesssim T_{\text{bootstrap}}^\updelta,
\label{E:Bootstrap_assumption_Omega_improved}
\end{align}
\end{subequations}
for some small $\updelta > 0$.

The bootstrap assumptions are improved by establishing
Strichartz estimates for the wave-part of the system complemented by estimates controlling the $L^2_t L^\infty_x$ norm of the transport-part. 
We stress that this distinction and the ensuing sections, where we will discuss the wave and transport parts of the system somewhat separately, should not suggest that the estimates for $\norm{\partial \Psi }_{L^2_t L^\infty_x}$ and 
$\norm{\bar{\partial} \Omega }_{L^2_t L^\infty_x}$ are decoupled. In particular, we need to handle the interactions between the transport and wave parts (see, in particular, Sect.~\ref{S:Control_acoustic_geometry}).

The logic of the complete argument is illustrated in Fig.~\ref{F:Flowchart}, where we highlight some of the new (in comparison to the irrotational case) ideas that are needed. In particular, in the irrotational case, Strichartz estimates require control of the rough geometry of the sound cones, i.e.,
closing the estimates for $\Psi$ requires deriving complementary, highly tensorial estimates for several  quantities associated to the geometry of the sound cones. In our case, this geometry interacts non-trivially with the transport-part, leading to a several intersting new aspects, some of which we discuss next.
We will discuss the steps in Fig.~\ref{F:Flowchart} in a ``constructive'' way, i.e., more or less in reverse order to the logic in Fig.~\ref{F:Flowchart}, starting with what we want to establish and identifying what we need to be true for that to hold.

\subsection{The Strichartz estimate and reductions\label{S:Strichartz_and_reductions}}
In view of the discussion of Sect.~\ref{S:Control_spacetime_norms}, we have to establish the Strichartz estimate
\begin{align}
\norm{\partial \Psi}_{L^t_tL^\infty_x} \lesssim \text{data}.
\label{E:Basic_Strichartz_wave_part}
\end{align}

Through a series of \emph{technical reductions} that involve rescaling, energy estimates, and the use of Duhamel's principle, it is possible to show that \eqref{E:Basic_Strichartz_wave_part} follows from the following 
Frequency-localized Strichartz estimate
\begin{align}
\norm{ P_\uplambda \partial \varphi }_{L^q_t L^\infty_x} \lesssim \uplambda^{\frac{3}{2}-\frac{1}{q}} \norm{\partial \varphi}_{L^2(\Sigma_0)},
\label{E:Frequency_localized_Strichartz}
\end{align}
where $P_\uplambda$ is the Littlewood--Paley projection onto dyadic frequency $\uplambda$, $q$ is a number slightly greater than $2$, and $\varphi$ is a solution to the following \emph{linear} wave equation
\begin{align}
\square_G \varphi = 0.
\label{E:Linear_in_varphi_wave_Duhamel}
\end{align}
(Recall that $\square_G$ is the wave operator for the acoustical metric.)
The fact that we are dealing ultimately with a linear equation comes from Duhamel's principle. Note, however, that the coefficients in \eqref{E:Linear_in_varphi_wave_Duhamel} depend on $G=G(\Psi)$, thus on the rough variables
$\Psi$. It is because of the low regularity of these coefficients that the analysis is delicate even at the linear level.

With a further reduction, estimate \eqref{E:Frequency_localized_Strichartz} can be reduced to the following
fixed-frequency Strichartz estimate,
\begin{align}
\norm{ P \partial \varphi }_{L^q_t L^\infty_x} \lesssim  \norm{\partial \varphi}_{L^2(\Sigma_0)},
\label{E:Fixed_frequency_Strichartz}
\end{align}
where $P$ is the Littlewood--Paley projection onto unit frequencies $\{ \frac{1}{2} \leq |\xi| \leq 2\}$.

Finally, an abstract duality argument known as the $TT^*$ argument, can be used to show that 
\eqref{E:Fixed_frequency_Strichartz} follows from the dispersive estimate\footnote{The $TT^*$ argument involves in particular duality in the $t$ variable. In order to obtain the desired boundedness in this duality, one needs certain integrability in time. Roughly speaking, this is the reason why the duality can be obtained from a dispersive, i.e., decay-in-time, estimate.}
\eqref{E:Dispersive_estimate_L_infty}, or equivalently \eqref{E:Dispersive_estimate_L_2}, below.

We remark that while the series of reductions mentioned above is technical and long, it follows from known steps used in the works on rough solutions to quasilinear wave equations discussed at the beginning of Sect.~\ref{S:Rough_solutions}. For our purposes, these reductions can be taken as a black box. We refer to 
\cite{Disconzi-Luo-Mazzone-Speck-2022} for more details on the reduction procedure. We also observe the following  point:

\begin{remark}
\label{R:Omitting_rescaling}
In estimate \eqref{E:Fixed_frequency_Strichartz}, $\Psi$ in $G(\Psi)$ is not the ``original'' wave-variable $\Psi$ but a suitably rescaled version of it. Suitable rescalings of the transport variables and a subdivision of the time interval $[0,T_\text{bootstrap}]$
are also considered. These rescalings involve the dyadic frequencies $\uplambda$. For simplicity of notation, we will not
relabel the rescaled quantities, continuing to call them $\Psi$, $\Omega$, etc. Nor will we discuss these rescalings, as, while important for the complete argument, would be a distraction for the high-level presentation we are providing. Readers should keep in mind, however, that the remaining of the discussion
is applied to the rescaled variables and not the original ones. Similarly, the estimates we will discuss
are often localized in some specific regions of spacetime due to technical reasons and some furhter reductions (e.g., reductions to compactly supported data or to data given at a different time than $t=0$) are also carried out. Once again, in order to focus on the main ideas of the estimates and avoid technicalities, we will not introduce such localizations and reductions here.
\end{remark}

\subsection{The dispersive estimate\label{S:Dispersive_estimate}} As mentioned above, from a $TT^*$ argument, we have now reduced estimate \eqref{E:Fixed_frequency_Strichartz}
to the following dispersive estimate (see Sect.~\ref{S:Strichartz_and_reductions}):
\begin{align}
\norm{ P \B \varphi }_{L^\infty(\Sigma_t)} \lesssim
\left( \frac{1}{(1+t)^\frac{2}{q}} + \mathsf{d}(t) \right) 
( \norm{\partial \varphi}_{L^2(\Sigma_0)} + \norm{\varphi}_{L^2(\Sigma_0)} ),
\label{E:Dispersive_estimate_L_infty}
\end{align}
where the notation is the same as in Sect.~\ref{S:Strichartz_and_reductions}. Here, $\mathsf{d}$ is a function that satisfies
\begin{align}
\norm{\mathsf{d}}_{L^\frac{q}{2}_t} \lesssim 1,
\nonumber
\end{align}
i.e., it has the same integrability as $(1+t)^{-\frac{2}{q}}$. The term $\mathsf{d}$ is ``quasilinear in nature,'' i.e., even though we seek an estimate for the solution to the linear wave equation \eqref{E:Linear_in_varphi_wave_Duhamel}, the coefficients of $\square_G$ depend on the solution $G=G(\Psi)$, and hence need to be suitably controlled. This control leads to $\mathsf{d}$.

We have reduced estimate \eqref{E:Fixed_frequency_Strichartz} to \eqref{E:Dispersive_estimate_L_infty}. Note that \eqref{E:Dispersive_estimate_L_infty} was an estimate for $\partial \varphi$ whereas \eqref{E:Dispersive_estimate_L_infty} is an estimate for $\B \varphi$. This is because in the duality $TT^*$ argument referred to in Sect.~\ref{S:Strichartz_and_reductions}, spatial derivatives can be handled with an integration by parts argument. We are then left with \emph{a} time direction to handle. The argument is geometric in nature so that the time direction we need to consider is the true normal, with respect to $G$, to the constant-in-time surfaces $\Sigma_t$, which is precisely $\B$.

We finally note one further reduction. Since we now want an estimate at unity frequency, we can, via Bernstein's inequality, replace $\norm{P\B \varphi}_{L^\infty(\Sigma_t)}$ by $\norm{P\B \varphi}_{L^2(\Sigma_t)}$, so that
instead of \eqref{E:Dispersive_estimate_L_infty} we seek to establish:

\begin{align}
\norm{ P \B \varphi }_{L^2(\Sigma_t)} \lesssim
\left( \frac{1}{(1+t)^\frac{2}{q}} + \mathsf{d}(t) \right) 
( \norm{\partial \varphi}_{L^2(\Sigma_0)} + \norm{\varphi}_{L^2(\Sigma_0)} ),
\label{E:Dispersive_estimate_L_2}
\end{align}
The use of $L^2$ instead of $L^\infty$ is convenient as it allows us to rely on energy-type estimates for wave
equations.

\subsection{A detour with the Minkowski metric and a prelude of the acoustic geometry\label{S:Detour_Minkowski}}

Before discussing the strategy for proving \eqref{E:Dispersive_estimate_L_2}, we will take a brief detour to discuss
how an estimate like \eqref{E:Dispersive_estimate_L_2} could be proven in the case of the flat linear wave equation.
This will help newcomers to the filed get a taste for the need of geometric methods discussed in 
Sects.~\ref{S:Decay_acoustic_geometry} and \ref{S:Control_acoustic_geometry}. We expect that many readers
will be acquainted with the ideas presented in this section and such readers can probably skip it. 
But given the importance of the acoustic geometry
for the proof of \eqref{E:Dispersive_estimate_L_2}, we want to make sure all readers understand how it comes about\footnote{See the introduction of \cite{Disconzi-Luo-Mazzone-Speck-arxiv-v1}, which is an earlier arXiv version of \cite{Disconzi-Luo-Mazzone-Speck-2022}, for further discussion.}.

Decay-in-time estimates such as \eqref{E:Dispersive_estimate_L_2} are typically obtained by controlling
a suitably weighted energy.  To explain how one derives such weighted energy estimates, let us 
illustrate the argument in a simplified setting, namely
\begin{subequations}{\label{E:Standard_wave_sketch_argument}}
\begin{align}
\square_\mathsf{m} \varphi 
& = 0,
\label{E:Standard_wave_sketch_argument_eq}
\\
\left(\varphi,\partial_t \varphi \right)|_{\Sigma_0}
& =
\left(\mathring{\varphi},\mathring{\varphi}_1 \right),
\label{E:Standard_wave_sketch_argument_IC}
\end{align}
\end{subequations}
where $\mathring{\varphi},\mathring{\varphi}_1$ are given functions on $\mathbb{R}^3$
and $\square_\mathsf{m} := - \partial_t^2 + \Delta$ is the standard wave operator of the Minkowski 
metric $\mathsf{m}$ in $\mathbb{R}^{1+3}$, given, relative to standard coordinates,
by $\mathsf{m} = \operatorname{diag}(-1,1,1,1)$.

Given a vectorfield
$X := X^0 \partial_t + X^a \partial_a$, known as a multiplier,
we multiply
\eqref{E:Standard_wave_sketch_argument_eq}
by $X\varphi$ to obtain the identity
\begin{align}
\label{E:Multiplier_identity_sketch_argument}
(\square_\mathsf{m} \varphi) X \varphi  
& = \partial_t e_{X}^{\text{(Time)}} + \partial_a e_{X}^{\text{(Space)},a} + Q_X,
\end{align}
where 
\begin{align}
\nonumber
e_{X}^{\text{(Time)}} 
&:=
 -\frac{1}{2}  X^0 ( (\partial_t \varphi)^2 + |\bar{\nabla}^{\text{(Flat)}} \varphi|^2 )
 -
 (\partial_t \varphi) X^a \partial_a \varphi,
 \\
\nonumber
e_{X}^{\text{(Space)},i} 
& :=
\frac{1}{2}   X^i ( (\partial_t \varphi)^2 - |\bar{\nabla}^{\text{(Flat)}} \varphi|^2 )
+ \updelta^{ia} (\partial_a \varphi) X^b \partial_b \varphi 
+  X^0 (\partial_t \varphi ) \updelta^{ia} \partial_a \varphi,
\end{align}
and
\begin{align}
\nonumber
Q_X & := \frac{1}{2}( \partial_t X^0 - \partial_a X^a ) (\partial_t \varphi)^2 
+ 
\updelta^{ac} (-\partial_a X^0 + \partial_t X^b \delta_{ab} ) (\partial_c \varphi) \partial_t \varphi 
\\
\nonumber
& \ \
+ 
\frac{1}{2}(\partial_t X^0 + \partial_a X^a ) |\bar{\nabla}^{\text{(Flat)}} \varphi|^2 
- 
\updelta^{ac} (\partial_a X^b) (\partial_c \varphi) \partial_b \varphi.
\end{align}

Above, $\bar{\nabla}^{\text{(Flat)}}$ is the gradient with respect
to the standard spatial coordinates
and we recall that $\updelta$ is the Kronecker delta.
Thus, $(\square_\mathsf{m} \varphi) X \varphi$ can be written as a sum of 
two types of terms: perfect derivatives
of quadratic expressions in derivatives of $\varphi$ with coefficients 
depending on $X$ (but not on derivatives of $X$), 
i.e., $\partial_t e_{X}^{\text{(Time)}} + \partial_i e_{X}^{\text{(Space)},i}$;
and a quadratic form in derivatives of $\varphi$ with coefficients
depending on first derivatives of $X$, i.e., $Q_X$. 
Integrating \eqref{E:Multiplier_identity_sketch_argument} over
a spacetime region $D$, using that the LHS of  \eqref{E:Multiplier_identity_sketch_argument} vanishes
 for solutions to \eqref{E:Standard_wave_sketch_argument},
and integrating by parts, one can convert the perfect-derivative terms on RHS~\eqref{E:Multiplier_identity_sketch_argument}
into boundary terms that can be interpreted as suitable ``energies"
for the solution.\footnote{One can show that if the boundary of $D$ is causal with respect to $\mathsf{m}$ and $X$ is timelike
with respect to $\mathsf{m}$, then the boundary integrals are coercive, allowing us to interpret them as ``energies.''} 
For example, choosing $X :=\partial_t$ 
and $D :=[0,t]\times \mathbb{R}^3$ produces the standard conserved energy
$E(t):= \int_{\mathbb{R}^3}( (\partial_t\varphi(t,x))^2 + |\nabla^{\text{(Flat)}} \varphi(t,x)|^2 ) \, dx$ 
for solutions to \eqref{E:Standard_wave_sketch_argument}. 

We now turn to the issue of deriving $L^2$-type decay estimate for solutions to the standard linear wave equation \eqref{E:Standard_wave_sketch_argument}.
More precisely, we seek to derive suitable bounds for a weighted energy. 
In order to construct suitable weighted energies\footnote{We will be more specific about the form of such energies in Sect.~\ref{S:Conformal_energy}, when considering the case of Eq.~\eqref{E:Linear_in_varphi_wave_Duhamel} which we are investigating.},
one can\footnote{There are different
ways of obtaining decay for solutions to \eqref{E:Standard_wave_sketch_argument}. Here,
we only mention the method whose generalization to the quasilinear setting we will be interested.} 
combine the standard energy $E(t)$ from the previous paragraph 
with other coercive integrals obtained via ``weighted'' multipliers
of type $X = f(r) \partial_r$ and $X= r^m L^{\text{(Flat)}}$,
supplemented by suitable lower-order terms.
Here, $r$ is the standard radial coordinate 
in $\mathbb{R}^3$ (so that $\partial_r$ is the Euclidean-unit outer normal to the Euclidean spheres $\{r = \, \text{constant} \}$), 
$L^{\text{(Flat)}} := \partial_t + \partial_r$ is outgoing and null with respect to the Minkowski metric and is tangent 
to the Minkowski lightcones $\{ t = r + \text{constant} \}$,
$f(r) > 0$ and $m \geq 0$ have to be suitably chosen.
We clarify that the multiplier $f(r) \partial_r$ allows one to control a spacetime integral that
plays a role in bounding the weighted energy in an ``interior'' region (say, $|r|\leq 1$), 
while $r^m L^{\text{(Flat)}}$ yields coercive hypersurface integrals that allow one to control the weighted
energy in an ``exterior'' region (say, $|r| \geq 1$).
The idea of using multipliers of type $f(r) \partial_r$ to generate coercive spacetime integrals goes back to \cite{Morawetz-1968},
while the idea of combining Morawetz multipliers with multipliers of type 
$r^m L^{\text{(Flat)}}$ to prove various types of decay estimates for wave equations 
originated in \cite{Dafermos-Rodnianski-2010}, in what is now known as the $r^p$-method of Dafermos and Rodnianski.

We would like to adapt the above procedure to quasilinear problems, where in particular the Minkowski metric $\mathsf{m}$ is replaced by the acoustical metric $G$. The vectorfield $L^\text{(Flat)}$
is tangent and orthogonal to the lightcones, which are null hypersurfaces for the Minkowski metric.
The corresponding analogue for the acoustical metric are the sound cones. The lightcones
are level sets of the function $\mathcal{U}^{\text{(Flat)}} := t-r$. In the quasilinear setting, we will define the analogue of $r$ by $\tilde{r} := t - \mathcal{U}$, where $\mathcal{U}$ is an eikonal function
whose level sets are sound cones. Similarly, the analogue of $L^\text{(Flat)}$ will be a $G$-null vectorfield
$L$ tangent to the sound cones. Vectors $N$ that are tangent to $\Sigma_t$ and $G$-orthogonal to the (topological)
spheres $\tilde{r} = \text{constant}$ will be the analogue of $\partial_r$ in the flat case. 

There will be, however, a crucial difference. While in the Minkowski case the vectorfields $L$ and 
$\partial_r$ are given, smooth, and independent of solutions $\varphi$ to 
\eqref{E:Standard_wave_sketch_argument}, in the quasilinear setting, $\mathcal{U}$ has to be constructed out of $G$. Thus, it will depend on the solution $\Psi$ since $G = G(\Psi)$, and so will $L$ and $N$. When we carry an integration by parts similar to the one discussed for the flat case, derivatives of $N$ and $L$ will appear. 
At first sight, one can expect to control derivatives of $N$ and $L$ directly from $\Psi$, since $N$ and $L$ are in the end determined by $\Psi$. With current techniques, however, this direct control does not seem to be possible: our goal is to obtain decay for $\Psi$ by controlling a suitable weighted energy, but the 
control one obtains for $N$ and $L$ by simply using the equation satisfied by $\Psi$ is too weak for that\footnote{Indeed, one should not expect control of $N$ and $L$ ``for free.'' If bounds 
for $N$ and $L$ compatible with the desired decay could be obtained by directly estimating $\Psi$, then presumably we could prove decay for $\Psi$ independently, i.e., without having to introduce $N$ and $L$ in the first place.}.

In the end, good bounds for derivatives of $N$ and $L$ are obtained by exploiting some delicate geometric structures present in the system. The integration by parts is not carried with respect to standard coordinate derivatives but rather with respect to certain geometric vectorfields. One thus obtain geometric derivatives of the vectorfields $N$ and $L$ which are themselves geometric quantities (being orthogonal to spheres and sound cones, respectively). Thus, \emph{the derivatives we need to control have geometric meaning and can be tied to certain geometric objects associated with the geometry of the sound cones, i.e., the acoustic geometry. This suggests the use of geometric techniques and, in a nutshell, is the reason why the acoustic geometry
plays a prominent role in the proof of \eqref{E:Dispersive_estimate_L_2}.} In more specific terms,
derivatives of $N$ and $L$ can be decomposed in terms of some objects tied to the geometry of the sound cones;
these are the so-called connection coefficients of the acoustic geometry. These connection coefficients
satisfy a delicate evolution-elliptic system of PDEs, the so-called null-structure equations. By exploiting some highly delicate and tensorial properties of the null-structure equations, one can obtain non-trivial control of the connection coefficients, leading to the desired bounds for $N$ and $L$ and then to the sought-after control of a weighted energy which, in turn, gives the bound \eqref{E:Dispersive_estimate_L_2}.

\subsection{Decay properties of the wave-part and the acoustic geometry\label{S:Decay_acoustic_geometry}}
We have reduced the desired Strichartz estimate for the wave-part, estimate \eqref{E:Basic_Strichartz_wave_part}, to a decay estimate for solutions to \eqref{E:Linear_in_varphi_wave_Duhamel}, namely, estimate \eqref{E:Dispersive_estimate_L_2}.

While \eqref{E:Dispersive_estimate_L_2} is an estimate for the linear wave equation \eqref{E:Linear_in_varphi_wave_Duhamel}, we recall from Sect.~\ref{S:Dispersive_estimate} that 
the quasilinear nature of the problem is hidden in the acoustical metric $G$. A key feature of solutions to quasilinear wave equations is that their regularity and decay properties are
directional dependent, with derivatives in directions of the characteristics behaving better than 
derivatives in directions transverse to the characteristics\footnote{\label{FN:Better_behavior_flat_wave}This can be illustrated with the standard linear wave equation \eqref{E:Standard_wave_sketch_argument_eq} 
as follows. Using the fundamental solution, 
one can show that if the initial data is supported on $\{r \leq 2\}$, then the solution is supported 
in the region $\{ |t-r| \leq 2 \}$ (i.e., near the forward Minkowski lightcone $\{ t = r \}$ emanating from the origin)
and can be written as $r^{-1} H(r-t, \upomega^{\text{(Round)}}, r^{-1})$ for some function $H$ (which is $C^{\infty}$ if the data are), 
where $\upomega^{\text{(Round)}} = r^{-1} x \in \mathbb{S}^2$. 
From direct computation, we see that 
$L^{\text{(Flat)}} H$ and $\slashed{\partial}^{\text{(round)}} H$ 
(where $\slashed{\partial}^{\text{(Round)}}$ is the gradient with respect to round metric on the Euclidean spheres $\{r = \text{ constant } \}$), 
which are derivatives tangent to the Minkowski lightcones, 
are of order $r^{-2}$, whereas the transverse derivative $(\partial_t - \partial_r) H$
is of order $r^{-1}$. Since $t \sim r$ in the region $\{ |t-r| \leq 2 \}$, 
we conclude that the tangential derivatives decay better with respect to $t$ than transverse derivatives.}.
Indeed, to prove \eqref{E:Dispersive_estimate_L_2}, we need to exploit that derivatives of solutions to \eqref{E:Dispersive_estimate_L_2}
in directions tangent to the characteristics of the acoustical metric decay faster than derivatives transverse to it.

In order to distinguish between directions tangent and transverse to the sound cones, we introduce an \textdef{eikonal function}\footnote{The eikonal function, the construction of an associated null-frame, and some of the related discussion in this section are the same as in Sect.~\ref{S:Ingredient_one}, but we repeat them here for convenience and also because here we will be more specific about the null-frame.} $\mathcal{U}$, which is a solution to the \textdef{eikonal equation}
\begin{align}
(G^{-1})^{\alpha\beta} \partial_\alpha \mathcal{U} \partial_\beta \mathcal{U} = 0,
\label{E:Eikonal_equation_rough}
\end{align}
with appropriate initial conditions. The level sets of $\mathcal{U}$ are the characteristics associated with acoustical the metric $G$, i.e., the sound cones. In this regard, we note $\mathcal{U}$ is tied to the wave-part of the Euler system, and not to its transport-part. 
(We recall that we are presenting the discussion of estimates for the wave and transport parts separately for
convenience, but that the problem is coupled and both parts interact non-trivially.)
Next, to identify directions transverse and tangent to the sound cones, we introduce a \textdef{null-frame}
adapted to $\mathcal{U}$, as follows. Denote the level sets\footnote{\label{FN:Abuse_notation_sound_cone_rough}We use a harmless abuse of notation to simplify the presentation. Instead of writing $\SoundCone_{\mathcal{U}_0} := \{ \mathcal{U} = \mathcal{U}_0 \}$
for a given $\mathcal{U}_0$, we simply indicate that $\SoundCone_\mathcal{U}$ is a level set of $\mathcal{U}$ by  $\SoundCone_\mathcal{U} := \{ \mathcal{U} = \text{constant} \}$.} of $\mathcal{U}$ by $\SoundCone_\mathcal{U}$, i.e., 
\begin{align}
\SoundCone_\mathcal{U} := \{ \mathcal{U} = \text{constant} \}.
\nonumber
\end{align}
We set up the initial conditions for \eqref{E:Eikonal_equation_rough} in such a way that the sound cones
$\SoundCone_\mathcal{U}$ have their tip at a fixed\footnote{By a localization argument, it suffices to consider the construction for a generic, fixed, flow line of $\B$, see \cite{Disconzi-Luo-Mazzone-Speck-2022}.} flow line of $\B$, see Fig.~\ref{F:Sound_cone_and_null_frame_rough}. Consider the (topological) spheres\footnote{We again use an abuse of notation, see Footnote \ref{FN:Abuse_notation_sound_cone_rough}.}
\begin{align}
\begin{split}
S_{t,\mathcal{U}} & :=
\{ t = \, \text{constant} \} \cap \{ \mathcal{U} = \, \text{constant} \}
\\
& \, \, = \Sigma_t \cap \SoundCone_\mathcal{U}.
\end{split}
\nonumber
\end{align}
Let $N$ be the unit outer normal (with respect to $G$) to $S_{t,\mathcal{U}}$ along $\Sigma_t$ (see Fig.~\ref{F:Sound_cone_and_null_frame_rough}).
Set $L := \B + N$, $\underline{L} := \B-N$, and let $\{e_1,e_2\}$ be an orthonormal (with respect to $G$) frame on $S_{t,\mathcal{U}}$. The set
\begin{align}
\{ e_1, e_2, \underline{L}, L \}
\label{E:Null_frame_rough}
\end{align}
is the desired null-frame, which forms a basis at each point. We immediately check that $\{e_1,e_2, L\}$ are tangent to $\SoundCone_\mathcal{U}$,
$\underline{L}$ is transverse to $\SoundCone_\mathcal{U}$, $L$ and $\underline{L}$ are null (with respect to $G$), i.e., $G(L,L)=0=G(\underline{L},\underline{L})$, that $G(L,e_A) = 0 = G(\underline{L},e_A)$, $A=1,2$, and
$G(L,\underline{L})=-2$.

\begin{figure}[ht]
\centering
  \includegraphics[scale=0.4]{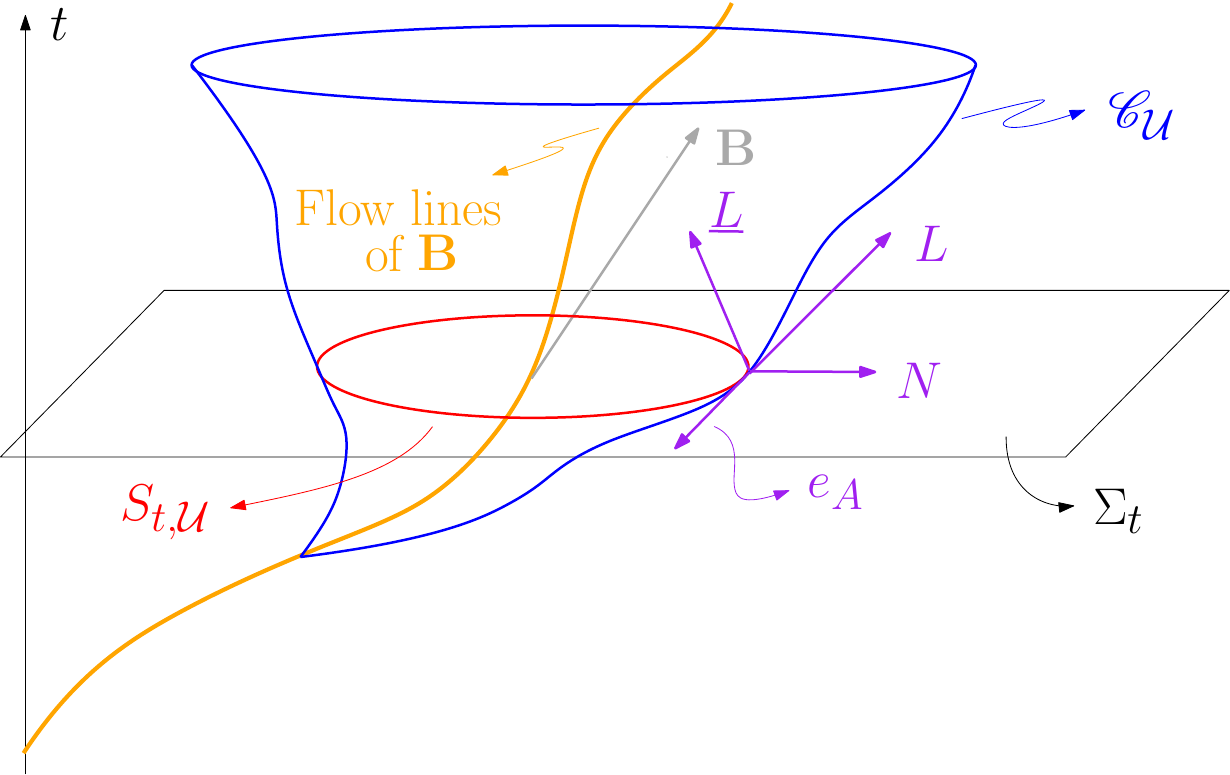}
  \caption{Illustration of a sound cone and corresponding geometric constructions (see the text for definitions).}
  \label{F:Sound_cone_and_null_frame_rough}
\end{figure}

In order to establish the desire decay \eqref{E:Dispersive_estimate_L_2}, the strategy is to construct a suitable \emph{weighted} energy and control its growth using certain multipliers with suitable vectorfields. 
The precise form of the energy is stated in Sect.~\ref{S:Conformal_energy}. For now, in order to follow the logic of the argument, it suffices to know that (a) the relevant energy is going to be weighted because are interested in establishing decay, and (b) bounding the weighted energy involves, as usual, an integration by parts.

It turns out that we need to use two different vectorfields, one whose weights are good in an ``interior'' region but become weak in the ``exterior'' part, and one whose weights behave the opposite way\footnote{See the discussion of Sect.~\ref{S:Decay_acoustic_geometry}.}. For the interior region, we take $f(\tilde{r}) N$ for suitable $f$, and for the exterior region we use $\tilde{r}^m L$ for suitable\footnote{There are lower-order terms added to $f(\tilde{r}) N$ and $\tilde{r}^m L$ which we omit because they are not important for our high-level discussion.} $m$. Here,
\begin{align}
\tilde{r} := t - \mathcal{U}
\label{E:r_tilde}
\end{align}
should be thought of as the quasilinear analogue of the radial coordinate\footnote{Note that for the flat eikonal function $\mathcal{U}^{\text{(Flat)}}=t-r$, $\tilde{r} = r$.} in $\mathbb{R}^3$. The interior estimates, using $f(\tilde{r}) N$, is like a Morawetz estimate (see \citealt{Morawetz-1968}), adapted to the acoustic geometry and produces integrated energy-decay estimates, whereas the exterior estimate using 
$\tilde{r}^m L$ is related to the so-called $r^p$-method of \cite{Dafermos-Rodnianski-2010}.

After testing \eqref{E:Linear_in_varphi_wave_Duhamel} with the multipliers $f(\tilde{r}) N$ and $\tilde{r}^m L$
and integrating by parts, we are left with error terms that involve $\partial N$ and $\partial L$. 
Because $N$ and $L$ are constructed out of $\mathcal{U}$ and the latter depends on $G= G(\Psi)$ by \eqref{E:Eikonal_equation_rough}, we see that controlling these error terms is not independent of controlling the fluid variables which we ultimately want to bound. Said differently, while Eq.~\eqref{E:Linear_in_varphi_wave_Duhamel} is linear in $\varphi$, the coefficients of the equation depend on $\Psi$. The quasilinear nature of the problem is still with us, even after all the reductions that lead to estimates for a linear equation, as mentioned in Sect.~\ref{S:Dispersive_estimate}.

The weighted energy that will be controlled is called a \emph{conformal energy} because, for reasons discussed in Sect.~\ref{S:Conformal_energy}, in the end one considers not \eqref{E:Linear_in_varphi_wave_Duhamel} but instead
the equation
\begin{align}
\square_{\tilde{G}} \varphi = 0,
\nonumber
\end{align}
where $\tilde{G}$ is a metric conformal to $G$.

\subsection{Control of the acoustic geometry\label{S:Control_acoustic_geometry}} 
From the discussion of Sect.~\ref{S:Decay_acoustic_geometry}, in order to establish the decay
estimate \eqref{E:Dispersive_estimate_L_2}, it is crucial to control the error terms in $\partial N$ and $\partial L$ that will appear in estimates for the conformal energy (see Sect.~\ref{S:Conformal_energy}). Tracking the dependence
of $N$ and $L$ on $\mathcal{U}$, we see that $N,L \sim \partial \mathcal{U}$. Since the error terms mentioned in Sect.~\ref{S:Decay_acoustic_geometry} involve $\partial (N,L)$, we see that we need to control $\partial^2 \mathcal{U}$. In order to see how this is not straightforward, let us do a derivative counting. From 
\eqref{E:Eikonal_equation_rough}, $\partial^2 \mathcal{U}$ satisfies
\begin{align}
(G^{-1})^{\alpha\beta} \partial_\alpha \mathcal{U} \partial_\beta (\partial^2 \mathcal{U} )
+ \partial^2 \mathcal{U} \cdot \partial^2 \mathcal{U} \sim \partial^2 \Psi \partial \mathcal{U},
\label{E:Transport_partial_2_eikonal_schematic}
\end{align}
where we used that $\partial^2 G \sim \partial^2 \Psi$ since $G = G(\Psi)$. Equation \eqref{E:Transport_partial_2_eikonal_schematic} is a transport equation for $\partial^2 \mathcal{U}$ (note
that \eqref{E:Eikonal_equation_rough} is a fully nonlinear transport equation for $\mathcal{U}$; see also 
\eqref{E:Transport_partial_2_eikonal} below). For the derivation of standard $L^2$-type transport estimates, 
we seek to control $\norm{\partial^2 \mathcal{U}}^2_{L^2(\Sigma_t)}$ by multiplying \eqref{E:Transport_partial_2_eikonal_schematic} by $\partial^2 \mathcal{U}$ 
and integrating by parts. This, however, produces the a term cubic in $\partial^2\mathcal{U}$ which cannot be controlled by the natural energy $\norm{\partial^2 \mathcal{U}}^2_{L^2(\Sigma_t)}$, since the cubic structure
forces us to put one factor of $\partial^2 \mathcal{U}$ in $L^\infty$, and  
$\norm{\partial^2 \mathcal{U}}_{L^\infty(\Sigma_t)}$ is not controlled by 
$\norm{\partial^2 \mathcal{U}}_{L^2(\Sigma_t)}$ in any direct way (e.g., by Sobolev embedding). Indeed,
because \eqref{E:Eikonal_equation_rough} is a (fully nonlinear) transport equation for $\mathcal{U}$, transport estimates will give that $\mathcal{U}$ is only as regular as the coefficients $G=G(\Psi)$, so morally
$\partial^2 \mathcal{U} \sim \partial^2 \Psi$, and we do not
have control over $\partial^2 \Psi \in L^\infty(\Sigma_t)$ since we only have $\Psi \in H^{2+\varepsilon}(\Sigma_t)$.

The above derivative counting can be made more precise by deriving the exact equation satisfied by $\partial^2\mathcal{U}$. If we write $\vec{\mathcal{U}}$ for the vector of second-order derivatives 
of $\mathcal{U}$,  $\vec{\mathcal{U}} = (\partial_\alpha\partial_\beta \mathcal{U})_{\alpha,\beta=0,\dots,3}$, then $\vec{\mathcal{U}}$ satisfies an evolution equation of the form
\begin{align}
L \, \vec{\mathcal{U}} + \vec{\mathcal{U}} \cdot \vec{\mathcal{U}} \sim \operatorname{Riem}(G)(\cdot,L,\cdot,L),
\label{E:Transport_partial_2_eikonal}
\end{align}
where $\operatorname{Riem}(G)(\cdot,L,\cdot,L)$ is the Riemann curvature of the acoustical metric $G$ contracted with $L$ in the second and fourth components (the free indices in $\operatorname{Riem}_G(\cdot,L,\cdot,L)$ correspond to the entries of the vector $\vec{\mathcal{U}}$, which are indexed by $\alpha,\beta=0,\dots,3$).
We refer to Chapter~7 of \cite{Alinhac-Book-2010} for the precise form of \eqref{E:Transport_partial_2_eikonal} as well as its derivation. For our purposes, \eqref{E:Transport_partial_2_eikonal} reveals the following important points:
\begin{itemize}
\item $\partial^2 \mathcal{U}$ indeed satisfies a transport equation (along the integral curves of $L$). 
\item Equation \eqref{E:Transport_partial_2_eikonal} reflects the derivative counting in \eqref{E:Transport_partial_2_eikonal_schematic} since $\operatorname{Riem}_G \sim \partial^2 G \sim \partial^2 \Psi$.
\item Equation \eqref{E:Transport_partial_2_eikonal} makes abundantly clear how the geometry of the acoustical
metric, i.e., the acoustic geometry, plays a role in our problem, since it is precisely the Riemann curvature of $G$ that sources the evolution equation for $\partial^2 \mathcal{U}$.
\item Not all components of the Riemann curvature of $G$ enter as a source in the evolution for $\partial^2 \mathcal{U}$, but only those that have two $L$ components. This hints at the idea that there are special directions in the problem, which is consistent with the aforementioned discussion that the properties of solutions to quasilinear wave equations are directional dependent. 
\end{itemize}

While Eq.~\eqref{E:Transport_partial_2_eikonal} is very revealing from a conceptual point of view, explicitly showing links between the behavior of $\mathcal{U}$, $\Psi$, and the acoustic geometry, it is does not present any good structure that would allow us to control $\partial^2 \mathcal{U}$. 
The above derivative counting shows the difficulties in carrying out direct $L^2$-type transport estimates and integration along the integral curves of $L$ will not produce better control. A limitation of Eq.~\eqref{E:Transport_partial_2_eikonal} is that it treats all derivatives of $\mathcal{U}$ at the same level,
whereas we know from the above discussion that properites of solutions to quasilinear wave equations
are directional dependent. Such direction dependence should be reflected in the behavior of $\mathcal{U}$, as 
$\mathcal{U}$ is essentially a geometric representation of the propagation of sound waves.

One can show that derivatives of $\mathcal{U}$ in fact exhibit better behavior in directions tangent to the sound cones, but this is far from straightforward. The desired estimates
are obtained by studying a delicate evolution-elliptic system called the null-structure equations, which goes back to
Christodoulou and Klainerman's proof of stability of Minkowski space (see \citealt{Christodoulou-Klainerman-Book-1993}).
See Sects.~9.8, 9.9 of \cite{Disconzi-Luo-Mazzone-Speck-2022} for the null-structure equations for the sound cones in the case of of the Euler system. Here, we will illustrate the argument using only one of these equations
(see \eqref{E:Raychaudhuri_modified_source} below).

Recall that we need to control error terms in $\partial(N,L) \sim \partial^2 \mathcal{U}$, but we saw that direct estimates for $\partial^2 \mathcal{U}$ are not available. We focus instead on equations satisfied by the error terms $\partial(N,L) $. Anticipating the fact that derivatives in directions tangent to the sound cones behave better, and in order to isolate the bad derivatives (transversal to the sound cones), we decompose 
$\partial(N,L)$ relative to the null-frame \eqref{E:Null_frame_rough}. For example, the derivative of $N$ in the direction of $e_A$ projected (with respect to $G$) onto $e_B$ defines a tensor on $S_{t,\mathcal{U}}$, the second fundamental form of $S_{t,\mathcal{U}}$:
\begin{align}
\uptheta_{AB} := G(\mathbf{D}_{e_A} N, e_B),
\nonumber
\end{align}
where $\mathbf{D}$ is the covariant derivative with respect to $G$. The derivative of $L$ in the direction of $e_A$ projected (with respect to $G$) onto $e_B$ is the \textdef{null second fundamental form} of $S_{t,\mathcal{U}}$:
\begin{align}
\upchi_{AB} := G(\mathbf{D}_{e_A} L, e_B).
\nonumber
\end{align}
Another example of such decompositions is the torsion $\upzeta$ given by
\begin{align}
\upzeta_A := \frac{1}{2} G(\mathbf{D}_{\underline{L}} L, e_A).
\nonumber
\end{align}

Considering all derivatives of $e_1,e_2,\underline{L}, L$ in the directions $e_1,e_2,\underline{L}, L$
decomposed relative the the null-frame \eqref{E:Null_frame_rough} gives a collection of tensors known as 
\textdef{connection coefficients.} Each connection coefficient is 
a special combination of up to second-order derivatives of $\mathcal{U}$ with coefficients depending on up to first-order derivatives of $G$.
Only the null second fundamental form $\upchi_{AB}$ or, more precisely, its trace, the null mean curvature given by \eqref{E:Null_mean_curvature},
will be discussed here, but we introduced $\uptheta_{AB}$ and $\upzeta_A$ as further examples of connection coefficients.

At the most basic level, the null-structure equations are equations satisfied by the connection coefficients. More precisely, at a geometric level, the null-structure equations are simply identities relating the connection coefficients. Such identities are geometric in nature and hold for an arbitrary Lorentzian metric
and corresponding eikonal function\footnote{Note that \eqref{E:Eikonal_equation_rough} can be considered for an arbitrary Lorentzian metric.}. However, in our case, we eventually use the fact that the acoustical metric depends on the fluid variables and Eqs.~\eqref{E:New_formulation_classical} to obtain a system of PDEs relating the conection coefficients to the fluid variables. This is illustrated with the passage of \eqref{E:Raychaudhuri_modified} to \eqref{E:Raychaudhuri_modified_source} below.

The desired bounds for $\partial N$ and $\partial L$ are then derived somewhat indirectly, as we employ
the null-structure equations to obtain estimates for the connection coefficients. Such estimates take into account the fact that the connection coefficients capture the directional behavior of derivatives of $N$ and $L$, distinguishing between derivatives tangent versus transverse to the sound cones. 

A detailed presentation of estimates for the acoustic geometry, i.e., all the relevant estimates for the connection coefficients, is significantly beyond the goal of these notes. Having highlighted how the acoustic geometry plays a role in the problem\footnote{It is worth to take a step back to recall how we got to this understanding. We were not trying to force a geometric approach into the problem. The goal is to derive weighted estimates for $\varphi$ to prove \eqref{E:Dispersive_estimate_L_2}. This requires the use of suitable multipliers; these multipliers need to take into account special directions in the behavior of solutions to quasilinear wave equations. This leads us to error terms in $\partial N$ and $\partial L$ which are of the form $\partial^2 \mathcal{U}$, but direct estimates for $\mathcal{U}$ based on \eqref{E:Eikonal_equation_rough} are not available.  Compare with Sect.~\ref{S:Detour_Minkowski}.}, we will restrict ourselves to 
illustrating one of the key estimates, focusing on the role played by the vorticity, which is what distinguishes Theorem
\ref{T:Rough_classical_Euler} from low-regularity results for wave equations
of the form \eqref{E:Prototype_wave_equation}, of which the irrotational Euler system is a particular case.
For this illustration, we will consider the 
\textdef{null mean curvature,} which is the trace of $\upchi_{AB}$, given by
\begin{align}
\tr_{\slashed{G}} \upchi := \sum_{A=1,2} G( \mathbf{D}_{e_A} L, e_A ).
\label{E:Null_mean_curvature}
\end{align}
where $\slashed{G}$ is the metric induced by $G$ on the spheres $S_{t,\mathcal{U}}$. The idea of introducing 
$\tr_{\slashed{G}} \upchi$ is that we can decompose $\upchi_{AB}$ into its trace and trace-free parts. Bounding $\tr_{\slashed{G}} \upchi$ turns out to be one of the more delicate
parts of the argument, both in \cite{Disconzi-Luo-Mazzone-Speck-2022} as well as in estimates
for systems with a single characteristic speed of the form \eqref{E:Prototype_wave_equation}.

One of the relevant null-structure equations is the equation satisfied by 
$\tr_{\slashed{G}} \upchi$, which is the Raychaudhuri equation
\begin{align}
L \tr_{\slashed{G}} \upchi = -\Ric(G)_{LL} + \dots,
\nonumber
\end{align}
where $\Ric(G)$ is the Ricci curvature
of the metric $G$ and $\Ric(G)_{LL}$ is its $LL$ component relative to the null-frame. 

After a careful decomposition of the Ricci tensor first introduced by Klainerman and Rodnianski in \cite{Klainerman-Rodnianski-2003}, we find
\begin{align}
L( \tr_{\slashed{G}} \upchi + \Gamma_L ) 
= \frac{1}{2} L^\alpha L^\beta (G^{-1})^{\mu\nu} \partial_\mu \partial_\nu G_{\alpha\beta} + \dots,
\label{E:Raychaudhuri_modified}
\end{align}
where $\Gamma_L := L^\alpha \Gamma_\alpha$, with
$\Gamma^\alpha \sim \partial G \sim \partial \Psi$ being a contracted Cartesian Christoffel symbol of $G$.

To explain why we have grouped $\Gamma_L$ with $\tr_{\slashed{G}} \upchi$ on the RHS of \eqref{E:Raychaudhuri_modified}, let us first introduce the following notation:

\begin{notation}
Denote by $\slashed{\partial}$ derivatives tangent to the sound cones\footnote{One usually 
uses $\slashed{\partial}$ to denote derivatives tangent to $S_{t,\mathcal{U}}$. Derivatives tangent
to the sound cones are then $\slashed{\partial}$ and $L$. Here we write
$\slashed{\partial}$ only since the difference will not be important for the points we want to stress.
}.
\end{notation}

For reasons explained in Sect.~\ref{S:Conformal_energy}, we need to bound 
$\slashed{\partial} \tr_{\slashed{G}} \upchi$ along the sound cones, , i.e., we need to control $\norm{\slashed{\partial} \tr_{\slashed{G}} \upchi}_{L^2(\SoundCone_\mathcal{U})}$.
But if we treat $L \Gamma_L$ as a source term in the evolution for $\tr_{\slashed{G}} \upchi$ then, upon differentiation, we would obtain 
\begin{align}
L \slashed{\partial} \tr_{\slashed{G}} \upchi
= \slashed{\partial} L \Gamma_L + \dots
= \partial^3 \Psi + \dots,
\nonumber
\end{align}
producing a term we cannot control. Thus, 
$\tr_{\slashed{G}} \upchi + \Gamma_L$ is the good variable to consider.

Returning to \eqref{E:Raychaudhuri_modified}, observe that on the RHS, we have essentially $\square_G G$. Using that $G = G(\Psi)$, so that $\square_G G \sim \square_G \Psi$, and Eq.~\eqref{E:New_formulation_classical_wave} we find
\begin{align}
L( \tr_{\slashed{G}} \upchi + \Gamma_L ) 
\simeq \curl \Omega.
\label{E:Raychaudhuri_modified_source}
\end{align}

Equation \eqref{E:Raychaudhuri_modified_source} is one of the key equations in the proof of Theorem \ref{T:Rough_classical_Euler}. It shows that in order to control $\tr_{\slashed{G}} \upchi + \Gamma_L$, we need to control
$\curl \Omega$ at a \emph{consistent regularity level.} This is challenging because we want to control $\tr_{\slashed{G}} \upchi + \Gamma_L$ at a low regularity level, thus we need to control $\curl\Omega$ with very few derivatives. But $\curl\Omega$ satisfies a transport-div-curl system for which low-regularity techniques, which are based on dispersion, are not available. In particular, since we  need to control
$\slashed{\partial}(\tr_{\slashed{G}} \upchi + \Gamma_L)$ in $L^2(\SoundCone_\mathcal{U})$, we have to bound $\norm{\slashed{\partial}\curl \Omega}_{L^2(\SoundCone_\mathcal{U})}$. However, because $\Omega$ satisfies a transport equation, we expect that $L^2$-type estimates are available only along $\Sigma_t$, and not along sound cones $\SoundCone_\mathcal{U}$. Indeed, recall that the eikonal function is adapted to the wave-part of the system and not to the transport-part. Thus, while estimates along sound cones for the wave variables are consistent with the dynamics of the wave-part, estimates along sound cones for the trasport variables are not\footnote{To see this from another point of view that illustrates difficulties in dealing with multi-characteristics systems, imagine trying the opposite, i.e., to derive $L^2$ estimates for the wave variables along the characteristics associated with the transport-part, i.e., along the flow lines of $\B$.}. 

The presence of $\curl \Omega$ on the RHS of \eqref{E:Raychaudhuri_modified_source} and the difficulties it causes is an example of the previously mentioned \emph{interaction} between the wave and transport parts, i.e., the transport variables entering as source terms in the estimates for the acoustic geometry.
This is a manifestation of the presence of multiple characteristic speeds. In particular, we stress the following two important points:
\begin{itemize}
\item The problem does not become much easier by assuming
that $\curl \Omega$ has more regularity initially, as we have done 
(through $\left. \curl u \right|_{t=0}$), because this extra regularity needs to be \emph{propagated by the flow.} Since $\Psi$ enters non-trivially as a source for the evolution of $\curl\Omega$ in \eqref{E:New_formulation_classical_transport_curl_Omega},
there is little room for how many derivatives we can take of the evolution for $\curl \Omega$. In other words, even though we assumed extra regularity for $\curl \Omega$ at $t=0$, this regularity needs to be propagated through a rough geometry that interacts with the vorticity.
\item Taking $\curl\Omega$ to be small rather than identically zero does not provide any immediate advantage 
over the case of $\curl\Omega$ large because we would still be faced with the problem of controlling it at a low regularity level. This shows that the case with vorticity is qualitatively different than the case without vorticity (and entropy).
\end{itemize}

Ultimately, control of the acoustic geometry is obtained
by exploiting some very special structures of the new formulation of classical Euler illustrated through system
\eqref{E:New_formulation_classical}. In particular, it is \emph{crucial} that the source term appearing in \eqref{E:New_formulation_classical_wave} is a curl (recall Remark \ref{R:Trading_C_for_curl_Omega} here but note that $\C$ does have a curl structure). \emph{Had a generic derivative of $\Omega$ appeared on the RHS of \eqref{E:New_formulation_classical_wave}, the argument would not work} (see also bullet points in Sect.~\ref{S:Estimates_curl_Omega_along_sound_cones}). In order to give an idea of how 
special features of \eqref{E:New_formulation_classical} are used, and keeping the focus on the role of vorticity, in the next section we discuss how $\slashed{\partial} \curl \Omega$ is controlled along sound cones.

\subsection{Estimate for $\curl\Omega$ along sound cones}
\label{S:Estimates_curl_Omega_along_sound_cones}

Here we explain how to bound $\slashed{\partial} \curl \Omega$ along sound cones. Define
\begin{align}
\mathsf{J} := | \slashed{\partial} \curl \Omega|^2 \B.
\label{E:Current_curl_Omega_sound_cones}
\end{align} 
A computation gives
\begin{align}
\begin{split}
\mathbf{D}_\alpha \mathsf{J}^\alpha 
& = \slashed{\partial} \curl \Omega \, \B \slashed{\partial} \curl \Omega
+ \dots
\\
& \simeq 
\slashed{\partial} \curl \Omega \, \B \slashed{\partial} \curl \Omega,
\end{split}
\label{E:Divergence_current_L2_cones}
\end{align}
where $\mathbf{D}$ is the covariant derivative associated with the acoustical metric.

\begin{figure}[ht]
\centering
  \includegraphics[scale=0.4]{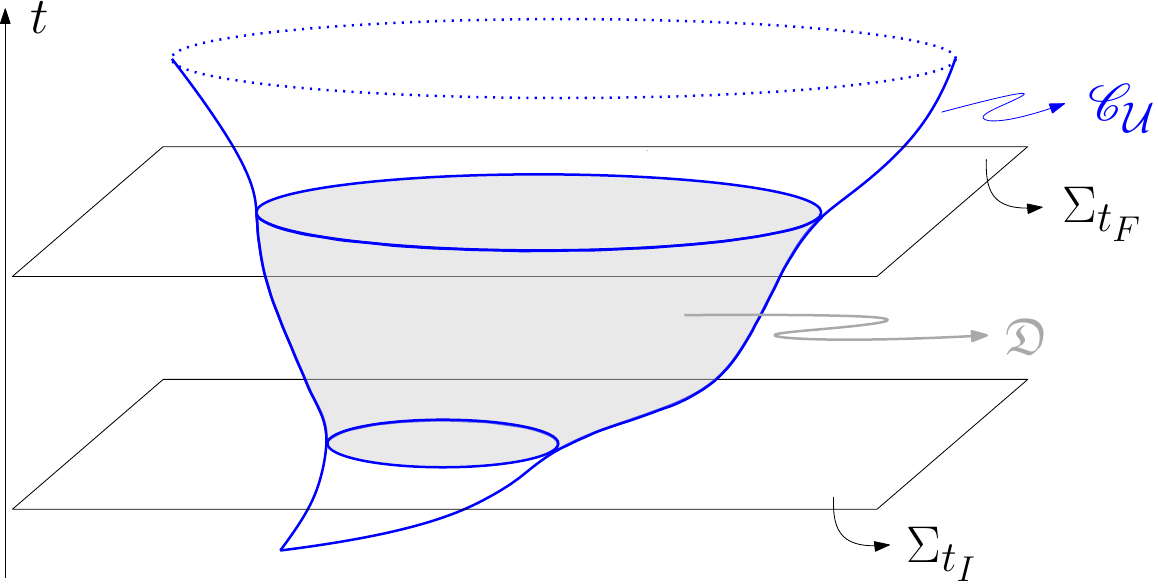}
  \caption{Illustration of the integration region $\mathfrak{D}$ in \eqref{E:Divergence_theorem_L2_cones}.}
  \label{F:Interior_region_sound_cone}
\end{figure}

Consider a spacetime region $\mathfrak{D}$ interior to a sound cone $\SoundCone_\mathcal{U}$ and between
two times\footnote{The times $t_I$ and $t_F$ are chosen in relation to the bootstrap time
$T_{\text{bootstrap}}$.} $t_I$ and $t_F$, as shown in Fig.~\ref{F:Interior_region_sound_cone}. Integrating \eqref{E:Divergence_current_L2_cones} over $\mathfrak{D}$ and invoking the divergence theorem,
\begin{align}
\begin{split}
\int_\mathfrak{D} \slashed{\partial} \curl \Omega \, \B \slashed{\partial} \curl \Omega
&= \int_\mathfrak{D} \mathbf{D}_\alpha \mathsf{J}^\alpha 
\\
& = - \int_{\SoundCone_\mathcal{U}} G(\mathsf{J},\mathsf{V}) + 
\int_{\Sigma_{t_F}} G(\mathsf{J},\mathsf{V}) - \int_{\Sigma_{t_I}} G(\mathsf{J},\mathsf{V})
+ \dots,
\end{split}
\label{E:Divergence_theorem_L2_cones}
\end{align}
where $\mathsf{V}$ is a null vector (with respect to $G$) normal to $\SoundCone_\mathcal{U}$ that allows us to apply the divergence theorem with a null boundary; see Sect.~6.1 of \cite{Disconzi-Luo-Mazzone-Speck-2022} for the construction of $\mathsf{V}$. 
More precisely, $\SoundCone_\mathcal{U}$ is null with respect to $G$ and thus it does not carry a natural volume element induced from $G$, but it does have a natural volume element for which Stokes theorem (which does not depend on the metric) applies. Aside from the integral over the sound cones, all remaining integrals in \eqref{E:Divergence_theorem_L2_cones} are with respect to appropriately geometrically induced, from $G$, volume elements (we are applying the divergence theorem for the $G$-covariant derivative 
$\mathbf{D}$ and thus the volume elements in the integrals are $G$-related). From the construction of $\mathsf{V}$ and $G(\B,\B) = -1$, it follows that $G(\B,\mathsf{V})=-1$ so that
\begin{align}
\begin{split}
G(\mathsf{J},\mathsf{V}) & = | \slashed{\partial} \curl \Omega|^2 G(\B,\mathsf{V})
\\
&=  -| \slashed{\partial} \curl \Omega|^2
\end{split}
\label{E:Product_V_current_L2_cones}
\end{align}
and \eqref{E:Divergence_theorem_L2_cones} gives
\begin{align}
\begin{split}
\int_{\SoundCone_\mathcal{U}} | \slashed{\partial} \curl \Omega|^2
&\lesssim  
\int_\mathfrak{D} | \slashed{\partial} \curl \Omega|  |\B \slashed{\partial} \curl \Omega| 
+ 
\int_{\Sigma_{t_F}} |G(\mathsf{J},\mathsf{V})| + \int_{\Sigma_{t_I}} |G(\mathsf{J},\mathsf{V})|
\\
&
\lesssim
\int_\mathfrak{D} | \slashed{\partial} \curl \Omega|  |\partial^2 \Psi| 
+ \int_{\Sigma_{t_F}}  | \slashed{\partial} \curl \Omega|^2 + \int_{\Sigma_{t_I}}  | \slashed{\partial} \curl \Omega|^2 ,
\end{split}
\label{E:Estimate_curl_Omega_along_cones_intermediate}
\end{align}
where we used \eqref{E:Product_V_current_L2_cones} 
and \eqref{E:New_formulation_classical_transport_curl_Omega}
to obtain 
$\B \slashed{\partial} \curl \Omega \simeq \partial^2 \Psi$. The last two integrals on the RHS are bounded
by the energy estimates of Sect.~\ref{S:Energy_estimates_low_regularity}; see
\eqref{E:Energy_estimates_2_plus_epsilon_part_6} (note Remark \ref{R:Use_energy_estimates_L2_cones} below). For the first integral on the right hand side, we use Fubini's theorem to write
\begin{align}
\int_\mathfrak{D} | \slashed{\partial} \curl \Omega|  |\partial^2 \Psi| 
\lesssim 
\int_{t_I}^{t_F}  \int_{\Sigma_\tau} | \slashed{\partial} \curl \Omega|^2 \, d\tau
+ 
\int_{t_I}^{t_F} \int_{\Sigma_\tau} |\partial^2 \Psi|^2 \,d\tau.
\nonumber  
\end{align}
The integrals over $\Sigma_\tau$, $t_I \leq \tau \leq t_F$, are bounded by \eqref{E:Energy_estimates_2_plus_epsilon_part_6}. Thus, we have obtained the desired control
of $\slashed{\partial} \curl \Omega$ in $L^2(\SoundCone_\mathcal{U})$.

\begin{remark}
\label{R:Use_energy_estimates_L2_cones}
The energy estimate \eqref{E:Energy_estimates_2_plus_epsilon_part_6} involves the 
$L^1_t L^\infty_x$ norms we ultimately want to control (see Sect.~\ref{S:Control_spacetime_norms}).
We recall that the argument is organized as a consistent bootstrap, where the
can use the bootstrap assumption \eqref{E:Bootstrap_assumption}, which provide the desired bound
for the spacetime norms, provided we can improve it to \eqref{E:Bootstrap_assumption_improved}.
\end{remark}

We point out the following important observations:
\begin{itemize}
\item The estimate for $\slashed{\partial} \curl \Omega$ in $L^2(\SoundCone_\mathcal{U})$ relies fundamentally on $G(\mathsf{J},\mathsf{V})=-1$, which is \emph{true because $\B$ is everywhere transverse to 
$\SoundCone_\mathcal{U}$ in view of $G(\B,\B)=-1$.} Absent such transversality, 
$G(\B,\mathsf{V})$, and thus $G(\mathsf{J},\mathsf{V})$, could change sign or be zero, so that
the boundary term $- \int_{\SoundCone_\mathcal{U}} G(\mathsf{J},\mathsf{V}) $ would not correspond
to the square of the $L^2$ norm along $\SoundCone_\mathcal{U}$.
\item Control of the integral over $\mathfrak{D}$ works because $\curl \Omega$ (or, rather, 
$\C$, recall Remark \ref{R:Trading_C_for_curl_Omega}) has \emph{improved regularity
properties} as compared to a generic derivative of $\Omega$. Had a generic derivative $\partial \Omega$ been present instead of $\curl \Omega$, then we would not have been able to use
\eqref{E:New_formulation_classical_transport_curl_Omega} in \eqref{E:Estimate_curl_Omega_along_cones_intermediate}, having to rely on \eqref{E:New_formulation_classical_transport} instead, leading to an uncontrollable $\partial^3 \Psi$ term.
\item In particular, the above highlights that if we had a generic derivative of $\Omega$ as source term in \eqref{E:Raychaudhuri_modified_source}, the argument would not close. Generic derivatives of $\Omega$ cannot be estimated along sound cones because they require Hodge estimates (see Sect.~\ref{S:Energy_estimates_low_regularity}) which cannot be implemented along cones. This is a feature that repeats throughout the estimates for the acoustic geometry: \emph{it is a remarkable fact that the transport variables that appear as source terms for the acoustic geometry appear only in certain special combinations of derivatives for which improved estimates are available.} Had generic derivatives been present, the argument would break down.
\end{itemize}

\subsection{The conformal energy}
\label{S:Conformal_energy}

In Sect.~\ref{S:Decay_acoustic_geometry} we highlighted that to prove the dispersive
estimate \eqref{E:Dispersive_estimate_L_2} we need to control a weighted energy, and that
control of this weighted energy requires control of the acoustic geometry discussed in 
Sect.~\ref{S:Control_acoustic_geometry}. We can now state the energy to be controlled.

We let $\mathcal{W}$ and $\underline{\mathcal{W}}$ be two smooth cut-off functions satisfying 
$0\leq \mathcal{W},\underline{\mathcal{W}}\leq 1$ and such that, for $t>0$,
\begin{align}
\begin{split}
&\mathcal{W}(t,\mathcal{U}) = 
\begin{cases}
1, & \frac{\mathcal{U}}{t} \in [0,\frac{1}{2}]
\\
0, & \frac{\mathcal{U}}{t} \in (-\infty,-\frac{1}{4}] \cup [\frac{3}{4},1],
\end{cases}
\\
&\underline{\mathcal{W}}(t,\mathcal{U}) = 
\begin{cases}
1, & \frac{\mathcal{U}}{t} \in [0,1]
\\
0, & \frac{\mathcal{U}}{t} \in (-\infty,-\frac{1}{4}],
\end{cases}
\\
&\mathcal{W}(t,\mathcal{U}) = \underline{\mathcal{W}}(t,\mathcal{U}) \text{ for } \, t \geq 1 \text{ and }
\frac{\mathcal{U}}{t} \in (-\infty,-\frac{1}{4}].
\end{split}
\nonumber
\end{align}
The regions\footnote{Recall from Remark \ref{R:Omitting_rescaling} that we are dealing with rescaled versions of the original variables and the original interval $[0,T_{\text{bootstrap}}]$. Thus, even though $T_{\text{bootstrap}}$ is small, its rescaled version will be large so that it makes sense to consider the region $t\geq 1$ in the definition of $\mathcal{W}$ and $\underline{\mathcal{W}}$.}
where $\mathcal{W}$ and $\underline{\mathcal{W}}$ change definitions are illustrated in Fig.~\ref{F:Regions_conformal_energy}.

\begin{figure}[ht]
\centering
  \includegraphics[scale=0.4]{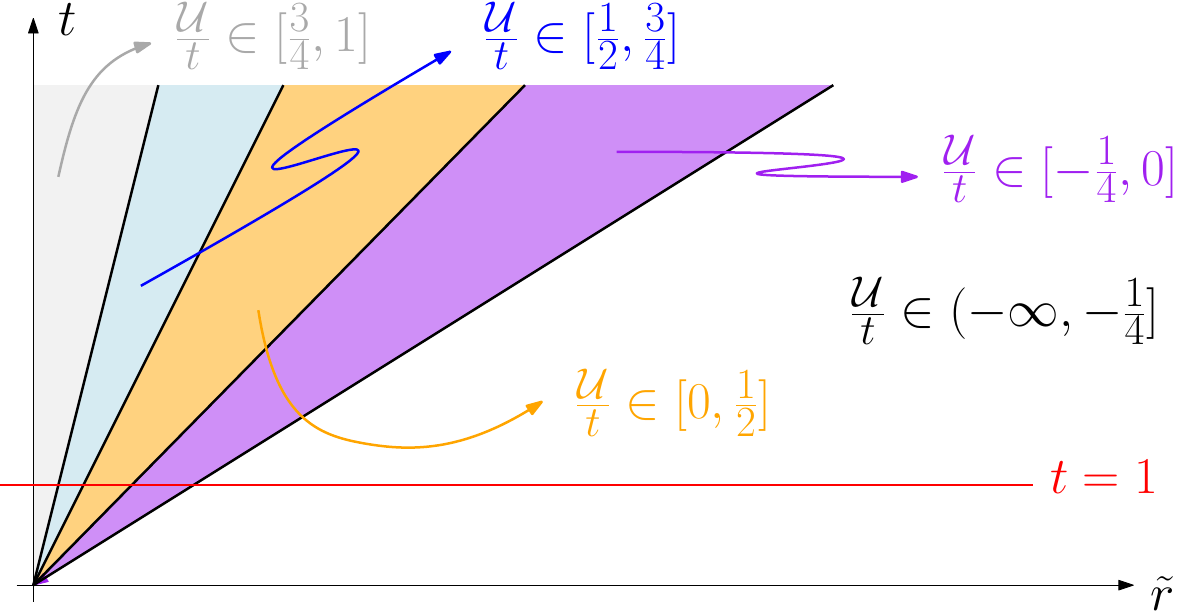}
  \caption{Illustration of the regions of definition of $\mathcal{W}$ and $\underline{\mathcal{W}}$. In this schematic representation, we have omitted the quasilinear nature of the problem and depicted the geometry as flat.}
  \label{F:Regions_conformal_energy}
\end{figure}

The desired energy whose bound implies \eqref{E:Dispersive_estimate_L_2} is given by
\begin{align}
\label{E:Conformal_energy}
\begin{split}
E_{\text{conformal}} & := \int_{\Sigma_t} (\underline{\mathcal{W}} - \mathcal{W} )t^2 
(|\mathbf{D} \varphi|^2 + |\tilde{r}^{-1} \varphi|^2 ) 
\\
& \ \ \
+ \int_{\Sigma_t} \mathcal{W} (|\tilde{r} \mathbf{D}_L \varphi|^2 + |\tilde{r} \slashed{\nabla} \varphi|^2_{\slashed{G}} + |\varphi|^2 ),
\end{split}
\end{align}
where $\slashed{\nabla}$ is the gradient on $S_{t,\mathcal{U}}$ with respect to the metric 
$\slashed{G}$ induced on $S_{t,\mathcal{U}}$, the integrals are with respect to the volume
form of the metric induced on\footnote{Due to a localization argument, the integral is only over a localized region of $\Sigma_t$. We downplay this aspect as explained in Remark \ref{R:Omitting_rescaling}.} $\Sigma_t$ by $G$, and we recall that $\tilde{r}$ is given by
\eqref{E:r_tilde}. $E_{\text{conformal}}$ is called the \textdef{conformal energy.}

The use of cut-off functions is to separate the conformal energy according to regions where
different multiplies are needed, as explained in Sect.~\ref{S:Decay_acoustic_geometry}. The goal is to prove a bound establishing a mild growth of the conformal energy, namely,
\begin{align}
\label{E:Mild_growth_conformal_energby}
E_{\text{conformal}} \lesssim C_\varepsilon (1+t)^{2\varepsilon} 
( \norm{\partial \varphi}_{L^2(\Sigma_1)} + \norm{\varphi}_{L^2(\Sigma_1)} ),
\end{align}
for some small $\varepsilon > 0$ (the constant $C_\varepsilon$ blows up when $\epsilon \rightarrow 0$). Once \eqref{E:Mild_growth_conformal_energby} has been established, one can then extract decay for $\varphi$ on each $\Sigma_t$, thus establishing \eqref{E:Dispersive_estimate_L_2}, via an argument by \cite{Dafermos-Rodnianski-2010}. The argument is technical and will not be given here (see, aside from \citealt{Dafermos-Rodnianski-2010}, \citealt[Sect.~7]{Wang-2017}). We notice, however, that decay for $\varphi$ is consistent with \eqref{E:Mild_growth_conformal_energby}. Indeed, the first integral in \eqref{E:Conformal_energy}
has an explicit weight in $t$; the second integral has $\tilde{r}$ weights, but these are comparable to $t$ in the region where $\mathcal{W}$ does not vanish. (Although for the proof of \eqref{E:Mild_growth_conformal_energby} it is important that the weights in the second integral
of \eqref{E:Conformal_energy} are precisely $\tilde{r}$ and not $t$; it is only after 
\eqref{E:Mild_growth_conformal_energby} has been established that we can invoke 
$\tilde{r} \approx t$ in the region $\mathcal{W} \neq 0$.) Since \eqref{E:Mild_growth_conformal_energby} states that $E_{\text{conformal}}$ grows very slowly, the presence of $t$ weights in $E_{\text{conformal}}$ requires that $\varphi$ itself must have some time decay.

\begin{remark}
Due to technical reasons related to localizations that are employed in the proof, in the end we consider problem
\eqref{E:Linear_in_varphi_wave_Duhamel} with data given at $\{t=1\}$ instead of $\{ t = 0\}$, which 
explains the right-hand side of \eqref{E:Mild_growth_conformal_energby} being evaluated at $\Sigma_1$.
(In fact, \eqref{E:Dispersive_estimate_L_infty} and \eqref{E:Dispersive_estimate_L_2} should be adjusted
to have the right-hand side evaluated at $\Sigma_1$ as well.)
Readers should not let this minor technicality distract from the main points of the argument.
\end{remark}

To prove \eqref{E:Mild_growth_conformal_energby}, one proceeds as usual, computing $\frac{d}{dt} 
E_{\text{conformal}}$, carrying out an integration by parts and then an integration in time. Schematically, then
\begin{align}
E_{\text{conformal}}(t) \lesssim E_{\text{conformal}}(1) + \int_\mathfrak{D} \mathfrak{E},
\nonumber
\end{align}
where $\mathfrak{D}$ is a spacetime region (say, $\mathfrak{D} = [0,T_{\text{bootstrap}}] \times \mathbb{R}^3$ for concreteness) and $\mathfrak{E}$ an error term coming from the integration by parts\footnote{Using the geometric formalism for quasilinear wave equations, we can identify 
$\mathfrak{E}$ as a contraction of the energy-momentum tensor for $\square_G$ with the deformation tensor of $G$ for suitably chosen vectorfields, see \cite{Alinhac-Book-2010}. This precise form will not be needed for our high-level discussion.}. As mentioned in Sect.~\ref{S:Decay_acoustic_geometry}, this integration by parts produces, in particular, error terms in $\partial (N,L)$ that are integrated over spacetime (observe that $L$ appears explicitly in $E_{\text{conformal}}$ whereas $N$ is hidden in the spacetime gradient $\mathbf{D} \varphi$). Such $\partial (N,L)$ terms are cast as connection coefficients which then need to be controlled. Note that the two integrals in $E_{\text{conformal}}$ represent the interior and exterior regions alluded to in Sect.~\ref{S:Decay_acoustic_geometry}.

We stress that the reduction of proving \eqref{E:Mild_growth_conformal_energby} to bounding
the error terms involving the connection coefficients, (i.e., to the control of the acoustic
geometry) is not trivial; see \citealt{Disconzi-Luo-Mazzone-Speck-2022} and references therein for a full discussion. Here, we restrict ourselves to connect the need for controlling the connection
coefficients with the discussion of Sect.~\ref{S:Control_acoustic_geometry}, since
the exposition in that section highlighted the multiple-speeds behavior of the Euler system
and the role played by the vorticity in comparison to pure wave problems. Moreover, as in Sect.~\ref{S:Control_acoustic_geometry} we focused primarily on $\tr_{\slashed{G}} \upchi$, we do so here in connection to the proof of \eqref{E:Mild_growth_conformal_energby}.

With the above set up in mind, we can now point out some of the main ingredients in the proof of \eqref{E:Mild_growth_conformal_energby}:

\begin{itemize}
\item Observe that $E_{\text{conformal}}(1) \lesssim \norm{\partial \varphi}_{L^2(\Sigma_1)} + \norm{\varphi}_{L^2(\Sigma_1)}$, so we need to show that
\begin{align}
\int_\mathfrak{D} \mathfrak{E} \lesssim C_\varepsilon (1+t)^{2\varepsilon}  E_{\text{conformal}}(1).
\nonumber
\end{align}
\item As mentioned, connection coefficients such as $\upchi$ are produced by an integration by parts. Decomposing $\upchi$ into its trace and trace-free part leads to $tr_{\slashed{G}} \upchi$ (see Sect.~\ref{S:Control_acoustic_geometry}), which needs to be bounded.
\item Recall from Sect.~\ref{S:Control_acoustic_geometry} that we do not expect to be able to control $tr_{\slashed{G}} \upchi$ but rather $tr_{\slashed{G}} \upchi + \Gamma_L$. 
However, the null-structure equations and other tools we employ are geometric in nature, and thus give equations and estimates for $tr_{\slashed{G}} \upchi$.
To overcome this discrepancy, the trick, that goes back to the work by \cite{Wang-2017},
is to make a \emph{conformal change} of the metric $G$,
\begin{align}
\tilde{G} := e^{2 \upsigma} G,
\nonumber
\end{align}
for a suitably chosen function $\upsigma$ such that the null mean curvature of $\tilde{G}$ is 
$tr_{\slashed{G}} \upchi + \Gamma_L$, i.e.,
\begin{align}
tr_{\slashed{\tilde{G}}} \tilde{\upchi} = tr_{\slashed{G}} \upchi + \Gamma_L.
\label{E:conformal_null_mean_curvature}
\end{align}
In this way, we deal with a geometry whose null mean curvature is the quantity we need to control, i.e., $tr_{\slashed{G}} \upchi + \Gamma_L$.
We can then carry out an energy estimate using the conformal metric $\tilde{G}$, in which case
the corresponding connection coefficient appearing upon integration by parts is given by
\eqref{E:conformal_null_mean_curvature}, which, from Sect.~\ref{S:Control_acoustic_geometry}, we expect to be able to control. See further comments below.
\item In the end, however, we want control related to $tr_{\slashed{G}}$,
and not $tr_{\slashed{\tilde{G}}} \tilde{\upchi}$, because it is estimate \eqref{E:Mild_growth_conformal_energby}, based on $G$ and not $\tilde{G}$, that we need to establish. For this, we need to derive several comparison results between quantities constructed out of $G$ and of out $\tilde{G}$. These comparisons require good estimates for the conformal factor $\upsigma$.
\item At some point of the argument we need to control not only 
$tr_{\slashed{\tilde{G}}} \tilde{\upchi}$, or equivalently $tr_{\slashed{G}} \upchi + \Gamma_L$, but also its $\SoundCone_\mathcal{U}$-tangential derivatives, 
$\slashed{\partial}(tr_{\slashed{G}} \upchi + \Gamma_L)$. Roughly, control of this term is needed
in estimates related to $\upsigma$; it is also needed in the aforementioned energy argument
involving the conformal metric $\tilde{G}$: as in the case of $E_{\text{conformal}}$, the 
null mean curvature (of the conformal metric) $tr_{\slashed{\tilde{G}}} \tilde{\upchi}$ appears after an integration by parts, but a further integration by parts is needed in order to correctly balance terms with different powers of $\tilde{r}$ (recall that the energy we ultimately want to control,  $E_{\text{conformal}}$, is weighted in $\tilde{r}$, see Remark \ref{R:Downplaying_technicalities_conformal_energy}).
\item To bound the term $\slashed{\partial}(tr_{\slashed{G}} \upchi + \Gamma_L)$, the idea is to use the fact that derivatives of the connection coefficients in directions tangent to the sound cones exhibit better behavior than derivatives in directions transvere to the cones (see Sects.~\ref{S:Decay_acoustic_geometry} and \ref{S:Control_acoustic_geometry}). In order to exploit this fact, in the spacetime integral over $\mathfrak{D}$, we want to isolate the integration over sound cones. I.e., instead of using Fubini's theorem to integrate first in space and then in time, 
\begin{align}
\int_\mathfrak{D} (\cdots) \, d\vol= \int_{t_I}^{t_F} \int_{\Sigma_t} (\cdots) \, dx dt
\label{E:Spacetime_interal_error_term_space_time}
\end{align}
we integrate first along each sound cone $\SoundCone_\mathcal{U}$ and then along the $\mathcal{U}$ direction (recall that each $\SoundCone_\mathcal{U}$ is a level set of $\mathcal{U}$):
\begin{align}
\int_\mathfrak{D} (\cdots)  d\vol = \int_{\mathcal{U}_I}^{\mathcal{U}_F} \int_{\SoundCone_\mathcal{U}} (\cdots) \, d \slashed{\upomega} d\mathcal{U},
\label{E:Spacetime_interal_error_term_sound_cone_eikonal}
\end{align}
where $d \slashed{\upomega}$ is a suitable integration measure\footnote{Recall that $\SoundCone_\mathcal{U}$ is null with respect to the acoustical metric; see the discussion in the paragraph after \eqref{E:Divergence_theorem_L2_cones}.} on $\SoundCone_\mathcal{U}$ and
for certain initial and final times $t_I, t_F$ and values of the eikonal function $\mathcal{U}_I,\mathcal{U}_F$. \emph{We can estimate the $\SoundCone_\mathcal{U}$ integral involving 
$\slashed{\partial}(tr_{\slashed{G}} \upchi + \Gamma_L)$ in view of the special structures 
of \eqref{E:New_formulation_classical} which allow us us to bound  $\slashed{\partial} \curl \Omega$ in $L^2(\SoundCone_\mathcal{U})$,} as explained in Sect.~\ref{S:Control_acoustic_geometry}. Observe that we would not be able to take advantage of this special control along $\SoundCone_\mathcal{U}$ if we wrote the spacetime integral as 
\eqref{E:Spacetime_interal_error_term_space_time} instead of \eqref{E:Spacetime_interal_error_term_sound_cone_eikonal}. The idea of considering a decomposition
in the form  \eqref{E:Spacetime_interal_error_term_sound_cone_eikonal} 
in place of \eqref{E:Spacetime_interal_error_term_space_time}, so that one can take advantage of the better behavior of the acoustic geometry in directions tangent to the sound cones, goes back to Christodoulou and Klainerman's proof of stability of Minkowski space (see \citealt{Christodoulou-Klainerman-Book-1993}).
\item In order to bound the conformal factor $\upsigma$, a modified (depending on $\upsigma$) version of the so-called mass-aspect function is introduced. The mass-aspect function is a key quantity depending on $\underline{L} tr_{\slashed{G}}\upchi$ used in Christodoulou and Klainerman's proof of stability of Minkowski space (see \citealt{Christodoulou-Klainerman-Book-1993}). Such quantity is important because $\underline{L} tr_{\slashed{G}}$ is a ``bad'' derivative of $tr_{\slashed{G}}$, i.e., a derivative transverse to the sound cones, and thus difficult to control. The mass-aspect function is a suitable combination of $\underline{L} tr_{\slashed{G}}$ and other variables that satisfies a good equation, for which estimates can be derived. In our context, this quantity is appended by a dependence on derivatives of $\upsigma$ (see \citealt[Equation (209)]{Disconzi-Luo-Mazzone-Speck-2022}. We remark that control of the modified mass-aspect function is quite delicate and it constitutes one of the main technical difficulties of the proof.

\end{itemize}

\begin{remark}
\label{R:Downplaying_technicalities_conformal_energy}
Above, we focused on only one aspect of the proof of \eqref{E:Mild_growth_conformal_energby}, that related to estimating $\slashed{\partial}(tr_{\slashed{G}} \upchi + \Gamma_L)$, which is one of the most delicate but also interesting aspects of the proof. We illustrated the role of the vorticity in the proof and the need to employ some very special structures of Eqs.~\eqref{E:New_formulation_classical}. We are leaving out many other relevant and interesting ideas as well as several technical aspects. In particular, the above outline does not take into account the following important points:
\begin{itemize}
\item For technical reasons, we in fact have to control a slightly higher Lebesgue exponent than $2$ along sound cones, i.e., we need to bound $\slashed{\partial} \curl \Omega$ in $L^p(\SoundCone_\mathcal{U})$, $p \gtrsim 2$. The key point for such estimate is the same as in Sect.~\ref{S:Control_acoustic_geometry}, namely, the transversality of $\B$ and $\SoundCone_\mathcal{U}$.
\item As mentioned, one also needs control the connection coefficients associated with $\partial N$. This is the part of the argument where we rely on a Morawetz-type estimate, as alluded to in Sects.~\ref{S:Decay_acoustic_geometry} and \ref{S:Control_acoustic_geometry}.
\item Recall that $E_{\text{conformal}}$ is a \emph{weighted} energy. The role played by the $t$ and $\tilde{r}$ weights is crucial for closing the estimates. Obtaining estimates with correct powers of the weights is a delicate and crucial aspect of the proof, but one which we do not discuss here because it lies heavily on the technical part of the work. The \emph{conceptual} importance of deriving weighted estimates, namely, the need to obtain decay estimates, has been discussed above and in Sects.~\ref{S:Decay_acoustic_geometry} and \ref{S:Control_acoustic_geometry}
\item We remark that the control needed along sound cones is at top order. We also need energy estimates for Littlewood--Paley projections of $\curl \Omega$ along sound cones and their sum over frequencies. These estimates are obtained by commuting the equations with the projections and arguing as above. In particular, once again the key feature is the transversality of $\B$ and the sound cones. 
\end{itemize}
\end{remark}

\subsection{H\"older control of the transport-part}
\label{S:Holder_control_transport}

We have already discussed one important aspect of controlling the transport-part of the system, namely, control along sound cones, see Sects.~\ref{S:Estimates_curl_Omega_along_sound_cones} and \ref{S:Conformal_energy}. 
In propagating the regularity of $\curl \Omega$, we have in particular to propagate its H\"older regularity, since this is needed for the elliptic estimates, as mentioned. Another interesting feature of the proof of Theorem \ref{T:Rough_classical_Euler} is that we can derive non-trivial H\"older estimates for a hyperbolic system,
a feat that relies crucially on the good structures
of the the new formulation of the classical Euler and which does not seem attainable for generic hyperbolic systems.

Bounds for $\bar{\partial} \Omega$ in $C^{0,\upalpha}$ can be obtained as follows. First, we establish
a div-curl estimate in H\"older spaces of the form
\begin{align}
\norm{\bar{\partial} \Omega}_{C^{0,\upalpha}(\Sigma_t)} \lesssim
\norm{
\dive \Omega}_{C^{0,\upalpha}(\Sigma_t)} + \norm{ \curl \Omega}_{C^{0,\upalpha}(\Sigma_t)} + 
\text{L.O.T.}
\label{E:Div_curl_Holder}
\end{align}
Using \eqref{E:New_formulation_classical_transport_div_Omega}, \eqref{E:Div_curl_Holder} gives
\begin{align}
\norm{\bar{\partial} \Omega}_{C^{0,\upalpha}(\Sigma_t)} \lesssim
\norm{\partial \Psi}_{C^{0,\upalpha}(\Sigma_t)} + \norm{ \curl \Omega}_{C^{0,\upalpha}(\Sigma_t)},
\label{E:Div_curl_Holder_Psi}
\end{align}
where as usual we ignore the lower-order terms. For $\curl \Omega$,  we derive an energy estimate
for transport equations in H\"older spaces that reads
\begin{align}
\norm{\curl \Omega}_{C^{0,\upalpha}(\Sigma_t)}
\lesssim
\norm{\curl \Omega}_{C^{0,\upalpha}(\Sigma_0)}
+ \int_0^t \norm{\B \curl \Omega}_{C^{0,\upalpha}(\Sigma_\tau)} \, d\tau.
\label{E:Energy_estimate_Holder_spaces_transport}
\end{align}

Estimate \eqref{E:Energy_estimate_Holder_spaces_transport} is proven by integrating along the characteristics of the transport-part, i.e., 
along the flow lines of $\B$, and comparing ratios of nearby point. This comparison is needed
because the H\"older norms involve comparing $\curl \Omega$ at different (nearby) points along $\Sigma_t$. On the other hand, integrating along the flow lines of $\B$ only allows us to compare points along the integral curves of $\B$, i.e., compare the value of $\curl \Omega$ at a point
$x_t \in \Sigma_t$ with the value of $\curl \Omega$ at a point
$x_0 \in \Sigma_0$, where $x_0$ and $x_t$ are connected by a flow line of $\B$ (see Fig.~\ref{F:Control_flow_lines_B_Holder}). Along $\Sigma_0$ we have control for  $\norm{\curl \Omega}_{C^{0,\upalpha}(\Sigma_0)}$ by assumption. Thus, in order to propagate the H\"older regularity of $\curl \Omega$ from $\Sigma_0$ to $\Sigma_t$ we need 
show that the distance between two nearby points $x_t, y_t$ along $\Sigma_t$, which enters in the definition of the norm  $\norm{\curl \Omega}_{C^{0,\upalpha}(\Sigma_t)}$ we want to control, is comparable
to the distance between $x_0, y_0$ along $\Sigma_0$, which enters in the definition of the  norm  $\norm{\curl \Omega}_{C^{0,\upalpha}(\Sigma_0)}$, which is bounded by assumption, where
$x_t$ and $y_t$ are connected to $x_0$ and $y_0$, respectively, along flow lines of $\B$ (see Fig.~\ref{F:Control_flow_lines_B_Holder}). In order words, we need to establish that under the bootstrap assumptions \ref{E:Bootstrap_assumption},
\begin{align}
| x_t - y_t | \approx |x_0 - y_0|.
\nonumber
\end{align}
This will be the case as long \emph{as we have uniform control over the integral curves of $\B$, which can be shown to be the case using the bootstrap assumptions.}

\begin{figure}[ht]
\centering
  \includegraphics[scale=0.4]{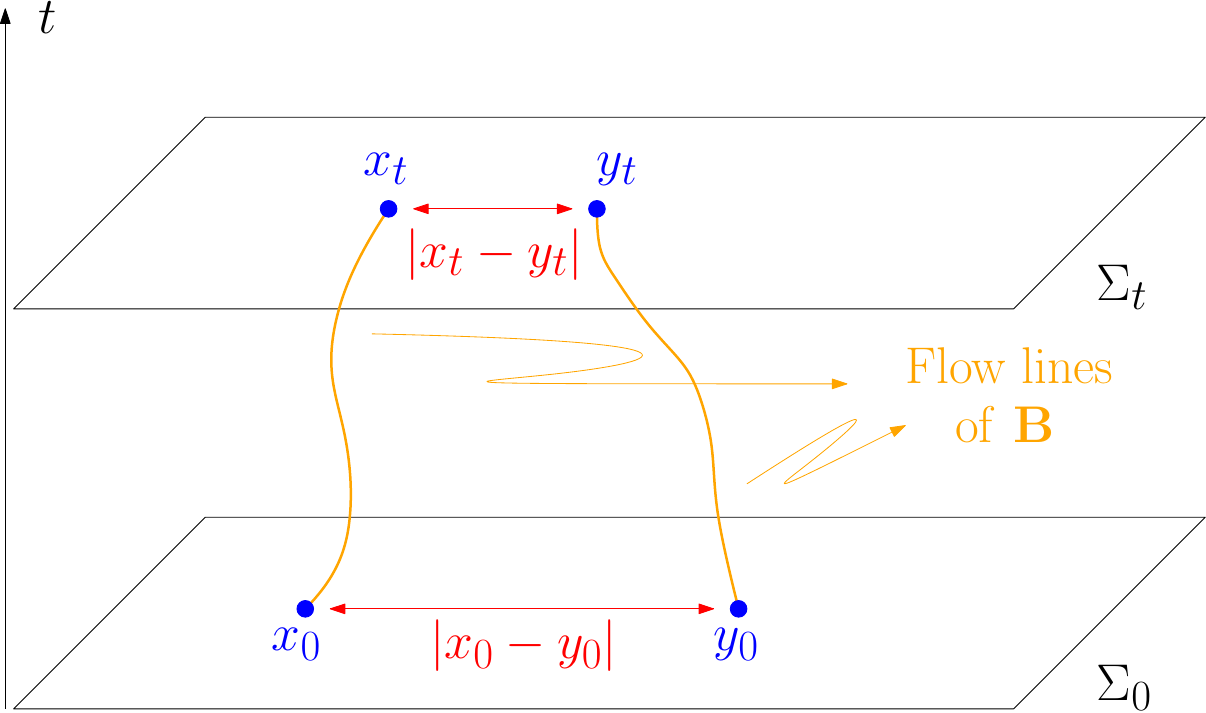}
  \caption{Illustration of distance between points along the flow lines of $\B$.}
  \label{F:Control_flow_lines_B_Holder}
\end{figure}

Combining \eqref{E:Energy_estimate_Holder_spaces_transport} and  
\eqref{E:New_formulation_classical_transport_curl_Omega} gives
\begin{align}
\norm{\curl \Omega}_{C^{0,\upalpha}(\Sigma_t)}
\lesssim
\norm{\curl \Omega}_{C^{0,\upalpha}(\Sigma_0)}
+ \int_0^t \norm{\partial \Psi}_{C^{0,\upalpha}(\Sigma_\tau)} \, d\tau.
\label{E:Energy_estimate_Holder_spaces_transport_Psi}
\end{align}
Then \eqref{E:Div_curl_Holder_Psi} and  \eqref{E:Energy_estimate_Holder_spaces_transport_Psi}
yield
\begin{align}
\norm{\bar{\partial} \Omega}_{C^{0,\upalpha}(\Sigma_t)} \lesssim
\norm{\partial \Psi}_{C^{0,\upalpha}(\Sigma_t)} +
\norm{\curl \Omega}_{C^{0,\upalpha}(\Sigma_0)}
+ \int_0^t \norm{\partial \Psi}_{C^{0,\upalpha}(\Sigma_\tau)} \, d\tau
\label{E:Div_curl_Holder_final}
\end{align}

Estimate \eqref{E:Div_curl_Holder_final} will provide the desired bound for $\Omega$ in H\"older spaces provided we can control $\norm{\partial \Psi}_{C^{0,\upalpha}(\Sigma_\tau)}$. For the latter,
we rely on the Littlewood-Paley characterization of H\"older spaces (see \citealt[Appendix A]{Taylor-Book-1991}) to show that the bootstrap assumption \eqref{E:Bootstrap_assumption_Psi}
controls the $L^2_t C_x^{0,\upalpha}$-norm of $\partial \Psi$. 

It is a \emph{non-trivial} fact that the bootstrap assumptions for the the wave-part allow us to 
squeeze a bit of H\"older control of $\partial \Psi$, as the H\"older norms are associated with control of the transport-part. This is another example of the \emph{special features of Eqs.~\eqref{E:New_formulation_classical} and the corresponding interaction between the wave and transport parts.} 

\begin{remark}
As in the last bullet point of Remark \ref{R:Downplaying_technicalities_conformal_energy}, 
control of $\partial \Psi$ in H\"older spaces is at top order, so we also need H\"older estimates
for Littlewood-Paley projections of $\partial \Psi$ and their sum over frequencies in H\"older spaces. These are controlled by the second sum in the bootstrap assumption \eqref{E:Bootstrap_assumption_Psi}.
\end{remark}

\begin{remark}
Control of the acoustic geometry involves control over spheres $S_{t,\mathcal{U}}$, which are sections of $\SoundCone_\mathcal{U}$. Because of our functional framework, this eventually leads to H\"older estiamtes for connection coefficients on $S_{t,\mathcal{U}}$. Such estimates are obtained by argument similar to the above, but where now we transport along the integral curves of $L$ instead of $\B$. As in the case of $\B$, this requires uniform control of the flow lines of $L$, which can be established under the bootstrap assumptions.
\end{remark}

\subsection{Closing the bootstrap\label{S:Closing_bootstrap}} As it is often the case on arguments based on a bootstrap,
once estimates have been shown to close at a consistent regularity level under bootstrap assumptions, one seeks to improve the bootstrap (in our case, \eqref{E:Bootstrap_assumption_improved}) upon appealing to some smallness in the problem.
In our case, smallness is obtained by taking the bootstrap time $T_{\text{bootstrap}}$ sufficiently small. We refer to \cite{Disconzi-Luo-Mazzone-Speck-2022} for details.

\subsection{Back to the relativistic case\label{S:Back_to_relativistic_Euler}}
As it has been stressed many times in these notes,
one should not expect delicate results for the classical compressible Euler equations to be easily generalizeable to or even true in the relativistic setting. Thus, a legitimate question is whether a version of Theorem \ref{T:Rough_classical_Euler} holds for the relativistic Euler equations. An answer was provided by Yu:

\begin{theorem}[\cite{Yu-2024}] A similar low-regularity result as Theorem \ref{T:Rough_classical_Euler} holds for the relativistic Euler equations.
\label{T:Rough_relativistic_Euler}
\end{theorem}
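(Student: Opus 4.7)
The plan is to replicate the overall strategy used to prove Theorem \ref{T:Rough_classical_Euler}, now taking as the starting point the new formulation of the relativistic Euler equations provided by Theorem \ref{T:New_formulation}. As in the classical case, we split the unknowns into \emph{wave variables} $\Psi = (\hat{h}, s, u)$, which satisfy equations \eqref{E:New_formulation_wave} with principal part given by the covariant wave operator $\square_G$ of the acoustical metric $G$ (Definition \ref{D:Acoustical_metric}), and \emph{transport variables} $\{\upomega, S, \C, \D\}$, which satisfy the transport and transport-div-curl equations \eqref{E:New_formulation_transport}--\eqref{E:New_formulation_transport_div_curl}. The goal is to propagate $(\hat{h}, s, u, \upomega) \in H^{2+\varepsilon} \times H^{3+\varepsilon} \times H^{2+\varepsilon} \times H^{2+\varepsilon}$ together with the H\"older regularity $(\C, \D) \in C^{0,\upalpha}$ for small $\upalpha > 0$. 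The whole argument is organized as a bootstrap on the mixed spacetime norms of $\partial \Psi$ and $\bar\partial \upomega$, completely parallel to \eqref{E:Bootstrap_assumption}--\eqref{E:Bootstrap_assumption_improved}, with the classical material vectorfield $\B$ replaced by the relativistic material operator $u^\mu \partial_\mu$ (which plays exactly the same role since $G(u,u) = -1$).

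The energy estimates at the $H^{2+\varepsilon}$-level follow by combining standard wave estimates on \eqref{E:New_formulation_wave} with transport and div-curl estimates for \eqref{E:New_formulation_transport}--\eqref{E:New_formulation_transport_div_curl}, precisely as in Section \ref{S:Energy_estimates_low_regularity}; the extra regularity assumption on $\upomega$ at $t=0$ is needed to make the analogue of \eqref{E:Energy_estimates_2_plus_epsilon_part_5} close, and is propagated by the div-curl gain already exploited in the proof of Theorem \ref{T:Improved_regularity}. These energy estimates depend on controlling $\norm{\partial \Psi}_{L^1_t L^\infty_x}$ and $\norm{\bar\partial \upomega}_{L^1_t L^\infty_x}$. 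For the former, one implements the same cascade of reductions (rescaling, Duhamel, $TT^*$) to reduce the problem to the dispersive bound \eqref{E:Dispersive_estimate_L_2} for solutions of the linear wave equation $\square_G \varphi = 0$, and from there to a mild growth bound \eqref{E:Mild_growth_conformal_energby} for the conformal energy \eqref{E:Conformal_energy}, using a relativistic eikonal function $\mathcal{U}$ satisfying \eqref{E:Eikonal_equation_rough} and a null frame $\{e_1, e_2, \underline{L}, L\}$ constructed as in Section \ref{S:Decay_acoustic_geometry}. For the latter, one combines div-curl H\"older estimates across $\Sigma_t$ with transport estimates for $\C$ (which controls $\vort(\upomega)$ up to lower-order terms) in $C^{0,\upalpha}$, propagated along the flow lines of $u$, exactly as in \eqref{E:Div_curl_Holder}--\eqref{E:Div_curl_Holder_final}.

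Control of the acoustic geometry proceeds via the null-structure equations for the connection coefficients $\upchi, \uptheta, \upzeta, \dots$. The modified Raychaudhuri equation now reads, schematically, $L(\tr_{\slashed{G}} \upchi + \Gamma_L) \simeq \C + \D + Q(\partial\Psi, \partial\Psi)$, where the source comes directly from \eqref{E:New_formulation_wave}; thus one needs $L^2(\SoundCone_\mathcal{U})$-type control of $\slashed{\partial}(\C, \D)$. This is where the transversality of $u$ to the sound cones plays the decisive role: because $G(u,u) = -1$, the same divergence-theorem computation carried out in \eqref{E:Divergence_current_L2_cones}--\eqref{E:Estimate_curl_Omega_along_cones_intermediate} produces coercive boundary integrals on $\SoundCone_\mathcal{U}$ with sign consistent with the $L^2$ norm. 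The fact that the RHS of \eqref{E:New_formulation_wave} involves precisely $\C$ and $\D$ (with the good transport-div-curl structure \eqref{E:New_formulation_transport_div_curl_C}, \eqref{E:New_formulation_transport_div_curl_D} and null-forms, rather than generic derivatives of $\upomega$ or $S$) is exactly what prevents the naive $\partial^3 \Psi$ loss, in complete analogy with Remark \ref{R:Trading_C_for_curl_Omega}. The conformal energy bound is then closed using the modified mass-aspect function argument of Section \ref{S:Conformal_energy}, with the decomposition \eqref{E:Spacetime_interal_error_term_sound_cone_eikonal} of the spacetime error integral over level sets of $\mathcal{U}$.

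The hard part will be handling the fact that, in contrast to the classical setting, the div, curl, and vort operators appearing in \eqref{E:New_formulation_transport_div_curl} are \emph{spacetime} operators acting on four-dimensional objects, while almost every estimate must be performed on three-dimensional slices $\Sigma_t$ or on three-dimensional sections $S_{t,\mathcal{U}}$. This forces one to systematically project the spacetime transport-div-curl system onto the spatial slices, using the algebraic identities $u_\alpha \upomega^\alpha = 0$ and $u^\alpha S_\alpha = 0$ (the latter following from \eqref{E:New_formulation_transport_div_curl_S} and \eqref{E:Projected_relativistic_Euler_enthalpy_entropy_full_system_entropy}) to excise the timelike components, as was done in the proof of Theorem \ref{T:Improved_regularity}. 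One must then verify that this excision is compatible both with the H\"older estimates (which rely on Calder\'on-Zygmund theory and standard elliptic div-curl estimates on $\mathbb{R}^3$) and with the null-form structure, since it is that structure which underlies the closure of the estimates at top order. A second serious technical point is propagating H\"older regularity of $\C$ and $\D$ under the fully coupled evolutions \eqref{E:New_formulation_transport_div_curl_D}, \eqref{E:New_formulation_transport_div_curl_C}: this requires uniform control of the integral curves of $u$ under the bootstrap assumptions (the relativistic analogue of the argument sketched in Figure \ref{F:Control_flow_lines_B_Holder}), together with careful tracking of the lower-order terms hidden under the $\simeq$ symbols, since, unlike the classical case where some thermodynamic coefficients are constants or simple functions, here every coefficient is a nontrivial function of the primary variables through the equation of state. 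Once these technical issues are resolved, the bootstrap is closed by choosing the bootstrap time sufficiently small, yielding the theorem.
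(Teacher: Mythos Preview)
Your proposal follows essentially the same approach as the paper, and you correctly identify the main structural ingredients: the new formulation of Theorem \ref{T:New_formulation}, the wave/transport splitting, the bootstrap on mixed spacetime norms, the conformal energy machinery, and the spacetime div-curl issue requiring excision of timelike components. The paper's own discussion is likewise a sketch listing the genuinely new difficulties relative to the classical case.

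There is, however, one point you oversimplify in a way that hides a real obstacle. You write that $\B$ is replaced by $u^\mu\partial_\mu$, ``which plays exactly the same role since $G(u,u)=-1$.'' This is misleading. It is true that $G(u,u)=-1$, and this is precisely what gives you transversality of $u$ to the sound cones and hence coercivity of the boundary integral in the analogue of \eqref{E:Divergence_current_L2_cones}--\eqref{E:Estimate_curl_Omega_along_cones_intermediate}. But in the classical case $\B$ is also $G$-orthogonal to the constant-time slices $\Sigma_t$, and this orthogonality is used pervasively in the geometric formalism (e.g., in the divergence theorem computations, in the decomposition of the null frame, and in the multiplier estimates). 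In the relativistic case $u$ is \emph{not} $G$-orthogonal to $\Sigma_t$, so replacing $\B$ by $u$ generates additional error terms depending on $\Psi$ that must be controlled separately. The paper flags this as one of the genuinely new technical difficulties. Relatedly, the elliptic estimates arising from the excision procedure you describe are no longer constant-coefficient as in the classical case but quasilinear, which you allude to but do not name explicitly.
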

\begin{proof}
We refer to \cite{Yu-2024} for a precise statement of Theorem \ref{T:Rough_relativistic_Euler}, as well as for its complete proof.
 As the proof of Theorem 
\ref{T:Rough_classical_Euler} employed the new formulation
of classical Euler introduced by 
\cite{Speck-2019,Luk-Speck-2020}, the proof of Theorem \ref{T:Rough_relativistic_Euler} employs the new formulation of the relativistic Euler equations 
given in Theorem \ref{T:New_formulation}.

It would not be feasible for us to present a detailed discussion of the proof of Theorem 
\ref{T:Rough_relativistic_Euler}. Rather, we will restrict ourselves to 
list some of the key points in which the proof of Theorem \ref{T:Rough_relativistic_Euler} is genuinely different than that of Theorem \ref{T:Rough_classical_Euler}. More precisely,
while the proof of Theorem \ref{T:Rough_relativistic_Euler} follows the general structure
of the proof of Theorem \ref{T:Rough_classical_Euler}, there are truly new aspects that Yu 
had to deal with, making the
proof of Theorem \ref{T:Rough_relativistic_Euler} significantly more challenging than that of Theorem \ref{T:Rough_classical_Euler}. Some of these aspects are
\begin{itemize}
\item The elliptic estimates in Theorem \ref{T:Rough_classical_Euler} are for constant-coefficient elliptic operators, whereas those of Theorem \ref{T:Rough_relativistic_Euler} are for quasilinear elliptic operators. This is related to the fact that in the relativistic case the div-curl operators
are spacetime div-curl. One needs to extract spacial regularity along $\Sigma_t$ from these spacetime div-curl operators. The basic idea is reminiscent of excising the timelike part of the vorticity as in Theorem \ref{T:Improved_regularity}. However, at a low-regularity level such excision process is much more delicate.
\item In the geometric framework employed to estimate the acoustic geometry, one has to deal with several decompositions of tensors in directions parallel and perpendicular to the four-velocity\footnote{We use four-velocity for what in other sections we called simply velocity to emphasize here the difference between the relativistic and the classical cases.}, a feature with no analogue in the classical case.
\item In the classical case, the material derivative vectorfield $\B$ is orthogonal, with respect to $G$, to the constant-time hypersurfaces $\Sigma_t$. While we have not discussed the implications of this orthogonality, it is routinely used in the proof of Theorem \ref{T:Rough_classical_Euler} as a  consequence of the geometric formalism. In the relativistic case, derivative in the direction of the four-velocity plays the role of the material derivative, but the four-velocity is \emph{not} orthogonal to $\Sigma_t$ with respect to the acoustical metric. This produces error terms depending on $\Psi$ that need to be controlled through further technical arguments.
\item In general, the new formulation of the relativistic Euler equations is significantly more intricate than the new formulation of the classical Euler equations (compare Theorem \ref{T:New_formulation} with Theorem 3.1 in \citealt{Speck-2019}). In a low-regularity setting, this leads to quite a few 
potentially dangerous error terms that need to be carefully analyzed.
\end{itemize}
\end{proof}

\subsection{Further lowering of regularity for the classical Euler equations}
\label{S:Lowering_regularity_classical_Euler}

We mentioned at the beginning of Sect.~\ref{S:Rough_solutions} that Theorem \ref{T:Rough_classical_Euler} has been improved by \cite{Wang-2022} and independently by \cite{Zhang-Andersson-arxiv-2022}, who established Theorem \ref{T:Rough_classical_Euler} without any H\"older regularity on the data and lowered the regularity on the curl part of the data to $\curl u \in H^{2+{\varepsilon^\prime}}(\Sigma_0)$, $0< \varepsilon^\prime < \varepsilon$. Andersson and Zhang's result also allows for $\varepsilon^\prime=0$.

We will now highlight some of the essential ideas used in the proof in \cite{Wang-2022}. This proof is more in line with the geometric-analytic techniques that we want to emphasize in our presentation. While the work by \cite{Zhang-Andersson-arxiv-2022} involves several new important contributions to the field, its result involves ideas of a different nature, more in line with the approach of \cite{Smith-Tataru-2005} and a detailed exposition would be beyond the scope of this review.  

Recall from Sect.~\ref{S:Energy_estimates_low_regularity} (see the discussion surrounding Eq.~\eqref{E:Energy_estimates_2_plus_epsilon_part_5}) that our assumption $\left. \curl u \right|_{t=0} \in H^{2+\varepsilon}$ in Theorem \ref{T:Rough_classical_Euler} was used in deriving energy estimates for $\Psi$.
From Sect.~\ref{S:Control_spacetime_norms}, we have that our H\"older assumptions in Theorem \ref{T:Rough_classical_Euler} were used to bound 
$\norm{\bar{\partial} \Omega}_{L^1_tL^\infty_x}$. The H\"older assumption is also used to control the acoustic geometry, see Sects.~\ref{S:Control_acoustic_geometry}, \ref{S:Estimates_curl_Omega_along_sound_cones}, and \ref{S:Holder_control_transport}. Without the H\"older assumptions and under the weaker hypothesis $\curl u \in H^{2+{\varepsilon^\prime}}(\Sigma_0)$, $0< \varepsilon^\prime < \varepsilon$, on the curl part, one needs to employ a different argument than the one presented above for closing the energy estimates, bounding the spacetime norms, and controlling the acoustic geometry.

We have considered $u$ as one of the wave variables in the analysis, i.e., $\Psi = (\hat{h}, u, s)$ (see the discussion after Theorem \ref{T:Rough_classical_Euler}). It would have been more precise to include only $\dive u$ in $\Psi$ since $\curl u$ has been grouped with the transport variables\footnote{See Footnote \ref{FN:Wave_part_full_velocity}.}. But since $u$ itself satisfies a wave equation, we have not taken full account of the decomposition of a vectorfield into its divergence and curl parts. Wang's approach takes full advantage of this decomposition by splitting the velocity into its divergence and curl parts and deriving a better wave equation for the divergence part than that satisfied by $u$. The price one pays is that the problem becomes very non-local and some new, delicate, cancellations have to be uncovered to close the estimates.

In more detail, consider 
\begin{align}
u = u_d + u_c,
\label{E:Decomposition_u_div_curl_Wang}
\end{align}
where $u_d$ is associated with the divergence part of the velocity and $u_c$ with its curl part. This is accomplished by defining $u_c$ as a solution to 
\begin{align}
(id- \Delta_\updelta)u_c = \curl \curl u
\label{E:Decomposition_curl_Wang}
\end{align}
where $\Delta_\updelta$ is the Laplace-Beltrami operator relative to the Euclidean metric $\updelta$. Recalling the definition of the specific vorticity $\Omega = e^{-\hat{h}}\curl u$,
and the modified vorticity of the vorticity \eqref{E:Modified_vorticity_of_vorticity_classical}, we see that the RHS of \eqref{E:Decomposition_curl_Wang} is related to $\C$, which, as discussed, plays a key role in the analysis. Thus, \eqref{E:Decomposition_curl_Wang} reads as
\begin{align}
(id- \Delta_\updelta)u_c \simeq \curl \Omega,
\label{E:Decomposition_curl_Wang_schematic}
\end{align}
where, in \eqref{E:Decomposition_curl_Wang_schematic}, we continue to employ Remark \ref{R:Trading_C_for_curl_Omega} to write $\curl \Omega$ as a proxy for $\C$.

The term $\curl \Omega$ on the RHS of \eqref{E:Decomposition_curl_Wang_schematic} is precisely the difficult source term appearing in \eqref{E:New_formulation_classical_wave}. \emph{A key idea in \cite{Wang-2022} is that upon computing $\square_G u$ and using \eqref{E:Decomposition_u_div_curl_Wang} and \eqref{E:Decomposition_curl_Wang_schematic}, the source term $\curl \Omega$ in \eqref{E:New_formulation_classical_wave} cancels} (up to lower-order terms). 

More precisely, writing\footnote{The cross terms, i.e., those mixing $\B$ and derivatives along $\Sigma_t$, which are omitted in $\Square_G \simeq - \B\B + \Delta_{\bar{G}}$, are also important in the analysis, but we will not discuss them here.} 
$\Square_G \simeq  -\B\B + \Delta_{\bar{G}}$, where $\Delta_{\bar{G}}$ is the Laplace-Beltrami operator relatitve to the metric $\bar{G}$ induced on $\Sigma_t$ by the acoustical metric $G$, from \eqref{E:Decomposition_curl_Wang}, \eqref{E:Decomposition_curl_Wang_schematic}, and \eqref{E:New_formulation_classical_wave}, we have
\begin{align}
-\curl \Omega + \partial \Psi \cdot \partial \Psi \simeq \Square_G u & = \Square_G u_d + \Square_G u_c
\nonumber
\\
& = \Square_G u_d - \B \B u_c + \Delta_{\bar{G}} u_c
\nonumber
\\
& = \Square_G u_d - \B \B u_c + \Delta_{\updelta} u_c
\nonumber
\\
&= \Square_G u_d - \B \B u_c - \curl \Omega + u_c,
\nonumber
\end{align}
where we also used that $\Delta_{\bar{G}} \simeq \Delta_{\updelta}$. This follows from the fact that $\bar{G} = c_s^{-2} \updelta$, which allows us to trade $\Delta_{\bar{G}}$ for $\Delta_{\updelta}$ up to terms with good structure that can be subsumed into $\simeq$. We also note the presence of a minus sign on the $\curl\Omega$ term on the LHS. This minus sign is present in the non-schematic form of Eq.~\eqref{E:New_formulation_classical_wave}, but so far we could ignore it due to our schematic presentation. However, in order to observe exact cancellations, such signs matter. In fact, the exact form of the canceled terms, including its coefficients, is important, but for our purposes of illustrating the cancellation it suffices to keep track of the signs. Thus, canceling $\curl \Omega$ on both sides gives the following evolution equation for $u_d$
\begin{align}
\Square_G u_d \simeq  \B\B u_c + \partial \Psi \cdot \partial \Psi .
\label{E:Wave_equation_divergence_part_Wang}
\end{align}

Equation \eqref{E:Wave_equation_divergence_part_Wang} still contains the high-order source term $\B\B u_c$. The next step is to show that in all estimates involving \eqref{E:Wave_equation_divergence_part_Wang}, this term can be canceled. There are two important applications of \eqref{E:Wave_equation_divergence_part_Wang} where this cancellation is crucial. The first is in the energy estimates, in particular when estimates along sound cones are involved. Recall from Sect.~\ref{S:Estimates_curl_Omega_along_sound_cones} that in the proof of Theorem \ref{T:Rough_classical_Euler}, a delicate point was to control $\curl \Omega$ along sound cones. These estimates for $\curl \Omega$ along the sound cones were one of the important new elements in Theorem \ref{T:Rough_classical_Euler} as compared to the irrotational case, and they required the regularity assumptions stated in Theorem \ref{T:Rough_classical_Euler}. Under the weaker regularity assumptions of \cite{Wang-2022}, the approach of Sect.~\ref{S:Estimates_curl_Omega_along_sound_cones} fails. Thus, one first eliminates this term as in \eqref{E:Wave_equation_divergence_part_Wang}. In order to show that $\B\B u_c$ can be canceled as well, Wang rewrites \eqref{E:Wave_equation_divergence_part_Wang} as a first-order system and introduces a vectorfield related to $\B u_c$ and tailored to an application of the divergence theorem (this should be compared to \eqref{E:Current_curl_Omega_sound_cones}, which is used to control $\curl \Omega$ along sound cones in Sect.~\ref{S:Estimates_curl_Omega_along_sound_cones}). The second application of \eqref{E:Wave_equation_divergence_part_Wang} where one has to cancel the $\B\B u_c$ term is in the Strichartz estimates. Recall from Sect.~\ref{S:Strichartz_and_reductions} that to obtain the Strichartz estimates, we employ Duhamel's principle to reduce the problem to controlling solutions to the linear wave equation in a $G$ background (see Eq.~\eqref{E:Linear_in_varphi_wave_Duhamel} and the surrounding discussion). Here, unlike what was done for $\curl \Omega$ in the proof of Theorem \ref{T:Rough_classical_Euler}, the term $\B\B u_c$ is not treated as a source from the point of view of the estimates for the wave equation. The term $\B\B u_c$ is canceled by an intricate modification of the application of Duhamel's principle that goes back to Wang's previous work \cite{Wang-2014}.

In the proof of Theorem \ref{T:Rough_classical_Euler}, we introduced the bootstrap assumption 
\eqref{E:Bootstrap_assumption_Omega} which is consistent with the H\"older control we obtained in view the H\"older assumptions on the data. Without the H\"older assumptions, Wang has to close energy estimates 
for $\Omega$ in $H^{2+\varepsilon^\prime}$ without appealing to a bound on\footnote{See Remark \ref{R:Bootstrap_assumptions_energy_estimates}.} $\norm{\bar{\partial} \Omega}_{L^1_tL^\infty_x}$ (and further without using  $\curl u \in H^{2+{\varepsilon}}(\Sigma_0)$; compare with \eqref{E:Energy_estimates_2_plus_epsilon_part_5} and the surrounding discussion). In view of the Hodge decomposition and \eqref{E:New_formulation_classical_transport_div_Omega}, this boils down to bounding
$\curl \curl \Omega \sim \curl \C$ in $H^{\varepsilon^\prime}$. Wang uncovers some special cancellations in the energy estimate for $\curl \curl \Omega$ in order to close the estimates without appealing to 
a bound on $\norm{\bar{\partial} \Omega}_{L^1_tL^\infty_x}$ (or $\Omega$ in $H^{2+\varepsilon}$). In order to illustrate this, consider an energy estimate for $\norm{\curl \curl \Omega}_{L^2(\Sigma_t)}$. We have
\begin{align}
\frac{1}{2} \partial_t \int_{\Sigma_t} |\curl \curl \Omega|^2 & \simeq
\int_{\Sigma_t} \B \curl \curl \Omega \cdot \curl \curl \Omega
\nonumber
\\
& \simeq \int_{\Sigma_t} \curl \B \curl \Omega \cdot \curl \curl \Omega.
\nonumber
\end{align}
At this point, it seems natural to invoke \eqref{E:New_formulation_classical_transport_curl_Omega} to replace
$\B \curl \Omega$. Instead, Wang observes the following. Direct computation gives
\begin{align}
\curl \B \curl \Omega \cdot \curl \curl \Omega \simeq 
(\updelta^{ij} \partial_j \partial_l u^k \partial_i \Omega_k + \updelta^{ij} \partial_i \B \hat{h} \partial_l \Omega_j)
\updelta^{lm} (\curl\curl \Omega)_m.
\label{E:Trilinear_structure_Wang}
\end{align}
(The term $\B \hat{h}$ comes from the specific form of $\C$; recall Remark \ref{R:Trading_C_for_curl_Omega}.)
Using $\dive \curl \curl \Omega=0$, 
\begin{align}
\updelta^{ij} \partial_j \partial_l u^k \partial_i \Omega_k \updelta^{lm} (\curl \curl\Omega)_m
\simeq \partial_l( \updelta^{ij} \partial_j  u^k \partial_i \Omega_k \updelta^{lm} (\curl \curl\Omega)_m),
\nonumber
\end{align}
which can be integrated by parts. Another factorization allows one to handle the second term in \eqref{E:Trilinear_structure_Wang}; schematically
\begin{align}
\bar{\partial}\B \hat{h} \bar{\partial} \Omega \cdot \curl \curl \Omega \simeq
\B( \bar{\partial} \hat{h} \bar{\partial} \Omega \cdot \curl \curl \Omega )
- \bar{\partial} \hat{h}  \B ( \bar{\partial} \Omega \cdot \curl \curl \Omega )
\nonumber
\end{align}
The first terms can be integrated by parts in time (since the initial energy identity is integrated over time).  This will produce only sufficiently lower-order terms that can thus be estimated without a time integral. The last term can be controlled using Eqs.~\eqref{E:New_formulation_classical_transport} and \eqref{E:New_formulation_classical_transport_curl_Omega}.

In order to obtain a top-order estimate, i.e., for $\norm{\curl \curl \Omega}_{H^{\varepsilon^\prime}(\Sigma_t)}$, one needs to apply the above argument to the once $\bar{\partial}^{\varepsilon^\prime}$ differentiated equations. This causes additional technical difficulties which in particular require enlarging the adopted functional framework via estimates in Besov spaces. On curious aspect of the difficulties in carrying out this top-order estimate is that the condition that $\varepsilon^\prime$ is strictly less than $\varepsilon$ is needed\footnote{One can, of course, consider data $\curl u \in H^{2+{\varepsilon}}(\Sigma_0)$ which will then automatically be in $H^{2+{\varepsilon^\prime}}(\Sigma_0)$. But only $\Omega \in H^{2+
\varepsilon^\prime}(\Sigma_t)$ will be controlled.}.

A substantial account of the ideas and techniques employed in 
\cite{Wang-2022} would require a much more in depth discussion that is beyond the goal of this review. But given our goal of explaining some key geometric-analytic techniques and our exposition on how they are used in the proof of Theorem \ref{T:Rough_classical_Euler}, it would be remiss not to discuss the important improvement of this Theorem obtained in \cite{Wang-2022} and point out some of its important ideas, as done above. In particular, one of the most challenging aspects in \cite{Wang-2022}, namely, control of the acoustic geometry under such limited regularity assumptions, involves several innovations whose discussion would be beyond the scope of this review.

Recall from the beginning of Sect.~\ref{S:Rough_solutions} that the irrotational classical compressible Euler equations are locally ill-posed for $(\hat{h},u) \in H^N$ with $N\leq 2$. On the other extreme from compressibility, the incompressible Euler equations are ill-posed for $\curl u \in H^N$ with $N \leq 1.5$ (in three spatial dimensions) due to a result of \cite{Bourgain-Li-2015}. Based on these considerations, \cite{Wang-2022} proposed the following conjecture: The classical compressible Euler equations in three spatial dimensions are locally well-posed for data $(\hat{h}, u, \curl u) \in H^N(\Sigma_0) \times H^N(\Sigma_0)\times H^{N-\frac{1}{2}}(\Sigma_0)$, with $N> 2$ (under natural non-degeneracy conditions like in assumption 3 of Theorem \ref{T:Rough_classical_Euler}).

\section{The relativistic Euler equations with a physical vacuum boundary}
\label{S:Vacuum_bry}

Consider a fluid within a domain that is not fixed but moves with the fluid motion, see Fig.~\ref{F:Domains_0_t} for an illustration.
Fluids of this type are called \textdef{free-boundary fluids.} Examples include a liquid drop or, more relevant for the relativistic case, a star. We will consider the case where the fluid body is in vacuum, like an isolated star.

\begin{figure}[ht]
\centering
  \includegraphics[scale=0.4]{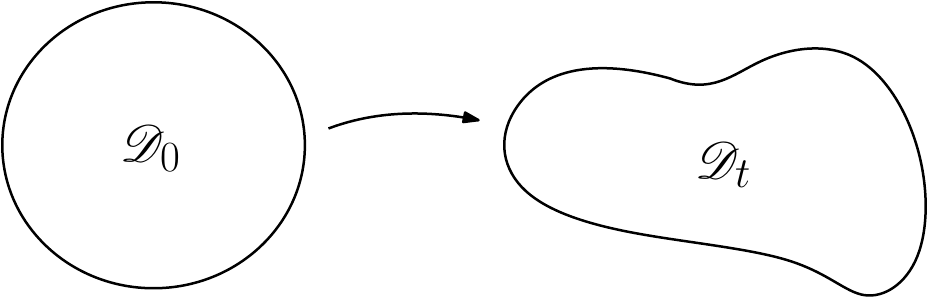}
  \caption{Illustration of moving domain changing its shape
  from $t=0$ to $t>0$ due to the fluid motion.}
  \label{F:Domains_0_t}
\end{figure}

Denoting by $\Md_t$ the region occupied by the fluid at time $t$, the dynamics of the fluid is defined in the spacetime region
\begin{align}
\Md := \bigcup_{0 \leq t < T} \{ t \} \times \Md_t,
\label{E:Moving_domain_spacetime}
\end{align}
for some $T>0$, known as the \textdef{moving domain} (see Fig.~\ref{F:Moving_domain}). The fluid's \textdef{free boundary} (also known as the \textdef{moving boundary} or the \textdef{free interface}) is
\begin{align}
\Fb := \bigcup_{0 \leq t < T} \{ t \} \times \Fb_t, 
\, \Fb_t := \partial \Md_t.
\nonumber
\end{align}
Note that $\Md$ has to be determined alongside the fluid motion, i.e., it cannot be freely prescribed.

\begin{figure}[ht]
\centering
  \includegraphics[scale=0.4]{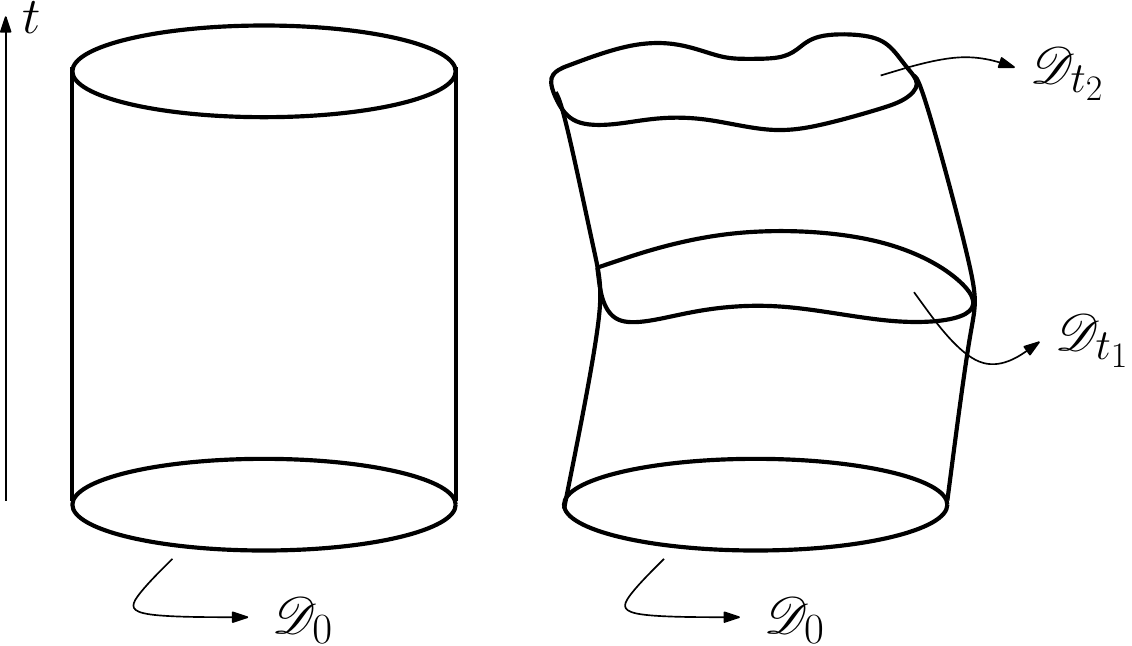}
  \caption{Illustration of the moving domain $\Md$ on the right. The picture on the left shows the domain $[0,T] \times \mathscr{D}_0$ used in standard (i.e., non-free-boundary) initial-boundary value problems.}
  \label{F:Moving_domain}
\end{figure}

The \textdef{free-boundary relativistic Euler equations}
are the relativistic Euler equations defined on a moving domain $\Md$. In this case, we have to impose the following boundary conditions that come from physical considerations\footnote{We continue to use the notation and conventions of Sect.~\ref{S:Relativistic_Euler}.} (see
\citealt{Oppenheimer:1939ne,Tolman:1934za,Tolman:1939jz}):
\begin{align}
\left. p \right|_\Fb = 0,
\label{E:Bry_condition_pressure}
\end{align}
and
\begin{align}
u \in T \Fb,
\label{E:Bry_condition_velocity}
\end{align}
where $T \Fb$ is the tangent bundle of $\Fb$. 
Condition \eqref{E:Bry_condition_pressure} says that the pressure has to vanish at the fluid-vacuum interface. We observe that \eqref{E:Bry_condition_pressure} can be obtained upon imposing that $\nabla_\alpha \mathcal{T}^{\alpha\beta}=0$, where $\mathcal{T}$ is given by \eqref{E:Energy_momentum_perfect}, holds across the free boundary in a distributional sense. Thus, \eqref{E:Bry_condition_pressure} is necessary if the fluid is coupled to Einstein's equations.
Condition \eqref{E:Bry_condition_velocity} says that $\Fb_t$ is advected by the fluid, i.e., $\Fb_t$ moves with speed equal to the normal component of the fluid velocity on the boundary.

Let us assume from now on that we have a \textdef{barotropic} equation of state, meaning that the pressure is a function of the density only, $p = p(\varrho)$. Then \eqref{E:Bry_condition_pressure} gives a condition for $\varrho$ on the boundary. There are two distinct cases to consider, depending on the behavior of $\left.\varrho\right|_\Fb$:
\begin{itemize}
\item $\left.\varrho\right|_\Fb \geq \, \text{constant} \, > 0$, in which case the fluid is called a \textdef{liquid.}
\item $\left.\varrho\right|_\Fb = 0$, in which case the fluid is called a \textdef{gas.}
\end{itemize}
Note that \eqref{E:Bry_condition_pressure} holds in both cases. The liquid and gas cases (whose names are more or less self-explanatory) are very different problems. A key difference is that the relativistic Euler equations degenerate on the boundary in the case of a gas (since 
$\left. (p+\varrho)\right|_\Fb = 0$, see equations
\eqref{E:Projected_relativistic_Euler_eq_full_system}) but not in the case of a liquid 
(since 
$\left. (p+\varrho)\right|_\Fb > 0$). Here, we will consider the case of a gas. In this case, $\Fb$ is also known as a \textdef{vacuum boundary.}

In the gas case, $\Md_t$ is given by
\begin{align}
\Md_t = \{ (\tau,x) \, | \, \tau = t, \varrho(t,x) > 0 \}.
\end{align}
Moreover, in the gas case, we also impose
\begin{align}
\left. c_s^2 \right|_\Fb = 0,
\label{E:Bry_condition_sound_speed}
\end{align}
which is related to the fact that sound waves cannot propagate in vacuum.

\begin{remark}
The condition \eqref{E:Bry_condition_sound_speed} implies that the sound cones degenerate to the flow lines on the boundary. Thus, the free-boundary relativistic Euler equations in the case of a gas is not only a system with multiple characteristic speeds; it also has degenerate characteristics.
\end{remark}

It turns out that the \emph{decay rate} of $c_s^2$ near $\Fb_t$ plays a crucial role in the study of the free-boundary relativistic Euler equations for a gas (see Remark \ref{R:Decay_explanation}). There is essentially only one decay rate that is consistent with the evolution of a free-boundary relativistic perfect fluid in the case of a gas, namely, that $c_s^2$ be comparable to the distance to the boundary (for points near the boundary), i.e.,
\begin{align}
c_s^2(t,x) \approx \operatorname{dist}(x,\Fb_t),
\label{E:Physical_vacuum_bry_condition}
\end{align}
for $x \in \Md_t$, $x$ near $\Fb_t$, where $\operatorname{dist}$ is the distance relative to the spacetime metric restricted to $\Sigma_t$. A decay faster than \eqref{E:Physical_vacuum_bry_condition} leads to a boundary that does not accelerate. There are physically relevant cases where the boundary acceleration vanishes, with the fluid particles at the boundary essentially free falling. But in most situations of interest, including the case of rotating stars, the boundary acceleration is non-zero. A slower decay rate than \eqref{E:Physical_vacuum_bry_condition} leads to a very singular problem, potentially causing an infinite acceleration of the boundary.

Condition \eqref{E:Physical_vacuum_bry_condition} is known as the \textdef{physical vacuum boundary condition.} 
When satisfying \eqref{E:Physical_vacuum_bry_condition},
the free-boundary relativistic Euler equations 
are known as the \textdef{relativistic Euler equations with a physical vacuum boundary.} From the point of view of the Cauchy problem that we will be investigating, condition \eqref{E:Physical_vacuum_bry_condition} should be viewed as a condition on the initial data that is then propagated by the flow.

A standard equation of state used in the study of a gas with a free-boundary is\footnote{See references in \cite{Disconzi-Ifrim-Tataru-2022}.}
\begin{align}
p(\varrho) = \varrho^{\kappa+1}, \, \kappa > 1,
\label{E:Equation_state_free_boundary}
\end{align}
which we henceforth adopt. From now on, we will focus on the class of solutions satisfying the physical vacuum boundary condition \eqref{E:Physical_vacuum_bry_condition}.

\begin{remark}
\label{R:Decay_explanation}
In order to understand the origins of \eqref{E:Physical_vacuum_bry_condition}, let us proceed formally and carry out the following heuristic argument. Assume that $c_s^2$ decays, near the free boundary, as a power of the distance to the boundary, 
\begin{align}
c_s^2(t,x) \approx (\operatorname{dist}(x,\Fb_t))^\beta,
\label{E:Decay_sound_speed_distance_power}
\end{align}
for some $\beta \in \mathbb{R}$. This assumption is natural because $\operatorname{dist}$ is a natural scale to consider since away from the boundary we can use finite speed of propagation to localize the problem and treat it as the standard (non-free boundary) relativistic Euler equations. Alternatively, we can imagine a Taylor expansion for $c_s^2$ near $\Fb_t$ with coordinates such that $x^3 = \operatorname{dist}$. Then, 
using \eqref{E:Projected_relativistic_Euler_eq_full_system_momentum}, we find the acceleration to be
\begin{align}
\begin{split}
a_\alpha & = u^\mu \nabla_\mu u_\alpha = -\frac{ \proj^\mu_\alpha \nabla_\mu p}{p+\varrho}
\sim c_s^2 \frac{\partial \varrho}{\varrho + \varrho^{\kappa+1}} 
\sim \frac{c_s^2 \partial \varrho}{\varrho} 
\sim \frac{ \operatorname{dist}^\beta \operatorname{dist}^{\frac{\beta}{\kappa} -1 } \partial \operatorname{dist} }{\operatorname{dist}^\frac{\beta}{\kappa}}
\\
& \sim   \operatorname{dist}^{\beta - 1},
\end{split}
\nonumber
\end{align}
where we used that $\varrho^\kappa \sim c_s^2 \sim \operatorname{dist}^\beta$ and $\partial \operatorname{dist} = O(1)$. Thus, we find that the acceleration satisfies
\begin{align}
\left. a \right|_{\Fb} 
= 
\begin{cases}
0 & \text{ if } \beta > 1,
\\
\text{finite} \neq 0 & \text{ if } \beta = 1,
\\
\infty & \text{ if } \beta < 1.
\end{cases}
\nonumber
\end{align}
The third and first conditions are not very physical (zero boundary acceleration would not allow the fluid to rotate, as stars do). Thus we should have $\beta =1$, which is condition \eqref{E:Physical_vacuum_bry_condition}.
\end{remark}

\begin{remark}
\label{R:Self_contained_bry_evolution}
The physical vacuum boundary condition implies that linear waves with speed $c_s$ reach the boundary in finite time. Indeed, if we imagine for simplicity the boundary located at $x=0$ and the fluid region as $x>0$, in one dimension, we see the speed
\begin{align}
\frac{dx}{dt} = c_s \sim \sqrt{x},
\nonumber
\end{align}
is integrable\footnote{If we write $\frac{dx}{dt} = c_s \sim x^\beta$, then the acceleration is
$\frac{d^2 x}{dt^2} \sim x^{\beta-1} \frac{dx}{dt} \sim x^{2\beta -1}$, which will be finite and non-zero at $x=0$ precisely for $\beta = \frac{1}{2}$, which is another way of recovering the physical vacuum boundary condition.}.
Thus, the motion of the boundary is strongly coupled with the bulk evolution and cannot be viewed as a self-containing evolution at leading order\footnote{We mention this because it is not uncommon in free-boundary problems for the boundary dynamics to decouple from the interior evolution, as it happens, for example, in the water-waves problem, see \cite{Lannes-2005,Lannes-Book-2013,Wu-2009}.}.
\end{remark}

Our goal is to establish local well-posedness for the 
Cauchy problem for the relativistic Euler equations with a physical vacuum boundary. We need to introduce some further notions before we can state the main theorem.

\begin{assumption}
For simplicity, we will henceforth take the spacetime metric to be the Minkowski metric. All the main difficulties of the problem are already present in this case (see Remark \ref{R:Minkowski_metric_diagonal}).
\end{assumption}

\subsection{Diagonalization}
In order to carry out the analysis, our first step is to re-express the relativistic Euler equations as equations
with good structures, where ``good'' means with respect to the behavior of the fluid near the vacuum boundary. This will be accomplished by finding a set of \emph{good nonlinear variables} that \emph{diagonalize} the system with respect to the \textdef{relativistic material derivative,} given by
(note that $u^0 \neq 0$ in view of $u^\mu u_\mu = -1$; compare also with \eqref{E:v_0_explicit_v_i} below)
\begin{align}
D_t := \partial_t + \frac{u^i}{u^0} \partial_i.
\label{E:Material_derivative_relativistic}
\end{align}
$D_t$ plays a role similar to the vectorfield $\B$ in the classical compressible Euler equations (compare \eqref{E:Material_derivative_relativistic} and \eqref{E:Material_derivative_classical})

We want to have a system diagonal with respect to $D_t$ because our method of constructing solutions will involve a type of Euler's method, for which we require the system to have roughtly the form\footnote{We can always diagonalize the system by algebraically solving for the time derivatives in terms of spatial derivatives  in Eqs.~\eqref{E:Projected_relativistic_Euler_eq_full_system}, but such a procedure will not produce equations with good structure.}
$\partial_t (LHS) = \partial_x (RHS)$, except that it is $D_t$, and not $\partial_t$, that 
should be thought of as the right time derivative in this problem, as it intrinsically tied to the evolution in view of characteristics of the Euler system\footnote{This is readily seen from $D_t = \frac{1}{u^0} u^\mu \partial_\mu$.}, especially
to the characteristics on the boundary.

\begin{remark}
In many situations one can work with $D_t$ or $u^\mu \partial_\mu$ (with the latter also called a relativistic material derivative), the difference in estimates between both cases being lower order. In our approach, however, there are some very delicate estimates that require very precise structures and which might not be true if we used $u^\mu \partial_\mu$ instead.
\end{remark}

Let us introduce the desired new variables:
\begin{subequations}{\label{E:Diagonal_variables}}
\begin{align}
v & := (1+\varrho^\kappa)^{1+\frac{1}{\kappa}} u,
\label{E:Velocity_diagonal}
\\
r & := \frac{\kappa +1}{\kappa} \varrho^\kappa.
\label{E:Sound_speed_diagonal}
\end{align}
\end{subequations}
The quantities $v$ and $r$ are the desired \emph{good nonlinear variables,} and we will take them as our primary variables for the problem. Observe that $r$ is, up to a constant factor $\frac{1}{\kappa}$ that is introduced for convenience, the sound speed squared. Given the key role played by $c_s^2$ in view of the physical vacuum boundary condition \eqref{E:Physical_vacuum_bry_condition}, it is not surprising that we take it to be one of the primary variables. After deriving some consequences of these new variables,
we will motivate their definition.

The physical vacuum boundary condition \eqref{E:Physical_vacuum_bry_condition} can be equivalently stated in terms of $r$,
\begin{align}
r(t,x) \approx \operatorname{dist}(x,\Fb_t),
\label{E:Physical_vacuum_bry_condition_diagonal}
\end{align}
Also, $\Md_t$ can equivalently be characterized by
\begin{align}
\Md_t =  \{ (\tau,x) \, | \, \tau = t, r(t,x) > 0 \}.
\label{E:Moving_domain_diagonal}
\end{align}

In view of \eqref{E:Velocity_normalization} and \eqref{E:Sound_speed_diagonal}, observe that $v$ satisfies the constraint
\begin{align}
v^\alpha v_\alpha = - \left(1+\frac{\kappa}{\kappa+1} r\right)^{2+\frac{2}{\kappa}}.
\label{E:Velocity_normalization_diagonal}
\end{align}

In light of \eqref{E:Velocity_normalization_diagonal}, it suffices to consider an evolution for the spatial components of $v$, i.e., $v^i$, $i=1,2,3$. In particular, we will adopt the following notation.
\begin{notation}
\label{N:v_0}
From now on, references to $v$ will mean only to its spatial part
$(v^1,v^2,v^3)$, with the implicit understanding that $v^0$ is written in terms of $v^i$ and $r$ via \eqref{E:Velocity_normalization_diagonal}, i.e., 
\begin{align}
v^0 = \sqrt{ \left(1+\frac{\kappa}{\kappa+1} r\right)^{2+\frac{2}{\kappa}} + v^i v_i }.
\label{E:v_0_explicit_v_i}
\end{align}
Thus, when we say, e.g., that ``$v$ satisfies P'' we mean that ``$(v^1,v^2,v^3)$ satisfies P."
\end{notation}

Using definitions \eqref{E:Diagonal_variables}, Eqs.~\eqref{E:Projected_relativistic_Euler_eq_full_system_energy}--\eqref{E:Projected_relativistic_Euler_eq_full_system_momentum}, and recalling that the pressure is given by \eqref{E:Equation_state_free_boundary}, we obtain that $(r,v)$ satisfy
\begin{subequations}{\label{E:Diagonal_Euler_system}}
\begin{align}
D_t r + r (\bar{H}^{-1})^{ij} \partial_i v_j + r a_1 v^i \partial_i r & = 0,
\label{E:Diagonal_Euler_system_sound_speed}
\\
D_t v_i + a_2 \partial_i r & =0,
\label{E:Diagonal_Euler_system_velocity}
\end{align}
\end{subequations}
where $\bar{H}^{-1}$ is the following\footnote{See Remark \ref{R:Inverse_sign_explicit} for the explicit use of $^{-1}$.} inverse Riemannian metric\footnote{One can verify that $\bar{H}$ indeed defines a Riemannian metric with the help of \eqref{E:Velocity_normalization_diagonal}.}
on $\Md_t$,
\begin{align}
(\bar{H}^{-1})^{ij} = \frac{\kappa}{a_0 v^0 }(1 + \frac{\kappa}{\kappa+1} r)\left(\updelta^{ij} - \frac{v^i v^j}{(v^0)^2}\right),
\label{E:H_bar_inverse}
\end{align}
$a_0$, $a_1$, and $a_2$ are smooth functions of $(r,v)$ that are $O(1)$ near $\Fb_t$ and $v^0$ is computed from \eqref{E:v_0_explicit_v_i} (recall that $\updelta$ denotes the Euclidean metric).
For the most part, the precise form of these coefficients $a_0$, $a_1$, and $a_2$ will not be important. Only the fact that 
\begin{align}
\label{E:a_2_bounded_from_below}
a_2 \geq \, \text{constant} > 0 
\end{align}
is of relevance, and the specific form of $a_0$, given by
\begin{align}
a_0 := 1 - c_s^2 \frac{v^i v_i}{(v^0)^2} = 1 - \kappa r \frac{v^i v_i}{(v^0)^2},
\label{E:a_0}
\end{align}
will be used in Sect.~\ref{S:Role_weights}.
 Observe that 
\eqref{E:Material_derivative_relativistic} can be written as
\begin{align}
D_t = \partial_t + \frac{v^i}{v^0} \partial_i,
\label{E:Material_derivative_relativistic_v}
\end{align}
so that all explicit dependence on $(\varrho,u)$ has been eliminated from \eqref{E:Diagonal_Euler_system}.
Equations
\eqref{E:Diagonal_Euler_system} are simply the relativistic Euler equations written in terms of $r$ and $v$, and they are the main equations we will investigate.

\begin{remark}
Since $\bar{H}^{-1}$ is an inverse Riemannian metric, $(\bar{H}^{-1})^{ij} \partial_i v_j$ is a divergence of  $v$. This is consistent with the appearance of the spacetime divergence of $u$ in \eqref{E:Projected_relativistic_Euler_eq_full_system_energy}.
$\bar{H}^{-1}$ can be related to a rescaled version of the acoustical metric but this will not be needed here. We also note that $\bar{H}^{-1}$ is pointwise equivalent to the (inverse) Euclidean metric,
\begin{align}
\label{E:H_bar_equivalent_Euclidean}
\bar{H}^{-1} \approx \updelta^{-1}.
\end{align}
\end{remark}

Let us now explain the choices \eqref{E:Diagonal_variables} made to diagonalize the system. It is natural to consider rescalings of the
velocity in the form 
\begin{align}
v = f(\varrho) u,
\label{E:v_definition_general}
\end{align}
where $f$ is to be determined. Using \eqref{E:Projected_relativistic_Euler_eq_full_system}, we can derive the following evolution equation for $v$,
\begin{align}
\frac{p+\varrho}{f} u^\mu \partial_\mu v^i + c_s^2 g^{i \mu} \partial_\mu \varrho 
+ (-\frac{f^\prime}{f}(\varrho + p) + c_s^2 ) u^i u^\mu \partial_\mu \varrho = 0.
\label{E:Attempt_diagonalize_velocity}
\end{align}

We want \eqref{E:Attempt_diagonalize_velocity} to be an evolution equation for $v$ diagonal with respect to $D_t$. In particular, this means that only $v$ should be differentiated with respect to $\partial_t$.
We note that the term $\frac{p+\varrho}{f} u^\mu \partial_\mu v^i$ is a multiple of $D_t v^i$ and thus this term has the desired property. The term $g^{i \mu} \partial_\mu \varrho$ seems to contain a $\partial_t \varrho$; however, we have $g^{i0} = 0$ since $g$ is the Minkowski metric and thus
$g^{i \mu} \partial_\mu \varrho $ contains only spatial derivatives (see Remark \ref{R:Minkowski_metric_diagonal}). The last term in \eqref{E:Attempt_diagonalize_velocity}, however,
contains a $\partial_t \varrho$ or, said differently, a multiple of $D_t \varrho$. Thus, we chose $f$ such that the term in parenthesis vanishes,
\begin{align}
-\frac{f^\prime}{f}(\varrho + p) + c_s^2 = 0.
\nonumber
\end{align}
Solving this ODE for $f$ produces 
\begin{align}
f(\varrho) = e^{\int \frac{c_s^2(\varrho)}{p(\varrho) + \varrho}\, d\varrho }.
\label{E:f_diagonalization_general}
\end{align}

The choice \eqref{E:f_diagonalization_general} also diagonalizes the evolution of $\varrho$, i.e., using 
\eqref{E:v_definition_general} and \eqref{E:f_diagonalization_general}, equations 
\eqref{E:Projected_relativistic_Euler_eq_full_system} give
\begin{align}
u^\mu \partial_\mu \varrho + \frac{p+\varrho}{f a_0 } (\updelta^{ij} - \frac{v^i v^j}{(v^0)^2})
\partial_i v_j - \frac{c_s^2}{a_0 (v^0)^2} f v^i \partial_i \varrho = 0.
 \label{E:Attempt_diagonalize_density}
\end{align}

Equations \eqref{E:Attempt_diagonalize_velocity} and \eqref{E:Attempt_diagonalize_density} hold for a general barotropic equation of state. With the choice \eqref{E:Equation_state_free_boundary},
\eqref{E:f_diagonalization_general} becomes
\begin{align}
f(\varrho) = (1+\varrho^\kappa)^{1+\frac{1}{\kappa}},
\nonumber
\end{align}
which gives \eqref{E:Velocity_diagonal}. With \eqref{E:Sound_speed_diagonal} and \eqref{E:Material_derivative_relativistic_v},
Eqs.~\eqref{E:Attempt_diagonalize_velocity} and \eqref{E:Attempt_diagonalize_density} 
then give \eqref{E:Diagonal_Euler_system}.

\begin{remark}
\label{R:Minkowski_metric_diagonal}
It seems that our argument relies critically on the fact that the spacetime metric is the Minkowski metric since we used that $g^{i0}=0$. However, this is not the case. Given any smooth Lorentzian metric $g$,
we can always choose coordinates such that $g^{i0} = 0$. In this case, the above argument still goes through. For a general metric, there will be extra terms in the estimates we will discuss coming from differentiation of $g$, but these will be lower terms order terms can be handled with similar ideas.
\end{remark}

\subsection{Vorticity} In the case of a barotropic fluid, $p = p(\varrho)$, Eqs.~\eqref{E:Relativistic_Euler_eq_full_system_energy_momentum} do not involve the baryon density $n$ and thus
\eqref{E:Relativistic_Euler_eq_full_system_baryon_charge} decouples, i.e., it can be solved
after a solution to \eqref{E:Relativistic_Euler_eq_full_system_energy_momentum} has been determined.
Defining the vorticity as in \eqref{E:Vorticity_definition}, however, will introduce $n$ back into the problem. It is convenient, therefore, to define the vorticity of a barotropic fluid as
\begin{align}
\tilde{\Omega} := d( f u ) = dv,
\label{E:Vorticity_definition_barotropic}
\end{align}
where $f$ is given by \eqref{E:f_diagonalization_general} and $v$ by \eqref{E:v_definition_general} ($d$ is the exterior derivative in spacetime).
We can think of $\tilde{\Omega}$ as a vorticity variable because it does satisfy an analogue 
of Lichnerowicz's equation \eqref{E:Lichnerowicz_equation}, namely,
\begin{align}
v^\mu \tilde{\Omega}_{\mu \alpha} = 0,
\label{E:Lichnerowicz_equation_barotropic}
\end{align}
and an analogue of the evolution equation \eqref{E:Vorticity_evolution}, namely
\begin{align}
v^\mu \partial_\mu \tilde{\Omega}_{\alpha\beta} + \partial_\alpha v^\mu \tilde{\Omega}_{\mu \beta}
+ \partial_\beta v^\mu \tilde{\Omega}_{\alpha \mu} = 0,
\label{E:Vorticity_evolution_barotropic}
\end{align}
where in contrast to \eqref{E:Vorticity_evolution} there are no terms on the RHS in view of the decoupling
between $n$ and the rest of the system (or, equivalently, decoupling of $s$). In particular, \eqref{E:Vorticity_evolution_barotropic} implies that $\tilde{\Omega}$ remains zero if zero initially.

In our case, since we are considering only the evolution for the spatial components $v^i$ (see \eqref{E:Diagonal_Euler_system_velocity}), it is natural to consider only the spatial components of 
$\tilde{\Omega}$ as well. Using \eqref{E:Lichnerowicz_equation_barotropic}, we can algebraically solve for the time components of $\tilde{\Omega}$ in terms of its spatial components,
\begin{align}
\tilde{\Omega}_{0j} = -\frac{v^i}{v^0} \Omega_{ij}.
\label{E:Vorticity_time_space_algebraically}
\end{align}
(Note that $\tilde{\Omega}_{00}=0$.) Using \eqref{E:Vorticity_time_space_algebraically} into \eqref{E:Vorticity_evolution_barotropic} yields
\begin{align}
D_t \tilde{\Omega}_{ij} 
+ \frac{1}{v^0} \partial_i v^k \tilde{\Omega}_{k j}
+ \frac{1}{v^0} \partial_j v^k \tilde{\Omega}_{ik} 
- \frac{1}{(v^0)^2} v^k \partial_i v^0 \tilde{\Omega}_{k j}
+ \frac{1}{(v^0)^2} v^k \partial_j v^0 \tilde{\Omega}_{ki} = 0, 
\label{E:Vorticity_evolution_barotropic_spatial}
\end{align}
where we recall Notation \ref{N:v_0}. We make the following important remark:

\begin{remark}
\label{R:Transport_estimates_implicit}
The estimates will discuss will focus on the wave-part of the system. Thus, all our estimates need
to be complemented by suitable estimates for the vorticity. Such vorticity estimates are not the main challenge in the argument and are obtained by direct (weighted) transport estimates using Eq.~\eqref{E:Vorticity_evolution_barotropic_spatial}, and thus they will be omitted. Alternatively, readers
can consider the case of an irrotational fluid $\tilde{\Omega}=0$ for simplicity\footnote{The condition
$\tilde{\Omega}=0$ is propagated by the flow in view of \eqref{E:Vorticity_evolution_barotropic_spatial}.}.
\end{remark}

Observe that our choice \eqref{E:f_diagonalization_general} was not motivated by the vorticity but 
by the entirely different question of canceling the term in parenthesis in \eqref{E:Attempt_diagonalize_velocity}. The fact that such choice gives precisely the function used to define $\tilde{\Omega}$ is a strong suggestion that \eqref{E:v_definition_general}, with $f$ given by
\eqref{E:f_diagonalization_general}, is a good variable to consider.

\subsection{Function spaces and control norms}
Observe that, as the original equations, Eqs.~\eqref{E:Diagonal_Euler_system} also degenerate on the boundary in view of the factors in $r$ and \eqref{E:Physical_vacuum_bry_condition_diagonal}. The degenerate nature of these equations suggests the use of weighted Sobolev spaces, with powers of the distance to the boundary as weights. The functional framework which we will use is introduced in this section, after which we will be able to state the main results in Sect.~\ref{S:LWP_vacuum_bry}.
We will provide more intuition for our choice of function spaces in Sect.~\ref{S:Energy_estimates_linearized}, but the basic idea is that they are constructed out of a natural energy satisfied by the linearized equations.

\subsubsection{Weighted norms} The basic weighted Sobolev spaces which we will employ are defined as follows.

\begin{definition}
\label{D:Weighted_Sobolev}
Let $r$ be a defining function for the domain $\Md_t$, i.e., 
$\Md_t = \{ r > 0 \}$ and the gradient of $r$ is non-zero on $\partial \Md_t$.
Given an integer $N\geq 0$ and a real number $\sigma>-\frac{1}{2}$,
we define $H^{N,\sigma}(\Md_t)$ as the space of distributions
on $\Md_t$ such that the norm
\begin{align}
\norm{ \mathsf{f} }^2_{H^{N,\sigma}(\Md_t)} :=
\sum_{|\alpha| \leq N} \norm{ r^\sigma \bar{\partial}^\alpha \mathsf{f}}_{L^2(\Md_t)}^2
\label{E:Weighted_Sobolev}
\end{align}
is finite. In \eqref{E:Weighted_Sobolev}, $\alpha$ are multi-indices and we recall that $\bar{\partial}$ denotes spatial derivatives (see Sect.~\ref{S:Notation_conventions}).
Using interpolation, we can define $H^{N,\sigma}(\Md_t)$ for
non-integer $N$.
\end{definition}

\begin{remark}
We are ultimately interested in the case when the weight $r$ in \eqref{E:Weighted_Sobolev} is a solution to \eqref{E:Diagonal_Euler_system}, which motivates the abuse of notation of calling it $r$ as well. 
However, it is important to define $H^{N,\sigma}$ without making explicit reference to such solution since, in the proof of local well-posedness, a solution to \eqref{E:Diagonal_Euler_system} is not yet available.
\end{remark}

\begin{definition}
\label{D:Weighted_space_diagonal}
We define $\Hspace^{2N}(\Md_t)$ as the product space of real valued maps of class $H^{2N,\frac{1-\kappa}{2\kappa}+N}$
and vectorfields of class
$H^{2N,\frac{1-\kappa}{2\kappa}+N+\frac{1}{2}}$, i.e.,
\begin{align}
\Hspace^{2N}(\Md_t) : = 
H^{2N,\frac{1-\kappa}{2\kappa}+N}(\Md_t) \times
H^{2N,\frac{1-\kappa}{2\kappa}+N +\frac{1}{2}}(\Md_t),
\label{E:Weighted_space_diagonal}
\end{align}
with the understanding that elements in the first component are real valued and elements in the second component are vectorfields.
\end{definition}

\begin{notation}
\label{N:Weighted_space_components}
We will often omit $\Md_t$ and write simply $H^{N,\sigma}$ and 
$\Hspace^{2N}$ when there is no risk of confusion. When $\Hspace^{2N}$ is written on a single component 
of $(s,w)$ it means $H^{2N,\frac{1-\kappa}{2\kappa}+N}(\Md_t)$ on the first component and $H^{2N,\frac{1-\kappa}{2\kappa}+N +\frac{1}{2}}(\Md_t)$ on the second one, i.e. 
$\norm{s}_{\Hspace^{2N}} = \norm{s}_{H^{2N,\frac{1-\kappa}{2\kappa}+N}(\Md_t)}$ on the first component and $\norm{w}_{\Hspace^{2N}} = \norm{w}_{H^{2N,\frac{1-\kappa}{2\kappa}+N +\frac{1}{2}}(\Md_t)}$.
\end{notation}
 
\begin{remark}
In Definition \ref{D:Weighted_space_diagonal}, we wrote the number of derivatives as $2N$ for convenience, but the total number of derivatives need not to be even as $N$ can be any non-negative number.
\end{remark}

We will be seeking solutions with regularity $(r,v) \in \Hspace^{2N}$ for appropriate $N$.
The relation between weights and derivatives in the definition of 
$\Hspace^{2N}$ is motivated by the following factors. The underlying wave evolution is at leading order  governed by a wave operator roughly of the form (see, however, Remark 
\ref{R:Elliptic_estimates_vacuum_bry}; compare also with \eqref{E:Wave_equation_density})
\begin{align}
D_t^2 - r \Delta,
\label{E:Wave_operator_vacuum_bry_leading_order}
\end{align}
so that each two derivatives are paired with one power of $r$. This relates the number of derivatives $2N$ with  weights with power  $N$ in
$\Hspace^{2N}$. The powers in $\frac{1-\kappa}{2\kappa}$ come from the structure of the linearized equations 
(which play an important role in our approach, see Sects.~\ref{S:LWP_vacuum_bry} and \ref{S:Energy_estimates_linearized}). The ``extra'' power of $\frac{1}{2}$ in the weights for $v$ as compared to the weights for $r$ come from the fact that the
derivatives of $v$ in \eqref{E:Diagonal_Euler_system_sound_speed} are weighted by $r$, whereas the derivatives of $r$ in \eqref{E:Diagonal_Euler_system_velocity} are not (see Sect.~\ref{S:Energy_estimates_linearized} for more details).

\subsubsection{Scaling analysis\label{S:Scaling}}
Equations \eqref{E:Diagonal_Euler_system} do not admit a scaling law.
However, it is possible to define a scaling law for the leading-order dynamics near the boundary. 
To see this, we ignore all terms of $O(1)$ in \eqref{E:Diagonal_Euler_system}, in which case the system  \eqref{E:Diagonal_Euler_system}  simplifies to
\begin{subequations}{\label{E:Scaling_analysis_simplified_eq_order_1}}
\begin{align}
(\partial_t + v^i \partial_i) r + r \updelta^{ij} \partial_i v_j + r v^i \partial_i r & \simeq 0,
\label{E:Scaling_analysis_simplified_eq_order_1_r}
\\
(\partial_t + v^j \partial_j) v_i + \partial_i r & \simeq 0.
\label{E:Scaling_analysis_simplified_eq_order_1_v}
\end{align}
\end{subequations}
As we will see in Sect.~\ref{S:Energy_estimates_linearized}, the term $r v^i \partial_i r$ in \eqref{E:Scaling_analysis_simplified_eq_order_1_r} can be handled as a type of perturbation from the point of view of our energies. This will be the case because in the derivation of the energy estimates, we multiply \eqref{E:Scaling_analysis_simplified_eq_order_1_r} by a power of $r$, producing a weighted-type $L^2$ control of $r$, see Definition \ref{D:Weighted_space_diagonal}; but the term 
$r v^i \partial_i r$ has an extra power of $r$ to spare. Taking this into consideration, we also ignore the last term on LHS of \eqref{E:Scaling_analysis_simplified_eq_order_1_r}, in which case the equations further simplify to
\begin{subequations}{\label{E:Scaling_analysis_simplified_eq}}
\begin{align}
(\partial_t + v^i \partial_i) r + r \updelta^{ij} \partial_i v_j & \simeq 0,
\label{E:Scaling_analysis_simplified_eq_r}
\\
(\partial_t + v^j \partial_j) v_i + \partial_i r & \simeq 0.
\label{E:Scaling_analysis_simplified_eq_order}
\end{align}
\end{subequations}
By the above considerations, Eqs.~\eqref{E:Scaling_analysis_simplified_eq} should captuer the leading-order dynamics near the free boundary. Equations \eqref{E:Scaling_analysis_simplified_eq}
admit the following scaling law
\begin{align}
(r(t,x), v(t,x))  \mapsto (r_\uplambda(t,x), v_\uplambda(t,x)) :=
 (\uplambda^{-2}(\uplambda t, \uplambda^2 x), \uplambda^{-1} v(\uplambda t, \uplambda^2 x) ),
\label{E:Scaling_law}
\end{align}
for any $\uplambda > 0$; i.e., if $(r,v)$ solves \eqref{E:Scaling_analysis_simplified_eq} so does
$(r_\uplambda, v_\uplambda)$. With this scaling law, we can in the usual fashion define
 \textdef{critical spaces} $\Hspace^{2N_0}$ where $N_0$ is given by\footnote{The $3$ in $2N_0$ corresponds to the dimension of space. In general $d$ dimensions, we have 
$2 N_0 := d + 1 + \frac{1}{\kappa}$.}
\begin{align}
2 N_0 := 3 + 1 + \frac{1}{\kappa}.
\nonumber
\end{align}
Observe that $2N_0$ is in general not an integer, with the spaces $\Hspace^{2N}$ defined for non-integer $2N$ via interpolation.

We recall that critical homogeneous spaces are defined as invariant under the underlying scaling law. In our case, then, $\Hspace^{2N_0}$ is such that the homogeneous part of the $\Hspace^{2N_0}$-norm
(the terms with $|\alpha|=2N_0$ in \eqref{E:Weighted_Sobolev} for integer $2N_0$) is invariant under
\eqref{E:Scaling_law}. The basic heuristic principle in the study of quasilinear problems is that 
local well-posedness should not hold for data with regularity below the critical scaling value, $2N_0$ in our case. This is because if such result were to hold, then by a simple rescaling of solutions one could transform a small data, small time, result into a large data, large time one, thus deriving a stronger result from a seemingly weaker one. Whether or not this is in fact the case depends on the particular problem being studied. But this heuristic principle is very useful in determining regularity values for which one should expect to be able to determine local well-posedness. Moreover, in quasilinear problems experience suggests that we should not expect local well-posedness at or right above scaling either, except perhaps for very specific equations that have very special structures. In our case, however, we will be able to get close enough to scaling so that we our solutions can legitimately be classified as rough solutions to the 
problem.

\subsubsection{Control norms}
In order to establish a continuation criterion for solutions, we next introduce some time-dependent control norms.
For a solution $(r,v)$ to \eqref{E:Diagonal_Euler_system}, define
\begin{subequations}{\label{E:Control_norm}}
\begin{align}
A &:= \norm{ \bar{\nabla} r - \mathsf{N}}_{L^\infty(\Md_t)}
+ \norm{v}_{\dot{C}^\frac{1}{2}(\Md_t)},
\label{E:Control_norm_A}
\\
B &:= A + \norm{\bar{\nabla} r}_{\tilde{C}^\frac{1}{2}(\Md_t)} + 
\norm{\bar{\nabla} v}_{L^\infty(\Md_t)}.
\label{E:Control_norm_B}
\end{align}
\end{subequations}
Above, the notation is as follows. $\bar{\nabla}$ is the spatial gradient, $\dot{C}^\frac{1}{2}$ is the H\"older semi-norm of exponent $\frac{1}{2}$, and $\tilde{C}^{\frac{1}{2}}$ is the following semi-norm
\begin{align}
\label{E:C_tilde_half_norm}
\norm{\mathsf{f}}_{\tilde{C}^\frac{1}{2}(\Md_t)}
:=
\sup_{\stackrel{x,y \in \Md_t}{x\neq y}}  \frac{|\mathsf{f}(x)-\mathsf{f}(y)|}{(r(x))^\frac{1}{2}+(r(y))^\frac{1}{2} + |x-y|^\frac{1}{2}}.
\end{align}
Thus, $\norm{\bar{\nabla} r}_{\tilde{C}^\frac{1}{2}(\Md_t)}$ behaves roughly as the $\dot{C}^\frac{3}{2}$ semi-norm, but it is weaker as it uses only one derivative away from the boundary. Finally, $\mathsf{N}$ is a vectorfield constructed as follows. 
Since we can localize the problem using the finite-propagation-speed property, it suffices to work in small neighborhoods of the boundary (away from boundary, the system is not degenerate so standard estimates apply). Fix a point $x_0 \in \Gamma_t$. In a small neighborhood of $x_0$, we can construct a vectorfield $\mathsf{N}$ such that $\mathsf{N}(x_0) = \bar{\nabla} r (x_0)$, where $x_0 \in \Fb_t$, see Fig.~\ref{F:N}. The reason
for introducing $\mathsf{N}$ is that we can then make $A$ small by working in sufficiently small neighborhoods, whereas we cannot make
$\norm{\bar{\nabla} r}_{L^\infty(\Md_t)}$ small by localization or scaling arguments. We will need to make $A$ small to close the estimates (see Remark \ref{R:Elliptic_estimates_vacuum_bry} and Sect.~\ref{S:Estimates_solutions_diagonal}).

\begin{figure}[ht]
\centering
  \includegraphics[scale=1]{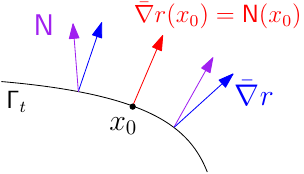}
  \caption{Illustration of construction of the vectorfield $\mathsf{N}$.}
  \label{F:N}
\end{figure}

\begin{remark}
\label{R:Working_neighborhood_boundary}
We will make use, often silently, of the above mentioned fact that we can localize the problem in a small neighborhood of a boundary point. Thus, we will assume that $r$ is sufficiently small whenever needed.
In addition, when needed, we can assume to be working in the neighborhood of a point $x_0$ such that
$\bar{\nabla} r(x_0) = \mathsf{N}(x_0)$, as just explained. 
\end{remark}

\begin{remark}
Observe that the construction of $\mathsf{N}$ is not unique, but different constructions will lead to equivalent norms $A$. We also note that $\mathsf{N}$ is a purely spatial vectorfield, i.e., $\mathsf{N} = (\mathsf{N}^1, \mathsf{N}^2, \mathsf{N}^3)$.
\end{remark}

The norms $A$ and $B$ are associated with the spaces $\Hspace^{2N_0}$ and $\Hspace^{2N_0 + 1}$ in view of the following embeddings:
\begin{align}
A & \lesssim \norm{ (r,v) }_{\Hspace^{2N}}, \, 2N > 2N_0,
\nonumber 
\\
B & \lesssim \norm{ (r,v) }_{\Hspace^{2N}}, \, 2N > 2N_0 + 1.
\nonumber
\end{align}

\begin{remark}
Definitions \eqref{E:Control_norm} can also be applied for a general defining function $r$ and vectorfield $v$ that are not necessarily solutions to \eqref{E:Diagonal_Euler_system}. This is important for the iteration leading to existence of solutions.
\end{remark}

\subsection{Local well-posedness and continuation criterion}
\label{S:LWP_vacuum_bry}

We are now ready to state the main results.

\begin{theorem}[\citealt{Disconzi-Ifrim-Tataru-2022}]
\label{T:LWP_vacuum_bry}
Consider Eqs.~\eqref{E:Diagonal_Euler_system} in $\Md$,
where $\Md$ is given by \eqref{E:Moving_domain_spacetime} with $\Md_t$ given by \eqref{E:Moving_domain_diagonal}. Define the state space
\begin{align}
\mathbf{H}^{2N} := \{ (r,v) \, | \, (r,v) \in \Hspace^{2N} \}.
\nonumber
\end{align}
Then, the Cauchy problem for equations 
\eqref{E:Diagonal_Euler_system} is locally well-posed in $
\mathbf{H}^{2N}$ for data $(\mathring{r},\mathring{v}) \in 
\mathbf{H}^{2N}$ provided that the physical vacuum boundary condition
\begin{align}
\mathring{r}(x) \approx \operatorname{dist}(x,\Fb_0), \, \text{ where } \, \Md_0 := \{ \mathring{r} > 0 \}, \, \Fb_0 = \partial \Md_0,
\nonumber
\end{align}
is satisfied\footnote{The physical vacuum boundary condition is already incorporated into the assumption $\mathring{r} \in \mathbf{H}^{2N}$, but we state it explicitly because of its importance.}
and
\begin{align}
2N > 2N_0 + 1, \, 2N_0 = 3 + 1 +\frac{1}{\kappa}.
\nonumber
\end{align}
\end{theorem}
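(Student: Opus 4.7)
The plan is to follow the classical three-step program for quasilinear local well-posedness, carefully adapted to the degenerate setting at $\Fb_t$: first derive a priori energy estimates for \eqref{E:Diagonal_Euler_system} in $\Hspace^{2N}$, next construct solutions via an approximation scheme that respects the diagonal-in-$D_t$ structure of the system, and finally obtain uniqueness and continuous dependence through weak estimates on differences of solutions. The starting point is the linearized system around a given state $(r, v)$ obeying the physical vacuum boundary condition. Linearizing \eqref{E:Diagonal_Euler_system} to $(s, w)$ with principal part
\begin{align*}
D_t s + r (\bar{H}^{-1})^{ij} \partial_i w_j &= \text{l.o.t.},
\\
D_t w_i + a_2 \partial_i s &= \text{l.o.t.},
\end{align*}
multiplying the first by $a_2^{-1} r^{(1-\kappa)/\kappa} s$ and the second by $r^{(1-\kappa)/\kappa+1} \bar{H}_{ij} w^j$, summing, and integrating by parts over $\Md_t$, one checks that the boundary terms at $\Fb_t$ vanish thanks to the power $(1-\kappa)/\kappa+1 > 0$ and the physical vacuum decay of $r$. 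This produces the base energy
\begin{align*}
E_0(s,w) := \int_{\Md_t} r^{(1-\kappa)/\kappa} \bigl( a_2^{-1} s^2 + r \bar{H}_{ij} w^i w^j \bigr) \, dx,
\end{align*}
which is precisely the $L^2$-level of the space $\Hspace^0$ from Definition \ref{D:Weighted_space_diagonal} (and explains the $\tfrac{1}{2}$ weight-imbalance between $r$- and $v$-components, forced by the $r$ multiplying $\partial w$ in the $s$-equation). A direct computation using \eqref{E:a_2_bounded_from_below} and \eqref{E:H_bar_equivalent_Euclidean} gives $\tfrac{d}{dt} E_0 \lesssim B \cdot E_0$.

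For higher-order control, the plan is to commute the linearized equations with $N$ copies of a carefully chosen second-order operator $\mathcal{L}$ modeled on $r \Delta_{\bar{H}}$ plus lower-order corrections, selected so that $[\mathcal{L}, D_t]$ is of order at most one near $\Fb_t$; this is the analytic counterpart of the leading-order wave operator \eqref{E:Wave_operator_vacuum_bry_leading_order}, and is precisely the pairing that makes the exponents of $r$ in $\Hspace^{2N}$ compatible with the degenerate principal symbol. The commuted energies $E_{2N}$ directly control a subset of $2N$ derivatives, and the remaining tangential-normal combinations are recovered through a weighted elliptic estimate, treating $r\Delta$ as a uniformly degenerate operator of Baouendi--Grushin type; the smallness of $A$, available by localizing near a boundary point as in Remark \ref{R:Working_neighborhood_boundary}, lets one absorb variable coefficients into the flat Grushin model. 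Combining the resulting bound $\tfrac{d}{dt} E_{2N} \lesssim B \cdot E_{2N}$ with the embedding $B \lesssim \norm{(r,v)}_{\Hspace^{2N}}$ for $2N > 2N_0 + 1$ and Gronwall produces the desired a priori estimate on a time interval depending only on the data.

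Solutions are then constructed by an iteration that uses the diagonal-in-$D_t$ structure: given $(r^{(n)}, v^{(n)})$, define $(r^{(n+1)}, v^{(n+1)})$ by solving \eqref{E:Diagonal_Euler_system} with the coefficients (and $\bar{H}^{-1}$) frozen at the previous iterate, which reduces each step to transport problems along the flow of $v^{(n)}/(v^{(n)})^0$; the defining-function property $r^{(n+1)} \approx \operatorname{dist}(\cdot, \Fb_t^{(n+1)})$ is preserved as long as $B^{(n)}$ stays bounded, since the flow is then bi-Lipschitz. Vorticity is propagated in parallel by weighted transport estimates on \eqref{E:Vorticity_evolution_barotropic_spatial} as in Remark \ref{R:Transport_estimates_implicit}. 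Uniqueness follows by applying $E_0$ to the difference of two solutions, and continuous dependence at the high norm is obtained via a Bona--Smith argument using the high-norm a priori bound together with the low-norm Lipschitz estimate. The hardest part, I expect, is the weighted elliptic step: one must establish sharp Hardy-type inequalities for $r\Delta$ with constants that stay uniform as the iterates deform $\Fb_t$, which requires simultaneously propagating $\Hspace^{2N-1/2}$ regularity of the defining function $r^{(n)}$ and uniform bi-Lipschitz equivalence between $r^{(n)}$ and the true distance to $\Fb_t^{(n)}$, at a regularity level only barely above the scaling threshold $2N_0$ where standard boundary trace and extension theorems are not directly available.
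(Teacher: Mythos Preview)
Your proposal has the right overall architecture but misses the central mechanism that makes the estimates close, and your higher-order strategy diverges from the paper's in a way that is not obviously viable.

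\textbf{The base energy.} You write the linearized $s$-equation as $D_t s + r(\bar H^{-1})^{ij}\partial_i w_j = \text{l.o.t.}$, relegating the term $\tfrac{1}{\kappa}(\bar H^{-1})^{ij}\partial_i r\, w_j$ (which comes from linearizing $D_t$) to the lower-order bin. The paper makes a point of stressing that this term is \emph{not} lower order in the weighted framework: it carries weight $r^{(1-\kappa)/\kappa}$ on the product $sw$, whereas the energy controls $r^{(1-\kappa)/\kappa}s^2$ and $r^{(1-\kappa)/\kappa+1}|w|^2$, so by Cauchy--Schwarz it cannot be absorbed. What actually happens is that after your integration by parts the cross-terms $\int r^{1/\kappa}(\bar H^{-1})^{ij}(s\,\partial_i w_j + w_j\,\partial_i s)$ combine with this very term into the perfect derivative $\int (\bar H^{-1})^{ij}\partial_i(r^{1/\kappa}w_j s)$, which \emph{then} integrates away. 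Without that exact cancellation the base estimate does not close; your statement that ``boundary terms vanish'' is true but beside the point.

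\textbf{Higher-order estimates.} You propose commuting with $N$ copies of an operator $\mathcal L\sim r\Delta_{\bar H}$ chosen so that $[\mathcal L, D_t]$ is first order. The paper does \emph{not} do this and explains why differentiating with spatial operators destroys the weighted structure (generic derivatives hitting the coefficient $r$ give $O(1)$ terms with no weight to spare). Instead, the paper commutes with powers of $D_t$ itself, using that $D_t r \simeq r\,\bar\partial(r,v)$ recovers a weight each time. But this runs into a second subtlety: $D_t$ commutes with itself, so the crucial $\tfrac{1}{\kappa}(\bar H^{-1})^{ij}\partial_i r\, w_j$ term is \emph{absent} when one writes down the equation satisfied by $D_t^{2\ell}(r,v)$. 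The fix is to introduce \emph{good linear variables} $(s_{2\ell},w_{2\ell})$, explicit corrections of $D_t^{2\ell}(r,v)$ designed so that the missing term reappears, and these are then shown to satisfy the linearized system with perturbative sources. The weighted elliptic step you anticipate is indeed present, but it is used to pass from control of $(s_{2\ell},w_{2\ell})$ in $\Hspace^0$ back to $(r,v)$ in $\Hspace^{2N}$ via \emph{transition operators} $L_1,L_2\sim r\Delta$ relating $(s_{2\ell},w_{2\ell})$ to $(s_{2\ell-2},w_{2\ell-2})$; it is not used as a commutator.

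\textbf{Construction and uniqueness.} Two further differences: the paper builds solutions by a time-discretization (regularization plus transport plus an Euler step), noting that each piece taken alone appears unbounded and only their combination exhibits the needed cancellation; a frozen-coefficient iteration of the sort you describe is not what is done. For uniqueness, applying $E_0$ to the difference is too naive because two solutions live on different domains; the paper introduces a distance functional $\tilde{\mathcal D}$ with carefully chosen weights $\alpha,\beta$ vanishing appropriately on $\partial(\Md_t^1\cap\Md_t^2)$ so that integration by parts is legitimate there.
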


We make the following remarks about Theorem \ref{T:LWP_vacuum_bry}:
\begin{itemize}
\item Local well-posedness above is meant in the usual Hadamard sense: existence and uniqueness of solutions, with $(r,v) \in 
C^0([0,T], \mathbf{H}^{2N})$ for some $T>0$ and continuous dependence of solutions on the data in this topology. For continuous dependence on the data, we need to define an appropriate topology on $\mathbf{H}^{2N}$ to compare solutions, since  different solutions are defined in different domains. We refer to \cite{Disconzi-Ifrim-Tataru-2022} for the definition.
\item Observe that the regularity of solutions obtained in Theorem \ref{T:LWP_vacuum_bry} is for data very close to scaling, thus qualifying for what we would call \emph{rough solutions} to the problem. These
rough solutions are obtained as unique limits of smooth solutions.
\end{itemize}

Theorem \ref{T:LWP_vacuum_bry} established the first local existence and uniqueness result for the relativistic Euler equations with a physical vacuum boundary. More precisely, 
local existence and uniqueness has been established in $1+1$ dimensions in \cite{Oliynyk-2012-1}. In this case, the boundary are just disconnected points and the main difficulties of the problem are absent. A priori estimates for the relativistic Euler equation 
with a physical vacuum boundary in $3+1$ dimensions have been obtained by \cite{Jang-LeFloch-Masmoudi-2016} and \cite{Hadzic-Shkoller-Speck-2019}. 

\begin{remark}
\cite{Jang-LeFloch-Masmoudi-2016} and \cite{Hadzic-Shkoller-Speck-2019} use the formalism of Lagrangian coordinates, whereas Theorem \ref{T:LWP_vacuum_bry} is established entirely in Eulerian coordinates. 
\end{remark}

\begin{remark}
If one considers the case of a gas not satisfying the physical vacuum boundary condition, when $c_s^2$ decays faster than the distance to the boundary, the motion of the boundary decouples from the bulk evolution (see Remark \ref{R:Self_contained_bry_evolution}) and several difficulties are not present. The problem in this case has been investigated by several authors, including situations with coupling to Einstein's equations (see \citealt{Rendall-1992,Brauer-Karp-2011,Brauer-Karp-2014,LeFloch-Ukai-2009} and reference therein). Perturbations of spherically symmetric static Einstein--Euler system with a physical vacuum boundary have been studied by \cite{Makino-1998,Makino-2016,Makino-2017,Makino-2018}.
In the case of a liquid, the problem has been recently addressed in the works by
\cite{Oliynyk-2017,Oliynyk-2019-arxiv,
Miao-Shahshahani-Wu-2021,
Miao-Shahshahani-2024,
Ginsberg-2019,Ginsberg-Lindblad-2023}. We refer to Sect.~1.6 of \cite{Disconzi-Ifrim-Tataru-2022} for a detailed discussion of previous works.
\end{remark}

\medskip

\noindent \emph{Proof of Theorem \ref{T:LWP_vacuum_bry}.} 
We will provide an outline of the proof. Then, in Sects.~\ref{S:Energy_estimates_linearized}, \ref{S:Estimates_solutions_diagonal}, and \ref{S:Remaining_LWP}, we will provide arguments in more details, focusing mostly in the energy estimates that need to be derived. At a high level, the main ingredients
of the proof are:
\begin{itemize}
\item The analysis of the linearized equations plays a key role in the proof. The linearized system enjoys good structures that come from some special cancellations (see Sect.~\ref{S:Energy_estimates_linearized}).
\item We would like to derive energy estimates for the nonlinear problem by differentiating Eqs.~\eqref{E:Diagonal_Euler_system} and treating the resulting system as a linear system for the derivatives to which we could apply the cancelations mentioned above. However, this does not seem possible. With exception of the one-time differentiated system (which solves the linearized problem), higher-order regular derivatives do not satisfy a linearized system with good structure. This comes from the fact that our problem has weights, and differentiating the system with respect to regular derivatives destroys its weighted structure. When generic derivatives fall on the weights $r$ in \eqref{E:Diagonal_Euler_system_sound_speed} we can get $O(1)$ terms that are no longer comparable to the distance to the boundary.
\item The weighted structure of Eqs.~\eqref{E:Diagonal_Euler_system} is preserved, however, if we differentiate the system with respect to $D_t$ because, each time $D_t$ falls on the weight $r$ we \emph{recover} $r$ using Eq.~\eqref{E:Diagonal_Euler_system_sound_speed}, which gives
$D_t r \simeq r (\bar{\partial} r, \bar{\partial} v)$. This suggests deriving energy estimates for $D_t^{2N}(r,v)$ (see Remark \ref{R:Elliptic_estimates_vacuum_bry}).
\item To control $D_t^{2N}(r,v)$, we would like to show that $D_t^{2N}(r,v)$ satisfies the linearized equations with good perturbative terms. This is, however, not the case. The cancelations that are needed in the linearized system rely in particular on the linearized variables coming from linearizing $D_t$. Differentiating \eqref{E:Diagonal_Euler_system} with respect to $D_t$, however, does not produce such terms because $D_t$ commutes with itself. We therefore introduce some \emph{good linear variables} which are suitable modifications of 
$D_t^{2N} (r,v)$. These good linear variables are then showed to satisfy the linearized system with good perturbative terms (see Sect.~\ref{S:Estimates_solutions_diagonal}).
\item The previous argument gives estimates for $D_t^{2N} (r,v)$ (to be more precise, it is not
$D_t^{2N} (r,v)$ that we control but a modified version of it; we downplay this aspect for now, see Remark \ref{R:Elliptic_estimates_vacuum_bry}). In order to obtain full control of $(r,v)$, we employ elliptic estimates. The elliptic part of the system comes from the underlying wave evolution, whose leading order operator is of the form \eqref{E:Wave_operator_vacuum_bry_leading_order}, so that $D_t^{2N} (r,v)$ satisfies an equation that reads, schematically, as (see Remark \ref{R:Elliptic_estimates_vacuum_bry} below)
\begin{align}
D_t^{2 N} (r,v) - r \Delta D_t^{2N-2} (r,v) = \text{L.O.T.}
\label{E:Schematic_wave_equation_vacuum_boundary}
\end{align}
Given the energy estimates for $D_t^{2N} (r,v)$, we can move this term to the RHS in \eqref{E:Schematic_wave_equation_vacuum_boundary} and apply elliptic estimates for the operator $r\Delta$ to control 
$D_t^{2N-2} (r,v)$. Proceeding inductively (i.e., controlling 
$D_t^{2N-4} (r,v)$ in terms of $D_t^{2N-2} (r,v)$, etc.), we finally obtain control of $(r,v)$ (see Sect.~\ref{S:Estimates_solutions_diagonal}).
\item The above discussion focused on a priori estimates, which are the basis for the argument. The full Theorem is proved with help of a regularization and a time-discretization akin to an Euler's method.
In particular, the passage from the linear to the nonlinear problem requires the nonlinear equations to have good structures, which is another reason why we rewrote the original Euler system in terms of the $(r,v)$ variables (see Sect.~\ref{S:Remaining_LWP}).
\end{itemize}
\hfill \qed

\begin{remark}
\label{R:Elliptic_estimates_vacuum_bry}
Here we point out the following further remarks at a high-level. They are discussed in more detail in Sects.~\ref{S:Energy_estimates_linearized}, \ref{S:Estimates_solutions_diagonal}, and \ref{S:Remaining_LWP}:
\begin{itemize}
\item It turns out that $D_t^{2N}(r,v)$ is still not the right variable to consider. We need to correct it by a suitable term, leading to what we call good linear variables. We will, however, continue to talk about estimating $D_t^{2N}(r,v)$ until Sect.~\ref{S:Estimates_solutions_diagonal}, when we will discuss why $D_t^{2N}(r,v)$ is not yet appropriate for the derivation of estimates and how to modify it.
\item The wave operator \eqref{E:Wave_operator_vacuum_bry_leading_order} and the corresponding wave equations \eqref{E:Schematic_wave_equation_vacuum_boundary} are only a schematic representation of the actual wave evolution of the system. It turns out that the correct wave operator needed for the
estimates involves an elliptic part more complicated than $r\Delta$. We wrote $r\Delta$ to illustrate the main important point, namely, that one has a \emph{degenerate (because of $r$) elliptic operator.} In particular, the elliptic estimates one
needs to carry out are degenerate elliptic estimates. See Sect.~\ref{S:Weighted_elliptic_estimates}.
See also Remark \ref{R:Good_variables_material_derivatives}.
\item To prove the elliptic estimates, we need to perform some integration by parts and pay special attention to terms when derivatives fall on the weights, since we need to have norms with the correct weights. For this, we need to observe some subtle cancellations that hold because of the very specific form of the elliptic operators involved (which are not, as mentioned, simply $r \Delta$).
\item We also employ an argument that is reminiscent of the standard ``freezing of coefficients at a point'' used in elliptic equations (see, e.g., \citealt{Gilbarg-Trudinger-Book-2001}). This requires some smallness to absorb top-order terms back into the LHS. When carrying out integration by parts, if the derivatives falling
on the weights are transverse to the distance to the boundary, then one can show that the corresponding terms are small, but they will be of $O(1)$ if the derivatives are approximately normal to the boundary. This can be illustrated by thinking of the boundary as given by $\{ x^3 = 0 \}$, so that $r$ behaves like $x^3$. Then, $\partial_1 r$ and $\partial_2 r$ are small, but $\partial_3 r = O(1)$. However, if we arrange the estimates so that the corresponding coefficient is not $\partial_3 r$ but rather
$\partial_3 r - \mathsf{N}$, where $\mathsf{N} = (0,0,1)$, then this coefficient can be made small by considering a sufficiently small neighborhood of the boundary. This explains the construction of the control norm $A$ in \eqref{E:Control_norm_A}.
\item It is is only the divergence part of $v$ that satisfies a wave equation, thus the above argument only controls the (spatial) divergence of $v$. Full control is obtained with help of vorticity estimates.
See Remark \ref{R:Transport_estimates_implicit}.
\end{itemize}
\end{remark}

Next, we investigate continuation of solutions.

\begin{theorem}[\citealt{Disconzi-Ifrim-Tataru-2022}]
\label{T:Continuation_solutions_vacuum_bry}
For each integer $N \geq 0$, there exists an energy functional 
$E^{2N} = E^{2N}(r,v)$ with the following properties:

\noindent (a) Coercivity: as long as $A$ remains bounded,
\begin{align}
E^{2N} \approx \norm{ (r,v) }^2_{\Hspace^{2N}}.
\nonumber
\end{align}
(b) Energy estimates hold for solutions to \eqref{E:Diagonal_Euler_system},
\begin{align}
\frac{d}{dt} E^{2N} \lesssim_A B \norm{ (r,v) }^2_{\Hspace^{2N}}.
\nonumber
\end{align}
\end{theorem}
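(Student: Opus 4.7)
The plan is to build $E^{2N}$ as a sum of two pieces: a \emph{principal energy} $\mathcal{E}^{2N}_{\mathrm{princ}}$ measuring $2N$ material derivatives of the fluid variables via the natural $L^2$-type energy of the linearized problem, and a hierarchy of \emph{elliptic energies} $\sum_{k=0}^{N-1}\mathcal{E}^{2k}_{\mathrm{ell}}$ that recaptures the lower-order spatial regularity via the degenerate elliptic operator hidden inside the wave evolution. Following the outline of the proof of Theorem \ref{T:LWP_vacuum_bry}, $\mathcal{E}^{2N}_{\mathrm{princ}}$ should not simply be the linearized energy applied to $(D_t^{2N}r, D_t^{2N}v)$, because naive commutation of $D_t^{2N}$ through \eqref{E:Diagonal_Euler_system} fails to reproduce the linearized structure: $D_t$ commutes with itself and does not generate the terms needed for the linearized energy cancellation. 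Instead, I would introduce \emph{good linear variables} $(s_N,w_N)$ that are $D_t^{2N}(r,v)$ corrected by carefully selected lower-order multiples of $\bar\partial r$ and $\bar\partial v$, arranged so that $(s_N,w_N)$ solves the linearization of \eqref{E:Diagonal_Euler_system} around the background $(r,v)$ modulo an error controlled by $\lesssim_A B\,\norm{(r,v)}_{\Hspace^{2N}}^2$. The principal energy would then read
\begin{align*}
\mathcal{E}^{2N}_{\mathrm{princ}} := \int_{\Md_t} r^{\frac{1-\kappa}{\kappa}+2N}\Big(|s_N|^2 + r\,(\bar H^{-1})^{ij}w_{N,i}w_{N,j}\Big)\,dx,
\end{align*}
matching the weight prescription of Definition \ref{D:Weighted_space_diagonal}.

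For coercivity (part (a)), I would argue by downward induction on the number of material derivatives. The principal part controls $D_t^{2N}(r,v)$ in the correctly weighted $L^2$. To recover spatial derivatives and produce the full $\Hspace^{2N}$ norm, I would invert the elliptic piece of the wave evolution schematized in \eqref{E:Schematic_wave_equation_vacuum_boundary}: writing $r\Delta D_t^{2N-2}(r,v) = D_t^{2N}(r,v) + \mathrm{L.O.T.}$ and applying \emph{weighted elliptic estimates} for the correct degenerate operator (dictated by $\bar H^{-1}$ rather than the naive $r\Delta$) transfers control of $D_t^{2N}(r,v)$ into control of $D_t^{2N-2}(r,v)$ with two extra spatial derivatives and one extra half-unit of weight in $r$ — which is exactly the weight arithmetic built into $\Hspace^{2N}$. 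Smallness of $A$ enters here to absorb the boundary-facing term $\bar\nabla r - \mathsf{N}$ that appears when a spatial derivative lands on the weight $r$. Iterating downwards closes the coercivity, with the divergence-free part of $v$ handled through the vorticity evolution \eqref{E:Vorticity_evolution_barotropic_spatial} as indicated in Remark \ref{R:Transport_estimates_implicit}.

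For part (b), I would differentiate $E^{2N}$ in time and use that $(s_N,w_N)$ solves the linearized system up to the aforementioned error. The linearized energy estimate itself is obtained by integration by parts using \eqref{E:Diagonal_Euler_system_sound_speed} and \eqref{E:Diagonal_Euler_system_velocity}: the pair $(r,\bar H^{-1})$ is formally skew-adjoint in the appropriate weighted inner product up to zeroth-order commutators, so the cross-terms cancel modulo factors depending on $\bar\nabla r$ and $\bar\nabla v$. These gradients are precisely what is captured by the stronger control norm $B$ in \eqref{E:Control_norm_B} — the $L^\infty$ norm of $\bar\nabla v$ and the $\tilde C^{1/2}$ norm of $\bar\nabla r$ — while the remaining coefficients are $O(1)$ smooth functions of $(r,v)$ and hence controlled through $A$. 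The lower-order elliptic pieces $\mathcal{E}^{2k}_{\mathrm{ell}}$ evolve analogously through standard weighted commutator identities, yielding the claimed bound $\frac{d}{dt} E^{2N} \lesssim_A B\,\norm{(r,v)}_{\Hspace^{2N}}^2$.

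The main obstacle will be the actual construction of the good linear variables $(s_N,w_N)$ at arbitrary order and the verification that the commutator error is genuinely $\lesssim_A B\,\norm{(r,v)}_{\Hspace^{2N}}^2$: one must ensure that whenever a derivative lands on a weight $r$, the structural identity $D_t r \simeq r(\bar\partial r,\bar\partial v)$ read off from \eqref{E:Diagonal_Euler_system_sound_speed} is invoked to recover a factor of $r$ and preserve the weighted scaling dictated by \eqref{E:Scaling_law}. A secondary delicate issue is the weighted integration by parts in the degenerate elliptic estimate: writing the operator in its correct form, rather than as $r\Delta$, is essential so that boundary contributions at $\Fb_t$ vanish in the weighted $L^2$ setting and so that the top-order piece can be absorbed back to the LHS using smallness of $A$ in the spirit of a freezing-coefficients argument.
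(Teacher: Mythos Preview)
Your strategy is essentially the paper's: introduce good linear variables, apply the linearized energy estimate, and use weighted elliptic estimates for coercivity. But there are two concrete problems with your implementation.

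First, the weight in your principal energy is wrong. You write $r^{\frac{1-\kappa}{\kappa}+2N}$ in front of $|s_N|^2$, but the good linear variables already carry the extra $r$-powers implicitly: at leading order $D_t^{2N}r \sim r^N\bar\partial^{2N}r$ (see \eqref{E:Orders_top_derivatives}), so measuring $s_N$ with weight $r^{\frac{1-\kappa}{\kappa}+2N}$ would correspond to weight $r^{\frac{1-\kappa}{\kappa}+4N}$ on $\bar\partial^{2N}r$, which is far too much and cannot be coercive against $\Hspace^{2N}$. The correct choice is simply the $\Hspace^0$ weight $r^{\frac{1-\kappa}{\kappa}}$; this is the linearized energy \eqref{E:Weighted_energy_linearized}, and the paper's energy is precisely
\[
E^{2N}=\sum_{\ell=0}^{N}\norm{(s_{2\ell},w_{2\ell})}_{\Hspace^0}^2,
\]
see \eqref{E:Energy_good_linear_variables}.

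Second, the decomposition into a ``principal'' piece plus separate ``elliptic energies'' is not how the paper organizes things, and your elliptic pieces are never defined. In the paper, \emph{every} level $\ell=0,\dots,N$ contributes a linearized-energy term in $\Hspace^0$; each pair $(s_{2\ell},w_{2\ell})$ is a good linear variable satisfying the linearized system \eqref{E:Good_linear_variables_system} with perturbative sources $(f_{2\ell},h_{2\ell})$, so part~(b) follows by applying the linearized energy estimate at each level and bounding $\norm{(f_{2\ell},h_{2\ell})}_{\Hspace^0}$. The weighted elliptic estimates \eqref{E:Weighted_elliptic_estimates} with the transition operators $L_1,L_2$ of \eqref{E:Transition_operators} are used \emph{only} for coercivity (part~(a), the $\gtrsim$ direction), relating $(s_{2\ell-2},w_{2\ell-2})$ in $\Hspace^2$ to $(s_{2\ell},w_{2\ell})$ in $\Hspace^0$; they are not separate energy components with their own time evolution.

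Finally, obtaining the factor \emph{linear} in $B$ in part~(b) is not automatic from the commutator structure: the sources $(f_{2\ell},h_{2\ell})$ are multilinear in derivatives of $(r,v)$, and isolating exactly one factor of $B$ with the rest in $\Hspace^{2N}$ requires the weighted interpolation inequalities of Lemma~\ref{L:Weighted_interpolation}. The paper flags this explicitly as the key technical input for Theorem~\ref{T:Continuation_solutions_vacuum_bry}.
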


As a consequence of Theorem \ref{T:Continuation_solutions_vacuum_bry}, Gr\"onwall's inequality gives
\begin{align}
 \norm{ (r,v) }^2_{\Hspace^{2N}} \lesssim
 e^{\int_0^t C(A) B \, d\tau}  \norm{ (\mathring{r},\mathring{v}) }^2_{\Hspace^{2N}}.
\label{E:Gronwall_vacuum_bry}
\end{align}

\begin{remark}
The energies $E^{2N}$ in Theorem \ref{T:Continuation_solutions_vacuum_bry} are explicitly constructed for integer $N \geq 0$ only, but the arguments in 
\cite{Disconzi-Ifrim-Tataru-2022} show that \eqref{E:Gronwall_vacuum_bry} holds for non-integer $N>0$.
\end{remark}

As a consequence of \eqref{E:Gronwall_vacuum_bry}, we obtain
the following \emph{continuation criterion:}

\begin{corollary}[\citealt{Disconzi-Ifrim-Tataru-2022}]
\label{C:Continuation_criterion}
The unique solutions obtained in Theorem \ref{T:LWP_relativistic_Euler} can be continued as long as $A$ remains bounded and $B \in L_t^1(\Md)$.
\end{corollary}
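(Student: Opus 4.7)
The plan is the classical continuation-by-bootstrap argument, reducing the corollary to Theorem \ref{T:Continuation_solutions_vacuum_bry}. Let $[0,T^*)$ denote the maximal time interval of existence of the unique solution $(r,v)$ provided by the local well-posedness result, and suppose for contradiction that $T^* < \infty$ while $\sup_{[0,T^*)} A(t) < \infty$ and $B \in L^1_t([0,T^*))$. It is enough to show that $(r,v)(t)$ admits a limit in the state space $\mathbf{H}^{2N}$ as $t \to T^*$ satisfying the physical vacuum boundary condition, since one can then reapply the local existence theorem at $t = T^*$ to extend the solution past $T^*$, contradicting maximality.

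The first step is to bound $\norm{(r,v)(t)}_{\Hspace^{2N}}$ uniformly on $[0,T^*)$. This is immediate from \eqref{E:Gronwall_vacuum_bry}, which follows by combining the coercivity $E^{2N} \approx \norm{(r,v)}^2_{\Hspace^{2N}}$ with the differential inequality $\frac{d}{dt} E^{2N} \lesssim_A B \norm{(r,v)}^2_{\Hspace^{2N}}$ and invoking Gr\"onwall. The boundedness of $A$ makes the implicit constant $C(A)$ uniform on $[0,T^*)$, and the hypothesis $B \in L^1_t$ makes the exponential factor $e^{\int_0^t C(A)B\, d\tau}$ bounded by some $M < \infty$. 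Thus $\norm{(r,v)(t)}_{\Hspace^{2N}} \leq M^{1/2} \norm{(\mathring r, \mathring v)}_{\Hspace^{2N}}$ for all $t \in [0,T^*)$.

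Next, I would extract a limit profile $(r(T^*), v(T^*)) \in \mathbf{H}^{2N}$. Applying the same energy machinery at a slightly lower regularity $\Hspace^{2N'}$ with $2N' < 2N$ to the difference $(r(t_1),v(t_1)) - (r(t_2),v(t_2))$ (suitably interpreted in Eulerian coordinates, accounting for the different moving domains $\Md_{t_1}, \Md_{t_2}$) shows that $\{(r,v)(t)\}_{t<T^*}$ is Cauchy in $\Hspace^{2N'}$. Combined with the uniform $\Hspace^{2N}$ bound and weak-$*$ compactness, a Bona--Smith-type interpolation upgrades this to strong convergence at the top level, yielding a limit $(r(T^*), v(T^*)) \in \mathbf{H}^{2N}$.

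The main obstacle, and what makes the corollary more than a direct Gr\"onwall argument, is confirming that the limit lies in the state space $\mathbf{H}^{2N}$ in the genuine sense that the physical vacuum boundary condition $r(T^*,x) \approx \operatorname{dist}(x, \Fb_{T^*})$ persists. This is precisely why $A$, and not merely the $\Hspace^{2N}$ norm, appears in the hypothesis: the summand $\norm{\bar{\nabla} r - \mathsf{N}}_{L^\infty(\Md_t)}$ in \eqref{E:Control_norm_A} controls how close $|\bar{\nabla} r|$ stays to a fixed non-vanishing profile near the free interface, preventing $|\bar{\nabla} r|$ from vanishing on $\Fb_t$ and thereby preserving the non-degeneracy that defines the physical vacuum regime. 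Boundedness of $A$ up to $T^*$ thus guarantees that the defining function $r(T^*, \cdot)$ remains transverse to $\Fb_{T^*}$, so that $(r(T^*), v(T^*))$ furnishes valid initial data for the local existence theorem, yielding the desired extension past $T^*$.
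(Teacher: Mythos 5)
Your main line is exactly the paper's: the corollary is obtained by combining the coercivity and energy estimate of Theorem \ref{T:Continuation_solutions_vacuum_bry} into the Gr\"onwall bound \eqref{E:Gronwall_vacuum_bry}, which gives a uniform $\Hspace^{2N}$ bound on $[0,T^*)$ when $A$ is bounded (so that $E^{2N}\approx \norm{(r,v)}^2_{\Hspace^{2N}}$ and $C(A)$ is uniform) and $B\in L^1_t$, and then re-invoking the local well-posedness of Theorem \ref{T:LWP_vacuum_bry} to extend past $T^*$. The Bona--Smith extraction of a limit at $t=T^*$ is heavier than necessary (and comparing solutions at different times means comparing functions on different domains $\Md_{t_1}\neq\Md_{t_2}$, which in this framework requires the distance-functional machinery of Section \ref{S:Remaining_LWP}); the standard shortcut is to restart the local theory at times $t<T^*$ close to $T^*$, using that the lifespan given by the local theorem is bounded below in terms of the data size and the non-degeneracy constants, so no limit profile at $T^*$ is needed.

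The genuine flaw is in your last paragraph, where you claim that boundedness of $A$, through the summand $\norm{\bar{\nabla}r-\mathsf{N}}_{L^\infty(\Md_t)}$ in \eqref{E:Control_norm_A}, ``prevents $|\bar{\nabla}r|$ from vanishing on $\Fb_t$'' and hence propagates the physical vacuum condition $r\approx\operatorname{dist}(\cdot,\Fb_t)$. A bound (as opposed to smallness) on $\norm{\bar{\nabla}r-\mathsf{N}}_{L^\infty}$ gives only an upper deviation estimate and yields no lower bound on $|\bar{\nabla}r|$; moreover $\mathsf{N}$ is itself pinned to $\bar{\nabla}r$ at a boundary point, so it can shrink together with $\bar{\nabla}r$ while $A$ stays bounded. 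In the paper's scheme the role of bounded $A$ is precisely the coercivity $E^{2N}\approx\norm{(r,v)}^2_{\Hspace^{2N}}$ and the uniformity of the constant $\lesssim_A$; the smallness of $A$ used in the interior estimates is obtained by localization, not assumed. The persistence of the non-degeneracy $r\gtrsim\operatorname{dist}$ up to $T^*$ has to come from elsewhere: e.g.\ from the equation \eqref{E:Diagonal_Euler_system_sound_speed}, differentiating and restricting to the moving boundary shows that $\log|\bar{\nabla}r|$ along $\Fb_t$ obeys a transport inequality with rate controlled by $\norm{\bar{\nabla}v}_{L^\infty}+\norm{\bar{\nabla}r}_{\tilde{C}^{1/2}}\lesssim B$, so it is the hypothesis $B\in L^1_t(\Md)$ (together with the initial non-degeneracy) that keeps $|\bar{\nabla}r|$ comparable to its initial size on the boundary and hence keeps the limiting data in the state space $\mathbf{H}^{2N}$. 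As written, your argument attributes this step to the wrong hypothesis and the step would fail.
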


\begin{remark}
Observe that $B \sim \norm{\bar{\partial} v}_{L^\infty(\Md_t)}
+ \norm{\bar{\partial} r}_{C^\frac{1}{2}(\Md_t)}$, so that
Corollary \ref{C:Continuation_criterion} is very similar to the 
standard continuation criterion for relativistic Euler that comes
from their formulation as a first-order symmetric hyperbolic system. See \cite{Disconzi-Ifrim-Tataru-2022} for details.
\end{remark}

\noindent \emph{Proof of Theorem \ref{T:Continuation_solutions_vacuum_bry}.} Theorem 
\ref{T:Continuation_solutions_vacuum_bry} is obtained from the energy estimates that have been briefly discussed in the proof 
of Theorem \ref{T:LWP_vacuum_bry}. In particular, in order to obtain the estimates always with a factor linear in $B$
we need to use some powerful interpolation Theorems
obtained by Ifrm and Tataru in \cite{Ifrim-Tataru-2024}.
\hfill \qed

\begin{remark}
Local existence and uniqueness for the classical compressible Euler equations with a physical vacuum boundary have been obtained
by \cite{Coutand-Shkoller-2011, Coutand-Shkoller-2012}
and \cite{Jang-Masmoudi-2015,
Jang-Masmoudi-2009,
Jang-Masmoudi-2011,
Jang-Masmoudi-2012} and,
more recently, by \cite{Ifrim-Tataru-2024}, where analogues
to Theorems \ref{T:LWP_vacuum_bry}, \ref{T:Continuation_solutions_vacuum_bry} and Corollary
\ref{C:Continuation_criterion} have been obtained for 
the classical compressible Euler equations.
At the risk of being repetitive, however, we once more stress
that results in the relativistic setting should not be taken
for granted as simple extensions of the non-relativistic case.
\end{remark}

\subsection{Energy estimates for the linearized equations\label{S:Energy_estimates_linearized}}
Consider the linearization of Eqs.~\eqref{E:Diagonal_Euler_system}, where we denote
by $s$ the variable associated to the linearization of $r$ and by $w$ the variable associated to the linearization of $v$. Direct computation\footnote{In fact, the computation is not so direct, see Sect.~\ref{S:Role_weights}.} gives that the linearization of \eqref{E:Diagonal_Euler_system} is
\begin{subequations}{\label{E:Linearized_diagonal_Euler_system}}
\begin{align}
D_t s + \frac{1}{\kappa} (\bar{H}^{-1})^{ij} \partial_i r w_j + r (\bar{H}^{-1})^{ij} \partial_i w_j
+ r a_1 v^i \partial_i s & = f,
\label{E:Linearized_diagonal_Euler_system_sound_speed}
\\
D_t w_i + a_2 \partial_i s & = h_i,
\label{E:Linearized_diagonal_Euler_system_velocity}
\end{align}
\end{subequations}
where $f$ and $h$ are of the form
\begin{subequations}{\label{E:Error_terms_linearized_diagonal}}
\begin{align}
f &= S_1 s + r W_1 w,
\label{E:Error_terms_linearized_diagonal_f}
\\
h & = S_2 s + W_2 w,
\label{E:Error_terms_linearized_diagonal_h}
\end{align}
\end{subequations}
where $S_1, S_2, W_1, W_2$ are linear in $\partial (r,v)$ with coefficients that are smooth functions of $(r,v)$. The functions $f$ and $h$ will be error terms that can be directly controlled by our energy
(see \eqref{E:Weighted_energy_linearized} below). 

\begin{remark}
\label{R:Error_terms_f_h}
In accordance with the spirit of these notes, we could have ignored the terms $f$ and $h$ and written \eqref{E:Linearized_diagonal_Euler_system} with 
$\simeq \dots$ or $= \,\text{L.O.T}$. We will do so in much of what follows. But since the norms we want to control have $r$ weights, it is important to verify that the RHS of \eqref{E:Linearized_diagonal_Euler_system} in fact has the correct powers of $r$ to be bounded by the energy; see \eqref{E:Weighted_energy_linearized}.
\end{remark}

\begin{remark}
\label{R:r_and_v_as_coefficients}
As usual upon considering a linearization, in \eqref{E:Linearized_diagonal_Euler_system} our solution variables are $s$ and $w$, with $r$ and $v$ treated as coefficients. In particular, our energy estimates for the linearized equations in Sect.~\ref{S:Energy_estimates_linearized} hold with bounds depending on the $L^\infty$ norm of\footnote{Some of the derivatives of $r$ and $v$ appearing as coefficients in the energy estimate will be time derivatives coming from lower-order terms involving the material derivative (although not all terms in material derivatives will be error terms, see, e.g., Remark \ref{R:Factoring_dt_moving_domain_r_factors} and Sect.~\ref{S:Estimates_solutions_diagonal}).
Using \eqref{E:Diagonal_Euler_system}, however, we can algebraically solve for $\partial_t (r,v)$ in terms 
of $\bar{\partial}(r,v)$, with coefficients that are smooth functions of $(r,v)$.}
  $\bar{\partial}(r,v)$.
\end{remark}

We make the following observations:
\begin{itemize}
\item As the original system \eqref{E:Diagonal_Euler_system}, Eqs.~\eqref{E:Linearized_diagonal_Euler_system} degenerate on the boundary.
\item The linearized system does not require boundary conditions. This is related to the fact that the solutions in the one-parameter family of solutions used to produce the linearization are not required to have the same domain\footnote{Although some kind of boundary behavior is imposed on $s$ and $w$ through the choice of function spaces.}. 
\item The term $\frac{1}{\kappa} (\bar{H}^{-1})^{ij} \partial_i r w_j $ comes from the linearization of $D_t$, i.e., we obtain a term in $\bar{H}^{-1}$ when computing the linearization of $D_t$. See comments after \eqref{E:Equivalence_linearized_energy_H0_norm} as well.
\item The term $\frac{1}{\kappa} (\bar{H}^{-1})^{ij} \partial_i r w_j $ does not contain derivatives
of $(s,w)$, so at first sight it looks like an error term that should be moved to the RHS. We will see
in Sects.~\ref{S:Role_weights} and \ref{S:Estimates_solutions_diagonal} that this term is not lower order with respect to our energies as it does not contain the right
 weight.
\end{itemize}

In order to derive energy estimates, we will use the following moving domains formula
\begin{align}
\frac{d}{dt} \int_{\Md_t} \mathsf{f} & = 
\int_{\Md_t} D_t \mathsf{f} + \int_{\Md_t} \mathsf{f} \partial_i \left( \frac{v^i}{v^0} \right).
\label{E:Moving_domain_derivative}
\end{align}
Formula \eqref{E:Moving_domain_derivative} is 
 a well-known moving-domain formula for a domain advected by the fluid's velocity
(see, e.g., \citealt[Appendix C.4]{Evans-Book-2010}). To see that such a formula takes the form \eqref{E:Moving_domain_derivative}, we observe that $\frac{v^i}{v^0} = \frac{u^i}{u^0}$ is the the actual physical three-velocity of the fluid particles on the boundary (see \citealt[Sect.~7.1]{Rezzolla-Zanotti-Book-2013}). In particular, this provides further motivation for choosing to work with \eqref{E:Material_derivative_relativistic_v}.

\subsubsection{The role of weights\label{S:Role_weights}} As already mentioned, we need to work with a formalism of weighted
energies and function spaces in order to deal with the degenerate character of the problem. 
Let us explain the role of the weights in deriving energy estimates for the linearized system
\eqref{E:Linearized_diagonal_Euler_system}.

For simplicity, let us consider the case $\kappa=1$, in which case \eqref{E:Linearized_diagonal_Euler_system} becomes
\begin{subequations}{\label{E:Linearized_diagonal_Euler_system_kappa_1}}
\begin{align}
D_t r + (\bar{H}^{-1})^{ij} \partial_i r w_j + r (\bar{H}^{-1})^{ij} \partial_i w_j
+ r a_1 v^i \partial_i s & = f,
\label{E:Linearized_diagonal_Euler_system_sound_speed_kappa_1}
\\
D_t w_i + a_2 \partial_i s & = h_i,
\label{E:Linearized_diagonal_Euler_system_velocity_kappa_1}
\end{align}
\end{subequations}

Suppose we want to control the ``standard'' energy
\begin{align}
E_{\text{standard}} := \frac{1}{2} \int_{\Md_t} s^2 + |w|^2 \, dx.
\nonumber
\end{align}
Multiplying \eqref{E:Linearized_diagonal_Euler_system_sound_speed_kappa_1} by $s$,
contracting \eqref{E:Linearized_diagonal_Euler_system_velocity_kappa_1} with $w^i$, integrating over $\Md_t$, and using \eqref{E:Moving_domain_derivative} and \eqref{E:Error_terms_linearized_diagonal}, we find
\begin{align}
\begin{split}
& \frac{1}{2} \frac{d}{dt} \int_{\Md_t} (s^2 +  |w|^2 )
+ \int_{\Md_t} r (\bar{H}^{-1})^{ij} s \partial_i w_j + 
\int_{\Md_t} a_1 r s v^i \partial_i s
\\
&\ \ \
+ \int_{\Md_t} s (\bar{H}^{-1})^{ij} \partial_i r w_j 
+ \int_{\Md_t} a_2 w^i \partial_i s
=\text{L.O.T.}
\end{split}
\label{E:Attempt_energy_estimate_linearized_standard}
\end{align}
$\text{L.O.T.}$ means terms controlled by the energy.
The third integral on the LHS can be handled with integration by parts,
\begin{align}
\int_{\Md_t} a_1 r s v^i \partial_i s = 
\frac{1}{2}
\int_{\Md_t} a_1 r  v^i \partial_i s^2 = 
- \int_{\Md_t} \partial_i (a_1 r s v^i )  s^2 
\lesssim \int_{\Md_t}  s^2 ,
\nonumber
\end{align}
where there is no boundary term because $r=0$ on $\Fb_t$. 

\begin{remark}
\label{R:No_boundary_terms}
It will be a recurring fact throughout our estimates that integration by parts will produce no boundary terms due to the presence of power of $r$. This is an important aspect of our estimates, as boundary terms could produce uncontrollable terms.
\end{remark}

We need to handle the cross-terms, i.e., to
cancel the terms in
$\bar{\partial} w$ on the second integral on the LHS with the term in $\bar{\partial} s$ on the 
last integral  on the LHS of \eqref{E:Attempt_energy_estimate_linearized_standard} after integrating one of them by parts. This clearly cannot be done because of the coefficient $r (\bar{H}^{-1})^{ij}$ in the $\bar{\partial} w$ term and the coefficient $a_2$ in the $\bar{\partial} s$ term. Thus, we need to 
adjust our energy to balance these coefficients. Multiplying \eqref{E:Linearized_diagonal_Euler_system_sound_speed_kappa_1} by $s$ and contracting \eqref{E:Linearized_diagonal_Euler_system_velocity_kappa_1} with $a_2^{-1}r(\bar{H}^{-1})^{ij} w_j$, we find, after integrating over $\Md_t$ and using \eqref{E:Moving_domain_derivative} (and \eqref{E:Diagonal_Euler_system_sound_speed}, see Remark \ref{R:Factoring_dt_moving_domain_r_factors} below),
\begin{align}
\begin{split}
\frac{1}{2} \frac{d}{dt} 
\int_{\Md_t} (s^2 + \frac{1}{a_2} r (\bar{H}^{-1})^{ij} w_i w_j )
+ 
\int_{\Md_t}  (\bar{H}^{-1})^{ij} \partial_i r w_j s 
\\
\ \ \ 
+
\int_{\Md_t} r (\bar{H}^{-1})^{ij} s \partial_i w_j 
+
\int_{\Md_t} r (\bar{H}^{-1})^{ij} w_i \partial_j s 
= \text{L.O.T.}
\end{split}
\label{E:Computation_energy_estimate_linearized_kappa_1}
\end{align}
(where the term in $a_1 s\partial_i s$ is handled as before and was thus moved to the RHS).
The first integral gives the following weighted energy for the $\kappa=1$ case,
\begin{align}
E^{(\kappa=1)} := \frac{1}{2} \int_{\Md_t} s^2 + \frac{1}{a_2} r | w|_{\bar{H}}^2 \, dx,
\label{E:Weighted_energy_linearized_kappa_1}
\end{align}
where
\begin{align}
| w|_{\bar{H}}^2 := (\bar{H}^{-1})^{ij} w_i w_j 
\nonumber
\end{align}
is simply the norm of $w$ measured with respect to the $\bar{H}$ metric, which is pointwise equivalent to the Euclidean norm; recall \eqref{E:H_bar_equivalent_Euclidean}. Recall also that $a_2  \geq \text{constant}> 0$ 
by \eqref{E:a_2_bounded_from_below},
so $E^{(\kappa=1)}$ is in fact non-negative. 
Observe that \eqref{E:Weighted_energy_linearized_kappa_1} is nothing else than a \emph{weighted} $L^2$ norm (squared) of $s$ and $w$. Note the equivalence between $E^{(\kappa=1)}$ and the $\Hspace^0$-norm squared.

\begin{remark}
\label{R:Energy_H_not_Euclidean}
Despite the equivalence between $\bar{H}^{-1}$ and $\updelta^{-1}$, i.e., Eq.~\eqref{E:H_bar_equivalent_Euclidean}, note in the above calculation that for the energy estimate to close, $E^{(\kappa=1)}$ needs to be defined using $| w|_{\bar{H}}^2$. It is only \emph{after} the energy estimate is carried out that we can invoke the equivalence of $\bar{H}^{-1}$ and $\updelta^{-1}$ and replace  $| w|_{\bar{H}}^2$ with $|w|^2$ if desired\footnote{This is the same situation as in energy estimates for quasilinear wave equations. One defines an energy using the actual metric of the problem in order to carry out energy estimates, but invoke the equivalence of such metric with the Minkowski metric (or the Euclidean metric for the spatial part) to trade the energy by a standard Sobolev norm.}.
\end{remark}

\begin{remark}
\label{R:Factoring_dt_moving_domain_r_factors}
In invoking \eqref{E:Moving_domain_derivative} to factor a $\frac{d}{dt}$, it is important to pay attention to the term in material derivative of $r$, i.e., after contracting \eqref{E:Linearized_diagonal_Euler_system_velocity_kappa_1} with $a_2^{-1}r(\bar{H}^{-1})^{ij} w_j$, we find, after integration over $\Md_t$,
\begin{align}
\label{E:Using_D_t_r_factoring_dt_moving_domain}
\begin{split}
\int_{\Md_t} \frac{1}{a_2} r (\bar{H}^{-1})^{ij} w_j D_t w_i & = 
\frac{1}{2} \int_{\Md_t}  D_t (\frac{1}{a_2} r (\bar{H}^{-1})^{ij} w_i w_j)
 - \frac{1}{2} \int_{\Md_t} r D_t (\frac{1}{a_2} (\bar{H}^{-1})^{ij}) w_i w_j
\\
& \ \ \
-  \frac{1}{2}\int_{\Md_t} D_t r \frac{1}{a_2}  (\bar{H}^{-1})^{ij} w_i w_j.
\end{split}
\end{align}
The first term on the RHS contributes to $E^{(\kappa=1)}$ upon using
\eqref{E:Moving_domain_derivative}. The second term on the RHS is controlled by the energy and is part of the RHS of \eqref{E:Computation_energy_estimate_linearized_kappa_1}. The last term on the RHS seems to be missing a power of $r$ in order to be controlled by the energy. More precisely, 
since $r(t,x) \approx \operatorname{dist}(x,\Fb_t)$, we have $\bar{\partial} r = O(1)$, so that\footnote{Recall Remark \ref{R:Working_neighborhood_boundary}.}
\begin{align}
\label{E:Cannot_bound_energy_without_weights}
\int_{\Md_t} \bar{\partial} r |w|^2 
\approx \int_{\Md_t}  |w|^2 
\cancel{\, \lesssim \,} \int_{\Md_t} r |w|^2.
\end{align}
Had the last term on the RHS of \eqref{E:Using_D_t_r_factoring_dt_moving_domain} been a \emph{generic} derivative of $r$, we would \emph{not} be able to bound the integral of the last 
term on the RHS of \eqref{E:Using_D_t_r_factoring_dt_moving_domain} in terms 
of $E^{(\kappa=1)}$, precisely because of \eqref{E:Cannot_bound_energy_without_weights}.
However, \emph{we can recover a power of $r$ from $D_t r$ upon using \eqref{E:Diagonal_Euler_system_sound_speed},} which gives
\begin{align}
\label{E:D_t_r_gains_r_using_eq_moving_domain}
D_t r = r \bar{\partial} (r,v),
\end{align}
so that the last term on the RHS of \eqref{E:Using_D_t_r_factoring_dt_moving_domain}
belongs to the RHS of \eqref{E:Computation_energy_estimate_linearized_kappa_1}.
Throughout our energy estimates, we will silently use \eqref{E:D_t_r_gains_r_using_eq_moving_domain}
to recover a power of $r$ when factoring a $\frac{d}{dt}$ via \eqref{E:Moving_domain_derivative}.
\end{remark}

Using \eqref{E:Error_terms_linearized_diagonal}, we can check that the RHS of \eqref{E:Computation_energy_estimate_linearized_kappa_1} is controlled by $E^{(\kappa=1)}$ (see Remark \ref{R:Error_terms_f_h}). Thus, in order to close the estimate, it remains to cancel the terms on $\bar{\partial}(s,w)$ on the LHS. The second, third, and fourth terms on the LHS of \eqref{E:Computation_energy_estimate_linearized_kappa_1} can be combined as
\begin{align}
\label{E:Cancelation_linearized_kappa_1}
\begin{split}
& \int_{\Md_t}  (\bar{H}^{-1})^{ij} \partial_i r w_j s 
+
\int_{\Md_t} r (\bar{H}^{-1})^{ij} s \partial_i w_j 
+
\int_{\Md_t} r (\bar{H}^{-1})^{ij} w_i \partial_j s 
\\
& \ \ \ 
= \int_{\Md_t} (\bar{H}^{-1})^{ij} \partial_i (r w_j s)
= -  \int_{\Md_t} \partial_i (\bar{H}^{-1})^{ij}  (r w_j s)
\\
& \ \ \ 
\lesssim \int_{\Md_t} ( s^2 + r|w|^2 ) \lesssim E^{(\kappa=1)},
\end{split}
\end{align}
where we recall Remark \ref{R:r_and_v_as_coefficients} and note that there is no boundary term after the integration by parts since $r$ vanishes on the boundary. 

From the foregoing, we conclude the following energy estimate for \eqref{E:Linearized_diagonal_Euler_system_kappa_1}: 
\begin{align}
E^{(\kappa=1)}(t) \lesssim E^{(\kappa=1)}(0).
\label{E:Energy_estimate_linearized_kappa_1}
\end{align}

In deriving \eqref{E:Energy_estimate_linearized_kappa_1}, we note that 
the term $\int_{\Md_t} s (\bar{H}^{-1})^{ij} \partial_i r w_j$ that \emph{comes from the linearization of $D_t$ is essential to produce the cancelation \eqref{E:Cancelation_linearized_kappa_1}.} Without such a term, \eqref{E:Cancelation_linearized_kappa_1} would give instead
\begin{align}
\begin{split}
& 
\int_{\Md_t} r (\bar{H}^{-1})^{ij} s \partial_i w_j 
+
\int_{\Md_t} r (\bar{H}^{-1})^{ij} w_i \partial_j s 
= \int_{\Md_t} (\bar{H}^{-1})^{ij} r \partial_i ( w_j s)
\\
& \ \ \ 
= -  \int_{\Md_t} \partial_i (\bar{H}^{-1})^{ij}  r w_j s
- \int_{\Md_t}  (\bar{H}^{-1})^{ij}  \partial_i r w_j s.
\end{split}
\nonumber
\end{align}
The last term \emph{cannot be bounded by} $E^{(\kappa=1)}$ in view of \eqref{E:Cannot_bound_energy_without_weights}. We see that the term
$\int_{\Md_t} s (\bar{H}^{-1})^{ij} \partial_i r w_j$ coming from linearizing $D_t$ is key for the energy estimate, even though it has no derivatives of $(s,w)$. This otherwise simple observation has major consequences for the estimates we will present in Sect.~\ref{S:Estimates_solutions_diagonal}.

Having illustrated the energy estimates for \eqref{E:Linearized_diagonal_Euler_system_kappa_1}, let us turn to the general $\kappa$. The argument is very similar to the above. But now, the term coming from the 
linearization of $D_t$ has a $\frac{1}{\kappa}$ factor. i.e., we obtain $\frac{1}{\kappa} (\bar{H}^{-1})^{ij} \partial_i r w_j$. So, in order to get an exact cancellation, we multiply \eqref{E:Linearized_diagonal_Euler_system_sound_speed} by $r^\frac{1-\kappa}{\kappa} s$ and \eqref{E:Linearized_diagonal_Euler_system_velocity} by $\frac{1}{a_2} r^{\frac{1-\kappa}{\kappa}+1} (\bar{H}^{-1})^{ij} w_j$, yielding for the cross-terms
\begin{align}
\begin{split}
& \frac{1}{\kappa} r^\frac{1-\kappa}{\kappa} (\bar{H}^{-1})^{ij} \partial_i r w_j s
+ r^\frac{1}{\kappa} (\bar{H}^{-1})^{ij} \partial_i w_j s 
+ r^\frac{1}{\kappa} (\bar{H}^{-1})^{ij} w_j \partial_i s
\nonumber
\\
\ \ \ & = \partial_i (r^\frac{1}{\kappa}) (\bar{H}^{-1})^{ij}  w_j s
+ r^\frac{1}{\kappa} (\bar{H}^{-1})^{ij} \partial_i w_j s 
+ r^\frac{1}{\kappa} (\bar{H}^{-1})^{ij} w_j \partial_i s
\end{split}
\\
\ \ \ &
= (\bar{H}^{-1})^{ij} \partial_i (r^\frac{1}{\kappa} w_j s ),
\nonumber
\end{align}
which can be integrated by parts (again, no boundary term appears). We see that in the end we control the energy
\begin{align}
E := \frac{1}{2} \int_{\Md_t} r^\frac{1-\kappa}{\kappa}( s^2 + \frac{1}{a_2} r | w|_{\bar{H}}^2) \, dx,
\label{E:Weighted_energy_linearized}
\end{align}
i.e., 
\begin{align}
E(t) \lesssim E(0).
\nonumber
\end{align}
Observe that 
\begin{align}
E \approx \norm{ (s,w) }_{\Hspace^0}^2
\label{E:Equivalence_linearized_energy_H0_norm}
\end{align}
in view of \eqref{E:a_2_bounded_from_below} and \eqref{E:H_bar_equivalent_Euclidean}.

We have one more comment to make about the linearization of $D_t$. We said it produces the term
$\frac{1}{\kappa} (\bar{H}^{-1})^{ij}\partial_i r w_j$. This is not immediate, and require some intentional algebra that we now present.

Linearizing the term $\frac{v^i}{v^0} \partial_i r$ from $D_t r$ and using \eqref{E:v_0_explicit_v_i}, we find,
using $\delta$ to denote the operation of linearization,
\begin{align}
\begin{split}
\delta \left(\frac{v^i}{v^0} \partial_i r \right) & = \delta \left(\frac{v^i}{v^0} \right) \partial_i + 
\frac{v^i}{v^0} \partial_i \delta r
\\
&= \frac{\delta v^i}{v^0} \partial_i r - \frac{v^i}{(v^0)^2} \delta \partial v^0 \partial_i r + 
\frac{v^i}{v^0} \partial_i \delta r.
\end{split}
\nonumber
\end{align}
As $s := \delta r$, the term $\frac{v^i}{v^0} \partial_i \delta r$ belongs to $D_t s$.
But
\begin{align}
\begin{split}
\delta v^0 &= \frac{1}{2 v^0}[ 
(2+\frac{2}{\kappa}) ( 1 + \frac{\kappa r}{\kappa+1})^{1+\frac{2}{\kappa}} \delta r + 2 v^j \delta v_j ]
\end{split}
\nonumber
\end{align}
The term $(2+\frac{2}{\kappa}) ( 1 + \frac{\kappa r}{\kappa+1})^{1+\frac{2}{\kappa}} \delta r$ is linear in $s := \delta r$ and is absorbed into $f$ on the RHS of
\eqref{E:Linearized_diagonal_Euler_system_sound_speed} (see \eqref{E:Error_terms_linearized_diagonal_f}).
Thus, writing $\dots$ for terms that are accounted for through $D_t s$ or $f$,
\begin{align}
\begin{split}
\delta \left(\frac{v^i}{v^0} \partial_i r \right) & =
\left( \frac{\delta v^i}{v^0} - \frac{v^i}{(v^0)^3} v^j \delta v_j \right) \partial_i r + \dots
\\
& = 
\frac{1}{v^0} \left( \updelta^{ij} - \frac{v^i v^j}{(v^0)^2} \right) w_j \partial_i r + \dots
\end{split}
\nonumber
\end{align}
where we used $w_j := \delta v_j$. Next, add and subtract $\frac{1}{\kappa} (\bar{H}^{-1})^{ij} w_j \partial_i r$, recalling the form of $\bar{H}^{-1}$ given by \eqref{E:H_bar_inverse}, to get
\begin{align}
\begin{split}
\delta \left(\frac{v^i}{v^0} \partial_i r \right) & =
\frac{1}{\kappa} \frac{\kappa}{a_0 v^0 }(1 + \frac{\kappa}{\kappa+1} r)\left(\updelta^{ij} - \frac{v^i v^j}{(v^0)^2}\right) w_j \partial_i r
\\
& \ \ \ 
-
\frac{1}{\kappa} \frac{\kappa}{a_0 v^0 }(1 + \frac{\kappa}{\kappa+1} r)\left(\updelta^{ij} - \frac{v^i v^j}{(v^0)^2}\right) w_j \partial_i r
+ 
\frac{1}{v^0} \left( \updelta^{ij} - \frac{v^i v^j}{(v^0)^2} \right) w_j \partial_i r + \dots
\\
& =
\frac{1}{\kappa} (\bar{H}^{-1})^{ij} w_j \partial_i r 
+ \frac{1}{v^0} \left[ -\frac{1}{a_0} (1 + \frac{\kappa r}{\kappa+1} ) + 1 \right] 
\left( \updelta^{ij} - \frac{v^i v^j}{(v^0)^2} \right) w_j \partial_i r + \dots
\end{split}
\nonumber
\end{align}
The term in brackets can be simplified using \eqref{E:a_0}:
\begin{align}
\begin{split}
 -\frac{1}{a_0} (1 + \frac{\kappa r}{\kappa+1} ) + 1 & = \frac{1}{a_0} \left[
 -(1 + \frac{\kappa r}{\kappa+1} ) + a_0 \right]
 \\
 & = \frac{1}{a_0}\left[ -1 - \frac{\kappa r }{\kappa + 1} + 1 - \kappa r \frac{v^i v_i }{(v^0)^2} 
 \right]
  \\
 & = - \frac{1}{a_0}\left[  \frac{\kappa }{\kappa + 1} + \kappa  \frac{v^i v_i }{(v^0)^2} 
 \right] r.
\end{split}
\nonumber
\end{align}
The important fact in this computation is that we obtain a \emph{term proportional to} $r$. Therefore, 
\begin{align}
\begin{split}
\delta \left(\frac{v^i}{v^0} \partial_i r \right) & =
\frac{1}{\kappa} (\bar{H}^{-1})^{ij} w_j \partial_i r 
-\frac{1}{v^0 a_0} \left[  \frac{\kappa }{\kappa + 1} + \kappa  \frac{v^i v_i }{(v^0)^2} 
 \right]
\left( \updelta^{ij} - \frac{v^i v^j}{(v^0)^2} \right) r w_j \partial_i r + \dots
\\
& = \frac{1}{\kappa} (\bar{H}^{-1})^{ij} w_j \partial_i r  + \dots
\end{split}
\nonumber
\end{align}
where in the last step we used that the term linear in $r w$ can be absorbed into $f$ on the RHS of 
\eqref{E:Linearized_diagonal_Euler_system_sound_speed} and thus belongs to the $\dots$ terms.

The computations we presented so far, from the choice of variables $(r,v)$ to some exact cancellations, 
illustrate  the following  
\emph{key idea} in the treatment of the relativistic Euler equations with a physical boundary: \emph{it is crucial to find the right variables to treat the problem.} A right choice of variables will, indeed, be essential for the energy estimates we will derive in the next section.

\subsection{Estimates for solutions\label{S:Estimates_solutions_diagonal}} As we have seen
in Sect.~\ref{S:Energy_estimates_linearized}, the linearized equations possess a good algebraic structure that allows us to derive a basic weighted $L^2$ energy estimate. This is manifest in the cancellation of the cross terms $w \bar{\partial}s$ and $s \bar{\partial}w$, which requires combination
with an undifferentiated (in $(s,w)$) term coming from the linearization of $D_t$. 

In order to derive estimates for $(r,v)$ satisfying \eqref{E:Diagonal_Euler_system}, it is natural to proceed in the usual fashion, differentiating the equations and treating it as a linear system for the top-order derivatives of $(r,v)$, in which case we would appeal to the energy estimate for the linearized equation since $(\bar{\partial} s,\bar{\partial} w)$ satisfies the one-time differentiated system\footnote{So that deriving estimates for the $\ell$-times differentiated system is the same as deriving estimates for the $\ell-1$-times differentiated linearized system.}.

This procedure, however, does not work because differentiating \eqref{E:Diagonal_Euler_system} with respect to arbitrary derivatives \emph{destroys the delicate weighted structure of the equations that is crucial for estimates of the linearized system.} Indeed, when derivatives fall on the coefficients $r$ that play the role of weights we obtain $\partial r = O(1)$ terms. Such terms are not amenable to produce control of the weighted norm $\Hspace^{2N}$ which requires high powers of the weight (recall definitions \eqref{E:Weighted_Sobolev} and \eqref{E:Weighted_space_diagonal}). A lack of weights, naturally, cannot produce 
control of our weighted norms; recall \eqref{E:Cannot_bound_energy_without_weights} and the surrounding discussion.

A way around this difficulty is to differentiate \eqref{E:Diagonal_Euler_system} with respect to material derivatives $D_t$. This preserves the weights because every time a material derivative fall on $r$, we recover it upon using \eqref{E:Diagonal_Euler_system_sound_speed}, which gives $D_t r = r \bar{\partial} (r,v)$ (this was indeed already used in the estimates for the linearized equation, see Remark \ref{R:Factoring_dt_moving_domain_r_factors}). In this way we can, in principle, obtain control of 
$(D_t^{2N} r,D_t^{2N} v)$ in $\Hspace^0$. In order to translate it to control of $\norm{(r,v)}_{\Hspace^{2N}}$, we need to trade the material derivatives by spatial derivatives. We hope to be able to do so with the help of weighted 
elliptic estimates coming from the underlying evolution, as follows.

As mentioned 
in the the text surrounding \eqref{E:Schematic_wave_equation_vacuum_boundary}, 
the wave evolution reads, schematically,
\begin{align}
D_t^{2 N} (r,v) - r \Delta D_t^{2N-2} (r,v) = \dots
\nonumber
\end{align}
where $\dots$ represent error terms. Given control of $\norm{ D_t^{2N} (r,v) }_{\Hspace^0}$ from the energy estimates, we can treat this term as a source, 
\begin{align}
 r \Delta D_t^{2N-2} (r,v) =  D_t^{2 N} (r,v) + \dots
\label{E:Schematic_weighted_elliptic_equation_from_wave_evolution}
\end{align}
Since $r$ vanishes as the distance to the boundary, the operator $ r \Delta$ is degenerate elliptic, so that standard elliptic estimates do not apply. But we can carry out weighted elliptic estimates to obtain
\begin{align}
\norm{ D_t^{2N-2} (r,v) }_{\Hspace^2} \lesssim \norm{ D_t^{2N} (r,v) }_{\Hspace^0} + \dots
\label{E:Schematic_weighted_elliptic_estimate_from_wave_evolution}
\end{align}
We will show \eqref{E:Schematic_weighted_elliptic_estimate_from_wave_evolution} in Sect.~\ref{S:Weighted_elliptic_estimates}, but it is worth noting at this point that the derivative counting matches: Eq.~\eqref{E:Schematic_weighted_elliptic_equation_from_wave_evolution} says that the norm on the LHS involves two more derivatives and an additional power of $r$ than the terms on the RHS, which is precisely the difference between $\Hspace^2$ and $\Hspace^0$ (see Definitions \ref{D:Weighted_Sobolev} and \ref{D:Weighted_space_diagonal}). Proceeding recursively, i.e., controlling $D_t^{2N-4} (r,v)$ in terms of $D_t^{2N-2} (r,v)$, etc.), we finally obtain control of $(r,v)$.

\begin{remark}
The comments of Remark \ref{R:Elliptic_estimates_vacuum_bry}, especially the last bullet point, apply to the above discussion of elliptic estimates.
\end{remark}

Nevertheless, the preceding argument does not work. More precisely, the argument we sketched for the elliptic estimates goes through, as we will show in Sect.~\ref{S:Weighted_elliptic_estimates} and what follows. It is the energy estimates for material derivatives of $(r,v)$ that fails, at least in the form we presented above. Recall that in order to close the energy estimate for the linearized problem \eqref{E:Linearized_diagonal_Euler_system} it was crucial to take into account the term $\frac{1}{\kappa} (\bar{H}^{-1})^{ij} \partial_i r w_j$ that comes from the linearization of $D_t$ in \eqref{E:Diagonal_Euler_system_sound_speed}. Alternatively, from the point of view of the one-time differentiation of Eqs.~\eqref{E:Diagonal_variables}, treating $\partial (r,v)$ as a solution to the linearized equations, the term $\frac{1}{\kappa} (\bar{H}^{-1})^{ij} \partial_i r w_j$ comes from the commutator $[\partial,D_t]$ obtained from differentiating \eqref{E:Diagonal_Euler_system_sound_speed}. \emph{The term
$\frac{1}{\kappa} (\bar{H}^{-1})^{ij} \partial_i r w_j$ is absent upon differentiating \eqref{E:Diagonal_Euler_system} with respect to the material derivative} since $D_t$ obviously commutes with itself.

\subsubsection{Good linear variables} A solution to problem just discussed is found upon introducing a new set of \emph{good linear variables}. Define
\begin{subequations}{\label{E:Good_linear_variables}}
\begin{align}
s_0 &:= r,
\label{E:s_0}
\\
w_0 & := v,
\label{E:w_0}
\\
s_1 & := \partial_t r,
\label{E:s_1}
\\
w_1 & := \partial_t v,
\label{E:w_1}
\\
s_2 & := D_t^2 r + \frac{1}{2} \frac{a_0 a_2}{\kappa (1+ \frac{\kappa r}{\kappa + 1})} (\bar{H}^{-1})^{ij} \partial_i r \partial_j r 
\label{E:s_2}
\\
w_\ell & := D_t^\ell v, \ \ \ell \geq 2,
\label{E:w_ell}
\\
s_\ell & := D_t^\ell r - \frac{a_0}{\kappa(1+ \frac{\kappa r }{\kappa + 1})} (\bar{H}^{-1})^{ij} D_t^{\ell-1} v_j \partial_i r
\label{E:s_ell}
\nonumber
\\
& = D_t^\ell r - \frac{a_0}{\kappa(1+ \frac{\kappa r }{\kappa + 1})} (\bar{H}^{-1})^{ij} (w_{\ell-1})_j \partial_i r
\ \ \ell \geq 3.
\end{align}
\end{subequations}

Leaving aside the case $\ell \leq 2$ for a moment, we observe that $(s_\ell,w_\ell)$ are precisely the variables we hope to control with energy estimates according to the above argument, namely, $(D_t^\ell r, D_t^\ell v)$, except that in the case of $s_\ell$ we introduce a \emph{correction term} in $D_t^\ell r$ to account for aforementioned the fact that the term $\frac{1}{\kappa} (\bar{H}^{-1})^{ij} \partial_i r w_j$ is absent from the commutation of Eq.~\eqref{E:Diagonal_Euler_system_sound_speed} with $D_t$. The correction term in fact could have been chosen to be
\begin{align}
s_\ell^\prime = D_t^\ell r - 
\frac{1}{\kappa} (\bar{H}^{-1})^{ij} (w_{\ell-1})_j \partial_i r, 
\label{E:s_ell_alternative}
\end{align}
since the difference between $s_\ell$ given by \eqref{E:s_ell} and $s^\prime_\ell$ given by \eqref{E:s_ell_alternative} is\footnote{For this, one needs to use the form of $a_0$ given by \eqref{E:a_0}.} a term linear in $r w_{\ell-1}$, 
\begin{align}
s_\ell - s_\ell^\prime = (\dots) r w_{\ell-1}
\label{E:Difference_s_ell_s_ell_prime}
\end{align}
which can be treated as an error term; recall the form of the RHS of \eqref{E:Linearized_diagonal_Euler_system_sound_speed} given by 
\eqref{E:Error_terms_linearized_diagonal_f}. But it is more convenient\footnote{We will be omitting the computations in which such convenience is manifest. In this regard, there would be no harm for out high-level presentation to simply define $s_\ell$ by the RHS of \eqref{E:s_ell_alternative}, but we thought that it would be better to be faithful to the definitions of \cite{Disconzi-Ifrim-Tataru-2022} to avoid any possible confusion.} to work with \eqref{E:s_ell} rather than \eqref{E:s_ell_alternative} as some algebraic manipulations are slightly simpler.

Definition \eqref{E:s_ell} involves $w_{\ell-1}$ instead of $w_\ell$ because we need to compute the evolution equation for $(s_\ell, w_\ell)$ by differentiating them with $D_t$ and then using \eqref{E:Diagonal_Euler_system}. When the material derivative falls on $w_{\ell-1}$ in \eqref{E:s_ell} it produces the desired term $\frac{a_0}{\kappa(1+ \frac{\kappa r }{\kappa + 1})} (\bar{H}^{-1})^{ij} (w_{\ell})_j \partial_i r$ (or equivalently $\frac{1}{\kappa} (\bar{H}^{-1})^{ij} (w_{\ell})_j \partial_i r$, see above) needed for the cancellation of the cross-terms $s_\ell \bar{\partial} w_\ell$ and
$w_\ell \bar{\partial} s_\ell$.

Our goal is then to proceed recursively, using weighted elliptic estimates to control 
$(s_{\ell-2},w_{\ell-2})$ in terms of $(s_\ell, w_\ell)$, as suggested above. At the end of the recursion, however, we want to obtain a bound for $(r,v)$ and not some combination thereof. Hence definitions
\eqref{E:s_0} and \eqref{E:w_0}. But in order to guarantee that the transition from $\ell \geq 3$ down to $\ell=0$ works in our hierarchy of energy and elliptic estimates, we do need to adjust the definition for $\ell=1,2$, as done in \eqref{E:s_1}, \eqref{E:w_1}, and \eqref{E:s_2}. We make the following important remark.

\begin{remark}
\label{R:Good_variables_material_derivatives}
With the previous explanations in mind, we note that from a purely derivative counting point of view, we can think of the good linear variables as $(s_\ell, w_\ell) \sim D_t^\ell (r,v)$. In other words, we should view $D_t^\ell (r,v)$ as a good proxy for $(s_\ell, w_\ell)$ from a schematic point of view.
\end{remark}

\begin{assumption}
\label{A:Modification_small_ell}
We will present arguments that are valid for $2\ell \geq 3$ in what follows. Minor modifications are needed for the remaining cases in view of the different definitions of the good linear variables for $\ell \leq 2$, see \eqref{E:Good_linear_variables}.
\end{assumption}

Next, we apply $D_t$ to $(s_\ell,w_\ell)$, use \eqref{E:Diagonal_Euler_system} and definitions
\eqref{E:Good_linear_variables} to find the following evolution equation
\begin{subequations}{\label{E:Good_linear_variables_system}}
\begin{align}
D_t s_{2\ell} + \frac{1}{\kappa} (\bar{H}^{-1})^{ij} \partial_i r (w_{2\ell})_j + r (\bar{H}^{-1})^{ij} \partial_i (w_{2\ell})_j
+ r a_1 v^i \partial_i s_{2\ell} & = f_{2\ell},
\label{E:Good_linear_variables_system_s}
\\
D_t (w_{2\ell})_i + a_2 \partial_i s_{2\ell} & = (h_{2\ell})_i,
\label{E:Good_linear_variables_system_w}
\end{align}
\end{subequations}

We see that $(s_{2\ell},w_{2\ell})$ satisfy the linearized equation (compare with \eqref{E:Linearized_diagonal_Euler_system}) with suitable source terms. 
We note that \eqref{E:Difference_s_ell_s_ell_prime} has been using in deriving \eqref{E:Good_linear_variables_system_s}.
The structure of the source terms $(f_{2\ell}, h_{2\ell})$
will be investigated in Sect.~\ref{S:Energy_estimates_solutions_diagonal}.

The system \eqref{E:Good_linear_variables} is written for even values $2\ell$ because, as explained,
we will use the underlying wave evolution combined with elliptic estimates to control the good linear variables recursively, with the recursion connecting the different linear variables always at difference of two levels in our hierarchy, i.e., $(s_{\ell-2}, w_{\ell-2})$ with $(s_\ell, w_\ell)$ and so on (recall the previous discussion). Next, we will use Eqs.~\eqref{E:Good_linear_variables_system} to derive estimates for $(r,v)$.

\subsubsection{Energy norm and bookkeeping scheme\label{S:Energy_norm_bookkeeping}} According to the foregoing, our goal is to 
use our estimates for the linearized equation from Sect.~\ref{S:Energy_estimates_linearized} to
control $(s_{2\ell},w_{2\ell})$. We thus introduce the energy
\begin{align}
\label{E:Energy_good_linear_variables}
E^{2N}(r,v) = E^{2N} := \sum_{\ell=0}^N \norm{ (s_{2\ell}, w_{2\ell} ) }_{\Hspace^0}^2,
\end{align}
which is simply the sum of the energies for the linearized variables applied to each pair of
good linear variables $(s_{2\ell},w_{2\ell})$; see \eqref{E:Weighted_energy_linearized} and
\eqref{E:Equivalence_linearized_energy_H0_norm}. Recall that $(s_{2\ell},w_{2\ell})$ are expressions in $(r,v)$ given by \eqref{E:Good_linear_variables}.

Our goal is to estimate the energy \eqref{E:Energy_good_linear_variables} to produce a bound on the norm $\norm{(r,v)}_{\Hspace^{2N}}$. The first task is to show that the energy indeed controls the norm, i.e., 
\begin{align}
E^{2N} \approx \norm{ (r,v) }^2_{\Hspace^{2N}}.
\label{E:Equivalence_energy_norm}
\end{align}
This is statement (a) in Theorem \ref{T:Continuation_solutions_vacuum_bry}. In this regard, we note that, unsurprisingly, the energy estimates in Theorem \ref{T:Continuation_solutions_vacuum_bry} are key for establishing the local well-posedness in Theorem \ref{T:LWP_vacuum_bry}.

\begin{remark}
Strictly speaking, \eqref{E:Energy_good_linear_variables} is not the full energy. It controls only the wave-part of the system and a transport-energy needs to added to \eqref{E:Energy_good_linear_variables}, but we recall that we will not discuss transport estimates here (see Remark \ref{R:Transport_estimates_implicit}).
\end{remark}

In order to establish \eqref{E:Equivalence_energy_norm}, it is convenient to design a bookkeeping scheme that allows us to track how derivatives of $(r,v)$ are paired with powers of the weights $r$ in several multi-linear expressions we will encounter. Such bookkeeping scheme will also be useful to estimate the source terms $(f_{2\ell}, h_{2\ell})$ in \eqref{E:Good_linear_variables_system}. We base such scheme on
the scaling law \eqref{E:Scaling_law}, wherein powers of $\uplambda$ suggest appropriate orders for the terms in our expressions. 

\begin{definition} 
\label{D:Order_multi-linear}
We define the \textdef{order} of a multi-linear expression in $r$, $\bar{\partial}^k r$, and $\bar{\partial}^k v$ with $O(1)$ coefficients, as follows:
\begin{itemize}
\item $r$ and $v$ have order $-1$ and $-\frac{1}{2}$, respectively (we only count $v$ as having order $-\frac{1}{2}$ when it is differentiated. Undifferentiated $v$ has order $0$).
\item $D_t$ and $\bar{\partial}$ have order $\frac{1}{2}$ and $1$, respectively.
\item $\bar{H}^{-1}$, $a_0$, $a_1$, and $a_2$ and, more generally, smooth functions of $(r,v)$ not vanishing at $r=0$, have order $0$. (The order of such smooth functions is defined in terms of the order of the leading term in a Taylor expansion about $r=0$, being order $0$ if the term is constant different than zero).
\item The order of a multi-linear expression is defined as the sum of the order of its factors.
\end{itemize}
\end{definition}

With these conventions, all terms in \eqref{E:Diagonal_Euler_system_sound_speed} have order $-\frac{1}{2}$ except the last term that has order\footnote{The fact that the last terms in \eqref{E:Diagonal_Euler_system_sound_speed} has lower order than the remaining ones is
related to the fact that the last term has enough weight to spare, which is why in our scaling analysis 
in Sect.~\ref{S:Scaling} we treated such a term as a perturbation.} $-1$, and all terms in \eqref{E:Diagonal_Euler_system_velocity} have order zero. Upon successive differentiation of any multi-linear expression with respect to $D_t$ or $\bar{\partial}$, all terms produce the same (highest) order, unless some of these derivatives fall on the coefficients, in which case lower order terms are produced.

The basic idea is that terms of high order in our scheme are the ``dangerous'' ones. This is, in part, because such terms are the ones with more derivatives. As in unweighted estimates, such terms with more derivatives are the ones we have to care the most about. \emph{Unlike} unweighted estimates, however, it is not only the number of derivatives per se that matters but the delicate balence of derivatives and weights (e.g., a terms that is not top order in the number of derivatives but has no weights typically cannot be controlled). More derivatives require more weights, thus powers of $r$ are good and decrease the order of an expression. Hence, a higher order term indicates a term with lots of derivatives, few powers of $r$, or both, and this is why the higher the order of a term the more care is needed to control it. Since more derivatives require more wights, powers of $r$ are good, what is reflected in our scheme since powers of $r$ decrase the order of an expression. We also note that a $D_t$ derivative is better than a $\bar{\partial}$ derivative because, upon using \eqref{E:Diagonal_Euler_system} to 
successively solve for $D_t^k (r,v)$ in terms of $\bar{\partial}$ derivatives we gain powers of $r$ in view of \eqref{E:D_t_r_gains_r_using_eq_moving_domain}. This is why $D_t$ has lower order than $\bar{\partial}$. Finally, $r$ has lower order than $v$ because it requires less weight than $v$ in $\Hspace^{2N}$ (recall Definition \ref{D:Weighted_space_diagonal}).

\begin{remark}
It might not be immediate from \eqref{E:Weighted_Sobolev}, which is used to define $\Hspace^{2N}$ in \eqref{E:Weighted_space_diagonal}, why higher derivatives require more weights, since all terms in
the sum \eqref{E:Weighted_Sobolev} have the same weights. However, terms with fewer derivatives and less weights can be controlled by norms involving higher derivatives and higher powers of the weights, as long as the number of derivatives and powers of the weights allow us to apply Hardy-type weighted embeddings, see \eqref{E:Weighted_embedding}. In fact, in \cite{Disconzi-Ifrim-Tataru-2022}, a norm equivalent to $\Hspace^{2N}$, where terms with fewer derivatives are paired with smaller powers of the weights, is also used. We do not introduce this equivalent norm here for the sake of brevity.
\end{remark}

\begin{remark}
\label{R:Order_different_meaning}
We stress that in what follows, references to the order of a term is in accordance with Definition \ref{D:Order_multi-linear} and \emph{not} in the traditional sense of the order as given by the number of derivatives.
\end{remark}

Using Eqs.~\eqref{E:Diagonal_Euler_system} to successively solve for $D_t(r,v)$ in terms of $\bar{\partial}(r,v)$, we obtain from \eqref{E:Good_linear_variables} that $(s_{2\ell},w_{2\ell})$ is a linear combination of multi-linear expressions in $r,\bar{\partial}^k r, \bar{\partial}^k v$ with zero-order coefficients. It is useful to record here the structure of the top-linear-in-derivatives\footnote{Which refers only to the terms with a maximum number of derivatives and does not include all terms of maximum order in the sense of Definition \ref{D:Order_multi-linear}, see Remark \ref{R:Order_different_meaning}.} terms obtained by this procedure:
\begin{align}
\begin{split}
D_t^{2\ell} r & \approx r^\ell \bar{\partial}^{2\ell} r + r^{\ell+1} \partial^{2\ell} v \approx 
r^\ell \bar{\partial}^{2\ell} r
\\
\text{order: } \,\, \ell-1 & \approx (\ell-1) + (\ell-\frac{3}{2}) \approx \ell -1
\\
D_t^{2\ell} v & \approx r^\ell \bar{\partial}^{2\ell} v + r^{\ell} \partial^{2\ell} vr\approx 
r^\ell \bar{\partial}^{2\ell} v
\\
\text{order: } \,\, \ell-\frac{1}{2} & \approx (\ell-\frac{1}{2}) + (\ell-1) \approx \ell -\frac{1}{2}
\end{split}
\label{E:Orders_top_derivatives}
\end{align}
where below each expression we have written the order (in the sense of Definition \ref{D:Order_multi-linear}) of the corresponding terms, and in the second $\approx$ on each line we kept only the term of highest order.

Taking the $\Hspace^0$-norm and recalling Definition \ref{D:Weighted_space_diagonal}, \eqref{E:Orders_top_derivatives} suggests that
\begin{align}
\norm{D_t^{2\ell} (r,v)}_{\Hspace^0} \approx \norm{(r,v)}_{\Hspace^{2\ell}}.
\label{E:Suggestion_material_derivative_norm}
\end{align}
We already know, however, from the initial discussion in Sect.~\ref{S:Estimates_solutions_diagonal} that 
$D_t^{2\ell} (r,v)$ are not the correct variables for the energy estimates and \eqref{E:Equivalence_energy_norm}, which we will prove next, shows that the correct equivalence with 
$\norm{(r,v)}_{\Hspace^{2\ell}}$ is given by considering $\norm{(s_{2\ell}, w_{2\ell})}_{\Hspace^0}$, 
which includes corrections to $D_t^{2\ell} (r,v)$. We will see this more explicitly in Sect.~\ref{S:Weighted_elliptic_estimates}. Yet, in the spirit of Remark \ref{R:Good_variables_material_derivatives}, the reader should keep \eqref{E:Suggestion_material_derivative_norm} as an heuristic guide for \eqref{E:Equivalence_energy_norm}.

Another ingredient we will need are some powerful weighted interpolation theorems proven in \cite{Ifrim-Tataru-2024}.

\begin{lemma}
\label{L:Weighted_interpolation}
The following inequalities hold\footnote{We state the inequalities in three spatial dimensions, which explains the explicit appearance of $3$ in some of the formulas.}.
\begin{align}
\norm{ r^{\sigma_j} \bar{\partial}^j \mathsf{f} }_{L^{p_j}} \lesssim
\norm{r^{\sigma_0} \mathsf{f} }_{L^{p_0}}^{1-\theta_j}
\norm{r^{\sigma_m} \bar{\partial}^m \mathsf{f} }_{L^{p_m}}^{\theta_j}
\nonumber
\end{align}
for $1\leq p_j, p_m \leq \infty$, $\theta_j = \frac{j}{m}$, $\frac{1}{p_j} = \frac{1-\theta_j}{p_0}
+ \frac{\theta_j}{p_m}$, $\sigma_j = \sigma_0(1-\theta_j) + \sigma_m \theta_j$,
$m-\sigma_m - 3(\frac{1}{p_m} - \frac{1}{p_0}) > - \sigma_0$, $\sigma_j > -\frac{1}{p_j}$, 
$0 < j < m$, $\sigma_0, \sigma_m \in \mathbb{R}$.

\begin{align}
\norm{ r^{\sigma_j} \bar{\partial}^j \mathsf{f} }_{L^{p_j}} \lesssim
\norm{\mathsf{f} }_{L^{\infty}}^{1-\theta_j}
\norm{r^{\sigma_m} \bar{\partial}^m \mathsf{f} }_{L^{2}}^{\theta_j}
\nonumber
\end{align}
for $\theta_j = \frac{j}{m}$, $\frac{1}{p_j} = \frac{\theta_j}{2}$, $\sigma_j = \sigma_m \theta_j$,
$m-\sigma_m - \frac{3}{2} > 0$,  
$0 < j < m$, $\sigma_m > -\frac{1}{2}$.

\begin{align}
\norm{ r^{\sigma_j} \bar{\partial}^j \mathsf{f} }_{L^{p_j}} \lesssim
\norm{ \mathsf{f} }_{\dot{C}^\frac{1}{2}}^{1-\theta_j}
\norm{r^{\sigma_m} \bar{\partial}^m \mathsf{f} }_{L^{p_2}}^{\theta_j}
\nonumber
\end{align}
for $\theta_j = \frac{2j-1}{2m-1}$, $\frac{1}{p_j} = \frac{\theta_j}{2}$, 
$\sigma_j = \sigma_m \theta_j$,
$m-\frac{1}{2}-\sigma_m - \frac{3}{2} > 0$,
$0 < j < m$, $\sigma_m > -\frac{1}{2}$.

\begin{align}
\norm{ r^{\sigma_j} \bar{\partial}^j \mathsf{f} }_{L^{p_j}} \lesssim
\norm{ \mathsf{f} }_{\tilde{C}^\frac{1}{2}}^{1-\theta_j}
\norm{r^{\sigma_m} \bar{\partial}^m \mathsf{f} }_{L^{p_2}}^{\theta_j}
\nonumber
\end{align}
for $\theta_j = \frac{j}{m}$, $\frac{1}{p_j} = \frac{\theta_j}{2}$, 
$\sigma_j = \sigma_m \theta_j-\frac{1}{2}(1-\theta_j)$,
$m-\frac{1}{2}-\sigma_m - \frac{3}{2} > 0$,
$0 < j < m$, $\sigma_m > \frac{m-2}{2}$

Recall that $\tilde{C}^\frac{1}{2}$ is defined in \eqref{E:C_tilde_half_norm} and $\dot{C}^\frac{1}{2}$ is the H\"older semi-norm. 
\end{lemma}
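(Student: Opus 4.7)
Proof proposal. The four inequalities form a family of weighted Gagliardo--Nirenberg type estimates, so the plan is to establish the first (most general) one, and then recover the others by limiting or substitution arguments that exploit the characterizations of $L^\infty$ and of the H\"older seminorms $\dot{C}^{1/2}$, $\tilde{C}^{1/2}$. The dimensional/scaling balance $\sigma_j = (1-\theta_j)\sigma_0 + \theta_j \sigma_m$, $\frac{1}{p_j} = \frac{1-\theta_j}{p_0} + \frac{\theta_j}{p_m}$, and $\theta_j = j/m$, suggests the standard bootstrap from the base case $(j,m) = (1,2)$.

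First I would prove the base case $j=1$, $m=2$ with $p_0 = p_m = 2$, $\sigma_0, \sigma_m$ arbitrary subject to $\sigma_1 > -1/2$ and $1 - \sigma_m - 3(\tfrac{1}{2} - \tfrac{1}{2}) > -\sigma_0$, i.e.\ $\sigma_0 + \sigma_m > -1$. Writing $2\sigma_1 = \sigma_0 + \sigma_m$ and integrating by parts in $\Md_t$ (keeping in mind Remark~\ref{R:No_boundary_terms}, since the weight $r$ vanishes on $\Fb_t$ at a rate that kills boundary terms once $\sigma_1 > -1/2$),
\begin{align*}
\int_{\Md_t} r^{2\sigma_1} |\bar{\partial} \mathsf{f}|^2
&= -\int_{\Md_t} r^{2\sigma_1} \mathsf{f}\, \bar{\partial}^2 \mathsf{f}
- 2\sigma_1 \int_{\Md_t} r^{2\sigma_1 - 1} \bar{\partial} r \cdot \mathsf{f}\, \bar{\partial}\mathsf{f}.
\end{align*}
Cauchy--Schwarz on the first term gives the desired contribution $\|r^{\sigma_0}\mathsf{f}\|_{L^2} \|r^{\sigma_m}\bar{\partial}^2 \mathsf{f}\|_{L^2}$. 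The second term is bounded, via Cauchy--Schwarz and a weighted Hardy inequality (which is exactly available under $\sigma_1 > -1/2$ together with the condition on $\sigma_0, \sigma_m$) by a small multiple of $\int r^{2\sigma_1}|\bar{\partial}\mathsf{f}|^2$ plus the good right-hand side; absorbing gives the base inequality.

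Next, I would extend to general $p_0, p_m \in [1,\infty]$ and unequal exponents by replacing $|\bar{\partial}\mathsf{f}|^2$ with $|\bar{\partial}\mathsf{f}|^{p_1}$ for appropriate $p_1$, performing an analogous integration by parts (now writing one factor of $|\bar{\partial}\mathsf{f}|^{p_1-2}\bar{\partial}\mathsf{f}$ and differentiating the remaining piece), and applying H\"older's inequality with the exponents dictated by scaling; the constraints $\frac{1}{p_j} = \frac{1-\theta_j}{p_0} + \frac{\theta_j}{p_m}$ and $\sigma_j = (1-\theta_j)\sigma_0 + \theta_j \sigma_m$ are precisely what is needed for H\"older to close. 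Then I would iterate: knowing the inequality for $(j,m) = (1,2)$, the general $(j,m)$ case with $0 < j < m$ follows by writing $\bar\partial^j \mathsf{f}$ as a mixed derivative and bootstrapping on $m$, using the previous step to trade one derivative at a time while the scaling constraints on $\sigma_j, p_j$ are preserved. The $L^\infty$ case is then obtained by taking $p_0 \to \infty$ in the first inequality (one must check uniformity of constants, which follows from the Hardy step above).

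The H\"older cases require a separate ingredient. For $\dot{C}^{1/2}$, I would use the pointwise characterization and a localized version of the base inequality on balls $B(x,\rho) \cap \Md_t$: decomposing $\bar{\partial}\mathsf{f}$ via a dyadic partition of unity adapted to the distance to $\Fb_t$ and using that $[\mathsf{f}]_{\dot{C}^{1/2}} \approx \sup_\rho \rho^{-1/2} \osc_{B(x,\rho)} \mathsf{f}$, one gains the exponent $\theta_j = (2j-1)/(2m-1)$ (rather than $j/m$) precisely because one derivative's worth of regularity is already encoded in the $\dot{C}^{1/2}$ seminorm. The $\tilde{C}^{1/2}$ case is analogous but uses the weaker definition~\eqref{E:C_tilde_half_norm}, which allows an additional factor of $r^{-1/2}$ near $\Fb_t$ and accounts for the extra $-\tfrac{1}{2}(1-\theta_j)$ in the exponent of $r$ and for the stronger lower bound $\sigma_m > (m-2)/2$.

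The main obstacle will be the H\"older cases, especially $\tilde{C}^{1/2}$, because the mixed weight/scale interplay near $\Fb_t$ forces a careful dyadic Whitney-type decomposition and one must verify that the local base-case interpolation inequality has constants independent of the dyadic scale; this is where the constraints $m - \tfrac{1}{2} - \sigma_m - \tfrac{3}{2} > 0$ and $\sigma_m > (m-2)/2$ enter in a sharp way. For the full proof one would follow \cite{Ifrim-Tataru-2020-arxiv}, where these estimates are systematically established.
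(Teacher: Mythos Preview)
The paper does not give a proof of this lemma; it is stated as an ingredient and attributed to \cite{Ifrim-Tataru-2020-arxiv} (``some powerful weighted interpolation theorems proven in \cite{Ifrim-Tataru-2020-arxiv}''). So there is no proof in the paper to compare against, and your own closing remark already points to the same reference for the full argument.

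Your sketch follows the standard Gagliardo--Nirenberg template (base case $(j,m)=(1,2)$ via integration by parts and Hardy, then H\"older and iteration), which is indeed how such weighted interpolation inequalities are typically built. A couple of points to be careful about if you were to carry this out in detail: in your base-case integration by parts the commutator term carries $r^{2\sigma_1-1}\bar\partial r$, and since $\bar\partial r = O(1)$ near $\Fb_t$ this is genuinely more singular than $r^{2\sigma_1}$; the Hardy step that absorbs it needs the full strength of the weighted embedding \eqref{E:Weighted_embedding}, not just $\sigma_1>-\tfrac12$. Also, the boundary-term vanishing you invoke via Remark~\ref{R:No_boundary_terms} is safe only when the total power of $r$ is strictly positive, so for small or negative $\sigma_1$ one typically argues by density from compactly supported functions rather than by direct integration by parts. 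Finally, your treatment of the $\dot C^{1/2}$ and $\tilde C^{1/2}$ endpoints is the right idea (localize, use a Whitney-type decomposition adapted to $r$), but the scale-uniformity you flag as the main obstacle is exactly where the work lies and is not something one can wave through; this is the part where following \cite{Ifrim-Tataru-2020-arxiv} directly is the honest route.
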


We will also make silent use of the following Hardy-type embedding,
\begin{align}
\label{E:Weighted_embedding}
H^{N_1,\sigma_1} \subset H^{N_2,\sigma_2},
\end{align}
for $N_1 > N_2 \geq 0$, $\sigma_1 > \sigma_2 > -\frac{1}{2}$, and $N_1 - N_2 = \sigma_1 - \sigma_2$.

We are finally ready to establish \eqref{E:Equivalence_energy_norm}. We start with the $\lesssim$ part.

Consider the definition of $(s_{2\ell},w_{2\ell})$ given by \eqref{E:Good_linear_variables}. Using \eqref{E:Diagonal_Euler_system}, we successively solve for material derivatives in terms of spatial derivatives, expressing $(s_{2\ell},w_{2\ell})$ as a linear combination of multi-linear expressions
in $r,\bar{\partial}^k r, \bar{\partial}^k v$, $0\leq k \leq 2\ell$, with coefficients that are smooth functions of $(r,v)$ of order zero. In order to simplify the analysis, we proceed in steps, supposing first that in this procedure we ignore the last term in \eqref{E:Diagonal_Euler_system_sound_speed} (which, we recall, has lower order than the remaining terms). Moreover, let us also first consider 
the case that whenever we commute $D_t$ with $\bar{\partial}$ (which is needed for this procedure of solving for material derivatives in terms of spatial derivatives), all derivatives fall on $v$ and not on $r$ (recall that $D_t$ involves $r$ via $v^0$). Finally, let us also focus at first only on the terms that are top order. In this case, the corresponding multi-linear expressions $s_{2\ell}$ and $w_{2\ell}$
have the following properties:
\begin{itemize}
\item They have orders $\ell-1$, $\ell-\frac{1}{2}$, respectively.
\item They have exactly $2\ell$ derivatives.
\item They contain at most $\ell+1$, $\ell$, factors of $r$, respectively.
\end{itemize}

For $s_{2\ell}$, the corresponding multi-linear expressions with the above properties can be written as
\begin{align}
r^a \prod_{j=1}^J \partial^{n_j} r \prod_{l=1}^L \partial^{n_l} v,
\nonumber
\end{align}
where $n_j, m_l \geq 1$, 
\begin{align}
\sum_{j=1}^J n_j + \sum_{l=1}^L n_l = 2\ell,
\nonumber
\end{align}
and
\begin{align}
a + J + \frac{L}{2} = \ell + 1.
\nonumber
\end{align}
When $J=0$ or $L=0$ the corresponding produce is absent.

With a bit of algebraic manipulations, we can show that these constraints imply that we can choose $b_j$ and $c_l$ such that 
\begin{align}
\begin{split}
& 0 \leq b_j \leq (n_j-1) \frac{\ell}{2\ell -1}, \\
& 0 \leq c_l \leq (m_l-1) \frac{\ell+\frac{1}{2}}{\ell - \frac{1}{2}},\\
& a = \sum_{j=1}^J b_j + \sum_{l=1}^L c_l.
\end{split}
\nonumber
\end{align}
With these choices, a cumbersome but not difficult inspections shows that we can invoke Lemma \ref{L:Weighted_interpolation} to obtain
\begin{align}
\begin{split}
\norm{r^{b_j} \partial^{n_j} r }_{L^{p_j}(r^\frac{1-\kappa}{\kappa})} 
& \lesssim (1+A)^{1-\frac{2}{p_j}} \norm{ (r,v) }^\frac{2}{p_j}_{\Hspace^{2\ell}}
\\
\norm{r^{c_l} \partial^{m_l} r }_{L^{p_l}(r^\frac{1-\kappa}{\kappa})} 
& \lesssim (1+A)^{1-\frac{2}{q_l}} \norm{ (r,v) }^\frac{2}{q_l}_{\Hspace^{2\ell}},
\end{split}
\nonumber
\end{align}
where
\begin{align}
\begin{split}
\frac{1}{p_j} & = \frac{n_j -1 - b_j}{2(\ell-1)},
\\
\frac{1}{q_l} & = \frac{m_l -\frac{1}{2} - c_l}{2(\ell-1)},
\end{split}
\nonumber
\end{align}
and
\begin{align}
\norm{ \mathsf{f} }^p_{L^p(r^\frac{1-\kappa}{\kappa})} := \int_{\Md_t} |\mathsf{f}|^p \,r^\frac{1-\kappa}{\kappa} \, dx.
\nonumber
\end{align}
(Observe that the numerators in $\frac{1}{p_j}$ and $\frac{1}{q_l}$ correspond to the orders of the expressions being estimated and add to $\ell-1$, as needed.) This gives the desired estimate for the top-order terms in $s_{2\ell}$ under the simplifications mentioned above. The remaining terms in $s_{2\ell}$, not considered under the above simplifications, are analyzed in a similar fashion. In fact, they are easier as they are lower-order and thus require less care to apply the Lemma \ref{L:Weighted_interpolation}. A similar analysis can be done for $w_{2\ell}$. This concludes the $\lesssim$ part of the proof of \eqref{E:Equivalence_energy_norm}.

We next move to the $\gtrsim$ part. This part of the proof is more delicate and is done with the help of elliptic estimates. It is also of interest on its own in view of the weighted elliptic estimates we will derive and thus we present it in the next section.

\subsubsection{Weighted elliptic estimates\label{S:Weighted_elliptic_estimates}} In this section we establish the $\gtrsim$ part of \eqref{E:Equivalence_energy_norm}. We begin with the following relations satisfied by $(s_{2\ell},w_{2\ell})$
\begin{subequations}{\label{E:Transition_relations}}
\begin{align}
s_{2\ell} & = L_1 s_{2\ell - 2} + F_{2\ell},
\label{E:Transition_relations_s}
\\
w_{2\ell} &= L_2 w_{2\ell -2 } + H_{2\ell},
\label{E:Transition_relations_w}
\end{align}
\end{subequations}
where 
\begin{subequations}{\label{E:Transition_operators}}
\begin{align}
L_1 s & := a_2 (\bar{H}^{-1})^{ij} ( r \partial_i \partial_j s + \frac{1}{\kappa} \partial_i r \partial_j s),
\label{E:L_1}
\\
(L_2 w)_i &:= a_2 (\bar{H}^{-1})^{pq} ( \partial_i (r \partial_p w_q) + \frac{1}{\kappa} \partial_p r \partial_i w_q ),
\label{E:L_2}
\end{align}
\end{subequations}
and $(F_{2\ell},H_{2\ell})$ are error terms. Let us elaborate on \eqref{E:Transition_relations} and 
\eqref{E:Transition_operators}. Commuting $D_t$ through \eqref{E:Diagonal_Euler_system_sound_speed} and using \eqref{E:Diagonal_Euler_system_velocity},
\begin{align}
D_t^2 r + r (\bar{H}^{-1})^{ij} \partial_i \underbrace{D_t v_j}_{\mathclap{=-a_2 \partial_j r}} & = \dots 
\nonumber
\end{align}
so that, schematically,
\begin{align}
D_t^2 r - r \bar{\partial}^2 r = \dots
\nonumber
\end{align}
This, as discussed in our sketch of the proof of Theorem \ref{T:LWP_vacuum_bry}, is a wave equation for $r$. Taking further material derivatives and proceeding inductively,
\begin{align}
D_t^{2\ell} r - r \bar{\partial}^2 D_t^{2\ell-2} r = \dots
\nonumber
\end{align}
In the spirit of Remark \ref{R:Good_variables_material_derivatives}, we ignore the correction terms in the definition of $s_{2\ell}$ for a moment, in which case we have
\begin{align}
s_{2\ell} - r \bar{\partial}^2  s_{2\ell-2} = \dots
\nonumber
\end{align}
which has the same derivative counting as \eqref{E:Transition_relations_s}, with $s_{2\ell}$ being equal to a second-order spatial operator acting on $s_{2\ell-2}$ up to an error term indicated here by $\dots$. This second-order operator is degenerate due to the $r$ factor.

Considering now the correct form of $s_{2\ell}$ with the correction term given by \eqref{E:s_ell}, taking material derivatives and using \eqref{E:Diagonal_Euler_system}, we arrive at \eqref{E:Transition_relations_s}, with the second-order operator in question given by \eqref{E:L_1} and all the remaining terms collected in $F_{2\ell}$. A similar argument gives \eqref{E:Transition_relations_w}. 

\begin{remark}
Operators \eqref{E:Transition_operators} involve some first-order derivative terms which are not uniquely determined by the above procedure in that we can consider different first-order derivative terms that could in principle be included in their definitions or relegated to the RHS in Eqs.~\eqref{E:Transition_relations}. The choice of first-order-in-derivative terms in these operators is carefully made in order to ensure their ellipticity properties shown below, while also guaranteeing the the terms on the RHS can indeed be proven to be perturbative. In this argument, one also uses that the last term on the LHS of \eqref{E:Diagonal_Euler_system_sound_speed} has order lower by $-\frac{1}{2}$ than the remaining terms. Observe that $L_1, L_2 \sim r \Delta$, in accordance with the outline of the proof of Theorem \ref{T:LWP_vacuum_bry}.
\end{remark}

We have set a up a hierarchy of good linear variables $(s_{2\ell}, w_{2\ell})$, $\ell=0,\dots, N$, which satisfy 
the linearized system with source terms, leading to the energy \eqref{E:Energy_good_linear_variables}. The operators $L_1$ and $L_2$ connect the good linear variables at different levels in our hierarchy, expressing them at level $2\ell$ in terms of their counter-parts at level $2\ell-2$, up to error terms. Because of this property, the operators \eqref{E:Transition_operators} are referred to as ``transition operators'' in \cite{Disconzi-Ifrim-Tataru-2022}.

We are now ready to establish the $\gtrsim$ part of \eqref{E:Equivalence_energy_norm}. The main tool will be the following \emph{weighted elliptic estimates} which we will establish,
\begin{subequations}{\label{E:Weighted_elliptic_estimates}}
\begin{align}
\norm{s}_{H^{2,\frac{1}{2\kappa} + \frac{1}{2}}} & \lesssim
\norm{L_1 s}_{H^{0,\frac{1}{2\kappa} - \frac{1}{2}}} + \norm{s}_{L^2(r^\frac{1-\kappa}{\kappa})},
\label{E:Weighted_elliptic_estimate_s}
\\
\norm{w}_{H^{2,\frac{1}{2\kappa} + 1}} & \lesssim
\norm{L_2 w}_{H^{0,\frac{1}{2\kappa}}} + \norm{w}_{L^2(r^{\frac{1-\kappa}{\kappa}+1})}.
\label{E:Weighted_elliptic_estimate_w}
\end{align}
\end{subequations}
Observe that at this point, \eqref{E:Weighted_elliptic_estimates} establishes general properties of the operators $L_1$ and $L_2$ without appeal to \eqref{E:Good_linear_variables}. After establishing  \eqref{E:Weighted_elliptic_estimates} we show how to apply it to $(s_{2\ell},w_{2\ell})$ to obtain \eqref{E:Equivalence_energy_norm}.

\begin{remark}
Strictly speaking, estimate \eqref{E:Weighted_elliptic_estimate_w} is not correct. Observe that the second-order-in-derivatives part of $L_2$ contains one derivative acting on the divergence of $w$, and as such controls only the divergence-part of $w$. This needs to be complemented by an estimate for the curl of $w$, as in standard div-curl estimates. This is done by estimating the transport part of the system but we do not do so here, see Remark \ref{R:Transport_estimates_implicit}.
\end{remark}

We will focus on \eqref{E:Weighted_elliptic_estimate_s}, with \eqref{E:Weighted_elliptic_estimate_w} proven by similar arguments. First, integration by parts in a standard elliptic fashion yields
\begin{align}
\norm{s}_{H^{2,\frac{1}{2\kappa} + \frac{1}{2}}} & \lesssim
\norm{L_1 s}_{H^{0,\frac{1}{2\kappa} - \frac{1}{2}}} + 
\norm{s}_{H^{1,\frac{1}{2\kappa} - \frac{1}{2}}}.
\label{E:Weighted_elliptic_estimate_weaker_s}
\end{align}
Thus, in view of \eqref{E:Weighted_elliptic_estimate_weaker_s}, it suffices to prove
\begin{align}
\norm{s}_{H^{1,\frac{1}{2\kappa} -\frac{1}{2}}} & \lesssim
\norm{L_1 s}_{H^{0,\frac{1}{2\kappa} - \frac{1}{2}}} + \norm{s}_{L^2(r^\frac{1-\kappa}{\kappa})},
\label{E:Weighted_elliptic_estimate_s_H1}
\end{align}
Compute
\begin{align}
\label{E:Cancellation_weighted_elliptic}
\begin{split}
\int_{\Md_t} r^\frac{1-\kappa}{\kappa} \partial_3 s L_1 s 
&=
\int_{\Md_t} r^\frac{1-\kappa}{\kappa} \partial_3 s
a_2 (\bar{H}^{-1})^{ij} ( r 
\overset{\mathclap{
	\textcolor{blue}{
		\substack{\text{integrate by parts}\\\uparrow}
	}
	}}{\textcolor{blue}{\partial_i}}
 \partial_j s + \frac{1}{\kappa} \partial_i r \partial_j s),
\\
&=
-\int_{\Md_t} r^\frac{1}{\kappa} a_2
\underbrace{\partial_3 \partial_i s (\bar{H}^{-1})^{ij} \partial_j s }_{\mathclap{=\frac{1}{2}(\bar{H}^{-1})^{ij} \partial_3 (\partial_i s \partial_j s)}}
\textcolor{red}{- \int_{\Md_t} \frac{1}{\kappa} r^\frac{1-\kappa}{\kappa} \partial_i r \partial_3 s 
(\bar{H}^{-1})^{ij} a_2 \partial_j w}
\\
& \ \ \ 
- \int_{\Md_t} r^\frac{1}{\kappa} \partial_3 s \partial_i (a_2 (\bar{H}^{-1})^{ij} ) \partial_j s
\textcolor{red}{\,
+ \int_{\Md_t}  r^\frac{1-\kappa}{\kappa} \partial_3 s (\bar{H}^{-1})^{ij} a_2 \frac{1}{\kappa} \partial_i r \partial_j s}
\\
& = 
-\frac{1}{2} \int_{\Md_t} r^\frac{1}{\kappa} a_2
(\bar{H}^{-1})^{ij} \partial_3 (\partial_i s \partial_j s)
- \int_{\Md_t} r^\frac{1}{\kappa} \partial_3 s \partial_i (a_2 (\bar{H}^{-1})^{ij} ) \partial_j s,
\end{split} 
\end{align}
where the terms in \textcolor{red}{red} cancel out and there are not boundary terms due to the presence of $r$ factors. We note that the first term in \textcolor{red}{red} comes from when $\partial_i$ falls on
$r^\frac{1-\kappa}{\kappa} r = r^\frac{1}{\kappa}$. This cancellation is needed because the terms in \textcolor{red}{red} are quadratic in $\bar{\partial} s$ (which we want to control from below by $L_1 s$ in \eqref{E:Weighted_elliptic_estimate_s_H1}) without any sign or smallness. Integrating $\partial_3$ by parts in the term $\partial_3 (\partial_i s \partial_j s)$ and using again that there are no boundary terms,
\begin{align}
\label{E:After_cancellation_weighted_elliptic}
\begin{split}
\int_{\Md_t} r^\frac{1-\kappa}{\kappa} \partial_3 s L_1 s 
& = 
\frac{1}{2\kappa} \int_{\Md_t}a_2 r^\frac{1-\kappa}{\kappa} \partial_3 r
(\bar{H}^{-1})^{ij} \partial_i s \partial_j s
+ \frac{1}{\kappa}\int_{\Md_t}r^\frac{1}{\kappa} \partial_3 (a_2 (\bar{H}^{-1})^{ij}) \partial_i s \partial_j s
\\
& \ \ \
- \int_{\Md_t} r^\frac{1}{\kappa} \partial_3 s \partial_i (a_2 (\bar{H}^{-1})^{ij} ) \partial_j s,
\end{split} 
\end{align}
Recall now that we can assume to be working on a neighborhood of a point $x_0 \in \partial \Md_t$ where
$\bar{\nabla} r(x_0) = \mathsf{N}(x_0)$, so that $|\bar{\nabla} r - \mathsf{N} | \leq A = O(\varepsilon) \ll 1$, 
where $A$ is given by \eqref{E:Control_norm_A}. We can arrange the coordinates such that $N(x_0) = (0,0,1) =: \mathsf{e}_3$, so that 
\begin{align}
\label{E:Key_coercive_term_weighted_elliptic}
\begin{split}
\frac{1}{2\kappa} \int_{\Md_t}a_2 r^\frac{1-\kappa}{\kappa} \partial_3 r
(\bar{H}^{-1})^{ij} \partial_i s \partial_j s
&= 
\frac{1}{2\kappa} \int_{\Md_t}a_2 r^\frac{1-\kappa}{\kappa} ( \partial_3 r - \mathsf{N}_3 )
(\bar{H}^{-1})^{ij} \partial_i s \partial_j s
\\
& \ \ \ 
+ \frac{1}{2\kappa} \int_{\Md_t}a_2 r^\frac{1-\kappa}{\kappa} \mathsf{N}_3 
(\bar{H}^{-1})^{ij} \partial_i s \partial_j s
\\
& \gtrsim - \varepsilon \int_{\Md_t} r^\frac{1-\kappa}{\kappa} |\bar{\partial} s|^2 
+  \int_{\Md_t} r^\frac{1-\kappa}{\kappa} 
(\bar{H}^{-1})^{ij} \partial_i s \partial_j s,
\end{split}
\end{align}
where $\mathsf{N}_3 $ is the third component of $\mathsf{N}$, we used that $\mathsf{N}_3  \gtrsim \text{constant} > 0$ in the neighborhood of $x_0$, and that 
that $a_2$ is bounded from below away from zero (see \eqref{E:a_2_bounded_from_below}).
This takes cares of the first integral on the RHS of \eqref{E:After_cancellation_weighted_elliptic}. For the remaining two integrals, we note that, like the integrals in \textcolor{red}{red} that cancel out in \eqref{E:Cancellation_weighted_elliptic}, they do not have a sign and are quadratic in $\bar{\partial} s$. However, \emph{unlike} those \textcolor{red}{red} terms in \eqref{E:Cancellation_weighted_elliptic}, the last two integrals
in \eqref{E:After_cancellation_weighted_elliptic} carry small terms in that we have a power of $r$ to spare, i.e.
\begin{align}
r^\frac{1}{\kappa} = r^\frac{1-\kappa}{\kappa} r = r^\frac{1-\kappa}{\kappa} O(\varepsilon),
\label{E:Power_of_r_to_spare_weighted_elliptic}
\end{align}
where we recall Remark \ref{R:Working_neighborhood_boundary}. Thus, combining 
\eqref{E:After_cancellation_weighted_elliptic}, \eqref{E:Key_coercive_term_weighted_elliptic}, and \eqref{E:Power_of_r_to_spare_weighted_elliptic},
\begin{align}
\int_{\Md_t} r^\frac{1-\kappa}{\kappa} \partial_3 s L_1 s \gtrsim
- \varepsilon \int_{\Md_t} r^\frac{1-\kappa}{\kappa} |\bar{\partial} s|^2 
+  \int_{\Md_t} r^\frac{1-\kappa}{\kappa} 
(\bar{H}^{-1})^{ij} \partial_i s \partial_j s.
\nonumber
\end{align}
Applying the Cauchy--Schwarz-with-epsilon inequality to the LHS and adding the $\norm{s}_{L^(r^\frac{1-\kappa}{\kappa})}$ to both sides produces \eqref{E:Weighted_elliptic_estimate_s_H1}, and hence \eqref{E:Weighted_elliptic_estimate_s} in view of \eqref{E:Weighted_elliptic_estimate_weaker_s}, as desired. We recall that the proof of \eqref{E:Weighted_elliptic_estimate_w} is obtained with similar ideas.

In order to finish the proof of \eqref{E:Equivalence_energy_norm}, we need to analyze the structure of the error terms $(F_{2\ell},H_{2\ell})$ in \eqref{E:Transition_relations} and show how the weighted elliptic estimates \eqref{E:Weighted_elliptic_estimates} are in practice applyed to $(s_{2\ell},w_{2\ell})$.
We begin with $(F_{2\ell},H_{2\ell})$.

The key estimate we need for the error terms $(F_{2\ell},H_{2\ell})$ is the inequality
\begin{align}
\label{E:Estimate_error_terms_weighted_elliptic}
\norm{(F_{2\ell},H_{2\ell})}_{\Hspace^{2N - 2\ell}} \lesssim \varepsilon 
\norm{(r,v)}_{\Hspace^{2N}}.
\end{align}
To prove \eqref{E:Estimate_error_terms_weighted_elliptic}, 
analyzing the expressions for $(F_{2\ell},H_{2\ell})$ reveals that the terms $F_{2\ell}$ and $H_{2\ell}$ have the following properties:
\begin{itemize}
\item They are linear combinations of multi-linear expressions in $r,\bar{\partial}^k r, \bar{\partial}^k v$ with coefficients that are order-zero smooth functions of $(r,v)$.
\item They have $2\ell$ derivatives and are of order at most $\ell-1$ and $\ell-\frac{1}{2}$, respectively.
\end{itemize}
Moreover, they either:
\begin{itemize}
\item  Have order  $\ell-1$ and $\ell-\frac{1}{2}$, respectively, and contain at most $\ell+1$, $\ell$, factors of $r$, respectively, and their corresponding products contain at least two factors of $\bar{\partial}^{\geq 2} r$ or $\bar{\partial}^{\geq 1} v$,
\end{itemize}
or 
\begin{itemize}
\item Have order strictly less than  $\ell-1$ and $\ell-\frac{1}{2}$, respectively, and contain at most
$\ell+2$, $\ell+1$ factors of $r$, respectively.
\end{itemize}

We will not establish this characterization of $(F_{2\ell},H_{2\ell})$ here, referring to Lemma 5.2 in \cite{Disconzi-Ifrim-Tataru-2022}. We note, however, that these facts about of $(F_{2\ell}, H_{2\ell})$ are not straightforward (for example, they do not follow by a simple derivative counting) and require observing some special cancellations.

With the above description of $(F_{2\ell},H_{2\ell})$ at hand, these terms are then estimated with the help of the weighted interpolation inequalities of Lemma \ref{L:Weighted_interpolation} akin to estimate for the $\lesssim$ part of \eqref{E:Equivalence_energy_norm} carried out in the previous section. In doing so, we need to observe that the interpolations can be carried out in such a way that each term in $(F_{2\ell},H_{2\ell})$ has at least one small factor. This is ensured by the above structure of the the error terms, which guarantees that upon interpolating, one always obtains either a term in $O(A)$ or a term with an extra (with respect to the norm being estimated) power of $r$, in such a way that we always obtained a term $O(\varepsilon)$ as indicated in \eqref{E:Estimate_error_terms_weighted_elliptic}.

In order to conclude the proof, we will show that
\begin{align}
\label{E:Concatenating_estimate}
\norm{ (s_{2\ell-2}, w_{2\ell-2} )}_{\Hspace^{2N-2\ell + 2}} \lesssim
\norm{ (s_{2\ell}, w_{2\ell} )}_{\Hspace^{2N-2\ell}}
+
\varepsilon 
\norm{(r,v)}_{\Hspace^{2N}}
\end{align}
for $1 \leq \ell \leq N$. Concatenating the estimates \eqref{E:Concatenating_estimate} from $\ell=1$ up to $\ell=N$ and using that $\varepsilon$ is small then produces the $\gtrsim$ part of \eqref{E:Equivalence_energy_norm}. (In this concatenation and the use of the above estimates, readers should be reminded of Assumption \ref{A:Modification_small_ell} for completeness.)

Thus, it remains to establish \eqref{E:Concatenating_estimate}. For $\ell=N$, this is simply
\eqref{E:Weighted_elliptic_estimates} and \eqref{E:Estimate_error_terms_weighted_elliptic}
applied to \eqref{E:Transition_relations}.
For $\ell < N$, we commute \eqref{E:Transition_relations} with $r^m \bar{\partial}^n$. For \eqref{E:Transition_relations_s}, this yields
\begin{align}
\label{E:Weighted_elliptic_estimate_commutator_1}
r^m \bar{\partial}^n s_{2\ell} = L_1 (r^m \bar{\partial}^n s_{2\ell -2 } ) +
[r^m \bar{\partial}^n , L_1] s_{2\ell -2 } + r^m \bar{\partial}^n F_{2\ell}.
\end{align}
At this point, one can choose $m$ and $n$ appropriately so that the following properties hold:
\begin{itemize}
\item $r^m \bar{\partial}^n F_{2\ell}$ can be analyzed invoking the structure of $F_{2\ell}$ described above and produces an estimate similar to \eqref{E:Estimate_error_terms_weighted_elliptic}.
\item The commutator term $[r^m \bar{\partial}^n , L_1] s_{2\ell -2 }$ can be decomposed as
\begin{align}
\label{E:Weighted_elliptic_estimate_commutator_2}
[r^m \bar{\partial}^n , L_1] s_{2\ell -2 } = n a_2 (\bar{H}^{-1})^{ij} \partial_i r \partial_j (r^m \bar{\partial}^n s_{2\ell-2}) + \tilde{F}_{2\ell},
\end{align}
where $\tilde{F}_{2\ell}$ is an error term that can be controlled via interpolation and produces an estimate similar to \eqref{E:Estimate_error_terms_weighted_elliptic}.
\end{itemize}

Thus, in view of \eqref{E:Weighted_elliptic_estimate_commutator_1} and \eqref{E:Weighted_elliptic_estimate_commutator_2}, recalling the definition of $L_1$ in \eqref{E:L_1}, and writing $\dots$ for error terms, we have
\begin{align}
r^m \bar{\partial}^n s_{2\ell} = \tilde{L}_1 (r^m \bar{\partial}^n s_{2\ell -2 } ) + \dots,
\nonumber
\end{align}
where
\begin{align}
\tilde{L}_1 s & := a_2 (\bar{H}^{-1})^{ij} \left( r \partial_i \partial_j s + 
( \frac{1}{\kappa} + n) \partial_i r \partial_j s \right).
\nonumber
\end{align}
This is an operator with similar structure to $L_1$ and a similar argument as in the proof of 
\eqref{E:Weighted_elliptic_estimate_s} establish an elliptic estimate giving  \eqref{E:Concatenating_estimate} on the first component (taking into account the suitable values of $m$ and $n$ that are chosen); the argument for the second component (i.e., for $w_{2\ell}$) is similar. (Observe that in the case $\ell = N$ we do not need to commute with $r^m \bar{\partial}^n$, so that $m=n=0$ and $\tilde{L}_1 = L_1$, as expected). This finishes the proof of \eqref{E:Equivalence_energy_norm}.

\begin{remark}
Readers will notice that our use of weighted elliptic estimates here was slightly different than how such estimates were advertised in our sketch of the proof of Theorem \ref{T:LWP_vacuum_bry}. There, we invoked elliptic estimates in the middle of the argument for establishing energy estimates, whereas here they are used to establish the coercivity of the energy \eqref{E:Energy_good_linear_variables}. But this is a mere change of perspective. In both cases, we are interested in controlling an energy constructed out of material derivatives of $(r,v)$ (with corrections terms, which have been ignored in the initial outline of the proof), and \emph{elliptic estimates are invoked to produce a bound from below showing that the energy controls the relevant norms.}
\end{remark}

\subsubsection{Energy estimates\label{S:Energy_estimates_solutions_diagonal}}
Having established \eqref{E:Equivalence_energy_norm}, we turn our attention to the energy estimate 
\begin{align}
\label{E:Energy_estimate_solutions}
\frac{d}{dt} E^{2N} \lesssim_A B \norm{ (r,v) }^2_{\Hspace^{2N}},
\end{align}
i.e., part (b) of Theorem \ref{T:Continuation_solutions_vacuum_bry}.

We have already hinted at the proof of \eqref{E:Energy_estimate_solutions}: we apply the energy estimates for the linearized system from Sect.~\ref{S:Energy_estimates_linearized} to \eqref{E:Good_linear_variables_system}. In order to close the estimate, we need to show that the RHS 
of \eqref{E:Good_linear_variables_system} is perturbative with respect to our norms, i.e., it  satisfies
the estimate
\begin{align}
\norm{(f_{2\ell},h_{2\ell})}_{\Hspace^0} \lesssim_A B \norm{(r,v)}_{\Hspace^{2\ell}}.
\label{E:Perturbative_terms_good_linear_variables_source}
\end{align}
The proof of \eqref{E:Perturbative_terms_good_linear_variables_source} involves ideas reminiscent to the proof of \eqref{E:Equivalence_energy_norm}. First, we need to understand the structure of the the 
terms $(f_{2\ell},h_{2\ell})$. The terms $(f_{2\ell},h_{2\ell})$ satisfy the following properties:
\begin{itemize}
\item They are linear combinations of multi-linear expressions in $(r,\bar{\partial}^k r, \bar{\partial}^k v)$ with coefficients that are smooth functions of $(r,v)$.
\item They have order at most $\ell-\frac{1}{2}$, $\ell$, respectively.
\item Each term in $(f_{2\ell},h_{2\ell})$ contains exactly $2\ell+1$ derivatives.
\item No single factor in $(f_{2\ell},h_{2\ell})$ has order larger than $\ell-1$, $\ell-\frac{1}{2}$, respectively.
\end{itemize} 

As in the characterization of the terms $(F_{2\ell}, H_{2\ell})$ in Sect.~\ref{S:Weighted_elliptic_estimates}, we note that the proof of these properties for $(f_{2\ell},h_{2\ell})$ is not straightforward (e.g., it does not follow by a simple derivative counting) and requires observing some cancellations.

Once the above features of $(f_{2\ell},h_{2\ell})$ have been established, the proof of \eqref{E:Perturbative_terms_good_linear_variables_source} follows by a judicious application of the weighted interpolation inequalities of Lemma \ref{L:Weighted_interpolation} in the same spirit as the proof of the $\lesssim$ part of \eqref{E:Equivalence_energy_norm} in Sect.~\ref{S:Energy_norm_bookkeeping}. We remark that special attention need to be paid when applying these interpolations in order to guarantee that we always obtain a factor linear in $B$, as in \eqref{E:Energy_estimate_solutions}.

\subsection{Remaining arguments\label{S:Remaining_LWP}}

So far we have discussed only energy estimates for solutions to \eqref{E:Diagonal_Euler_system}. Naturally, energy estimates are one of the cornerstones for the local well-posedness stated 
in Theorem \ref{T:LWP_vacuum_bry}. In this section, we briefly mention the remaining arguments needed to go from energy estimates to local well-posedness. Construction of solutions relies on a \emph{time discretization} which involves the following:

\begin{itemize}
\item A regularization step.
\item A transport step, i.e., an iteration of the boundary at each time step (where the boundary is iterated linearly with the given velocity at each step).
\item An application of an Euler's-method-type of iteration.
\end{itemize}

An interesting aspect is that when these steps are taken separately, all of them seem to be unbounded. However, when taken together, there is an extra cancellation that comes to rescue. This cancellation is a direct analogue of the key cancellation we observed for the linearized equations (involving a term coming from the linearization of the material derivative).

We use our energy estimates in order to control the iteration and guarantee convergence to a solution in the limit when the discretization parameter goes to zero. More precisely, the energy estimates will contain extra error terms coming from the fact that the discretized variables are only an approximate solution to the problem, but these error terms can be controlled within our functional framework. More importantly, the energy estimates are obtained for a continuous time variable upon taking $\frac{d}{dt}$ of the energy. Here, we have a discrete time variable. But the energy estimates for a continuous time variable can be translated into estimates for the variables at \emph{fixed time} by reinterpreting the operators $D_t^\ell$ as operators at fixed time which are obtained by reiterating the equations, solving for material derivatives in terms of spatial derivatives. With such control, we can then pass to the continuum limit. Continuous dependence on the data is proven with help of the above regularization.

Let us finally mention how uniqueness is established. Once again, the linearized equations play a crucial role. In order to establish uniqueness, we need to compare different solutions, but distinct solutions are in principle defined on different domains (see Remark \ref{R:Lagrangian_uniqueness} below). Given two solutions $(r_1,v_1)$ and $(r_2,v_2)$ defined on domains\footnote{Shrinking the time intervals if needed, we can assume both solutions to be defined on the same time interval $[0,T]$.} 
\begin{align}
\Md^1 := \bigcup_{0 \leq t < T} \{ t \} \times \Md^1_t
\, \text{ and } \,
\Md^2 := \bigcup_{0 \leq t < T} \{ t \} \times \Md^2_t,
\nonumber
\end{align}
respectively, set 
\begin{align}
\Md_t := \Md^1_t \cap \Md^2_t, \, \Gamma_t := \partial \Md_t.
\nonumber
\end{align}

Define the following distance functional to measure the distance between solutions:
\begin{align}
\label{E:Distance_functional}
\mathcal{D}((r_1,v_1), (r_2, v_2) )  := 
\int_{\Md_t} (r_1 + r_2)^\frac{1-\kappa}{\kappa} \left( 
(r_1 - r_2 )^2 + (r_1 + r_2) |v_1 - v_1|^2 \right) \, dx,
\end{align}
which is directly inspired by the energy for the linearized system, compare with \eqref{E:Weighted_energy_linearized}.

\begin{remark}
\label{R:Distance_functional_Euclidean}
From the estimate for the linearized system in Sect.~\ref{S:Energy_estimates_linearized}, it seems it would be more natural, in fact required, (see Remark \ref{R:Energy_H_not_Euclidean}) to measure the norm of the difference $|v_1 - v_2|$ with respect to the $\bar{H}^{-1}$ metric for $(r_1,v_1)$, $(r_2,v_2)$, or a combination of both. This will not be required, however, because the desired energy estimate will not be carried out directly for $\mathcal{D}((r_1,v_1), (r_2, v_2) )$, see below.
\end{remark}

We observe the following key property of \eqref{E:Distance_functional} for establishing uniqueness,
\begin{align}
\label{E:Distance_functional_zero}
\mathcal{D}((r_1,v_1), (r_2, v_2) )=0  \, \text{ if and only if } \, 
(r_1,v_1) = (r_2, v_2).
\end{align}

In view of \eqref{E:Distance_functional_zero}, uniqueness then follows from the following estimate
\begin{align}
\label{E:Energy_estimate_distance_functional}
\sup_{t\in [0,T]} \mathcal{D}((r_1,v_1)(t), (r_2, v_2)(t) )  \lesssim
\mathcal{D}((r_1,v_1)(0), (r_2, v_2)(0) ) .
\end{align}

We would like to establish \eqref{E:Energy_estimate_distance_functional} by a an energy estimates, computing $\frac{d}{dt}\mathcal{D}((r_1,v_1), (r_2, v_2) )$, canceling terms with help of integration by parts, and Gr\"onwalling. This does not seem possible, however, because the weight $r_1+r_2$ in \eqref{E:Distance_functional} does not necessarily vanish on $\Gamma_t$. Indeed, it will vanish only at points where both boundaries $\Md^1_t$ and $\Md^2_t$ coincide (since
$\Md^{1,2}_t := \{ r_{1,2} > 0 \}$). This difficulty is overcome by introducing another distance functional,
\begin{align}
\label{E:Distance_functional_degenerate}
\begin{split} 
\tilde{\mathcal{D}}((r_1,v_1), (r_2, v_2) )  & := 
\int_{\Md_t} (r_1 + r_2)^\frac{1-\kappa}{\kappa} \big ( 
\alpha (r_1 - r_2 )^2 
\\
& \ \ \ 
+ \beta (a_{21} + a_{22})^{-1} 
(\bar{H}^{-1}_{\text{mid}})^{ij} (v_1-v_2)_i (v_1-v_2)_j \big) \, dx.
\end{split}
\end{align}
In \eqref{E:Distance_functional_degenerate}, $\bar{H}^{-1}_{\text{mid}}$ is an average between $(\bar{H}^{-1})^{ij}$ 
for the solutions $(r_1,v_1)$ and $(r_2, v_2)$, i.e., \eqref{E:H_bar_inverse} with $r$ and $v$ replaced by $\frac{r_1+r_2}{2}$ and $\frac{v_1+v_2}{2}$, respectively; $a_{21}$ and $a_{22}$ are the coefficient $a_2$ corresponding to the solutions $(r_1,v_1)$ and $(r_2, v_2)$, respectively; most importantly, $\alpha$ and $\beta$ and functions of $r_1+r_2$ and $r_1-r_2$ that are \emph{carefully chosen to have the right vanishing rate near $\Gamma_t$ so that integration by parts can be carried out without producing uncontrollable boundary terms.} Readers should compare \eqref{E:Distance_functional_degenerate} with 
\eqref{E:Weighted_energy_linearized} and also note Remark
\ref{R:Distance_functional_Euclidean}.

The key properties of \eqref{E:Distance_functional_degenerate} are its equivalence with \eqref{E:Distance_functional_zero}, 
\begin{align}
\label{E:Distance_functionals_equivalence}
\tilde{\mathcal{D}}((r_1,v_1), (r_2, v_2) ) \approx
\mathcal{D}((r_1,v_1), (r_2, v_2) )
\end{align}
and the fact that it satisfies energy estimates
\begin{align}
\label{E:Distance_functional_degenerate_energy_estimate}
\frac{d}{dt} \tilde{\mathcal{D}}((r_1,v_1), (r_2, v_2) )
\lesssim 
\tilde{\mathcal{D}}((r_1,v_1), (r_2, v_2) ),
\end{align}
where the aforementioned choice of $\alpha$ and $\beta$ is crucial for the proof of \eqref{E:Distance_functional_degenerate_energy_estimate}.

Combining \eqref{E:Distance_functionals_equivalence} and \eqref{E:Distance_functional_degenerate_energy_estimate} then produces 
\eqref{E:Energy_estimate_distance_functional}.

\begin{remark}
Because one is comparing solutions defined in different domains, it seems that at some point an estimate for the distance between their boundaries will be needed. Indeed, the proof of \eqref{E:Energy_estimate_distance_functional} employs the following estimate
\begin{align}
\int_{\Gamma_t} |r_1 + r_2|^{\frac{1}{\kappa} + 2} \lesssim
\mathcal{D}((r_1,v_1), (r_2, v_2) ),
\nonumber
\end{align} 
which can be viewed as a measure of the (weighted) distance between the two boundaries.
\end{remark}

\begin{remark}
The proof of uniqueness relies on some delicate estimates that separate the behavior of solutions near and away from the boundary. Such estimates are borrowed from \cite{Ifrim-Tataru-2024}, where the classical, non-relativistic Euler equations with a physical vacuum boundary are treated.
\end{remark}

\begin{remark}
\label{R:Lagrangian_uniqueness}
A way of avoiding comparing solutions defined in different domains is to work in Lagrangian 
coordinates, as done in the a priori estimates obtained in 
\cite{Jang-LeFloch-Masmoudi-2016,Hadzic-Shkoller-Speck-2019}. 
While this is an advantage of the Lagrangian formalism, the use of Lagrangian coordinates has its own challenges, such as the need to obtain estimates for the co-factor matrix. Additionally, an Eulerian framework seems more suited for coupling to Einstein's equations, see Sect.~\ref{S:Open_problems}.
\end{remark}

\section{Relativistic fluids with viscosity}
\label{S:Relativistic_viscous_fluids}

So far, we have discussed only perfect fluids, i.e., fluids with no viscosity or dissipation\footnote{Strictly speaking, the relativistic Euler equations could exhibit anomalous dissipation \citep{Eyink:2017xtw}, but we do not discuss this here.}. Relativistic fluids with viscosity is a major topic of investigation in physics and we cannot do justice to its importance in this review, where our goal is to focus primarily on mathematical results. Thus, here we will restrict ourselves to simply mention two relativistic systems where inclusion of viscosity is important; a more complete discussion of the relevance of relativistic fluids with viscosity can be found in the references below. They are:
\begin{itemize}
\item The study of the quark-gluon plasma. The quark-gluon plasma is an exotic state of matter that forms in collisions of heavy ions performed at particle accelerators like the Relativistic Heavy Ion Collider (RHIC) at  Brookhaven National Laboratory and the Large Hadron Collider (LHC) at the European Organization for Nuclear Research (CERN). The quark-gluon plasma is the hottest\footnote{For example, the quark-gluon plasma reaches temperatures several order of magnitude higher than the temperature of the Sun.} and densest matter system we currently know. Thus, it offers a rare opportunity for scientists to study properties of matter at extreme conditions.
It is well-attested, through a combination of experiments, data analysis, and numerical simulations, that the quark-gluon plasma behaves as a relativistic liquid with viscosity, see
\cite{Denicol-Rischke-Book-2021,Romatschke:2017ejr,Rezzolla-Zanotti-Book-2013}.
\item Neutron star mergers. Recent state-of-the-art numerical simulations in \cite{Alford:2017rxf} strongly suggest that viscous and dissipative effects can affect the gravitational wave signal produced in collisions of neutron stars. These findings have been corroborated by a detailed study of the microscopic
origins of viscous effects in neutron stars in \cite{Most:2022yhe}. 
Such viscous and dissipative effects are estimated to be within the sensitivity-range of the next
generation of gravitational wave detectors, see \cite{Most:2018eaw,
Most:2019onn,Hammond:2022uua}. See the introduction of \cite{Chabanov:2023blf} for a short summary of the current state of affairs on the matter and 
\cite{Shibata:2017xht,Shibata:2017jyf,Alford:2020lla,Alford:2019kdw,Alford:2019qtm,Alford:2020pld,Most:2021zvc,Ripley:2023qxo,Camelio:2022ljs,Camelio:2022fds}
and references therein for further discussion on the importance of viscosity in neutron star mergers.
\end{itemize}

In particular, there has been recently a great deal of interest in the overlap between viscous effects on neutron star mergers and the quark-gluon plasma. This is leading to efforts to bring together different scientific communities working on these topics and a rich synergy among astrophysicists, nuclear physicists, and high-energy physicists, see, e.g., \citealt{Huth:2021bsp,Dexheimer:2020zzs,Huang:2022mqp}.

\begin{notation}
We will henceforth use the terms viscosity and dissipation interchangeably, e.g., referring to a viscous fluid or a dissipative fluid. This is a common practice in the community.
\end{notation}

\begin{remark}
In the ensuing discussion, we will introduce a significant amount of terminology that is specific to the study of relativistic fluids with viscosity. While we could present much of our mathematical results without referring to such terminology, we think it is useful to introduce it here in order to help readers understand some of the jargon used in the literature covering relativistic fluids with viscosity.
\end{remark}

The first task in the theoretical study of relativistic viscous fluids is to decide on an appropriate model. Unlike the case of a perfect fluid, there is no known Lagrangian for the description of a relativistic viscous\footnote{This is already the case for a classical fluid \cite[Chapter IV]{Serrin-1959}. See also the discussion in \cite{Dubovsky:2011sj}.} fluid, (see in particular the last bullet point at the end of this section). Absent a Lagrangian, one cannot canonically determine an energy-momentum tensor which would give equations of motion upon variation with respect to the metric.
A natural approach in this case is to modify  the perfect fluid energy-momentum tensor and baryon current by adding terms that represent viscous effects, i.e.,
\begin{align}
\label{E:Energy_momentum_viscous}
\mathcal{T}_{\alpha\beta} := (\varrho + \mathscr{R}) u_\alpha u_\beta
+ (p + \mathscr{P}) \proj_{\alpha\beta} + \pi_{\alpha\beta} + \mathscr{Q}_\alpha u_\beta + \mathscr{Q}_\beta u_\alpha,
\end{align}
and
\begin{align}
\label{E:Baryon_density_current_viscous}
\mathcal{J}_\alpha := n u_\alpha + \mathscr{J}_\alpha,
\end{align}
where $\varrho$, $u$, $p$, $n$, and $\proj$ are as in Sect.~\ref{S:Relativistic_Euler}, i.e., they are the fluid's (energy) density, velocity, pressure, baryon density, and projection onto the space orthogonal to $u$, respectively (see, however, the last bullet point at the end of this section).
As in Sect.~\ref{S:Relativistic_Euler}, it is assumed that $u$ is normalized
\begin{align}
g_{\alpha\beta} u^\alpha u^\beta = -1,
\label{E:Velocity_normalization_viscous}
\end{align}
that $p$ is given by an equation of state,
\begin{align}
p = p(\varrho,n),
\nonumber
\end{align}
and that the thermodynamic relations of Sect.~\ref{S:Thermodynamic_properties} hold. In particular, as in Sect.~\ref{S:Relativistic_Euler}, one can choose different thermodynamic quantities (e.g., the entropy, temperature) to be primary variables instead of $\varrho$ or $n$.

In comparison to \eqref{E:Energy_momentum_perfect} and \eqref{E:Baryon_density_current_perfect}, the new quantities
$\mathscr{R}$, $\mathscr{P}$, $\pi$, $\mathscr{Q}$, and $\mathscr{J}$ in \eqref{E:Energy_momentum_viscous} and \eqref{E:Baryon_density_current_viscous} are the \textdef{viscous fluxes,} and they correspond, respectively, to the \textdef{viscous correction to the energy density,} the
\textdef{viscous correction to the pressure,} also known as the \textdef{bulk viscosity}, the 
\textdef{viscous shear stress,} the \textdef{heat flux,} and the \textdef{viscous correction to the baryon current.} When all the viscous fluxes vanish, we recover \eqref{E:Energy_momentum_perfect} and \eqref{E:Baryon_density_current_perfect}.

The physical content of \eqref{E:Energy_momentum_viscous} and \eqref{E:Baryon_density_current_viscous} is that the viscous fluxes describe deviations from \textdef{thermodynamic equilibrium.} In our context, 
thermodynamic equilibrium simply refers to any solution to the relativistic Euler equations\footnote{Often solutions to the Euler equations are called local thermodynamic equilibrium solutions to emphasize the difference from global thermodynamic equilibrium, see Definition \ref{D:Stability}.}. Such solutions do not generate entropy, see \eqref{E:Entropy_transported}\footnote{To be more precise, entropy production is measured by the divergence of the so-called entropy current, which for the relativistic Euler equations is given by $\mathcal{S}_\mu := s n u_\mu$, whose divergence vanishes in view of \eqref{E:Entropy_transported}, \eqref{E:Baryon_density_current_perfect}, and \eqref{E:Relativistic_Euler_eq_full_system_baryon_charge}.}. With the inclusion of viscosity, however, the fluid should dissipate and produce entropy. Because of this, descriptions based on relativistic viscous fluids are also often referred to as \textdef{out of equilibrium} (meaning, out of thermodynamic equilibrium), and ``out of equilibrium'' is also often used interchangeably with viscosity and dissipation.

It is important to remark that in writing \eqref{E:Energy_momentum_viscous} and \eqref{E:Baryon_density_current_viscous} as a sum of would-be thermodynamic equilibrium 
and out-of-equilibrium terms, one is \emph{making the assumption that the dissipative properties of the fluid can be neatly distinguished from those of a perfect fluid} (see also the last bullet point at the end of this section).

\begin{remark}
\label{R:Viscosity_small}
It is often implicitly assumed in theories of relativistic viscous fluids that the viscous fluxes are in a sense small compared to the underlying perfect-fluid contributions, in a sense that one hopes to be able to precisely quantify. On the other hand, it is not uncommon for physicists to be faced with situations where they need to push relativistic viscous fluid dynamics to its limits, applying it to regimes that might be formally outside its regime of validity. See \cite{Romatschke:2017ejr,Heinz:2013th} and references therein.
\end{remark}

\begin{remark}
Based on how one often defines energy and pressure in terms of a given energy-momentum tensor
(see, e.g., Sect.~4.2 of \citealt{Wald:1984rg}) it is customary in the physics literature to refer to $\varrho + \mathscr{R}$ as the density and to $p+\mathscr{P}$ as the pressure, referring to $\varrho$ and $p$ as the equilibrium density and equilibrium pressure, respectively. Similarly, one would refer to $s$, $\uptheta$, as the equilibrium entropy, temperature, etc. Sometimes one also calls 
$\varrho + \mathscr{R}$ and $p+\mathscr{P}$ the total density and pressure, respectively. We will not follow these conventions here.
\end{remark}

In order to define a theory of relativistic viscous fluids, one needs to specify the viscous fluxes\footnote{See the review 
\cite{Rocha:2023ilf} for comprehensive outlook of most theories of relativistic viscous fluids. Here, we review only those theories for which good mathematical properties are known; see the ensuing discussion.
}. 
The equations of motion will then be, as usual, 
\begin{subequations}{\label{E:Divergence_energy_momentum_baryon_current_viscous}}
\begin{align}
\nabla_\alpha \mathcal{T}^\alpha_\beta &= 0,
\label{E:Divergence_energy_momentum_viscous}
\\
\nabla_\alpha \mathcal{J}^\alpha & = 0.
\label{E:Divergence_baryon_current_viscous}
\end{align}
\end{subequations}

The first proposal in this direction was introduced by \cite{Eckart:1940te}, wherein he set
\begin{subequations}{\label{E:Viscous_fluxes_Eckart}}
\begin{align}
\mathscr{R} & := 0,
\label{E:Viscous_fluxes_Eckart_R}
\\
\mathscr{P} & := - \upzeta \nabla_\alpha u^\alpha,
\label{E:Viscous_fluxes_Eckart_P}
\\
\pi_{\alpha\beta} & := - \upeta \proj_\alpha^\mu \proj_\beta^\nu ( \nabla_\mu u_\nu + \nabla_\nu u_\mu - \frac{2}{3} \nabla_\lambda u^\lambda g_{\mu\nu}),
\label{E:Viscous_fluxes_Eckart_pi}
\\
\mathscr{Q}_\alpha & := -\upkappa \uptheta (\proj_\alpha^\mu \nabla_\mu  \ln\uptheta + u^\mu \nabla_\mu u_\alpha ),
\label{E:Viscous_fluxes_Eckart_Q}
\\
\mathscr{J}_\alpha & := 0,
\label{E:Viscous_fluxes_Eckart_J}
\end{align}
\end{subequations}
where $\upzeta = \upzeta(\varrho,n)$, $\upeta = \upeta(\varrho,n)$, and $\upkappa = \upkappa(\varrho, n)$ and the \textdef{coefficients of bulk viscosity, shear viscosity,} and \textdef{heat conductivity, respectively.} (We recall from Sect.~\ref{S:Thermodynamic_properties} that $\uptheta$ is the fluid's temperature.) As the equation of state, their choice depends on the nature of the fluid. They are precise relativistic analogues of the corresponding coefficients for the classical compressible Navier--Stokes-Fourier equations.

A similar proposal for the viscous fluxes was introduced by Landau and Lifshitz in the 1950s (see \citealt{Landau-Lifshitz-Book-Fluids-1987}). They use the same definitions
\eqref{E:Viscous_fluxes_Eckart_R}, \eqref{E:Viscous_fluxes_Eckart_P}, and \eqref{E:Viscous_fluxes_Eckart_pi} as Eckart's but \eqref{E:Viscous_fluxes_Eckart_Q}
and \eqref{E:Viscous_fluxes_Eckart_J} are replaced by
\begin{subequations}{\label{E:Viscous_fluxes_Landau}}
\begin{align}
\mathscr{Q} & : = 0,
\label{E:Viscous_fluxes_Landau_Q}
\\
\mathscr{J}_\alpha & := -\upkappa \frac{\uptheta}{h} (\proj_\alpha^\mu \nabla_\mu \ln \uptheta + u^\mu \nabla_\mu u_\alpha )
\label{E:Viscous_fluxes_Landau_J}
\end{align}
\end{subequations}
where we recall that $h$ is the enthalpy given by \eqref{E:Enthalpy_definition}. The theories of relativistic viscous fluids defined by \eqref{E:Viscous_fluxes_Eckart} and \eqref{E:Viscous_fluxes_Landau} are known as the \textdef{Eckart and the Landau--Lifshitz theories,} respectively.

We will not discuss the physical arguments employed by Eckart and Landau and Lifshitz that lead to the postulates \eqref{E:Viscous_fluxes_Eckart} and \eqref{E:Viscous_fluxes_Landau}, referring to Chapter~6 of \cite{Rezzolla-Zanotti-Book-2013} and the original works \cite{Eckart:1940te,Landau-Lifshitz-Book-Fluids-1987} instead. It suffices to say that Eckart and Landau and Lifshitz were seeking a covariant generalization of the classical Navier--Stokes--Fourier equations (compare \eqref{E:Viscous_fluxes_Eckart_P} and \eqref{E:Viscous_fluxes_Eckart_pi} with the definitions of bulk and shear viscosity in a classical fluid, e.g., Eq.~(61.1) in \cite{Serrin-1959}). In this regard, it is worth mentioning that with
\eqref{E:Viscous_fluxes_Eckart} or \eqref{E:Viscous_fluxes_Landau}, the equations of motion
\eqref{E:Divergence_energy_momentum_baryon_current_viscous} reduce to the 
classical Navier--Stokes--Fourier equations in the non-relativistic limit, see Chapter~6 in \cite{Rezzolla-Zanotti-Book-2013}.

It turns out that the Eckart and Landau--Lifshitz theories suffer from several pathologies that render them inadequate for modeling a relativistic viscous fluid, as shown by \cite{Pichon-1965} and 
\cite{Hiscock:1985zz}. A major problem with such theories is that they violate causality. We recall that \textdef{causality} is a fundamental postulate in relativity theory stating that no signal can propagate faster than the speed of light in vacuum. When a physical system is described by PDEs, causality translates into the mathematical requirement that the characteristics of the PDEs are contained within the lightcone in physical space (see the example in Fig.~\ref{F:Light_cone_and_sound_cones_tangent}) and the system enjoys a domain-of-dependence property. More precisely:

\begin{definition}[Causality]
\label{D:Causality}
Let $(M,g)$ be a globally hyperbolic spacetime\footnote{It is natural to define causality for globally hyperbolic spacetimes since several pathologies, such closed timelike curves, are ruled out for such spaces. We recall that both Minkowski space and spacetimes that arise as a solution to the Cauchy problem for Einstein's equations are globally hyperbolic.}. Let $\Sigma \subset M$ be a Cauchy surface\footnote{Which exists since the spacetime is globally hyperbolic.}. Consider a system of (possibly nonlinear) PDEs on $M$ for a field or collection of fields $\varphi$, which we write as $P\varphi = 0$. We say that the system is causal if the following condition holds. For any $x \in M$ in the future\footnote{Being globally hyperbolic, $M$ can be time oriented and thus the future and past of a set are well defined.} of $\Sigma$, $\varphi(x)$ depends only $J^-(x) \cap \Sigma$, where $J^-(x)$ is the causal past of $x$ (with respect to the metric $g$). In other words, if $\varphi_1$ and $\varphi_2$ are two solutions to $P\varphi = 0$ such that $\left. \varphi_1 \right|_{J^-(x)\cap \Sigma} = 
\left. \varphi_2 \right|_{J^-(x)\cap \Sigma}$, then $\varphi_1(x) = \varphi_2(x)$.
\end{definition}

\begin{remark}
\label{R:Causality_precise_definition}
For related discussion on causality and the domain-of-dependence property, we refer to Sect.~\ref{S:Leray_systems}, particularly in the conclusion of Theorem \ref{T:Leray_Ohya_domain_of_dependence}. We note that the setting of Sect.~\ref{S:Leray_systems} is sufficiently general to cover all the cases of interest where the domain-of-dependence property is invoked in this review, see 
Sect.~\ref{S:Domain_of_dependence_outside_Gevrey}.
\end{remark}

From the equations of motion 
\eqref{E:Divergence_energy_momentum_baryon_current_viscous} with the choices \eqref{E:Viscous_fluxes_Eckart} or \eqref{E:Viscous_fluxes_Landau}, one obtains one set of characteristics given by
\begin{align}
\proj^{\alpha\beta} \xi_\alpha\xi_\beta = 0,
\label{E:Characteristics_Eckart_Landau}
\end{align}
which is not causal\footnote{Equations \eqref{E:Divergence_energy_momentum_baryon_current_viscous} with the choices 
\eqref{E:Viscous_fluxes_Eckart} or \eqref{E:Viscous_fluxes_Landau} form a mixed-order system of PDEs, so its principal part, needed for the computation of the characteristics, is determined with the help of Leray theory (see Appendix \ref{S:Leray_systems}).}.

In addition, the Eckart and Landau-Lifshitz theories are also unstable, see \cite{Hiscock:1985zz}, where stability here refers to mode stability. More precisely, the situation is as follows.

\begin{definition}
\label{D:Stability}
Assume that the spacetime metric is the Minkowski metric.
We define a \textdef{global thermodynamic equilibrium} as a solution to \eqref{E:Divergence_energy_momentum_baryon_current_viscous} where all the viscous fluxes vanish and
$u$, $\varrho$, and $n$ (and consequently the other thermodynamic scalars like $\uptheta$, $s$, and so on, see Sect.~\ref{S:Thermodynamic_properties}) are constant. Note that this gives a constant solution to the relativistic Euler equations. Consider Eqs.~\eqref{E:Divergence_energy_momentum_baryon_current_viscous} linearized about a global theormodynamic equilibrium and consider plane-wave solutions to the linearized equations\footnote{Such solutions can always be determined since the equations are constant-coefficient PDEs.}. We say that the system 
is \textdef{stable} if it does not admit plane-wave solutions that grow in time, and \textdef{unstable} otherwise.
\end{definition}

\begin{remark}
Observe that Definition \ref{D:Stability} applies to any relativistic viscous fluid described by \eqref{E:Energy_momentum_viscous} and \eqref{E:Baryon_density_current_viscous}, i.e., 
relations \eqref{E:Viscous_fluxes_Eckart} or \eqref{E:Viscous_fluxes_Landau} are not assumed.
\end{remark}

A good theory of relativistic viscous fluids should be stable 
in the sense of Definition \ref{D:Stability}. Physically, it is expected that very small deviations from global thermodynamic equilibrium should decay due to dissipation. Such small deviations are believed to be well-modeled by the linearized equations. 

\cite{Hiscock:1985zz} have shown that the Eckart and Landau--Lifshitz theories are unstable in the sense of Definition \ref{D:Stability}. Moreover, they have determined that these instabilities 
are severe in the sense that, for example, deviations from equilibrium in water at room temperature would grow by an $e$ factor in about $10^{-34}$ seconds, a physically unacceptable result\footnote{As Hiscock and Lindblom argue in their paper, ``If, for all astrophysically imaginable conditions, the $e$-folding time for growth of the instability were much longer than the age of the universe, then the instabilities would be only of pedagogical interest.''}.

The acausality and instability of Eckart's and Landau--Lifshitz's theories prompted physicists to seek alternative theories of relativistic fluids. In Sects.~\ref{S:DNMR} and \ref{S:BDNK}, we will describe two different approaches to construct theories of relativistic viscous fluids with good properties, including causality and stability. We stress that by no means these are the only theories of 
relativistic viscous fluids currently available. We focus on them here because they are likely to be the ones of most interest to mathematicians in view of their formal structure and also because they seem to be the ones that have attracted most interest among physicists in recent years (see a sample of references in Sects.~\ref{S:DNMR} and \ref{S:BDNK}). A comprehensive discussion of different theories of relativistic fluids with viscosity can be found in Chapter~2 of \cite{Romatschke:2017ejr}; Chapter~6 of \cite{Rezzolla-Zanotti-Book-2013}; Sect.~II of \cite{Bemfica:2020zjp}, and references therein.

We finish this introduction with the following general observations:
\begin{itemize}
\item While in hindsight the inadequacy of Eckart's and Landau--Lifshitz's theories 
seems straightforward from a computation of their characteristics and an analysis of mode stability, it is important to remark that for many years they were considered ``correct,'' as they are natural generalizations of the Navier--Stokes--Fourier equations to a covariant setting. For example, Weinberg's celebrated book \cite{Weinberg:1972kfs} has a full section, 2.11, dedicated to relativistic viscous fluids 
based on Eckart's approach. In fact, Eckart's and Landau--Lifshitz's theories have been extensively used in the study of viscous cosmological models, see the review \cite{Brevik:2014cxa}.
\item The previous point in conjunction with the aforementioned existence of many different theories of relativistic viscous fluids highlight the following important fact: \emph{there is no universally accepted theory of relativistic viscous fluids.} This should be contrasted with other matter models, wherein generally there is widespread agreement about their applicability. What underlies this state of affairs 
are certain conceptual difficulties that make modeling dissipation in relativity theory a challenging task; see the last bullet point below.
\item As a side note, we remark that it is not possible to decide on a ``correct'' theory of 
relativistic fluids with viscosity from microscopic theory. The derivation\footnote{By which we mean a formal derivation. There are no rigorous results on the derivation of relativistic viscous fluid theories from microscopic theory. For comparison with classical fluids, it is worth mentioning the following rigorous results on derivations from microscopic theory. Limits from the Boltzmann equation to the classical compressible Euler equations have been obtained in \cite{Huang-WangYang-2010} for solutions with contact discontinuities and in \cite{Yu-2005} for weak planar shocks; the incompressible Navier--Stokes and Euler equations and the compressible Euler equations can be derived from Boltzmann's equations, see \cite{Saint-Raymond-Book-2009}, although the compressible Navier-Stokes equations cannot, see \cite{Mott-Smith-1951,Uribe-Velasco-Garcia-Colin-1998,Salomons-Mareschal-1992,Liepmann-Narasimha-Chahine-1962}; finally, for work on the Hilbert expansion, including the case of relativistic perfect fluids, see \cite{Guo-Jang-Jiang-2009,Guo-Jang-2010,Guo-Huang-Wang-2021,Speck-Strain-2011} (see also the more recent works \cite{Kim-La-2024,Kim-Nguyen-2022-arxiv}).} of a fluid model from microscopic theory necessarily involves a coarse-graining procedure and different procedures lead to different results. For example, the Landau--Lifshitz and the theories discussed in Sects.~\ref{S:DNMR} and \ref{S:BDNK} can all be derived from kinetic theory (see \cite{DeGroot:1980dk} and references below).
\item We point out that there are several issues of a conceptual nature regarding the definition of a relativistic viscous fluid, such as, for example, what it means to talk about the definition of temperature or pressure outside thermodynamic equilibrium. We will not discuss these issues here\footnote{It is interesting to note that the ``correct'' definition of the classical Navier--Stokes on Riemannian manifolds is also up for debate, see \cite{Chan-Czubak-Disconzi-2017,Chan-Czubak-Yoneda-2023,Chan-Czubak-2022-arxiv,Czubak-2014} and reference therein.}, save for a few brief remarks in Sect.~\ref{S:BDNK_origins}. Interested readers are referred to Chapter 6 of \cite{Rezzolla-Zanotti-Book-2013}; Sect.~II of \cite{Bemfica:2020zjp}; \cite{Romatschke:2017ejr,Denicol-Rischke-Book-2021}; 
Sect.~2.11 of \cite{Weinberg:1972kfs} and references therein.
\end{itemize}

In the ensuing presentation, we will be always working under the following specific setting.

\begin{assumption}
\label{A:Working_projected_equations_and_normalization}
As in the case of the relativistic Euler equations, the constraint \eqref{E:Velocity_normalization_viscous} will always be assumed as one of the equations of the system, unless stated otherwise.
In practice, however, from the point of view of the Cauchy problem, we will consider Eqs.~\eqref{E:Divergence_energy_momentum_viscous} decomposed in the directions parallel and orthogonal to $u$, i.e., 
\begin{subequations}{\label{E:Divergence_energy_momentum_viscous_projected}}
\begin{align}
u^\beta \nabla_\alpha \mathcal{T}^\alpha_\beta & = 0,
\label{E:Divergence_energy_momentum_viscous_projected_u}
\\
\proj^{\gamma \beta} \nabla_\alpha \mathcal{T}^\alpha_\beta &= 0,
\label{E:Divergence_energy_momentum_viscous_projected_u_perp}
\end{align}
\end{subequations}
as it was done for the relativistic Euler equations, and treat all components
of $u$ as independent, provided that we can show the constraint \eqref{E:Velocity_normalization_viscous} to be propagated, as it was the case for the relativistic Euler equations. This propagation is in general true and can be shown for the specific models of relativistic viscous fluids we will discuss, but it has to be shown in a case-by-case basis, i.e., for each given specific model. Thus, for the sake of brevity, we will not discuss the propagation of \eqref{E:Velocity_normalization_viscous} here.
It will also be always assumed in \eqref{E:Divergence_energy_momentum_baryon_current_viscous}
that an equation of state is given, as well as other functional relations that might vary depending on the model one is studying, such as, for example, the relations for the coefficients of bulk and shear viscosity and heat conductivity defined below as functions of $\varrho$ and $n$. 
We note that we are not restricting ourselves here to the Eckart or Landau--Lifshitz theories. The theories introduced below also involve \eqref{E:Divergence_energy_momentum_baryon_current_viscous} as equations of motion and will have coefficients $\upzeta$, $\upeta$, and $\upkappa$, among others, that are functions of $\varrho$ and $n$.
\end{assumption}

\subsection{The DNMR theory\label{S:DNMR}}

In this section, we present the \textdef{Denicol--Niemi--Molnar--Rischke (DNMR) theory\footnote{For historical reasons, this is also referred to as the M\"uller--Israel--Stewart or also simply Israel--Stewart theory. See Sect.~\ref{S:DNMR_historical}.}}, first introduced in \cite{Denicol:2012cn}. This is the most widely used theory in studies of the quark-gluon plasma, in particular when it comes to implementing numerical simulations, see \cite{Heffernan:2023utr,Heffernan:2023gye,Jeon:2015dfa}. But despite its importance, its mathematical properties are poorly understood, as we will discuss in what follows.

In the DNMR theory, the viscous fluxes are not given in term of $\varrho$, $n$, $u$, and their derivatives, as in the Eckart and Landau--Lifshitz theories. Instead, they are \emph{new variables in the theory to be treated on the same footing as $\varrho$, $n$, and $u$.} The task then becomes finding equations of motion for the viscous fluxes, at which point one needs to make some modeling choices (see Sect.~\ref{S:DNMR_historical} for some background discussion). In the DNMR theory, these equations of motion are derived from kinetic theory via the method of moments that goes back to the work of \cite{Grad-1958}. 

\begin{assumption}
\label{A:DNMR_simplified}
We will present the DNMR theory for the case where $n$, $\mathscr{J}$, and $\mathscr{Q}$ are absent, so in particular all functions of $\varrho,n$ will be functions of $\varrho$ only, since this is the case treated in Theorems \ref{T:Causality_DNMR} and \ref{T:LWP_DNMR}. 
This is also the situation that has been the primary focus of investigation in studies of the quark-gluon plasma, see \cite{Ryu:2017qzn}. We will, nevertheless, also mention some general results valid for the full set of equations derived in \cite{Denicol:2012cn}. Readers interested in the complete DNMR theory are referred to Eqs.\footnote{We warn that \cite{Denicol:2012cn} uses the $+---$ convention for the spacetime metric. Here, we present the DNMR equations in the $-+++$ signature.} (63)--(67) of \cite{Denicol:2012cn}.
We also note that, even in its most general form, the DNMR equations always assume that $\mathscr{R}=0$.
\end{assumption}

The derivation from kinetic theory carried out in \cite{Denicol:2012cn}, which will not be given here, leads to the following equations of motion
\begin{subequations}{\label{E:DNMR}}
\begin{align}
u^\mu \nabla_\mu \varrho + (p + \varrho + \mathscr{P}) \nabla_\mu u^\mu + \pi^\mu_\alpha \nabla_\mu u^\alpha & = 0,
\label{E:DNMR_density}
\\
(\varrho + p + \mathscr{P}) u^\mu \nabla_\mu u_\alpha + c_s^2 \proj_\alpha^\mu \nabla_\mu \varrho
+ \proj_\alpha^\mu \nabla_\mu \mathscr{P} + \proj_\alpha^\mu \nabla_\nu \pi^\nu_\mu & = 0,
\label{E:DNMR_velocity}
\\
\uptau_\mathscr{P} u^\mu \nabla_\mu \mathscr{P} + \mathscr{P} + \upzeta \nabla_\mu u^\mu 
+ \updelta_{\mathscr{P}\mathscr{P}} \mathscr{P} \nabla_\mu u^\mu + \uplambda_{\mathscr{P} \pi} \pi^{\mu\nu} \upsigma_{\mu\nu} & = 0,
\label{E:DNMR_bulk}
\\
\uptau_\pi\hat{\mathsf{\Pi}}_{\mu\nu}^{\alpha\beta} u^\lambda \nabla_\lambda \pi^{\mu \nu} + \pi^{\alpha\beta} + 2 \upeta \sigma^{\alpha\beta} + \updelta_{\pi\pi} \pi^{\alpha\beta} \nabla_\mu u^\mu 
&
\nonumber
\\
+ \uptau_{\pi\pi} \pi_\mu^{\langle\alpha} \sigma^{\beta\rangle \mu}
+ \uplambda_{\pi\mathscr{P}} \mathscr{P} \sigma^{\alpha\beta} & = 0,
\label{E:DNMR_shear}
\end{align}
\end{subequations}
and $\mathscr{R} = 0$, i.e., there is no viscous correction to the energy density in the DNMR theory. Above,
\begin{align}
\hat{\mathsf{\Pi}}_{\mu\nu}^{\alpha\beta} := \frac{1}{2} (\proj^\alpha_\mu \proj^\beta_\nu 
+ \proj^\beta_\mu \proj^\alpha_\nu ) - \frac{1}{3}\proj^{\alpha\beta} \proj_{\mu\nu}
\label{E:Projection_symmetric_trace_free_part}
\end{align}
projects a two-tensor onto its $u$-orthogonal symmetric trace-free part;
\begin{align}
A^{\langle\alpha}_\mu B^{\beta\rangle \mu} : = \hat{\mathsf{\Pi}}^{\alpha\beta}_{\mu\nu} A^{\mu \lambda}B^\nu_\lambda
\nonumber
\end{align}
where $A$ and $B$ are symmetric two-tensors ($\pi$ is symmetric, see \eqref{E:Shear_constraints_symmetry});
\begin{align}
\sigma^{\alpha\beta} := \hat{\mathsf{\Pi}}^{\alpha\beta}_{\mu\nu}\nabla^\mu u^\nu,
\nonumber
\end{align}
is often called the \textdef{shear tensor} (not to be confused with the viscous shear stress $\pi$);
and
\begin{align}
\upzeta, \upeta, \uptau_\mathscr{P}, \uptau_\pi, \updelta_{\mathscr{P}\mathscr{P}},
\uplambda_{\mathscr{P}\pi}, \updelta_{\pi\pi}, \uptau_{\pi\pi}, \uplambda_{\pi\mathscr{P}}
\nonumber
\end{align}
are functions of\footnote{In general, when $n$ is included, they will be functions of $\varrho$ and $n$.} $\varrho$ known as \textdef{transport coefficients,} of which 
$\upzeta$ and $\upeta$ are the \textdef{coefficients of bulk and shear viscosity,} respectively, and 
$\uptau_\mathscr{P}$ and $\uptau_\pi$ are called the \textdef{bulk and shear relaxation times,} respectively (note that \eqref{E:DNMR_bulk} and \eqref{E:DNMR_shear} have the form of relaxation-type equations). In addition to \eqref{E:Velocity_normalization_viscous}, the following constraints hold
\begin{subequations}{\label{E:Shear_constraints}}
\begin{align}
\pi_{\alpha\beta} = \pi_{\beta\alpha},
\label{E:Shear_constraints_symmetry}
\\
u^\mu \pi_{\mu \alpha} = 0, 
\label{E:Shear_constraints_orthogonality}
\\ 
\pi^\mu_\mu = 0.
\label{E:Shear_constraints_trace}
\end{align}
\end{subequations}
We note that we continue to use the definition $c_s^2 = \left. \frac{\partial p}{\partial \varrho}\right|_s$, although, as we will soon see, the corresponding sound cones in the DNMR theory are not given solely in terms of $c_s^2$. Observe that \eqref{E:DNMR_density} and \eqref{E:DNMR_velocity} are simply
\eqref{E:Divergence_energy_momentum_viscous_projected_u} and \eqref{E:Divergence_energy_momentum_viscous_projected_u_perp}, respectively.

\begin{remark}
Observe that \eqref{E:Viscous_fluxes_Eckart_pi} also satisfies \eqref{E:Shear_constraints}. In fact, on physical grounds, \eqref{E:Shear_constraints} is usually required in any theory of relativistic viscous fluids. We also note that a direct calculation shows that $-2\upeta \sigma^{\alpha\beta}$ equals the RHS of \eqref{E:Viscous_fluxes_Eckart_pi}.
\end{remark}

Readers should take note of the complexity of Eqs.~\eqref{E:DNMR}. Despite their complexity, some important results have been obtained for the DNMR theory, namely:
\begin{itemize}
\item Stability (in the sense of Definition \ref{D:Stability}) holds, see \cite{Denicol:2012cn} (see also \cite{Hiscock:1983zz,Olson:1989ey} and Sect.~\ref{S:DNMR_historical}). In fact,
stability was established in the more general case when the DNMR equations include
$n$, $\mathscr{J}$, and $\mathscr{Q}$.
\item When Eqs.~\eqref{E:DNMR} are linearized about global thermodynamic equilibrium, the resulting system is causal.
\item Causality holds in $1+1$ dimensions, see \cite{Denicol:2008ha}, and in rotational symmetry, see \cite{Pu:2009fj,Floerchinger:2017cii}.
\end{itemize}
We remark that these results do not hold unconditionally, but are proved under certain assumptions on the fluid variables and the transport coefficients. Such assumptions are, nevertheless, of general physical significance, see discussions in the above references. On the other hand, they do hold in a more general setting when $n$, $\mathscr{J}$, and $\mathscr{Q}$ are included, which we are not considering here. These results do not address, however, causality of the full set of Eqs.~\eqref{E:DNMR} in $3+1$ dimensions, which are the equations commonly used in numerical simulations of the quark-gluon plasma. This will be the content of Theorem \ref{T:Causality_DNMR}.

\begin{remark}
\label{R:Constraints_propagation_DNMR}
Because of the constraints \eqref{E:Shear_constraints}, only five components of $\pi$ are independent. However, as it is done with \eqref{E:Velocity_normalization} with the relativistic Euler equations (recall Assumption \ref{A:Working_projected_equations_and_normalization}), from the point of view of the Cauchy problem, 
it is more convenient to work with all components of $\pi$, without imposing \eqref{E:Shear_constraints}, which are then shown to be propagated by the flow if imposed on the initial data. Under these conditions. In this setting, Eqs.~\eqref{E:DNMR} form a $22 \times 22$ system of first-order PDEs \emph{without diagonal principal part.}
\end{remark}
\subsubsection{Causality\label{S:Causality_DNMR}}

In order to state our main result of this section, we need to following Notation.

\begin{notation}
\label{N:Diagonalization_pi}
The symmetry and trace-free condition of $\pi$ allows us to diagonalize it, with 
\begin{align}
\pi^\mu_\nu e_A^\nu = \Lambda_A e^\mu_A,\, A=0,\dots,3,
\nonumber
\end{align}
where the eigenvectors $\{ e_A \}_{A=0}^3$ form an orthonormal frame,  
\begin{align}
g_{\mu\nu} e^\mu_A e^\nu_B = m_{AB} = \operatorname{diag}(-1,1,1,1),
\nonumber
\end{align}
$e_0 = u$, $\Lambda_{A=0} = 0$, the eigenvalues $\Lambda_{A=i}$ are real valued, and 
\begin{align}
\Lambda_1 + \Lambda_2 + \Lambda_3 = 0.
\nonumber
\end{align}
We can assume the eigenvalues to be ordered according to $\Lambda_1 \leq \Lambda_2 \leq \Lambda_3$, with $\Lambda_1 \leq 0 \leq \Lambda_3$.
\end{notation}

We can now state a result addressing the causality of Eqs.~\eqref{E:DNMR}. See Remark \ref{R:Sufficient_regular_DNMR} below for the meaning of ``sufficiently regular'' in the statement of Theorem \ref{T:Causality_DNMR}.

\begin{theorem}[\citealt{Bemfica-Disconzi-Hoang-Noronha-Radosz-2021}]
\label{T:Causality_DNMR}
Let $(\varrho,u, \mathscr{P}, \pi)$ be a sufficiently regular solution\footnote{See Remark \ref{R:Sufficient_regular_DNMR}.} to equations
\eqref{E:DNMR} satisfying the constraints \eqref{E:Shear_constraints} and \eqref{E:Velocity_normalization_viscous} and defined in a globally hyperbolic spacetime. Suppose that:
\begin{enumerate}[label=(A.\arabic*),ref=(A.\arabic*)]
\item \label{I:Causality_DNMR_1}
$\uptau_\mathscr{P}, \uptau_\pi > 0$, $\upeta, \upzeta, \updelta_{\mathscr{P}\mathscr{P}}, \uplambda_{\mathscr{P}\pi}, \updelta_{\pi\pi}, \uptau_{\pi\pi}, \updelta_{\pi\mathscr{P}} \geq 0$.
\item \label{I:Causality_DNMR_2}
$\varrho > 0$, $p \geq 0$, $\varrho + p + \mathscr{P} > 0$ (note that $\mathscr{P}$ can be negative).
\item \label{I:Causality_DNMR_3}
$\varrho + p + \mathscr{P} + \Lambda_i > 0$, $i=1,2,3$.
\end{enumerate}
Then, the following are sufficient conditions for causality\footnote{Recall Remark \ref{R:Causality_precise_definition}.} of Eqs.~\eqref{E:DNMR}:
\begin{subequations}{\label{E:Sufficient_causality_DNMR}}
\begin{align}
(\varrho + p + \mathscr{P} - |\Lambda_1| - \frac{1}{2\uptau_\pi}(2\upeta + \uplambda_{\pi\mathscr{P}} \mathscr{P} ) - \frac{\uptau_{\pi\pi}}{2\uptau_\pi}\Lambda_3 & \geq 0,
\label{E:Sufficient_causality_DNMR_a}
\\
2\upeta + \uplambda_{\pi\mathscr{P}} \mathscr{P} - \uptau_{\pi\pi} |\Lambda_1| & > 0,
\label{E:Sufficient_causality_DNMR_b}
\\
\uptau_{\pi\pi} -  6 \updelta_{\pi\pi} & \leq 0,
\label{E:Sufficient_causality_DNMR_c}
\\
\frac{\uplambda_{\mathscr{P}\pi}}{\uptau_\mathscr{P}} + c_s^2 - \frac{\uptau_{\pi\pi}}{12\uptau_\pi} & \geq 0,
\label{E:Sufficient_causality_DNMR_d}
\\
\frac{1}{3\uptau_\pi}[ 4\upeta + 2 \uplambda_{\pi\mathscr{P}} \mathscr{P} + (3\updelta_{\pi\pi} + \uptau_{\pi\pi}) \Lambda_3 ] + \frac{\upzeta + \updelta_{\mathscr{P}\mathscr{P}} \mathscr{P} + \uplambda_{\mathscr{P}\pi} \Lambda_3}{\uptau_\mathscr{P}}  &
\nonumber
\\
+ |\Lambda_1| + \Lambda_3 c_s^2
 + \frac{ \frac{12\updelta_{\pi\pi} - \uptau_{\pi\pi}}{12\uptau_\pi}
	\big( \frac{\uplambda_{\mathscr{P}\pi}}{\uptau_\mathscr{P}} + c_s^2 - \frac{\uptau_{\pi\pi}}{12\uptau_\pi} 
     \big)
     (\Lambda_3 + |\Lambda_1|)^2
 }{ \varrho + p + \mathscr{P} - |\Lambda_1| - \frac{1}{2\uptau_\pi}(2\upeta + \uplambda_{\pi\mathscr{P}} \mathscr{P}) - \frac{\uptau_{\pi\pi}}{2\uptau_\pi} \Lambda_3 } &
 \nonumber
 \\
 - (\varrho + p + \mathscr{P} ) (1-c_s^2) 
  & \leq 0,
  \label{E:Sufficient_causality_DNMR_e}
\\
\frac{1}{6\uptau_\pi}[ 2\upeta + \uplambda_{\pi\mathscr{P}} \mathscr{P} + (\uptau_{\pi\pi} - 6 \updelta_{\pi\pi})|\Lambda_1| ] + \frac{\upzeta + \updelta_{\mathscr{P}\mathscr{P}} \mathscr{P} - \updelta_{\mathscr{P}\pi} |\Lambda_1|}{\uptau_\mathscr{P}} &
\nonumber
\\
+ (\varrho + p + \mathscr{P} - |\Lambda_1|) c_s^2 & \geq 0,
\label{E:Sufficient_causality_DNMR_f}
\\
\frac{ \frac{12\updelta_{\pi\pi} - \uptau_{\pi\pi}}{12\uptau_\pi}
	\big( \frac{\uplambda_{\mathscr{P}\pi}}{\uptau_\mathscr{P}} + c_s^2 - \frac{\uptau_{\pi\pi}}{12\uptau_\pi} 
     \big)
     (\Lambda_3 + |\Lambda_1|)^2
 }{\big[ \frac{1}{2\uptau_\pi}(2\upeta + \uplambda_{\pi\mathscr{P}} \mathscr{P}) - \frac{\uptau_{\pi\pi}}{2\uptau_\pi} |\Lambda_1| \big]^2} & \leq 1,
 \label{E:Sufficient_causality_DNMR_g}
\\
\frac{(\varrho + p + \mathscr{P} + \Lambda_2)(\varrho + p + \mathscr{P} + \Lambda_3)}{3(\varrho + p + \mathscr{P} - |\Lambda_1|)}
\Big\{ 1 + \frac{\frac{1}{\uptau_\pi}(2\upeta + \uplambda_{\pi\mathscr{P}} \mathscr{P}) + \frac{\uptau_{\pi\pi}}{\uptau_\pi} \Lambda_3}{\varrho + p + \mathscr{P} -|\Lambda_1|}
\Big\} & \leq 
\nonumber
\\
\frac{1}{3 \uptau_\pi}[ 4\upeta + 2 \uplambda_{\pi\mathscr{P}}\mathscr{P} - (3\updelta_{\pi\pi} + \uptau_{\pi\pi})|\Lambda_1| ] + \frac{\upzeta + \updelta_{\mathscr{P}\mathscr{P}} \mathscr{P} - \updelta_{\mathscr{P}\pi} |\Lambda_1|}{\uptau_\mathscr{P}} &
\nonumber
\\
+ (\varrho + p + \mathscr{P} - |\Lambda_1|) c_s^2, & 
\label{E:Sufficient_causality_DNMR_h}
\end{align}
\end{subequations}
where condition \eqref{E:Sufficient_causality_DNMR_h} can be dropped if $\uptau_{\pi\pi} = \updelta_{\pi\pi} = 0$.

Moreover, still under assumptions \ref{I:Causality_DNMR_1}, \ref{I:Causality_DNMR_2}, and 
\ref{I:Causality_DNMR_3}, the following are necessary conditions for causality of Eqs.~\eqref{E:DNMR}:
\begin{subequations}{\label{E:Necessary_causality_DNMR}}
\begin{align}
2\upeta + \uplambda_{\pi\mathscr{P}} \mathscr{P} - \frac{1}{2} \uptau_{\pi\pi}|\Lambda_1| & \geq 0,
\label{E:Necessary_causality_DNMR_a}
\\
\varrho+p+\mathscr{P} - \frac{1}{2\uptau_\pi}(2\upeta + \uplambda_{\pi\mathscr{P}}\mathscr{P} )
-\frac{\uptau_{\pi\pi}}{4\uptau_\pi} \Lambda_3 & \geq 0,
\label{E:Necessary_causality_DNMR_b}
\\
\frac{1}{2\uptau_\pi}(2\upeta + \uplambda_{\pi\mathscr{P}}\mathscr{P} )
+ \frac{\uptau_{\pi\pi}}{4\uptau_\pi}(\Lambda_i + \Lambda_j ) & \geq 0, \, i \neq j
\label{E:Necessary_causality_DNMR_c}
\\
\varrho + p + \mathscr{P} + \Lambda_i - \frac{1}{2\uptau_\pi}(2\upeta + \uplambda_{\pi\mathscr{P}}\mathscr{P} ) - \frac{\uptau_{\pi\pi}}{4\uptau_\pi}(\Lambda_i + \Lambda_j ) & \geq 0, \, i \neq j
\label{E:Necessary_causality_DNMR_d}
\\
\frac{1}{2\uptau_\pi}(2\upeta + \uplambda_{\pi\mathscr{P}}\mathscr{P} ) 
+ \frac{\uptau_{\pi\pi}}{2\uptau_\pi} \Lambda_i &
\nonumber 
\\
+ \frac{1}{6\uptau_\pi}[2 \upeta + \uplambda_{\pi\mathscr{P}} \mathscr{P} 
+ (6\updelta_{\pi\pi} - \uptau_{\pi\pi})\Lambda_i ] &
\nonumber
\\
+ \frac{\upzeta + \updelta_{\mathscr{P}\mathscr{P}}\mathscr{P} + \uplambda_{\mathscr{P}\pi} \Lambda_i }{\uptau_\mathscr{P}} + (\varrho + p + \mathscr{P} + \Lambda_i ) c_s^2 & \geq 0,
\label{E:Necessary_causality_DNMR_e}
\\
\varrho + p + \mathscr{P} + \Lambda_i 
-\frac{1}{2\uptau_\pi}(2\upeta + \uplambda_{\pi\mathscr{P}}\mathscr{P} ) 
-\frac{\uptau_{\pi\pi}}{2\uptau_\pi} \Lambda_i &
\nonumber
\\
-\frac{1}{6\uptau_\pi}[2 \upeta + \uplambda_{\pi\mathscr{P}} \mathscr{P} 
+ (6\updelta_{\pi\pi} - \uptau_{\pi\pi})\Lambda_i ] 
 &
\nonumber
\\
- \frac{\upzeta + \updelta_{\mathscr{P}\mathscr{P}}\mathscr{P} + \uplambda_{\mathscr{P}\pi} \Lambda_i }{\uptau_\mathscr{P}}
- (\varrho + p + \mathscr{P} + \Lambda_i ) c_s^2 & \geq 0,
\label{E:Necessary_causality_DNMR_f}
\end{align}
\end{subequations}  
where $i,j=1,2,3$ in \eqref{E:Necessary_causality_DNMR_c}, \eqref{E:Necessary_causality_DNMR_d},
\eqref{E:Necessary_causality_DNMR_e}, and \eqref{E:Necessary_causality_DNMR_f}.
\end{theorem}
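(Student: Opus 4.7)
The plan is to analyze causality through the classical route: recast \eqref{E:DNMR}--\eqref{E:Shear_constraints} as a first-order quasilinear system $\mathcal{A}^\mu(U)\partial_\mu U = \mathcal{B}(U)$ for the $22$-component state $U = (\varrho, u^\alpha, \mathscr{P}, \pi^{\alpha\beta})$ (keeping all components of $u$ and $\pi$ independent, per Remark \ref{R:Constraints_propagation_DNMR} and Assumption \ref{A:Working_projected_equations_and_normalization}), compute the characteristic polynomial $\mathcal{Q}(\xi):=\det(\mathcal{A}^\mu\xi_\mu)$, and translate causality (in the sense of Definition \ref{D:Causality}) into the statement that any real $\xi\neq 0$ with $\mathcal{Q}(\xi)=0$ satisfies $g^{\mu\nu}\xi_\mu\xi_\nu \le 0$. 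Since the constraints \eqref{E:Shear_constraints} and \eqref{E:Velocity_normalization_viscous} will produce trivial factors in $\mathcal{Q}$, the substantive part comes from the dynamical modes coupling $(\varrho,u,\mathscr{P},\pi)$.

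Next, I would exploit the LRF and the eigenframe of Notation \ref{N:Diagonalization_pi} to diagonalize $\pi$, writing $\xi_\mu = \xi_0 u_\mu + \xi_i e^i_\mu$, and decompose $\mathcal{Q}(\xi)$ according to this frame. Because of the structural assumption that $\pi$ is diagonal in $\{e_A\}$ with eigenvalues $\Lambda_i$, one expects $\mathcal{Q}$ to factor into blocks in which the eigenvalues $\Lambda_i$ appear as algebraic parameters. By restricting first to propagation along one of the spatial eigendirections, say $\xi_i = \xi\,\updelta_{i}^{\,k}$ with $k\in\{1,2,3\}$, one reduces the problem to a low-degree polynomial $\mathcal{Q}_k(\xi_0,\xi)$ whose roots $v = \xi_0/\xi$ are the would-be characteristic speeds. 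Assumptions \ref{I:Causality_DNMR_1}--\ref{I:Causality_DNMR_3} guarantee that $\mathcal{A}^0$ is invertible and that the relevant scalar and tensor blocks are nondegenerate, so that the leading coefficient of $\mathcal{Q}_k$ does not vanish.

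For the sufficient conditions, the idea is to prove that under \eqref{E:Sufficient_causality_DNMR} the polynomial $\mathcal{Q}_k(v,1)$ has all its real roots in $[-1,1]$ and, moreover, that the roots remain real whenever required. The natural tool is a Sturm/Routh-type sign analysis applied to $\mathcal{Q}_k(v,1)$ and to its value at $v=\pm 1$: the conditions \eqref{E:Sufficient_causality_DNMR_a}--\eqref{E:Sufficient_causality_DNMR_h} arise by arranging that $\mathcal{Q}_k(\pm 1,1)$ has the sign dictated by the leading coefficient and that the discriminant/intermediate coefficients have the signs forcing roots to cluster inside $[-1,1]$. The cross-conditions \eqref{E:Sufficient_causality_DNMR_e},\eqref{E:Sufficient_causality_DNMR_h} arise from the coupling between the bulk-channel and the shear-channel; they encode the compatibility between the scalar acoustic-like sector (sound speed $c_s^2$ modified by $\mathscr{P}$, $\Lambda_i$, and the transport coefficients $\uplambda_{\mathscr{P}\pi}, \uptau_{\pi\pi}$) and the tensor sector (with characteristic structure set by $\upeta, \uptau_\pi, \uptau_{\pi\pi}$). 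Propagation along a generic direction is then handled by a perturbation/convexity argument showing that the extremal characteristic speeds are attained along eigendirections of $\pi$.

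The necessary conditions \eqref{E:Necessary_causality_DNMR} come from the reverse direction: if any one of the listed inequalities fails, then one can exhibit a direction $\xi$ with $\mathcal{Q}(\xi) = 0$ and $g^{\mu\nu}\xi_\mu\xi_\nu > 0$ (spacelike), violating causality; equivalently, one shows that violation forces a root of $\mathcal{Q}_k(v,1)$ outside $[-1,1]$, or forces a pair of complex roots whose presence destroys hyperbolicity. The main obstacle is the sheer algebraic complexity: because $(\mathscr{P},\pi)$ are independent dynamical variables rather than gradients of $u$, the principal part couples the scalar, vector, and tensor channels through the transport coefficients $\uptau_{\pi\pi}, \uplambda_{\mathscr{P}\pi}, \uplambda_{\pi\mathscr{P}},\updelta_{\pi\pi},\updelta_{\mathscr{P}\mathscr{P}}$, and disentangling the characteristic polynomial into a form where one can read off clean sign conditions requires a very careful choice of variables and frames, together with nontrivial algebraic manipulation to isolate the bulk and shear sectors and control their coupling. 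This is the step where the conditions \eqref{E:Sufficient_causality_DNMR_e} and \eqref{E:Sufficient_causality_DNMR_h}, with their nonpolynomial dependence on $\varrho+p+\mathscr{P}$ and the $\Lambda_i$, emerge, and where most of the effort of \cite{Bemfica-Disconzi-Hoang-Noronha-Radosz-2021} is concentrated.
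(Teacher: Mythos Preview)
Your overall strategy---compute the characteristic determinant of the first-order system, exploit the eigenframe of $\pi$ from Notation \ref{N:Diagonalization_pi}, and for necessity exhibit a violating covector---matches the paper's approach. Two points deserve correction.

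First, a sign reversal in the duality: in cotangent space, causality requires characteristic covectors $\xi$ to satisfy $g^{\mu\nu}\xi_\mu\xi_\nu \ge 0$ (spacelike or null), not $\le 0$; see Remark \ref{R:Causal_structure_A_and_g}. Characteristic surfaces lying inside the physical lightcone correspond to characteristic covectors lying \emph{outside} the cotangent lightcone. Your necessity argument inherits the same reversal: a spacelike $\xi$ with $\mathcal{Q}(\xi)=0$ is exactly what you want, not a violation. This does not affect the architecture of the argument but would flip the direction of every inequality you derive.

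Second, and more substantively: your two-step plan of first restricting $\xi$ to an eigendirection of $\pi$ and then handling generic directions by ``a perturbation/convexity argument showing that the extremal characteristic speeds are attained along eigendirections'' is the place where your outline is genuinely exposed. You give no justification for this extremality claim, and for an anisotropic multi-channel system with the cross-couplings $\uptau_{\pi\pi}$, $\uplambda_{\mathscr{P}\pi}$, $\uplambda_{\pi\mathscr{P}}$ it is not at all clear that the worst direction is an eigendirection of $\pi$. The paper's route is different: the eigenframe is used not to restrict $\xi$ but to render the full characteristic determinant, for \emph{arbitrary} $\xi$ decomposed in that frame, into a form whose roots can be analyzed directly. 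This is why the sufficient conditions \eqref{E:Sufficient_causality_DNMR} involve $\Lambda_3$ and $|\Lambda_1|$ in asymmetric combinations that do not arise from any single eigendirection. The paper also closes the loop from the algebraic root-location statement to the actual domain-of-dependence property via the Leray--Ohya machinery (Theorems \ref{T:Leray_Ohya_diagonalization} and \ref{T:Leray_Ohya_domain_of_dependence}), a step your outline leaves implicit.
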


\begin{remark}
\label{R:Sufficient_regular_DNMR}
In the statement of Theorem \ref{T:Causality_DNMR}, we use \emph{sufficiently regular} to mean any class of functions for which the domain-of-dependence property can be proven, with the domain of dependence of solutions based on their characteristics. Currently, the Cauchy problem for Eqs.~\eqref{E:DNMR} is known to be solvable only in the Gevrey class (see Sect.~\ref{S:LWP_DNMR}), thus Theorem \ref{T:Causality_DNMR} applies in this case. Nevertheless, if the Cauchy problem is solvable in more general reasonable function spaces, including Sobolev spaces, then one can establish causality in these spaces and the validity of Theorem \ref{T:Causality_DNMR} with the ideas of Sect.~\ref{S:Domain_of_dependence_outside_Gevrey}.
\end{remark}

\begin{remark}
\label{R:Check_sufficient_and_necessary_conditions}
It can be checked that the sufficient conditions \eqref{E:Sufficient_causality_DNMR} are not empty and, as a sanity check that they in fact imply the necessary conditions \eqref{E:Necessary_causality_DNMR}.
It can also be checked that conditions \eqref{E:Necessary_causality_DNMR} are not sufficient for causality, see 
\cite{Bemfica-Disconzi-Hoang-Noronha-Radosz-2021}. 
\end{remark}

\begin{proof}
Modulo verifying some technical aspects, causality boils down to computing the system's characteristics (see Remarks \ref{R:Causality_precise_definition}, \ref{R:Sufficient_regular_DNMR}, and  \ref{R:Causal_structure_A_and_g}, Theorems \ref{T:Leray_Ohya_domain_of_dependence} and \ref{T:Leray_Ohya_diagonalization}, and Sect.~\ref{S:Domain_of_dependence_outside_Gevrey}). Given the complexity of \eqref{E:DNMR}, a brute-force calculation of the characteristic determinant is unlikely to produce useful expressions. In order to obtain an expression for the characteristic determinant that is amenable to an analysis of its roots, we combine the following ideas:
\begin{itemize}
\item We use the causality for the system linearized about global thermodynamic equilibrium
as a useful guide.
\item We borrow from the philosophy of Theorem \ref{T:New_formulation}, adopting a geometric viewpoint and
seeking to identify would-be acoustical metrics associated with the several speeds of propagation of the system (see below for the list of characteristics). We also borrow from Theorem \ref{T:New_formulation} the idea of identifying the right combination of variables that lead to good structures (although, here, unlike in Theorem \ref{T:New_formulation}, we are looking for good \emph{algebraic} structures).
\item We develop several calculation techniques that allow us to carry out some of the most challenging parts of the computation. These techniques rely crucially on the diagonalization of $\pi$, expressing it in terms of its eigenvalues and eigenvectors, as expressed Notation \ref{N:Diagonalization_pi}. I.e., this seemingly inconsequential procedure of expressing $\pi$ in terms of its bases of eigenvectors is in fact one of the \emph{key ideas} in the proof. Without it, the computations become too complicated to produce useful expressions\footnote{Observe the emphasis on \emph{useful}. We can always compute the characteristic determinant with the help of a symbolic math software. But this in general produce pages-long expressions whose roots cannot be analyzed.}.
\end{itemize}
Even after casting the characteristic determinant in suitable form, the analysis of its roots is still challenging. A careful analysis has to be carried out, assumptions \eqref{E:Sufficient_causality_DNMR} in the case of sufficiency or \eqref{E:Necessary_causality_DNMR} in the case of necessity are invoked in order to establish that the roots of the characteristic polynomial have the right properties to ensure causality or lack thereof. 

Once the characteristics have been identified, causality is established with the help of Theorems \ref{T:Leray_Ohya_diagonalization} and \ref{T:Leray_Ohya_domain_of_dependence} in the case of sufficiency.

For necessity, the analysis is as follows. Causality requires that for all $(\xi_1,\xi_2, \xi_3) \in T_x^*M$, $x$ a fixed spacetime point, the roots $\xi_0 = \xi_0(\xi_1,\xi_2, \xi_3)$ of the characteristic determinant are real and the set
\begin{align}
\{ \xi_0 = \xi_0(\xi_1,\xi_2, \xi_3) \, | \, (\xi_1,\xi_2, \xi_3) \in T_x^*M  \}
\nonumber
\end{align}  
lies outside the lightcone in $T_x^*M$ (see Remark \ref{R:Equivalent_definition_hyperbolicity_relativity}; in particular, recall that we require that $(\xi_0(\xi_1,\xi_2, \xi_3),\xi_1,\xi_2, \xi_3)$ lies outside the lightcone because this analysis is on the cotangent space). Thus, in order to violate causality, it suffices to exhibit a single $(\xi_1,\xi_2, \xi_3)$ for which these conditions are not met. We show that if the negation of \eqref{E:Necessary_causality_DNMR} holds then such a $(\xi_1,\xi_2, \xi_3)$  can be constructed, i.e., causality does not hold, and the result follows by the contrapositive. 
\end{proof}

We point out that our high-level presentation of the proof of Theorem \ref{T:Causality_DNMR} should not lead readers to think that its proof is by any means straightforward. As it should be expected from the complexity of Eqs.~\eqref{E:DNMR}, the calculation of the characteristic determinant and analysis of the corresponding roots is challenging and constitute the bulk of \cite{Bemfica-Disconzi-Hoang-Noronha-Radosz-2021}.

The proof of Theorem \ref{T:Causality_DNMR} reveals that the characteristics of Eqs.~\eqref{E:DNMR} are the following:
\begin{itemize}
\item The flow lines, which appear with multiplicity $14$ (i.e., $14$ repeated roots).
\item The sound waves, which appear with single multiplicity (i.e., two distinct roots, corresponding to a cone).
\item Shear waves, which appear as three distinct characteristics of single multiplicity each (i.e., two distinct roots, giving a cone, for each characteristic). More precisely, these are generally distinct characteristics, but they might coincide for specific values of the variables and transport coefficients.
\end{itemize}

Observe that the total number of roots in the above list comes to $22$, see Remark \ref{R:Constraints_propagation_DNMR}. We refer to \cite{Bemfica-Disconzi-Hoang-Noronha-Radosz-2021} for the exact expressions of the polynomials that determine the above characteritics.

\begin{remark}
In comparison to the characteristics of the system linearized about global thermodynamic equilibrium, which considers $n$, $\mathscr{J}$, and $\mathscr{Q}$ as well, Eqs.~\eqref{E:DNMR} do not have the so-called second sound, see \cite{Hiscock:1985zz}, which corresponds to propagation of temperature disturbances. This is because $n$, $\mathscr{J}$, and $\mathscr{Q}$ are not included in \eqref{E:DNMR}.
\end{remark}

Assumptions \ref{I:Causality_DNMR_1}, \ref{I:Causality_DNMR_2}, and 
\ref{I:Causality_DNMR_3} are natural from a physical perspective,
whereas \eqref{E:Sufficient_causality_DNMR} seem physically reasonable, although they 
are too stringent for some applications of interest. Indeed, it seems that one could simply choose initial data satisfying \eqref{E:Sufficient_causality_DNMR} in order to guarantee causality. However in, numerical simulations of the quark-gluon plasma, one \emph{does not get to choose the initial condition.} 
The reason is that the quark-gluon plasma, which is modeled as a relativistic viscous fluid, is only one stage of a multi-stage process. The quark-gluon plasma emerges from the collision of heavy-ions, and after a short-lived period as a fluid, this plasma freezes out into a collection of hadrons. Only the hadrons are measured directly by detectors, with properties of the quark-gluon plasma inferred indirectly
from hadronic measurements with the help of numerical simulations. Such numerical simulations have to account for the pre-fluid phase of matter, wherein the impact of the heavy-ions is modeled using microscopic theory\footnote{More precisely, this is one of two approaches used. The other is to simply parametrize the initial conditions. But this parametrization is constrained by certain structural features of heavy-ion collisions and does not allow one to choose the initial data freely either, see \cite{Moreland:2014oya}.} (kinetic theory, quantum chromodynamics) and produces a fluid state that is then \emph{fed as initial data into the fluid dynamics simulations.} This process is illustrated in Fig.~\ref{F:HIC}. We return to this point in Sect.~\ref{S:Limitations}.

\begin{figure}[ht]
    \centering
    \includegraphics[width=\textwidth]{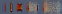}
    \caption{Stages of heavy-ion collisions \citep{Madai}.}
    \label{F:HIC}
\end{figure}

Currently, it is the set of necessary conditions \eqref{E:Necessary_causality_DNMR} that found more direct applications in numerical simulations of the quark-gluon plasma. This is because one can test, at each time step in these simulations, whether inequalities \eqref{E:Necessary_causality_DNMR} are satisfied and, if they are not, this means that causality has been violated in the simulation (of course, if \eqref{E:Necessary_causality_DNMR} holds causality is not guaranteed, see Remark \ref{R:Check_sufficient_and_necessary_conditions}). Recently, two groups of researchers, \cite{Plumberg:2021bme} and \cite{Chiu:2021muk}, tested conditions \eqref{E:Necessary_causality_DNMR} against 
numerical simulations of the quark-gluon plasma in collisions of heavy ions. They found out that \emph{causality is violated} in several stages of the simulations, particular at earlier times. In fact, 20--30\% of the fluid cells violate causality at the initial stages of the simulation, see Fig.~\ref{F:Causality_violations}.

\begin{figure}[ht]
    \centering
    \includegraphics[width=\textwidth]{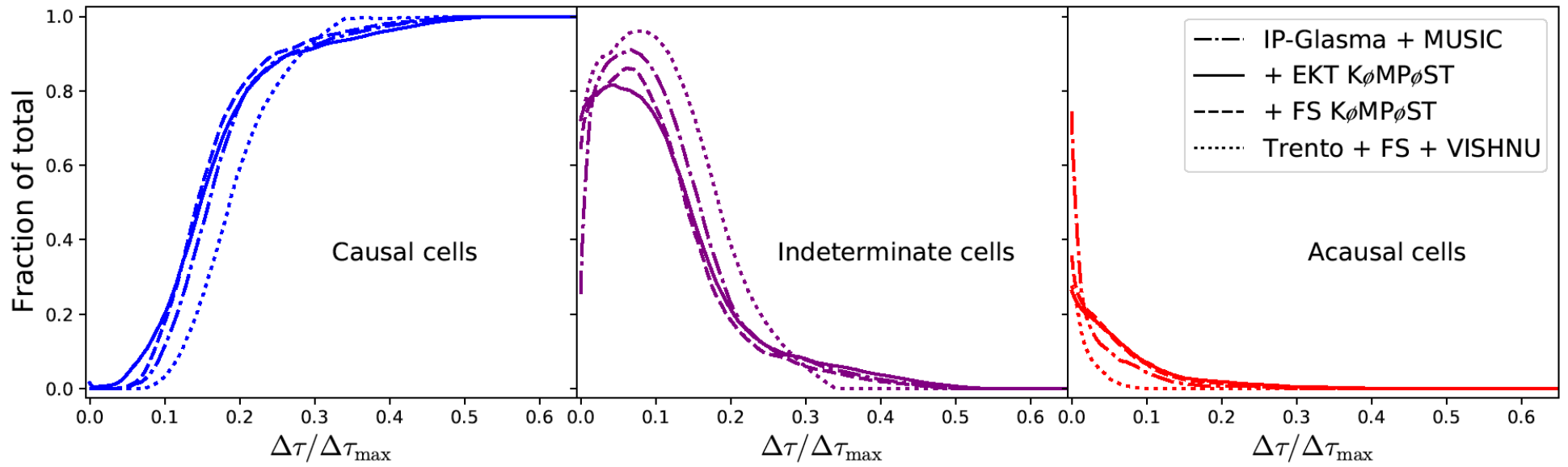}
    \caption{Causality violations from \cite{Plumberg:2021bme}. The graphs show, as a function of time: (left graph): fraction of cells that satisfy conditions \eqref{E:Sufficient_causality_DNMR}, and thus are causal; (right graph): fraction of cells that do not satisfy conditions \eqref{E:Necessary_causality_DNMR}, and thus violate causality; (middle graph): fraction of cells that satisfy \eqref{E:Necessary_causality_DNMR} but do not satisfy \eqref{E:Sufficient_causality_DNMR}, and thus whose causality is unknown. The different curves correspond to different numerical solvers that generate the initial condition fed to the hydrodynamic simulation. Image reproduced with permission from \cite{Plumberg:2021bme}.}
    \label{F:Causality_violations}
\end{figure}

Here, we will not discuss possible consequences of such causality violations for the current understanding of the quark-gluon plasma. Such an analysis would steer us away from our goal of focusing on mathematical aspects. We refer instead to \cite{Plumberg:2021bme} and \cite{Chiu:2021muk}. 
We will restrict ourselves to the following general remarks.

\begin{itemize}
\item Observe that causality violations do not necessarily imply an undefined evolution (in a PDE sense). For instance, it could be that the equations are hyperbolic and have a well-defined evolution, but their characteristics lay outside the lightcones in physical space.
\item It is clear from Fig.~\ref{F:Causality_violations} that causality violations are prominent at early times of the simulations, decreasing and nearly disappearing at later times. One possibility is that the initial data provided by the pre-fluid stage of the heavy-ion collision (see previous discussion) still retains non-fluid features that render the applicability of Eqs.~\eqref{E:DNMR} (which are derived under assumptions that the system is a fluid) questionable (recall Remark \ref{R:Viscosity_small}). On the other hand, the evolution toward a causal regime can in principle be coming form the fact that Eqs.~\eqref{E:DNMR} ``correct'' the initial state, in the sense that dissipation would drive the fluid toward thermodynamic equilibrium, i.e., the dynamics would approach that of a perfect fluid, where causality holds.
\item Another possibility is that the evolution from non-causal to causal states found in \cite{Plumberg:2021bme} and \cite{Chiu:2021muk} and depicted in Fig.~\ref{F:Causality_violations} is an artifact of the numerical algorithms employed. As it is usually the case in numerical simulations of complex systems of PDEs, their implementation is not clear-cut and requires the introduction of several numerical artifacts\footnote{One common example in typical fluid codes, including the case of classical fluids, is the use of artificial viscosity or parabolic regularizations.}. The numerical solvers used in \cite{Plumberg:2021bme} and \cite{Chiu:2021muk} employ certain regulators that 
cut off and reinitialize some variables when they reach certain critical values. It is possible that this process simply resets the variables to values where causality is satisfied.
\item The fact that causality violations nearly disappear at later times raises the natural question of whether such violations matter in practice. A satisfactory answer to this question does not seem to be available. First, one would need to make precise what it means to say that the causality violations do or do not matter. One possibility would be to show that such violations do not influence, in any meaningful way, the values of quantities of interest computed in the simulations and used for comparison with experiments. But it is hard to see how one could do so without having simulations without causality violations to compare with\footnote{In this regard, it is important to remember that, in any setting in which Eqs.~\eqref{E:DNMR} are locally well-posed, the system is deterministic, so that the behavior of solutions at later times is entirely determined by the non-causal initial condition. Thus one cannot simply infer that causality violations do not matter at earlier times from the fact that they no longer occur at later times.}.
\item The interplay between numerical solutions and actual solutions of \eqref{E:DNMR} (assuming they exists, see Sects.~\ref{S:LWP_DNMR} and \ref{S:Open_problems}) is not understood. In fact, there does not exist a result stating that the numerical algorithms used to simulate Eqs.~\eqref{E:DNMR} converge to or reproduce in a meaningful sense mathematical solutions to \eqref{E:DNMR}. In particular, it is not clear what the role of the aforementioned regulators is. An application of the causality conditions of Theorem \ref{T:Causality_DNMR} is only justified if the numerical simulations faithfully reproduce, within acceptable error bounds, actual solutions to \eqref{E:DNMR}. On the other hand, if such reproduction does not hold, it is then not clear what equations of motion are being solved in these simulations. We refer to \cite{Guermond-Marpeau-Popov-2008} and references therein for a discussion
of the delicate relation between local well-posedness and convergence
of numerical schemes.
\end{itemize}

We stress that the above remarks are speculative, save for the last one. It is not yet understood why causality violations happen, decrease over time, nor the interplay between numerical methods and solutions to \eqref{E:DNMR} (see Sects.~\ref{S:Limitations} and \ref{S:Open_problems}).
Nevertheless, we do note that these causality violations should serve at least as a cautionary tale about the \emph{dangers of numerically studying equations whose mathematical properties are poorly understood.}

\subsubsection{Local well-posedness\label{S:LWP_DNMR}}
In this section we address the solvability of the Cauchy problem for the DNMR equations \eqref{E:DNMR}. Our main result is the following.

\begin{theorem}[Bemfica, Disconzi, Hoang, Noronha, Radosz, \cite{Bemfica-Disconzi-Hoang-Noronha-Radosz-2021}]
\label{T:LWP_DNMR}
Consider the Cauchy problem for Eqs.~\eqref{E:DNMR} in Minkowski space, with initial data
$\mathring{\Psi} = (\mathring{\varrho}, \mathring{u}, \mathring{\mathscr{P}}, \mathring{\pi})$ on $\{ t = 0\}$.
Assume that the initial data satisfies the constraints \eqref{E:Velocity_normalization_viscous} and 
\eqref{E:Shear_constraints} and that $\mathring{u}^0 > 0$ (i.e., $\mathring{u}$ is future-pointing). 
Suppose that all transport coefficients in Eqs.~\eqref{E:DNMR} are analytic functions of their arguments.
Assume that the initial data satisfies assumptions \ref{I:Causality_DNMR_1}, \ref{I:Causality_DNMR_2}, and \ref{I:Causality_DNMR_3} of Theorem \ref{T:Causality_DNMR} in strict form, i.e., with $\leq$ replaced by $<$. Assume further that the initial data
satisfies conditions \eqref{E:Sufficient_causality_DNMR} of Theorem \ref{T:Causality_DNMR} in strict form. Finally, suppose that $\mathring{\Psi} \in G^{(s)}(\mathbb{R}^3)$, with $1 \leq s <\frac{20}{19}$, where $G^{(s)}$ is a Gevrey space (see Definition \ref{D:Gevrey_spaces}). Then, there exists a $T>0$ and a unique  solution
$\Psi = (\varrho, u, \mathscr{P}, \pi) \in G^{(s)}([0,T]\times \mathbb{R}^3)$ to Eqs.~\eqref{E:DNMR} such that
$\Psi = \mathring{\Psi}$ on $\{ t = 0\}$. Moreover, $\Psi$ is causal in the sense of Definition \ref{D:Causality}.
\end{theorem}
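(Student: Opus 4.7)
My plan is to obtain local existence and uniqueness via the Leray--Ohya theorem for weakly hyperbolic systems in Gevrey classes, which is the natural substitute for Leray's theorem when the characteristic polynomial has factors of multiplicity $>1$. As noted in Remark \ref{R:Constraints_propagation_DNMR}, I first write \eqref{E:DNMR} as a $22\times 22$ first--order system in the unknown $\Psi = (\varrho, u, \mathscr{P}, \pi)$, temporarily dropping the constraints \eqref{E:Velocity_normalization_viscous}--\eqref{E:Shear_constraints} and planning to recover them via a propagation argument at the end. The principal symbol of this system is not strictly hyperbolic: as shown in the proof of Theorem \ref{T:Causality_DNMR}, the characteristic determinant factors, under strict versions of \ref{I:Causality_DNMR_1}--\ref{I:Causality_DNMR_3} and \eqref{E:Sufficient_causality_DNMR}, as a product of a linear flow--line polynomial $u^\alpha \xi_\alpha$ raised to some multiplicity, times strictly hyperbolic quadratic factors corresponding to the sound cones and the three shear cones. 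Consequently, Leray's classical theorem (Appendix \ref{S:Leray_systems}) does not apply directly, but the Leray--Ohya framework does, yielding local existence in $G^{(s)}$ precisely for $1 \leq s < m/(m-1)$, where $m$ is determined by the multiplicities of the characteristic factors; the bound $s<20/19$ in the statement is the numerical value of this ratio for the DNMR system.

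Concretely, I would proceed in the following steps. First, I compute the Leray indices of the $22\times 22$ system so as to identify its principal part in the sense of Leray--Ohya and verify that, pointwise on the initial data, the principal symbol factors into hyperbolic polynomials with real roots separated as in the proof of Theorem \ref{T:Causality_DNMR}; the strict versions of \ref{I:Causality_DNMR_1}--\ref{I:Causality_DNMR_3} and \eqref{E:Sufficient_causality_DNMR} are exactly what is needed to guarantee that these roots remain real and that $\{t=0\}$ is non-characteristic and spacelike with respect to every factor. Second, since these strict inequalities are open conditions in $\Psi$, a continuity argument shows that they persist on a nonempty spacetime slab, so the hypotheses of Leray--Ohya hold throughout. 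Third, the analyticity of the transport coefficients and the $G^{(s)}$ regularity of the data put us in the functional setting in which the Leray--Ohya existence and uniqueness theorem for weakly hyperbolic systems applies, producing a unique $\Psi \in G^{(s)}([0,T]\times \mathbb{R}^3)$ with the prescribed initial values.

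It then remains to recover the constraints and the causality statement. For the constraints, I differentiate $g_{\alpha\beta}u^\alpha u^\beta$, $u^\mu \pi_{\mu\alpha}$, $\pi^\mu_\mu$ and the antisymmetric part of $\pi$ and use \eqref{E:DNMR} and the assumption $\mathring u^0>0$ to show that each such quantity satisfies a linear homogeneous relaxation/transport equation along $u$ with initial data vanishing by assumption; uniqueness in Gevrey (or even standard ODE uniqueness along integral curves of $u$) then forces them to remain zero on $[0,T]\times \mathbb{R}^3$, as promised in Assumption \ref{A:Working_projected_equations_and_normalization}. For causality, Theorem \ref{T:Causality_DNMR} applied at every spacetime point of the solution (whose variables satisfy, by continuity and shrinking $T$ if needed, the strict sufficient conditions) identifies the characteristic cones; Theorem \ref{T:Leray_Ohya_domain_of_dependence} (with the diagonalization provided by Theorem \ref{T:Leray_Ohya_diagonalization}) then yields the domain--of--dependence property in $G^{(s)}$, which is exactly causality in the sense of Definition \ref{D:Causality}.

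The main obstacle I anticipate is the first step: explicitly checking that the $22\times 22$ system, which has no diagonal principal part in its natural form, fits the Leray--Ohya hypotheses, i.e. exhibiting integer Leray weights relative to which the principal symbol factors as the product of strictly hyperbolic polynomials identified in Theorem \ref{T:Causality_DNMR}, and tracking how the multiplicities of the flow--line and shear factors combine to produce the Gevrey threshold $s<20/19$. This is essentially a careful algebraic computation building on the characteristic analysis in the proof of Theorem \ref{T:Causality_DNMR}, rather than a new analytic difficulty, but it is delicate because any miscount of the multiplicities would change the admissible Gevrey class.
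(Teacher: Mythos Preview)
Your proposal is correct and follows essentially the same route as the paper: both reduce to the Leray--Ohya framework via the characteristic analysis of Theorem~\ref{T:Causality_DNMR}, use Choquet-Bruhat's diagonalization (Theorem~\ref{T:Leray_Ohya_diagonalization}) to handle the non-diagonal principal part, and read off the Gevrey threshold $s<20/19$ from the resulting Gevrey index (Remark~\ref{R:Q_Q_minus_1_diagonalization}).

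One step you gloss over that the paper makes explicit: after diagonalizing, Theorem~\ref{T:Leray_Ohya_existence} produces a solution to the \emph{diagonalized} system \eqref{E:Diagonalized_Leray}, not directly to \eqref{E:DNMR}. To recover a solution of the original first-order system, the paper approximates the Gevrey data by analytic data, solves \eqref{E:DNMR} directly via Cauchy--Kovalevskaya (the strict inequalities guarantee $\{t=0\}$ is non-characteristic), observes that these analytic solutions automatically satisfy the diagonalized system, and then uses the Gevrey estimates of Leray--Ohya to pass to the limit in the Gevrey topology. This is not a serious gap in your outline---it is a standard device---but it is worth flagging since without it you have existence only for the higher-order diagonalized system, and the passage back to \eqref{E:DNMR} is not automatic.
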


\begin{proof}
The computation of the characteristics carried out for the proof of Theorem \ref{T:Causality_DNMR} allows us to invoke Theorem \ref{T:Leray_Ohya_diagonalization} to diagonalize Eqs.~\eqref{E:DNMR}, putting them into the form \eqref{E:Diagonalized_Leray}. From the characteristics we can further verify that the intersection of the corresponding cones in co-tangent space has non-empty interior when applied to the initial data. Thus, the diagonalized system is a Leray--Ohya system for which the assumptions of Theorem \ref{T:Leray_Ohya_existence} and are satisfied. More precisely:
\begin{itemize}
\item The diagonliazed system is of higher order than \eqref{E:DNMR} (see Eq.~\eqref{E:Diagonalized_Leray}). Thus, we need to derive higher order initial conditions for the diagonalized system. This is done by sucessively differentiating Eqs.~\eqref{E:DNMR} and algebraically solving for time derivatives in terms of spatial derivatives, which can be done because, in view of the computation of the characteristics applied to the initial data, $\{ t= 0\}$ in non-characteristics.
\item The assumption that the diagonliazed system forms a Leray--Ohya system for any $V \in \mathcal{A}^s(\Sigma,Y)$  (see Definition \ref{D:Iteration_space}) is guaranteed by the fact that we assumed conditions 
\ref{I:Causality_DNMR_1}, \ref{I:Causality_DNMR_2}, \ref{I:Causality_DNMR_3}, and \eqref{E:Sufficient_causality_DNMR} to hold in strict form, so that we obtain a Leray--Ohya system for the given initial data and functions nearby.
\item The Gevrey index of the diagonalized system (see Definition \ref{D:Gevrey_index} and Remark 
\ref{R:Q_Q_minus_1_diagonalization}) is $\frac{19}{20}$.
\end{itemize}
With this, we obtain a solution to the diagonalized system. In order to obtain a solution to the the original equations, we proceed as follows. We approximate the given Gevrey data by analytic data (see \cite{Teofanov-2006}) and solve the corresponding analytic problem by the Cauchy--Kovalevskays theorem (which can be applied since $\{ t = 0\}$ is non-characteristic for our data, see above). These solutions automatically satisfy the diagonalized system. Thus, the Gevrey estimates of \cite{Leray-Ohya-1967} apply to them. We can use these estimates to control the analytic solutions in the Gevrey topology and pass to the limit, obtaining a Gevrey solution to the original equations. See \cite{Disconzi-2019} for details of the argument (which was applied to a different set of equations, but the argument is the same).

Causality follows because the solutions to \eqref{E:DNMR} that we have constructed also satisfy the diagonliazed system for which Theorem \ref{T:Leray_Ohya_domain_of_dependence} can be invoked.
\end{proof}

\begin{remark}
\label{R:Generalization_LWP_DNMR}
We make the following important observations.
\begin{itemize}
\item Theorem \ref{T:LWP_DNMR} generalizes in a more or less straightforward manner to
a globally hyperbolic spacetime and to Eqs.~\eqref{E:DNMR} coupled to Einstein's equations with Gevrey initial data, see \cite{Bemfica-Disconzi-Hoang-Noronha-Radosz-2021}.
\item The requirement of Gevrey initial data is, unfortunately, quite restrictive. We would like to establish local existence and uniqueness in Sobolev and smooth spaces. This seems challenging, see Sect.~\ref{S:Open_problems}.
\item Nevertheless, in the case when $\pi$ is absent (i.e., setting $\pi=0$ in Eqs.~\eqref{E:DNMR_density}, \eqref{E:DNMR_velocity}, \eqref{E:DNMR_bulk}, and dropping Eq.~\eqref{E:DNMR_shear}), the system \eqref{E:DNMR_density}-\eqref{E:DNMR_bulk} is locally well-posed in Sobolev spaces under suitable assumptions on the transport coefficients. This result holds with coupling to Einstein's equations and, in fact, if the baryon current \eqref{E:Baryon_density_current_viscous} is included with $\mathscr{J} = 0$, and if a smooth function of $\varrho$, $n$, and $\mathscr{P}$ (but not their derivatives) is added to \eqref{E:DNMR_bulk}. See \cite{Bemfica-Disconzi-Noronha-2019-2} for details. We note that that in \cite{Bemfica-Disconzi-Noronha-2019-2}, $\upzeta = \upzeta(\varrho,n)$, which is the case often considered in physics, but as also noted in that paper, the result remains true if $\upzeta = \upzeta(\varrho,n,\mathscr{P})$, so that the term $\updelta_{\mathscr{P}\mathscr{P}} \mathscr{P}$ in \eqref{E:DNMR_bulk} can be absorbed into $\upzeta$.
\end{itemize}
\end{remark}

A very natural question once local well-posedness has been established is that of global well-posedness versus breakdown of solutions. This is addressed in the next Theorem, which we state in an informal way for the sake of brevity, referring to \cite{Disconzi-Hoang-Radosz-2023} for a precise statement\footnote{See \cite{Lerman:2023qyc} for an analogue of Theorem \ref{T:Breakdown_DNMR} for a non-relativistic limit of the DNMR equations.}.

\begin{theorem}[\citealt{Disconzi-Hoang-Radosz-2023}]
\label{T:Breakdown_DNMR}
Consier the DNMR equations without shear viscosity and with the Minkowski metric.
Then, there exists an open set of smooth initial data 
for which the corresponding unique smooth solutions to the Cauchy problem
break down in finite time.
\end{theorem}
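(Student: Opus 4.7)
\medskip

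\noindent \textbf{Proof plan for Theorem \ref{T:Breakdown_DNMR}.}

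The plan is to extend the Sideris-type integral-moment argument that Guo and Tahvildar-Zadeh \cite{Guo-Tahvildar-Zadeh-1999} used for the relativistic Euler equations to the DNMR system with bulk viscosity only. The conceptual point is that, in the absence of $\pi$, the principal structure of \eqref{E:DNMR_density}--\eqref{E:DNMR_bulk} is the same as for relativistic Euler \emph{plus} a relaxation equation for $\mathscr{P}$, and the relaxation term $\uptau_\mathscr{P}^{-1}\mathscr{P}$ cannot remove enough ``free energy'' from an appropriately chosen compactly supported perturbation to prevent the Euler-type blow-up mechanism. First I would set the stage: let $(\bar{\varrho}, \bar{u}, 0)$ with $\bar{u}=(1,0,0,0)$ be a constant equilibrium, and consider smooth initial data that agrees with this equilibrium outside a ball $B_R\subset \mathbb{R}^3$ and lies in the existence class provided by Remark \ref{R:Generalization_LWP_DNMR}. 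By the causality/finite-propagation property established in Section \ref{S:Causality_DNMR}, a smooth solution, for as long as it exists, coincides with the background outside an expanding ball $B_{R+c_0 t}$ with $c_0<1$ depending only on the background transport coefficients, so all spatial integrals below are over a compact region.

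Next I would introduce integral functionals analogous to those of Sideris and Guo--Tahvildar-Zadeh. The natural choices are the ``excess mass-energy'' $F(t):=\int_{B_{R+c_0 t}}(\mathcal{T}^{00}-\bar{\mathcal{T}}^{00})\,dx$, a ``radial momentum'' $G(t):=\int_{B_{R+c_0 t}} x^i\mathcal{T}^{0i}\,dx$, and a weighted moment $H(t):=\int_{B_{R+c_0 t}} (p-\bar{p})\,dx$ together with bulk-viscous analogues involving $\mathscr{P}$. Using \eqref{E:DNMR_density}--\eqref{E:DNMR_velocity} written in the divergence form $\nabla_\alpha \mathcal{T}^\alpha_\beta=0$ and integrating by parts, one computes $\dot G$ in terms of integrals of $(p+\mathscr{P})$ and of the momentum flux; the key Sideris-type step is the algebraic inequality bounding $\int (p-\bar p)$ from below by a superlinear power of $F$ (using the strict convexity of the internal energy in $\varrho$ and the physical positivity assumptions from \ref{I:Causality_DNMR_2}). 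This produces, after commuting $\frac{d}{dt}$ with the moving-ball integration, a Riccati-type differential inequality of the schematic form $\ddot G \geq c_1 G^{1+\delta} - R_{\text{visc}}(t)$ with some $\delta>0$, where $R_{\text{visc}}$ collects all contributions involving $\mathscr{P}$.

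The hard step, which is the main obstacle, is controlling $R_{\text{visc}}(t)$. In the Euler case this term is absent, but here $\mathscr{P}$ appears both algebraically in the momentum flux and dynamically through \eqref{E:DNMR_bulk}. The plan is to use the relaxation structure of \eqref{E:DNMR_bulk} to derive an a priori bound of the form
\begin{align}
\int_{B_{R+c_0 t}} \mathscr{P}^2\, dx \;\lesssim\; e^{-t/\uptau_\mathscr{P}}\int_{B_R} \mathring{\mathscr{P}}^2\, dx + \uptau_\mathscr{P}\upzeta\int_0^t\!\!\int_{B_{R+c_0\tau}}(\nabla_\mu u^\mu)^2\,dx\,d\tau,
\end{align}
which is essentially an entropy/dissipation identity for the bulk viscous sector, and then to estimate the velocity-divergence term by the (time-integrated) acceleration of the mass-energy functional, closing the estimate in terms of $G$ and $F$ themselves. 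Choosing $\mathring{\mathscr{P}}$ small (even zero) and $(\mathring{\varrho},\mathring{u})$ in an open set where $G(0), \dot G(0)$ are sufficiently large compared to the transport-coefficient-dependent constants, I would show $R_{\text{visc}}(t)\leq \tfrac{1}{2} c_1 G^{1+\delta}(t)$ on the entire interval of existence, reducing the inequality to $\ddot G\geq \tfrac{c_1}{2}G^{1+\delta}$, which forces $G$ (and hence $\|\nabla (\varrho,u)\|_{L^\infty}$) to blow up in finite time, contradicting smooth global existence.

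Finally, openness of the data set is automatic: the moment inequalities defining the blow-up condition involve continuous functionals of the initial data in $H^N$ (or Gevrey), and the transport-coefficient-dependent thresholds are strict. Thus perturbing the constructed example in a sufficiently strong topology preserves both the hypotheses of the local existence theorem (Remark \ref{R:Generalization_LWP_DNMR}) and the differential inequality, yielding the desired open set. The main technical difficulty throughout is juggling the bulk-viscous dissipation against the Euler-type Riccati mechanism; a secondary difficulty is verifying that the positivity conditions \ref{I:Causality_DNMR_2}--\ref{I:Causality_DNMR_3} are preserved along the flow up to the blow-up time, so that the DNMR local well-posedness theory remains applicable and the blow-up is a genuine breakdown of smoothness rather than a degeneration of the system.
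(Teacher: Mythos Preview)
Your overall strategy matches the paper's: extend the Guo--Tahvildar-Zadeh integral-moment argument to the bulk-viscous DNMR system, obtain a Riccati-type differential inequality for a radial-momentum functional, and show that the extra viscous contributions cannot defeat the blow-up mechanism. There are, however, two substantive discrepancies worth flagging.

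First, the paper explicitly notes that the proof in \cite{Disconzi-Hoang-Radosz-2023} requires the baryon density $n$ and the conservation law \eqref{E:Divergence_baryon_current_viscous} (with $\mathscr{J}=0$) to be part of the system; the argument does not close with $(\varrho,u,\mathscr{P})$ alone. Your proposal works entirely without $n$, so as written it is set up in a regime where the actual proof is known not to go through. Relatedly, the conclusion in \cite{Disconzi-Hoang-Radosz-2023} is a \emph{dichotomy}: either the solution ceases to be $C^1$ or it exits the physical regime (including the causality conditions). Your plan treats preservation of \ref{I:Causality_DNMR_2}--\ref{I:Causality_DNMR_3} as something to be verified so that the breakdown is ``a genuine breakdown of smoothness''; the paper instead builds the possible failure of these conditions into the statement.

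Second, your mechanism for controlling $R_{\text{visc}}$ differs from the paper's and has a gap. You propose an $L^2$ dissipation identity for $\mathscr{P}$ sourced by $\int(\nabla_\mu u^\mu)^2$, and then claim this divergence term can be ``estimated by the (time-integrated) acceleration of the mass-energy functional.'' But the Sideris/Guo--Tahvildar-Zadeh functionals $F$, $G$ are zeroth-order moments of $\mathcal{T}$; they give no direct control on $\|\nabla_\mu u^\mu\|_{L^2}$, and such a bound would amount to a gradient estimate that the integral-moment method does not provide. The paper instead obtains pointwise-type control of $\mathscr{P}$ via transport estimates along the flow lines (integrating \eqref{E:DNMR_bulk} with the Duhamel/relaxation structure), which is what allows the viscous remainder to be absorbed without needing derivative bounds on $u$.
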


In Theorem \ref{T:Breakdown_DNMR}:
\begin{itemize}
\item The DNMR equations without shear viscosity are as stated above, i.e., 
setting $\pi=0$ in Eqs.~\eqref{E:DNMR_density}, \eqref{E:DNMR_velocity}, \eqref{E:DNMR_bulk}, and dropping Eq.~\eqref{E:DNMR_shear}, but in Theorem \ref{T:Breakdown_DNMR} it is further assumed that $\updelta_{\mathscr{P}\mathscr{P}}=0$. Equation \eqref{E:Divergence_baryon_current_viscous}, with $\mathscr{J} = 0$ in \eqref{E:Baryon_density_current_viscous}, is also included, with the equation of state and the transport coefficients being now functions of $\varrho$ and $n$. For technical reasons, it turns out that the proof provided in \cite{Disconzi-Hoang-Radosz-2023} does not go through if $n$, and thus Eq.~\eqref{E:Divergence_baryon_current_viscous}, are not considered as apart of the system, see the discussion in 
\cite{Disconzi-Hoang-Radosz-2023}. Finally, we note that the result is in fact a bit more general, as a term proportional to $\mathscr{P}^2$ can be added to Eq.~\eqref{E:DNMR_bulk}, see \cite{Disconzi-Hoang-Radosz-2023}.
\item Although we have not stated a result on the Cauchy problem for the DNMR equations in $C^\infty$, the situation described in the previous bullet point fits into the assumptions that guarantee local well-posedness in Sobolev spaces mentioned in Remark \ref{R:Generalization_LWP_DNMR}. From this and a standard argument one obtains local existence and uniqueness of smooth solutions.
\item The breakdown in Theorem \ref{T:Breakdown_DNMR} happens as a dichotomy. In finite time, either solutions cease to be $C^1$ or they become un-physical, where physical solutions are defined in a precise sense. In particular, one of the ways that solutions can become un-physical is by violation of causality.
\item The initial data constructed in Theorem \ref{T:Breakdown_DNMR} consists of localized (large) perturbations of constant states. 
\end{itemize}

\begin{proof} We will only highlight some aspects of the proof, referring to 
\cite{Disconzi-Hoang-Radosz-2023} for details.
\begin{itemize}
\item The proof follows closely the proof of breakdown of solutions for the relativistic Euler equations by Guo and Tahvildar--Zadeh, see \cite{Guo-Tahvildar-Zadeh-1999}, which in turn builds on the seminal work of Sideris on formation of singularities for the classical compressible Euler equations, see \cite{Sideris-1985}. We assume that solution exists for all time and derive a Riccati-type differential inequality for the fluid variables that contradicts some quantitative assumptions on the initial data. 
\item In order to ensure that that Euler-inspired argument works, we need to obtain some precise control of $\mathscr{P}$. This is done with help of some intricate transport estimates for $\mathscr{P}$.
\end{itemize}
\end{proof}

We remark that the proof of Theorem \ref{T:Breakdown_DNMR}, being a proof by contradiction, does not provide information about the nature of the breakdown. See Sect.~\ref{S:Open_problems}.

\subsubsection{Further context and historical notes\label{S:DNMR_historical}}

Systems where the viscous fluxes are new variables satisfying additional equations of motion are the subject of \textdef{extended irreversible thermodynamics}, also referred to as \textdef{rational extended thermodynamics} (see \citealt{Muller-Ruggeri-Book-1998}). 
``Extended'', in this context, refers to the fact that one is extending the state space by adding extra variables. This idea goes back to the works of \cite{Grad-1963} and \cite{Mueller-1967}, where they adopted this approach to construct models of classical fluids. Grad's obtained equations of motion for the viscous fluxes from the method of moments in kinetic theory, as already mentioned, whereas M\"uller postulated equations for the viscous fluxes based on thermodynamic considerations. 
More precisely, M\"uller introduced an out-of-equilibrium entropy current and tailored the equations of motion for the viscous fluxes to guarantee that entropy production be always non-negative. M\"uller's entropy current contained a correction to the entropy current of a perfect fluid, deemed a second-order correction. This approach has been widely adopted in extended theories and because of it extended theories are also known as \textdef{second-order theories of relativistic viscous fluids.} 

M\"uller's approach was adapted to relativity by 
\cite{Stewart-1977,Israel-Stewart-1979,Stewart-Stewart-1976,Israel:1976tn,Israel:1979wp}, leading to what has become known as the \textdef{M\"uller--Israel--Stewart (MIS) theory,} sometimes referred to simply as \textdef{Israel--Stewart (IS) theory} (see Eqs.~\eqref{E:MIS} below). Another recent example of a second-order theory of relativistic viscous fluids is the \textdef{resumed Baier--Romatschke--Son--Starinets--Stephanov (re-BRSSS) theory} introduced in \cite{Baier:2007ix}, which connects second-order approaches with a gradient expansion.

It has been common practice in the field of relativistic viscous fluids to refer to the original Israel--Stewart theory, the re-BRSSS theory, and the DNMR theory, collectively as MIS or IS theories, or yet as \textdef{MIS-like} or \textdef{IS-like theories.} While such theories share the basic feature that the viscous fluxes are independent variables satisfying additional equations of motion, we note that they are in practice different theories as the equations satisfied by the viscous fluxes are different in each instances. For example, while in the MIS theory the entropy current is always non-negative, in the DNMR theory it is non-negative only up to higher-order terms, where higher-order is measured with respect to a specific power-counting scheme used to organize the moments expansion in the derivation of the DNMR theory. These higher-order terms are negligible in the regime where the derivation from kinetic theory by the method of moments is expected to be valid, thus in the field's jargon one says that entropy production in the DNMR theory is non-negative within the limit of validity of the theory.

On the other hand, because of their similarities, it is often the case that qualitative ideas are treated as interchangeable among these MIS-like theories. For example, while numerical simulations of the quark-gluon plasma are performed using the DNMR theory, the re-BRSSS has been highly influential in setting up the stage for the study of the quark-gluon plasma. We remark, however, that the MIS, re-BRSSS, and DNMR theories agree when linearized about a global thermodynamic equilibrium\footnote{This explains why in our references about the stability of the DNMR theory we cited papers published before the original DNMR work. Those works studied the stability of the MIS theory, which gives the same linearization about global thermodynamic equilibrium as the DNMR theory.}. We refer the reader to Chapter~6 of \cite{Rezzolla-Zanotti-Book-2013}, \cite{Romatschke:2017ejr}, \cite{Denicol-Rischke-Book-2021,Muller-Ruggeri-Book-1998}, and references therein for further discussions of second-order theories of relativistic viscous fluids.

\subsubsection{Viscous shocks in the MIS theory and implications for  the DNMR theory\label{S:Viscous_shocks}}
One topic of much interest in fluid dynamics is that of the formation of shocks, see the beginning of Sect.~\ref{S:Study_of_shocks} and Sects.~\ref{S:Shocks_1d} and \ref{S:Shocks_higher_d}.
The possibility of shocks in the quark-gluon plasma has been investigated in \cite{Bouras:2009nn,Bouras:2009vs,Bouras:2010zz,Fogaca:2012gb,Bouras:2010hm}, whereas the potential for shocks in mergers of neutron stars is discussed in \cite{Radice:2020ddv,Bernuzzi:2020tgt}. 

Given that the DNMR equations form a quasilinear system not in divergence form, it is not immediately clear how to construct a weak formulation that could accommodate jump discontinuities and hence describe shocks. On the other hand, when dissipation is present, the possibility emerges that physical phenomena usually modeled by shocks could be described by solutions that are not discontinuous as in the case of perfect fluids, but rather by the so-called viscous shocks. A \textdef{viscous shock} is a plane-symmetric continuous solution connecting asymptotic thermodynamic equilibrium states which are supersonic in front of the shock and subsonic behind the shock, see below and also the references \cite{Barker-Humpherys-Lafitte-Rudd-Zumbrun-2008,Olson:1991pf,Geroch-Lindblom-1991}. We will now describe some results on viscous shocks for the MIS theory, turning to the DNMR equations at the end of this section (recall from  Sect.~\ref{S:DNMR_historical} the similarities between the MIS and the DNMR theories).

The equations of motion of the MIS theory are, as in the case of the DNMR theory, given by \eqref{E:Divergence_energy_momentum_baryon_current_viscous}, with the energy-momentum tensor and the baryon
current given by
\eqref{E:Energy_momentum_viscous} and \eqref{E:Baryon_density_current_viscous}, respectively, with the viscous fluxes satisfying additional equations of motion. The choice of these equations, however, is different than that of the DNMR theory, and is given by (see \citealt[Chapter 6]{Rezzolla-Zanotti-Book-2013} and \citealt{Olson:1991pf}; compare with \eqref{E:DNMR})
\begin{subequations}{\label{E:MIS}}
\begin{align}
\upbeta_0 u^\mu \nabla_\mu \mathscr{P} + \frac{1}{\upzeta} \mathscr{P} + \nabla_\mu u^\mu 
 -\upalpha_0 \nabla_\mu \mathscr{Q}^\mu 
 -\upgamma_0 \uptheta \mathscr{Q}^\mu \nabla_\mu \left( \frac{\upalpha_0}{\uptheta} \right)
& \nonumber
\\
  + \frac{1}{2} \mathscr{P} \uptheta \nabla_\mu \left( \frac{\upbeta_0 u^\mu}{\uptheta} \right)  & = 0,
\label{E:MIS_bulk}
\\
\Big\langle \upbeta_2 u^\lambda \nabla_\lambda \pi^{\mu \nu} + \nabla^\mu u^\nu  - \upalpha_1 \nabla^\mu \mathscr{Q}^\nu 
 + \frac{1}{2} \pi^{\mu\nu} \uptheta \nabla_\lambda \left( \frac{\upbeta_2 u^\lambda}{\uptheta} \right)
&
\nonumber
\\
 -\upgamma_1 \uptheta \mathscr{Q}^\mu \nabla^\nu \left( \frac{\upalpha_1}{\uptheta} \right) 
+\upgamma_3 \pi_\lambda^\nu \nabla^{[ \mu} u^{\lambda ]} \Big\rangle + \frac{1}{2 \upeta} \pi^{\mu\nu} 
 & = 0,
\label{E:MIS_shear}
\\
\upbeta_1 \proj^{\mu\nu} u^\lambda\nabla_\lambda \mathscr{Q}_\nu + \frac{1}{\upkappa \uptheta} \mathscr{Q}^\mu + \frac{1}{\uptheta}\proj^{\mu\nu}\nabla_\nu \uptheta 
+\proj^{\mu\nu} u^\lambda \nabla_\lambda u_\nu 
-\upalpha_0 \proj^{\mu\nu}\nabla_\nu \mathscr{P} 
&
\nonumber
\\
-\upalpha_1 \proj^{\mu\nu} \nabla_\lambda \pi^{\lambda}_\nu
+ \frac{1}{2} \proj^{\mu\nu}\mathscr{Q}_\nu \uptheta \nabla_\lambda \left( \frac{\upbeta_1 u^\lambda}{\uptheta} 
\right)
-(1-\upgamma_0)\uptheta \mathscr{P} \proj^{\mu\nu}\nabla_\nu\left(\frac{\upalpha}{\uptheta}\right)
&=0.
\label{E:MIS_heat}
\end{align}
\end{subequations}
Above, $\Big \langle \cdot \Big \rangle$ means
\begin{align}
\Big \langle A^{\mu\nu} \Big \rangle := \mathsf{\Pi}^{\mu\nu}_{\alpha\beta} A^{\alpha\beta},
\nonumber
\end{align}
where $\mathsf{\Pi}^{\mu\nu}_{\alpha\beta}$ is given by \eqref{E:Projection_symmetric_trace_free_part}; $[\cdot]$ means anti-symmetrization; the coefficients $\upalpha_i, \upbeta_i, \upgamma_i$ are transport coefficients which are known functions of $\varrho$ and $n$; and all other quantities have the same meaning as in Sects.~\ref{S:Relativistic_Euler} and the beginning of Sect.~\ref{S:DNMR}. Also, in \eqref{E:Energy_momentum_viscous} and \eqref{E:Baryon_density_current_viscous} we have $\mathscr{R}=0,\mathscr{J}=0$. Finally, constraints
\eqref{E:Shear_constraints} continue to hold and, in addition,
\begin{align}
u^\mu \mathscr{Q}_\mu = 0.
\nonumber
\end{align}

\begin{remark}
Unlike the case of Eqs.~\eqref{E:DNMR}, in Eqs.~\eqref{E:MIS} we are considering a dependence on $n$ and the inclusion of $\mathcal{J}$ and $\mathscr{Q}$, compare with Assumption \ref{A:DNMR_simplified}.
\end{remark}

We are now ready to state the problem. We consider plane-symmetric steady solutions and assume the background to be Minkowski. Thus, we can assume all quantities to depend only on the $x^1$ coordinate and write the velocity and the heat flux as $u = \gamma (1,v, 0,0)$, $\mathscr{Q} =  \mathsf{Q}(\gamma v, \gamma,0,0)$, where $v$ and $\mathsf{Q}$ are scalars and $\gamma = (1-v^2)^{-\frac{1}{2}}$ is the standard $\gamma$ factor in Minkowski space. Under these conditions, there is only one non-zero component of $\pi$ which we can take to be the $x^1 x^1$ component and proportional to $\gamma^2$, so we can write $\pi^{11} = \mathbb{B} \gamma^2$ for some $\mathbb{B}$. We prescribe that solutions approach global thermodynamic equilibrium when $x \rightarrow\pm \infty$, i.e., the viscous fluxes vanish when $x \rightarrow\pm \infty$ and the remaining fluid variables approach constant values. In order to describe shocks, we want solutions that are supersonic in front of the shock and subsonic behind it, thus, $v \rightarrow v^\pm$ when $x \rightarrow \pm \infty$, where $v^+ > c_s^+$,  $0 < v^- < c_s^-$ and $c_s^\pm$ are the corresponding asymptotic values of the sound speed. Because the system approaches equilibrium at $\pm \infty$, $v^\pm$ are the perfect-fluid asymptotic speeds before and after the shock\footnote{We note that if we insist that $v^\pm$ are asymptotic states of a shock wave solution to the relativistic Euler equations satisfying appropriate jump conditions (see Sect.~\ref{S:Shocks_1d}), $v^+$ and $v^-$ cannot both be prescribed arbitrarily, see \cite{Geroch-Lindblom-1991}.\label{FN:Asymptotic_states_related}}.
Solutions to \eqref{E:MIS} with the properties just described are called \textbf{viscous shocks.}

\begin{remark}
The description given above can be equally applied to theories with different viscous fluxes. Thus, we can also define viscous shock solutions for other theories of viscous fluids.
\end{remark}

We define the upstream and downstream Mach numbers, denoted $Ma^\pm$, by
\begin{align}
Ma^\pm := \frac{v^\pm}{c_s^\pm}.
\nonumber
\end{align}
Physically, $Ma^\pm$ measures the strength of a shock. A shock is thought of as mild or weak if $Ma^\pm$ is close to one.

The main result concerning viscous shocks for the MIS theory, due to \cite{Geroch-Lindblom-1991} and \cite{Olson:1991pf}, is that there \emph{exists a critical 
value\footnote{$Ma^-$ and $Ma^+$ will, in general, be related, see Footnote \ref{FN:Asymptotic_states_related}. Thus, the result can be alternatively stated in terms of the existence of a critical $Ma^-_*$.} $Ma^+_*$ such that the MIS equations do not admit viscous shock solutions if $Ma^+ > Ma^+_*$.}

The main idea of the proof is the following. Under the conditions defining a viscous shock, the system \eqref{E:MIS} simplifies considerably and $\mathsf{Q}$, $\mathbb{B}$ and $n$ can be written in terms of $\varrho$, $v$, and constants of integration that are determined by prescribing the variables' asymptotic values. The equations simplify to a system of the form
\begin{align}
A(\Psi) \frac{d}{dx^1} \Psi = B(\Psi),
\nonumber
\end{align}
where $\Psi = (\varrho, v, \mathscr{P})$ and $A$ and $B$ are, respectively, a matrix and a vector depending on $\Psi$ (but not its derivative). To have well-defined solutions, we need to avoid the characteristic condition 
\begin{align}
\det A(\Psi) = 0.
\nonumber
\end{align}

The determinant $\det A(\Psi)$ is known at $x = \pm \infty$ since the asymptotic states are prescribed in this limit. Analyzing the determinant evaluated at the asymptotic states and using $v^+ > c_s^+$ and  $0 < v^- < c_s^-$, one can then show that the determinant changes sign and thus, being a continuous function of the fluid variables, must vanish somewhere, if $Ma^+$ is sufficiently large. The argument also uses standard thermodynamic assumptions such as the positivity of several physical quantities. The reader is referred to \cite{Geroch-Lindblom-1991,Olson:1991pf} for the details of the proof.

Recall from Sect.~\ref{S:DNMR_historical} that the MIS and the DNMR theories, albeit different, share many features. Thus, one could expect a similar result as the above for the DNMR theory, see Sect.~\ref{S:Open_problems}. We point out that viscous shocks have also been investigated for the BDNK theory that we discuss in Sect.~\ref{S:BDNK}.

\subsubsection{Potential limitations\label{S:Limitations}}
Despite the widespread use of the DNMR equations and their success in applications to the study of the quark-gluon plasma, it is important to put in perspective some potential limitations of the theory:
\begin{itemize}
\item From a mathematical point of view, the main drawback in the current status of the DNMR theory is a lack of local well-posedness in Sobolev spaces. The only case when local well-posedness in Sobolev spaces does hold, when shear viscosity is absent, is not enough for applications to the quark-gluon plasma, since it is precisely shear viscosity that seems to be the main viscous contribution in this case (see \citealt{Heffernan:2023gye,Heffernan:2023utr}). 
\item In particular, when it comes to establishing results about convergence of numerical schemes (see comments at the end of Sect.~\ref{S:Causality_DNMR}), it seems highly unlikely that Gevrey spaces can be useful.
\item More generally, we would like to establish causality and local well-posedness of the DNMR theory when $\mathscr{J}$, $n$, and $\mathscr{Q}$ are included (recall Remark \ref{A:DNMR_simplified} and see Sect.~\ref{S:Open_problems}). While Eqs.~\eqref{E:DNMR} are enough for current investigations of the quark-gluon that focus on high-energy heavy-ion collisions, in which case $n=0$, low-energy collisions are expected to have $n\neq 0$ (see \citealt{Denicol:2018wdp}).
\item In the study of viscous effect in mergers of neutron stars, where coupling to Einstein's equations needs to be taken into account, local well-posedness in Sobolev spaces is widely regarded as a basic requirement for the implementation of general-relativistic numerical codes (see discussions in
\citealt[Chapter 11]{Baumgarte:2010ndz} and 
\citealt[Chapter 4]{Rezzolla-Zanotti-Book-2013}). The good news in this regard is that 
recent investigations suggest that only bulk viscosity is relevant for these mergers (see \citealt{Alford:2017rxf,Most:2022yhe}). In this case, one could simulate Einstein's equations coupled to the DNMR equations with bulk viscosity as the only viscous flux, in which case local well-posedness in Sobolev spaces has been established
(see Remark \ref{R:Generalization_LWP_DNMR}). The not so good news is that some estimates suggest that shear viscosity can also be relevant (see \citealt{Shibata:2017jyf,Shibata:2017xht,Ripley:2023qxo}), in which case local well-posedness for the DNMR equations is currently unavailable\footnote{Also, as a matter of principle, one would establish local well-posedness in the presence of all viscous fluxes, and show that a smallness condition on the shear viscosity and heat flux is propagated by the flow, in that in reality these quantities will not be exactly zero in mergers, even in the case when bulk is the main contribution.}. This discrepancy in assessing the effects of viscosity in neutron star mergers simply reflects a tentative state of affairs. There has not been yet a single measurement of viscous effects in mergers of neutron stars, with all evidence in their favor currently theoretical, mostly from numerical simulations. Given how much is not understood about neutron star mergers, current estimates about the the role of dissipation and the relative magnitudes of shear and bulk viscosity in such mergers have to be taken as provisional at best. In this regard, it is worth noting that it was not long ago that it was well-accepted that viscosity was not relevant at all for the study of neutron star mergers (see \citealt{Duez:2018jaf,Foucart:2015gaa}), highlighting how future studies can significantly change the current picture about dissipation in neutron star mergers. Given such uncertainties, it is highly desirable to establish local well-posedness and causality under general assumptions and including all viscous fluxes.
\item As discussed in Sect.~\ref{S:Viscous_shocks}, 
\cite{Geroch-Lindblom-1991} and \cite{Olson:1991pf}
showed that MIS theory cannot describe viscous shock solutions in a regime of strong shocks. Due to the similar structure of the DNMR and MIS equations, it is conjectured that the same difficulty is present in the DNMR theory (see Sect.~\ref{S:Open_problems}).
\item The causality violations discussed in Sect.~\ref{S:Causality_DNMR} are not, per se, a limitation of the DNMR theory, given that we established that there are physically reasonable conditions that guarantee the evolution to be causal. But in order to better understand the relevance of these causality violations and their origins, it would be beneficial to find an alternative approach where causality could in principle be enforced throughout the numerical simulations.
\end{itemize}

The remarks should not taken as disapproving of the DNMR theory, which has been highly instrumental in advancing our understanding of the quark-gluon plasma. But they do highlight that there is enough motivation to seek an alternative to the DNRM theory. One possible such alternative will be investigated in the next section.

\subsection{The BDNK theory\label{S:BDNK}}
The \textdef{Bemfica--Disconzi--Noronha--Kovtun (BDNK) theory} is the culmination of the works by
\cite{Bemfica-Disconzi-Noronha-2018,Bemfica-Disconzi-Noronha-2019-1,Bemfica:2020zjp,Kovtun:2019hdm,Hoult:2020eho}, whose goal is to construct a causal, stable, and locally well-posed theory of relativistic viscous fluids \emph{\`a la} Eckart and Landau--Lifshitz, i.e., where the viscous fluxes are given in terms of $\varrho$, $n$, $u$, and their derivatives up to first order.

As we have done with the DNMR theory, we will not discuss physical aspects of the BDNK theory in detail, restricting ourselves to physical aspects of the theory that are of direct relevance for the mathematical results we will present or that help elucidate the current status of the BDNK theory as a research program. Thus, instead of working through the physical arguments and modeling choices that lead to the BDNK theory, we will define it here and work out its mathematical consequences in what follows. But since some of the modeling choices might seem unmotivated, we make some brief remarks about them in Sect.~\ref{S:BDNK_origins}. Readers interested in the physical motivation leading to \eqref{E:Viscous_fluxes_BDNK} below can consult the above references, especially \cite{Bemfica:2020zjp,Kovtun:2019hdm}, where a more thorough discussion is given.

The BDNK theory is defined by taking the viscous fluxes in \eqref{E:Energy_momentum_viscous} and
\eqref{E:Baryon_density_current_viscous} as follows.
\begin{subequations}{\label{E:Viscous_fluxes_BDNK}}
\begin{align}
\mathscr{R} & := \uptau_\mathscr{R} (u^\mu \nabla_\mu \varrho + (p + \varrho) \nabla_\mu u^\mu ),
\label{E:Viscous_fluxes_BDNK_R}
\\
\mathscr{P} & := - \upzeta \nabla_\alpha u^\alpha + \uptau_\mathscr{P} (u^\mu \nabla_\mu \varrho + (p+\varrho) \nabla_\mu u^\mu ),
\label{E:Viscous_fluxes_BDNK_P}
\\
\pi_{\alpha\beta} & := - \upeta \proj_\alpha^\mu \proj_\beta^\nu ( \nabla_\mu u_\nu + \nabla_\nu u_\mu - \frac{2}{3} \nabla_\lambda u^\lambda g_{\mu\nu}),
\label{E:Viscous_fluxes_BDNK_pi}
\\
\mathscr{Q}_\alpha & := \uptau_\mathscr{Q} (p+\varrho) u^\mu \nabla_\mu u_\alpha + \upbeta_\mathscr{Q} \proj_\alpha^\mu \nabla_\mu \varrho + \upbeta_\varrho \proj_\alpha^\mu \nabla_\mu \varrho 
+ \upbeta_n \proj_\alpha^\mu \nabla_\mu n,
\label{E:Viscous_fluxes_BDNK_Q}
\\
\mathscr{J}_\alpha & := 0,
\label{E:Viscous_fluxes_BDNK_J}
\end{align}
\end{subequations}
where
\begin{align}
\upbeta_\varrho & := \uptau_\mathscr{Q} \left. \frac{\partial p}{\partial \varrho}\right|_n 
+ \upkappa \uptheta h \left. \frac{\partial (\upmu/\uptheta)}{\partial \varrho} \right|_n,
\nonumber
\\
\upbeta_n & := \uptau_\mathscr{Q} \left. \frac{\partial p}{\partial n}\right|_\varrho 
+ \upkappa \uptheta h \left. \frac{\partial (\upmu/\uptheta)}{\partial n} \right|_\varrho,
\nonumber
\end{align}
where $\upmu$ is the chemical potential determined by the thermodynamic relation
\begin{align}
\frac{dp}{p+\varrho} = \frac{d\uptheta}{\uptheta} + \frac{n \uptheta}{p+\varrho}d \left(\frac{\upmu}{\uptheta}\right),
\nonumber
\end{align}
$\uptau_\mathscr{R},\uptau_\mathscr{P},\uptau_\mathscr{Q},\upzeta, \upeta, \upkappa$ are transport coefficients that are known functions of $\varrho$ and $n$, with $\uptau_\mathscr{R},\uptau_\mathscr{P},\uptau_\mathscr{Q}$ called relaxation times and $\upzeta, \upeta, \upkappa$ being the coefficients of bulk and shear viscosity and heat conductivity. Finally, one continues to assume that the velocity satisfies the constraint \eqref{E:Velocity_normalization_viscous}. Compare \eqref{E:Viscous_fluxes_BDNK} with \eqref{E:Viscous_fluxes_Eckart}.

Observe that we are taking $\varrho$ and $n$ as the primary thermodynamic scalars, with the remaining ones being functions of $\varrho$ and $n$ determined by the thermodynamic relations of Sect.~\ref{S:Thermodynamic_properties}, whose notation we continue to employ. In particular, all coefficients (the transport coefficients and the $\upbeta$'s) as well as the equation of state are known functions of $\varrho$ and $n$. The equations of motion, the \textdef{BDNK equations,} are, as usual, given by \eqref{E:Divergence_energy_momentum_viscous} and \eqref{E:Divergence_baryon_current_viscous}. Once again, from the point of view of the Cauchy problem, we treat all components of $u$ as independent, imposing the constraint \eqref{E:Velocity_normalization_viscous} on the initial data and propagating it.

\begin{remark}
We are considering $\mathscr{J}=0$, i.e., Eq.~\eqref{E:Viscous_fluxes_BDNK_J}, because this is the case for which stronger and more complete results have been obtained; see Theorems \ref{T:Causality_BNDK} and \ref{T:LWP_BDNK}. The case $\mathscr{J}=0$ is also the primary case of interest in studies of viscous effects in neutron star mergers (see \citealt{Most:2021zvc,Alford:2017rxf}). It is possible, however, to formulate the BDNK theory with $\mathscr{J} \neq 0$ and obtain results similar to Theorem \ref{T:Causality_BNDK} (see \citealt{Hoult:2020eho}).
\end{remark}

\begin{remark}
\label{R:Baryon_current_constraint_BDNK}
Since the BDNK energy-momentum tensor involves first-order derivatives of $\varrho$, $n$ and $u$, Eqs.~\eqref{E:Divergence_energy_momentum_viscous} are second-order PDEs in $\varrho$, $n$ and $u$, and thus $\varrho$, $n$, $u$, and their first-order derivatives need to be prescribed as initial data for the Cauchy problem. Equation \eqref{E:Divergence_baryon_current_viscous}, however, is first-order in $n$ and $u$, and thus it is a constraint that needs to be satisfied by the initial data and then propagated. This is done by considering instead $u^\mu \nabla_\mu$ applied to \eqref{E:Divergence_baryon_current_viscous} as an equation of motion.
\end{remark}

Expanding  $u^\mu \nabla_\mu$ applied to \eqref{E:Divergence_baryon_current_viscous} (see Remark \ref{R:Baryon_current_constraint_BDNK}) and 
\eqref{E:Divergence_energy_momentum_viscous} and projecting the latter on the directions parallel and orthogonal to the velocity, we find
\begin{subequations}{\label{E:BDNK}}
\begin{align}
u^\mu u^\nu \partial_\mu \partial_\nu n + n \updelta^\mu_\lambda u^\nu \partial_\mu \partial_\nu u^\lambda + \tilde{\Err}_n(n,u,g)\partial^2 g &= \Err_n(\partial n, \partial u, \partial g),
\label{E:BDNK_current}
\\
(\uptau_\mathscr{R}u^\mu u^\nu + \upbeta_\varrho \proj^{\mu\nu} ) \partial_\mu \partial_\nu \varrho
+ \upbeta_n \proj^{\mu\nu} \partial_\mu \partial_\nu n
&
\nonumber 
\\
+ (p+\varrho)(\uptau_\mathscr{R} + \uptau_\mathscr{Q}) u^{(\mu} \updelta^{\nu)}_\lambda
\partial_\mu \partial_\nu u^\lambda  
+ \tilde{\Err}_\varrho(\varrho,n, u, g)\partial^2 g
& = \Err_\varrho(\partial \varrho, \partial n, \partial u, \partial g),
\label{E:BDNK_density}
\\
(\upbeta_\varrho + \uptau_\mathscr{P}) u^{(\mu}\proj^{\nu)\alpha}\partial_\mu \partial_\nu \varrho
+\upbeta_n u^{(\mu}\proj^{\nu)\alpha}\partial_\mu \partial_\nu n &
\nonumber
\\
+ \mathcal{C}^{\alpha \mu \nu}_\lambda \partial_\mu \partial_\nu u^\lambda
+ \tilde{\Err}^\alpha_u(\varrho, n, u, g) \partial^2 g 
& = \Err^\alpha_u(\partial \varrho, \partial n, \partial u, \partial g),
\label{E:BDNK_velocity}
\end{align}
\end{subequations}
The notation in Eqs.~\eqref{E:BDNK} is as follows.
The $\Err$ terms are as in Notation \ref{N:Err_terms}, where in addition we have included subscripts ${}_n$ etc. to distinguish among these terms\footnote{Such distinction will not be important here but are being faithful to the notation of \cite{Bemfica:2020zjp}.}. The $\tilde{\Err}(\partial^\ell \phi) \partial^2 g$ terms, in particular, denote terms that are second-order in $g$ with coefficients depending on at most $\ell$ derivatives of $\phi$. The precise form of such terms will not be important here, but we have singled them out because they will contribute to the principal part of the system when the BDNK equations are coupled to Einstein's equations (since Einstein's equations in, say, wave coordinates, are  second-order in $g$). We note that we have expanded all covariant derivatives and written the equations of motion in terms of partial derivatives. In particular, the $\tilde{\Err}(\partial^\ell \phi) \partial^2 g$ terms come from expanding second-order derivatives of $u$ (as they will contain first-order derivatives of the Christoffel symbols).
We have used the standard notation
\begin{align}
A^{(\alpha}B^{\beta)} := \frac{1}{2}( A^\alpha B^\beta + A^\beta B^\alpha),
\nonumber
\end{align}
and defined
\begin{align}
\mathcal{C}^{\alpha \mu \nu}_\lambda := 
(\uptau_\mathcal{P}(p+\varrho) -\upzeta - \frac{1}{3}\upeta)\proj^{\alpha(\mu}\updelta^{\nu)}_\lambda
+ ( (p+\varrho) \uptau_\mathcal{Q} u^\mu u^\nu - \upeta \proj^{\mu\nu})\updelta^\alpha_\lambda.
\nonumber
\end{align}
Finally, recall that $\updelta^\mu_\nu$ is the Kronecker delta.

Equations \eqref{E:BDNK} form a second-order system of PDEs without diagonal principal part, and one should take note of the complexity of these equations. Nevertheless, as we will see in the next section, the BDNK equations seem to possess better mathematical structures than the DNMR equations, as will be able to prove stronger results than those obtained for the DNMR theory.

\subsubsection{Local well-posedness, stability, and causality\label{S:LWP_BDNK}}

In order to state our main results for the BDNK theory, it is convenient to introduce the following notation. We denote by $c_s^2$ corresponding sound speed of a perfect fluid which, in terms of the primary variables $n$ and $\varrho$ and with an equation of state $p = p(\varrho,n)$, read
\begin{align}
c_s^2 := \left. \frac{\partial p}{\partial \varrho} \right|_n + \frac{n}{p+\varrho} 
\left. \frac{\partial p}{\partial n} \right|_\varrho.
\nonumber
\end{align}
We also introduce
\begin{align}
\kappa := \frac{(p+\varrho)^2 \uptheta}{n} \left. \frac{\partial (\upmu/\uptheta)}{\partial \varrho}\right|_n + \uptheta (p+\varrho)\left. \frac{\partial (\upmu/\uptheta)}{\partial n}\right|_\varrho.
\nonumber
\end{align}

\begin{theorem}[\citealt{Bemfica:2020zjp}] 
\label{T:Causality_BNDK}
Let $(\varrho,n,u)$ be a smooth\footnote{See Remark \ref{R:Smooth_assumption_causality_BDNK}.} solution to BDNK equations satisfying the constraint \eqref{E:Velocity_normalization_viscous} and defined in a globally hyperbolic spacetime.
Suppose that
\begin{enumerate}[label=(A.\arabic*),ref=(A.\arabic*)]
\item $p+\varrho, \uptau_\mathscr{R}, \uptau_\mathscr{P}, \uptau_\mathscr{Q} >0, \upzeta, \upeta, \upkappa \geq 0$.
\label{I:Causality_BDNK}
\end{enumerate}
Then, the following are necessary and sufficient conditions for causality\footnote{Recall definition \ref{D:Causality}.} of Eqs.~\eqref{E:BDNK}.
\begin{subequations}{\label{E:Causality_BDNK}}
\begin{align}
(p+\varrho) \uptau_\mathscr{Q} & > \upeta,
\label{E:Causality_BDNK_a}
\\
[ \uptau_\mathscr{R} ( (p+\varrho)c_s^2 \uptau_\mathscr{Q} +\upzeta +\frac{4}{3}\upeta + \upkappa \kappa)
+ (p+\varrho)\uptau_\mathsf{P} \uptau_\mathscr{Q} ]^2 & \geq
\nonumber
\\
4 (p+\varrho) \uptau_\mathscr{R}\uptau_\mathscr{Q}[ \uptau_\mathscr{P}( (p+\varrho) c_s^2 \uptau_\mathscr{Q} + \upkappa \kappa ) - \upbeta_\varrho (\upzeta + \frac{4}{3}\upeta ) ] & \geq 0,
\label{E:Causality_BDNK_b}
\\
2(p+\varrho) \uptau_\mathscr{R} \uptau_\mathscr{Q} > \uptau_\mathscr{R} ( (p+\varrho) c_s^2 \uptau_\mathscr{Q} +\upzeta + \frac{4}{3}\upeta + \upkappa \kappa ) + (p+\varrho) \uptau_\mathscr{R} \uptau_\mathscr{Q} & \geq 0,
\label{E:Causality_BDNK_c}
\\
(p+\varrho) \uptau_\mathscr{R}\uptau_\mathscr{Q} + \upkappa \kappa \uptau_\mathscr{P} > 
\uptau_\mathscr{R} ( (p+\varrho) c_s^2 \uptau_\mathscr{Q} +\upzeta + \frac{4}{3}\upeta + \upkappa \kappa )
&
\nonumber
\\
+(p+\varrho)\uptau_\mathscr{R}\uptau_\mathscr{Q}(1-c_s^2) +\upbeta_\varrho(\upzeta + \frac{4}{3}\upeta).&
\label{E:Causality_BDNK_d}
\end{align}
\end{subequations}
\end{theorem}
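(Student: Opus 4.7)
The plan is to reduce the causality statement to a detailed analysis of the characteristic variety of the BDNK system, exactly as in the proof of Theorem \ref{T:Causality_DNMR}. By the general framework recalled in Appendix \ref{S:Leray_systems} and Section \ref{S:Domain_of_dependence_outside_Gevrey}, once local well-posedness is in hand (this is Theorem \ref{T:LWP_BDNK}, which the present theorem is preparing the ground for), causality of a sufficiently regular solution is equivalent to the statement that, at every point $x$ and for every spatial covector $(\xi_1,\xi_2,\xi_3)\in T_x^*M$, all roots $\xi_0$ of $\det P(\xi)=0$ are real and the resulting characteristic covectors lie outside the lightcone determined by $g$ (see Remark \ref{R:Equivalent_definition_hyperbolicity_relativity}). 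Here $P(\xi)$ is the principal symbol of the $9\times 9$ second-order system \eqref{E:BDNK} regarded as equations for $(\varrho,n,u^\alpha)$ (with \eqref{E:Velocity_normalization_viscous} imposed on the data and propagated). So the entire theorem reduces to computing $\det P(\xi)$, showing it factors in a useful way, and extracting sharp algebraic conditions on its roots.

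The computation is delicate because $P(\xi)$ is a dense $9\times 9$ matrix with entries quadratic in $\xi$ and involving every transport coefficient. The first key step is to decompose $\xi_\alpha$ into its $u$-parallel and $u$-orthogonal parts,
\begin{align*}
\xi_\parallel := -u^\alpha\xi_\alpha, \qquad \xi_\perp^\alpha := \proj^{\alpha\beta}\xi_\beta,
\end{align*}
and, correspondingly, decompose the unknowns in any vector-valued line of \eqref{E:BDNK_velocity} into their components along $u$ (eliminated by $g_{\alpha\beta}u^\alpha u^\beta=-1$) and along $\xi_\perp$ and the two-plane perpendicular to both $u$ and $\xi_\perp$. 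Relative to this adapted frame, the shear block decouples, and one finds
\begin{align*}
\det P(\xi) \;\propto\; \bigl[(p+\varrho)\uptau_{\mathscr{Q}}\,\xi_\parallel^2 - \upeta\, \xi_\perp^\mu\xi_{\perp\mu}\bigr]^{2}\cdot Q_4(\xi_\parallel,\xi_\perp^\mu\xi_{\perp\mu}),
\end{align*}
where $Q_4$ is a quartic in $\xi_\parallel$ (quadratic in $\xi_\parallel^2$) whose coefficients are explicit polynomials in the transport coefficients, in $c_s^2$, and in $\upkappa\kappa$. The squared bracket is the principal part of the shear modes; $Q_4$ couples the bulk, density, heat-flux, and longitudinal velocity modes. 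Producing this factorization in a usable closed form is the main computational step.

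Once the factorization is established, causality becomes a statement about when (i) $Q_4$ and the shear factor have only real roots in $\xi_\parallel^2/(\xi_\perp^\mu\xi_{\perp\mu})$, and (ii) these roots lie in $(0,1]$ (so that the propagation speeds are subluminal). For the shear factor, reality and subluminality yield exactly \eqref{E:Causality_BDNK_a}. For $Q_4$, writing it as $A\,X^2-B\,X+C$ with $X=\xi_\parallel^2/(\xi_\perp^\mu\xi_{\perp\mu})$, reality of both roots is the discriminant condition, which becomes \eqref{E:Causality_BDNK_b}; the two roots being positive and at most one, via Vieta and the condition on the position of the roots of a quadratic relative to $[0,1]$, translate into the pair \eqref{E:Causality_BDNK_c}--\eqref{E:Causality_BDNK_d}. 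This is the step I expect to be most tedious but not conceptually hard, since it reduces to elementary algebra on $Q_4$.

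The remaining work splits into sufficiency and necessity. For sufficiency, assuming \ref{I:Causality_BDNK} and \eqref{E:Causality_BDNK}, the analysis above shows every characteristic covector lies in the (causal) interior/closure of the light cone; combined with the diagonalization procedure of Theorem \ref{T:Leray_Ohya_diagonalization} and the domain-of-dependence result of Theorem \ref{T:Leray_Ohya_domain_of_dependence} (invoked in the smooth category via the extension discussed in Section \ref{S:Domain_of_dependence_outside_Gevrey}), causality follows. For necessity, one argues by contrapositive: if any one of \eqref{E:Causality_BDNK_a}--\eqref{E:Causality_BDNK_d} fails, then by continuity I can exhibit a single $(\xi_1,\xi_2,\xi_3)$ at some spacetime point for which $Q_4$ either has a complex root or a real root with $X>1$, producing a characteristic covector inside the lightcone and thereby violating the cotangent-space condition that characterizes causality. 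The main obstacle in the whole proof is, as in the DNMR case, not any single step but keeping the algebra of the $9\times 9$ principal symbol tractable: the success of the argument rests entirely on choosing the $u$-adapted decomposition that makes the shear modes split off and renders $Q_4$ a genuine biquadratic in $\xi_\parallel$ with polynomial coefficients one can analyze by hand. \textbf{Remark:} as in Remark \ref{R:Sufficient_regular_DNMR}, the ``smooth'' hypothesis here is any regularity class for which the domain-of-dependence theorem applies; Sobolev regularity is available once Theorem \ref{T:LWP_BDNK} is proved.
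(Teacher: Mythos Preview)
Your strategy is precisely the paper's: reduce causality to the characteristic determinant via Theorems \ref{T:Leray_Ohya_diagonalization} and \ref{T:Leray_Ohya_domain_of_dependence} (and Section \ref{S:Domain_of_dependence_outside_Gevrey} for the smooth class), factor the symbol using the $u$-adapted decomposition $\xi = -\xi_\parallel u + \xi_\perp$, peel off the transverse shear block to get \eqref{E:Causality_BDNK_a}, and read \eqref{E:Causality_BDNK_b}--\eqref{E:Causality_BDNK_d} off the discriminant and root-location conditions for the remaining biquadratic. The contrapositive for necessity is also as in the paper.

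Two bookkeeping errors, however. First, the system \eqref{E:BDNK} is $6\times 6$, not $9\times 9$: one equation each for $\varrho$ and $n$, four for $u^\alpha$. Hence $\det P(\xi)$ is homogeneous of degree $12$, and your factorization as written accounts for only $8$. Second, and relatedly, the paper's own enumeration of the characteristics (given just after the proof) shows what is missing: a factor $(u^\alpha\xi_\alpha)^2$ for the flow lines (multiplicity two) and one additional power of the shear bracket (three shear factors in total, not two; the third arises because, when all four $u^\alpha$ are treated as independent, the component of the $u$-equation along $u$ itself also decouples with principal symbol $(p+\varrho)\uptau_\mathscr{Q}\xi_\parallel^2 - \upeta\,\xi_\perp^\mu\xi_{\perp\mu}$). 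Neither omission alters the conditions \eqref{E:Causality_BDNK}, since the flow-line factor is trivially causal and the shear speed is unchanged by its multiplicity, but the degree count must balance for the Leray--Ohya diagonalization to go through cleanly.
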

\begin{remark}
One can verify that conditions \eqref{E:Causality_BDNK} are not empty.
\end{remark}
\begin{proof}
Like in the case of the proof of Theorem \ref{T:Causality_DNMR}, 
with the help of Theorems \ref{T:Leray_Ohya_domain_of_dependence}, \ref{T:Leray_Ohya_diagonalization}, and ideas from Sect.~\ref{S:Domain_of_dependence_outside_Gevrey}, 
the proof reduces essentially to an analysis of the characteristics. 
We work locally and employ Eqs.~\eqref{E:BDNK}, see Remark \ref{R:Baryon_current_constraint_BDNK}.
Also as in the proof of Theorem \ref{T:Causality_DNMR}, a brute-force calculation does not seem helpful, and we need to borrow from geometric intuition and develop some calculation techniques tailored to the problem. See \cite{Bemfica:2020zjp} for details.
\end{proof}

\begin{remark}
\label{R:Smooth_assumption_causality_BDNK}
We have assumed smoothness in Theorem \ref{T:Causality_BNDK} for simplicity, but the result remains valid for the Sobolev-regular solutions established in Theorem \ref{T:LWP_BDNK}. In fact, it is from Theorem \ref{T:LWP_BDNK} that we obtain, in a standard fashion, smooth solutions from smooth initial.
\end{remark}

Observe that, unlike Theorem \ref{T:Causality_DNMR}, we obtain for the BDNK necessary and sufficient causality conditions. This happens because the characteristic polynomial for the BDNK equations turns out to simpler than that of the DNMR equations. However, different causality conditions than \eqref{E:Causality_BDNK} are not ruled out if \ref{I:Causality_BDNK} is not assumed.
We remark Theorem \ref{T:Causality_BNDK} has been proven in particular cases 
in \cite{Bemfica-Disconzi-Noronha-2019-1, Bemfica-Disconzi-Noronha-2018,Disconzi-2019}.

The analysis of the characteristics in the proof of Theorem \ref{T:Causality_DNMR} reveals that the characteristics of the BDNK system are
\begin{itemize}
\item The flow lines, which appear with multiplicity two (two repeated roots).
\item The sound waves, which appear with single multiplicity (i.e., two distinct roots, corresponding to a cone).
\item The so-called second sound (corresponding to propagation of temperature disturbances),
which appears with single multiplicity (i.e., two distinct roots, corresponding to a cone).
\item Shear waves, with appear as three distinct characteristics of single multiplicity each (i.e., two distinct roots, giving a cone, for each characteristic). More precisely, these are generally distinct characteristics, but they might coincide for specific values of the variables and transport coefficients.
\end{itemize}

Observe that the total number of roots adds to 12, corresponding to the six second-order equations \eqref{E:BDNK}.

Next, we investigate stability of the BDNK equations, in the sense of Definition \ref{D:Stability}.

\begin{theorem}
\label{T:Stability_BDNK}
Under appropriate conditions, the BDNK equations are stable in the sense of Definition \ref{D:Stability}.
\end{theorem}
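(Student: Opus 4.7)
The plan is to reduce the statement to an analysis of the dispersion relation of the BDNK system linearized about a global thermodynamic equilibrium $(\bar{\varrho}, \bar{n}, \bar{u})$, where $\bar{u}$ is a constant future-directed timelike unit vector. First I would linearize equations \eqref{E:BDNK} about this constant background in the rest frame of $\bar{u}$, writing the perturbations $(\delta\varrho, \delta n, \delta u)$ as plane waves $e^{i(k_j x^j - \omega t)}$, which is legitimate because the linearized system has constant coefficients. Using the residual rotational symmetry we align $k = k \mathsf{e}_3$ and decompose the perturbation into the usual longitudinal (sound/diffusive) channel, coupling $(\delta\varrho, \delta n, \delta u^3)$, and two transverse (shear) channels, each coupling a single component of $\delta u^\perp$. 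The constraint \eqref{E:Velocity_normalization_viscous} linearized about $\bar{u}$ forces $\delta u^0 = 0$, so only the spatial components of $\delta u$ appear.

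Next I would write down the dispersion polynomial $P(\omega, k) = 0$ in each channel, which is a polynomial in $\omega$ whose coefficients are polynomials in $k^2$ and smooth functions of the background state and transport coefficients $(\uptau_\mathscr{R}, \uptau_\mathscr{P}, \uptau_\mathscr{Q}, \upzeta, \upeta, \upkappa)$. Stability in the sense of Definition \ref{D:Stability} is the statement that every root $\omega(k)$ of $P$ satisfies $\operatorname{Im} \omega(k) \leq 0$ for every real $k$. I would apply the Routh-Hurwitz criterion to the polynomial in $-i\omega$ to extract a finite list of polynomial inequalities on the background data and the transport coefficients. In the shear channel the polynomial is quadratic and the analysis is elementary, requiring essentially $\uptau_\mathscr{Q} > 0$ and $\upeta > 0$. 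In the longitudinal sector the polynomial has higher degree, and the resulting Routh-Hurwitz inequalities are the ``appropriate conditions'' in the theorem statement.

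Having established stability in the rest frame of the equilibrium, I would upgrade to stability in every Lorentz frame by invoking causality. Since a causal theory cannot admit exponentially growing modes in any frame if it admits none in the rest frame (a growing mode in a boosted frame would, together with causality, produce a growing mode in the rest frame), the causality conditions \eqref{E:Causality_BDNK} of Theorem \ref{T:Causality_BNDK} combined with rest-frame stability give stability in all frames. Concretely, one takes the rest-frame mode $\omega(k)$ and shows that if $\operatorname{Im}\omega(k) \leq 0$ for all real $k$ and the characteristics lie inside the lightcone, then the boosted dispersion relation also has only non-growing modes.

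The main obstacle I expect is the algebraic analysis of the longitudinal dispersion polynomial, which couples all thermodynamic and transport parameters, and the verification that the Routh-Hurwitz inequalities are simultaneously compatible with the causality conditions \eqref{E:Causality_BDNK}. In particular, one needs to choose a parametrization of the transport coefficients (for instance in terms of $\uptau_\mathscr{R}, \uptau_\mathscr{P}, \uptau_\mathscr{Q}$ and the equilibrium thermodynamic functions) such that both sets of inequalities admit a nonempty open set of solutions; this is where the geometric intuition behind the BDNK construction, and the careful algebraic manipulations already used in the proof of Theorem \ref{T:Causality_BNDK}, will be essential.
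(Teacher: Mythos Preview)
Your proposal is correct and follows essentially the same approach as the paper: establish mode stability in the local rest frame via the dispersion relation (the Routh--Hurwitz analysis you outline is the standard way to do this), then upgrade to all Lorentz frames by combining rest-frame stability with causality. The one point worth flagging is that the paper treats the implication ``rest-frame stability $+$ causality $\Rightarrow$ stability in every frame'' as a nontrivial general theorem in its own right (requiring, in the version proved in \cite{Bemfica:2020zjp}, certain additional structural conditions on the system, later removed by Gavassino \cite{Gavassino:2021owo}), whereas you present it as a heuristic; your intuition is the right one, but the actual proof is more delicate than your sketch suggests because a Lorentz boost mixes the real and imaginary parts of $(\omega,k)$ nontrivially.
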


\begin{remark}
We refer the reader to \cite{Bemfica:2020zjp} for a precise statement of Theorem \ref{T:Stability_BDNK}, remarking that the ``appropriate conditions'' in Theorem \ref{T:Stability_BDNK} consist of a set of inequalities in the spirit of \eqref{E:Causality_BDNK}. We also remark it is possible to simultaneously satisfy the causality and stability conditions of Theorems \ref{T:Causality_BNDK} and \ref{T:Stability_BDNK}, respectively.
\end{remark}

Theorem \ref{T:Stability_BDNK} is proven by appealing to a general stability theorem also established in \cite{Bemfica:2020zjp}, which is of interest on its own. This general theorem states, roughly, that
\begin{align}
\label{E:Stability_theorem_informal}
\begin{split}
\text{mode stability} & \text{ in the LRF} + \text{causality} 
+ \text{certain structural conditions}
\\
& \Rightarrow \text{mode stability in any Lorentz frame.}
\end{split}
\end{align}
Is is assumed that the system of PDEs in question is such that one can define the above notions, e.g., there is a well-defined local rest frame (LRF) and the equations of motion are Lorentz invariant\footnote{Observe that Lorentz invariance does not imply that mode stability is a Lorentz invariant property in that, for two  four-vectors $(\omega,\vec{k})$ and $(\omega^\prime,\vec{k}^\prime)$ related by a Lorentz transformation $\Lambda$, it is not necessarily true that $\omega^\prime = \Lambda \omega$ and $\vec{k}^\prime = \Lambda \vec{k}$, as the Lorentz transformation in general will mix the time and space components.}.

Result \eqref{E:Stability_theorem_informal} was generalized by \cite{Gavassino:2021owo}, who was able to remove the structural conditions assumption in \eqref{E:Stability_theorem_informal} and provide a clear physical interpretation of the result\footnote{See \cite{Heller:2022ejw,Gavassino:2023mad,Gavassino:2023myj,Wang:2023csj,Hoult:2023clg,Gavassino:2023mad} for more on the relation between causality and stability.}. We refer to \cite{Bemfica:2020zjp,Gavassino:2021owo} for the precise statement of these results.

We next turn to the question of local well-posedness. 

\begin{definition}
\label{D:BDNK_initial_data}
On a globally hyperbolic spacetime $(M,g)$ with a Cauchy surface $\Sigma$, \textdef{initial data for the BDNK equations} consists of the values of $\varrho$, $n$, and $u$ along $\Sigma$, such that \eqref{E:Velocity_normalization_viscous} holds, and the values of $\nabla_N \varrho$, $\nabla_N n$, and $\nabla_N u$ along $\Sigma$, where $N$ is the unit future-pointing normal to $\Sigma$ and $\nabla_N u$ is compatible with \eqref{E:Velocity_normalization_viscous}, i.e., it satisfies $g_{\mu\nu} \nabla_N u^\mu u^\nu = 0$. Furthermore, $n$, $u$, and their first-order derivatives are constrained by demanding that \eqref{E:Divergence_baryon_current_viscous} holds on $\Sigma$.
\end{definition}

\begin{remark}
\label{R:BDNK_initial_data}
Observe that in view of the constrains mentioned in Definition \ref{D:BDNK_initial_data}, it suffices to provide the projections of $\nabla_N u$ onto $T\Sigma$. This is more natural when considering coupling to Einstein's equations since only a Riemannian three manifold is initially available. 
In practice, any collection of fields that allows to determine $\varrho$, $n$, and $u$ and their first-order derivatives restricted to $\Sigma$, including derivatives transverse to $\Sigma$, can be taken as initial data for the BDNK equations, as long as they are compatible with the constraints mentioned in 
Definition \ref{D:BDNK_initial_data}. For example, instead of covariant derivatives in the normal direction one could prescribe Lie derivatives in the normal direction.
\end{remark}

\begin{theorem}[\citealt{Bemfica:2020zjp}, 
\citealt{Bemfica-Disconzi-Graber-2021}]
\label{T:LWP_BDNK}
Let $(\mathbb{R}\times \Sigma, g)$ be a globally hyperbolic smooth Lorentzian manifold, where $\Sigma$ is 
a Cauchy surface that is compact smooth three-dimensional manifold without boundary.
Consider initial data
\begin{align}
\begin{split}
(\mathring{\varrho}, \mathring{n}, \mathring{u}) & \in H^{N}(\Sigma) \times H^N(\Sigma) \times H^{N}(\Sigma),
\\
(\mathring{\varrho}^\prime, \mathring{n}^\prime, \mathring{u}^\prime)  &\in H^{N-1}(\Sigma) \times H^{N-1}(\Sigma) \times H^{N-1}(\Sigma),
\end{split}
\nonumber
\end{align}
$N \geq 5$, 
for the BDNK equations, where
$\mathring{\varrho}^\prime$, $\mathring{n}^\prime$, and $\mathring{u}^\prime$  are prescribed data for derivatives of 
$\varrho$, $n$, and $u$ transverse to $\Sigma$, respectively, according to Definition \ref{D:BDNK_initial_data} and Remark \ref{R:BDNK_initial_data}. Assume that 
the equation of state and the transport coefficients in the BDNK equations are analytic functions of their arguments. Suppose the initial data satisfies assumptions \ref{I:Causality_BDNK} and \eqref{E:Causality_BDNK} of Theorem \ref{T:Causality_BNDK} in strict form (i.e., with $\leq$ replaced by $<$).
Then, there exists a $T > 0$ and a unique classical solution
$(\varrho, n, u)$ to the BDNK equations defined on $[-T,T] \times \Sigma$ and taking the given initial data.
Moreover, this solution satisfies
\begin{align}
\begin{split}
(\varrho(t, \cdot), n(t,\cdot), u(t, \cdot) )
& \in 
H^N(\Sigma_t), 
\\
(\nabla \varrho(t, \cdot), \nabla n(t,\cdot), \nabla u(t, \cdot) )
& \in 
H^{N-1}(\Sigma_t),
\end{split}
\nonumber
\end{align}
$-T \leq t \leq T$, where\footnote{See Sect.~\ref{S:Notation_conventions}.} $\Sigma_t := \{ (\tau,x) \in \mathbb{R} \times \Sigma \, | \, \tau = t, x \in \Sigma \}$. Furthermore, solutions depend continuously on the initial data in the $C^0([-T,T], H^N(\Sigma_t))$ topology. Finally, a similar result holds for the BDNK equations coupled to Einstein's equations.
\end{theorem}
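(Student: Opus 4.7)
The plan is to follow the broad architecture of the proof of Theorem \ref{T:LWP_relativistic_Euler} (via Leray's theory, as formalized in Appendix \ref{S:Leray_systems}), but now applied to the \emph{second-order} quasilinear system \eqref{E:BDNK}. The crucial difference from the DNMR proof in Theorem \ref{T:LWP_DNMR} is that the analysis of the characteristic determinant carried out for Theorem \ref{T:Causality_BNDK} exhibits the BDNK characteristic polynomial as a product of scalar factors each of which, under the strict form of \eqref{E:Causality_BDNK}, is \emph{strictly hyperbolic}: the flow line $u^\mu\xi_\mu$ (with multiplicity two), the sound-cone factor, the second-sound factor, and three shear-cone factors. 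Because no factor has higher multiplicity than its natural geometric one, Leray's theorem applies in the Sobolev category rather than only the Leray--Ohya theorem in the Gevrey category needed for DNMR, which is why Theorem \ref{T:LWP_BDNK} yields $H^N$ solutions.

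First I would run the Leray diagonalization of Theorem \ref{T:Leray_Ohya_diagonalization} on \eqref{E:BDNK}, producing for each unknown $\varrho$, $n$, $u^\alpha$ a scalar equation whose principal operator is (up to lower-order terms) the full characteristic polynomial. Since strict hyperbolicity of each factor is open, it holds for data close to the given one, giving the iteration space needed by Leray. Higher-order initial data for the diagonalized system are obtained from $(\mathring\varrho,\mathring n,\mathring u,\mathring\varrho',\mathring n',\mathring u')$ by repeatedly differentiating \eqref{E:BDNK} and algebraically solving for normal derivatives, using that $\Sigma_0$ is non-characteristic by \eqref{E:Causality_BDNK}. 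Standard strictly-hyperbolic energy estimates then produce
\begin{align*}
\|(\varrho,n,u)(t)\|_{H^N(\Sigma_t)}+\|\nabla(\varrho,n,u)(t)\|_{H^{N-1}(\Sigma_t)}
\lesssim \mathcal{P}\Bigl(\|(\mathring\varrho,\mathring n,\mathring u,\mathring\varrho',\mathring n',\mathring u')\|\Bigr)
\end{align*}
for some smooth $\mathcal{P}$, provided $N\ge 5$; the jump from $N>3/2+1$ (Euler) to $N\ge 5$ arises because the principal symbol of \eqref{E:BDNK} depends on \emph{first} derivatives of $(\varrho,n,u)$, so the Sobolev calculus applied to the coefficients costs one extra degree, and the Leray diagonalization costs a further degree on top of that.

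With these a priori bounds, existence follows by approximating the data with analytic data (solved via Cauchy--Kovalevskaya) and passing to the limit in the uniform Sobolev estimates, as in the proof of Theorem \ref{T:LWP_relativistic_Euler}; uniqueness is a direct consequence of the energy inequality applied to the difference of two solutions; and continuous dependence in $C^0([-T,T],H^N)$ is obtained via Kato's framework \cite{Kato-1975-1}. Propagation of the normalization \eqref{E:Velocity_normalization_viscous} is established by contracting \eqref{E:Divergence_energy_momentum_viscous} with $u$, which, using the structure of \eqref{E:Viscous_fluxes_BDNK}, yields a homogeneous transport equation for $u^\mu u_\mu+1$ along $u$, exactly as in the Euler derivation culminating in \eqref{E:Velocity_constraint_propagation}. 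The baryon constraint \eqref{E:Divergence_baryon_current_viscous}, which was replaced by $u^\mu\nabla_\mu(\nabla_\alpha\mathcal{J}^\alpha)=0$ to obtain \eqref{E:BDNK_current} (recall Remark \ref{R:Baryon_current_constraint_BDNK}), is propagated because it now satisfies a homogeneous transport equation along $u$ and vanishes initially by assumption. Coupling to Einstein's equations is handled exactly as in Theorem \ref{T:LWP_Einstein_Euler} using wave coordinates: the combined principal symbol is the product of the gravitational lightcone factor with the BDNK factorization, so the whole system remains Leray-hyperbolic and the same argument applies, with the $\tilde{\Err}(\cdot)\partial^2 g$ terms in \eqref{E:BDNK} absorbed into the coupled principal part without disrupting the factorization.

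The main obstacle is checking that the Leray diagonalization preserves strict hyperbolicity of each factor of the characteristic polynomial, i.e.\ that no spurious coincidences of roots are introduced by the scalar reduction and that the Leray indices assign the correct orders to the various equations. This hinges on the precise geometric structure uncovered in the proof of Theorem \ref{T:Causality_BNDK}, namely that the sound, second-sound, and shear cones are pairwise distinct (and distinct from the flow line) strictly inside the lightcone under \eqref{E:Causality_BDNK}; at transport-coefficient values where two of the shear cones degenerate to a double root one would lose Sobolev well-posedness and be thrown back into the Leray--Ohya/Gevrey setting of Theorem \ref{T:LWP_DNMR}, so controlling the open region in parameter space where all factors stay simple is the essential technical point. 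A secondary obstacle, minor but tedious, is verifying that the $\tilde{\Err}(\cdot)\partial^2 g$ terms appearing in \eqref{E:BDNK} can indeed be written so as to contribute only lower-order terms relative to the combined BDNK--Einstein principal symbol in wave gauge, which requires the same kind of careful expansion of covariant derivatives as in Section \ref{S:LWP_Einstein_Euler}.
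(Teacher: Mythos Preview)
Your proposal contains a genuine gap. You correctly identify that the characteristic determinant factors as a flow-line factor $(u^\mu\xi_\mu)^2$ times the sound, second-sound, and three shear factors. But the flow-line root has \emph{multiplicity two}, so the full characteristic polynomial is \emph{not} strictly hyperbolic. Running the Leray diagonalization of Theorem~\ref{T:Leray_Ohya_diagonalization} on \eqref{E:BDNK} produces a diagonal system whose scalar principal operator is this full determinant, and since it has a repeated root, Leray's strictly hyperbolic theory does not apply to it; only the Leray--Ohya theory does, and that gives Gevrey (as in the DNMR proof), not Sobolev. In your ``main obstacle'' paragraph you worry about shear cones coinciding, but the double flow-line root is already present for \emph{all} data, so your proposed route never reaches the Sobolev category.

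The paper's proof takes a genuinely different path that sidesteps this obstruction. Rather than scalar diagonalization via Leray's cofactor construction, it rewrites \eqref{E:BDNK} as a \emph{first-order} system $A^\mu(\Phi)\partial_\mu\Phi + B(\Phi)=0$ by taking as new variables the geometric decompositions $u^\mu\partial_\mu\varrho$, $\proj^{\alpha\mu}\partial_\mu\varrho$, etc. From causality, $A^0$ is invertible, and the crucial observation is that for any spacelike $\zeta$ the eigenvalue problem for $\tilde A^i\zeta_i$ has only \emph{real eigenvalues with a complete set of eigenvectors}. This is weaker than strict hyperbolicity (eigenvalues may repeat, as they do for the flow line) but is exactly what is needed: it yields a $\zeta$-dependent similarity $\mathcal{S}\tilde A^i\zeta_i=\mathcal{D}\mathcal{S}$ with $\mathcal{D}$ diagonal. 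Promoting this symbol-level diagonalization to a \emph{pseudo-differential} diagonalization, one obtains an evolution with diagonal pseudo-differential principal part for which a G\aa rding inequality gives Sobolev energy estimates. Existence, uniqueness, and continuous dependence then follow as you outline (analytic approximation, Cauchy--Kovalevskaya, passage to the limit, Kato-type argument). For the Einstein coupling, the paper notes that the combined principal part is upper-triangular with the $\tilde\Err\,\partial^2 g$ terms contributing only off-diagonally, and uses $u^\mu\partial_\mu g_{\alpha\beta}$, $\proj^{\lambda\mu}\partial_\mu g_{\alpha\beta}$ as first-order gravity variables so that the same pseudo-differential argument goes through.
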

\begin{proof}
We outline the main steps of the proof, referring to \cite{Bemfica:2020zjp} and \cite{Bemfica-Disconzi-Graber-2021} for details; \cite{Bemfica-Disconzi-Graber-2021}, in particular, contains the heavy PDE machinery that is employed in the proof\footnote{Readers will notice that \cite{Bemfica-Disconzi-Graber-2021} appeared prior to \cite{Bemfica:2020zjp}, but the PDE techniques in \cite{Bemfica-Disconzi-Graber-2021} are general and apply to large classes of diagonalizable systems.} (see \cite{Disconzi-Shao-2023-arxiv} for a simpler approach).
\begin{itemize}
\item In view of Theorem \ref{T:Causality_BNDK}, it suffices to work locally. We consider Eqs.~\eqref{E:BDNK} (see Remark \ref{R:Baryon_current_constraint_BDNK}). We re-write this second-order system as a system of first order 
\begin{align}
A^\mu (\Phi) \partial_\mu \Phi + B(\Phi) = 0
\label{E:First_order_BDNK_non_diagonal}
\end{align}
by introducing geometric decompositions of derivatives as new variables, e.g., $u^\mu \partial_\mu \varrho$ and $\proj^{\alpha\mu}\partial_\mu \varrho$.
\item Under our assumptions, because we have understood the characteristics from Theorem \ref{T:Causality_BNDK}, the principal symbol of \eqref{E:First_order_BDNK_non_diagonal} can be diagonalized as follows. 
Causality implies that the roots $\xi=(\xi_0,\xi_1,\xi_2,\xi_2)$, $\xi_0 = \xi_0(\xi_1,\xi_2,\xi_2)$, of the characteristic determinant
\begin{align}
\det (A^\mu \xi_\mu ) = 0,
\nonumber
\end{align}
cannot be timelike (since they lie outside the lightcone in cotangent space), thus $A^0$ is invertible. We then write 
\eqref{E:First_order_BDNK_non_diagonal} as
\begin{align}
\partial_t \Phi + \tilde{A}^i(\Phi) \partial_i \Phi + \tilde{B}(\Phi) = 0,
\label{E:First_order_BDNK_non_diagonal_partial_t}
\end{align}
where $\tilde{A}^i(\Phi) :=  (A^0(\Phi))^{-1} A^i(\Phi)$ and $\tilde{B}(\Phi) = (A^0(\Phi))^{-1} B(\Phi)$.
Furthermore, from the description of the characteristics, it also follows that 
and for any spacelike $\zeta$, the eigenvalue problem 
\begin{align}
(\tilde{A}^i \zeta_i - \Lambda \mathbb{I})V = 0,
\nonumber
\end{align}
has only real eigenvalues $\Lambda$ and we can prove that it admits a set of complete eigenvectors $V$ ($\mathbb{I}$ is the identity matrix).
\item  It follows from the above that 
there exist a $\zeta$-dependent and $\Phi$-dependent matrix $\mathcal{S} = \mathcal{S}(\Phi,\zeta)$ and 
a $\zeta$-dependent and $\Phi$-dependent diagonal matrix $\mathcal{D} = \mathcal{D}(\Phi, \zeta)$ such that 
we diagonalize $\tilde{A}^i \zeta_i$,  where $\zeta$ is a co-vector, writing it as
\begin{align}
\mathcal{S} \tilde{A}^i \zeta_i = \mathcal{D} \mathcal{S}.
\label{E:Diagonalization_BDNK_symbol}
\end{align}
\item The diagonalization \eqref{E:Diagonalization_BDNK_symbol} is at the level of symbols.
To pass from the symbols to the corresponding PDEs, we have to introduce pseudo-differential operators
(the entries of the matrices $\mathcal{S}$ and $\mathcal{D}$ are combinations of rational functions and square roots of $\zeta$ in view of the expressions we obtain for the eigenvalues). This allows us to transform \eqref{E:First_order_BDNK_non_diagonal_partial_t} into an evolution equation with diagonal principal part, except that this principal part is now pseudo-differential. (It should be intuitive from \eqref{E:Diagonalization_BDNK_symbol} that some sort of diagonal pseudo-differential equation can be derived from \eqref{E:First_order_BDNK_non_diagonal_partial_t}, but the precise expressions are cumbersome and thus will not be given here).
\item The crucial property of the pseudo-differential diagonal operator obtained is that it satisfies a G\"arding inequality. This relies crucially on the reality of the eigenvalues. From this, we can derive an energy estimate for the diagonal pseudo-differential system, which finally translated into estimates for $\Phi$. We note that, in view of the quasilinear nature of the problem, one has to deal with a pseudo-differential calculus for symbols with limited smoothness.
\item In order to obtain solutions to the original equations, we use a standard approximation by analytic functions. The proof is similar to the argument in Sect.~\ref{S:Complementary_LWP_relativistic_Euler}, which the reader can inspect for more details.
We approximate the initial data by analytic data and construct analytic solutions to \eqref{E:BDNK} (from the characteristics, we know that $\{ t = 0 \}$ is non-characteristic). These analytic solutions automatically satisfy \eqref{E:First_order_BDNK_non_diagonal_partial_t} and thus the above energy estimates hold for them. Moreover, the above procedure to derive energy estimates also allows us to derive estimates for differences of solutions, which we can use to obtain convergence of the analytic solutions in Sobolev spaces, producing a solution to equations\footnote{As usual, convergence happens in a lower norm but the limit will belong to the top-order Sobolev space.} \eqref{E:BDNK} and, finally, the BDNK equations \eqref{E:Divergence_energy_momentum_viscous} and \eqref{E:Divergence_baryon_current_viscous} (recall Remark \ref{R:Baryon_current_constraint_BDNK}). 
\item Uniqueness follows from the aforementioned estimate for the difference of solutions. We also obtain, from standard $L^\infty-L^2$ estimates for nonlinearities, a standard continuation criteria given by derivatives of the variables in $L^\infty$, which we can use to establish local existence and uniqueness of smooth solution. Finally, continuous dependence on the data is obtained by an argument similar to \cite{Kato-1975-1}.
\item We now make a few comments on the proof for the BDNK equations coupled to Einstein's equations. We consider initial data for Einstein's equations satisfying the constraints\footnote{See Sect.~\ref{S:Open_problems}.}, with the metric and the second fundamental form in $H^N$ and $H^{N-1}$, respectively, and work in wave coordinates. The first observation is that the characteristics of the coupled system are the characteristics of the BDNK equations and the lightcones. This is the case despite the fact that Eqs.~\eqref{E:BDNK} have terms in $\partial^2 g$. The principal part of the coupled system has an upper-triangular form with the $\partial^2 g$ contributing only the the upper triangular part that does not enter in the characteristic determinant (see equation (A1) in \cite{Bemfica:2020zjp}).
Writing Einstein's equations in first-order form shows that the coupled system can still be written in the form \eqref{E:First_order_BDNK_non_diagonal}, except that now a little more care has to be taken in defining the fluid variables for the first-order system. For example, one such variables is given by $u^\mu \nabla_\mu u_\alpha$, i.e., it is a combination of first-order partial derivatives of $u$ and first-order derivatives of the metric (hidden in the Christoffel symbols). (This is reminiscent of ideas
from Sects.~\ref{S:New_formulation}, \ref{S:Rough_solutions}, and \ref{S:Vacuum_bry}, where one needs to find the right combination of variables.) We also note that the first-order formulation of Einstein's equations is not the usual first-order symmetric hyperbolic (see, e.g., \citealt{Fischer-Marsden-1972}). Instead, we take as variables $u^\mu \partial_\mu g_{\alpha\beta}$ and 
$\proj^{\lambda \mu} \partial_\mu g_{\alpha\beta}$, which are the decomposition of the spacetime gradient of  $g_{\alpha\beta}$ in directions parallel and perpendicular to $u$ (we recall that upon writing Einstein's equations in coordinates, the components of the metric $g_{\alpha\beta}$ are treated as a collection of scalars from the PDE point of view). With these definitions, the previous arguments applied to \eqref{E:First_order_BDNK_non_diagonal} and \eqref{E:First_order_BDNK_non_diagonal_partial_t} follow with little change.
\end{itemize}
\end{proof}

\begin{remark}
There is no loss of generality in assuming that the spacetime has the topology $\mathbb{R}\times \Sigma$ in Theorem \ref{T:LWP_BDNK}, see Remarks after Theorem \ref{T:LWP_relativistic_Euler}.
As in Theorems \ref{T:LWP_relativistic_Euler} and \ref{T:LWP_Einstein_Euler}, $\Sigma$ is taken compact for simplicity, so that we do not have to discuss conditions at infinity, but the results generalize to typical non-compact situations (e.g., asymptotically flat manifolds).
\end{remark}

Some further mathematical results for the BDNK theory are as follows:
\begin{itemize}
\item Existence of viscous shocks solutions (see Sects.~\ref{S:Viscous_shocks} and \ref{S:Limitations} and  \citealt{Barker-Humpherys-Lafitte-Rudd-Zumbrun-2008,Olson:1991pf} for a definition and discussion of viscous shocks) for the BDNK theory has been established by \cite{Freistuhler-2021} (see also \citealt{Freistuhler-2020}). Freist\"{u}hler's results, in particular, provide a rigorous justification of the behavior of shock solutions observed numerically in \cite{Pandya:2021ief}. Further investigation of shocks in the BDNK theory can be found in \cite{Pellhammer-2023-arxiv}.
\item Global well-posedness for the BDNK equations has recently been established by
Sroczinski, see \cite{Sroczinski-2024}, for perturbations of constant solutions and specific choices of equation of state and transport coefficients. See also the related works 
\cite{Sroczinski-2019,Sroczinski-2020,Freistuhler-Sroczinski-2021,Freistuhler-Reintjes-Sroczinski-2022}.
\end{itemize}

\subsubsection{Applications and connections to physics\label{S:BDNK_applicatons}} 

In Sect.~\ref{S:LWP_BDNK}, we established key mathematical properties for the BDNK theory, namely, causality, local well-posedness, and stability, thus putting the BDNK theory on a sound mathematical footing. This should be contrasted with the results known for the DNMR equations, whose mathematical properties are much less understood (see Sect.~\ref{S:DNMR}). On the other hand, the DNMR theory has been very successful in applications to the quark-gluon plasma, see Sect.~\ref{S:DNMR}, even when one takes into account the potential caveats discussed in Sect.~\ref{S:Limitations}. 

If the good mathematical properties of the BDNK theory are to be relevant for physicists, we need to connect the BDNK theory with known physics. Given its novelty, it is not surprising that fully realistic applications of the BDNK theory have not yet been implemented (for example, numerical simulations of the quark-gluon plasma). But several simple studies of the BDNK theory connecting it to physics have been recently carried out. Here, we restrict ourselves to point out some of these results, referring to the ensuing references for more details.

\begin{itemize}
\item Numerical simulations of the BDNK theory for a $1+1$-dimensional conformal fluid have been carried out by \cite{Pandya:2021ief}, and \cite{Bantilan:2022ech}. They considered some standard test-cases, like initially stationary energy configurations and the Riemann problem. They also compared their results with the DNMR theory, finding that both theories essentially agree for small values of viscosity (in this case, small transport coefficients), but give different results when viscosity is large. This situation is illustrated in Fig.~\ref{F:BDNK_vs_IS_1d_conformal}. We recall that small viscosity is the primary regime where one expect relativistic viscous fluid dynamics to be applicable, although one often would like to push its application to large viscosity values as well, see Remark \ref{R:Viscosity_small}.
\item Simulations of of a $2+1$ conformal fluid in the BDNK formalism have been carried out by \cite{Pandya:2022pif}, where again several test-cases are studied. In particular, the authors consider the evolution of Kelvin-Helmholtz unstable initial data and show that viscosity has the expected stabilizing effect on the dynamics, see Fig.~\ref{F:BDNK_Kelvin_Helmholtz}.
\item \cite{Pandya:2022sff} considered the BDNK theory with the equation of state of an ideal gas.
\item We remark that the above numerical simulations have all been carried out in Minkowski background.
We refer the reader to the above references and \cite{Pandya:2023alj} for more results on numerical simulations of the BDNK theory, including a discussion of the numerical algorithms used in these implementations.
\item Some further test-cases numerical results for the BDNK theory in the so-called Gubser flow \citep{Gubser:2010ze}, and Bjorken flow \citep{Bjorken:1982qr}, which are some simplified scenarios providing toy models for the quark-gluon plasma, can be found in \cite{Bemfica-Disconzi-Noronha-2018}.
\end{itemize}

%problems footnote in caption
%https://tex.stackexchange.com/questions/10181/using-footnote-in-a-figures-caption
\begin{figure}[ht]
    \centering
    \includegraphics[width=\textwidth]{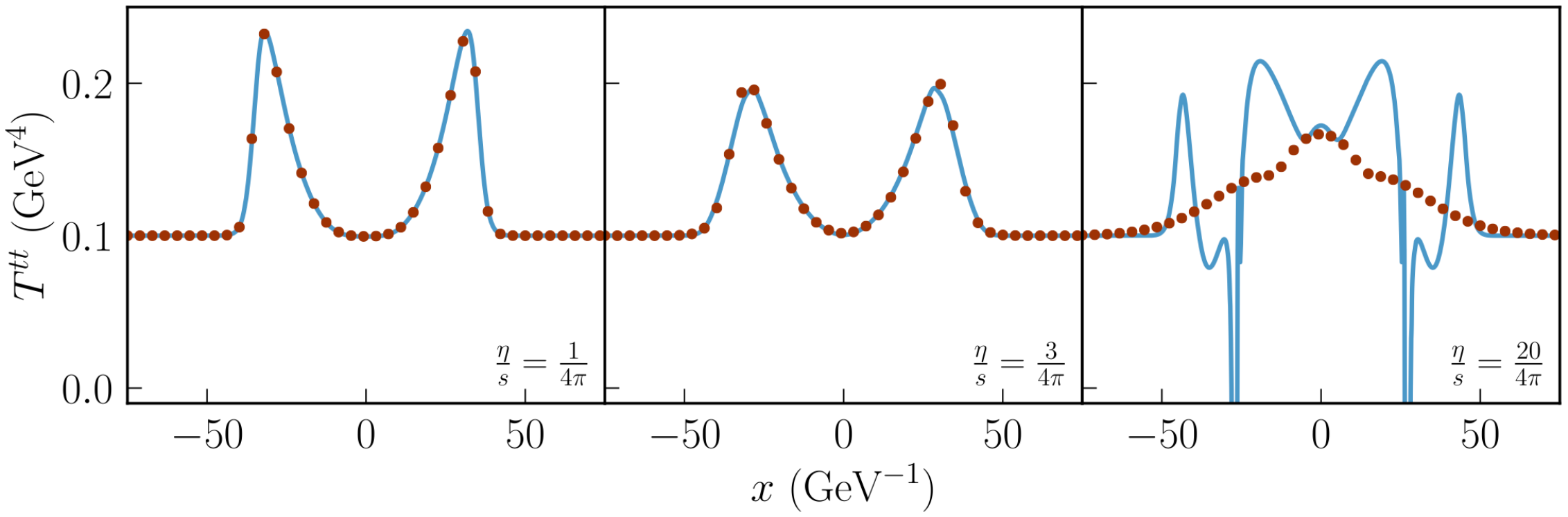}
    \caption{Snapshot of evolution of the density $tt$ component of the energy-momentum tensor as a function of space, for different values of the shear viscosity coefficient\protect\footnotemark \, $\upeta$ (denoted $\eta$ in the graphs). The solid lines represent the BDNK theory and the dots the DNMR theory.  Source: \cite{Pandya:2021ief}. }
    \label{F:BDNK_vs_IS_1d_conformal}
\end{figure}
\footnotetext{It is more appropriate to use $\upeta/s$, where $s$ is the entropy, instead of $\upeta$ as the former is related to the Kudsen number.}

\begin{figure}[ht]
    \centering
    \includegraphics[scale=0.3]{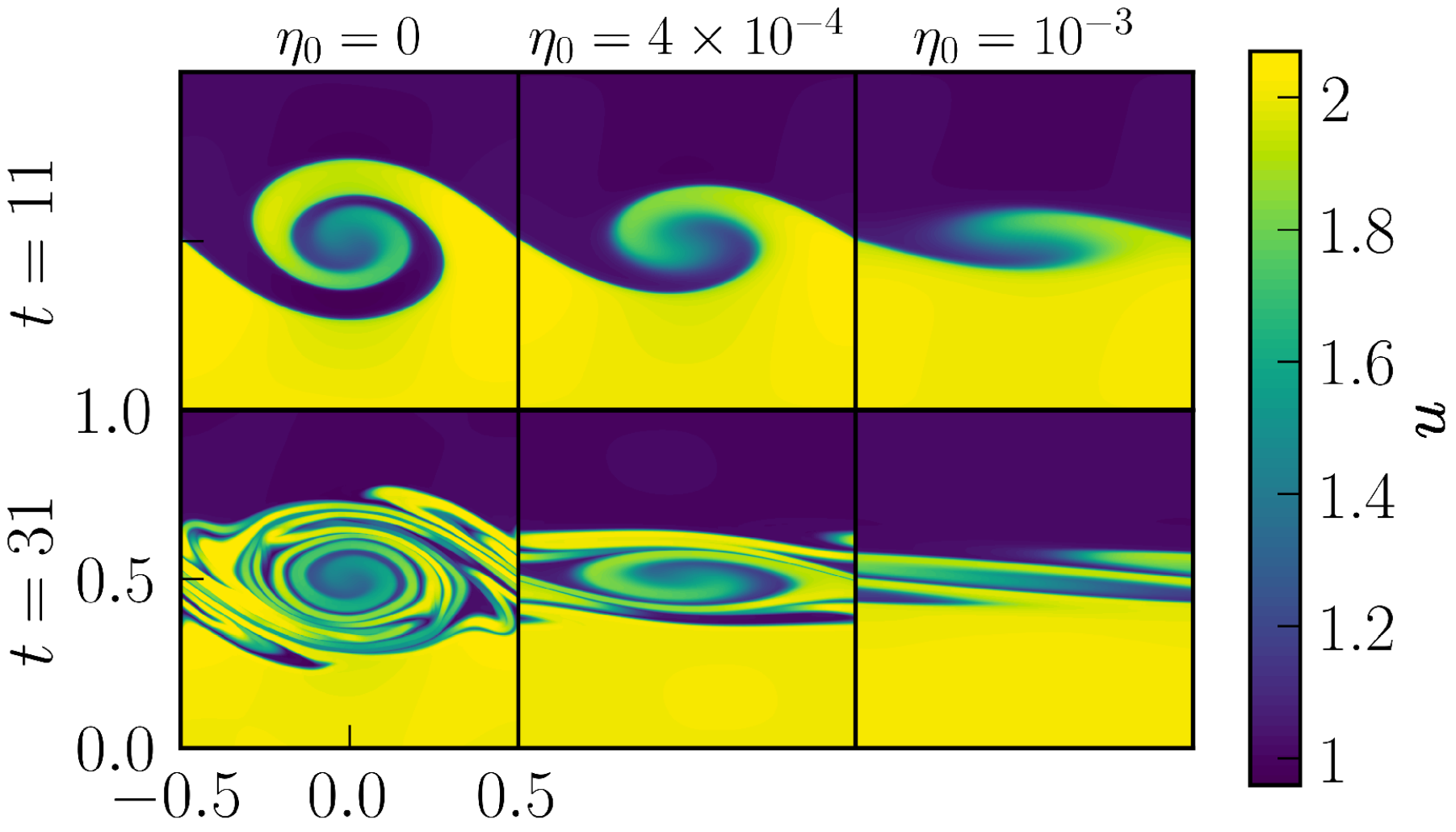}
    \caption{Snapshot of a BDNK evolution of unstable Kelvin-Helmholtz initial data at two different times, with different columns indicating different viscosity values (the first column has no viscosity and corresponds to a perfect fluid). Viscosity tends to damp the formation of vorticies. 
See the supplementary material for a video of the corresponding numerical simulation.    
   Image reproduced with permission from \cite{Pandya:2022pif}, copyright by APS.}
    \label{F:BDNK_Kelvin_Helmholtz}
\end{figure}

Above, we discussed numerical simulations because they provide the only likely avenue for applications of the BDNK theory to realistic systems. While simulations of the quark-gluon plasma or neutron star mergers with the BDNK theory have yet to be implemented, the above results show that the BDNK theory does reproduce known physics or expected results in simple situation. In addition, it is possible to establish results of physical significance for the BDNK theory by theoretical means, namely:

\begin{itemize}
\item The BDNK theory can be derived from kinetic theory, see
\cite{Bemfica-Disconzi-Noronha-2018,Bemfica-Disconzi-Noronha-2019-1,Hoult:2021gnb,Rocha:2023hts}.
\item Hoult and Kovtun have also derived the BDNK theory from the so-called holographic principle, see
\cite{Hoult:2021gnb}. 
\item Entropy production is non-negative for the BDNK theory within the regime of validity of the theory, i.e., up to higher-order gradients that are to be neglected upon the derivation of the BDNK theory (see below), see \cite{Kovtun:2019hdm}. This is a similar situation to the non-negativity of entropy production in the DNMR theory, see Sect.~\ref{S:Limitations}.
\item The validity of the BDNK and DNMR theories in comparison with results from microscopic theory
(for a specific choice of microscopic theory, the so-called $\lambda \phi^4$ of weakly-interacting particles), have been obtained by Brito, Denicol, and Rocha, see \cite{Rocha:2023hts}.
\end{itemize}

In sum, aside from its sound mathematical foundations, the BDNK theory seems to possess many physically appealing features. Nevertheless, we caution the reader that \emph{only with studies of the BDNK theory directed to realistic systems, what inevitably requires full scale $3+1$ numerical simulations, will one be able to asses whether the BDNK theory provides a satisfactory model of relativistic viscous fluids found in nature.}

\subsubsection{Origins of the BDNK tensor and historical notes\label{S:BDNK_origins}}
Having discussed mathematical and physical implications of the BDNK theory, we now turn to its origins.
We start noting that the Eckart and Landau-Lifshitz choices \eqref{E:Viscous_fluxes_Eckart} and \eqref{E:Viscous_fluxes_Landau}, respectively, do not provide the most general form of an energy-momentum tensor for a viscous fluid where the viscous fluxes are given in terms of $\varrho$, $n$, $u$, and their derivatives up to first order. As we mentioned, the choices \eqref{E:Viscous_fluxes_Eckart} and \eqref{E:Viscous_fluxes_Landau} were motivated by the search for a covariant generalization of the Navier-Stokes-Fourier theory. But other modeling choices, partly inspired by the theory of perfect fluids, have also been made in \eqref{E:Viscous_fluxes_Eckart} and \eqref{E:Viscous_fluxes_Landau}. To see this, take, for example, \eqref{E:Viscous_fluxes_BDNK_R}. This choice can be understood by recalling that, in a perfect fluid, the energy density satisfies
\begin{align}
\varrho = u^\mu u^\nu \mathcal{T}_{\mu\nu}
\nonumber
\end{align}
so that $\varrho$ is the energy density measured by an observer locally at rest with respect to the fluid. Postulate \eqref{E:Viscous_fluxes_BDNK_R} says that the same principle holds for a viscous fluid, i.e., an observer locally at rest with respect to the fluid \emph{will not see viscous corrections to the energy density\footnote{Where, as it was commented at the beginning of our discussion of relativistic viscous fluids, it is also assumed that one can neatly distinguish between viscous and non-viscous contributions to the fluid.}.}  The same assumption is made in the DNMR theory, see comments in Assumption \ref{A:DNMR_simplified}. Even if one thinks that this 
hypothesis is simple and natural, it is nonetheless a \emph{modeling choice that does not follow from well-established physical principles}. Observe that when it comes to the pressure, a different reasoning is adopted, wherein a observer locally at rest with the fluid will see the pressure split into two parts, a viscous and a non-viscous contribution 
\begin{align}
p + \mathscr{P} = \frac{1}{3} \proj^{\mu\nu} \mathcal{T}_{\mu\nu},
\nonumber
\end{align}
with $\mathscr{P}$ given by \eqref{E:Viscous_fluxes_Eckart_P} in Eckart's and Landau-Lifshitz's theories and $\mathscr{P}$ satisfying \eqref{E:DNMR_bulk} in the DNMR theory (see \cite[Section 4.2]{Wald:1984rg}
for the above expression for the pressure in a continuum medium).

The \emph{key idea} of the BDNK theory is that one should let the \emph{fundamental principle of causality dictate which terms are allowed in the theory, rather than making what are otherwise ``natural'' modeling choices and only afterwards investigating causality.} In practice, this means starting with the most general energy-momentum tensor and baryon current compatible with the symmetries of a fluid and investigating the characteristics of the corresponding equations of motion. 

More precisely, in practice, at its face value this would lead us to intractable expressions, so one still needs to invoke physical arguments and modeling choices to simplify the energy-momentum tensor. But such choices should be kept to a \emph{bare minimum.}
 For example, to simplify the analysis of the characteristics, one would like to have quasilinear equations that have no derivatives as coefficients in the principal part, leading to viscous fluxes that are linear in first-order derivatives. As another example, one can check that the most general energy-momentum tensor that is linear in first-order derivatives will have more than the six transport coefficients $\uptau_\mathscr{R},\uptau_\mathscr{P},\uptau_\mathscr{Q},\upzeta, \upeta, \upkappa$
in \eqref{E:Viscous_fluxes_BDNK}. For example, the terms $u^\mu \nabla_\mu \varrho$ and $(p+\varrho)\nabla_\mu u^\mu$ in \eqref{E:Viscous_fluxes_BDNK_R} do not need to have, a priori, the same coefficient $\uptau_{\mathscr{R}}$.
Physical arguments are then invoked to establish that some of the transport coefficients are proportional to each other, reducing the total number of transport coefficients to six, see \cite{Hoult:2020eho}.
While one can argue whether assumptions of this form are being faithful to the philosophy of keeping 
modeling choices to a bare minimum, they are substantially less restrictive than setting an entire viscous flux to zero, as in the Eckart and Landau-Lifshitz\footnote{Such a restrictive choice in the DNMR theory is compensated by the introduction of new variables and new equations of motion to model the viscous fluxes.}.

It is apparent at this point that there is a great deal of freedom in the construction of relativistic theories of viscous fluids, as one can postulate the viscous fluxes in different manners. This is a consequence of an intrinsic ambiguity in the description of relativistic dissipative phenomena, namely, \emph{out of thermodynamic equilibrium, there is no first-principles microscopic definition of thermodynamic quantities such as temperature, energy density, and so on.} The quantities that \emph{do have a first-principles microscopic definitions are the energy-momentum tensor and the baryon current.} What one calls temperature, energy density, etc. in  a relativistic viscous fluid are choices of variables that \emph{parametrize} the energy-momentum tensor and the baryon current. Such choices are restricted only by the requirement that they reduce to their counter-parts in thermodynamic equilibrium, when temperature etc. can be defined from first-principles. See \cite{Kovtun:2019hdm}, 
\cite[Chapter 6]{Rezzolla-Zanotti-Book-2013}, \cite[Section II]{Bemfica:2020zjp}, \cite{Romatschke:2017ejr,Denicol-Rischke-Book-2021}, \cite[Section 2.11]{Weinberg:1972kfs} for a detailed discussion.

\begin{remark}
By making a choice for the viscous fluxes and thus paramatrizing the energy-momentum tensor and baryon current, one is effectively providing a definition for thermodynamic quantities out of thermodynamic equilibrium. For historical reasons, each such choice is known as a \textdef{hydrodynamic frame.}
\end{remark}

A more systematic discussion of the underlying physical arguments leading to the BDNK theory is beyond the scope of this mathematical review. We refer the reader to \cite{Kovtun:2019hdm}, \cite[Section II]{Bemfica:2020zjp}, and references therein. 

Theories of relativistic viscous fluids where the viscous fluxes are given in terms of first-order derivatives of\footnote{As usual, different thermodynamic scalars can be used as primary variables.} $\varrho$, $n$, and $u$, being linear in the derivatives, such as the Eckart, Landau-Lifshitz, and BDNK theories, are known as \textdef{first-order theories of relativistic viscous fluids} (see \cite[Chapter 6]{Rezzolla-Zanotti-Book-2013} for a history of the terminology). 

The instability and acausality results of Hiscock and Lindblom, see \cite{Hiscock:1985zz}, and Pichon, see \cite{Pichon-1965}, that we mentioned at the beginning of Sect.~\ref{S:Relativistic_viscous_fluids} as ruling out the Eckart and Landau-Lifshitz theories are applicable, in fact, to large classes of first-order theories. Essentially, theories where $\mathscr{R} = 0$ and a few further mild assumptions on the viscous fluxes hold will be acausal and unstable. But we see from Theorem \ref{T:Causality_BNDK} that causality of the BDNK theory requires $\mathscr{R} \neq 0$ since $\uptau_\mathscr{R} > 0$. Because $\mathscr{R} = 0$ was deemed a ``natural'' assumption (see the above discussion), for a long time it was thought that first-order theories of relativistic fluids are inevitably unstable and acausal.
(We note that Theorem \ref{T:Causality_BNDK} in fact requires $\uptau_\mathscr{T}, \uptau_\mathscr{P}, \uptau_\mathscr{Q} > 0$; the first-order theories treated by Hicock and Lindblom, on the other hand, explicitly exclude the terms in the $\uptau$'s in \eqref{E:BDNK}.)

The first indications that a causal, stable, and locally well-posed theory of relativistic viscous fluids was possible were obtained by the author in \cite{Disconzi-2014-1} and Freist\"{u}hler and Temple in \cite{Freistuhler-Temple-2014}. 

In \cite{Disconzi-2014-1}, the author considered a theory of relativistic viscous fluids introduced by Lichnerowicz in the '50s, see \cite{Lichnerowicz-Book-1955}. Lichnerowicz theory satisfies $\mathscr{R}=0$, but it does not fall under the assumptions of Hicock and Lindblom's or Pichon's results because it does not satisfy some of the further assumptions in their works. In a nutshell, in Lichnerowicz's theory the viscous fluxes are given in terms of derivatives of the enthaply current (see Definition \ref{D:Relativistic_vorticity}) instead of derivatives of the velocity, as assumed in those works.  Causality of the Lichnerowicz theory is established in \cite{Disconzi-2014-1} for the case of an irrotational fluid (when $\Omega$ given in Definition \ref{D:Relativistic_vorticity} vanishes). This result was extended by Czubak and the author \cite{Czubak-Disconzi-2016} to the case when the vorticity satisfies certain constraints. We stress that the results \cite{Disconzi-2014-1,Czubak-Disconzi-2016} are more of a pedagogical nature, serving as motivation to explore first-order theories of relativistic viscous fluids not satisfying the assumptions of Hiscock and Lindblom and Pichon, since\footnote{Although, curiously, Lichnerowicz's theory has found interesting applications in cosmology, see \cite{Disconzi-Kephart-Scherrer-2017,Disconzi-Kephart-Scherrer-2015,Acquaviva:2018rqi,Montani:2016hmf}.} (a) neither the vorticity-free condition in \cite{Disconzi-2014-1} nor the constraints in \cite{Czubak-Disconzi-2016} are propagated by the flow, except for some very special and uninteresting initial data, see \cite{Lichnerowicz-Book-1955}; and (b) more importantly, Lichnerowicz theory is unstable\footnote{This does not seem to be written in the literature, but it follows by an analysis similar to that carried out by Hiscock and Lindblom in \cite{Hiscock:1985zz}.}.

Freist\"{u}hler and Temple's theory introduced in \cite{Freistuhler-Temple-2014} is more akin to the BDNK theory, i.e., it considers viscous fluxes with $\mathscr{R} \neq 0$ and other assumptions not fitting the hypotheses in the works of Hiscock and Lindblom and Pichon. The main goal of \cite{Freistuhler-Temple-2014} was the mathematical study of shocks in relativistic viscous fluids, and thus a complete and and systematic study of causality and stability was not carried out by the authors. Freist\"{u}hler and Temple extended their results in \cite{Freistuhler-Temple-2017,Freistuhler-Temple-2018}, but again focusing primarily on the study of shocks.

Some earlier hints that causality and stability was possible for first-order theories of relativistic viscous fluids were provided by constructions of Choquet-Bruhat \cite[Chapter IX]{Choquet-Bruhat-Book-2009} and Van and Biro \cite{Van:2011yn}.

\emph{The first unambiguous construction of a first-order causal and stable relativistic viscous fluid theory} was obtained by Bemfica, Noronha, and the author in \cite{Bemfica-Disconzi-Noronha-2018}, where the BDNK theory was constructed for the case of a conformal fluid, which is probably the simplest possible case of interest\footnote{Conformal fluids are often used as a toy model for the quark-gluon plasma, see \cite{Romatschke:2017ejr}.}. Coupling to gravity was also considered in this work, but local well-posedness was established only in Gevrey spaces. Local well-posedness in Sobolev spaces was later obtained by Bemfica, Rodriguez, Shao, and the author in
\cite{Bemfica-Disconzi-Rodriguez-Shao-2021}. We note that the PDE techniques in \cite{Bemfica-Disconzi-Rodriguez-Shao-2021} were generalized and refined by Bemfica, Graber, and the author in \cite{Bemfica-Disconzi-Graber-2021}, and this latter reference provides the key PDE techniques 
needed for the proof of local well-posedness in Theorem \ref{T:Causality_BNDK}. See also
\cite{Disconzi-Shao-2023-arxiv} for a simpler and self-contained exposition of these techniques.

Once it was realized that first-order theories are not inevitably acausal and unstable, several works contributed to what culminated in the BDNK theory here presented, see \cite{Bemfica:2020zjp,Bemfica-Disconzi-Noronha-2019-1,Bemfica-Disconzi-Noronha-2018,Kovtun:2019hdm,Hoult:2020eho}. 

The BDNK theory has spurred a revival of interest in first-order theories of relativistic viscous fluids, see
\cite{Das:2020gtq,
Das:2020fnr,
Grozdanov:2018fic,
Romenski:2019qzs,
Gavassino:2020ubn,
Dore:2020jye,
Poovuttikul:2019ckt,
Shokri:2020cxa,
Taghinavaz:2020axp,
Erschfeld:2020blf,
Celora:2020pzs,
Mitra:2021ubx,
Freistuhler-2020,
Biswas:2022hiv,
Salazar:2022yud}, references therein, and the above works on numerical simulations and shocks. This renewed interest includes extensions to relativistic viscous magnetohydrodynamics, see
\cite{Armas:2022wvb,Hattori:2022hyo},
extensions to viscous spin hydrodynamics, see
\cite{Weickgenannt:2023btk,Abboud:2023hos,Xie:2023gbo},
extensions to anisotropic hydrodynamics, see \cite{Bemfica:2023res}, and higher-order fluids\footnote{See \cite{deBrito:2023tgb} for a second-order, DNMR-like, approach to higher-order fluids.}, see \cite{Diles:2023tau},
studies of the non-relativistic limit and post-Newtonian approximations, see
\cite{HegadeKR:2023glb,Ripley:2023qxo}, connections to kinetic theory, see
\cite{Rocha:2023hts,Rocha:2022ind,Garcia-Perciante-Mendez-2023}, cosmology, see \cite{Bemfica:2022dnk}, 
and the fluid-gravity correspondence and Carrollian fluids, see \cite{Ciambelli:2023mvj}. More generally, the discovery of the BDNK theory highlighted how much remains to be understood about relativistic viscous fluids, thus leading to  
renewed efforts in the construction of causal and stable theories of relativistic fluids with viscosity, see, e.g., \cite{Hoang-2024-arxiv,Andersson-Celora-Comer-Hawke-2024} and references therein\footnote{We also note the recent work by \cite{Figueras-Aaron-Kovacs-2024-arxiv}. There, a formalism to construct causal and locally well-posed theories of gravity beyond Einstein is developed. The authors suggest that their methods can be applied to fluids as well, possibly leading to a generalization of the BDNK theory.}.

Interestingly, the BDNK theory has also led to new developments in the study of second-order theories. Once it became clear that a proper choice of hydrodynamic frame is needed for causality and stability of first-order theories, it was not a huge leap to ask how different choices of hydrodynamic frames would affect second-order theories, leading to the so-called second-order theory in a general hydrodynamic frame, see \cite{Noronha:2021syv}. It then follows that the \emph{BDNK theory can be obtained as a particular limit of such a second-order theory in a general hydrodynamic frame,} see \cite{Noronha:2021syv}.

\section{Some further important results in relativistic fluids}
\label{S:Further_important_results}

As said in the Introduction, our goal in this review is to help non-specialists to understand some recent exciting ideas and techniques in the field of relativistic fluids. Our goal is not to provide a comprehensive account of the field. But given that there are many other important developments in relativistic hydrodynamics beyond the ones we have discussed so far, in this section we will provide a glimpse of some of them. 

The works we discuss below fall broadly into two classes. In Sects.~\ref{S:Further_Singularities_Einstein_Euler_symmetry} and \ref{S:Further_connection_self_gravitating}, we discuss results related to the Einstein--Euler system under symmetry assumptions. While a general goal in the mathematical analysis of equations of motion coming from physics is to establish results for ``generic'' data or solutions, in particular away from symmetry, symmetric cases provide a valuable setting where one can understand in detail several aspects of the dynamics without having to deal with many of the complications that arise away from symmetry. The symmetric case also often allows one to pursue questions that are otherwise out of reach (see, for example, the contrast in our understanding of shock solutions in the $1+1$ case versus $2+1$ and $3+1$ cases in Sects.~\ref{S:Shocks_1d} and \ref{S:Shocks_higher_d}). In Sects.~\ref{S:Further_turning_point},  \ref{S:Further_NR_limit}, \ref{S:Further_GWP_expanding}, and \ref{S:Further_big_bang}, we discuss the question of stability of symmetric solutions. (More precisely, in Sect.~\ref{S:Further_NR_limit} we discuss the non-relativistic limit, hence the question of stability in that case is one of stability of the dynamics in certain asymptotic regimes. But the reason to investigate it is similar to that underlying stability of symmetric solutions.)
The problem of stability of symmetric solutions is crucial for many applications in physics. It is often the case that physicists rely on special solutions that are derived under symmetry assumptions. Realistic models, however, are never perfectly symmetric. Thus, one hopes that perturbations away from symmetry remain close to symmetric solutions under the evolution, so that conclusions derived from the symmetric solutions would be good approximations for the true, non-symmetric system. For example, the Friedmann–Lema\^itre–-Robertson–-Walker (FLRW)  
\citep{Friedmann-1922,
Friedmann-1924,
Lemaitre-1931,
Lemaitre-1933,
Robertson-1935,
Robertson-1936-1,
Robertson-1936-2,
Walker-1937} family of solutions describing a homogeneous and isotropic universe is a cornerstone of modern cosmological models (see \citealt{Weinberg-Book-2008} for a discussion of current cosmological models and Sect.~5.1 of \cite{Wald:1984rg} for a precise definition of homogeneity and isotropy). Yet, the universe is only approximately homogeneous and isotropic. Thus, it is important to establish that solutions to the Einstein--Euler system with initial data close to a FLRW solution remain close to that FLRW solution. 

\begin{remark}
It should be apparent from the above discussion that by stability of solutions we mean nonlinear stability, i.e., we are considering solutions of the complete nonlinear system of PDEs for initial data close to some given symmetric initial data. Observe that in the case where the symmetric solution is defined globally in time, this in particular requires establishing a global existence result for the evolution of the nonlinear problem.
\end{remark}

Both themes referred above, the study of symmetric solutions to the Einstein--Euler system and the problem of stability of symmetric solutions, are broad and important topics that deserve each a review of their own, and we could not do justice to these topics in this section. 
Therefore, our presentation is far from exhaustive and, unfortunately, omits many other interesting results. Moreover, we will restrict ourselves to summarize results, without discussing their proofs.

\subsection{The turning point principle for relativistic stars}
\label{S:Further_turning_point}

In spherical symmetry and in the static case, the Einstein--Euler system simplifies considerably. The resulting system, known as the Tolman--Oppenheimer--Volkoff (TOV) equations \citep{Oppenheimer:1939ne,Tolman:1934za,Tolman:1939jz}, has been used to describe stars in hydrostatic equilibrium, as follows. One considers a spherically symmetric static star of radius $R$ governed by Einstein--Euler system. The region exterior to the star is described by the vacuum Einstein equations and, being spherically symmetric, is given by the Schwarzschild metric in view of Birkhoff's theorem \citep{Birkhoff-Book-1923,Birkhoff-Book-reprint-2013,Veblen-1924}. On imposes that on the boundary of the star the fluid's pressure vanishes (see Eq.~\eqref{E:Bry_condition_pressure} and surrounding discussion) and the metric is continuous (thus the interior metric matches the outside Schwarzschild metric). This is the static version of the free-boundary Einstein--Euler system that was discussed in a fixed background but without symmetry assumptions in Sect.~\ref{S:Vacuum_bry} (see also Problem \ref{Pr:LWP_Einstein_Euler_vacuum_bry} in Sect.~\ref{S:Open_problems}).

Under reasonable assumptions on the equation of state, an analysis of the TOV equations reveals that there are relations constraining the allowed values of a star radius for a given mass. This is the well-known \emph{mass-radius diagram} illustrated in Fig.~\ref{F:Mass-radius}. 

\begin{figure}[ht]
\centering
  \includegraphics[scale=0.5]{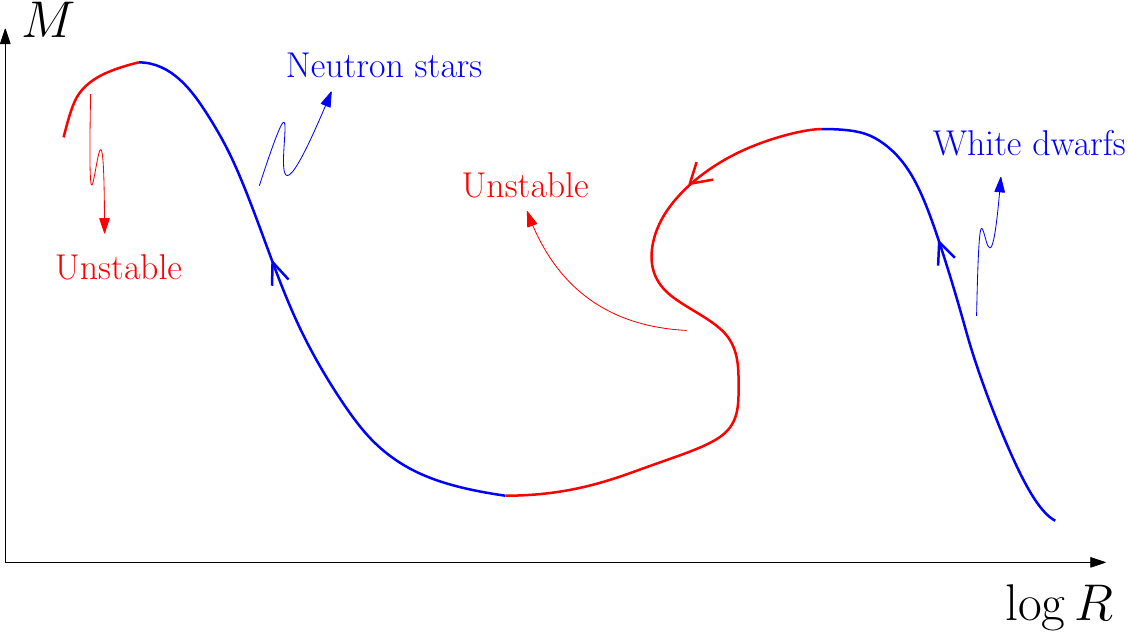}
  \caption{Illustration of the mass-radius diagram. The arrows indicate directions of increasing density at the center of the star. The red curves represent configurations believed to be unstable.}
  \label{F:Mass-radius}
\end{figure}

The details of the mass-radius diagram depend on many assumptions about the nature of the star, see Sect.~6.2 of \cite{Wald:1984rg}. Despite such assumptions and the static spherically symmetric regime, it is believed that the mass-radius diagram captures general qualitative features of \emph{realistic} stars. For example, some estimates show that deviations from spherical symmetry caused by magnetic fields are small for typical field strengths (e.g., excluding magnetars) and more general estimates in axial symmetry lead to a qualitatively similar mass-radius diagram (wherein one needs to define what is meant by the ``radius'' of a non-spherically symmetric star), see \cite{Ozel:2016oaf}; Chapter~14 of \cite{Baumgarte:2010ndz} and references therein for more background and discussion.

A rigorous mathematical justification of the mass-radius diagram, i.e., a proof that its qualitative features are present in solutions of the Einstein--Euler system away from symmetry and under assumptions appropriate for a realistic description of star evolution, is currently out of reach. Simply posing the question would already  require understanding the dynamics of the Einstein--Euler system with a free boundary, a problem that in itself is very challenging (see Problem \ref{Pr:LWP_Einstein_Euler_vacuum_bry} in Sect.~\ref{S:Open_problems}). Specific aspects of the problem, on the other hand, are more amenable to a mathematical treatment. 

One clear interesting feature of the mass-radius diagram is the change from stable to unstable behavior indicated by points of local maximum and minimum of the curve in Fig.~\ref{F:Mass-radius}. In order to study this phenomena, \cite{Zeldovich-Podurets-1966} considered spectral properties of the spherically symmetric Einstein--Euler system linearized about solutions to the TOV equations parametrized by values of the central density, i.e., the density at the center of the star. They conjectured that the linearized system's spectral stability properties, including the number of corresponding growing modes for the system, could only change at extrema of the mass function. This conjecture is known as the \emph{turning point principle.}

In what seems to be the first rigorous result toward the problem of providing a full mathematical justification that the mass-radius curve captures general qualitative features of the Einstein--Euler system describing realistic stars, \cite{Hadzic-Lin-2021} have recently proven the turning point principle for equations of state of the form $p(\varrho) = K \varrho^\gamma + O(\varrho^{\gamma+1})$, where $K$ and $\gamma$ are constants within some specified range. We refer the reader to \cite{Hadzic-Lin-2021} for a precise formulation of the turning point principle and its proof.

\subsection{Singularities and shocks in the Einstein--Euler system under symmetry assumptions\label{S:Further_Singularities_Einstein_Euler_symmetry}}

The problem of shock formation for the relativistic Euler equations in Minkowski space remains open, see Sect.~\ref{S:Some_context_shocks} and Problem \ref{Pr:Shock_relativistic_Euler} in Sect.~\ref{S:Open_problems}. Such a problem is naturally much more challenging upon coupling to Einstein's equations. However, under symmetry assumptions, important results have been obtained (compare with the discussion in Sect.~\ref{S:Shocks_1d}).

\cite{Rendall-Stahl-2008} considered the Einstein--Euler system in plane symmetry. They proved that for large classes of smooth initial data and a large number of equations of state, solutions break down in finite time. Recalling the connection between breakdown and distributional or weak solutions in Sect.~\ref{S:Study_of_shocks}, weak solutions to the Einstein--Euler system on $\mathbb{T}^3$ with Gowdy symmetry have been constructed by \cite{LeFloch-Rendall-2011}. These solutions admit shock waves and the so-called impulsive gravitational waves, which are solutions with Dirac-delta curvature
singularities. These results have been extended by \cite{Grubic-LeFloch-2015}.
Along similar lines, for the Einstein--Euler system in spherical symmetry, \cite{Burtscher-LeFloch-2014}
constructed a large class of spherically symmetric BV solutions for the Einstein--Euler system (compare with Sect.~\ref{S:Shocks_1d}). They also proved that these solutions exhibit formation of trapped surfaces.

\subsection{The non-relativistic limit\label{S:Further_NR_limit}}
A very natural question is whether solutions to the relativistic Euler equations converge to those of the classical compressible Euler equations when one takes the non-relativistic limit. We can similarly ask whether solutions to the Einstein--Euler system converge to those of the Euler--Poisson system in the non-relativistic limit. Formally, it is not difficult to see that these limits hold, see Sect.~3.5 of \cite{Rezzolla-Zanotti-Book-2013} and Sects.~4.2, 4.4 of \cite{Wald:1984rg}. A rigorous mathematical proof, on the other hand, presents the usual difficulties one encounters when studying singular limits. This is a problem of clear interest for applications since the Euler--Poisson system is often used as an approximation to the Einstein--Euler system in regimes where relativistic corrections are expected to be small, see \cite{Buchert:2011sx,Saari-1971,Marchal-Saari-1976,Gibbons-Patricot-2003,Saari-2005,Ellis-Gibbons-2014} and references therein. Thus, it is important to justify that solutions to the Einstein--Euler system can be approximated by solutions of the Euler--Poisson equations in a controlled fashion.

The non-relativistic limit discussed above was rigorously established for large classes of solutions by \cite{Oliynyk-2010-1,Oliynyk-2010-2,Oliynyk-2012-2} and by \cite{Liu-Oliynyk-2018-1, Liu-Oliynyk-2018-2, Oliynyk-2015} in a cosmological setting.

\subsection{Global future stable solutions in an expanding spacetime\label{S:Further_GWP_expanding}}
We have already highlighted above the importance of understanding stability properties of the FLRW family of solutions to the Einstein--Euler system. In this section we report on results on the (nonlinear) future stability of this family. To understand the meaning of \emph{future} stability, it is important to highlight that the construction of FLRW solutions employs a conveniently chosen time coordinate $t$ in which the solution exhibits a singularity\footnote{This is a true curvature singularity, not an artifact of the choice of coordinates.} when $t \rightarrow -1^+$ but remains well-defined and regular when $t \rightarrow \infty$ (the choice of $t=-1$ as the ``time of the singularity'' is just a choice of normalization and any other $t$ values can be chosen), see \cite[Section 5.2]{Wald:1984rg} for details. By future stability we meant that for initial data sufficiently close to initial data for a FLRW solution, solutions to the Einstein--Euler system exist for all $t>0$ and remain close to the corresponding FLRW solution. Here, $t$ is a suitable time function related to the aforementioned time coordinate used in the FLRW solution and we normalize it to have the data given at $t=0$. See the references below for precise statements.

The question of future stability of the FLRW family has been addressed to satisfaction in the case of a linear equation of state, i.e., $p(\varrho) = c_s^2 \varrho$ with constant $c_s$, and with a positive cosmological constant in Einstein's equations.
The latter is the case of primary interest for applications according to current cosmological models in view of the accelerated expansion of the universe, see \cite{Weinberg-Book-2008}. The results, always under the positive cosmological constant assumption, are the following.  \cite{Rodnianski-Speck-2013} established the result for an irrotational fluid and when $0<c_s^2 <\frac{1}{3}$. Speck removed the irrotational assumption in \cite{Speck-2012}. The cases $c_s^2=0$ and $c_s^2 = \frac{1}{3}$ have been treated by  \cite{Hadzic-Speck-2015} and \cite{Lubbe:2011kz}, respectively.
Interestingly, with heuristic arguments and numerical simulations, \cite{Rendall-2004} and \cite{Beyer-Marshall-Oliynyk-2023}, respectively, found evidence that stability might fail in the case $\frac{1}{3}< c_s^2 < 1$.

We note that these works are in part inspired by an earlier result of \cite{Speck-2013} on small-data global stability of the relativistic Euler equations on a background spacetime that undergoes accelerated expansion and in the case $0 \leq c_s^2 \leq \frac{1}{3}$. \cite{Oliynyk-2021} obtained a similar result for $\frac{1}{3}< c_s^2 < \frac{1}{2}$, later improved by \cite{Marshall-Oliynyk-2023} to the range $\frac{1}{3}< c_s^2 < 1$.
More recently, Speck's result has been improved by \cite{Fajman-Oliynyk-Wyatt-2021}, who considered a background whose expansion is not accelerated.

\subsection{Stability of big-bang singularities\label{S:Further_big_bang}}
In Sect.~\ref{S:Further_GWP_expanding} we discussed the future stability of the FLRW family of solutions. Here we comment on results regarding their \emph{past} stability, thus in particular stability of the big-bang singularity. Without getting into details, we should remark that this is a rather delicate matter in that one needs to make sense of closeness for solutions that are blowing up.

A satisfactory picture of stability of the big-bang singularity has been obtained in a series of works by 
\cite{Rodnianski-Speck-2018-1,
Rodnianski-Speck-2018-2,
Fournodavlos-Rodnianski-Speck-2023}. The authors established stability of the big-bang singularity in the case of an irrotational stiff fluid, i.e., $c_s^2 =1$. We note that the result is in fact more general: they establish stability of singularities arising from a larger family of solutions than FLRW, the so-called Kasner family. See the previous references for details and precise statements, and also \cite{Speck-2018-2} for a related result.

\subsection{Works in connection with self-gravitating fluids\label{S:Further_connection_self_gravitating}}
Understanding the free-boundary Einstein--Euler system is an important open problem in the mathematical theory of relativistic stars. See Sect.~\ref{S:Open_problems} and Problem \ref{Pr:LWP_Einstein_Euler_vacuum_bry}. 
Important results have been obtained in the static case going back to the 1980s, see the references 
\cite{Rendall-Schmidt-1991,Lindblom-1988,Makino-1992,Makino-1998,Makino-2016,Makino-2017,Makino-2018,Makino-2019-2-arxiv,Makino-2019-arxiv} and 
section 1.6 of \cite{Disconzi-Ifrim-Tataru-2022} for a more detailed discussion; see also the related discussion in Sect.~\ref{S:Further_turning_point}. 

The case of a star wherein one considers matter confined to a compact spatial region is only one example of a self-gravitating fluid. One can also consider the situation when matter extends in space, decaying to zero at infinity. In this case, it is of interest to provide a description of the system's asymptotic behavior. This has been recently done in \cite{Andersson-Burtscher-2019} for certain power-law equations of state (including the linear case).

In the study of isolated bodies such as stars, it makes sense to consider a zero cosmological constant in that effects from the universe's accelerated expansion are negligible at such local scales. Yet, a more complete description should account for the accelerated expansion. This point of view has been recently investigated by \cite{Beheshti-Kroon-2022}, who studied global-in-time properties a system of self-gravitating balls of dust in an expanding Universe. We refer the reader to their paper for precise statements and background discussion.

\section{Open problems and future directions of research\label{S:Open_problems}}

We conclude this review listing some open problems that follow from or are directly connected to the results we have presented. As mentioned in the Introduction, our list of open problems will vary from precise conjectures to tantalizing speculations. We list the problems in an order that more or less mirrors our presentation in the text. It goes without saying that all problems listed below are \emph{conjectures,} so, when we state these problem in the form ``establish P...'' we are relying on an educated guess that P is true.

\begin{innercustomthmproblem}
Solve the problem of shock formation for the relativistic Euler equations.
\label{Pr:Shock_relativistic_Euler}
\end{innercustomthmproblem}

Problem \ref{Pr:Shock_relativistic_Euler} asks for establishing a constructive proof of stable shock formation without symmetry assumptions in more than one spatial dimension, as described in Sect.~\ref{S:Study_of_shocks}, for the relativistic Euler equations. More precisely, we would like to establish for the relativistic Euler equations a result like Theorems 4.2 and 4.3 of \cite{Luk-Speck-2024}. 

In Sect.~\ref{S:Some_context_shocks}, we discussed how some of the key tools needed for a resolution of Problem \ref{Pr:Shock_relativistic_Euler} are available. We also note that a detailed road map for solving Problem \ref{Pr:Shock_relativistic_Euler} is given in \cite{Abbrescia-Speck-2023}. After a resolution of Problem \ref{Pr:Shock_relativistic_Euler}, we can ask for:

\begin{innercustomthmproblem}
Give a description of the maximal development of the initial data for shock-forming solutions of the relativistic Euler equations.
\label{Pr:Maximal_development_Euler}
\end{innercustomthmproblem}

In problem \ref{Pr:Maximal_development_Euler}, which is significantly more challenging than \ref{Pr:Shock_relativistic_Euler}, one is asking for a generalization to the relativistic setting of the result \cite{Abbrescia-Speck-2022-arxiv,Shkoller-Vicol-2024} obtained for the classical compressible Euler equations, see Sect.~\ref{S:Shocks_higher_d}.

Problems \ref{Pr:Shock_relativistic_Euler} and \ref{Pr:Maximal_development_Euler} deal with the relativistic Euler equations in a fixed background. A much more interesting, albeit much more challenging, problem is:

\begin{innercustomthmproblem}
Solve the problem of shock formation for the Einstein--Euler system.
\label{Pr:Shock_Einstein_Euler}
\end{innercustomthmproblem}

We recall that by the problem of shock formation we are in particular referring to solutions away from symmetry (see Sect.~\ref{S:Study_of_shocks}). Shock solutions for the Einstein--Euler system in symmetry classes have been studied in \cite{Rendall-Stahl-2008,Groah-Smoller-Temple-Book-2007}.

It is likely that genuinely new ideas will be needed to address Problem \ref{Pr:Shock_Einstein_Euler}. As we reviewed in Sect.~\ref{S:Study_of_shocks}, the techniques currently available for the study of shock formation are \emph{adapted to the fluid's characteristics,} more precisely to the sound cones, whose collapse characterizes the onset of shock formation. But the sound cones are timelike with respect to the spacetime metric, and one cannot derive energy estimates along timelike surfaces\footnote{Unacquainted readers can see the comments on the energy formalism in Sects.~\ref{S:Detour_Minkowski} and \ref{S:Estimates_curl_Omega_along_sound_cones}.}. Thus, proper control of the gravity part cannot be obtained by known methods. This difficulty is a manifestation of the multi-characteristics nature of the problem. It is interesting to notice, however, that if the sound cones were spacelike with respect to the spacetime metric, i.e., if the fluid traveled at speeds greater than the speed of light, then current techniques of shock formation would be applicable, see \cite{Speck-2018-1}.

\begin{innercustomthmproblem}
Establish low-regularity solutions for the relativistic Euler equations with no H\"older assumption on the data and 
$(\hat{h}, u, \curl u) \in H^{2+\varepsilon}(\Sigma_0)\times H^{2+\varepsilon}(\Sigma_0)\times H^{2+{\varepsilon^\prime}}(\Sigma_0)$, $0< \varepsilon^\prime < \varepsilon$.
\label{Pr:Rough_solutions}
\end{innercustomthmproblem}

Recall from the discussion after Theorem \ref{T:Rough_classical_Euler} and from Sect.~\ref{S:Lowering_regularity_classical_Euler} that \cite{Wang-2022} and \cite{Zhang-Andersson-arxiv-2022}, established 
a version of Problem \ref{Pr:Rough_solutions} for the classical compressible Euler equations. Thus, Problem \ref{Pr:Rough_solutions} asks about the generalization of Wang's and Andersson and Zhang's results to the relativistic setting, improving Theorem \ref{T:Rough_relativistic_Euler}. The works by \cite{Wang-2022,Zhang-Andersson-arxiv-2022} involve some very \emph{delicate} estimates that rely on \emph{special structures} of the (new formulation\footnote{Recall Sect.~\ref{S:New_formulation_classical_schematic}.} of the) classical compressible Euler equations (see Sect.~\ref{S:Lowering_regularity_classical_Euler}), thus it is not clear how easy it would be to generalize them to the relativistic setting.

\begin{innercustomthmproblem}
Establish local well-posedness for the relativistic Euler equations with a physical vacuum boundary and a general equation of state.
\label{Pr:LWP_vacuum_bry_general}
\end{innercustomthmproblem}

Problem \ref{Pr:LWP_vacuum_bry_general} generalizes Theorem \ref{T:LWP_vacuum_bry} for an equation of state $p = p(\varrho,n)$ (or $p=p(\varrho, s)$, etc.; as usual, one can choose different thermodynamic scalars as primary variables), in which case Eq.~\eqref{E:Relativistic_Euler_eq_full_system_baryon_charge} is also included in the system. It is implicitly assumed that the equation of state is compatible with the behavior of a gas\footnote{For example, not every equation of state will allow $p$ and $\varrho$ to vanish simultaneously on the boundary.}. Note that upon the introduction of new variables such as $n$ or $s$, further decay conditions compatible with a finite non-zero acceleration of the boundary have to be imposed, similarly to what was done for $c_s^2$.

For the equation of state of an ideal gas
\begin{align}
\label{E:Ideal_gas_EoS}
p(\varrho,n) = (\varrho - n)( \gamma - 1),
\end{align}
where $\gamma >1$ is a constant,
a priori estimates for the linearized relativistic Euler equations with a physical vacuum boundary have been recently obtained by \cite{Luczak-in_prep}. In view of the role played by the analysis of the linearized problem in the proof of Theorem \ref{T:LWP_vacuum_bry}, these a priori estimates constitute a key step in a resolution of problem \ref{Pr:LWP_vacuum_bry_general} for the case of the equation of state
\eqref{E:Ideal_gas_EoS}. We note that fluids with the equation of state \eqref{E:Ideal_gas_EoS} are widely in study in physics \cite[Sect.~2.4 of][]{Rezzolla-Zanotti-Book-2013}, hence a resolution of Problem \ref{Pr:LWP_vacuum_bry_general} in this case is important. But naturally we 
would like a resolution of Problem \ref{Pr:LWP_vacuum_bry_general} for a general equation of state.

\begin{innercustomthmproblem}
Establish local well-posedness for the Einstein--Euler system with a physical vacuum boundary.
\label{Pr:LWP_Einstein_Euler_vacuum_bry}
\end{innercustomthmproblem}

In Problem \ref{Pr:LWP_Einstein_Euler_vacuum_bry}, one is considering a more realistic model of a general-relativistic star, wherein we solve the Einstein--Euler system inside the fluid region $\{ \varrho > 0 \}$ and the vacuum Einstein equations outside\footnote{For readers not versed in relativity, we stress that vacuum means a solution to Einstein's equations where the energy-momentum tensor on the RHS vanishes. This does not imply, however, that ``vacuum'' should be understood as ``trivial,'' i.e., solutions with no dynamics or uninteresting mathematically.}. Thus, differently then the case of a fixed background, we do need to consider the exterior of the fluid region in a non-trivial manner. In this case, Einstein's equations in wave coordinates will read
\begin{align}
\label{E:Einstein_Euler_wave_coordinates_free_bry}
g^{\mu\nu} \partial_\mu \partial_\nu g_{\alpha\beta} + \mathcal{N}(g,\partial g) 
= \chi_{\Md_t}( \mathcal{T}_{\alpha\beta} -\frac{1}{2} g^{\mu\nu} \mathcal{T}_{\mu\nu} g_{\alpha\beta} )
\end{align}
where $\chi_{\Md_t}$ is the characteristic function of the moving domain at time $t$, $\mathcal{T}$ is the energy-momentum tensor for a perfect fluid, Eq.~\eqref{E:Energy_momentum_perfect}, and 
$\mathcal{N}(g,\partial g)$ is the standard quadratic nonlinearity for Einstein's equations in wave coordinates (a cosmological constant is not included for simplicity). Equation \eqref{E:Einstein_Euler_wave_coordinates_free_bry} expresses that one solves Einstein's equations coupled to the fluid within the fluid region $\Md_t$ and vacuum Einstein outside.

Compared to the case of a fixed background, Problem \ref{Pr:LWP_Einstein_Euler_vacuum_bry} presents many new challenging features that will likely require genuinely new ideas for its solution. One difficulty is conceptually similar to that of Problem \ref{Pr:Shock_Einstein_Euler}, namely, the fluid's free boundary is timelike with respect to the spacetime metric, and no estimates along timelike manifolds are generally available. Note, in this respect, that there are \emph{no boundary conditions} for the metric along $\Md_t$. Another difficulty is that, as we saw in Sect.~\ref{S:Vacuum_bry}, we need to derive weighted estimates for the fluid variables. Estimates for the metric, consequently, need to be compatible with the weights. But it is not clear that this will be the case since the weights are well-adapted to the fluid part, and not to the gravity part, of system \eqref{E:Einstein_Euler_wave_coordinates_free_bry}. The origin of these difficulties is the fact that \eqref{E:Einstein_Euler_wave_coordinates_free_bry} forms a system with multiple characteristic speeds.

Another challenge is that problem \eqref{E:Einstein_Euler_wave_coordinates_free_bry} is \emph{highly singular.} We need to control the metric across full spacetime slices $\Sigma_t$ and not only across slices $\Md_t$ contained within the fluid region.
In order to derive estimates for $g$, one needs to differentiate \eqref{E:Einstein_Euler_wave_coordinates_free_bry} and immediately face the problem of taking derivatives of the characteristic function. More precisely, the RHS of \eqref{E:Einstein_Euler_wave_coordinates_free_bry} is not discontinuous in view of the fact that $\varrho$ and $p$ vanish on the boundary $\Gamma_t = \partial \Md_t$. For example, for a quadratic equation of state $p(\varrho) = \varrho^2$, we have that $\varrho(t,x) \sim c_s^2(t,x) \approx \operatorname{dist}(x,\Gamma_t)$, so that the RHS of \eqref{E:Einstein_Euler_wave_coordinates_free_bry} is in this case Lipschitz, but no better. Thus, one could differentiate \eqref{E:Einstein_Euler_wave_coordinates_free_bry} once, but no more than this, which is far from enough to close the problem\footnote{Consider, for illustration, the problem of deriving estimates for a quasilinear wave equation with a Lipschitz source term.}.

A natural idea to overcome this difficulty is to commute \eqref{E:Einstein_Euler_wave_coordinates_free_bry} with derivatives tangent to $\Gamma_t$. But using only tangent \emph{spatial} derivatives is not enough in that tangent spatial derivatives will not produce control over derivatives transverse to $\Gamma_t$. Thus, we also need to commute the equation with material derivatives $D_t$. This would in fact be compatible with estimates for the fluid part, since this is roughly\footnote{Recall Remark \ref{R:Good_variables_material_derivatives}.} what we have done in Sect.~\ref{S:Vacuum_bry}. But the following problem arises. In order to estimate $D_t^N g$, we need to know the initial values 
$\left. D_t^N g \right|_{\Sigma_0}$. These must be obtained in terms of the prescribed initial data
$\left. (g,\partial_t g) \right|_{\Sigma_0}$ by using \eqref{E:Einstein_Euler_wave_coordinates_free_bry} to successively solve for $D_t g$ in terms of spatial derivatives. In doing so, we quickly see that, in view of the purely spatial $g^{ij}\partial_i \partial_j g_{\alpha\beta}$, that we will have to differentiate the characteristic function (in non-tangential directions) as soon as we try to compute $D_t^4 g$ at $t=0$.

One possible approach to deal with the problem of obtaining data $\left. D_t^N g \right|_{\Sigma_0}$ was proposed in \cite{Andersson-Oliynyk-Schmidt-2016} and also employed in \cite{Miao-Shahshahani-2024}. In this approach, one fine tunes the data $\left. (g,\partial_t g) \right|_{\Sigma_0}$ to precisely kill the bad terms that would arise from differentiation of the characteristic function. Data of this form was said to satisfy \emph{compatibility conditions} in \cite{Andersson-Oliynyk-Schmidt-2016}. The drawback of this approach is that while one can certainly choose compatible initial data for a free wave equation, for Einstein's equations the initial data must satisfy the constraints, and it is \emph{not clear that there exists data satisfying the constraint equations and the compatibility conditions} (except for the trivial case of static solutions). 

(Existence of such data was not proven in \cite{Andersson-Oliynyk-Schmidt-2016,Miao-Shahshahani-2024}. We note that these works do not deal with the Einstein--Euler system with a physical vacuum boundary, dealing instead with the free-boundary Einstein--Elasticity system in \cite{Andersson-Oliynyk-Schmidt-2016} and the free-boundary Einstein--Euler system for a stiff liquid in \cite{Miao-Shahshahani-2024}, but in both cases the model involves a RHS with a characteristic function. We also note that in the case where the sound speed decays faster than the distance to the boundary, then the RHS of \eqref{E:Einstein_Euler_wave_coordinates_free_bry} is more regular, and the problem can then be treated, see \cite{Rendall-1992}.)

Perhaps it is the case that such compatibility conditions are necessary for the local well-posedness of the equations, and proving so would be an important step toward a resolution of Problem \ref{Pr:LWP_Einstein_Euler_vacuum_bry}. On the other hand, absent such a proof and looking at the problem from a different perspective, even if one is able to construct compatible data, it appears such data is not general enough. From the foregoing discussion, what the presence of the characteristic function on the RHS of \eqref{E:Einstein_Euler_wave_coordinates_free_bry} is suggesting is that the metric cannot be very regular, so that some derivatives of the metric will necessarily be singular. This singularity will then \emph{propagate} along the characteristics of the metric, i.e., along the lightcones, \emph{into the interior of $\Md_t$.} In other words, singularities of $g$ \emph{will not, in general, stay localized\footnote{Incidentally, this shows how singular problem \eqref{E:Einstein_Euler_wave_coordinates_free_bry} is.}.} The entire artifact of compatibility conditions, on the other hand, is to allow the construction of data where the singularity is tame, so that it is not allowed to propagate to the interior of the moving domain. But a simple exercise with D'Alembert's and Duhamel's formula for the linear wave equation in $1+1$ dimensions with a characteristic function as source term explicitly shows that singularities will propagate along the characteristics. Thus, solutions arising from compatible initial data do not seem to capture the general behavior of the system \eqref{E:Einstein_Euler_wave_coordinates_free_bry}. Note that, for the fluid variables, a similar problem does not occur. Any potential singularity in the fluid stays localized to the boundary since the sound cones degenerate to the flow lines on the boundary.

The above informal comments are only a few of the difficulties present in the study of Problem \ref{Pr:LWP_Einstein_Euler_vacuum_bry}. Such difficulties nonetheless, it is clear that Problem \ref{Pr:LWP_Einstein_Euler_vacuum_bry} is mathematically very rich, aside from being of clear physical importance for the description of general-relativistic stars\footnote{Since numerical simulations of stars are routinely performed by astrophysicists, one might wonder if the difficulties we discussed are present in numerical simulations. The short answer is yes. Current simulation will not in fact allow for the density to vanish outside the star, introducing instead a small ``artificial atmosphere,'' see \cite{Wu-Tang-2017}, as simulations would not run otherwise. It is interesting to speculate if a mathematical framework that leads to a solution of Problem \ref{Pr:LWP_Einstein_Euler_vacuum_bry} could also lead to better numerical algorithms that would dispense with an artificial atmosphere.}.

In some problems that follow, we will refer to the DNMR equations with only bulk viscosity. We recall
from Sect.~\ref{S:LWP_DNMR}, Remark \ref{R:Generalization_LWP_DNMR}, that this refers to setting $\pi=0$ in Eqs.~\eqref{E:DNMR_density}, \eqref{E:DNMR_velocity}, \eqref{E:DNMR_bulk}, and dropping Eq.~\eqref{E:DNMR_shear}. The reason for considering this case in particular is that we will list problems 
whose investigation would be premature absent local well-posedness in Sobolev spaces.
We recall from Sect.~\ref{S:LWP_DNMR} that the DNMR equations are known to be locally well-posed in Gevrey spaces only, but local well-posedness in Sobolev spaces holds when bulk viscosity is the only viscous flux. However, 
if local well posedness of Eqs.~\eqref{E:DNMR} in Sobolev spaces is demonstrated, then 
the problems stated below for the DNMR theory with only bulk viscosity can also be addressed for the full system \eqref{E:DNMR}. Alternatively, one can also consider such problems in Gevrey spaces, but this would be of limited mathematical and physical interest.

\begin{innercustomthmproblem}
Establish local well-posedness for the DNMR equations with a physical vacuum boundary.
\label{Pr:LWP_DNMR_vacuum_bry}
\end{innercustomthmproblem}

Along the same lines:

\begin{innercustomthmproblem}
Establish local well-posedness for the BDNK equations with a physical vacuum boundary.
\label{Pr:LWP_BDNK_vacuum_bry}
\end{innercustomthmproblem}

Since free-boundary problems provide basic models of isolate starts, Problems \ref{Pr:LWP_DNMR_vacuum_bry} and \ref{Pr:LWP_BDNK_vacuum_bry} are important in view of the interest in viscous effects on mergers of neutron stars (see Sect.~\ref{S:Relativistic_viscous_fluids}). More precisely, in order to develop a mathematical theory of such mergers and their corresponding dissipative phenomena, significantly stronger results would be needed, including local well-posedness of the DNMR and BDNK equations with a physical vacuum boundary coupled to Einstein's equations. But as seen in the discussion of Problem \ref{Pr:LWP_Einstein_Euler_vacuum_bry}, already in the case of a perfect fluid severe difficulties appear, so it is not clear if the case with viscosity, for either DNMR or BDNK, is within reach.

According to the above comments, in Problem \ref{Pr:LWP_DNMR_vacuum_bry}, it makes sense to consider the case with bulk viscosity only, as this is the only case where Eqs.~\eqref{E:DNMR} are known to be locally well-posed in Sobolev spaces. A priori estimates for the linearized\footnote{We stress that these are the equations linearized about an arbitrary solution, and not only about global thermodynamic equilibrium as in the discussion of stability in Sect.~\ref{S:DNMR}.} DNMR equations with only bulk viscosity and with a physical vacuum boundary have been recently obtained by \cite{Zhong-in_prep} under suitable assumptions on the decay rates for $\upzeta$, $\uptau_\mathscr{P}$, and $\mathscr{P}$ near the free-boundary and an equation of state $p(\varrho) = \varrho^{\kappa+1}$ as in Sect.~\ref{S:Vacuum_bry}. These estimates open the door for a solution to problem \ref{Pr:LWP_DNMR_vacuum_bry} under these assumptions. It is also natural to consider a general equations of state and other decay assumptions for $\upzeta$, $\uptau_\mathscr{P}$, and $\mathscr{P}$.

Regarding Problem \ref{Pr:LWP_BDNK_vacuum_bry}, the first step would be define what a physical vacuum boundary is, i.e., what conditions are needed to guarantee that the boundary can accelerate with a finite non-zero acceleration. This is not entirely clear given that the BDNK equations are second-order, so the  argument used in Sect.~\ref{S:Vacuum_bry} that leads to \eqref{E:Physical_vacuum_bry_condition} needs to be modified.

Problems \ref{Pr:LWP_vacuum_bry_general}, \ref{Pr:LWP_Einstein_Euler_vacuum_bry}, \ref{Pr:LWP_DNMR_vacuum_bry}, and \ref{Pr:LWP_BDNK_vacuum_bry} consider local-in-time questions. A natural global-in-time problem for relativistic fluids with a physical vacuum boundary is the following.

\begin{innercustomthmproblem}
Establish global well-posedness for the relativistic Euler equations with a physical vacuum boundary in the case of an expanding gas.
\label{Pr:GWP_vacuum_bry_expanding}
\end{innercustomthmproblem}

By an expanding gas, we mean a configuration where the initially compactly supported fluid is expanding outward. If the initial rate of expansion is sufficiently large, one can intuitively expect that the fluid will continue to expand indefinitely and no singularity will form. For the classical compressible Euler equations with a physical vacuum boundary, this was proven to be indeed the case, first by \cite{Sideris-2017} under symmetry assumptions, wherein the equations reduce to ODEs, and subsequently by
\cite{Hadzic-Jang-2018} without symmetry assumptions but for initial data close
to the symmetric solutions found by Sideris. This result was later extended in various directions by 
several authors \citep{Hadzic-Jang-2019,Shkoller-Sideris-2019,Rickard-2021-1,Rickard-2021-2,Parmeshwar-Hadzic-Jang-2021,Rickard-Hadzic-Jang-2019}. In view of these results in the classical setting and with local well-posedness of the relativistic Euler equations with a physical vacuum boundary established via Theorem \ref{T:LWP_vacuum_bry}, Problem \ref{Pr:GWP_vacuum_bry_expanding} is a natural question. The first task would be to find a relativistic analogue of Sideris' ODE solutions.

\begin{innercustomthmproblem}
Establish local well-posedness of the DNMR equations in Sobolev spaces.
\label{Pr:LWP_DNMR_Sobolev}
\end{innercustomthmproblem}

Given the importance of the DNMR theory for applications (see Sect.~\ref{S:DNMR}), problem \ref{Pr:LWP_DNMR_Sobolev} is one of the most pressing mathematical problems in relativistic viscous fluids. In view of the complexity of Eqs.~\eqref{E:DNMR}, it is unclear how laborious Problem \ref{Pr:LWP_DNMR_Sobolev} is.

We recall that Eqs.~\eqref{E:DNMR} are not the most general form of the DNMR equations. For the full DNMR system, which are Eqs.~(63)--(67) of \cite{Denicol:2012cn}, we can thus ask

\begin{innercustomthmproblem}
Establish causality for the full set of DNMR equations.
\label{Pr:Causality_full_DNMR}
\end{innercustomthmproblem}

In comparing to Eqs.~\eqref{E:DNMR}, we  warn that \cite{Denicol:2012cn} uses the $+---$ convention for the spacetime metric. 

\begin{innercustomthmproblem}
Establish the nature of the breakdown in Theorem \ref{T:Breakdown_DNMR}.
\label{Pr:Nature_breakdown_DNMR}
\end{innercustomthmproblem}

This is a natural question, given that Theorem \ref{T:Breakdown_DNMR} does not provide any information about the nature of the breakdown. In particular, it is interesting to investigate whether the breakdown could be due to the formation of a shock; a initial investigation should first consider the $1+1$-dimensional case. More generally, recall that for the relativistic Euler equations, the appropriate framework for the study of shock formation in more than one spatial dimension involves the new formulation of Theorem \ref{T:New_formulation}. Hence, one might imagine that something similar could be required for the study of shocks in the DNMR equations:

\begin{innercustomthmproblem}
Seek a new formulation of the DNMR equations akin to the new formulation of the relativistic Euler equations in Theorem \ref{T:New_formulation} for the case when only bulk viscosity is present.
\label{Pr:New_formulation_DNMR}
\end{innercustomthmproblem}

The intuition behind Problem \ref{Pr:New_formulation_DNMR} is the following. When bulk viscosity is the only viscous flux in Eqs.~\eqref{E:DNMR}, the equations possess only the flow lines and the sound cones as characteristics. Moreover, the sound speed is now given by\footnote{See comments on the last bulltet point of Remark \ref{R:Generalization_LWP_DNMR} about absorbing $\updelta_{\mathscr{P}\mathscr{P}}\mathscr{P}$ into $\upzeta$. Alternatively, we can take $\updelta_{\mathscr{P}\mathscr{P}}=0$ for simplicity.} 
\begin{align}
(c_s^2)_\text{viscous} = \tilde{c}_s^2 + \frac{\upzeta}{\uptau_\mathscr{P}(p+\varrho+\mathscr{P})},
\nonumber
\end{align}
where
\begin{align}
\tilde{c}_s^2 = \frac{\partial p}{\partial \varrho} + \frac{n}{\varrho + p + \mathscr{P}} \frac{\partial p}{\partial n}
\end{align}
reduces to the sound speed for a perfect fluid, $\frac{\partial p}{\partial \varrho} + \frac{n}{\varrho + p } \frac{\partial p}{\partial n}$, computed from an equation of state $p = p(\varrho,n)$, when $\mathscr{P}=0$, see \cite{Bemfica-Disconzi-Noronha-2019-2}. Thus, an acoustical metric can be defined using $(c_s^2)_\text{viscous}$, whose null hypersurfaces will be the sound cones. Equations \eqref{E:DNMR_density} and \eqref{E:DNMR_velocity} are similar to the relativistic Euler equations, essentially replacing $p$ by $p+\mathscr{P}$ in the Euler system (recall that $\pi=0$ here). Thus, there is hope that a formulation similar to that of Theorem \ref{S:New_formulation} can be constructed. The main question will be that of the role played by Eq.~\eqref{E:DNMR_bulk}.

We recall that Theorem \ref{T:Breakdown_DNMR} holds for large perturbations of constant states. For small perturbations, it is not unreasonable to ask

\begin{innercustomthmproblem}
Investigate whether the DNMR equations with only bulk viscosity are globally well-posed for initial data close to constant states.
\label{Pr:GWP_DNMR}
\end{innercustomthmproblem}

Numerical simulations can be useful in helping motivate Problem \ref{Pr:GWP_DNMR}. We also recall that Theorem \ref{T:Breakdown_DNMR} required $n\neq 0$, thus the inclusion of Eq.~\eqref{E:Divergence_baryon_current_viscous} is needed. This is not the situation treated in most studied of the quark-gluon plasma, when $n$ is taken to be zero. We thus ask:

\begin{innercustomthmproblem}
Establish a version of Theorem \ref{T:Breakdown_DNMR} with $n=0$.
\label{Pr:Breakdown_DNMR_n}
\end{innercustomthmproblem}

We mentioned in Sect.~\ref{S:LWP_BDNK} that \cite{Sroczinski-2024} established global well-posedness of the BDNK equations for perturbations of constant solutions and specific choices of equation of state and transport coefficients. It is natural to extend his results:

\begin{innercustomthmproblem}
Establish global well-posedness for the BDNK equations for initial data close to constant and a different set up than that of \cite{Sroczinski-2024}.
\label{Pr:GWP_BNDK}
\end{innercustomthmproblem}

On the flip side of Problem \ref{Pr:GWP_BNDK}, it is not unreasonable to think that the BDNK equations, being a complicated system of quasilinear equations, may not admit global solutions for large data. Thus:

\begin{innercustomthmproblem}
Establish a breakdown result for the BDNK equations in the spirit of Theorem \ref{T:Breakdown_DNMR}.
\label{Pr:Breakdown_BNDK}
\end{innercustomthmproblem}

We mentioned in Sects. \ref{S:Viscous_shocks} and \ref{S:Limitations} that the MIS cannot describe viscous shock solutions in a regime of strong shocks and conjectured that the same is true for the DNMR theory.

\begin{innercustomthmproblem}
Show that the DNMR cannot describe strong viscous shocks in the sense of \cite{Geroch-Lindblom-1991,Olson:1991pf}.
\label{Pr:Viscous_shocks_DNMR}
\end{innercustomthmproblem}

The non-relativistic limit of the DNMR and BDNK theories was studied in detail in \cite{HegadeKR:2023glb}. These results, however, are formal, i.e., they have not been rigorously established.

\begin{innercustomthmproblem}
Establish rigorously the non-relativistic limits of the DNMR and BDNK equations formally established in 
\cite{HegadeKR:2023glb}.
\label{Pr:NR_limit}
\end{innercustomthmproblem}

In problem \ref{Pr:NR_limit}, it is understood that the convergence to a non-relativistic solution should happen in the function space where the relativistic theories have been showed to be locally well-posed, i.e., Sobolev spaces for the DNMR theory with only bulk viscosity and the BDNK theory, and Gevrey spaces for the DNRM equations \eqref{E:DNMR}.

As a more open-ended project, numerical analysts might be interested in the following question.

\begin{innercustomthmproblem}
Investigate the convergence and mathematical properties of the numerical schemes used in studies of the quark-gluon plasma based on simulations of the DNMR equations.
\label{Pr:Convergence_numerical_schemes}
\end{innercustomthmproblem}

See, in particular, comments in Sect.~\ref{S:Causality_DNMR} regarding a possible relation between causality violations and numerical schemes.

Theorems \ref{T:LWP_BDNK} and \ref{T:LWP_BDNK} establish local well-posedness of the DNMR and BDNK equations in suitable spaces, including when coupling to Einstein's equations is considered. However, in these theorems it is assumed that an initial-data set for the Einstein--DNMR and Einstein--BDNK system is \emph{given}. The existence of these initial-data sets has not been addressed, but is obviously very important if these fluid models are to be coupled to Einstein's equations.

\begin{innercustomthmproblem}
Establish existence of initial-data sets for the Einstein--DNMR and Einstein--BDNK systems by proving existence of solutions for the constraint equations.
\label{Pr:Constraints}
\end{innercustomthmproblem}

Problem \ref{Pr:Constraints} can be studied under many different assumptions, including asymptotically flat manifolds, manifolds with trivial topology (i.e., diffeomorphic to $\mathbb{R}^3$), and so on. In the particular case of a conformal fluid and for the Einstein-BDNK equations, Problem \ref{Pr:Constraints} has recently been solved in \cite{Disconzi-Isenberg-Maxwell-2024-arxiv}.

\medskip

We have not discussed kinetic-theoretical aspects of relativistic fluid dynamics in this review. The interested reader can consult the references provided in the Introduction. But it seems appropriate in this section to cite one open problem related to kinetic theory. \cite{Elskens-Kiessling-2020} consider the microscopic foundations of kinetic theory for a relativistic plasma in Minkowski space and provide
a road-map to go from the exact equations of motion to the relativistic
Vlasov--Maxwell equations. In the authors' words, ``The purpose of this work is not
to supply an entirely rigorous vindication, but to lay down a conceptual road map for the
microscopic foundations of the kinetic theory of special-relativistic plasma, and to emphasize
that a rigorous derivation seems feasible.'' What makes this work relevant for the discussion of this section is that the authors are very intentional about pointing out where proofs are lacking in their proposal for the derivation, thus providing interested readers with a set of concrete mathematical problems.

\appendix

\section{Complementary proofs}

The goal of this section is to provide proofs that were omitted above but which are still of interest. Some of these
provide further details on arguments discussed in the main text. Some formulas below are also cited in the main text.

\subsection{Proof of Lemma \ref{L:Lemma_enthalpy_current_evolution}\label{S:Lemma_enthalpy_current_evolution}}
We start with the Hodge Laplacian\footnote{Not really a Laplacian because $g$ is Lorentzian.} of $w$,
\begin{align}
\label{E:Lemma_enthalpy_current_evolution_Hodge_Laplacian}
\square_H w := (d d^* + d^* d ) w = d d^* w + d^* \Omega,
\end{align}
where $d^*$ is the formal adjoint of $d$ with respect to $g$. Using $d^* w = - \nabla_\alpha w^\alpha$ and
\eqref{E:Projected_relativistic_Euler_eq_full_system_continuity} 
we find
\begin{align}
\label{E:Lemma_enthalpy_current_evolution_divergence_w}
\begin{split}
d^*w &= 
-\nabla_\alpha w^\alpha = -\nabla_\alpha(h u^\alpha)
= -w^\alpha \nabla_\alpha h - 
h \underbrace{\nabla_\alpha u^\alpha}_{\mathclap{= -\frac{u^\alpha \nabla_\alpha n}{n}}}
\\
&
= 
-u^\alpha \nabla_\alpha h + \frac{h}{n} u^\alpha \nabla_\alpha n
\\
&=
-w^\alpha (\frac{\nabla_\alpha h}{h} - \frac{\nabla_\alpha n}{n} )
\\
&= \iota_w dF,
\end{split}
\end{align}
where $F = \ln \frac{n}{h}$. Applying $d$
to \eqref{E:Lemma_enthalpy_current_evolution_divergence_w}
 we find
\begin{align}
\label{E:Lemma_enthalpy_current_evolution_Lie_derivative_dF}
d d^* w = d(\iota_w dF) = \mathcal{L}_w dF,
\end{align}
where we used \eqref{E:Lie_derivative_iota_d}.

We need a good expression for $dF$ in 
\eqref{E:Lemma_enthalpy_current_evolution_Lie_derivative_dF}.
It will be convenient to introduce $\tilde{h} = h^2$ and consider $F$ as a function of $\tilde{h}$ and $s$. Then, since $w^\alpha w_\alpha = -h^2$ (recall \eqref{E:Enthalpy_current_normalization}),
using \eqref{E:Vorticity_definition} and
\eqref{E:Lichnerowicz_equation}, we find
\begin{align}
\label{E:Lemma_enthalpy_current_evolution_dF}
\begin{split}
\nabla_\alpha F 
&= 
\frac{\partial F}{\partial \tilde{h}} \nabla_\alpha \tilde{h} + \frac{\partial F}{\partial s} \nabla_\alpha s
= 
-\frac{\partial F}{\partial \tilde{h}} \nabla_\alpha (w^\beta w_\beta) + \frac{\partial F}{\partial s} \nabla_\alpha s
\\
&=
-2 \frac{\partial F}{\partial \tilde{h}}  w^\beta \nabla_\alpha w_\beta + \frac{\partial F}{\partial s} \nabla_\alpha s
\\
&=
-2 \frac{\partial F}{\partial \tilde{h}}  
	w^\beta
	(
	\Omega_{\alpha\beta} + \nabla_\beta w_\alpha
	)	
+ \frac{\partial F}{\partial s} \nabla_\alpha s
\\
&=
-2 \frac{\partial F}{\partial \tilde{h}}  
w^\beta\nabla_\beta w_\alpha
+2 \frac{\partial F}{\partial \tilde{h}}
	\underbrace{
		w^\beta \Omega_{\beta\alpha} 
	}_{\mathclap{=h\uptheta \nabla_\alpha s}}
+ \frac{\partial F}{\partial s} \nabla_\alpha s
\\
&=
-2 \frac{\partial F}{\partial \tilde{h}}  
w^\beta\nabla_\beta w_\alpha
+ 
	(
	2 \frac{\partial F}{\partial \tilde{h}} h \uptheta
	+
	\frac{\partial F}{\partial s}
	) \nabla_\alpha s
\end{split}
\end{align}
Using 
\eqref{E:Lemma_enthalpy_current_evolution_dF} and
a standard formula for the Lie derivative in terms of covariant derivatives, we can write
\begin{align}
\label{E:Lemma_enthalpy_current_evolution_Lie_derivative_dF_long}
\begin{split}
(\mathcal{L}_w dF)_\gamma 
&= w^\beta \nabla_\beta  \nabla_\gamma F
+ \nabla_\gamma w^\beta \nabla_\beta F
\\
&= 
-2 \frac{\partial F}{\partial \tilde{h}}  
w^\alpha w^\beta \nabla_\alpha \nabla_\beta w_\gamma
+ 
	(
	2 \frac{\partial F}{\partial \tilde{h}} h \uptheta
	+
	\frac{\partial F}{\partial s}
	) w^\alpha \nabla_\alpha \nabla_\gamma s
\\
	& \ \ \ 
+ \Err_\gamma(\partial g, \partial s, \partial w).
\end{split}
\end{align}
But \eqref{E:Projected_relativistic_Euler_density_entropy_full_system_entropy} gives
\begin{align}
\label{E:Lemma_enthalpy_current_evolution_partial_2_s}
\begin{split}
w^\alpha \nabla_\alpha \nabla_\gamma s
& = w^\alpha \nabla_\gamma \nabla_\alpha  s
= \nabla_\gamma( 
	\underbrace{
	w^\alpha \nabla_\alpha s)
	}_{\mathclap{=0}}
- \nabla_\gamma w^\alpha \nabla_\alpha s
\\
&= \Err_\gamma(\partial g, \partial s, \partial w),
\end{split}
\end{align}
so 
\eqref{E:Lemma_enthalpy_current_evolution_partial_2_s}
and 
\eqref{E:Lemma_enthalpy_current_evolution_Lie_derivative_dF_long}
yield
\begin{align}
\label{E:Lemma_enthalpy_current_evolution_Lie_derivative_dF_short}
(\mathcal{L}_w dF)_\gamma 
=
-2 \frac{\partial F}{\partial \tilde{h}}  
w^\alpha w^\beta \nabla_\alpha \nabla_\beta w_\gamma
+ 
\Err_\gamma(\partial g, \partial s, \partial w).
\end{align}
Next, recall the following formula 
for the Hodge Laplacian,
\begin{align}
\label{E:Lemma_enthalpy_current_evolution_Hodge_Laplacian_curvature}
(\square_H w)_\gamma 
= -g^{\alpha \beta} \nabla_\alpha \nabla_\beta w_\gamma
+ \Ric_{\gamma \alpha} w^\alpha,
\end{align}
where $\Ric$ is the Ricci curvature.
Combining \eqref{E:Lemma_enthalpy_current_evolution_Hodge_Laplacian},
\eqref{E:Lemma_enthalpy_current_evolution_Lie_derivative_dF_short}, and
\eqref{E:Lemma_enthalpy_current_evolution_Hodge_Laplacian_curvature}, 
we have
\begin{align}
\label{E:Lemma_enthalpy_current_evolution_wave_plus_curvature}
\begin{split}
-g^{\alpha \beta} \nabla_\alpha \nabla_\beta w_\gamma
+ \Ric_{\gamma \alpha} w^\alpha
&=
-2 \frac{\partial F}{\partial \tilde{h}}  
w^\alpha w^\beta \nabla_\alpha \nabla_\beta w_\gamma
+ (d^* \Omega)_\gamma
\\
& \ \ \ 
+ 
\Err_\gamma(\partial g, \partial s, \partial w).
\end{split}
\end{align}
Recalling the definition of $F$ above, compute
\begin{align}
\label{E:Lemma_enthalpy_current_evolution_F_computation}
\begin{split}
2 \frac{\partial F}{\partial \tilde{h}}  
&= 
2 \frac{\partial F}{\partial h} 
\underbrace{\frac{\partial h}{\partial \tilde{h}}}_{\mathclap{=\frac{1}{2h}}}
\\
&=
\frac{1}{h} \frac{\partial}{\partial h}
\ln\frac{n}{h} = \frac{1}{h} (\frac{1}{n} \frac{\partial n}{\partial h} - \frac{1}{n} )
\\
&= -\frac{1}{h^2}( 1 - \frac{h}{n} \frac{\partial n}{\partial h}).
\end{split}
\end{align}
Using \eqref{E:Lemma_enthalpy_current_evolution_F_computation} into 
\eqref{E:Lemma_enthalpy_current_evolution_wave_plus_curvature} and rearranging the terms,
we find
\begin{align}
\label{E:Lemma_enthalpy_current_evolution_second_order_w}
\begin{split}
[ -g^{\alpha\beta} - \frac{1}{h^2}( 1 - \frac{h}{n} \frac{\partial n}{\partial h}) \frac{w^\alpha w^\beta}{h^2} ] \nabla_\alpha \nabla_\beta w_\gamma
& = 
-\Ric_{\gamma \alpha} w^\alpha
+ (d^* \Omega)_\gamma
\\
& \ \ \
+ 
\Err_\gamma(\partial g, \partial s, \partial w).
\end{split}
\end{align}
We will apply $w^\mu \nabla_\mu$ to
\eqref{E:Lemma_enthalpy_current_evolution_second_order_w}.
Hence, using that 
$(d^* \Omega)_\gamma = \nabla_\nu \Omega\indices{^\nu_\gamma}$ and \eqref{E:Vorticity_evolution}, we compute,
\begin{align}
\label{E:Lemma_enthalpy_current_evolution_transport_div_Omega}
\begin{split}
w^\mu \nabla_\mu (d^* \Omega)_\gamma
&=
w^\mu \nabla_\mu \nabla_\nu \Omega\indices{^\nu_\gamma}\\
&= 
\underbrace{
	w^\mu \nabla_\nu \nabla_\mu  
	\Omega\indices{^\nu_\gamma}
	}_{\mathrlap{=\nabla_\nu
		(
		w^\mu\nabla_\mu \Omega\indices{^\nu_\gamma}
		)
		-\nabla_\nu w^\mu \nabla_\mu \Omega\indices{^\nu_\gamma}
	}}
\overbrace{
	-w^\mu \Riem\indices{_\mu_\nu_\lambda^\nu}
	\Omega\indices{^\lambda_\gamma}
	+ w^\mu \Riem\indices{_\mu_\nu_\gamma^\lambda} 	
	\Omega\indices{^\nu_\lambda}
	}^{\mathclap{=\Err_\gamma(\partial^2 g, w, \Omega)}}
	\\
&=	
\nabla_\nu
	\underbrace{
	(
	w^\mu\nabla_\mu \Omega\indices{^\nu_\gamma}
	)
	}_{\mathclap{=\Err_\gamma(\partial g, \partial w, \partial s, \partial h, \Omega)}
	}
	-\nabla_\nu w^\mu \nabla_\mu
	\Omega\indices{^\nu_\gamma}
	+
	\Err_\gamma(\partial^2 g, w, \Omega)
\\
&=
\Err_\gamma(\partial^2 g, \partial^2 w, \partial^2 s, \partial^2 h, \partial \Omega)
\end{split}
\end{align}
where $\Riem$ is the Riemann tensor.

Thus, 
applying $u^\mu \nabla_\mu$ to
\eqref{E:Lemma_enthalpy_current_evolution_second_order_w}
and using 
\eqref{E:Lemma_enthalpy_current_evolution_transport_div_Omega}
we finally obtain
\begin{align}
[g^{\alpha\beta} + (1-\frac{h}{n} \frac{\partial n}{\partial h} ) \frac{w^\alpha w^\beta}{h^2} ]
w^\mu \nabla_\alpha \nabla_\beta w_\gamma
=
\Err_\gamma(\partial^2 g, \partial^2 w, \partial^2 s, \partial^2 h, \partial \Omega).
\nonumber
\end{align}
Multiplying this expression by $c_s^2$ and 
using that the sound speed is equivalently given by (see \cite[Section 2.4]{Rezzolla-Zanotti-Book-2013})
\begin{align}
\frac{1}{c_s^2} = \frac{h}{n} 
\left.\frac{\partial n}{\partial h}\right|_s
\nonumber
\end{align}
produces the result.

\subsection{Finishing the proof of Theorem \ref{T:LWP_relativistic_Euler}\label{S:Complementary_LWP_relativistic_Euler}}
In this section, we provide the remaining arguments in the proof of Theorem \ref{T:LWP_relativistic_Euler}.
We resume from the end of the proof provided in the main text, continuing with the same notation and conventions. The proof follows known arguments, but 
we provide it mostly because it is only briefly mentioned
in \cite{Lichnerowicz-Book-1967}.

\medskip

Under the assumptions of the Theorem, if we consider the relativistic Euler equations written as \eqref{E:Projected_relativistic_Euler_enthalpy_entropy_full_system},
we can solve for $\partial_t(s, h, u)$ at $t=0$ in terms of the initial data.
Differentiating \eqref{E:Projected_relativistic_Euler_density_entropy_full_system} with respect to $\partial_t$ and proceeding inductively, we can algebraically solve for
$\partial_t^k(s, h, u)$ at $t=0$, $k=2, 3, \dots$, in terms of the initial data and their spatial 
derivatives. This allows us to (a) construct initial data for the system 
\eqref{E:LWP_relativistic_Euler_system} and (b) construct analytic solutions to 
\eqref{E:Projected_relativistic_Euler_enthalpy_entropy_full_system} provided that the initial
data is analytic (in other words, we can verify that $\{t=0\}$ is non-characteristic 
for \eqref{E:Projected_relativistic_Euler_enthalpy_entropy_full_system} and apply the Cauchy--Kovalevskaya theorem). Such an analytic solution satisfies the system 
\eqref{E:LWP_relativistic_Euler_system} with $w = h u$, $\Omega = dw$, and $u^\alpha u_\alpha = -1$, so in particular $w^\alpha w_\alpha = - h^2$.

(We observe that the standard construction of analytic solutions applies to unconstrained system. Thus, we consider all components of $u$ as independent and apply the Cauchy--Kovalevskay theorem
to \eqref{E:Projected_relativistic_Euler_enthalpy_entropy_full_system}  without assuming \eqref{E:Velocity_normalization} and with $\proj$ given by
\eqref{E:Projection_u_orthogonal_non_normalized}. 
Without the normalization \eqref{E:Velocity_normalization}, it is not guaranteed
that $\{t=0\}$ is non-characteristic for 
\eqref{E:Projected_relativistic_Euler_enthalpy_entropy_full_system}. However, \eqref{E:Velocity_normalization} does hold at $\{t=0\}$, so the non-characteristic character is guaranteed. Once the analytic solution is constructed, the constraint
\eqref{E:Velocity_normalization} is propagated as in \eqref{E:Velocity_constraint_propagation} and the surrounding discussion.)

Given initial data as in the statement of the Theorem, consider a sequence  
$(\mathring{s}_\ell, \mathring{h}_\ell, \mathring{u}_\ell)$, $\ell \in \mathbb{N}$, of analytic
data converging to $(\mathring{s}, \mathring{h}, \mathring{u})$ in $H^N\times H^N \times H^N$. For each
$\ell$, let $(s_\ell,h_\ell,u_\ell)$ be the corresponding analytic solution for the analytic data
$(\mathring{s}_\ell, \mathring{h}_\ell, \mathring{u}_\ell)$ discussed above. From the proof of the Cauchy--Kovalevskaya theorem it follows that there exists a $T^\prime > 0$ independent\footnote{See, for instance, the discussion in Chapter 5.3.c of \cite{John-Book-1982}.} of $\ell$ such that 
for all $\ell$, $(s_\ell,h_\ell,u_\ell)$ 
is defined on $[0,T^\prime]$. 

Let $(s_\ell,\Omega_\ell,w_\ell)$ be the analytic solution to
\eqref{E:LWP_relativistic_Euler_system} constructed out 
of $(s_\ell,h_\ell,u_\ell)$, as explained above.
Applying \eqref{E:LWP_relativistic_Euler_energy_estimates_compact} to 
$(s_\ell,\Omega_\ell,w_\ell)$
we obtain that the sequence $(s_\ell,\Omega_\ell,w_\ell)$ is bounded in $L^\infty([0,T],H^N(\mathbb{T}^3)\times H^{N-1}(\mathbb{T}^3)\times H^N(\mathbb{T}^3))$ for some
$0<T\leq T^\prime$. For each $\ell$, we also have that 
$\partial_t w_\ell \in L^\infty([0,T],H^{N-1}(\mathbb{T}^3))$ and, using equations
\eqref{E:LWP_relativistic_Euler_system_s} and \eqref{E:LWP_relativistic_Euler_system_Omega} also that 
$\partial_t (s_\ell,\Omega_\ell) \in L^\infty([0,T],H^{N-1}(\mathbb{T}^3)\times H^{N-2}(\mathbb{T}^3))$. These bounds can then be combined with the fundamental
theorem of calculus in the $t$-variable to estimate the difference 
$(s_\ell, \Omega_\ell,w_\ell)(t_1,\cdot) -  
(s_\ell,  \Omega_\ell,w_\ell)(t_2,\cdot)$, $0\leq t_1, t_2 \leq T$ and, upon further application of
\eqref{E:LWP_relativistic_Euler_energy_estimates_compact} 
and a standard interpolation, we obtain\footnote{See Sect.1.5 of \cite{Disconzi-Luo-2020} for an example of how
the fundamental theorem of calculus is applied to obtain continuity.} that 
the sequence $(s_\ell, \Omega_\ell,w_\ell)$ is bounded
in $C^0([0,T], H^{N-\delta}(\mathbb{T}^3)\times H^{N-1-\delta}(\mathbb{T}^3)\times H^{N-\delta}(\mathbb{T}^3))$ for any $0 < \delta < N-3 -\frac{3}{2}$. Similarly we obtain that
$\partial_t(s_\ell, \Omega_\ell,w_\ell)$ is bounded
in $C^0([0,T], H^{N-1-\delta}(\mathbb{T}^3)\times H^{N-2-\delta}(\mathbb{T}^3) \times H^{N-1-\delta}(\mathbb{T}^3))$.

Next, we consider the difference $(s_\ell, w_\ell, \Omega_\ell) - (s_{\ell^\prime}, w_{\ell^\prime}, \Omega_{\ell^\prime})$ between two solutions. Subtracting 
\eqref{E:LWP_relativistic_Euler_system_w} 
for $\ell$ and $\ell^\prime$
we obtain\footnote{We do not write the $h$ dependence for $G^{-1}$ because $h=\sqrt{-w^\alpha w_\alpha}$.}
\begin{align}
\label{E:LWP_relativistic_Euler_difference_w}
\begin{split}
&(G^{-1}(s_\ell, w_\ell))^{\alpha\beta} w_\ell^\gamma \partial_\alpha \partial_\beta \partial_\gamma [ (w_\ell)_\delta - (w_{\ell^\prime})_\delta ]
\\
&=
[G^{-1}(s_{\ell^\prime}, w_{\ell^\prime}))^{\alpha\beta} w_{\ell^\prime}^\gamma 
-
(G^{-1}(s_\ell, w_\ell))^{\alpha\beta} w_\ell^\gamma
]
\partial_\alpha \partial_\beta \partial_\gamma  (w_{\ell^\prime})_\delta 
\\
& \ \ \
+\Err_\delta (\partial^2 s_\ell, \partial^2 w_\ell,\partial \Omega_\ell)
-
\Err_\delta (\partial^2 s_{\ell^\prime}, \partial^2 w_{\ell^\prime},\partial \Omega_{\ell^\prime}),
\end{split}
\end{align}
where we omitted the dependence on $g$ since it does not depend on $\ell$.
The fundamental theorem of calculus implies that the RHS of \eqref{E:LWP_relativistic_Euler_difference_w} can be written as an expression linear 
in the difference $(s_\ell, \Omega_\ell, w_\ell) - (s_{\ell^\prime}, \Omega_{\ell^\prime}, w_{\ell^\prime})$ and their derivatives. Indeed, denoting
by $\xi$ the argument of $\Err_\delta$ we have
\begin{align}
\label{E:LWP_relativistic_Euler_difference_w_FTC}
\begin{split}
\Err_\delta(\xi_\ell) - \Err_\delta(\xi_{\ell^\prime}) 
&=
\int_0^1 \frac{d}{dr} 
[\Err_\delta( r \xi_\ell + (1-r)\xi_{\ell^\prime}) ]\, dr
\\
&=
\int_0^1 D_\xi \Err_\delta( r \xi_\ell + (1-r)\xi_{\ell^\prime})  ) \cdot (\xi_\ell - \xi_{\ell^\prime}) \, dr
\\
&=
\Err_\delta
(\partial^2 s_\ell, \partial \Omega_\ell, \partial^2 w_\ell, \partial^2s_{\ell^\prime}, \partial\Omega_{\ell^\prime},  \partial^2 w_{\ell^\prime}
) 
\\
& \ \ \ \cdot \big(
\partial^2 (s_\ell-s_{\ell^\prime}),
\partial(\Omega_\ell, - \Omega_{\ell^\prime} ),
\partial^2 (w_\ell-w_{\ell^\prime})
\big),
\end{split}
\end{align}
where $D_\xi \Err_\delta$ is the derivative of $\Err_\delta$ and 
$(
\partial^2 (s_\ell-s_{\ell^\prime}),
\partial^2 (w_\ell-w_{\ell^\prime}),
\partial(\Omega_\ell, - \Omega_{\ell^\prime} )
)$ represents the vector whose entries are 
$s_\ell-s_{\ell^\prime}$ and its derivatives up to second order, $\Omega_\ell, - \Omega_{\ell^\prime}$ and its derivatives up to first order, 
and $w_\ell-w_{\ell^\prime}$ and its derivatives up to second order.
An argument similar to \eqref{E:LWP_relativistic_Euler_difference_w_FTC}
can be applied to the  term in brackets on the RHS of 
\eqref{E:LWP_relativistic_Euler_difference_w}.
A similar argument using Eqs.~\eqref{E:LWP_relativistic_Euler_system_s} and
\eqref{E:LWP_relativistic_Euler_system_Omega} gives that the difference 
$(s_\ell, \Omega_\ell, w_\ell) - (s_{\ell^\prime}, \Omega_{\ell^\prime}, w_{\ell^\prime})$ satisfies the system
\begin{subequations}{\label{E:LWP_relativistic_Euler_system_difference}}
\begin{align}
& w_\ell^\alpha \partial_\alpha (s_\ell - s_{\ell^\prime}) 
= 
\nonumber
\\
& \ \ \
\Err(
w_\ell,\partial s_{\ell^\prime},w_{\ell^\prime})
\cdot
\big(
 (s_\ell-s_{\ell^\prime}),
 (w_\ell-w_{\ell^\prime})
\big), 
\label{E:LWP_relativistic_Euler_system_difference_s}
\\
& 
w_\ell^\mu \partial_\mu 
((\Omega_\ell)_{\alpha \beta}
-(\Omega_{\ell^\prime})_{\alpha\beta} ) 
=
\nonumber
\\
& \ \ \
\Err_{\alpha\beta}(
\partial s_\ell, \partial w_\ell, \Omega_\ell,
\partial s_{\ell^\prime}, \partial w_{\ell^\prime}, \partial \Omega_{\ell^\prime}
)
\cdot
\big(
\partial (s_\ell-s_{\ell^\prime}),
(\Omega_\ell, - \Omega_{\ell^\prime} ),
\partial (w_\ell-w_{\ell^\prime})
\big), 
\label{E:LWP_relativistic_Euler_system_difference_Omega}
\\
&
(G^{-1}(s_\ell,w_\ell))^{\alpha\beta} w_\ell^\gamma \partial_\alpha \partial_\beta \partial_\gamma ((w_\ell)_\delta - (w_{\ell^\prime})_\delta )
=
\nonumber
\\
& \ \ \ 
 \Err_\delta
(\partial^2 s_\ell, \partial \Omega_\ell, \partial^2 w_\ell, \partial^2s_{\ell^\prime}, \partial\Omega_{\ell^\prime},  \partial^3 w_{\ell^\prime}
) 
\nonumber
\\
& \ \ \ 
\cdot
\big(
\partial^2 (s_\ell-s_{\ell^\prime}),
\partial(\Omega_\ell, - \Omega_{\ell^\prime} ),
\partial^2 (w_\ell-w_{\ell^\prime})
\big).
\label{E:LWP_relativistic_Euler_system_difference_w}
\end{align}
\end{subequations}
System \eqref{E:LWP_relativistic_Euler_system_difference}
is a linear system for 
$(s_\ell, \Omega_\ell, w_\ell) - (s_{\ell^\prime}, \Omega_{\ell^\prime}, w_{\ell^\prime})$ with coefficients depending on derivatives of
$s_\ell, \Omega_\ell, w_\ell, s_{\ell^\prime}, \Omega_{\ell^\prime}, w_{\ell^\prime}$ and
the same principal part as \eqref{E:LWP_relativistic_Euler_system}. 
Observe, however, that while the RHS of equations
\eqref{E:LWP_relativistic_Euler_system_s},
\eqref{E:LWP_relativistic_Euler_system_Omega},
and
\eqref{E:LWP_relativistic_Euler_system_w},
depend on up to zero derivatives of $s$, zero derivatives of $\Omega$, and two derivatives of
$w$, respectively, the 
the RHS of
\eqref{E:LWP_relativistic_Euler_system_difference_s},
\eqref{E:LWP_relativistic_Euler_system_difference_Omega},
and 
\eqref{E:LWP_relativistic_Euler_system_difference_w}
depend on up to one derivative of $s$, one derivative of $\Omega$, and three derivatives
of $w$. This extra derivative comes from 
the quasilinear nature of \eqref{E:LWP_relativistic_Euler_system}.
Thus, since the sequence is bounded in the norm
\eqref{E:LWP_relativistic_Euler_norm}, we cannot close \eqref{E:LWP_relativistic_Euler_system_difference}
in $H^N$ but only in $H^{N-1}$. More precisely,
defining 
\begin{align}
\begin{split}
\tilde{\mathcal{N}}_{N-1}(t) & := 
\norm{s_\ell - s_{\ell^\prime}}_{H^{N-1}(\Sigma_t)}
+
\norm{\Omega_\ell-\Omega_{\ell^\prime}}_{H^{N-2}(\Sigma_t)}
\\
&
\ \ \
+
\norm{\partial_t^2 w_\ell - \partial_t^2 w_{\ell^\prime}}_{H^{N-3}(\Sigma_t)}
+ 
\norm{\partial_t w_\ell- \partial_t w_{\ell^\prime} }_{H^{N-2}(\Sigma_t)}
\\
& \ \ \
+
\norm{w_\ell-w_{\ell^\prime}}_{H^{N-1}(\Sigma_t)},
\end{split}
\nonumber
\end{align}
arguing as in the proof of \eqref{E:LWP_relativistic_Euler_energy_estimates}
but with an estimate for the $H^{N-1}$ norms 
instead of $H^N$, 
using \eqref{E:LWP_relativistic_Euler_energy_estimates},
and that \eqref{E:LWP_relativistic_Euler_system_difference} is a linear system for the difference, we obtain
\begin{align}
\tilde{\mathcal{N}}_{N-1}(t) \lesssim
\tilde{\mathcal{N}}_{N-1}(0), \, 0 \leq t \leq T,
\nonumber
\end{align}
and
\begin{align}
\label{E:LWP_relativistic_Euler_energy_estimate_lower_continuous}
\begin{split}
\norm{
(s_\ell - s_{\ell^\prime},
\Omega_\ell - \Omega_{\ell^\prime},
w_\ell - w_{\ell^\prime})
}_{C^0([0,T],H^{N-1-\delta}(\mathbb{T}^3)\times H^{N-2-\delta}(\mathbb{T}^3) \times H^{N-1-\delta}(\mathbb{T}^3))}
\lesssim
\\
\norm{
(\mathring{s}_\ell - \mathring{s}_{\ell^\prime},
\mathring{\Omega}_\ell - \mathring{\Omega}_{\ell^\prime},
\mathring{w}_\ell - \mathring{w}_{\ell^\prime})
}_{H^N(\mathbb{T}^3)\times H^{N-1}(\mathbb{T}^3) \times H^N(\mathbb{T}^3)},
\end{split}
\end{align}
for any $0 < \delta < N - \frac{3}{2} - 2$, 
where the $\mathcal{N}(0)$ norm has been subsumed 
under $\lesssim$.

By construction, the sequence $(\mathring{s}_\ell, \mathring{\Omega}_\ell, \mathring{w}_\ell)$ is Cauchy in $H^N(\mathbb{T}^3)\times H^{N-1}(\mathbb{T}^3) \times H^N(\mathbb{T}^3)$, thus 
by \eqref{E:LWP_relativistic_Euler_energy_estimate_lower_continuous},
$(s_\ell, \Omega_\ell, w_\ell)$ is a Cauchy sequence
in 
$C^0([0,T],H^{N-1-\delta}(\mathbb{T}^3)\times H^{N-2-\delta}(\mathbb{T}^3) \times H^{N-1-\delta}(\mathbb{T}^3))$. Denote its limit
by $(s_\infty, \Omega_\infty, w_\infty)$. Fix $t_* \in (0,T]$. The sequence 
$(s_\ell, \Omega_\ell, w_\ell) (t_*, \cdot)$ converges
to $(s_\infty, \Omega_\infty, w_\infty)(t_*,\cdot)$
in $H^{N-1-\delta}(\mathbb{T}^3)\times H^{N-2-\delta}(\mathbb{T}^3)\times H^{N-1-\delta}(\mathbb{T}^3)$ and is bounded
in $H^{N}(\mathbb{T}^3)\times H^{N-1}(\mathbb{T}^3)\times H^{N}(\mathbb{T}^3)$ by \eqref{E:LWP_relativistic_Euler_energy_estimates_compact}, thus the limit is in $H^{N}(\mathbb{T}^3)\times H^{N-1}(\mathbb{T}^3)\times H^{N}(\mathbb{T}^3)$. Hence
$(s_\infty, \Omega_\infty, w_\infty) \in L^\infty([0,T],
H^{N}(\mathbb{T}^3) \times H^{N-1}(\mathbb{T}^3)  \times H^{N}(\mathbb{T}^3) )$.

Using $h_\ell = \sqrt{-(w^\alpha_\ell) (w_\ell)_\alpha}$ and 
$u_\ell = h^{-1} w_\ell$, the Sobolev embedding theorem
applied to $(s_\ell,h_\ell,u_\ell)$ and their time derivatives (which can be obtained from \eqref{E:Projected_relativistic_Euler_enthalpy_entropy_full_system}), we see that we can pass to the limit in 
\eqref{E:Projected_relativistic_Euler_enthalpy_entropy_full_system}, 
and obtain that $s_\infty$, $h_\infty = \sqrt{-w_\alpha w_\alpha}$, and $u_\infty = -h_\infty^{-1} w_\infty$ is a classical solution to \eqref{E:Projected_relativistic_Euler_enthalpy_entropy_full_system},  with the regularity stated in the Theorem. We obtain a solution for $-T \leq t < 0$ by time reversal. Uniqueness follows from the estimate for the difference of two solutions.
\hfill \qed

\section{Leray systems}
\label{S:Leray_systems}

In this appendix, we recall some results about Leray--Ohya systems, also called weakly hyperbolic systems (and called by Leray and Ohya non-strictly hyperbolic systems),
that have been invoked in the main text, particularly in Sect.~\ref{S:DNMR}. These results have been established by
\cite{Leray-Ohya-1964,Leray-Ohya-1967} for the case of systems with diagonal 
principal part, and extended by \cite{Choquet-Bruhat-1966} to more general systems.
These
works build upon the seminal work of Leray on hyperbolic differential equations, see \cite{Leray-Book-1953}. 
The reader can consult these references for the  proofs of the results stated below. Further discussion
can be found (without proofs) in \cite{Choquet-Bruhat-Book-2009, Czubak-Disconzi-2016,Disconzi-2014-1}. 
Related results can also be found in \cite{Mizohata-Book-1985}.

We start by recalling some standard notions and fixing the notation that will
be used throughout\footnote{We believe that this appendix can be useful on its own to readers. Thus, in order to make it as self-contained as possible, we repeat some definitions and notations already introduced in the main text.}. 
Given $T>0$, let 
$X = [0,T] \times  \mathbb{R}^n$.
By  $\partial^k$ we shall denote any $k^{\text{th}}$ order derivative. We shall denote coordinates on $X$ by
$\{ x^\alpha \}_{\alpha = 0}^n$, thinking of $x^0 \equiv t$ as the time-variable.
We use the multi-index notation to write
\begin{align}
\begin{split}
\partial^\alpha 
\equiv
\frac{\partial^{|\alpha |}}{\partial x_0^{\alpha_0} \partial x_1^{\alpha_1} \partial x_2^{\alpha_2} \cdots \partial x_n^{\alpha_n} } 
\equiv \partial^{\alpha_0}_{x^0} \partial^{\alpha_1}_{x^1} \partial^{\alpha_2}_{x^2} \cdots \partial^{\alpha_n}_{x^n},
\end{split}
\nonumber
\end{align}
where $|\alpha | = \alpha_0 +  \alpha_1 + \alpha_2 + \cdots + \alpha_n$.
We will often indicate vector-valued quantities in abbreviate form, e.g., 
$z = (z^I)$, $I=1,\dots,N$, to mean $z = (z^1, \dots, z^N)$.

\subsection{Gevrey spaces\label{S:Gevrey_spaces}}

In this section we review the definition of Gevrey spaces. Roughly speaking,
a function is of Gevrey class if it obeys inequalities similar, albeit weaker,
than those satisfied by analytic functions. One of the crucial properties of Gevrey spaces
for their use in general relativity is that they admit compactly supported functions.

\begin{definition} 
\label{D:Gevrey_spaces}
Let $s \geq 1$.
We say that $f:  \mathbb{R}^n  \rightarrow \mathbb{C}$ belongs to the 
\textdef{Gevrey space} $G^{(s)}(\mathbb{R}^n)$ if 
\begin{align}
\sup_\alpha  \frac{1}{(1+|\alpha|)^s} \norm{\partial^\alpha f}_{L^2(\mathbb{R}^n)}^\frac{1}{1+|\alpha|} 
< \infty.
\nonumber
\end{align}
Let $K\subset \mathbb{R}^n$ be the cube of unit side. We say that $f$ belongs to the \textdef{local Gevrey space}\footnote{The terminology ``local Gevrey space'' is not standard.} $G^{(s)}_{loc}(\mathbb{R}^n)$ if 
\begin{align}
\sup_\alpha  \frac{1}{(1+|\alpha|)^s} \left(\sup_K \norm{\partial^\alpha f}_{L^2(K)}\right)^\frac{1}{1+|\alpha |} 
< \infty,
\nonumber
\end{align}
where $\sup_K$ is taken over all side one cubes $K$ in $\mathbb{R}^n$.
\end{definition}

We note that the case $s=1$, i.e., $G^{(1)}(\mathbb{R}^n)$, corresponds to the space of analytic functions. We refer to \cite{Rodino-Book-1993,Mizohata-Book-1985} for equivalent definitions or variations on the definition of Gevrey spaces.

We next introduce the space of maps defined on $X$ whose derivatives up to order
$m$ belong to $G^{(s)}(\{x^0=t\})$, $0\leq t \leq T$.

\begin{definition}
\label{D:Gevrey_derivative}
On $X$, denote $\Sigma_t = \{ x^0 = t\}$. Let $s\geq 1$, and let $m\geq 0$ be an integer.
We denote by $\overline{\alpha}$ a multi-index $\alpha=(\alpha_0, \dots, \alpha_n)$ for which $\alpha_0 = 0$.
We define $G^{m,{(s)}}(X)$ as the set of maps $f:X \rightarrow \mathbb{C}$ such that
\begin{align}
\sup_{\overline{\alpha}, \, |\beta| \leq m, \, 0 \leq t \leq T}
\frac{1}{( 1 + |\overline{\alpha} |)^s} \norm{\partial^{\beta+\overline{\alpha}} f }_{L^2(\Sigma_t)}^\frac{1}{1+|\overline{\alpha}|} < \infty.
\nonumber
\end{align}
Let $Y$ be an open set of $\mathbb{R}^d$. We define $G^{m,{(s)}}(X\times Y)$ as the set of maps $f:X\times Y \rightarrow \mathbb{C}$ such that
\begin{align}
\sup_{\overline{\alpha}, \, \gamma, \, |\beta| \leq m, \, 0 \leq t \leq T}
\frac{1}{( 1 + |\overline{\alpha} | + |\gamma| )^s} 
\norm{\sup_{y \in Y} \left| \partial_x^{\beta+\overline{\alpha}} \partial_y^\gamma f \right| }_{L^2(\Sigma_t)}^\frac{1}{1+|\overline{\alpha}| + |\gamma|} < \infty.
\nonumber
\end{align}
Let $K_t \subset \Sigma_t$ be the cube whose sides have unit length. 
The spaces $G^{m,{(s)}}_{loc}(X)$ and $G^{m,{(s)}}_{loc}(X\times Y)$ are defined as the set of maps $f:X \rightarrow \mathbb{C}$
and  $f:X\times Y \rightarrow \mathbb{C}$, respectively, such that
\begin{align}
\sup_{\overline{\alpha}, \, |\beta| \leq m, \, 0 \leq t \leq T}
\frac{1}{( 1 + |\overline{\alpha} |)^s} \left(\sup_{K_t} \norm{\partial^{\beta+\overline{\alpha}} f }_{L^2(K_t)} \right)^\frac{1}{1+|\overline{\alpha}|} < \infty,
\nonumber
\end{align}
and
\begin{align}
\sup_{\overline{\alpha}, \, \gamma, \, |\beta| \leq m, \, 0 \leq t \leq T}
\frac{1}{( 1 + |\overline{\alpha} | + |\gamma| )^s} 
\left( \sup_{K_t} \norm{\sup_{y \in Y} \left| \partial_x^{\beta+\overline{\alpha}} \partial_y^\gamma f \right| }_{L^2(K_t)} \right)^\frac{1}{1+|\overline{\alpha}| + |\gamma|} < \infty,
\nonumber
\end{align}
where $\sup_{K_t}$ is taken over all cubes of side one within $\Sigma_t$.
\end{definition}

\begin{remark}
Definitions \ref{D:Gevrey_spaces} and \ref{D:Gevrey_derivative} are easily generalized
to vector and tensor fields in $\mathbb{R}^n$ and $X$, and to open subsets of $\mathbb{R}^n$ and $X$.
In particular, replacing $\mathbb{R}^n$ by an open set $\Omega$ and $X$ by $[0,T]\times \Omega$
in the above definitions we obtain the corresponding spaces for $\Omega$. This allows one to define
Gevrey spaces on manifolds. If $M$ is a differentiable manifold, we say that $f: M \rightarrow \mathbb{C}$
belongs to $G^{(s)}(M)$ if for every $p \in M$ there exists a coordinate chart $(x,U)$ about
$p$ such that $f\circ x^{-1} \in G^{(s)}(\Omega)$, where $\Omega = x(U)$.
This definition generalizes for vector and tensor fields.
\end{remark}

\begin{remark}
The reason to treat $X$ and $Y$ differently in definitions of $G^{(s)}(X\times Y)$ 
and $G^{m,{(s)}}(X\times Y)$ is that, in the theorems of Sect.~\ref{S:Cauchy_problem_Leray},
 we need
to distinguish between the regularity with respect to the spacetime $X$ and the regularity with respect to the parametrization of the initial data.
\end{remark}

\begin{remark}
We could similarly define for manifolds the analog of the other Gevrey spaces introduce above. 
However, this can be somewhat cumbersome and not always natural. In particular, the spaces
$G^{m,(s)}$ require a distinguished coordinate that plays the role of time. This can always be 
done locally, and it can be done for globally hyperbolic manifolds if we fix a particular 
foliation in terms of spacelike slices, although it is debatable
how canonical this is. Ultimately, for the type of equations we are interested, one can 
always localize the problem, invoke coordinate charts, and rely on results valid in $\mathbb{R}^n$.
\end{remark}

\subsection{The Cauchy problem for Leray systems\label{S:Cauchy_problem_Leray}}

Let  $a = a(x,\partial^k)$, $x \in X$, be a linear differential operator of order $k$. We can write
\begin{align}
a(x,\partial^k) = \sum_{ |\alpha | \leq k } a_\alpha(x) \partial^\alpha,
\nonumber
\end{align}
where $\alpha = (\alpha_0, \alpha_1, \alpha_2, \dots, \alpha_n)$ is a multi-index.
Let $p(x,\partial^k)$ be the principal 
part of $a(x,\partial^k)$, i.e., 
\begin{align}
p(x,\partial^k) = \sum_{ |\alpha | = k } a_\alpha(x) \partial^\alpha.
\nonumber
\end{align}
At each point $x \in X$ and for each co-vector $\xi \in T_x^* X$, where $T^*X$ is the cotangent bundle of $X$,
we can associate a polynomial of order $k$ in the cotangent space
$T_x^* X$ obtained by replacing the derivatives by $\xi \in T_x^* X$. More precisely, for each 
$k^{\text{th}}$ order derivative in $a(x,\partial^k)$, i.e., 
\begin{align}
\partial^\alpha 
=
\frac{\partial^{|\alpha |}}{\partial x_0^{\alpha_0} \partial x_1^{\alpha_1} \partial x_2^{\alpha_2} \cdots \partial x_n^{\alpha_n} } 
\nonumber
\end{align}
$|\alpha | = k$,
we associate the polynomial
\begin{align}
\xi^\alpha \equiv \xi_0^{\alpha_0} \xi_1^{\alpha_1} \xi_2^{\alpha_2} \cdots \xi_n^{\alpha_n},
\nonumber
\end{align}
where $\xi = (\xi_0, \xi_1,\xi_2,\dots, \xi_n) \in T_x^* X$, forming in this way the polynomial
\begin{align}
p(x,\xi) = \sum_{ |\alpha | = k } a_\alpha(x) \xi^\alpha.
\nonumber
\end{align}
Clearly, $p(x,\xi)$ is a homogeneous polynomial of degree $k$. It is called the 
\textdef{characteristic polynomial} (at $x$) of the operator $a$.

The \textdef{cone} (at $x$) $V_x(p)$ of $p$ is the set in
$T_x^* X$ defined by the equation
\begin{align}
p(x,\xi) = 0,
\nonumber
\end{align}
i.e., 
\begin{align}
V_x(p) := \{ \xi \in T_x^*X \, | \, p(x,\xi) = 0 \}.
\nonumber
\end{align}

\begin{definition}
\label{D:Hyperbolic_polynomial}
With the above notation, $p(x,\xi)$ is called a \textdef{strictly hyperbolic polynomial} or simply \textdef{hyperbolic polynomial\footnote{The terminology ``hyperbolic'' is not uniform across the literature and is sometimes applied to different definitions}} (at $x$) if there exists  $\zeta \in T_x^*X$ such 
that every straight line through $\zeta$ that does not contain the origin intersects 
the cone $V_x(p)$ at $k$ real distinct points (see also Remarks \ref{R:Equivalent_definition_hyperbolicity} and \ref{R:Equivalent_definition_hyperbolicity_relativity}).  The differential operator $a(x,\partial^k)$
is called a \textdef{(strictly) hyperbolic operator} (at $x$) if $p(x,\xi)$ is hyperbolic.
\end{definition}

\cite{Leray-Book-1953}
 that  (if $X$ is at least three-dimensional)  if $p(x,\xi)$ is hyperbolic at $x$, then 
 the set of points $\zeta$ satisfying the condition of Definition \ref{D:Hyperbolic_polynomial}
forms the interior
of two opposite  half-cones $\Gamma_x^{*,+}(a)$, $\Gamma_x^{*,-}(a)$, with
$\Gamma_x^{*,\pm}(a)$ non-empty, with boundaries that belong to 
$V_x(p)$ .

\begin{remark}
\label{R:Equivalent_definition_hyperbolicity}
Another way of stating Definition \ref{D:Hyperbolic_polynomial} is as follows (see \citealt[Chapter~VI]{Courant-Hilbert-Book-1989-2}). Given
$\zeta \in T_x X$, consider a non-zero vector $\theta$ that is not parallel to $\zeta$ and
form the line $\lambda \zeta + \theta$, where $\lambda \in \mathbb{R}$ is a parameter. We then require
this line to intersect the cone $V_x(p)$ at $k$ distinct real points.
Algebraically, this means that the polynomial equation in $\lambda$
\begin{align}
p(x,\lambda \zeta + \theta)=0,
\end{align}
has $k$ distinct real roots.
\end{remark}

\begin{remark}
\label{R:Equivalent_definition_hyperbolicity_relativity}
In relativity, the situation is often such that one has picked coordinates adapted to a problem (e.g., wave coordinates) and is interested in situations where $V_x(p)$ lies outside the light-cone in $T_x^*X$ (see Remark \ref{R:Causal_structure_A_and_g}). In this situation, we can take $\zeta = (1,0,\dots,0)$ and the
definition of hyperbolic polynomials reduces to the following:
$p(x,\xi)$ is hyperbolic at $x$ if for each non-zero $\xi=(\xi_0,\dots, \xi_n) \in T^*_x X$, the equation
$p(x,\xi)=0$ has $k$ distinct real roots $\xi_0 = \xi_0(\xi_1,\dots,\xi_n)$.
\end{remark}

With applications to systems in mind, we next consider the $N \times N$ diagonal linear differential operator matrix
\begin{align}
A(x,\partial) = 
\left(
\begin{matrix}
a^1(x,\partial^{k_1}) & \cdots & 0 \\
\vdots & \ddots & \vdots \\
0 & \cdots & a^N(x,\partial^{k_N})
\end{matrix}
\right).
\nonumber
\end{align}
Each $a^J(x,\partial^{k_J})$, $J = 1, \dots, N$ is a linear differential operator
of order $k_J$. 

\begin{definition}
\label{D:Leray_Ohya_hyperbolic}
The operator  $A(x,\partial)$ is called 
\textdef{Leray--Ohya  hyperbolic} (at $x$) if:

(i) The characteristic polynomial $p^J(x,\xi)$ of each $a^J(x,\partial^{k_J})$ is a product\footnote{See Remark \ref{R:Strictly_hyperbolic}.} 
of  hyperbolic
polynomials, i.e.
\begin{align}
p^J(x,\xi) = p^{J,1}(x, \xi) \cdots p^{J,r_J}(x,\xi),\, J=1,\dots, N,
\nonumber
\end{align}
where each $p^{J,q}(x,\xi)$, $q=1,\dots,r_J$, $J=1,\dots,N$,
is a hyperbolic polynomial.

(ii) The two opposite
convex half-cones,
\begin{align}
\Gamma_x^{*,+}(A) = \bigcap_{J=1}^N \bigcap_{q=1}^{r_J} \Gamma_x^{*,+}(a^{J,q}),
\, \text{ and } \,
\Gamma_x^{*,-}(A) = \bigcap_{J=1}^N \bigcap_{q=1}^{r_J} \Gamma_x^{*,-}(a^{J,q}),
\nonumber
\end{align}
have a non-empty interior. Here, $\Gamma_x^{*,\pm}(a^{J,q})$ are the
half-cones associated with the hyperbolic polynomials $p^{J,q}(x,\xi)$,
$q=1,\dots,r_J$,
$J=1,\dots,N$.
\end{definition}

\begin{remark}
\label{R:Constant_multiplicity}
When the hyperbolicity properties (i) and (ii) in Definition \ref{D:Leray_Ohya_hyperbolic} hold for every $x$, we call the corresponding
operators Leray--Ohya hyperbolic (we can also talk about hyperbolicity in an open set, a certain region, etc.).
When we say that an operator is Leray--Ohya hyperbolic on the whole space (or in an open set, etc.),
this means not only that Definition \ref{D:Leray_Ohya_hyperbolic} applies for every
$x$, but also that the numbers $r_J$ and the degree of the polynomials 
$p^{J,q}(x,\xi)$, $q=1,\dots,r_J$, $J=1,\dots,N$, do not change with $x$.
\end{remark}

\begin{definition}
\label{D:Causal_structure_A}
We define the \textdef{dual convex half-cone} $C_x^+(A)$ at $T_x X$ as the
set of $v \in T_x X$ such that $\xi(v) \geq 0$ for every
$\xi \in \Gamma_x^{*,+}(A)$; $C_x^-(A)$ is analogously defined, and 
we set  $C_x(A) = C_x^+(A) \cup C_x^-(A)$. 
If the convex cones $C_x^+(A)$ and
$C_x^-(A)$ can be continuously distinguished with respect to $x \in X$,
then $X$ is called \textdef{time-oriented} (with respect to the hyperbolic form
provided by the operator $A$). If $X$ is time-oriented, a path in $X$ is called
\textdef{future (past) time-like} with respect to $A$ if its tangent at each point 
$x \in X$ belongs to $C_x^+(A)$ ($C_x^-(A)$), and \textdef{future (past) 
causal} if its tangent at each point  $x \in X$ belongs or is tangent 
to $C_x^+(A)$ ($C_x^-(A)$). A regular surface $\Sigma$ is 
called \textdef{spacelike} with respect to $A$ if  $T_x\Sigma$ ($ \subset T_x X$) is
exterior to $C_x(A)$ for each $x \in \Sigma$.  It follows that for a time-oriented
$X$,
 the concepts of \textdef{causal past, 
future, domains of dependence and influence} of a set can be defined 
in the same way one does when the manifold is endowed with a Lorentzian 
metric. We refer the reader to \cite{Leray-Book-1953} for details. Here we need only 
the following: the \textdef{past domain of dependence}\footnote{In accordance with the language of Lorentzian manifolds, it would be more appropriate to call $J^-(x)$ the causal past of $x$. But we prefer to call it the past domain of dependence in order to emphasize that it is determined by the operator $A$ rather than a Lorentzian metric. This also avoids confusion with the notaion of causal past when a Lorentzian metric is involved.} $J^-(x)$ of a point $x \in X$ is the set of points that can
be joined to $x$ by a past causal curve. 
\end{definition}

\begin{remark}
\label{R:Causal_structure_A_and_g}
The definitions in Definition \ref{D:Causal_structure_A} endow $X$ with a
causal structure provided by 
the operator $A$.
Despite the similar terminology, however, it should be noticed that all of the above
definitions depend only on the structure of the operator $A$, and do not require
an a priori  Lorentzian metric on $X$. In particular, 
the timelike and causal directions defined by the operator $A$ in Definition 
\ref{D:Causal_structure_A} may not agree with a pre-specified time direction in a given physical problem.
The case of interest in relativity, however, is when
the causal structure of the space-time is connected with that of $A$. In this regard, the following observation is useful.
Suppose that $X$ has a Lorentzian metric $g$.
For solutions 
of the systems of equations here described (see Theorem \ref{T:Leray_Ohya_domain_of_dependence} below) to be causal in the sense 
of general relativity, one needs that (a) they satisfy the domain-of-dependence property, see Theorem \ref{T:Leray_Ohya_domain_of_dependence} and Sect.~\ref{S:Domain_of_dependence_outside_Gevrey}; and (b) for all $x \in X$, $C_x^\pm(A) \subseteq K_x^\pm$, where 
$K_x^\pm$ are the two halves of the light-cone $\{ g_{\mu \nu} \xi^\mu \xi^\nu \leq 0 \}$.
By duality, this is equivalent to saying that in the cotangent spaces we have $K_x^{*,\pm} \subseteq \Gamma_x^{*,+}(A) $,
where  $K_x^{*,\pm}$ are the two halves of the dual light-cone $\{ g^{\mu \nu} \xi_\mu \xi_\nu \leq 0 \}$.
\end{remark}

\begin{remark}
We stress that although we have been working with coordinates $\{ x^\alpha \}_{\alpha=0}^n$ in $[0,T]\times \mathbb{R}^n$, the sets $V_x(p)$, $\Gamma_x(a)^{*,\pm}$, 
$\Gamma_x(A)^{*,\pm}$, and $C_x^{\pm}(A)$ are invariantly defined, i.e., they do not depend on coordinates choices (see \citealt[Chapter~VI]{Courant-Hilbert-Book-1989-2}). Similarly for the notion of hyperbolicity. 
\end{remark}

Next, we consider the following quasi-linear system of differential 
equations
\begin{align}
\label{E:Quasilinear_system}
A(x,U,\partial) U = B(x, U),
\end{align}
where $A(x,U, \partial)$ is the $N \times N$ diagonal matrix
\begin{align}
\label{E:Matrix_A_Leray_system}
A(x,U, \partial) = 
\left(
\begin{matrix}
a^1(x,U, \partial^{k_1}) & \cdots & 0 \\
\vdots & \ddots & \vdots \\
0 & \cdots & a^N(x,U, \partial^{k_N})
\end{matrix}
\right),
\end{align}
with $a^J(x,U,\partial^{k_J})$, $J = 1, \dots, N$ differential operators
of order $k_J$. $B(x,U)$ is the vector
\begin{align}
B(x,U) = (b^J(x,U)),\, J=1,\dots, N,
\nonumber
\end{align}
and the vector 
\begin{align}
U(x) = (U^I(x)), \, I = 1, \dots, N
\nonumber
\end{align}
is the unknown. Notice that because $a_J$ is allowed to depend on $U$, the above system is in general 
nonlinear.

\begin{definition}
\label{D:Leray_system}
The system $A(x,U,\partial) U = B(x, U)$ is called a 
\textdef{Leray system}
if it is possible to attach to each unknown $u^I$ an integer $m_I \geq 0$,
and to each equation $J$ of the system an integer $n_J \geq 0$,
such that:

(i) $k_J = m_J - n_J$, $J=1,\dots, N$;

(ii) the functions $b^J$ and the coefficients of the differential operators
$a^J$ are\footnote{The regularity required for the coefficients
$a^J$ and $b^J$ depends on particular applications and context. For instance,
for Theorem \ref{T:Leray_Ohya_existence} the required regularity
is specified. Similarly, in Definition \ref{D:Leray_Cauchy_problem}, one needs
to take derivatives of these quantities up to order $n_J$, thus they need to be
at least as many times differentiable.} functions of $x$, of $u^I$, and of the 
derivatives of $u^I$ of order at most $m_I - n_J - 1$, 
$I, J =1\dots, N$. If for some $I$ and some $J$, $m_I - n_J< 0$, then
the corresponding $a^J$ and $b^J$ do not depend on $u^I$.
\end{definition}

\begin{remark} The indices $m_I$ and $n_J$ in Definition \ref{D:Leray_system}
are defined up to an additive integer.
\end{remark}

\begin{definition}
\label{D:Leray_Ohya_system}
A \textdef{Leray--Ohya system (with diagonal principal part)} is a Leray system where the matrix $A$ is Leray--Ohya
hyperbolic. In the quasilinear case, since the operators $a$ depend on $U$,
we need to specify a function 
$U$ that  is  plugged into $A(x,U,\partial)$ in order to compute the characteristic polynomials. 
In this case we talk about
a Leray--Ohya system for the function $U$. The primary case of interest
is when $U$ assumes the values of the given Cauchy data.
\end{definition}

When considering a quasilinear system, we write $p(x,U,\xi)$ and similar expressions to indicate 
the dependence on $U$. 
 
We now formulate the Cauchy problem for Leray systems.

\begin{definition}
\label{D:Leray_Cauchy_problem}
Let $\Sigma$ be a regular hypersurface in $X$, which we assume for simplicity to be given by $\{ x^0 = 0 \}$. 
The \textdef{Cauchy data} on $\Sigma$ for a Leray 
system in $X$ consists of the values
of $U=(u^I)$ and their derivatives up to order $m_I - 1$ on $\Sigma$,
i.e., the Cauchy data is $U_0 = (u^{I,\alpha})$, $I=1,\dots, N$, $|\alpha| \leq m_I - 1$,
where $u^{I,\alpha}$ is the prescribed $\alpha$ derivative\footnote{It is customary to prescribe only derivatives transverse to $\Sigma$ as Cauchy data. But since derivatives of $U$ tangent to $\Sigma$ are known when $\left. U \right|_\Sigma$ is known, there is no loss of generality in considering all derivatives of $u^I$ up to order $m_I - 1$ as data. This simplifies the notation as we to not have to distinguish among $|\alpha| \leq m_I -1$ those that correspond solely to transverse derivatives.} of $u^I$, so that
$\left. \partial^\alpha u^I \right|_{\Sigma} = u^{I,\alpha}$, when $U$ is a solution.
The Cauchy data is required to satisfy the following compatibility conditions. If
$V=(v^I)$ is an extension of the Cauchy data defined in a neighborhood  
of $\Sigma$, i.e.
$\left. \partial^\alpha v^I \right|_{\Sigma} =  u^{I,\alpha}$,  $I=1,\dots, N$, $|\alpha| \leq m_I -1$,
then, for each $J$ such that $n_J >0$, the difference $a^J(x,V, \partial^{m_J-n_J})V - b^J(x,V)$
and its derivatives of order less than $n_J$ vanish on $\Sigma$, for
$J=1,\ldots, N$. When to a Leray system
 $A(x,U, \partial) U = B(x, U)$
we prescribe initial data satisfying these conditions, we say that we have 
a \textdef{Cauchy problem} for $A(x,U, \partial) U = B(x, U)$. 
\end{definition}

\begin{remark}
Because the operators $a^J$ in the matrix $A$ in \eqref{E:Matrix_A_Leray_system}
have order $m_J - n_J$, one might think that one should prescribe as 
data only $u^I$ and their derivatives up to order $m_I - n_I -1$. However, this will in general not determine a unique solution, even for analytic data and assuming that $\Sigma$ is non-characteristic.
To see this, recall that in the equations $a^J(x,U, \partial^{m_J-n_J})u^J - b^J(x,U) = 0$, $a^J$ and $b^J$ depend on
derivatives of $u^I$ up to order $m_I - n_J - 1$, $I,J = 1,\dots, N$. The coefficients $a^J$, $b^J$ need to be determined by the data along $\Sigma$ for an application of the Cauchy--Kovalevskaya theorem, which then requires
\begin{align}
\label{E:Indices_inequality_compatibility_conditions}
m_I - n_J - 1 \leq m_I - n_I - 1,
\end{align}
for $I,J= 1,\dots, N$ if only derivatives up to order $m_I - n_I -1$ of $u^I$ were prescribed as data. But \eqref{E:Indices_inequality_compatibility_conditions} can only be satisfied if $n_J=0$, $J=0,\dots, N$.
If, instead, derivatives of $u^I$ up to order $m_I - 1$ are prescribed as data, as in Definition \ref{D:Leray_Cauchy_problem}, then, instead of \eqref{E:Indices_inequality_compatibility_conditions}, the requirement that the coefficients be known along $\Sigma$ from the data becomes
\begin{align}
m_I - n_J - 1 \leq m_I - 1,
\nonumber
\end{align}
i.e., $n_J \geq 0$, in accordance with definition \eqref{D:Leray_system}. Note, also, that if $n_J > 0$ for some $J$, then in the corresponding equation
\begin{align}
\label{E:n_J_greater_than_1}
a^J(x,U, \partial^{m_J-n_J})u^J - b^J(x,U) = 0 
\end{align}
all terms are known along $\Sigma$ since $u^J$ has its derivatives up to order $m_J -1$ are prescribed on $\Sigma$. Thus, compatibility conditions have to be satisfied by the data, and we see that since 
$\left. \partial^{\alpha} u^J \right|_\Sigma$, $|\alpha| \leq m_J -1$ is prescribed, all derivatives up to order $n_J -1$ of \eqref{E:n_J_greater_than_1} are determined by the data, which explains the definition of the compatibility conditions in Definition \ref{D:Leray_Cauchy_problem}. 
See \cite[Chapter 6]{Choquet-Bruhat-DeWitt-Morette-Dillard-Bleick-Book-1977} for further discussion on the Cauchy problem for Leray systems.
\end{remark}

Notice that by definition,
the Cauchy data for a Leray system satisfies the aforementioned compatibility
conditions. We also introduce the following notions related to the
Cauchy problem for a Leray system. 

\begin{assumption}
\label{A:Structure_operators}
Consider the Cauchy problem for a Leray system $A(x,U,\partial) U = B(x,U)$. 
Let $Y$ be an open set of $\mathbb{R}^L$, where $L$ equals the number of derivatives 
of $u^J$ of order less or equal to $\max_{I} m_I - n_J$,  $J=1,\dots,N$, and
such that $Y$ contains the closure of the values taken by the Cauchy data on $\Sigma$.
It is convenient to consider  $A(x,U,\partial)$ as a differential operator defined over $X \times Y$, 
as follows. We shall assume
that there exists a differential operator $\widetilde{A}(x,y,\partial)$
defined over $X\times Y$ with the following property.  If $(x,y) \in X\times Y$ and 
$V = (v^J)$ is a sufficiently regular
function on $\Sigma$ such that $y = (\partial^{\max_{I} m_I - n_J } v^J (x))_{J=1,\dots,N}$, 
then $A(x,V(x),\partial) = \widetilde{A}(x,y,\partial)$. We shall write $A(x,y,\partial)$ for 
$\widetilde{A}(x,y,\partial)$.
\end{assumption}

\begin{definition}
\label{D:Iteration_space}
Consider the Cauchy problem for a Leray system $A(x,U,\partial) U = B(x,U)$.
Let $\Sigma$ and $Y$ be as in Definition \ref{D:Leray_Cauchy_problem} and
Assumption \ref{A:Structure_operators}, respectively.
Denote by $\mathcal{A}^s(\Sigma,I)$ the set of $V = (v^J) \in G^{(s)}(\Sigma)$, $J=1,\dots,N$,
such that $ (\partial^{\max_{I} m_I - n_J } v^J (x))_{J=1,\dots,N} \in Y$ for all $x \in \Sigma$ (see Remark 
\ref{R:Iteration_space}).
\end{definition}

We are now ready to state the results of this appendix.
We use the above notation and definitions in the statement of the theorems below. 
 
\begin{theorem}[Existence and uniqueness, \citealt{Leray-Ohya-1967}]
\label{T:Leray_Ohya_existence}
Consider the Cauchy problem for (\ref{E:Quasilinear_system}).  Suppose that the Cauchy data 
is in $G^{(s)}(\Sigma)$, and that 
\begin{align}
a^J( \cdot,\cdot, \partial^{k_J}) \in G_{loc}^{n_J,(s)}(X \times Y), \, \text{ and } \, b^J(\cdot,\cdot) \in G^{n_J,(s)}(X \times Y).
\nonumber
\end{align}
Suppose that for any $V \in \mathcal{A}^s(\Sigma,Y)$
the system is Leray--Ohya hyperbolic with indices $m_I$ and $n_J$;
thus for all $x \in \Sigma$, each $p^J(x,V, \xi)$ is the product of 
$r_J$ hyperbolic polynomials,
\begin{align}
p^J(x,V,\xi) = p^{J,1}(x, V,\xi) \cdots p^{J, r_J}(x, V, \xi),\, J=1,\dots, N.
\nonumber
\end{align}
Suppose that each $p^{J,q+1}(x, V, \xi)$, $q=0, \dots, r_J-1$, depends on 
at most $m_I - m_{J,q} - r_I +q$ derivatives of $v^I$, $I=1,\dots,N$, where
\begin{align}
m_{J,q} = n_J + \operatorname{deg}(p^{J,1}) + \cdots +\operatorname{deg}(p^{J,q}), \, m_{J, r_J} = m_J, \, m_{J,0} = n_J. 
\nonumber
\end{align}
Above, $\operatorname{deg}(p^{J,q})$ is the degree, in $\xi$, of the polynomial 
$p^{J,q}(x, V,\xi)$. 

Denote by $a^J_{q+1}(x,y,\partial)$ the differential operator associated with 
$p^{J,q+1}$. Assume that
\begin{align}
a^J_{q+1}(\cdot,\cdot,\partial) \in G_{loc}^{m_{J,q} - q, (s)}(X\times Y).
\nonumber
\end{align}
Let $0 \leq g_I \leq r_I$ be the smallest integers such that $a^J(x,V,\partial^{m_J-n_J})$
and $b^J(x,V)$ depend on at most $m_I - n_J - r_I + g_I$ derivatives of 
$v^I$, $I=1,\dots, N$, $J=1,\dots, N$.
Finally, assume that 
\begin{align}
1 \leq s \leq \frac{r_J}{g_J} \, \text{ and } \, \frac{n}{2} + r^J < n_J, \, J=1,\dots, N.
\nonumber
\end{align}
Then, there exists a $T^\prime > 0$ and a solution $U=(u^I)$ to the Cauchy problem for (\ref{E:Quasilinear_system}) and defined on $[0,T^\prime) \times \mathbb{R}^n \subseteq X$. The solution satisfies
\begin{align}
u^I \in G^{m_I,(s)}([0,T^\prime) \times \mathbb{R}^n), \, I=1,\dots, N.
\nonumber
\end{align}
Furthermore, the solution is unique in this regularity class.
\end{theorem}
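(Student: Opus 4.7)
The plan is to construct the solution by a Picard-type iteration scheme, where at each step one solves a linear Leray-Ohya system and uses Leray's theory of strictly hyperbolic operators on each factor in the decomposition of the principal symbol. More precisely, given $V \in \mathcal{A}^s(\Sigma,Y)$, I would solve the linear system
\begin{align*}
A(x,V,\partial) W = B(x,V),
\end{align*}
together with the prescribed Cauchy data on $\Sigma$, and define the iteration map $\mathcal{F}\colon V \mapsto W$. A fixed point of $\mathcal{F}$ is a solution of the original quasilinear problem, so the task reduces to showing that $\mathcal{F}$ admits a fixed point in a suitable Gevrey ball.

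For the linear problem the diagonal structure \eqref{E:Matrix_A_Leray_system} reduces matters to $N$ scalar equations $a^J(x,V,\partial^{k_J}) w^J = b^J(x,V) + (\text{lower order})$. The principal symbol $p^J(x,V,\xi) = p^{J,1}(x,V,\xi)\cdots p^{J,r_J}(x,V,\xi)$ factors into $r_J$ strictly hyperbolic polynomials, and the non-emptiness of the interior of $\Gamma^{*,+}_x(A)$ guarantees a common time direction with respect to which each factor is strictly hyperbolic. First I would solve, successively, the sequence of strictly hyperbolic linear Cauchy problems associated to $a^J_1, a^J_2, \ldots, a^J_{r_J}$, applying Leray's Sobolev-type energy estimates from \cite{Leray-Book-1953} to each factor. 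Each application yields existence, uniqueness, and an energy inequality, but with a controlled loss of derivatives because the coefficients of $a^J_{q+1}$ depend on up to $m_I - m_{J,q} - r_I + q$ derivatives of $V$, which must be absorbed into the estimates; the inductive bookkeeping in $q$ is precisely what produces the indices $m_{J,q}$. The composite procedure gives estimates in a scale of Sobolev spaces, with a cumulative loss of roughly $g_J$ derivatives per factor $J$.

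The next step is to upgrade these Sobolev estimates to Gevrey estimates. Starting from analytic data, standard Cauchy-Kovalevskaya gives an analytic local solution. Then one approximates the Gevrey initial data by analytic data $V_k \to V$ in the Gevrey topology, constructs the corresponding analytic $W_k$, and derives Gevrey bounds by summing the Sobolev estimates against a weight of the form $\{(1+|\alpha|)!\}^{-s}$. The Gevrey index $s$ must absorb the per-step Sobolev derivative loss, and this is exactly where the assumption $s \leq r_J/g_J$ enters: one verifies that the factorial growth permitted in $G^{(s)}$ dominates the accumulated loss $g_J$ distributed over the $r_J$ factorization steps, so that the bounds close. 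The condition $n/2 + r^J < n_J$ ensures enough smoothness to apply the standard Gevrey product and composition estimates together with the Sobolev embedding needed to control the nonlinear coefficients. After this, a standard compactness-uniqueness argument, passing to the limit $k \to \infty$, produces $W \in G^{m_J,(s)}([0,T^\prime)\times\mathbb{R}^n)$ solving the linear system.

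The main obstacle, and the heart of the proof, is the derivative-loss bookkeeping across the factorization. In the strictly hyperbolic case ($r_J = 1$) there is no loss and one obtains smooth solutions; for $r_J > 1$ each additional factor forces one to measure regularity in a weaker space, and only the Gevrey scale has enough room to accommodate all losses simultaneously, provided $s \geq 1$ is chosen large enough relative to $g_J/r_J$. With the linear theory in hand, one closes the quasilinear iteration by showing that $\mathcal{F}$ maps a Gevrey ball of radius depending on the data into itself on a time interval $[0,T^\prime)$ with $T^\prime$ depending only on the Gevrey norm of the initial data, and that the difference of two iterates satisfies a linear Leray-Ohya system of the same type whose Gevrey estimates give contractivity in a slightly weaker Gevrey norm. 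The fixed point is the desired solution. Uniqueness in $G^{m_I,(s)}$ follows by applying the linear Gevrey energy estimates to the difference of two solutions, which satisfies a linear Leray-Ohya system with zero data.
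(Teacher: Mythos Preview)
Your overall architecture is correct and matches the paper's description: reduce the quasilinear problem to a sequence of linear problems via Picard iteration, solve the linear problem with Gevrey control, and close by a fixed-point argument. The paper's sketch (which is itself brief, deferring to \cite{Leray-Ohya-1967,Leray-Ohya-1964}) says exactly this.

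Where you differ is in the treatment of the linear problem. The paper states that Leray and Ohya solve the linear problem by a \emph{method of majorants reminiscent of the Cauchy--Kowalevskaya theorem}, exploiting the fact that Gevrey functions admit formal series expansions that can be controlled by majorant series; this is a power-series / analytic-type argument, not an energy argument. You instead propose to factor the principal part into its strictly hyperbolic pieces $a^J_1,\dots,a^J_{r_J}$, apply Leray's Sobolev energy estimates to each factor in succession, absorb the accumulated derivative loss, and then sum the resulting tower of Sobolev estimates against Gevrey weights to obtain $G^{(s)}$ control. This energy-with-loss route is a genuine alternative (closer in spirit to later treatments by Ohya, Mizohata, and others) and it is the approach in which the constraint $s \le r_J/g_J$ appears most transparently as the condition under which factorial growth absorbs the loss. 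The majorant method, by contrast, builds the Gevrey regularity directly into the construction rather than recovering it a posteriori from Sobolev bounds.

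One point in your sketch deserves care: the factorization $p^J = p^{J,1}\cdots p^{J,r_J}$ is only at the symbol level, so $a^J$ differs from the composition $a^J_{r_J}\circ\cdots\circ a^J_1$ by a lower-order operator whose coefficients involve derivatives of the $a^J_q$, hence derivatives of $V$. Your phrase ``solve, successively, the sequence of strictly hyperbolic linear Cauchy problems'' glosses over the mechanism for handling this remainder; in practice one either iterates (treating the lower-order remainder as a source and repeating until convergence in the Gevrey scale) or rewrites the scalar equation as a first-order system whose symbol is block-triangular with strictly hyperbolic blocks. Either way, this remainder is precisely what generates the derivative loss you allude to, and making the bookkeeping rigorous is the substance of the proof rather than a detail.
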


\begin{theorem}[Domain-of-dependence property, \citealt{Leray-Ohya-1967}] 
\label{T:Leray_Ohya_domain_of_dependence}
Assume the same hypotheses
of Theorem \ref{T:Leray_Ohya_existence}, and suppose further that 
\begin{align}
1 \leq s < \frac{r_J}{g_J}, \, J=1,\dots, N.
\nonumber
\end{align}
Let $T^\prime$ and $U$ be as in the conclusion of  Theorem \ref{T:Leray_Ohya_existence}. 
Then, if $T^\prime$ is sufficiently small, the operator $A(x,U,\partial)$
is Leray--Ohya hyperbolic (thus the past domain of dependence of a point is well-defined, see Remark \ref{D:Causal_structure_A}),
and for each $x \in [0,T^\prime) \times \mathbb{R}^n$, $U(x)$ depends only on
$\left. U_0 \right|_{J^-(x) \cap \Sigma}$, where $U_0$ is the Cauchy data.
\end{theorem}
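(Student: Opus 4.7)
My plan is to split the argument into three stages: (i) propagation of Leray--Ohya hyperbolicity from $\Sigma$ into a thin slab $[0,T')\times \mathbb{R}^n$, which legitimizes the notion of $J^-(x)$; (ii) a local uniqueness statement in truncated cones via Gevrey energy estimates; and (iii) a covering/continuation argument that upgrades local uniqueness to the full domain-of-dependence property.

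For stage (i), the solution $U=(u^I)$ produced by Theorem \ref{T:Leray_Ohya_existence} satisfies $u^I\in G^{m_I,(s)}([0,T')\times \mathbb{R}^n)$, so in particular the jet of $U$ that enters the coefficients of $A(x,U,\partial)$ is continuous in $(t,x)$ and, at $t=0$, equals the jet of the initial data, which by hypothesis defines an element of $\mathcal{A}^s(\Sigma,Y)$. Leray--Ohya hyperbolicity is an open condition on the coefficients (since it requires the roots of each factor $p^{J,q}(x,U,\xi)$ to be real and distinct in a suitable $\xi$-family and the intersection of the open half-cones $\Gamma_x^{*,+}(a^{J,q})$ to have non-empty interior). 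Hence for $T'$ small enough, hyperbolicity, the factorization, and the non-degeneracy of $\bigcap_{J,q}\Gamma_x^{*,+}(a^{J,q})$ persist along $U$, giving well-defined cones $C_x^\pm(A)$ and therefore a well-defined $J^-(x)$. I would additionally verify that the family $\{C_x^\pm(A)\}$ varies continuously in $x$, so that causal curves exist and $J^-(x)$ is closed.

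For stage (ii), fix $x_0\in[0,T')\times \mathbb{R}^n$ and suppose $V$ is another Gevrey solution with the same Cauchy data on $J^-(x_0)\cap\Sigma$. I would like to show $U=V$ on $J^-(x_0)$. The key step is to produce, for every $\tau\in(0,t(x_0)]$, an energy inequality of the form
\begin{align*}
\mathcal{E}^{(s)}_\tau(U-V)\;\lesssim\;\int_0^\tau \mathcal{E}^{(s)}_\sigma(U-V)\,d\sigma,
\end{align*}
where $\mathcal{E}^{(s)}_\tau$ is a Gevrey-type norm of $U-V$ restricted to the slice $\Sigma_\tau\cap J^-(x_0)$, built from weighted sums of $L^2$ norms $\sum_\alpha \frac{\|\partial^\alpha(U-V)\|_{L^2}^{1+|\alpha|}}{(1+|\alpha|)^{s(1+|\alpha|)}}$. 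The existence of such a localized Gevrey estimate follows by rerunning the Leray--Ohya estimates of \cite{Leray-Ohya-1967} on a family of truncated cones whose lateral boundaries are spacelike with respect to $A$. Gr\"onwall's inequality together with the vanishing of Cauchy data then yields $U=V$ on $J^-(x_0)$.

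The main obstacle, and the reason for the strict inequality $s<r_J/g_J$, lies in stage (ii). For a Leray--Ohya (weakly hyperbolic) system the energy inequality must accommodate the characteristic loss of derivatives that accumulates each time one factors $a^J$ into its hyperbolic pieces $p^{J,1},\dots,p^{J,r_J}$: roughly $g_J$ derivatives are lost per commutation, and the Gevrey index $s$ must compensate so that $(1+|\alpha|)^{-s(1+|\alpha|)}$ absorbs combinatorial factors of order $(C|\alpha|)^{g_J|\alpha|/r_J}$. The borderline $s=r_J/g_J$ used for existence is just barely insufficient for a closed Gronwall-type localization, because the truncation to $J^-(x_0)$ introduces boundary terms that spoil exactly this borderline balance. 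The strict inequality $s<r_J/g_J$ leaves a positive margin that can be spent on controlling the lateral boundary contributions and on absorbing cross-terms coming from the spatial cutoff, producing a genuinely coercive Gevrey energy. Once stage (ii) is in hand, stage (iii) is routine: cover $J^-(x_0)$ by finitely many small cones as in stage (ii), chain the local uniqueness statements, and conclude that $U(x_0)$ depends only on $\left.U_0\right|_{J^-(x_0)\cap\Sigma}$.
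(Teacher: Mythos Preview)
The paper does not actually prove this theorem; it cites \cite{Leray-Ohya-1967} and, in a one-line remark following the statements, indicates only that ``Theorem \ref{T:Leray_Ohya_domain_of_dependence} is obtained by a Holmgren type of argument.'' Your proposal takes a different route, namely direct localized Gevrey energy estimates on truncated cones followed by Gr\"onwall.

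The Holmgren approach is a duality argument: to show $U$ vanishes in $J^-(x_0)$ given vanishing Cauchy data on $J^-(x_0)\cap\Sigma$, one solves the \emph{adjoint} Leray--Ohya system backward from a spacelike surface through $x_0$, with compactly supported Gevrey data, and then integrates by parts against $U$. The strict inequality $s<r_J/g_J$ is what guarantees that the adjoint problem is solvable in a Gevrey class compatible with the pairing. This strategy leverages the \emph{global} existence theory already established and avoids having to localize the delicate Leray--Ohya estimates. By contrast, your stage (ii) asserts that ``rerunning the Leray--Ohya estimates on truncated cones'' yields a closed Gr\"onwall inequality; this is the crux, and you have not argued it beyond an assertion. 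The Leray--Ohya estimates for weakly hyperbolic systems are obtained through an intricate iteration over the hyperbolic factors $p^{J,q}$ with factorial-type weighted norms, and it is not at all obvious that they localize to regions with lateral boundaries without substantial additional work. Your explanation of the role of the strict inequality (boundary terms from truncation spoiling the borderline balance) is plausible-sounding but speculative; in the Holmgren scheme the strict inequality has a sharp, concrete role in solving the dual problem. Stage (i) and stage (iii) of your plan are fine, but stage (ii) as written is a gap rather than a proof.
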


\begin{definition}
When solutions to a system of PDEs satisfy the conclusion of Theorem \ref{T:Leray_Ohya_domain_of_dependence}, we say that the system has the \textdef{domain-of-dependence property.}
\end{definition}

\begin{remark}
\label{R:Iteration_space}
Theorem \ref{T:Leray_Ohya_existence} assumes that the system is Leray--Ohya hyperbolic
for $V \in \mathcal{A}(\Sigma,Y)$, which is essentially the space of values near the initial data.
Theorem \ref{T:Leray_Ohya_existence} is proven with an iteration (see below), thus we need to guarantee that, at each step, the iterates form a Leray--Ohya system with the right properties, which is guaranteed by staying within 
$\mathcal{A}(\Sigma,Y)$.
(Naturally, it would not make sense to require the system to be Leray--Ohya hyperbolic
for the yet to be proven to exist solution $U$.) Once $U$ is constructed, one can then
ask whether the system is Leray--Ohya hyperbolic for $U$. This will be the case if $T^\prime$
is small, since in this case the values of $U$ will be close to those of the initial data
by continuity, guaranteeing that $U \in \mathcal{A}(\Sigma,Y)$.
\end{remark}

\begin{remark}
\label{R:Remark_depends_only_Leray}
We recall the meaning of ``depends only on $\left. U_{0} \right|_{J^-(x) \cap \Sigma}$"
in Theorem \ref{T:Leray_Ohya_domain_of_dependence}.
Let $U_{0,1}$ and $U_{0,2}$ be two initial data for \eqref{E:Quasilinear_system} and $U_1$ and $U_2$ the corresponding solutions given by Theorem \ref{T:Leray_Ohya_existence}, which we can assume to be defined on the same time interval $[0,T^\prime]$ by shrinking one of the time intervals for $U_1$ or $U_2$ if necessary. Let $J^-_1(x)$ be the past domain of dependence of $x$ with respect to the solution $U_1$.
Suppose that $\left. U_{0,1} \right|_{J_1^-(x) \cap \Sigma} = \left. U_{0,2} \right|_{J_1^-(x) \cap \Sigma}$ Then, $U_1(x) = U_2(x)$ and, in fact, $U_1 = U_2$ in $J_1^-(x)$. It also follows that $J_1^-(x) = J_2^-(x)$, where $J^-_2(x)$ be the past domain of dependence of $x$ with respect to the solution $U_2$.
\end{remark}

See Sect.~\ref{S:Domain_of_dependence_outside_Gevrey} for further applications of the domain-of-dependence property, including the case of more general function spaces.

We now consider a system whose principal part is not necessarily diagonal.
The definition of a Leray system depends only on the existence of the indices $m_I$ and
$n_J$ with the stated properties, and thus can be extended to non-diagonal systems.

\begin{definition}
Consider a system of $N$ partial differential 
equations and $N$ unknowns in $X$,
and
denote the unknown as 
$U=(u^I)$, $I=1,\dots, N$. 
The system is a \textdef{(not necessarily diagonal in the principal part) Leray system} if
it is possible 
to attach to each unknown $u^I$ a non-negative integer $m_I$ and to
each equation a non-negative integer $n_J$, such that the system reads
\begin{align}
\label{E:General_Leray_system}
h^J_I(x,\partial^{m_K - n_J -1} u^K, \partial^{m_I - n_J}) u^I
+ b^J(x, \partial^{m_K - n_J - 1} u^K) = 0, \, J=1, \dots, N.
\end{align}
Here, 
$h^J_I(x,\partial^{m_K - n_J -1} u^K, \partial^{m_I - n_J})$
is a homogeneous differential operator of order $m_I - n_J$ (which can
be zero), whose coefficients depend on at most
$m_K - n_J -1$ derivatives of $u^K$, $K=1,\dots N$, and there is a sum over $I$ in $h^J_I(\cdot) u^I$. 
The remaining terms, 
$b^J(x,\partial^{m_K - n_J - 1} u^K)$, also depend on at most 
$m_K - n_J -1$ derivatives of $u^K$,  $K=1,\dots N$. 
As before, these indices are defined only up to an
overall additive integer.
\end{definition}

As done above, for a given sufficiently regular $U$,   
 $h^J_I(x,\partial^{m_K - n_J -1} U^K, \partial^{m_I - n_J})$  are well-defined linear
 operators, and we can ask about their hyperbolicity properties. The case of interest will be, again, when
 we evaluate these operators at some given Cauchy data.
 
Write (\ref{E:General_Leray_system}) in matrix form as
\begin{align}
\label{E:Leray_system_matrix}
H(x,U,\partial) U = B(x,U).
\end{align}
 
\begin{definition} 
The \textdef{characteristic determinant} of (\ref{E:Leray_system_matrix}) 
at $x \in X$ and for a given $U$
 is the polynomial $p(x,\xi)$ in the co-tangent space $T^*_x X$, $\xi \in T_x^* X$, 
given by 
\begin{align}
\label{E:Leray_characteristic_det}
p(x,U,\xi) = \det( H(x,U,\xi)).
\end{align}
\end{definition}
Note that $p$ is a homogeneous polynomial of degree
\begin{align}
 \ell := \sum_{I=1}^N m_I - \sum_{J=1}^N n_J.
\nonumber
\end{align}
Under appropriate conditions, (\ref{E:Leray_system_matrix}) can be transformed 
into a Leray--Ohya system of the form (\ref{E:Quasilinear_system}), i.e., with diagonal principal part.
More precisely, we have the following.

\begin{theorem}[Diagonalization, \citealt{Choquet-Bruhat-1966}]
\label{T:Leray_Ohya_diagonalization}
Consider (\ref{E:Leray_system_matrix}).
Suppose that the characteristic determinant (\ref{E:Leray_characteristic_det}) at a given $U$ is not identically zero, and it is the product
of $Q$ hyperbolic polynomials, i.e., 
\begin{align}
p(x,U,\xi) = p_1(x,U,\xi) \cdots p_Q(x,U,\xi).
\nonumber
\end{align}
Let $d_q$ be the degree of $p_q(x,U,\xi)$, $q=1,\dots, Q$, and suppose
that 
\begin{align}
\max_q d_q \geq \max_I m_I - \min_J n_J.
\nonumber
\end{align}
Finally, assume that 
\begin{align}
\ell \geq \max_I m_I - \min_J n_J.
\nonumber
\end{align}
Then, there exists a $N\times N$ matrix $C(x,U,\partial)$ of differential operators whose coefficients depend on $U$, such that
\begin{align}
C(x,U,\partial) H(x,U,\partial) U =\mathbb{I} \, p(x,U,\partial) U + \widetilde{B}_1(x,U),
\nonumber
\end{align}
and 
\begin{align}
C(x,U,\partial) B(x,U) = \widetilde{B}_2(x,U),
\nonumber
\end{align}
where $\mathbb{I}$ is the $N\times N$ identity matrix, $p(x,U,\partial)$ is the differential operator associated with 
$p(x,U,\xi)$, and $\widetilde{B}_1(x,U)$ and $ \widetilde{B}_2(x,U)$ depend on at most $\ell -1 $ derivatives of $U$, as do the coefficients
of the operator $p(x,U,\xi)$. Furthermore, there is a choice of indices that makes the system
\begin{align}
\label{E:Diagonalized_Leray}
\mathbb{I} \, p(x,U,\partial) U =  \widetilde{B}_2(x,U) - \widetilde{B}_1(x,U)
\end{align}
into a Leray system. In particular,
if the intersections $\cap_q \Gamma_x^{*,+}(a^q)$ and $\cap_q \Gamma_x^{*,-}(a^q)$, where
$\Gamma_x^{*,\pm}(a^q)$ are the half-cones associated with the hyperbolic polynomials $p_q(x,U,\xi)$,
have non-empty interiors, then 
 (\ref{E:Diagonalized_Leray}) is a Leray--Ohya system with diagonal principal part 
in the sense of definition \ref{D:Leray_Ohya_system}.
\end{theorem}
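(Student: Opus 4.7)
The plan is to take as $C(x,U,\partial)$ the differential operator whose symbol is the adjugate (classical cofactor transpose) of the matrix symbol $H(x,U,\xi)$. At the purely algebraic level, Cramer's rule gives the identity
\begin{align*}
\operatorname{adj}(H(x,U,\xi))\, H(x,U,\xi) \;=\; \det(H(x,U,\xi))\, \mathbb{I} \;=\; p(x,U,\xi)\, \mathbb{I},
\end{align*}
so at the level of symbols the left multiplication by $\operatorname{adj}(H)$ automatically diagonalizes the principal part into $p(x,U,\xi)\,\mathbb{I}$. The entries of $\operatorname{adj}(H(x,U,\xi))$ are homogeneous polynomials in $\xi$ of degree $\ell-(m_I-n_J)$ in the $(J,I)$ entry, with coefficients that depend polynomially on the coefficients of $H$ and hence on the derivatives of $U$ present in those coefficients. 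This dictates the definition of the operator $C(x,U,\partial)$.

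The first key step is to verify that the composition $C(x,U,\partial) H(x,U,\partial)$ equals $\mathbb{I}\,p(x,U,\partial)$ modulo operators of order strictly less than $\ell$. This is the usual story for compositions of variable-coefficient linear differential operators: principal symbols multiply as polynomials, while commutator corrections are of lower order. The second step is to track, term by term, how derivatives of the coefficients of $H$ (which are themselves functions of $U$ and its derivatives up to order $m_K-n_J-1$) enter these commutator terms; since each differentiation falling on a coefficient costs one order of $\xi$ but raises the derivative count on $U$ by one, a careful bookkeeping shows that the remainder operator applied to $U$ involves at most $\ell-1$ derivatives of $U$, producing $\widetilde{B}_1(x,U)$. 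An identical argument applied to $C(x,U,\partial) B(x,U)$, where $B$ already depends on at most $m_K-n_J-1$ derivatives of $U$, yields $\widetilde{B}_2(x,U)$ with the same dependence. The two inequalities $\max_q d_q \geq \max_I m_I - \min_J n_J$ and $\ell \geq \max_I m_I - \min_J n_J$ are precisely what guarantee that $\ell - 1$ is large enough to absorb all derivatives of $U$ arising from the coefficients in both $H$ and $B$ after $C$ has acted, so that no derivative of $U$ above order $\ell-1$ leaks into the right-hand sides.

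The next step is to exhibit new Leray indices $\widetilde{m}_I, \widetilde{n}_J$ that cast \eqref{E:Diagonalized_Leray} as a Leray system. Since the principal operator is the scalar $p(x,U,\partial)$ of order $\ell$ acting diagonally on each component, we set $\widetilde{m}_I - \widetilde{n}_J = \ell$ when $I=J$ and choose the free additive constant so that the dependence on derivatives of $U$ in $p$, $\widetilde{B}_1$, $\widetilde{B}_2$ fits within the $\widetilde{m}_K-\widetilde{n}_J-1$ budget prescribed by Definition \ref{D:Leray_system}; the inequality $\ell \geq \max_I m_I - \min_J n_J$ is exactly what makes this choice feasible. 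Finally, Leray-Ohya hyperbolicity of the diagonal system follows tautologically: each diagonal entry has characteristic polynomial $p(x,U,\xi)=p_1(x,U,\xi)\cdots p_Q(x,U,\xi)$, a product of hyperbolic polynomials, so condition (i) of Definition \ref{D:Leray_Ohya_hyperbolic} holds; condition (ii) is the hypothesis that $\bigcap_q \Gamma_x^{*,+}(a^q)$ and $\bigcap_q \Gamma_x^{*,-}(a^q)$ have non-empty interior, which is invariant across the $N$ (identical) diagonal entries.

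The hard part will be the second step above: a clean, invariant derivative count for the commutator remainder. The entries of $\operatorname{adj}(H(x,U,\xi))$ are large polynomial expressions in the coefficients of $H$, and composing the associated operators with $H$ produces many terms in which derivatives can fall either on coefficients of $\operatorname{adj}(H)$ or on coefficients of $H$ itself. One has to show that in \emph{every} such term the total order of differentiation on $U$ stays $\leq \ell-1$, and this requires exploiting the cancellation between the leading-order composition (which gives exactly $p\,\mathbb{I}$) and the subleading contributions, not just a crude triangle-inequality estimate on orders. Once this bookkeeping lemma is in place, the choice of $\widetilde{m}_I,\widetilde{n}_J$ and the verification of the Leray-Ohya structure are essentially formal.
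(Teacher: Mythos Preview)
Your proposal is correct and follows essentially the same approach as the paper: the paper explicitly states that the proof is based on the cofactor identity $c^T a = \det(a)\,\mathbb{I}$, which at the level of differential operators produces the lower-order terms $\widetilde{B}_1$, and that one then matches indices to obtain a Leray system under the stated conditions on $d_q$ and $\ell$. Your treatment is in fact more detailed than the paper's sketch, which simply cites \cite{Choquet-Bruhat-1966} for the full argument.
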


Theorem \ref{T:Leray_Ohya_diagonalization} is proven in \cite{Choquet-Bruhat-1966}.

\begin{definition}
\label{D:Gevrey_index}
Under the hypotheses of Theorem \ref{T:Leray_Ohya_diagonalization}, the number $\frac{Q}{Q-1}$ is 
called the \textdef{Gevrey index} of the system.  
\end{definition}

\begin{remark}
\label{R:Q_Q_minus_1_diagonalization}
Suppose that (\ref{E:Diagonalized_Leray}) forms a Leray--Ohya system in the sense
of definition \ref{D:Leray_Ohya_system}, i.e., the half-cones have non-empty interiors
as stated in Theorem \ref{T:Leray_Ohya_diagonalization}. 
It can then be shown, see \cite{Choquet-Bruhat-1966}, that a value of $s$ sufficient to apply
Theorems \ref{T:Leray_Ohya_existence} and \ref{T:Leray_Ohya_domain_of_dependence}
is $1 \leq s < \frac{Q}{Q-1}$.
\end{remark}

Let us make a brief comment about the proofs of the above results. Theorem \ref{T:Leray_Ohya_existence}
is proven as follows. First, one solves the associated linear problem. This is done by a method of majorants reminiscent 
of the Cauchy--Kowalevskaya theorem (see \citealt{Leray-Ohya-1964} for a treatment of the linear problem). One uses the fact that Gevrey functions admit a formal series expansion that provides
a consistent way of constructing successive approximating solutions to the problem. The non-linear problem is then treated
via a fixed point argument, upon solving successive linear problems. Theorem \ref{T:Leray_Ohya_domain_of_dependence} is obtained
by a Holmgren type of argument. We remark that the assumption that 
$p^{J,q+1}(x,V, \xi)$, $q=0, \dots, r_J-1$, depends on 
at most $m_I - m_{J,q} - r_I +q$ derivatives of $v^I$, $I=1,\dots,N$,
ensures that the coefficients of the associated differential operators 
$a^{J,q+1}(x,U, \partial)$ do not depend on too many derivatives of $U$,
as it should be in the treatment of quasi-linear equations.

Theorem \ref{T:Leray_Ohya_diagonalization} is based on the following identity:
\begin{align}
c^T a = \det(a) \mathbb{I},
\end{align}
where $a$ is an $N\times N$ invertible matrix and $c^T$ the transpose of the co-factor matrix. At the level of differential operators,
this identity produces the lower order terms $\widetilde{B}_1$. One then needs to match the order of the resulting differential operators
and lower order terms with appropriate indices satisfying the definition of a Leray system. This is possible under the conditions
on $d_q$ and $\ell$ stated in the theorem.

When the polynomials $p^J$, $J=1,\dots, N$ in Definition \ref{D:Leray_Ohya_hyperbolic} are hyperbolic polynomials whose corresponding cones have non-empty intersection (part (b) of Definition \eqref{D:Leray_Ohya_hyperbolic}), then the corresponding operator $A(x,\partial)$ is called a \textdef{strictly hyperbolic, or simply hyperbolic, operator (with diagonal principal part)}. The above definitions can be adjusted to this case and in particular a Leray system $A(x,U,\partial) U = B(x,U)$ is then called a (strictly) hyperbolic system. These are the types of operators and systems treated by Leray in his celebrated monograph \citep{Leray-Book-1953}. In this situation, local existence and uniqueness can be established in suitable Sobolev spaces and the domain of dependence property can also be demonstrated. Similarly, if 
the characteristic determinant of \eqref{E:Leray_system_matrix} is a hyperbolic polynomial, then the diagonalized system \eqref{E:Diagonalized_Leray} will be (strictly) hyperbolic.

\begin{remark}
\label{R:Strictly_hyperbolic}
We note a certain subtlety in the definition of Leray--Ohya hyperbolicity. In part (a) of Definition \ref{D:Leray_Ohya_hyperbolic}, $p^J(x,\xi)$ is a product
\begin{align}
p^J(x,\xi) = p^{J,1}(x, \xi) \cdots p^{J,r_J}(x,\xi)
\nonumber
\end{align}
with each $p^{J,q}(x,\xi)$, $q=1,\dots,r_J$, $J=1,\dots,N$, being a (strictly) hyperbolic polynomial. But this does not preclude $p^J(x,\xi)$ from being itself a hyperbolic polynomial. In fact, if the polynomials 
$p^{J,q}(x,\xi)$ do not have roots in common, then all the roots of the polynomial $p^{J,1}(x, \xi) \cdots p^{J,r_J}(x,\xi)$ will be real and distinct, thus $p^J(x,\xi)$ will be a hyperbolic polynomial. In this case the operator $A(x,\partial)$ will be strictly hyperbolic and the corresponding Leray system $A(x,U,\partial) U = B(x,U)$ will be a strictly hyperbolic system. Of course, Definition \ref{D:Leray_Ohya_hyperbolic} still applies, which is simply to say that every strictly hyperbolic system is in particular a Leray--Ohya system or, in alternative terminology, that every hyperbolic system is in particular a weakly hyperbolic system. But naturally, the results we can obtain if we apply to a strictly hyperbolic system techniques of Leray--Ohya system are significantly weakened, since the latter will provide local existence and uniqueness in Gevrey spaces whereas one can establish local existence and uniqueness for strictly hyperbolic systems in Sobolev spaces, as mentioned above.
\end{remark}

We see from Remark \ref{R:Strictly_hyperbolic} that the primary situation of interest in the study of Leray--Ohya system is then when one or more of the polynomials $p^J(x,\xi)$ has \emph{repeated roots}. More precisely, strict hyperbolicity can also fail if one or more of the numbers $r_J$ change with $x$ or if one or more of the polynomials $p^{J,q}(x,\xi)$ change their degree with $x$. This is a situation that we are not considering here (see Remark \ref{R:Constant_multiplicity}). Readers interested in this case are referred to 
\cite{Reissig-Schulze-Book-2005,Garetto-Jah-Ruzhansky-2018,Garetto-Jah-Ruzhansky-2020} and references therein for more details.

\subsubsection{More on domains of dependence\label{S:Domain_of_dependence_outside_Gevrey}}
Here, we will provide a template for how the domain-of-dependence property can be established outside the Gevrey class. It is often the case that this can be established by direct energy estimates. But for some complicated systems, like the ones dealt with in Sect.~\ref{S:Relativistic_viscous_fluids}, such direct estimates may not be available. For example, the estimates used in Theorem \ref{T:LWP_BDNK} are carried out in a pseudo-differential framework that makes a standard integration by parts argument more difficult.

The specific details of how this template can be applied will depend on particular aspects of the problem being studied, such as structural properties of the equations, the function spaces being used, and so on. Thus, we will inevitably be somewhat imprecise in some instances, referring to generic terms like ``appropriate conditions,'' ``sufficiently regular,'' etc. We expect that readers should have no difficulty in identifying what these should be in particular problems of interest.

We begin noticing that in view of Remark \ref{R:Remark_depends_only_Leray}, upon considering the system satisfied by the difference of two solutions, the domain-of-dependence property can be rephrase in terms of vanishing initial data. More precisely, it states that upon considering the system satisfied by the difference of two solutions with initial data $U_0$,  if $\left. U_0 \right|_{J^-(x)\cap \Sigma} = 0$, then 
$U = 0$ in $J^-(x)$, where $U$ is a solution for the system satisfied by the difference of two solutions. 

Suppose we are considering equations 
of the form \eqref{E:Leray_system_matrix}
that are locally well-posed in a function space $\mathsf{X}$ that is more general than Gevrey spaces, Sobolev spaces being the primary case of interest. Consider initial data $U_0 \in \mathsf{X}$ and let $U$ be the corresponding solution obtained by local well-posedness in $\mathsf{X}$.
Suppose that Gevrey functions are dense in $\mathsf{X}$, which will be the case for many function spaces of interest, including smooth spaces and Sobolev spaces (see, e.g., \citealt{Teofanov-2006} and \citealt[Example 1.4.9]{Rodino-Book-1993}). Take a sequence $\{ U_{0,i} \}$ of Gevrey initial data converging in $\mathsf{X}$ to $U_0$. Since $U_{0,i} \in \mathsf{X}$, local well-posedness in $\mathsf{X}$ yields solutions\footnote{More precisely, if the coefficients and nonlinearities of the equations are not of appropriate Gevrey regularity, we have to approximate them as well (assuming, say, that they are smooth). This is done in the usual way wherein one approximates them by analytic functions. See, e.g., Chapter~6, \S~10 of \cite{Courant-Hilbert-Book-1989-2}.} $U_i$ in $\mathsf{X}$. Under appropriate conditions, we can guarantee that the solutions $U_i$ are all defined on the same time interval.

If the method of establishing local well-posedness for \eqref{E:Leray_system_matrix}
in $\mathsf{X}$ produces sufficiently good estimates (as it will often be the case for hyperbolic systems where energy estimates are employed), then we can apply such estimates to the solutions $U_i$ and to differences $U_i - U_j$ and obtain that $U_i$ converges to $U$, typically not in $\mathsf{X}$ but in some weaker topology which we denote $\mathsf{X}^\prime$. For example, if $\mathsf{X}=H^s$ then $\mathsf{X}^\prime = H^{s-1}$, where $H^s$ is a Sobolev space.

Assume that the characteristic determinant of system \eqref{E:Leray_system_matrix} satisfies the assumptions of Theorem \ref{T:Leray_Ohya_diagonalization}. Then we can, upon appropriate regularity assumptions, diagonalize the system, transforming into a system of the form \eqref{E:Diagonalized_Leray}. Observe that, again under suitable regularity assumptions, $U$ and $U_i$ are solutions to \eqref{E:Diagonalized_Leray}, respectively. 

Let $J^-(x)$ and $J^-_i(x)$ be the past domain of dependence of $x$ associated with the solutions $U$ and $U_i$, respectively. Notice that these past domains of dependence are defined with the diagonal principal part of \eqref{E:Diagonalized_Leray}, since we have not assumed that the system \eqref{E:Leray_system_matrix} has diagonal principal part, and it is only for such systems that we have defined a causal structure, see Definition \ref{D:Causal_structure_A}. Let us use the standard notation $C^k$ for the space of $k$-times continuously differentiable functions.
If $\mathsf{X}^\prime$ has good embedding properties into $C^k$ for some appropriate $k$, then $J^-(x) \rightarrow J^-_i(x)$ as $i\rightarrow\infty$. Here, the convergence $J^-_i(x) \rightarrow J^-(x)$ is simply	convergence of sets pointwise, where we measure the distance between points along each $\Sigma_t$ with the Euclidean\footnote{If the spacetime has a metric, we could take the metric induced on $\Sigma_t$.} metric in $\mathbb{R}^n$. 

Assume now that $\left. U \right|{J^-(x)\cap \Sigma} = 0$. We can assume without loss of generality that the approximating initial data $U_{0,i}$ vanishes in the interior of ${J^-(x)\cap \Sigma}$.
Since $J^-_i(x) \rightarrow J^-(x)$, given any point $y$ in the interior of $J^-(x)$, we have that $y \in J^-_i(x)$ and $J^-_i(y) \subset J^-(x)$ for all $i$ sufficiently large. For the Gevrey solutions $U_i$, the domain-of-dependence property
holds in view of Theorems \ref{T:Leray_Ohya_existence} and \ref{T:Leray_Ohya_domain_of_dependence}. It follows that $U_i = 0$ in $J^-_i(y)$. Passing to the limit and using the above embedding of $\mathsf{X}^\prime$ into $C^k$, we conclude that $U(y) = 0$. This establishes the domain-of-dependence property for $\mathsf{X}$-solutions to \eqref{E:Leray_system_matrix} since $y$ is an arbitrary interior point of $J^-(x)$.

\newpage
\bmhead{Acknowledgements} 

This review article grew out of a
series of lectures the author delivered at the \href{https://cmsa.fas.harvard.edu}{Center
for Mathematical Sciences and Applications (CMSA)} at Harvard
University as part of the semester program \href{https://cmsa.fas.harvard.edu/event/general-relativity-program/}{General Relativity} in Spring 2022, and at the \href{http://www.fullerton.edu/ires-uz/asi/index.php}{Advanced Studies Institute in Mathematical Physics} at Urgench State University in Uzbekistan, as part of the \href{http://www.fullerton.edu/ires-uz/}{USA--Uzbekistan Collaboration for Research, Education and Student Training} in July--August 2022.
The author is grateful to the organizers of both events for the invitation and for their hospitality. The author is also thankful 
to all participants for stimulating discussions. The author would like to thank Leonardo Abbrescia, Brian Luczak, and Jean-Fran\c cois Paquet, for reading an earlier version of this manuscript and providing valuable feedback, and Jared Speck for feedback on an updated version of the manuscript. The author would like to thank Alex Pandya for producing a video of a simulation presented in Sect.~\ref{S:BDNK_applicatons}. Finally, the author would like to thank the referees for a careful reading of the manuscript and for providing thoughtful feedback that led to an improvement of the article.

\section*{Declarations}

\begin{itemize}

\item Funding
The author gratefully acknowledges support from NSF grants DMS-2406870 and DMS-2107701, from DOE grant DE-SC0024711,
from a Chancellor's Faculty Fellowship, and a Vanderbilt's Seeding Success grant.

\end{itemize}

\phantomsection
\addcontentsline{toc}{section}{References}
\bibliography{References.bib}

\begin{thebibliography}{371}
\providecommand{\natexlab}[1]{#1}
\providecommand{\url}[1]{{#1}}
\providecommand{\urlprefix}{URL }
\providecommand{\doi}[1]{\url{https://doi.org/#1}}
\providecommand{\eprint}[2][]{\url{#2}}
 \bibcommenthead

\bibitem[{Abboud et~al(2024)Abboud, Speranza, and Noronha}]{Abboud:2023hos}
Abboud N, Speranza E, Noronha J (2024) {Causal and stable first-order chiral
  hydrodynamics}. Phys Rev D 109(9):094007. \doi{10.1103/PhysRevD.109.094007},
  {\href{https://arxiv.org/abs/2308.02928}{{arXiv:2308.02928}}} {[hep-th]}

\bibitem[{Abbrescia and Speck(2022)}]{Abbrescia-Speck-2022-arxiv}
Abbrescia L, Speck J (2022) The emergence of the singular boundary from the
  crease in $3d$ compressible {E}uler flow. arXiv e-prints
  {\href{https://arxiv.org/abs/2207.07107}{{arXiv:2207.07107}}} {[math.AP]}

\bibitem[{Abbrescia and Speck(2023)}]{Abbrescia-Speck-2023}
Abbrescia L, Speck J (2023) The relativistic {E}uler equations: {ESI} notes on
  their geo-analytic structures and implications for shocks in {1D} and
  multi-dimensions. Class Quantum Grav 40(24):243001.
  \doi{10.1088/1361-6382/ad059a}

\bibitem[{Abbrescia and Wong(2020)}]{Abbrescia-Wong-2020}
Abbrescia L, Wong WWY (2020) Global nearly-plane-symmetric solutions to the
  membrane equation. Forum Math Pi 8:e13, 71. \doi{10.1017/fmp.2020.10}

\bibitem[{Acquaviva and Beesham(2018)}]{Acquaviva:2018rqi}
Acquaviva G, Beesham A (2018) {Dynamical analysis of a first order theory of
  bulk viscosity}. Class Quantum Grav 35(19):195011.
  \doi{10.1088/1361-6382/aadb38},
  {\href{https://arxiv.org/abs/1808.09202}{{arXiv:1808.09202}}} {[gr-qc]}

\bibitem[{Alford et~al(2019)Alford, Harutyunyan, and
  Sedrakian}]{Alford:2019kdw}
Alford M, Harutyunyan A, Sedrakian A (2019) {Bulk viscosity of baryonic matter
  with trapped neutrinos}. Phys Rev D 100(10):103021.
  \doi{10.1103/PhysRevD.100.103021},
  {\href{https://arxiv.org/abs/1907.04192}{{arXiv:1907.04192}}} {[astro-ph.HE]}

\bibitem[{Alford et~al(2020)Alford, Harutyunyan, and
  Sedrakian}]{Alford:2020lla}
Alford M, Harutyunyan A, Sedrakian A (2020) {Bulk Viscous Damping of Density
  Oscillations in Neutron Star Mergers}. Particles 3(2):500--517.
  \doi{10.3390/particles3020034},
  {\href{https://arxiv.org/abs/2006.07975}{{arXiv:2006.07975}}} {[nucl-th]}

\bibitem[{Alford and Haber(2021)}]{Alford:2020pld}
Alford MG, Haber A (2021) {Strangeness-changing Rates and Hyperonic Bulk
  Viscosity in Neutron Star Mergers}. Phys Rev C 103(4):045810.
  \doi{10.1103/PhysRevC.103.045810},
  {\href{https://arxiv.org/abs/2009.05181}{{arXiv:2009.05181}}} {[nucl-th]}

\bibitem[{Alford and Harris(2019)}]{Alford:2019qtm}
Alford MG, Harris SP (2019) {Damping of density oscillations in
  neutrino-transparent nuclear matter}. Phys Rev C 100(3):035803.
  \doi{10.1103/PhysRevC.100.035803},
  {\href{https://arxiv.org/abs/1907.03795}{{arXiv:1907.03795}}} {[nucl-th]}

\bibitem[{Alford et~al(2018)Alford, Bovard, Hanauske, Rezzolla, and
  Schwenzer}]{Alford:2017rxf}
Alford MG, Bovard L, Hanauske M, et~al (2018) {Viscous Dissipation and Heat
  Conduction in Binary Neutron-Star Mergers}. Phys Rev Lett 120(4):041101.
  \doi{10.1103/PhysRevLett.120.041101},
  {\href{https://arxiv.org/abs/1707.09475}{{arXiv:1707.09475}}} {[gr-qc]}

\bibitem[{Alinhac(1999{\natexlab{a}})}]{Alinhac-1999-2}
Alinhac S (1999{\natexlab{a}}) Blowup of small data solutions for a class of
  quasilinear wave equations in two space dimensions. {II}. Acta Math
  182(1):1--23. \doi{10.1007/BF02392822}

\bibitem[{Alinhac(1999{\natexlab{b}})}]{Alinhac-1999-1}
Alinhac S (1999{\natexlab{b}}) Blowup of small data solutions for a quasilinear
  wave equation in two space dimensions. Ann Math (2) 149(1):97--127.
  \doi{10.2307/121020}

\bibitem[{Alinhac(2001{\natexlab{a}})}]{Alinhac-2001-1}
Alinhac S (2001{\natexlab{a}}) The null condition for quasilinear wave
  equations in two space dimensions {I}. Invent Math 145(3):597--618.
  \doi{10.1007/s002220100165}

\bibitem[{Alinhac(2001{\natexlab{b}})}]{Alinhac-2001-2}
Alinhac S (2001{\natexlab{b}}) The null condition for quasilinear wave
  equations in two space dimensions. {II}. Amer J Math 123(6):1071--1101

\bibitem[{Alinhac(2010)}]{Alinhac-Book-2010}
Alinhac S (2010) Geometric analysis of hyperbolic differential equations: an
  introduction, London Mathematical Society Lecture Note Series, vol 374.
  Cambridge University Press, Cambridge, \doi{10.1017/CBO9781139107198}

\bibitem[{An et~al(2021)An, Chen, and Yin}]{An-Chen-Yin-2021-arxiv}
An X, Chen H, Yin S (2021) Low regularity ill-posedness and shock formation for
  {3D} ideal compressible mhd. arXiv e-prints
  {\href{https://arxiv.org/abs/2110.10647}{{arXiv:2110.10647}}} {[math.AP]}

\bibitem[{Andersson and Burtscher(2019)}]{Andersson-Burtscher-2019}
Andersson L, Burtscher AY (2019) On the asymptotic behavior of static perfect
  fluids. Ann Henri Poincar\'{e} 20(3):813--857.
  \doi{10.1007/s00023-018-00758-z}

\bibitem[{Andersson et~al(2016)Andersson, Oliynyk, and
  Schmidt}]{Andersson-Oliynyk-Schmidt-2016}
Andersson L, Oliynyk TA, Schmidt BG (2016) Dynamical compact elastic bodies in
  general relativity. Arch Ration Mech Anal 220(2):849--887.
  \doi{10.1007/s00205-015-0943-1}

\bibitem[{Andersson et~al(2024)Andersson, Celora, Comer, and
  Hawke}]{Andersson-Celora-Comer-Hawke-2024}
Andersson N, Celora T, Comer G, et~al (2024) A field-theory approach for
  modeling dissipative relativistic fluids. Entropy 26(8).
  \doi{10.3390/e26080621}

\bibitem[{Anile(1990)}]{Anile-Book-1990}
Anile AM (1990) Relativistic Fluids and Magneto-fluids: With Applications in
  Astrophysics and Plasma Physics. Cambridge Monographs on Mathematical
  Physics, Cambridge University Press, \doi{10.1017/CBO9780511564130}

\bibitem[{Aretakis(2018)}]{Aretakis-NotesGR}
Aretakis S (2018) General relativity.
  \urlprefix\url{https://www.math.toronto.edu/aretakis/General%20Relativity-Aretakis.pdf},
  accessed 2023

\bibitem[{Armas and Camilloni(2022)}]{Armas:2022wvb}
Armas J, Camilloni F (2022) {A stable and causal model of
  magnetohydrodynamics}. JCAP 10:039. \doi{10.1088/1475-7516/2022/10/039},
  {\href{https://arxiv.org/abs/2201.06847}{{arXiv:2201.06847}}} {[hep-th]}

\bibitem[{Bahouri and Chemin(1999)}]{Bahouri-Chemin-1999}
Bahouri H, Chemin JY (1999) \'{E}quations d'ondes quasilin\'{e}aires et
  estimations de {S}trichartz. Amer J Math 121(6):1337--1377

\bibitem[{Baier et~al(2008)Baier, Romatschke, Son, Starinets, and
  Stephanov}]{Baier:2007ix}
Baier R, Romatschke P, Son DT, et~al (2008) {Relativistic viscous
  hydrodynamics, conformal invariance, and holography}. JHEP 04:100.
  \doi{10.1088/1126-6708/2008/04/100},
  {\href{https://arxiv.org/abs/0712.2451}{{arXiv:0712.2451}}} {[hep-th]}

\bibitem[{Bantilan et~al(2022)Bantilan, Bea, and Figueras}]{Bantilan:2022ech}
Bantilan H, Bea Y, Figueras P (2022) {Evolutions in first-order viscous
  hydrodynamics}. JHEP 08:298. \doi{10.1007/JHEP08(2022)298},
  {\href{https://arxiv.org/abs/2201.13359}{{arXiv:2201.13359}}} {[hep-th]}

\bibitem[{B\"{a}r et~al(2007)B\"{a}r, Ginoux, and
  Pf\"{a}ffle}]{Bar-Ginoux-Book-2007}
B\"{a}r C, Ginoux N, Pf\"{a}ffle F (2007) Wave equations on {L}orentzian
  manifolds and quantization. ESI Lectures in Mathematics and Physics, European
  Mathematical Society (EMS), Z\"{u}rich, \doi{10.4171/037}

\bibitem[{Barker et~al(2008)Barker, Humpherys, Lafitte, Rudd, and
  Zumbrun}]{Barker-Humpherys-Lafitte-Rudd-Zumbrun-2008}
Barker B, Humpherys J, Lafitte O, et~al (2008) Stability of isentropic
  {N}avier-{S}tokes shocks. Appl Math Lett 21(7):742--747.
  \doi{10.1016/j.aml.2007.07.025}

\bibitem[{Baumgarte and Shapiro(2010)}]{Baumgarte:2010ndz}
Baumgarte TW, Shapiro SL (2010) {Numerical Relativity: Solving Einstein's
  Equations on the Computer}. Cambridge University Press,
  \doi{10.1017/CBO9781139193344}

\bibitem[{Beheshti et~al(2022)Beheshti, Normann, and
  Valiente~Kroon}]{Beheshti-Kroon-2022}
Beheshti S, Normann M, Valiente~Kroon JA (2022) Future stability of
  self-gravitating dust balls in an expanding universe. Phys Rev D
  105(12):124027. \doi{10.1103/physrevd.105.124027}

\bibitem[{Bemfica et~al(2018)Bemfica, Disconzi, and
  Noronha}]{Bemfica-Disconzi-Noronha-2018}
Bemfica FS, Disconzi MM, Noronha J (2018) Causality and existence of solutions
  of relativistic viscous fluid dynamics with gravity. Phys Rev D
  98(10):104064, 26. \doi{10.1103/physrevd.98.104064}

\bibitem[{Bemfica et~al(2019{\natexlab{a}})Bemfica, Disconzi, and
  Noronha}]{Bemfica-Disconzi-Noronha-2019-2}
Bemfica FS, Disconzi MM, Noronha J (2019{\natexlab{a}}) {Causality of the
  Einstein-Israel-Stewart Theory with Bulk Viscosity}. Phys Rev Lett
  122(22):221602. \doi{10.1103/PhysRevLett.122.221602},
  {\href{https://arxiv.org/abs/1901.06701}{{arXiv:1901.06701}}} {[gr-qc]}

\bibitem[{Bemfica et~al(2019{\natexlab{b}})Bemfica, Disconzi, and
  Noronha}]{Bemfica-Disconzi-Noronha-2019-1}
Bemfica FS, Disconzi MM, Noronha J (2019{\natexlab{b}}) Nonlinear causality of
  general first-order relativistic viscous hydrodynamics. Phys Rev D
  100(10):104020, 13. \doi{10.1103/physrevd.100.104020}

\bibitem[{Bemfica et~al(2021{\natexlab{a}})Bemfica, Disconzi, and
  Graber}]{Bemfica-Disconzi-Graber-2021}
Bemfica FS, Disconzi MM, Graber PJ (2021{\natexlab{a}}) Local well-posedness in
  {S}obolev spaces for first-order barotropic causal relativistic viscous
  hydrodynamics. Commun Pure Appl Anal 20(9):2885--. \doi{10.3934/cpaa.2021068}

\bibitem[{Bemfica et~al(2021{\natexlab{b}})Bemfica, Disconzi, Hoang, Noronha,
  and Radosz}]{Bemfica-Disconzi-Hoang-Noronha-Radosz-2021}
Bemfica FS, Disconzi MM, Hoang V, et~al (2021{\natexlab{b}}) Nonlinear
  constraints on relativistic fluids far from equilibrium. Phys Rev Lett
  126(22):222301. \doi{10.1103/physrevlett.126.222301}

\bibitem[{Bemfica et~al(2021{\natexlab{c}})Bemfica, Disconzi, Rodriguez, and
  Shao}]{Bemfica-Disconzi-Rodriguez-Shao-2021}
Bemfica FS, Disconzi MM, Rodriguez C, et~al (2021{\natexlab{c}}) Local
  existence and uniqueness in {S}obolev spaces for first-order conformal causal
  relativistic viscous hydrodynamics. Commun Pure Appl Anal 20(6):2279--2290.
  \doi{10.3934/cpaa.2021069}

\bibitem[{Bemfica et~al(2022)Bemfica, Disconzi, and Noronha}]{Bemfica:2020zjp}
Bemfica FS, Disconzi MM, Noronha J (2022) {First-Order General-Relativistic
  Viscous Fluid Dynamics}. Phys Rev X 12(2):021044.
  \doi{10.1103/PhysRevX.12.021044},
  {\href{https://arxiv.org/abs/2009.11388}{{arXiv:2009.11388}}} {[gr-qc]}

\bibitem[{Bemfica et~al(2023{\natexlab{a}})Bemfica, Disconzi, Noronha, and
  Scherrer}]{Bemfica:2022dnk}
Bemfica FS, Disconzi MM, Noronha J, et~al (2023{\natexlab{a}}) {Cosmological
  consequences of first-order general-relativistic viscous fluid dynamics}.
  Phys Rev D 107(2):023512. \doi{10.1103/PhysRevD.107.023512},
  {\href{https://arxiv.org/abs/2210.13372}{{arXiv:2210.13372}}} {[gr-qc]}

\bibitem[{Bemfica et~al(2023{\natexlab{b}})Bemfica, Martinez, and
  Shokri}]{Bemfica:2023res}
Bemfica FS, Martinez M, Shokri M (2023{\natexlab{b}}) {Causality and stability
  in first-order conformal anisotropic hydrodynamics}. Phys Rev D
  108(5):056004. \doi{10.1103/PhysRevD.108.056004},
  {\href{https://arxiv.org/abs/2304.14563}{{arXiv:2304.14563}}} {[hep-th]}

\bibitem[{Bernuzzi(2020)}]{Bernuzzi:2020tgt}
Bernuzzi S (2020) {Neutron Star Merger Remnants}. Gen Rel Grav 52(11):108.
  \doi{10.1007/s10714-020-02752-5},
  {\href{https://arxiv.org/abs/2004.06419}{{arXiv:2004.06419}}} {[astro-ph.HE]}

\bibitem[{Beyer et~al(2023)Beyer, Marshall, and
  Oliynyk}]{Beyer-Marshall-Oliynyk-2023}
Beyer F, Marshall E, Oliynyk TA (2023) Future instability of {FLRW} fluid
  solutions for linear equations of state {$p=K\rho$} with {$1/3<K<1$}. Phys
  Rev D 107(10):104030. \doi{10.1103/physrevd.107.104030}

\bibitem[{Bili\'c(1999)}]{Bilic-1999}
Bili\'c N (1999) Relativistic acoustic geometry. Class Quantum Grav
  16(12):3953--3964. \doi{10.1088/0264-9381/16/12/312}

\bibitem[{Birkhoff(1923{\natexlab{a}})}]{Birkhoff-Book-1923}
Birkhoff GD (1923{\natexlab{a}}) Relativity and Modern Physics. Harvard
  University Press

\bibitem[{Birkhoff(1923{\natexlab{b}})}]{Birkhoff-Book-reprint-2013}
Birkhoff GD (1923{\natexlab{b}}) Relativity and Modern Physics - Reprint 2013
  Edition. Harvard University Press

\bibitem[{Biswas et~al(2023)Biswas, Mitra, and Roy}]{Biswas:2022hiv}
Biswas R, Mitra S, Roy V (2023) {An expedition to the islands of stability in
  the first-order causal hydrodynamics}. Phys Lett B 838:137725.
  \doi{10.1016/j.physletb.2023.137725},
  {\href{https://arxiv.org/abs/2211.11358}{{arXiv:2211.11358}}} {[nucl-th]}

\bibitem[{Bjorken(1983)}]{Bjorken:1982qr}
Bjorken J (1983) {Highly Relativistic Nucleus-Nucleus Collisions: The Central
  Rapidity Region}. Phys Rev D 27:140--151. \doi{10.1103/PhysRevD.27.140}

\bibitem[{Bouras et~al(2009{\natexlab{a}})Bouras, Molnar, Niemi, Xu, El,
  Fochler, Greiner, and Rischke}]{Bouras:2009vs}
Bouras I, Molnar E, Niemi H, et~al (2009{\natexlab{a}}) {Development of
  relativistic shock waves in viscous gluon matter}. Nucl Phys A
  830:741C--744C. \doi{10.1016/j.nuclphysa.2009.10.121},
  {\href{https://arxiv.org/abs/0907.4519}{{arXiv:0907.4519}}} {[hep-ph]}

\bibitem[{Bouras et~al(2009{\natexlab{b}})Bouras, Molnar, Niemi, Xu, El,
  Fochler, Greiner, and Rischke}]{Bouras:2009nn}
Bouras I, Molnar E, Niemi H, et~al (2009{\natexlab{b}}) {Relativistic shock
  waves in viscous gluon matter}. Phys Rev Lett 103:032301.
  \doi{10.1103/PhysRevLett.103.032301},
  {\href{https://arxiv.org/abs/0902.1927}{{arXiv:0902.1927}}} {[hep-ph]}

\bibitem[{Bouras et~al(2010{\natexlab{a}})Bouras, Molnar, Niemi, Xu, El,
  Fochler, Greiner, and Rischke}]{Bouras:2010hm}
Bouras I, Molnar E, Niemi H, et~al (2010{\natexlab{a}}) {Investigation of shock
  waves in the relativistic Riemann problem: A Comparison of viscous fluid
  dynamics to kinetic theory}. Phys Rev C 82:024910.
  \doi{10.1103/PhysRevC.82.024910},
  {\href{https://arxiv.org/abs/1006.0387}{{arXiv:1006.0387}}} {[hep-ph]}

\bibitem[{Bouras et~al(2010{\natexlab{b}})}]{Bouras:2010zz}
Bouras I, et~al (2010{\natexlab{b}}) {Relativistic shock waves and mach cones
  in viscous gluon matter}. PoS BORMIO2010:010. \doi{10.22323/1.103.0010}

\bibitem[{Bourgain and Li(2015)}]{Bourgain-Li-2015}
Bourgain J, Li D (2015) Strong ill-posedness of the incompressible {E}uler
  equation in borderline {S}obolev spaces. Invent Math 201(1):97--157.
  \doi{10.1007/s00222-014-0548-6}

\bibitem[{Brauer and Karp(2011)}]{Brauer-Karp-2011}
Brauer U, Karp L (2011) Well-posedness of the {E}instein-{E}uler system in
  asymptotically flat spacetimes: the constraint equations. J Diff Eq
  251(6):1428--1446. \doi{10.1016/j.jde.2011.05.037}

\bibitem[{Brauer and Karp(2014)}]{Brauer-Karp-2014}
Brauer U, Karp L (2014) Local existence of solutions of self gravitating
  relativistic perfect fluids. Commun Math Phys 325(1):105--141.
  \doi{10.1007/s00220-013-1854-3}

\bibitem[{Bressan(2000)}]{Bressan-Book-2000}
Bressan A (2000) Hyperbolic systems of conservation laws, Oxford Lecture Series
  in Mathematics and its Applications, vol~20. Oxford University Press, Oxford,
  the one-dimensional Cauchy problem

\bibitem[{Brevik and Gr\o{}n(2014)}]{Brevik:2014cxa}
Brevik I, Gr\o{}n O (2014) {Relativistic Viscous Universe Models}. In: Travena
  A, Soren B (eds) {Recent Advances in Cosmology}. Nova Science, p 97--127,
  \eprint{1409.8561}

\bibitem[{de~Brito and Denicol(2023)}]{deBrito:2023tgb}
de~Brito CVP, Denicol GS (2023) {Third-order relativistic dissipative fluid
  dynamics from the method of moments}. Phys Rev D 108(9):096020.
  \doi{10.1103/PhysRevD.108.096020},
  {\href{https://arxiv.org/abs/2302.09097}{{arXiv:2302.09097}}} {[nucl-th]}

\bibitem[{Buchert and R\"as\"anen(2012)}]{Buchert:2011sx}
Buchert T, R\"as\"anen S (2012) {Backreaction in late-time cosmology}. Ann Rev
  Nucl Part Sci 62:57--79. \doi{10.1146/annurev.nucl.012809.104435},
  {\href{https://arxiv.org/abs/1112.5335}{{arXiv:1112.5335}}} {[astro-ph.CO]}

\bibitem[{Buckmaster et~al(2022{\natexlab{a}})Buckmaster, Drivas, Shkoller, and
  Vicol}]{Buckmaster-Drivas-Shkoller-Vicol-2022}
Buckmaster T, Drivas TD, Shkoller S, et~al (2022{\natexlab{a}}) Simultaneous
  development of shocks and cusps for 2{D} {E}uler with azimuthal symmetry from
  smooth data. Ann PDE 8(2):26. \doi{10.1007/s40818-022-00141-6}

\bibitem[{Buckmaster et~al(2022{\natexlab{b}})Buckmaster, Shkoller, and
  Vicol}]{Buckmaster-Shkoller-Vicol-2022}
Buckmaster T, Shkoller S, Vicol V (2022{\natexlab{b}}) Formation of shocks for
  2{D} isentropic compressible {E}uler. Commun Pure Appl Math 75(9):2069--2120

\bibitem[{Buckmaster et~al(2023{\natexlab{a}})Buckmaster, Shkoller, and
  Vicol}]{Buckmaster-Shkoller-Vicol-2023-1}
Buckmaster T, Shkoller S, Vicol V (2023{\natexlab{a}}) Formation of point
  shocks for 3{D} compressible {E}uler. Commun Pure Appl Math 76(9):2073--2191

\bibitem[{Buckmaster et~al(2023{\natexlab{b}})Buckmaster, Shkoller, and
  Vicol}]{Buckmaster-Shkoller-Vicol-2023-2}
Buckmaster T, Shkoller S, Vicol V (2023{\natexlab{b}}) Shock formation and
  vorticity creation for 3{D} {E}uler. Commun Pure Appl Math 76(9):1965--2072

\bibitem[{Burtscher and LeFloch(2014)}]{Burtscher-LeFloch-2014}
Burtscher AY, LeFloch PG (2014) The formation of trapped surfaces in
  spherically-symmetric {E}instein-{E}uler spacetimes with bounded variation. J
  Math Pures Appl (9) 102(6):1164--1217. \doi{10.1016/j.matpur.2014.10.003}

\bibitem[{Camelio et~al(2023{\natexlab{a}})Camelio, Gavassino, Antonelli,
  Bernuzzi, and Haskell}]{Camelio:2022ljs}
Camelio G, Gavassino L, Antonelli M, et~al (2023{\natexlab{a}}) {Simulating
  bulk viscosity in neutron stars. I. Formalism}. Phys Rev D 107(10):103031.
  \doi{10.1103/PhysRevD.107.103031},
  {\href{https://arxiv.org/abs/2204.11809}{{arXiv:2204.11809}}} {[gr-qc]}

\bibitem[{Camelio et~al(2023{\natexlab{b}})Camelio, Gavassino, Antonelli,
  Bernuzzi, and Haskell}]{Camelio:2022fds}
Camelio G, Gavassino L, Antonelli M, et~al (2023{\natexlab{b}}) {Simulating
  bulk viscosity in neutron stars. II. Evolution in spherical symmetry}. Phys
  Rev D 107(10):103032. \doi{10.1103/PhysRevD.107.103032},
  {\href{https://arxiv.org/abs/2204.11810}{{arXiv:2204.11810}}} {[gr-qc]}

\bibitem[{Celora et~al(2021)Celora, Andersson, and Comer}]{Celora:2020pzs}
Celora T, Andersson N, Comer GL (2021) {Linearizing a Non-linear Formulation
  for General Relativistic Dissipative Fluids}. Class Quantum Grav
  38(6):065009. \doi{10.1088/1361-6382/abd7c1},
  {\href{https://arxiv.org/abs/2008.00945}{{arXiv:2008.00945}}} {[gr-qc]}

\bibitem[{Chabanov and Rezzolla(2023)}]{Chabanov:2023blf}
Chabanov M, Rezzolla L (2023) {Impact of bulk viscosity on the post-merger
  gravitational-wave signal from merging neutron stars}. arXiv e-prints
  {\href{https://arxiv.org/abs/2307.10464}{{arXiv:2307.10464}}} {[gr-qc]}

\bibitem[{Chan and Czubak(2022)}]{Chan-Czubak-2022-arxiv}
Chan CH, Czubak M (2022) The gauss formula for the {L}aplacian on
  hypersurfaces. arXiv e-prints
  {\href{https://arxiv.org/abs/2212.11928}{{arXiv:2212.11928}}} {[math.DG]}

\bibitem[{Chan et~al(2017)Chan, Czubak, and
  Disconzi}]{Chan-Czubak-Disconzi-2017}
Chan CH, Czubak M, Disconzi MM (2017) The formulation of the {N}avier-{S}tokes
  equations on {R}iemannian manifolds. J Geom Phys 121:335--346.
  \doi{10.1016/j.geomphys.2017.07.015}

\bibitem[{Chan et~al(2023)Chan, Czubak, and Yoneda}]{Chan-Czubak-Yoneda-2023}
Chan CH, Czubak M, Yoneda T (2023) The restriction problem on the ellipsoid. J
  Math Anal Appl 527(1):127358. \doi{10.1016/j.jmaa.2023.127358}

\bibitem[{Chen et~al(2024)Chen, Cialdea, Shkoller, and
  Vicol}]{Chen-Cialdea-Shkoller-Vicol-2024-arxiv}
Chen J, Cialdea G, Shkoller S, et~al (2024) Vorticity blowup in 2d compressible
  euler equations. arXiv:240706455
  {\href{https://arxiv.org/abs/2407.06455}{{arXiv:2407.06455}}} {[math.AP]}

\bibitem[{Chiu and Shen(2021)}]{Chiu:2021muk}
Chiu C, Shen C (2021) {Exploring theoretical uncertainties in the hydrodynamic
  description of relativistic heavy-ion collisions}. Phys Rev C 103(6):064901.
  \doi{10.1103/PhysRevC.103.064901},
  {\href{https://arxiv.org/abs/2103.09848}{{arXiv:2103.09848}}} {[nucl-th]}

\bibitem[{Choquet-Bruhat(1966)}]{Choquet-Bruhat-1966}
Choquet-Bruhat Y (1966) Diagonalisation des syst\`emes quasi-lin\'{e}aires et
  hyperbolicit\'{e} non stricte. J Math Pures Appl (9) 45:371--386

\bibitem[{Choquet-Bruhat(2009)}]{Choquet-Bruhat-Book-2009}
Choquet-Bruhat Y (2009) General relativity and the {E}instein equations. Oxford
  Mathematical Monographs, Oxford University Press, Oxford

\bibitem[{Choquet-Bruhat et~al(1977)Choquet-Bruhat, DeWitt-Morette, and
  Dillard-Bleick}]{Choquet-Bruhat-DeWitt-Morette-Dillard-Bleick-Book-1977}
Choquet-Bruhat Y, DeWitt-Morette C, Dillard-Bleick M (1977) Analysis, manifolds
  and physics. North-Holland Publishing Co., Amsterdam-New York-Oxford

\bibitem[{Christodoulou(2007{\natexlab{a}})}]{Christodoulou-2007}
Christodoulou D (2007{\natexlab{a}}) The {E}uler equations of compressible
  fluid flow. Bull Amer Math Soc (NS) 44(4):581--602.
  \doi{10.1090/S0273-0979-07-01181-0}

\bibitem[{Christodoulou(2007{\natexlab{b}})}]{Christodoulou-Book-2007}
Christodoulou D (2007{\natexlab{b}}) The formation of shocks in 3-dimensional
  fluids. EMS Monographs in Mathematics, European Mathematical Society (EMS),
  Z\"{u}rich, \doi{10.4171/031}

\bibitem[{Christodoulou(2019)}]{Christodoulou-Book-2019}
Christodoulou D (2019) The shock development problem. EMS Monographs in
  Mathematics, European Mathematical Society (EMS), Z\"{u}rich,
  \doi{10.4171/192}

\bibitem[{Christodoulou and
  Klainerman(1993)}]{Christodoulou-Klainerman-Book-1993}
Christodoulou D, Klainerman S (1993) The global nonlinear stability of the
  {M}inkowski space, Princeton Mathematical Series, vol~41. Princeton
  University Press, Princeton, NJ

\bibitem[{Christodoulou and Miao(2014)}]{Christodoulou-Miao-Book-2014}
Christodoulou D, Miao S (2014) Compressible flow and {E}uler's equations,
  Surveys of Modern Mathematics, vol~9. International Press, Somerville, MA;
  Higher Education Press, Beijing

\bibitem[{Ciambelli and Lehner(2023)}]{Ciambelli:2023mvj}
Ciambelli L, Lehner L (2023) {Fluid-gravity correspondence and causal
  first-order relativistic viscous hydrodynamics}. Phys Rev D 108(12):126019.
  \doi{10.1103/PhysRevD.108.126019},
  {\href{https://arxiv.org/abs/2310.15427}{{arXiv:2310.15427}}} {[hep-th]}

\bibitem[{Courant and Hilbert(1989)}]{Courant-Hilbert-Book-1989-2}
Courant R, Hilbert D (1989) Methods of mathematical physics. {V}ol. {II}:
  Partial differential equations. Wiley Classics Library, Wiley, New York,
  \doi{10.1002/9783527617234}, reprint of the 1962 original

\bibitem[{Coutand and Shkoller(2011)}]{Coutand-Shkoller-2011}
Coutand D, Shkoller S (2011) Well-posedness in smooth function spaces for
  moving-boundary 1-{D} compressible {E}uler equations in physical vacuum.
  Commun Pure Appl Math 64(3):328--366. \doi{10.1002/cpa.20344}

\bibitem[{Coutand and Shkoller(2012)}]{Coutand-Shkoller-2012}
Coutand D, Shkoller S (2012) Well-posedness in smooth function spaces for the
  moving-boundary three-dimensional compressible {E}uler equations in physical
  vacuum. Arch Ration Mech Anal 206(2):515--616.
  \doi{10.1007/s00205-012-0536-1}

\bibitem[{Czubak(2024)}]{Czubak-2014}
Czubak M (2024) In search of the viscosity operator on {R}iemannian manifolds.
  Notices Amer Math Soc 71(1):8--16

\bibitem[{Czubak and Disconzi(2016)}]{Czubak-Disconzi-2016}
Czubak M, Disconzi MM (2016) On the well-posedness of relativistic viscous
  fluids with non-zero vorticity. J Math Phys 57(4):042501, 21.
  \doi{10.1063/1.4944910}

\bibitem[{Dafermos(2005)}]{Dafermos-Book-2005}
Dafermos CM (2005) Hyperbolic conservation laws in continuum physics,
  Grundlehren der Mathematischen Wissenschaften [Fundamental Principles of
  Mathematical Sciences], vol 325, 2nd edn. Springer, Berlin,
  \doi{10.1007/3-540-29089-3}

\bibitem[{Dafermos and Rodnianski(2010)}]{Dafermos-Rodnianski-2010}
Dafermos M, Rodnianski I (2010) A new physical-space approach to decay for the
  wave equation with applications to black hole spacetimes. In: X{VI}th
  {I}nternational {C}ongress on {M}athematical {P}hysics. World Sci. Publ.,
  Hackensack, NJ, p 421--432, \doi{10.1142/9789814304634\_0032}

\bibitem[{Das et~al(2020{\natexlab{a}})Das, Florkowski, Noronha, and
  Ryblewski}]{Das:2020fnr}
Das A, Florkowski W, Noronha J, et~al (2020{\natexlab{a}}) {Equivalence between
  first-order causal and stable hydrodynamics and Israel-Stewart theory for
  boost-invariant systems with a constant relaxation time}. Phys Lett B
  806:135525. \doi{10.1016/j.physletb.2020.135525},
  {\href{https://arxiv.org/abs/2001.07983}{{arXiv:2001.07983}}} {[nucl-th]}

\bibitem[{Das et~al(2020{\natexlab{b}})Das, Florkowski, and
  Ryblewski}]{Das:2020gtq}
Das A, Florkowski W, Ryblewski R (2020{\natexlab{b}}) {Correspondence between
  Israel-Stewart and first-order casual and stable hydrodynamics for the
  boost-invariant massive case with zero baryon density}. Phys Rev D
  102(3):031501. \doi{10.1103/PhysRevD.102.031501},
  {\href{https://arxiv.org/abs/2006.00536}{{arXiv:2006.00536}}} {[nucl-th]}

\bibitem[{{de Groot} et~al(1980){de Groot}, {van Leeuwen}, and {van
  Weert}}]{DeGroot:1980dk}
{de Groot} S, {van Leeuwen} WA, {van Weert} CG (1980) {Relativistic Kinetic
  Theory. Principles and Applications}. North-Holland, Amsterdam, New York,
  Oxford

\bibitem[{Denicol et~al(2008)Denicol, Kodama, Koide, and Mota}]{Denicol:2008ha}
Denicol G, Kodama T, Koide T, et~al (2008) {Stability and Causality in
  relativistic dissipative hydrodynamics}. J Phys G 35:115102.
  \doi{10.1088/0954-3899/35/11/115102},
  {\href{https://arxiv.org/abs/0807.3120}{{arXiv:0807.3120}}} {[hep-ph]}

\bibitem[{Denicol et~al(2012)Denicol, Niemi, Molnar, and
  Rischke}]{Denicol:2012cn}
Denicol G, Niemi H, Molnar E, et~al (2012) {Derivation of transient
  relativistic fluid dynamics from the Boltzmann equation}. Phys Rev D
  85:114047. \doi{10.1103/PhysRevD.85.114047}, [Erratum: Phys.Rev.D 91, 039902
  (2015)], {\href{https://arxiv.org/abs/1202.4551}{{arXiv:1202.4551}}}
  {[nucl-th]}

\bibitem[{Denicol and Rischke(2021)}]{Denicol-Rischke-Book-2021}
Denicol GS, Rischke DH (2021) Microscopic foundations of relativistic fluid
  dynamics, Lecture Notes in Physics, vol 990. Springer, Cham,
  \doi{10.1007/978-3-030-82077-0}

\bibitem[{Denicol et~al(2018)Denicol, Gale, Jeon, Monnai, Schenke, and
  Shen}]{Denicol:2018wdp}
Denicol GS, Gale C, Jeon S, et~al (2018) {Net baryon diffusion in fluid dynamic
  simulations of relativistic heavy-ion collisions}. Phys Rev C 98(3):034916.
  \doi{10.1103/PhysRevC.98.034916},
  {\href{https://arxiv.org/abs/1804.10557}{{arXiv:1804.10557}}} {[nucl-th]}

\bibitem[{Dexheimer et~al(2021)Dexheimer, Noronha, Noronha-Hostler, Ratti, and
  Yunes}]{Dexheimer:2020zzs}
Dexheimer V, Noronha J, Noronha-Hostler J, et~al (2021) {Future physics
  perspectives on the equation of state from heavy ion collisions to neutron
  stars}. J Phys G 48(7):073001. \doi{10.1088/1361-6471/abe104},
  {\href{https://arxiv.org/abs/2010.08834}{{arXiv:2010.08834}}} {[nucl-th]}

\bibitem[{Diles et~al(2023)Diles, Miranda, Mamani, Echemendia, and
  Zanchin}]{Diles:2023tau}
Diles SM, Miranda AS, Mamani LAH, et~al (2023) {Third-order relativistic fluid
  dynamics at finite density in a general hydrodynamic frame}. arXiv e-prints
  {\href{https://arxiv.org/abs/2311.01232}{{arXiv:2311.01232}}} {[hep-th]}

\bibitem[{Disconzi(2019{\natexlab{a}})}]{Disconzi-USCFluidNotes}
Disconzi M (2019{\natexlab{a}}) Recent developments in the theory of
  relativistic fluids.
  \url{http://www.disconzi.net/Notes_links_media/USCSummer2019/USC_notes.pdf},
  accessed 2023

\bibitem[{Disconzi(2014)}]{Disconzi-2014-1}
Disconzi MM (2014) On the well-posedness of relativistic viscous fluids.
  Nonlinearity 27(8):1915--1935. \doi{10.1088/0951-7715/27/8/1915}

\bibitem[{Disconzi(2019{\natexlab{b}})}]{Disconzi-2019}
Disconzi MM (2019{\natexlab{b}}) On the existence of solutions and causality
  for relativistic viscous conformal fluids. Commun Pure Appl Anal
  18(4):1567--1599. \doi{10.3934/cpaa.2019075}

\bibitem[{Disconzi and Kukavica(2019)}]{Disconzi-Kukavica-2019-2}
Disconzi MM, Kukavica I (2019) A priori estimates for the 3{D} compressible
  free-boundary {E}uler equations with surface tension in the case of a liquid.
  Evol Equ Control Theory 8(3):503--542. \doi{10.3934/eect.2019025}

\bibitem[{Disconzi and Luo(2020)}]{Disconzi-Luo-2020}
Disconzi MM, Luo C (2020) On the {I}ncompressible {L}imit for the
  {C}ompressible {F}ree-{B}oundary {E}uler {E}quations with {S}urface {T}ension
  in the {C}ase of a {L}iquid. Arch Ration Mech Anal 237(2):829--897.
  \doi{10.1007/s00205-020-01516-4}

\bibitem[{Disconzi and Shao(2023)}]{Disconzi-Shao-2023-arxiv}
Disconzi MM, Shao Y (2023) Strongly hyperbolic quasilinear systems revisited,
  with applications to relativistic fluid dynamics. arXiv e-prints
  {\href{https://arxiv.org/abs/2308.09851}{{arXiv:2308.09851}}} {[math.AP]}

\bibitem[{Disconzi and Speck(2019)}]{Disconzi-Speck-2019}
Disconzi MM, Speck J (2019) The relativistic {E}uler equations: remarkable null
  structures and regularity properties. Ann Henri Poincar\'{e}
  20(7):2173--2270. \doi{10.1007/s00023-019-00801-7}

\bibitem[{Disconzi et~al(2015)Disconzi, Kephart, and
  Scherrer}]{Disconzi-Kephart-Scherrer-2015}
Disconzi MM, Kephart TW, Scherrer RJ (2015) New approach to cosmological bulk
  viscosity. Phys Rev D 91(4):043532, 6. \doi{10.1103/PhysRevD.91.043532}

\bibitem[{Disconzi et~al(2017)Disconzi, Kephart, and
  Scherrer}]{Disconzi-Kephart-Scherrer-2017}
Disconzi MM, Kephart TW, Scherrer RJ (2017) On a viable first-order formulation
  of relativistic viscous fluids and its applications to cosmology. Internat J
  Modern Phys D 26(13):1750146, 52. \doi{10.1142/S0218271817501462}

\bibitem[{Disconzi et~al(2019)Disconzi, Luo, Mazzone, and
  Speck}]{Disconzi-Luo-Mazzone-Speck-arxiv-v1}
Disconzi MM, Luo C, Mazzone G, et~al (2019) Rough sound waves in 3{D}
  compressible {E}uler flow with vorticity. arXiv e-prints
  {\href{https://arxiv.org/abs/1909.02550v1}{{arXiv:1909.02550v1}}} {[math.AP]}

\bibitem[{Disconzi et~al(2022{\natexlab{a}})Disconzi, Ifrim, and
  Tataru}]{Disconzi-Ifrim-Tataru-2022}
Disconzi MM, Ifrim M, Tataru D (2022{\natexlab{a}}) The relativistic {E}uler
  equations with a physical vacuum boundary: {H}adamard local well-posedness,
  rough solutions, and continuation criterion. Arch Ration Mech Anal
  245(1):127--182. \doi{10.1007/s00205-022-01783-3}

\bibitem[{Disconzi et~al(2022{\natexlab{b}})Disconzi, Luo, Mazzone, and
  Speck}]{Disconzi-Luo-Mazzone-Speck-2022}
Disconzi MM, Luo C, Mazzone G, et~al (2022{\natexlab{b}}) Rough sound waves in
  3{D} compressible {E}uler flow with vorticity. Selecta Math (NS) 28(2):Paper
  No. 41, 153. \doi{10.1007/s00029-021-00733-3},
  \urlprefix\url{https://doi.org/10.1007/s00029-021-00733-3}

\bibitem[{Disconzi et~al(2023)Disconzi, Hoang, and
  Radosz}]{Disconzi-Hoang-Radosz-2023}
Disconzi MM, Hoang V, Radosz M (2023) Breakdown of smooth solutions to the
  {M}\"{u}ller-{I}srael-{S}tewart equations of relativistic viscous fluids.
  Lett Math Phys 113(3):55. \doi{10.1007/s11005-023-01677-9}

\bibitem[{Disconzi et~al(2024)Disconzi, Isenberg, and
  Maxwell}]{Disconzi-Isenberg-Maxwell-2024-arxiv}
Disconzi MM, Isenberg J, Maxwell D (2024) Initial data for first-order causal
  viscous conformal fluids in general relativity. arXiv e-prints
  {\href{https://arxiv.org/abs/2406.17945}{{arXiv:2406.17945}}} {[gr-qc]}

\bibitem[{Dore et~al(2020)Dore, Noronha-Hostler, and McLaughlin}]{Dore:2020jye}
Dore T, Noronha-Hostler J, McLaughlin E (2020) {Far-from-equilibrium search for
  the QCD critical point}. Phys Rev D 102(7):074017.
  \doi{10.1103/PhysRevD.102.074017},
  {\href{https://arxiv.org/abs/2007.15083}{{arXiv:2007.15083}}} {[nucl-th]}

\bibitem[{Dubovsky et~al(2012)Dubovsky, Hui, Nicolis, and
  Son}]{Dubovsky:2011sj}
Dubovsky S, Hui L, Nicolis A, et~al (2012) {Effective field theory for
  hydrodynamics: thermodynamics, and the derivative expansion}. Phys Rev D
  85:085029. \doi{10.1103/PhysRevD.85.085029},
  {\href{https://arxiv.org/abs/1107.0731}{{arXiv:1107.0731}}} {[hep-th]}

\bibitem[{Duez and Zlochower(2019)}]{Duez:2018jaf}
Duez MD, Zlochower Y (2019) {Numerical Relativity of Compact Binaries in the
  21st Century}. Rept Prog Phys 82(1):016902. \doi{10.1088/1361-6633/aadb16},
  {\href{https://arxiv.org/abs/1808.06011}{{arXiv:1808.06011}}} {[gr-qc]}

\bibitem[{Eckart(1940)}]{Eckart:1940te}
Eckart C (1940) {The Thermodynamics of irreversible processes. 3. Relativistic
  theory of the simple fluid}. Phys Rev 58:919--924.
  \doi{10.1103/PhysRev.58.919}

\bibitem[{Elfner and Bernhard(2013)}]{Madai}
Elfner H, Bernhard J (2013) Models and {D}ata {A}nalysis {I}nitiative.
  \urlprefix\url{http://madai.phy.duke.edu}, accessed 2023

\bibitem[{Ellis and Gibbons(2014)}]{Ellis-Gibbons-2014}
Ellis GFR, Gibbons GW (2014) Discrete {N}ewtonian cosmology. Class Quantum Grav
  31(2):025003, 24. \doi{10.1088/0264-9381/31/2/025003}

\bibitem[{Elskens and Kiessling(2020)}]{Elskens-Kiessling-2020}
Elskens Y, Kiessling MKH (2020) Microscopic {F}oundations of {K}inetic {P}lasma
  {T}heory: {T}he {R}elativistic {V}lasov-{M}axwell {E}quations and {T}heir
  {R}adiation-{R}eaction-{C}orrected {G}eneralization. J Stat Phys
  180(1-6):749--772. \doi{10.1007/s10955-020-02519-x}

\bibitem[{Eperon et~al(2019)Eperon, Reall, and
  Sbierski}]{Eperon-Reall-Sbierski-2019}
Eperon FC, Reall HS, Sbierski JJ (2019) Predictability of subluminal and
  superluminal wave equations. Commun Math Phys 368(2):585--626.
  \doi{10.1007/s00220-019-03428-1}

\bibitem[{Erschfeld et~al(2020)Erschfeld, Floerchinger, and
  Rupprecht}]{Erschfeld:2020blf}
Erschfeld A, Floerchinger S, Rupprecht M (2020) {General relativistic non-ideal
  fluid equations for dark matter from a truncated cumulant expansion}. Phys
  Rev D 102:063520. \doi{10.1103/PhysRevD.102.063520},
  {\href{https://arxiv.org/abs/2005.12923}{{arXiv:2005.12923}}} {[hep-ph]}

\bibitem[{Evans(2010)}]{Evans-Book-2010}
Evans LC (2010) Partial differential equations, Graduate Studies in
  Mathematics, vol~19, 2nd edn. American Mathematical Society, Providence, RI,
  \doi{10.1090/gsm/019}

\bibitem[{Eyink and Drivas(2018)}]{Eyink:2017xtw}
Eyink GL, Drivas TD (2018) {Cascades and Dissipative Anomalies in Compressible
  Fluid Turbulence}. Phys Rev X 8(1):011022. \doi{10.1103/PhysRevX.8.011022},
  {\href{https://arxiv.org/abs/1704.03532}{{arXiv:1704.03532}}}
  {[physics.flu-dyn]}

\bibitem[{Fajman et~al(2021)Fajman, Oliynyk, and
  Wyatt}]{Fajman-Oliynyk-Wyatt-2021}
Fajman D, Oliynyk TA, Wyatt Z (2021) Stabilizing relativistic fluids on
  spacetimes with non-accelerated expansion. Commun Math Phys 383(1):401--426.
  \doi{10.1007/s00220-020-03924-9}

\bibitem[{Figueras et~al(2024)Figueras, Held, and
  Kov{\'a}cs}]{Figueras-Aaron-Kovacs-2024-arxiv}
Figueras P, Held A, Kov{\'a}cs {\'A}D (2024) Well-posed initial value
  formulation of general effective field theories of gravity. arXiv:240708775
  [gr-qc] {\href{https://arxiv.org/abs/2407.08775}{{arXiv:2407.08775}}}
  {[math.gr-qc]}

\bibitem[{Fischer and Marsden(1972)}]{Fischer-Marsden-1972}
Fischer AE, Marsden JE (1972) The {E}instein evolution equations as a
  first-order quasi-linear symmetric hyperbolic system. {I}. Commun Math Phys
  28:1--38

\bibitem[{Floerchinger and Grossi(2018)}]{Floerchinger:2017cii}
Floerchinger S, Grossi E (2018) {Causality of fluid dynamics for high-energy
  nuclear collisions}. JHEP 08:186. \doi{10.1007/JHEP08(2018)186},
  {\href{https://arxiv.org/abs/1711.06687}{{arXiv:1711.06687}}} {[nucl-th]}

\bibitem[{Fogaca et~al(2012)Fogaca, Navarra, and Filho}]{Fogaca:2012gb}
Fogaca D, Navarra F, Filho L (2012) {Nonlinear waves in strongly interacting
  relativistic fluids}. In: Solitons: Interactions, Theoretical and
  Experimental Challenges and Perspectives. Nova Science, p 191--256,
  \eprint{1212.6932}

\bibitem[{Foucart et~al(2016)Foucart, Haas, Duez, O'Connor, Ott, Roberts,
  Kidder, Lippuner, Pfeiffer, and Scheel}]{Foucart:2015gaa}
Foucart F, Haas R, Duez MD, et~al (2016) {Low mass binary neutron star mergers
  : gravitational waves and neutrino emission}. Phys Rev D 93(4):044019.
  \doi{10.1103/PhysRevD.93.044019},
  {\href{https://arxiv.org/abs/1510.06398}{{arXiv:1510.06398}}} {[astro-ph.HE]}

\bibitem[{Four\`es-Bruhat(1958)}]{Choquet-Bruhat-1958}
Four\`es-Bruhat Y (1958) Th\'{e}or\`emes d'existence en m\'{e}canique des
  fluides relativistes. Bull Soc Math France 86:155--175.
  \urlprefix\url{http://www.numdam.org/item?id=BSMF_1958__86__155_0}

\bibitem[{Fournodavlos et~al(2023)Fournodavlos, Rodnianski, and
  Speck}]{Fournodavlos-Rodnianski-Speck-2023}
Fournodavlos G, Rodnianski I, Speck J (2023) Stable big bang formation for
  {E}instein's equations: the complete sub-critical regime. J Amer Math Soc
  36(3):827--916. \doi{10.1090/jams/1015}

\bibitem[{Freist\"{u}hler(2020)}]{Freistuhler-2020}
Freist\"{u}hler H (2020) A class of {H}adamard well-posed five-field theories
  of dissipative relativistic fluid dynamics. J Math Phys 61(3):033101, 17.
  \doi{10.1063/1.5135704}

\bibitem[{Freist\"{u}hler(2021)}]{Freistuhler-2021}
Freist\"{u}hler H (2021) Nonexistence and existence of shock profiles in the
  {B}emfica-{D}isconzi-{N}oronha model. Phys Rev D 103(12):124045.
  \doi{10.1103/physrevd.103.124045}

\bibitem[{Freist\"{u}hler and Sroczinski(2021)}]{Freistuhler-Sroczinski-2021}
Freist\"{u}hler H, Sroczinski M (2021) A class of uniformly dissipative
  symmetric hyperbolic-hyperbolic systems. J Diff Eq 288:40--61.
  \doi{10.1016/j.jde.2021.04.005}

\bibitem[{Freist\"{u}hler and Temple(2014)}]{Freistuhler-Temple-2014}
Freist\"{u}hler H, Temple B (2014) Causal dissipation and shock profiles in the
  relativistic fluid dynamics of pure radiation. Proc R Soc A
  470(2166):20140055. \doi{10.1098/rspa.2014.0055}

\bibitem[{Freist\"{u}hler and Temple(2017)}]{Freistuhler-Temple-2017}
Freist\"{u}hler H, Temple B (2017) Causal dissipation for the relativistic
  dynamics of ideal gases. Proc R Soc A 473(2201):20160729, 20.
  \doi{10.1098/rspa.2016.0729}

\bibitem[{Freist\"{u}hler and Temple(2018)}]{Freistuhler-Temple-2018}
Freist\"{u}hler H, Temple B (2018) Causal dissipation in the relativistic
  dynamics of barotropic fluids. J Math Phys 59(6):063101, 17.
  \doi{10.1063/1.5007831}

\bibitem[{Freist\"{u}hler et~al(2022)Freist\"{u}hler, Reintjes, and
  Sroczinski}]{Freistuhler-Reintjes-Sroczinski-2022}
Freist\"{u}hler H, Reintjes M, Sroczinski M (2022) Uniform dissipativity for
  mixed-order hyperbolic systems, with an application to relativistic fluid
  dynamics. J Diff Eq 325:70--81. \doi{10.1016/j.jde.2022.04.008}

\bibitem[{Friedlander(1975)}]{Friedlander-Book-1975}
Friedlander FG (1975) The wave equation on a curved space-time. Cambridge
  Monographs on Mathematical Physics, No. 2, Cambridge University Press,
  Cambridge-New York-Melbourne

\bibitem[{{Friedmann}(1922)}]{Friedmann-1922}
{Friedmann} A (1922) {{\"U}ber die Kr{\"u}mmung des Raumes}. Z Phys
  10:377--386. \doi{10.1007/BF01332580}

\bibitem[{{Friedmann}(1924)}]{Friedmann-1924}
{Friedmann} A (1924) {{\"U}ber die M{\"o}glichkeit einer Welt mit konstanter
  negativer Kr{\"u}mmung des Raumes}. Z Phys 21(1):326--332.
  \doi{10.1007/BF01328280}

\bibitem[{Garc\'{\i}a-Perciante and
  M\'{e}ndez(2023)}]{Garcia-Perciante-Mendez-2023}
Garc\'{\i}a-Perciante AL, M\'{e}ndez AR (2023) Dissipative properties of
  relativistic fluids in a general curved space--time. Gen Relativ Gravit
  55(8):91. \doi{10.1007/s10714-023-03137-0}

\bibitem[{Garetto et~al(2018)Garetto, J\"{a}h, and
  Ruzhansky}]{Garetto-Jah-Ruzhansky-2018}
Garetto C, J\"{a}h C, Ruzhansky M (2018) Hyperbolic systems with
  non-diagonalisable principal part and variable multiplicities, {I}:
  well-posedness. Math Ann 372(3-4):1597--1629. \doi{10.1007/s00208-018-1672-1}

\bibitem[{Garetto et~al(2020)Garetto, J\"{a}h, and
  Ruzhansky}]{Garetto-Jah-Ruzhansky-2020}
Garetto C, J\"{a}h C, Ruzhansky M (2020) Hyperbolic systems with
  non-diagonalisable principal part and variable multiplicities, {II}:
  microlocal analysis. J Diff Eq 269(10):7881--7905.
  \doi{10.1016/j.jde.2020.05.038}

\bibitem[{Gavassino(2022)}]{Gavassino:2021owo}
Gavassino L (2022) {Can We Make Sense of Dissipation without Causality?} Phys
  Rev X 12(4):041001. \doi{10.1103/PhysRevX.12.041001},
  {\href{https://arxiv.org/abs/2111.05254}{{arXiv:2111.05254}}} {[gr-qc]}

\bibitem[{Gavassino(2023)}]{Gavassino:2023myj}
Gavassino L (2023) {Bounds on transport from hydrodynamic stability}. Phys Lett
  B 840:137854. \doi{10.1016/j.physletb.2023.137854},
  {\href{https://arxiv.org/abs/2301.06651}{{arXiv:2301.06651}}} {[hep-th]}

\bibitem[{Gavassino et~al(2020)Gavassino, Antonelli, and
  Haskell}]{Gavassino:2020ubn}
Gavassino L, Antonelli M, Haskell B (2020) {When the entropy has no maximum: a
  new perspective on the instability of the first-order theories of
  dissipation}. Phys Rev D 102(4):043018. \doi{10.1103/PhysRevD.102.043018},
  {\href{https://arxiv.org/abs/2006.09843}{{arXiv:2006.09843}}} {[gr-qc]}

\bibitem[{Gavassino et~al(2024)Gavassino, Disconzi, and
  Noronha}]{Gavassino:2023mad}
Gavassino L, Disconzi MM, Noronha J (2024) {Dispersion Relations Alone Cannot
  Guarantee Causality}. Phys Rev Lett 132(16):162301.
  \doi{10.1103/PhysRevLett.132.162301},
  {\href{https://arxiv.org/abs/2307.05987}{{arXiv:2307.05987}}} {[hep-th]}

\bibitem[{Geroch(2011)}]{Geroch:2010da}
Geroch R (2011) {Faster Than Light?} AMS/IP Stud Adv Math 49:59--70.
  {\href{https://arxiv.org/abs/1005.1614}{{arXiv:1005.1614}}} {[gr-qc]}

\bibitem[{Geroch and Lindblom(1991)}]{Geroch-Lindblom-1991}
Geroch R, Lindblom L (1991) Causal theories of dissipative relativistic fluids.
  Ann Physics 207(2):394--416. \doi{10.1016/0003-4916(91)90063-E}

\bibitem[{Gibbons and Patricot(2003)}]{Gibbons-Patricot-2003}
Gibbons GW, Patricot CE (2003) Newton-{H}ooke spacetimes, {H}pp-waves and the
  cosmological constant. Class Quantum Grav 20(23):5225--5239.
  \doi{10.1088/0264-9381/20/23/016}

\bibitem[{Gilbarg and Trudinger(2001)}]{Gilbarg-Trudinger-Book-2001}
Gilbarg D, Trudinger NS (2001) Elliptic partial differential equations of
  second order. Classics in Mathematics, Springer, Berlin, reprint of the 1998
  edition

\bibitem[{Ginsberg(2019)}]{Ginsberg-2019}
Ginsberg D (2019) {\it {A} priori} estimates for a relativistic liquid with
  free surface boundary. J Hyperbolic Differ Equ 16(3):401--442.
  \doi{10.1142/S0219891619500152}

\bibitem[{Ginsberg and Lindblad(2023)}]{Ginsberg-Lindblad-2023}
Ginsberg D, Lindblad H (2023) On the local well-posedness for the relativistic
  {E}uler equations for a liquid body. Ann PDE 9(2):23.
  \doi{10.1007/s40818-023-00164-7}

\bibitem[{Glimm(1965)}]{Glimm-1965}
Glimm J (1965) Solutions in the large for nonlinear hyperbolic systems of
  equations. Commun Pure Appl Math 18:697--715. \doi{10.1002/cpa.3160180408}

\bibitem[{Glimm and Lax(1970)}]{Glimm-Lax-1970}
Glimm J, Lax PD (1970) Decay of solutions of systems of nonlinear hyperbolic
  conservation laws, Memoirs of the American Mathematical Society, vol No. 101.
  American Mathematical Society, Providence, RI

\bibitem[{Grad(1958)}]{Grad-1958}
Grad H (1958) Principles of the kinetic theory of gases. In: Fl{\"u}gge S (ed)
  Thermodynamik der Gase / Thermodynamics of Gases. Springer, Berlin,
  Heidelberg, p 205--294, \doi{10.1007/978-3-642-45892-7_3}

\bibitem[{Grad(1963)}]{Grad-1963}
Grad H (1963) Mathematical problems in magneto-fluid dynamics and plasma
  physics. In: Proc. {I}nternat. {C}ongr. {M}athematicians ({S}tockholm, 1962).
  Inst. Mittag-Leffler, Djursholm, pp 560--583

\bibitem[{Groah et~al(2007)Groah, Smoller, and
  Temple}]{Groah-Smoller-Temple-Book-2007}
Groah J, Smoller J, Temple B (2007) Shock wave interactions in general
  relativity: A locally inertial Glimm scheme for spherically symmetric
  spacetimes. Springer Monographs in Mathematics, Springer, New York,
  \doi{10.1007/978-0-387-44602-8}

\bibitem[{Grozdanov et~al(2019)Grozdanov, Lucas, and
  Poovuttikul}]{Grozdanov:2018fic}
Grozdanov S, Lucas A, Poovuttikul N (2019) {Holography and hydrodynamics with
  weakly broken symmetries}. Phys Rev D 99(8):086012.
  \doi{10.1103/PhysRevD.99.086012},
  {\href{https://arxiv.org/abs/1810.10016}{{arXiv:1810.10016}}} {[hep-th]}

\bibitem[{Grubic and LeFloch(2015)}]{Grubic-LeFloch-2015}
Grubic N, LeFloch PG (2015) On the area of the symmetry orbits in weakly
  regular {E}instein-{E}uler spacetimes with {G}owdy symmetry. SIAM J Math Anal
  47(1):669--683. \doi{10.1137/130950641}

\bibitem[{Gubser(2010)}]{Gubser:2010ze}
Gubser SS (2010) {Symmetry constraints on generalizations of Bjorken flow}.
  Phys Rev D 82:085027. \doi{10.1103/PhysRevD.82.085027},
  {\href{https://arxiv.org/abs/1006.0006}{{arXiv:1006.0006}}} {[hep-th]}

\bibitem[{Guermond et~al(2008)Guermond, Marpeau, and
  Popov}]{Guermond-Marpeau-Popov-2008}
Guermond JL, Marpeau F, Popov B (2008) A fast algorithm for solving first-order
  {PDE}s by {$L^1$}-minimization. Commun Math Sci 6(1):199--216

\bibitem[{Guo and Jang(2010)}]{Guo-Jang-2010}
Guo Y, Jang J (2010) Global {H}ilbert expansion for the
  {V}lasov-{P}oisson-{B}oltzmann system. Commun Math Phys 299(2):469--501.
  \doi{10.1007/s00220-010-1089-5}

\bibitem[{Guo and Tahvildar-Zadeh(1999)}]{Guo-Tahvildar-Zadeh-1999}
Guo Y, Tahvildar-Zadeh AS (1999) Formation of singularities in relativistic
  fluid dynamics and in spherically symmetric plasma dynamics. In: Nonlinear
  partial differential equations ({E}vanston, {IL}, 1998), Contemp. Math., vol
  238. Amer. Math. Soc., Providence, RI, p 151--161,
  \doi{10.1090/conm/238/03545}

\bibitem[{Guo et~al(2009)Guo, Jang, and Jiang}]{Guo-Jang-Jiang-2009}
Guo Y, Jang J, Jiang N (2009) Local {H}ilbert expansion for the {B}oltzmann
  equation. Kinet Relat Models 2(1):205--214. \doi{10.3934/krm.2009.2.205}

\bibitem[{Guo et~al(2021)Guo, Huang, and Wang}]{Guo-Huang-Wang-2021}
Guo Y, Huang F, Wang Y (2021) Hilbert expansion of the {B}oltzmann equation
  with specular boundary condition in half-space. Arch Ration Mech Anal
  241(1):231--309. \doi{10.1007/s00205-021-01651-6}

\bibitem[{Had\v{z}i\'{c} and Jang(2018)}]{Hadzic-Jang-2018}
Had\v{z}i\'{c} M, Jang J (2018) Expanding large global solutions of the
  equations of compressible fluid mechanics. Invent Math 214(3):1205--1266.
  \doi{10.1007/s00222-018-0821-1}

\bibitem[{Had\v{z}i\'{c} and Jang(2019)}]{Hadzic-Jang-2019}
Had\v{z}i\'{c} M, Jang JJ (2019) A class of global solutions to the
  {E}uler-{P}oisson system. Commun Math Phys 370(2):475--505.
  \doi{10.1007/s00220-019-03525-1}

\bibitem[{Had\v{z}i\'{c} and Lin(2021)}]{Hadzic-Lin-2021}
Had\v{z}i\'{c} M, Lin Z (2021) Turning {P}oint {P}rinciple for {R}elativistic
  {S}tars. Commun Math Phys 387(2):729--759. \doi{10.1007/s00220-021-04197-6}

\bibitem[{Had\v{z}i\'{c} and Speck(2015)}]{Hadzic-Speck-2015}
Had\v{z}i\'{c} M, Speck J (2015) The global future stability of the {FLRW}
  solutions to the dust-{E}instein system with a positive cosmological
  constant. J Hyperbolic Differ Equ 12(1):87--188.
  \doi{10.1142/S0219891615500046}

\bibitem[{Had\v{z}i\'{c} et~al(2019)Had\v{z}i\'{c}, Shkoller, and
  Speck}]{Hadzic-Shkoller-Speck-2019}
Had\v{z}i\'{c} M, Shkoller S, Speck J (2019) A priori estimates for solutions
  to the relativistic {E}uler equations with a moving vacuum boundary. Commun
  Part Diff Eq 44(10):859--906. \doi{10.1080/03605302.2019.1583250}

\bibitem[{Hammond et~al(2023)Hammond, Hawke, and Andersson}]{Hammond:2022uua}
Hammond P, Hawke I, Andersson N (2023) {Impact of nuclear reactions on
  gravitational waves from neutron star mergers}. Phys Rev D 107(4):043023.
  \doi{10.1103/PhysRevD.107.043023},
  {\href{https://arxiv.org/abs/2205.11377}{{arXiv:2205.11377}}} {[astro-ph.HE]}

\bibitem[{Hattori et~al(2022)Hattori, Hongo, and Huang}]{Hattori:2022hyo}
Hattori K, Hongo M, Huang XG (2022) {New Developments in Relativistic
  Magnetohydrodynamics}. Symmetry 14(9):1851. \doi{10.3390/sym14091851},
  {\href{https://arxiv.org/abs/2207.12794}{{arXiv:2207.12794}}} {[hep-th]}

\bibitem[{Hawking and Ellis(1973)}]{Hawking-Ellis-Book-1973}
Hawking SW, Ellis GFR (1973) The large scale structure of space-time. Cambridge
  Monographs on Mathematical Physics, No. 1, Cambridge University Press,
  London-New York

\bibitem[{Heffernan et~al(2024{\natexlab{a}})Heffernan, Gale, Jeon, and
  Paquet}]{Heffernan:2023utr}
Heffernan MR, Gale C, Jeon S, et~al (2024{\natexlab{a}}) {Bayesian
  quantification of strongly interacting matter with color glass condensate
  initial conditions}. Phys Rev C 109(6):065207.
  \doi{10.1103/PhysRevC.109.065207},
  {\href{https://arxiv.org/abs/2302.09478}{{arXiv:2302.09478}}} {[nucl-th]}

\bibitem[{Heffernan et~al(2024{\natexlab{b}})Heffernan, Gale, Jeon, and
  Paquet}]{Heffernan:2023gye}
Heffernan MR, Gale C, Jeon S, et~al (2024{\natexlab{b}}) {Early-Times
  Yang-Mills Dynamics and the Characterization of Strongly Interacting Matter
  with Statistical Learning}. Phys Rev Lett 132(25):252301.
  \doi{10.1103/PhysRevLett.132.252301},
  {\href{https://arxiv.org/abs/2306.09619}{{arXiv:2306.09619}}} {[nucl-th]}

\bibitem[{Hegade K.~R. et~al(2023)Hegade K.~R., Ripley, and
  Yunes}]{HegadeKR:2023glb}
Hegade K.~R. A, Ripley JL, Yunes N (2023) {Nonrelativistic limit of first-order
  relativistic viscous fluids}. Phys Rev D 107(12):124029.
  \doi{10.1103/PhysRevD.107.124029},
  {\href{https://arxiv.org/abs/2305.09725}{{arXiv:2305.09725}}} {[gr-qc]}

\bibitem[{Heinz and Snellings(2013)}]{Heinz:2013th}
Heinz U, Snellings R (2013) {Collective flow and viscosity in relativistic
  heavy-ion collisions}. Annu Rev Nucl Part Sci 63:123--151.
  \doi{10.1146/annurev-nucl-102212-170540},
  {\href{https://arxiv.org/abs/1301.2826}{{arXiv:1301.2826}}} {[nucl-th]}

\bibitem[{Heller et~al(2023)Heller, Serantes, Spali{\'n}ski, and
  Withers}]{Heller:2022ejw}
Heller MP, Serantes A, Spali{\'n}ski M, et~al (2023) {Rigorous Bounds on
  Transport from Causality}. Phys Rev Lett 130(26):261601.
  \doi{10.1103/PhysRevLett.130.261601},
  {\href{https://arxiv.org/abs/2212.07434}{{arXiv:2212.07434}}} {[hep-th]}

\bibitem[{Hiscock and Lindblom(1983)}]{Hiscock:1983zz}
Hiscock W, Lindblom L (1983) {Stability and causality in dissipative
  relativistic fluids}. Ann Phys 151:466--496.
  \doi{10.1016/0003-4916(83)90288-9}

\bibitem[{Hiscock and Lindblom(1985)}]{Hiscock:1985zz}
Hiscock WA, Lindblom L (1985) {Generic instabilities in first-order dissipative
  relativistic fluid theories}. Phys Rev D 31:725--733.
  \doi{10.1103/PhysRevD.31.725}

\bibitem[{Hoang(2024)}]{Hoang-2024-arxiv}
Hoang V (2024) Causal energy-momentum tensors and relativistic fluids. arXiv
  e-prints {\href{https://arxiv.org/abs/2405.13909}{{arXiv:2405.13909}}}
  {[gr-qc]}

\bibitem[{Holzegel et~al(2016)Holzegel, Klainerman, Speck, and
  Wong}]{Holzegel-Klainerman-Speck-Wong-2016}
Holzegel G, Klainerman S, Speck J, et~al (2016) Small-data shock formation in
  solutions to 3{D} quasilinear wave equations: an overview. J Hyperbolic
  Differ Eq 13(1):1--105. \doi{10.1142/S0219891616500016}

\bibitem[{H\"{o}rmander(2007)}]{Hormander-Book-2003-3}
H\"{o}rmander L (2007) The analysis of linear partial differential operators.
  {III}. Classics in Mathematics, Springer, Berlin,
  \doi{10.1007/978-3-540-49938-1}, pseudo-differential operators, Reprint of
  the 1994 edition

\bibitem[{Hoult and Kovtun(2020)}]{Hoult:2020eho}
Hoult RE, Kovtun P (2020) {Stable and causal relativistic Navier-Stokes
  equations}. JHEP 06:067. \doi{10.1007/JHEP06(2020)067},
  {\href{https://arxiv.org/abs/2004.04102}{{arXiv:2004.04102}}} {[hep-th]}

\bibitem[{Hoult and Kovtun(2022)}]{Hoult:2021gnb}
Hoult RE, Kovtun P (2022) {Causal first-order hydrodynamics from kinetic theory
  and holography}. Phys Rev D 106(6):066023. \doi{10.1103/PhysRevD.106.066023},
  {\href{https://arxiv.org/abs/2112.14042}{{arXiv:2112.14042}}} {[hep-th]}

\bibitem[{Hoult and Kovtun(2024)}]{Hoult:2023clg}
Hoult RE, Kovtun P (2024) {Causality and classical dispersion relations}. Phys
  Rev D 109(4):046018. \doi{10.1103/PhysRevD.109.046018},
  {\href{https://arxiv.org/abs/2309.11703}{{arXiv:2309.11703}}} {[hep-th]}

\bibitem[{Huang et~al(2010)Huang, Wang, and Yang}]{Huang-WangYang-2010}
Huang F, Wang Y, Yang T (2010) Hydrodynamic limit of the {B}oltzmann equation
  with contact discontinuities. Commun Math Phys 295(2):293--326.
  \doi{10.1007/s00220-009-0966-2}

\bibitem[{Huang et~al(2022)Huang, Baiotti, Kojo, Takami, Sotani, Togashi,
  Hatsuda, Nagataki, and Fan}]{Huang:2022mqp}
Huang YJ, Baiotti L, Kojo T, et~al (2022) {Merger and Postmerger of Binary
  Neutron Stars with a Quark-Hadron Crossover Equation of State}. Phys Rev Lett
  129(18):181101. \doi{10.1103/PhysRevLett.129.181101},
  {\href{https://arxiv.org/abs/2203.04528}{{arXiv:2203.04528}}} {[astro-ph.HE]}

\bibitem[{Huth et~al(2022)}]{Huth:2021bsp}
Huth S, et~al (2022) {Constraining Neutron-Star Matter with Microscopic and
  Macroscopic Collisions}. Nature 606:276--280.
  \doi{10.1038/s41586-022-04750-w},
  {\href{https://arxiv.org/abs/2107.06229}{{arXiv:2107.06229}}} {[nucl-th]}

\bibitem[{Ifrim and Tataru(2024)}]{Ifrim-Tataru-2024}
Ifrim M, Tataru D (2024) The compressible {E}uler equations in a physical
  vacuum: a comprehensive {E}ulerian approach. Ann Inst H Poincar\'e{} C Anal
  Non Lin\'eaire 41(2):405--495. \doi{10.4171/aihpc/91}

\bibitem[{Isenberg and Maxwell(2021)}]{Isenberg-Maxwell-2021-arxiv}
Isenberg J, Maxwell D (2021) A phase space approach to the conformal
  construction of non-vacuum initial data sets in general relativity. arXiv
  e-prints {\href{https://arxiv.org/abs/2106.15027}{{arXiv:2106.15027}}}
  {[gr-qc]}

\bibitem[{Israel(1976)}]{Israel:1976tn}
Israel W (1976) {Nonstationary irreversible thermodynamics: A Causal
  relativistic theory}. Ann Phys 100:310--331.
  \doi{10.1016/0003-4916(76)90064-6}

\bibitem[{Israel and Stewart(1979{\natexlab{a}})}]{Israel:1979wp}
Israel W, Stewart J (1979{\natexlab{a}}) {Transient relativistic thermodynamics
  and kinetic theory}. Ann Phys 118:341--372.
  \doi{10.1016/0003-4916(79)90130-1}

\bibitem[{Israel and Stewart(1976)}]{Stewart-Stewart-1976}
Israel W, Stewart JM (1976) Thermodynamics of nonstationary and transient
  effects in a relativistic gas. Phys Lett A 58(4):213--215

\bibitem[{Israel and Stewart(1979{\natexlab{b}})}]{Israel-Stewart-1979}
Israel W, Stewart JM (1979{\natexlab{b}}) On transient relativistic
  thermodynamics and kinetic theory. {II}. Proc R Soc A 365(1720):43--52.
  \doi{10.1098/rspa.1979.0005}

\bibitem[{Jang and Masmoudi(2009)}]{Jang-Masmoudi-2009}
Jang J, Masmoudi N (2009) Well-posedness for compressible {E}uler equations
  with physical vacuum singularity. Commun Pure Appl Math 62(10):1327--1385.
  \doi{10.1002/cpa.20285}

\bibitem[{Jang and Masmoudi(2011)}]{Jang-Masmoudi-2011}
Jang J, Masmoudi N (2011) Vacuum in gas and fluid dynamics. In: Nonlinear
  conservation laws and applications, IMA Vol. Math. Appl., vol 153. Springer,
  New York, p 315--329, \doi{10.1007/978-1-4419-9554-4_17}

\bibitem[{Jang and Masmoudi(2012)}]{Jang-Masmoudi-2012}
Jang J, Masmoudi N (2012) Well and ill-posedness for compressible {E}uler
  equations with vacuum. J Math Phys 53(11):115625, 11. \doi{10.1063/1.4767369}

\bibitem[{Jang and Masmoudi(2015)}]{Jang-Masmoudi-2015}
Jang J, Masmoudi N (2015) Well-posedness of compressible {E}uler equations in a
  physical vacuum. Commun Pure Appl Math 68(1):61--111. \doi{10.1002/cpa.21517}

\bibitem[{Jang et~al(2016)Jang, LeFloch, and
  Masmoudi}]{Jang-LeFloch-Masmoudi-2016}
Jang J, LeFloch PG, Masmoudi N (2016) Lagrangian formulation and a priori
  estimates for relativistic fluid flows with vacuum. J Diff Eq
  260(6):5481--5509. \doi{10.1016/j.jde.2015.12.004}

\bibitem[{Jeon and Heinz(2015)}]{Jeon:2015dfa}
Jeon S, Heinz U (2015) {Introduction to Hydrodynamics}. Int J Mod Phys E
  24(10):1530010. \doi{10.1142/S0218301315300106},
  {\href{https://arxiv.org/abs/1503.03931}{{arXiv:1503.03931}}} {[hep-ph]}

\bibitem[{John(1974)}]{John-1974}
John F (1974) Formation of singularities in one-dimensional nonlinear wave
  propagation. Commun Pure Appl Math 27:377--405. \doi{10.1002/cpa.3160270307}

\bibitem[{John(1982)}]{John-Book-1982}
John F (1982) Partial differential equations, Applied Mathematical Sciences,
  vol~1, 4th edn. Springer-Verlag, New York, \doi{10.1007/978-1-4684-9333-7}

\bibitem[{Kato(1975)}]{Kato-1975-1}
Kato T (1975) The {C}auchy problem for quasi-linear symmetric hyperbolic
  systems. Arch Rational Mech Anal 58(3):181--205. \doi{10.1007/BF00280740}

\bibitem[{Kim and La(2024)}]{Kim-La-2024}
Kim C, La J (2024) Vorticity convergence from {B}oltzmann to 2{D}
  incompressible {E}uler equations below {Y}udovich class. SIAM J Math Anal
  56(3):3144--3202. \doi{10.1137/23M1549857}

\bibitem[{Kim and Nguyen(2022)}]{Kim-Nguyen-2022-arxiv}
Kim C, Nguyen TT (2022) Asymptotics of {H}elmholtz-{K}irchhoff point-vortices
  in the phase space. arXiv e-prints
  {\href{https://arxiv.org/abs/2303.12257}{{arXiv:2303.12257}}} {[math.AP]}

\bibitem[{Klainerman and Rodnianski(2003)}]{Klainerman-Rodnianski-2003}
Klainerman S, Rodnianski I (2003) Improved local well-posedness for quasilinear
  wave equations in dimension three. Duke Math J 117(1):1--124.
  \doi{10.1215/S0012-7094-03-11711-1}

\bibitem[{Klainerman et~al(2012)Klainerman, Rodnianski, and
  Szeftel}]{Klainerman-Rodnianski-Szeftel-2012-arxiv}
Klainerman S, Rodnianski I, Szeftel J (2012) Overview of the proof of the
  bounded ${L}^2$ curvature conjecture. arXiv e-prints
  {\href{https://arxiv.org/abs/1204.1772}{{arXiv:1204.1772}}} {[math.AP]}

\bibitem[{Kovtun(2019)}]{Kovtun:2019hdm}
Kovtun P (2019) {First-order relativistic hydrodynamics is stable}. JHEP
  10:034. \doi{10.1007/JHEP10(2019)034},
  {\href{https://arxiv.org/abs/1907.08191}{{arXiv:1907.08191}}} {[hep-th]}

\bibitem[{Landau and Lifshitz(1987)}]{Landau-Lifshitz-Book-Fluids-1987}
Landau LD, Lifshitz E (1987) Fluid Mechanics - Volume 6 (Corse of Theoretical
  Physics), 2nd edn. Butterworth-Heinemann

\bibitem[{Lannes(2005)}]{Lannes-2005}
Lannes D (2005) Well-posedness of the water-waves equations. J Amer Math Soc
  18(3):605--654. \doi{10.1090/S0894-0347-05-00484-4}

\bibitem[{Lannes(2013)}]{Lannes-Book-2013}
Lannes D (2013) The water waves problem: Mathematical analysis and asymptotics,
  Mathematical Surveys and Monographs, vol 188. American Mathematical Society,
  Providence, RI, \doi{10.1090/surv/188}

\bibitem[{Lax(1964)}]{Lax-1964}
Lax PD (1964) Development of singularities of solutions of nonlinear hyperbolic
  partial differential equations. J Mathematical Phys 5:611--613.
  \doi{10.1063/1.1704154}

\bibitem[{LeFloch(2002)}]{LeFloch-Book-2002}
LeFloch PG (2002) Hyperbolic systems of conservation laws: The theory of
  classical and nonclassical shock waves. Lectures in Mathematics ETH
  Z\"{u}rich, Birkh\"{a}user, Basel, \doi{10.1007/978-3-0348-8150-0}

\bibitem[{LeFloch and Rendall(2011)}]{LeFloch-Rendall-2011}
LeFloch PG, Rendall AD (2011) A global foliation of {E}instein-{E}uler
  spacetimes with {G}owdy-symmetry on {$T^3$}. Arch Ration Mech Anal
  201(3):841--870. \doi{10.1007/s00205-011-0425-z}

\bibitem[{LeFloch and Ukai(2009)}]{LeFloch-Ukai-2009}
LeFloch PG, Ukai S (2009) A symmetrization of the relativistic {E}uler
  equations with several spatial variables. Kinet Relat Models 2(2):275--292.
  \doi{10.3934/krm.2009.2.275}

\bibitem[{{Lema{\^\i}tre}(1931)}]{Lemaitre-1931}
{Lema{\^\i}tre} G (1931) {A homogeneous universe of constant mass and
  increasing radius accounting for the radial velocity of extra-galactic
  nebulae}. Mon Not R Astron Soc 91:483--490. \doi{10.1093/mnras/91.5.483}

\bibitem[{{Lema{\^\i}tre}(1933)}]{Lemaitre-1933}
{Lema{\^\i}tre} G (1933) {L'Univers en expansion}. Annales de la
  Soci{\'e}t{\'e} Scientifique de Bruxelles 53:51

\bibitem[{Leray(1953)}]{Leray-Book-1953}
Leray J (1953) Hyperbolic differential equations. Institute for Advanced Study
  (IAS), Princeton, N.J.

\bibitem[{Leray and Ohya(1964)}]{Leray-Ohya-1964}
Leray J, Ohya Y (1964) Syst\`emes lin\'{e}aires, hyperboliques non stricts. In:
  Deuxi\`eme {C}olloq. l'{A}nal. {F}onct. Centre Belge Recherches Math.,
  Librairie Universitaire, Louvain, p 105--144

\bibitem[{Leray and Ohya(1967)}]{Leray-Ohya-1967}
Leray J, Ohya Y (1967) \'{E}quations et syst\`emes non-lin\'{e}aires,
  hyperboliques nonstricts. Math Ann 170:167--205. \doi{10.1007/BF01350150}

\bibitem[{Lerman et~al(2024)Lerman, Disconzi, and Noronha}]{Lerman:2023qyc}
Lerman A, Disconzi MM, Noronha J (2024) {Local well-posedness and singularity
  formation in non-Newtonian compressible fluids}. J Phys A 57(1):015201.
  \doi{10.1088/1751-8121/ad0fb4},
  {\href{https://arxiv.org/abs/2307.09611}{{arXiv:2307.09611}}} {[math.AP]}

\bibitem[{Lichnerowicz(1955)}]{Lichnerowicz-Book-1955}
Lichnerowicz A (1955) Th\'eories Relativistes de la Gravitation et de
  l'\'Electromagn\'etism. Masson et Cie, Paris

\bibitem[{Lichnerowicz(1967)}]{Lichnerowicz-Book-1967}
Lichnerowicz A (1967) Relativistic Hydrodynamics and Magnetohydrodynamics:
  Lectures on the Existence of Solutions. W. A. Benjamin, New York

\bibitem[{Liepmann et~al(1962)Liepmann, Narasimha, and
  Chahine}]{Liepmann-Narasimha-Chahine-1962}
Liepmann HW, Narasimha R, Chahine MT (1962) {Structure of a Plane Shock Layer}.
  The Physics of Fluids 5(11):1313--1324. \doi{10.1063/1.1706527}

\bibitem[{Lindblad(1998)}]{Lindblad-1998}
Lindblad H (1998) Counterexamples to local existence for quasilinear wave
  equations. Math Res Lett 5(5):605--622. \doi{10.4310/MRL.1998.v5.n5.a5}

\bibitem[{Lindblom(1988)}]{Lindblom-1988}
Lindblom L (1988) Static uniform-density stars must be spherical in general
  relativity. J Math Phys 29(2):436--439. \doi{10.1063/1.528033}

\bibitem[{Liu and Oliynyk(2018{\natexlab{a}})}]{Liu-Oliynyk-2018-1}
Liu C, Oliynyk TA (2018{\natexlab{a}}) Cosmological {N}ewtonian limits on large
  spacetime scales. Commun Math Phys 364(3):1195--1304.
  \doi{10.1007/s00220-018-3214-9}

\bibitem[{Liu and Oliynyk(2018{\natexlab{b}})}]{Liu-Oliynyk-2018-2}
Liu C, Oliynyk TA (2018{\natexlab{b}}) Newtonian limits of isolated
  cosmological systems on long time scales. Ann Henri Poincar\'{e}
  19(7):2157--2243. \doi{10.1007/s00023-018-0686-2}

\bibitem[{Liu(1979)}]{Liu-1979}
Liu TP (1979) Development of singularities in the nonlinear waves for
  quasilinear hyperbolic partial differential equations. J Diff Eq
  33(1):92--111. \doi{10.1016/0022-0396(79)90082-2}

\bibitem[{Liu(2021)}]{Liu-Book-2021}
Liu TP (2021) Shock waves, Graduate Studies in Mathematics, vol 215. American
  Mathematical Society, Providence, RI, \doi{10.1090/gsm/215}

\bibitem[{Lubbe and Kroon(2013)}]{Lubbe:2011kz}
Lubbe C, Kroon JAV (2013) {A conformal approach for the analysis of the
  non-linear stability of pure radiation cosmologies}. Ann Phys 328:1--25.
  \doi{10.1016/j.aop.2012.10.011},
  {\href{https://arxiv.org/abs/1111.4691}{{arXiv:1111.4691}}} {[gr-qc]}

\bibitem[{Luczak(to appear)}]{Luczak-in_prep}
Luczak B (to appear) In preparation

\bibitem[{Luk and Speck(2018)}]{Luk-Speck-2018}
Luk J, Speck J (2018) Shock formation in solutions to the 2{D} compressible
  {E}uler equations in the presence of non-zero vorticity. Invent Math
  214(1):1--169. \doi{10.1007/s00222-018-0799-8}

\bibitem[{Luk and Speck(2020)}]{Luk-Speck-2020}
Luk J, Speck J (2020) The hidden null structure of the compressible {E}uler
  equations and a prelude to applications. J Hyperbolic Differ Equ 17(1):1--60.
  \doi{10.1142/S0219891620500010}

\bibitem[{Luk and Speck(2024)}]{Luk-Speck-2024}
Luk J, Speck J (2024) The stability of simple plane-symmetric shock formation
  for three-dimensional compressible {E}uler flow with vorticity and entropy.
  Anal PDE 17(3):831--941. \doi{10.2140/apde.2024.17.831}

\bibitem[{Majda(1984)}]{Majda-Book-1984}
Majda A (1984) Compressible fluid flow and systems of conservation laws in
  several space variables, Applied Mathematical Sciences, vol~53.
  Springer-Verlag, New York, \doi{10.1007/978-1-4612-1116-7}

\bibitem[{Makino(1992)}]{Makino-1992}
Makino T (1992) Blowing up solutions of the {E}uler-{P}oisson equation for the
  evolution of gaseous stars. In: Proceedings of the {F}ourth {I}nternational
  {W}orkshop on {M}athematical {A}spects of {F}luid and {P}lasma {D}ynamics
  ({K}yoto, 1991), pp 615--624, \doi{10.1080/00411459208203801}

\bibitem[{Makino(1998)}]{Makino-1998}
Makino T (1998) On spherically symmetric stellar models in general relativity.
  J Math Kyoto Univ 38(1):55--69. \doi{10.1215/kjm/1250518159}

\bibitem[{Makino(2016)}]{Makino-2016}
Makino T (2016) On spherically symmetric solutions of the {E}instein-{E}uler
  equations. Kyoto J Math 56(2):243--282. \doi{10.1215/21562261-3478880}

\bibitem[{Makino(2017)}]{Makino-2017}
Makino T (2017) An application of the {N}ash-{M}oser theorem to the vacuum
  boundary problem of gaseous stars. J Diff Eq 262(2):803--843.
  \doi{10.1016/j.jde.2016.09.042}

\bibitem[{Makino(2018)}]{Makino-2018}
Makino T (2018) On slowly rotating axisymmetric solutions of the
  {E}instein-{E}uler equations. J Math Phys 59(10):102502, 33.
  \doi{10.1063/1.5026133}

\bibitem[{Makino(2019{\natexlab{a}})}]{Makino-2019-2-arxiv}
Makino T (2019{\natexlab{a}}) A note on the axisymmetric stationary metric in
  the general theory of relativity. arXiv e-prints
  {\href{https://arxiv.org/abs/1908.10639}{{arXiv:1908.10639}}} {[math.AP]}

\bibitem[{Makino(2019{\natexlab{b}})}]{Makino-2019-arxiv}
Makino T (2019{\natexlab{b}}) A remark on the matter-vacuum matching problem
  for axisymmetric metrics governed by the {E}instein-{E}uler equations. arXiv
  e-prints {\href{https://arxiv.org/abs/1907.09056}{{arXiv:1907.09056}}}
  {[math.AP]}

\bibitem[{Marchal and Saari(1976)}]{Marchal-Saari-1976}
Marchal C, Saari DG (1976) On the final evolution of the {$n$}-body problem. J
  Diff Eq 20(1):150--186. \doi{10.1016/0022-0396(76)90101-7}

\bibitem[{Marshall and Oliynyk(2023)}]{Marshall-Oliynyk-2023}
Marshall E, Oliynyk TA (2023) On the stability of relativistic perfect fluids
  with linear equations of state {$p=K\rho$} where {$1/3 < K < 1$}. Lett Math
  Phys 113(5):102. \doi{10.1007/s11005-023-01722-7}

\bibitem[{Merle et~al(2022{\natexlab{a}})Merle, Rapha\"{e}l, Rodnianski, and
  Szeftel}]{Merle-Raphael-Rodnianski-Szeftel-2022-3}
Merle F, Rapha\"{e}l P, Rodnianski I, et~al (2022{\natexlab{a}}) On blow up for
  the energy super critical defocusing nonlinear {S}chr\"{o}dinger equations.
  Invent Math 227(1):247--413. \doi{10.1007/s00222-021-01067-9}

\bibitem[{Merle et~al(2022{\natexlab{b}})Merle, Rapha\"{e}l, Rodnianski, and
  Szeftel}]{Merle-Raphael-Rodnianski-Szeftel-2022-1}
Merle F, Rapha\"{e}l P, Rodnianski I, et~al (2022{\natexlab{b}}) On the
  implosion of a compressible fluid {I}: {S}mooth self-similar inviscid
  profiles. Ann Math (2) 196(2):567--778. \doi{10.4007/annals.2022.196.2.3}

\bibitem[{Merle et~al(2022{\natexlab{c}})Merle, Rapha\"{e}l, Rodnianski, and
  Szeftel}]{Merle-Raphael-Rodnianski-Szeftel-2022-2}
Merle F, Rapha\"{e}l P, Rodnianski I, et~al (2022{\natexlab{c}}) On the
  implosion of a compressible fluid {II}: {S}ingularity formation. Ann Math (2)
  196(2):779--889. \doi{10.4007/annals.2022.196.2.4}

\bibitem[{Miao and Shahshahani(2024)}]{Miao-Shahshahani-2024}
Miao S, Shahshahani S (2024) Well-posedness for the free boundary hard phase
  model in general relativity. Adv Math 443:109614.
  \doi{10.1016/j.aim.2024.109614}

\bibitem[{Miao et~al(2021)Miao, Shahshahani, and Wu}]{Miao-Shahshahani-Wu-2021}
Miao S, Shahshahani S, Wu S (2021) Well-posedness of free boundary hard phase
  fluids in {M}inkowski background and their {N}ewtonian limit. Camb J Math
  9(2):269--350. \doi{10.4310/CJM.2021.v9.n2.a1}

\bibitem[{Mitra(2022)}]{Mitra:2021ubx}
Mitra S (2022) {Causality and stability analysis of first-order field
  redefinition in relativistic hydrodynamics from kinetic theory}. Phys Rev C
  105(5):054910. \doi{10.1103/PhysRevC.105.054910},
  {\href{https://arxiv.org/abs/2106.08510}{{arXiv:2106.08510}}} {[nucl-th]}

\bibitem[{Mizohata(1985)}]{Mizohata-Book-1985}
Mizohata S (1985) On the {C}auchy problem, Notes and Reports in Mathematics in
  Science and Engineering, vol~3. Academic Press, Inc., Orlando, FL; Science
  Press Beijing, Beijing

\bibitem[{Montani and Venanzi(2017)}]{Montani:2016hmf}
Montani G, Venanzi M (2017) {Bianchi I cosmology in the presence of a causally
  regularized viscous fluid}. Eur Phys J C 77(7):486.
  \doi{10.1140/epjc/s10052-017-5042-z},
  {\href{https://arxiv.org/abs/1609.09461}{{arXiv:1609.09461}}} {[gr-qc]}

\bibitem[{Morawetz(1968)}]{Morawetz-1968}
Morawetz CS (1968) Time decay for the nonlinear {K}lein-{G}ordon equations.
  Proc R Soc A 306:291--296. \doi{10.1098/rspa.1968.0151}

\bibitem[{Moreland et~al(2015)Moreland, Bernhard, and Bass}]{Moreland:2014oya}
Moreland JS, Bernhard JE, Bass SA (2015) {Alternative ansatz to wounded nucleon
  and binary collision scaling in high-energy nuclear collisions}. Phys Rev C
  92(1):011901. \doi{10.1103/PhysRevC.92.011901},
  {\href{https://arxiv.org/abs/1412.4708}{{arXiv:1412.4708}}} {[nucl-th]}

\bibitem[{Most et~al(2019)Most, Papenfort, Dexheimer, Hanauske, Schramm,
  Stöcker, and Rezzolla}]{Most:2018eaw}
Most ER, Papenfort LJ, Dexheimer V, et~al (2019) {Signatures of quark-hadron
  phase transitions in general-relativistic neutron-star mergers}. Phys Rev
  Lett 122(6):061101. \doi{10.1103/PhysRevLett.122.061101},
  {\href{https://arxiv.org/abs/1807.03684}{{arXiv:1807.03684}}} {[astro-ph.HE]}

\bibitem[{Most et~al(2020)Most, Jens~Papenfort, Dexheimer, Hanauske, Stoecker,
  and Rezzolla}]{Most:2019onn}
Most ER, Jens~Papenfort L, Dexheimer V, et~al (2020) {On the deconfinement
  phase transition in neutron-star mergers}. Eur Phys J A 56(2):59.
  \doi{10.1140/epja/s10050-020-00073-4},
  {\href{https://arxiv.org/abs/1910.13893}{{arXiv:1910.13893}}} {[astro-ph.HE]}

\bibitem[{Most et~al(2021)Most, Harris, Plumberg, Alford, Noronha,
  Noronha-Hostler, Pretorius, Witek, and Yunes}]{Most:2021zvc}
Most ER, Harris SP, Plumberg C, et~al (2021) {Projecting the likely importance
  of weak-interaction-driven bulk viscosity in neutron star mergers}. Mon Not R
  Astron Soc 509(1):1096--1108. \doi{10.1093/mnras/stab2793},
  {\href{https://arxiv.org/abs/2107.05094}{{arXiv:2107.05094}}} {[astro-ph.HE]}

\bibitem[{Most et~al(2024)Most, Haber, Harris, Zhang, Alford, and
  Noronha}]{Most:2022yhe}
Most ER, Haber A, Harris SP, et~al (2024) {Emergence of Microphysical Bulk
  Viscosity in Binary Neutron Star Postmerger Dynamics}. Astrophys J Lett
  967(1):L14. \doi{10.3847/2041-8213/ad454f},
  {\href{https://arxiv.org/abs/2207.00442}{{arXiv:2207.00442}}} {[astro-ph.HE]}

\bibitem[{Mott-Smith(1951)}]{Mott-Smith-1951}
Mott-Smith HM (1951) The solution of the {B}oltzmann equation for a shock wave.
  Phys Rev (2) 82:885--892

\bibitem[{M\"uller(1967)}]{Mueller-1967}
M\"uller I (1967) Zum {P}aradox der {W}{\"a}rmeleitungstheorie. Z Phys
  198:329--344. \doi{10.1007/BF01326412}

\bibitem[{M\"{u}ller and Ruggeri(1998)}]{Muller-Ruggeri-Book-1998}
M\"{u}ller I, Ruggeri T (1998) Rational extended thermodynamics, Springer
  Tracts in Natural Philosophy, vol~37, 2nd edn. Springer, New York,
  \doi{10.1007/978-1-4612-2210-1}, with supplementary chapters by H. Struchtrup
  and Wolf Weiss

\bibitem[{Noronha et~al(2022)Noronha, Spali\'nski, and
  Speranza}]{Noronha:2021syv}
Noronha J, Spali\'nski M, Speranza E (2022) {Transient Relativistic Fluid
  Dynamics in a General Hydrodynamic Frame}. Phys Rev Lett 128(25):252302.
  \doi{10.1103/PhysRevLett.128.252302},
  {\href{https://arxiv.org/abs/2105.01034}{{arXiv:2105.01034}}} {[nucl-th]}

\bibitem[{Oliynyk(2010{\natexlab{a}})}]{Oliynyk-2010-1}
Oliynyk TA (2010{\natexlab{a}}) Cosmological post-{N}ewtonian expansions to
  arbitrary order. Commun Math Phys 295(2):431--463.
  \doi{10.1007/s00220-009-0931-0}

\bibitem[{Oliynyk(2010{\natexlab{b}})}]{Oliynyk-2010-2}
Oliynyk TA (2010{\natexlab{b}}) A rigorous formulation of the cosmological
  {N}ewtonian limit without averaging. J Hyperbolic Differ Equ 7(3):405--431.
  \doi{10.1142/S0219891610002189}

\bibitem[{Oliynyk(2012{\natexlab{a}})}]{Oliynyk-2012-2}
Oliynyk TA (2012{\natexlab{a}}) The fast {N}ewtonian limit for perfect fluids.
  Adv Theor Math Phys 16(2):359--391

\bibitem[{Oliynyk(2012{\natexlab{b}})}]{Oliynyk-2012-1}
Oliynyk TA (2012{\natexlab{b}}) On the existence of solutions to the
  relativistic {E}uler equations in two spacetime dimensions with a vacuum
  boundary. Class Quantum Grav 29(15):155013, 28.
  \doi{10.1088/0264-9381/29/15/155013}

\bibitem[{Oliynyk(2015)}]{Oliynyk-2015}
Oliynyk TA (2015) The {N}ewtonian limit on cosmological scales. Commun Math
  Phys 339(2):455--512. \doi{10.1007/s00220-015-2418-5}

\bibitem[{Oliynyk(2017)}]{Oliynyk-2017}
Oliynyk TA (2017) A priori estimates for relativistic liquid bodies. Bull Sci
  Math 141(3):105--222. \doi{10.1016/j.bulsci.2017.02.001}

\bibitem[{Oliynyk(2019)}]{Oliynyk-2019-arxiv}
Oliynyk TA (2019) Dynamical relativistic liquid bodies. arXiv e-prints
  {\href{https://arxiv.org/abs/1907.08192}{{arXiv:1907.08192}}} {[math.AP]}

\bibitem[{Oliynyk(2021)}]{Oliynyk-2021}
Oliynyk TA (2021) Future global stability for relativistic perfect fluids with
  linear equations of state {$p=K\rho$} where {$1/3<K<1/2$}. SIAM J Math Anal
  53(4):4118--4141. \doi{10.1137/20M1361195}

\bibitem[{Olson(1990)}]{Olson:1989ey}
Olson TS (1990) {Stability and causality in the {I}srael-{S}tewart energy frame
  theory}. Ann Phys 199:18. \doi{10.1016/0003-4916(90)90366-V}

\bibitem[{Olson and Hiscock(1990)}]{Olson:1991pf}
Olson TS, Hiscock WA (1990) {Plane steady shock waves in Israel-Stewart
  fluids}. Ann Phys 204:331--350. \doi{10.1016/0003-4916(90)90393-3}

\bibitem[{Oppenheimer and Volkoff(1939)}]{Oppenheimer:1939ne}
Oppenheimer J, Volkoff G (1939) {On Massive neutron cores}. Phys Rev
  55:374--381. \doi{10.1103/PhysRev.55.374}

\bibitem[{\"Ozel and Freire(2016)}]{Ozel:2016oaf}
\"Ozel F, Freire P (2016) {Masses, Radii, and the Equation of State of Neutron
  Stars}. Annu Rev Astron Astrophys 54:401--440.
  \doi{10.1146/annurev-astro-081915-023322},
  {\href{https://arxiv.org/abs/1603.02698}{{arXiv:1603.02698}}} {[astro-ph.HE]}

\bibitem[{Pan and Smoller(2006)}]{Pan-Smoller-2006}
Pan R, Smoller JA (2006) Blowup of smooth solutions for relativistic {E}uler
  equations. Commun Math Phys 262(3):729--755. \doi{10.1007/s00220-005-1464-9}

\bibitem[{Pandya(2023)}]{Pandya:2023alj}
Pandya A (2023) {Toward astrophysics applications of causal, stable
  relativistic dissipative hydrodynamics}. PhD thesis, Princeton University,
  \urlprefix\url{http://arks.princeton.edu/ark:/88435/dsp01j098zf37z}

\bibitem[{Pandya and Pretorius(2021)}]{Pandya:2021ief}
Pandya A, Pretorius F (2021) {Numerical exploration of first-order relativistic
  hydrodynamics}. Phys Rev D 104(2):023015. \doi{10.1103/PhysRevD.104.023015},
  {\href{https://arxiv.org/abs/2104.00804}{{arXiv:2104.00804}}} {[gr-qc]}

\bibitem[{Pandya et~al(2022{\natexlab{a}})Pandya, Most, and
  Pretorius}]{Pandya:2022sff}
Pandya A, Most ER, Pretorius F (2022{\natexlab{a}}) {Causal, stable first-order
  viscous relativistic hydrodynamics with ideal gas microphysics}. Phys Rev D
  106(12):123036. \doi{10.1103/PhysRevD.106.123036},
  {\href{https://arxiv.org/abs/2209.09265}{{arXiv:2209.09265}}} {[gr-qc]}

\bibitem[{Pandya et~al(2022{\natexlab{b}})Pandya, Most, and
  Pretorius}]{Pandya:2022pif}
Pandya A, Most ER, Pretorius F (2022{\natexlab{b}}) {Conservative finite volume
  scheme for first-order viscous relativistic hydrodynamics}. Phys Rev D
  105(12):123001. \doi{10.1103/PhysRevD.105.123001},
  {\href{https://arxiv.org/abs/2201.12317}{{arXiv:2201.12317}}} {[gr-qc]}

\bibitem[{Parmeshwar et~al(2021)Parmeshwar, Had\v{z}i\'{c}, and
  Jang}]{Parmeshwar-Hadzic-Jang-2021}
Parmeshwar S, Had\v{z}i\'{c} M, Jang J (2021) Global expanding solutions of
  compressible {E}uler equations with small initial densities. Quart Appl Math
  79(2):273--334. \doi{10.1090/qam/1580}

\bibitem[{Pellhammer(2023)}]{Pellhammer-2023-arxiv}
Pellhammer V (2023) Oscillating shock profiles in relativistic fluid dynamics.
  arXiv:230111743 {\href{https://arxiv.org/abs/2301.11743}{{arXiv:2301.11743}}}
  {[math.AP]}

\bibitem[{Pichon(1965)}]{Pichon-1965}
Pichon G (1965) \'{E}tude relativiste de fluides visqueux et charg\'{e}s. Ann
  Inst H Poincar\'{e} Sect A (NS) 2:21--85

\bibitem[{Plumberg et~al(2022)Plumberg, Almaalol, Dore, Noronha, and
  Noronha-Hostler}]{Plumberg:2021bme}
Plumberg C, Almaalol D, Dore T, et~al (2022) {Causality violations in realistic
  simulations of heavy-ion collisions}. Phys Rev C 105(6):L061901.
  \doi{10.1103/PhysRevC.105.L061901},
  {\href{https://arxiv.org/abs/2103.15889}{{arXiv:2103.15889}}} {[nucl-th]}

\bibitem[{Poovuttikul and Sybesma(2020)}]{Poovuttikul:2019ckt}
Poovuttikul N, Sybesma W (2020) {First order non-Lorentzian fluids, entropy
  production and linear instabilities}. Phys Rev D 102(6):065007.
  \doi{10.1103/PhysRevD.102.065007},
  {\href{https://arxiv.org/abs/1911.00010}{{arXiv:1911.00010}}} {[hep-th]}

\bibitem[{Pu et~al(2010)Pu, Koide, and Rischke}]{Pu:2009fj}
Pu S, Koide T, Rischke DH (2010) {Does stability of relativistic dissipative
  fluid dynamics imply causality?} Phys Rev D 81:114039.
  \doi{10.1103/PhysRevD.81.114039},
  {\href{https://arxiv.org/abs/0907.3906}{{arXiv:0907.3906}}} {[hep-ph]}

\bibitem[{Radice et~al(2020)Radice, Bernuzzi, and Perego}]{Radice:2020ddv}
Radice D, Bernuzzi S, Perego A (2020) {The Dynamics of Binary Neutron Star
  Mergers and GW170817}. Annu Rev Nucl Part Sci 70:95--119.
  \doi{10.1146/annurev-nucl-013120-114541},
  {\href{https://arxiv.org/abs/2002.03863}{{arXiv:2002.03863}}} {[astro-ph.HE]}

\bibitem[{Rauch(1986)}]{Rauch-1986}
Rauch J (1986) B{V} estimates fail for most quasilinear hyperbolic systems in
  dimensions greater than one. Commun Math Phys 106(3):481--484

\bibitem[{Reissig and Schulze(2005)}]{Reissig-Schulze-Book-2005}
Reissig M, Schulze BW (eds)  (2005) New trends in the theory of hyperbolic
  equations, Operator Theory: Advances and Applications, vol 159.
  Birkh\"{a}user, Basel, \doi{10.1007/3-7643-7386-5}

\bibitem[{Rendall(1992)}]{Rendall-1992}
Rendall AD (1992) The initial value problem for a class of general relativistic
  fluid bodies. J Math Phys 33(3):1047--1053. \doi{10.1063/1.529766}

\bibitem[{Rendall(2004)}]{Rendall-2004}
Rendall AD (2004) Asymptotics of solutions of the {E}instein equations with
  positive cosmological constant. Ann Henri Poincar\'e 5(6):1041--1064.
  \doi{10.1007/s00023-004-0189-1}

\bibitem[{Rendall and Schmidt(1991)}]{Rendall-Schmidt-1991}
Rendall AD, Schmidt BG (1991) Existence and properties of spherically symmetric
  static fluid bodies with a given equation of state. Class Quantum Grav
  8(5):985--1000

\bibitem[{Rendall and St{\aa}hl(2008)}]{Rendall-Stahl-2008}
Rendall AD, St{\aa}hl F (2008) Shock waves in plane symmetric spacetimes.
  Commun Part Diff Eq 33(10-12):2020--2039. \doi{10.1080/03605300802421948}

\bibitem[{Rezzolla and Zanotti(2013)}]{Rezzolla-Zanotti-Book-2013}
Rezzolla L, Zanotti O (2013) Relativistic Hydrodynamics. Oxford University
  Press, New York, \doi{10.1093/acprof:oso/9780198528906.001.0001}

\bibitem[{Rickard(2021{\natexlab{a}})}]{Rickard-2021-2}
Rickard C (2021{\natexlab{a}}) Global solutions to the compressible {E}uler
  equations with heat transport by convection around {D}yson's isothermal
  affine solutions. Arch Ration Mech Anal 241(2):947--1007.
  \doi{10.1007/s00205-021-01669-w}

\bibitem[{Rickard(2021{\natexlab{b}})}]{Rickard-2021-1}
Rickard C (2021{\natexlab{b}}) The vacuum boundary problem for the spherically
  symmetric compressible {E}uler equations with positive density and unbounded
  entropy. J Math Phys 62(2):021504. \doi{10.1063/5.0037656}

\bibitem[{Rickard et~al(2021)Rickard, Had\v{z}i\'{c}, and
  Jang}]{Rickard-Hadzic-Jang-2019}
Rickard C, Had\v{z}i\'{c} M, Jang J (2021) Global existence of the
  nonisentropic compressible {E}uler equations with vacuum boundary surrounding
  a variable entropy state. Nonlinearity 34(1):33--91.
  \doi{10.1088/1361-6544/abb03b}

\bibitem[{Ringstr\"{o}m(2009)}]{Ringstrom-Book-2009}
Ringstr\"{o}m H (2009) The {C}auchy problem in general relativity. ESI Lectures
  in Mathematics and Physics, European Mathematical Society (EMS), Z\"{u}rich,
  \doi{10.4171/053}

\bibitem[{Ripley et~al(2023)Ripley, Hegade K.~R., and Yunes}]{Ripley:2023qxo}
Ripley JL, Hegade K.~R. A, Yunes N (2023) {Probing internal dissipative
  processes of neutron stars with gravitational waves during the inspiral of
  neutron star binaries}. Phys Rev D 108(10):103037.
  \doi{10.1103/PhysRevD.108.103037},
  {\href{https://arxiv.org/abs/2306.15633}{{arXiv:2306.15633}}} {[gr-qc]}

\bibitem[{{Robertson}(1935)}]{Robertson-1935}
{Robertson} HP (1935) {Kinematics and World-Structure}. Astrophys J 82:284.
  \doi{10.1086/143681}

\bibitem[{{Robertson}(1936{\natexlab{a}})}]{Robertson-1936-1}
{Robertson} HP (1936{\natexlab{a}}) {Kinematics and World-Structure II}.
  Astrophys J 83:187. \doi{10.1086/143716}

\bibitem[{{Robertson}(1936{\natexlab{b}})}]{Robertson-1936-2}
{Robertson} HP (1936{\natexlab{b}}) {Kinematics and World-Structure III}.
  Astrophys J 83:257. \doi{10.1086/143726}

\bibitem[{Rocha et~al(2022)Rocha, Denicol, and Noronha}]{Rocha:2022ind}
Rocha GS, Denicol GS, Noronha J (2022) {Perturbative approaches in relativistic
  kinetic theory and the emergence of first-order hydrodynamics}. Phys Rev D
  106(3):036010. \doi{10.1103/PhysRevD.106.036010},
  {\href{https://arxiv.org/abs/2205.00078}{{arXiv:2205.00078}}} {[nucl-th]}

\bibitem[{Rocha et~al(2023)Rocha, de~Brito, and Denicol}]{Rocha:2023hts}
Rocha GS, de~Brito CVP, Denicol GS (2023) {Hydrodynamic theories for a system
  of weakly self-interacting classical ultrarelativistic scalar particles:
  Microscopic derivations and attractors}. Phys Rev D 108(3):036017.
  \doi{10.1103/PhysRevD.108.036017},
  {\href{https://arxiv.org/abs/2306.07423}{{arXiv:2306.07423}}} {[nucl-th]}

\bibitem[{Rocha et~al(2024)Rocha, Wagner, Denicol, Noronha, and
  Rischke}]{Rocha:2023ilf}
Rocha GS, Wagner D, Denicol GS, et~al (2024) {Theories of Relativistic
  Dissipative Fluid Dynamics}. Entropy 26(3):189. \doi{10.3390/e26030189},
  {\href{https://arxiv.org/abs/2311.15063}{{arXiv:2311.15063}}} {[nucl-th]}

\bibitem[{Rodino(1993)}]{Rodino-Book-1993}
Rodino L (1993) Linear partial differential operators in {G}evrey spaces. World
  Scientific Publishing Co., Inc., River Edge, NJ, \doi{10.1142/9789814360036}

\bibitem[{Rodnianski and Speck(2013)}]{Rodnianski-Speck-2013}
Rodnianski I, Speck J (2013) The nonlinear future stability of the {FLRW}
  family of solutions to the irrotational {E}uler-{E}instein system with a
  positive cosmological constant. J Eur Math Soc (JEMS) 15(6):2369--2462.
  \doi{10.4171/JEMS/424}

\bibitem[{Rodnianski and Speck(2018{\natexlab{a}})}]{Rodnianski-Speck-2018-1}
Rodnianski I, Speck J (2018{\natexlab{a}}) A regime of linear stability for the
  {E}instein-scalar field system with applications to nonlinear big bang
  formation. Ann Math (2) 187(1):65--156. \doi{10.4007/annals.2018.187.1.2}

\bibitem[{Rodnianski and Speck(2018{\natexlab{b}})}]{Rodnianski-Speck-2018-2}
Rodnianski I, Speck J (2018{\natexlab{b}}) Stable big bang formation in
  near-{FLRW} solutions to the {E}instein-scalar field and {E}instein-stiff
  fluid systems. Sel Math New Ser 24(5):4293--4459.
  \doi{10.1007/s00029-018-0437-8}

\bibitem[{Romatschke and Romatschke(2019)}]{Romatschke:2017ejr}
Romatschke P, Romatschke U (2019) {Relativistic Fluid Dynamics In and Out of
  Equilibrium}. Cambridge Monographs on Mathematical Physics, Cambridge
  University Press, \doi{10.1017/9781108651998}, \eprint{1712.05815}

\bibitem[{Romenski et~al(2020)Romenski, Peshkov, Dumbser, and
  Fambri}]{Romenski:2019qzs}
Romenski E, Peshkov I, Dumbser M, et~al (2020) {A new continuum model for
  general relativistic viscous heat-conducting media}. Phil Trans Roy Soc Lond
  A 378(2170):20190175. \doi{10.1098/rsta.2019.0175},
  {\href{https://arxiv.org/abs/1910.03298}{{arXiv:1910.03298}}} {[gr-qc]}

\bibitem[{Ryu et~al(2018)Ryu, Paquet, Shen, Denicol, Schenke, Jeon, and
  Gale}]{Ryu:2017qzn}
Ryu S, Paquet JF, Shen C, et~al (2018) {Effects of bulk viscosity and hadronic
  rescattering in heavy ion collisions at energies available at the BNL
  Relativistic Heavy Ion Collider and at the CERN Large Hadron Collider}. Phys
  Rev C 97(3):034910. \doi{10.1103/PhysRevC.97.034910},
  {\href{https://arxiv.org/abs/1704.04216}{{arXiv:1704.04216}}} {[nucl-th]}

\bibitem[{Saari(1971)}]{Saari-1971}
Saari DG (1971) Expanding gravitational systems. Trans Amer Math Soc
  156:219--240. \doi{10.2307/1995609}

\bibitem[{Saari(2005)}]{Saari-2005}
Saari DG (2005) Collisions, rings, and other {N}ewtonian {$N$}-body problems,
  CBMS Regional Conference Series in Mathematics, vol 104. Conference Board of
  the Mathematical Sciences, Washington, DC; by the American Mathematical
  Society, Providence, RI, \doi{10.1090/cbms/104}

\bibitem[{Saint-Raymond(2009)}]{Saint-Raymond-Book-2009}
Saint-Raymond L (2009) Hydrodynamic limits of the {B}oltzmann equation, Lecture
  Notes in Mathematics, vol 1971. Springer, Berlin,
  \doi{10.1007/978-3-540-92847-8}

\bibitem[{Salazar and Zannias(2022)}]{Salazar:2022yud}
Salazar JF, Zannias T (2022) {Local thermodynamical equilibrium and
  relativistic dissipation}. Phys Rev D 106(10):103004.
  \doi{10.1103/PhysRevD.106.103004},
  {\href{https://arxiv.org/abs/2210.10213}{{arXiv:2210.10213}}} {[gr-qc]}

\bibitem[{Salomons and Mareschal(1992)}]{Salomons-Mareschal-1992}
Salomons E, Mareschal M (1992) Usefulness of the {B}urnett description of
  strong shock waves. Phys Rev Lett 69:269--272.
  \doi{10.1103/PhysRevLett.69.269}

\bibitem[{Serre(1999)}]{Serre-Book-1999-1}
Serre D (1999) Systems of conservation laws. 1. Cambridge University Press,
  Cambridge, \doi{10.1017/CBO9780511612374}, hyperbolicity, entropies, shock
  waves, Translated from the 1996 French original by I. N. Sneddon

\bibitem[{Serre(2000)}]{Serre-Book-2000-2}
Serre D (2000) Systems of conservation laws. 2. Cambridge University Press,
  Cambridge, geometric structures, oscillations, and initial-boundary value
  problems, Translated from the 1996 French original by I. N. Sneddon

\bibitem[{Serrin(1959)}]{Serrin-1959}
Serrin J (1959) Mathematical principles of classical fluid mechanics. In:
  Fl\"{u}gge S, Truesdell C (eds) Fluid Dynamics I / Str\"omungsmechanik I.
  Springer, Berlin, G\"ottingen, Heidelberg, p 125--263,
  \doi{10.1007/978-3-642-45914-6_2}

\bibitem[{Shibata and Kiuchi(2017)}]{Shibata:2017xht}
Shibata M, Kiuchi K (2017) {Gravitational waves from remnant massive neutron
  stars of binary neutron star merger: Viscous hydrodynamics effects}. Phys Rev
  D 95(12):123003. \doi{10.1103/PhysRevD.95.123003},
  {\href{https://arxiv.org/abs/1705.06142}{{arXiv:1705.06142}}} {[astro-ph.HE]}

\bibitem[{Shibata et~al(2017)Shibata, Kiuchi, and Sekiguchi}]{Shibata:2017jyf}
Shibata M, Kiuchi K, Sekiguchi Yi (2017) {General relativistic viscous
  hydrodynamics of differentially rotating neutron stars}. Phys Rev D
  95(8):083005. \doi{10.1103/PhysRevD.95.083005},
  {\href{https://arxiv.org/abs/1703.10303}{{arXiv:1703.10303}}} {[astro-ph.HE]}

\bibitem[{Shkoller and Sideris(2019)}]{Shkoller-Sideris-2019}
Shkoller S, Sideris TC (2019) Global existence of near-affine solutions to the
  compressible {E}uler equations. Arch Ration Mech Anal 234(1):115--180.
  \doi{10.1007/s00205-019-01387-4}

\bibitem[{{Shkoller} and {Vicol}(2024)}]{Shkoller-Vicol-2024}
{Shkoller} S, {Vicol} V (2024) {The geometry of maximal development and shock
  formation for the {E}uler equations in multiple space dimensions}.
  Inventiones Mathematicae 237(3):871--1252. \doi{10.1007/s00222-024-01269-x},
  {\href{https://arxiv.org/abs/2310.08564}{{arXiv:2310.08564}}} {[math.AP]}

\bibitem[{Shokri and Taghinavaz(2020)}]{Shokri:2020cxa}
Shokri M, Taghinavaz F (2020) {Conformal Bjorken flow in the general frame and
  its attractor: Similarities and discrepancies with the
  M\"uller-Israel-Stewart formalism}. Phys Rev D 102(3):036022.
  \doi{10.1103/PhysRevD.102.036022},
  {\href{https://arxiv.org/abs/2002.04719}{{arXiv:2002.04719}}} {[hep-th]}

\bibitem[{Sideris(1985)}]{Sideris-1985}
Sideris TC (1985) Formation of singularities in three-dimensional compressible
  fluids. Commun Math Phys 101(4):475--485

\bibitem[{Sideris(2017)}]{Sideris-2017}
Sideris TC (2017) Global existence and asymptotic behavior of affine motion of
  3{D} ideal fluids surrounded by vacuum. Arch Ration Mech Anal
  225(1):141--176. \doi{10.1007/s00205-017-1106-3}

\bibitem[{Smith and Tataru(2002)}]{Smith-Tataru-2002}
Smith HF, Tataru D (2002) Sharp counterexamples for {S}trichartz estimates for
  low regularity metrics. Math Res Lett 9(2-3):199--204.
  \doi{10.4310/MRL.2002.v9.n2.a6}

\bibitem[{Smith and Tataru(2005)}]{Smith-Tataru-2005}
Smith HF, Tataru D (2005) Sharp local well-posedness results for the nonlinear
  wave equation. Ann Math (2) 162(1):291--366.
  \doi{10.4007/annals.2005.162.291}

\bibitem[{Smoller and Temple(1993)}]{Smoller-Temple-1993}
Smoller J, Temple B (1993) Global solutions of the relativistic {E}uler
  equations. Commun Math Phys 156(1):67--99

\bibitem[{Speck(2012)}]{Speck-2012}
Speck J (2012) The nonlinear future stability of the {FLRW} family of solutions
  to the {E}uler-{E}instein system with a positive cosmological constant. Sel
  Math New Ser 18(3):633--715. \doi{10.1007/s00029-012-0090-6}

\bibitem[{Speck(2013)}]{Speck-2013}
Speck J (2013) The stabilizing effect of spacetime expansion on relativistic
  fluids with sharp results for the radiation equation of state. Arch Ration
  Mech Anal 210(2):535--579. \doi{10.1007/s00205-013-0655-3}

\bibitem[{Speck(2016)}]{Speck-Book-2016}
Speck J (2016) Shock formation in small-data solutions to 3{D} quasilinear wave
  equations, Mathematical Surveys and Monographs, vol 214. American
  Mathematical Society, Providence, RI

\bibitem[{Speck(2018{\natexlab{a}})}]{Speck-2018-2}
Speck J (2018{\natexlab{a}}) The maximal development of near-{FLRW} data for
  the {E}instein-scalar field system with spatial topology ${\mathbb{s}^3}$.
  Commun Math Phys 364(3):879--979. \doi{10.1007/s00220-018-3272-z}

\bibitem[{Speck(2018{\natexlab{b}})}]{Speck-2018-1}
Speck J (2018{\natexlab{b}}) Shock formation for {$2D$} quasilinear wave
  systems featuring multiple speeds: blowup for the fastest wave, with
  non-trivial interactions up to the singularity. Ann PDE 4(1):6.
  \doi{10.1007/s40818-017-0042-8}

\bibitem[{Speck(2019)}]{Speck-2019}
Speck J (2019) A new formulation of the 3{D} compressible {E}uler equations
  with dynamic entropy: remarkable null structures and regularity properties.
  Arch Ration Mech Anal 234(3):1223--1279. \doi{10.1007/s00205-019-01411-7}

\bibitem[{Speck and Strain(2011)}]{Speck-Strain-2011}
Speck J, Strain RM (2011) Hilbert expansion from the {B}oltzmann equation to
  relativistic fluids. Commun Math Phys 304(1):229--280.
  \doi{10.1007/s00220-011-1207-z}

\bibitem[{Sroczinski(2019)}]{Sroczinski-2019}
Sroczinski M (2019) Asymptotic stability of homogeneous states in the
  relativistic dynamics of viscous, heat-conductive fluids. Arch Ration Mech
  Anal 231(1):91--113. \doi{10.1007/s00205-018-1274-9}

\bibitem[{Sroczinski(2020)}]{Sroczinski-2020}
Sroczinski M (2020) Asymptotic stability in a second-order symmetric hyperbolic
  system modeling the relativistic dynamics of viscous heat-conductive fluids
  with diffusion. J Diff Eq 268(2):825--851. \doi{10.1016/j.jde.2019.08.028}

\bibitem[{Sroczinski(2024)}]{Sroczinski-2024}
Sroczinski M (2024) Global existence and decay of small solutions for
  quasi-linear second-order uniformly dissipative hyperbolic-hyperbolic
  systems. J Diff Eq 383:130--162. \doi{10.1016/j.jde.2023.10.056}

\bibitem[{Stewart(1977)}]{Stewart-1977}
Stewart JM (1977) On transient relativistic thermodynamics and kinetic theory.
  Proc R Soc A 357(1688):59--75. \doi{10.1098/rspa.1977.0155}

\bibitem[{Synge(1937{\natexlab{a}})}]{Synge-1937-1}
Synge JL (1937{\natexlab{a}}) Relativistic {H}ydrodynamics. Proc London Math
  Soc (2) 43(5):376--416. \doi{10.1112/plms/s2-43.5.376}

\bibitem[{Synge(1937{\natexlab{b}})}]{Synge-1937-2}
Synge JL (1937{\natexlab{b}}) Relativistic {H}ydrodynamics. Proc London Math
  Soc (2) 43(6):401. \doi{10.1112/plms/s2-43.6.401}

\bibitem[{Synge(2002)}]{Synge-2002}
Synge JL (2002) Relativistic hydrodynamics. Gen Relativ Gravit
  34(12):2171--2175. \doi{10.1023/A:1021151905577}, reprinted from Proc. London
  Math. Soc. (2) {\bf 43} (1937), 376--416, With an editor's note by J\"urgen
  Ehlers and a short biography of Synge by Peter Hogan and Ehlers

\bibitem[{Taghinavaz(2020)}]{Taghinavaz:2020axp}
Taghinavaz F (2020) {Causality and Stability Conditions of a Conformal Charged
  Fluid}. JHEP 08:119. \doi{10.1007/JHEP08(2020)119},
  {\href{https://arxiv.org/abs/2004.01897}{{arXiv:2004.01897}}} {[hep-th]}

\bibitem[{Tataru(2002)}]{Tataru-2002-1}
Tataru D (2002) Strichartz estimates for second order hyperbolic operators with
  nonsmooth coefficients. {III}. J Amer Math Soc 15(2):419--442.
  \doi{10.1090/S0894-0347-01-00375-7}

\bibitem[{Taylor(1991)}]{Taylor-Book-1991}
Taylor ME (1991) Pseudodifferential operators and nonlinear {PDE}, Progress in
  Mathematics, vol 100. Birkh\"{a}user, Boston, \doi{10.1007/978-1-4612-0431-2}

\bibitem[{Teofanov(2006)}]{Teofanov-2006}
Teofanov N (2006) Ultradistributions and time-frequency analysis. In:
  Pseudo-differential operators and related topics, Oper. Theory Adv. Appl.,
  vol 164. Birkh\"{a}user, Basel, p 173--192, \doi{10.1007/3-7643-7514-0\_13}

\bibitem[{Tolman(1934)}]{Tolman:1934za}
Tolman RC (1934) {Effect of inhomogeneity on cosmological models}. Proc Nat
  Acad Sci 20:169--176. \doi{10.1073/pnas.20.3.169}

\bibitem[{Tolman(1939)}]{Tolman:1939jz}
Tolman RC (1939) {Static solutions of Einstein's field equations for spheres of
  fluid}. Phys Rev 55:364--373. \doi{10.1103/PhysRev.55.364}

\bibitem[{Uribe et~al(1998)Uribe, Velasco, and
  Garc\'{\i}a-Col\'{\i}n}]{Uribe-Velasco-Garcia-Colin-1998}
Uribe FJ, Velasco RM, Garc\'{\i}a-Col\'{\i}n LS (1998) Burnett description of
  strong shock waves. Phys Rev Lett 81:2044--2047.
  \doi{10.1103/PhysRevLett.81.2044}

\bibitem[{Van and Biro(2012)}]{Van:2011yn}
Van P, Biro T (2012) {First order and stable relativistic dissipative
  hydrodynamics}. Phys Lett B 709:106--110.
  \doi{10.1016/j.physletb.2012.02.006},
  {\href{https://arxiv.org/abs/1109.0985}{{arXiv:1109.0985}}} {[nucl-th]}

\bibitem[{Veblen(1924)}]{Veblen-1924}
Veblen O (1924) Book {R}eview: {R}elativity and {M}odern {P}hysics. Bull Amer
  Math Soc 30(7):365--367. \doi{10.1090/S0002-9904-1924-03908-1}

\bibitem[{Visser(1993)}]{Visser-1993-arxiv}
Visser M (1993) Acoustic propagation in fluids: an unexpected example of
  lorentzian geometry. arXiv e-prints
  {\href{https://arxiv.org/abs/9311028}{{arXiv:9311028}}} {[gr-qc]}

\bibitem[{Wald(1984)}]{Wald:1984rg}
Wald RM (1984) {General Relativity}. Chicago University Press, Chicago,
  \doi{10.7208/chicago/9780226870373.001.0001}

\bibitem[{{Walker}(1937)}]{Walker-1937}
{Walker} AG (1937) {On Milne's Theory of World-Structure}. Proceedings of the
  London Mathematical Society 42:90--127. \doi{10.1112/plms/s2-42.1.90}

\bibitem[{Wang and Pu(2024)}]{Wang:2023csj}
Wang DL, Pu S (2024) {Stability and causality criteria in linear mode analysis:
  Stability means causality}. Phys Rev D 109(3):L031504.
  \doi{10.1103/PhysRevD.109.L031504},
  {\href{https://arxiv.org/abs/2309.11708}{{arXiv:2309.11708}}} {[hep-th]}

\bibitem[{Wang(2014)}]{Wang-2014}
Wang Q (2014) Rough solutions of {E}instein vacuum equations in {CMCSH} gauge.
  Commun Math Phys 328(3):1275--1340. \doi{10.1007/s00220-014-2015-z}

\bibitem[{Wang(2017)}]{Wang-2017}
Wang Q (2017) A geometric approach for sharp local well-posedness of
  quasilinear wave equations. Ann PDE 3(1):12. \doi{10.1007/s40818-016-0013-5}

\bibitem[{Wang(2022)}]{Wang-2022}
Wang Q (2022) Rough solutions of the 3-{D} compressible {E}uler equations. Ann
  Math (2) 195(2):509--654. \doi{10.4007/annals.2022.195.2.3}

\bibitem[{Weickgenannt(2023)}]{Weickgenannt:2023btk}
Weickgenannt N (2023) {Linearly stable and causal relativistic first-order spin
  hydrodynamics}. Phys Rev D 108(7):076011. \doi{10.1103/PhysRevD.108.076011},
  {\href{https://arxiv.org/abs/2307.13561}{{arXiv:2307.13561}}} {[nucl-th]}

\bibitem[{Weinberg(1972)}]{Weinberg:1972kfs}
Weinberg S (1972) {Gravitation and Cosmology}: {Principles and Applications of
  the General Theory of Relativity}. John Wiley and Sons, New York

\bibitem[{Weinberg(2008)}]{Weinberg-Book-2008}
Weinberg S (2008) Cosmology. Oxford University Press

\bibitem[{Wu and Tang(2017)}]{Wu-Tang-2017}
Wu K, Tang H (2017) Admissible states and physical-constraints-preserving
  schemes for relativistic magnetohydrodynamic equations. Math Models Methods
  Appl Sci 27(10):1871--1928. \doi{10.1142/S0218202517500348}

\bibitem[{Wu(2009)}]{Wu-2009}
Wu S (2009) Almost global wellposedness of the 2-{D} full water wave problem.
  Invent Math 177(1):45--135. \doi{10.1007/s00222-009-0176-8}

\bibitem[{Xie et~al(2023)Xie, Wang, Yang, and Pu}]{Xie:2023gbo}
Xie XQ, Wang DL, Yang C, et~al (2023) {Causality and stability analysis for the
  minimal causal spin hydrodynamics}. Phys Rev D 108(9):094031.
  \doi{10.1103/PhysRevD.108.094031},
  {\href{https://arxiv.org/abs/2306.13880}{{arXiv:2306.13880}}} {[hep-ph]}

\bibitem[{Yu(2024)}]{Yu-2024}
Yu S (2024) Rough solutions of the relativistic {E}uler equations. J Hyperbolic
  Differ Equ 21(2):423--500. \doi{10.1142/S0219891624500127},
  \urlprefix\url{https://doi.org/10.1142/S0219891624500127}

\bibitem[{Yu(2005)}]{Yu-2005}
Yu SH (2005) Hydrodynamic limits with shock waves of the {B}oltzmann equation.
  Commun Pure Appl Math 58(3):409--443. \doi{10.1002/cpa.20027}

\bibitem[{{Zel'dovich} and {Podurets}(1966)}]{Zeldovich-Podurets-1966}
{Zel'dovich} YB, {Podurets} MA (1966) {The Evolution of a System of
  Gravitationally Interacting Point Masses}. Soviet Astronomy 9:742

\bibitem[{Zhang and Andersson(2022)}]{Zhang-Andersson-arxiv-2022}
Zhang H, Andersson L (2022) Well-posedness for rough solutions of the 3d
  compressible euler equations. arXiv e-prints
  {\href{https://arxiv.org/abs/2208.10132}{{arXiv:2208.10132}}} {[math.AP]}

\bibitem[{Zhong(to appear)}]{Zhong-in_prep}
Zhong R (to appear) In preparation

\end{thebibliography}

\end{document}